\def\jcdot{\scriptscriptstyle\bullet}
\def\invlim{\mathop{\vtop{\ialign{##\crcr$\hfill{\lim}\hfil$\crcr
\noalign{\kern1pt\nointerlineskip}\leftarrowfill\crcr\noalign
{\kern -3pt}}}}\limits}
\def\dirlim{\mathop{\vtop{\ialign{##\crcr$\hfill{\lim}\hfil$\crcr
\noalign{\kern1pt\nointerlineskip}\rightarrowfill\crcr\noalign
{\kern -3pt}}}}\limits}
\def\lomapr#1{\smash{\mathop{\relbar\joinrel\longrightarrow}\limits^{#1}}}
 \def\verylomapr#1{\smash{\mathop{\relbar\joinrel\relbar\joinrel\relbar\joinrel\longrightarrow}\limits^{#1}}}
\def\veryverylomapr#1{\smash{\mathop{\relbar\joinrel\relbar\joinrel\relbar
\joinrel\relbar\joinrel\relbar\joinrel\longrightarrow}\limits^{#1}}}
\def\phi{\varphi}
\def\epsilon{\varepsilon}
\let\mathcal\mathscr
\numberwithin{equation}{section}
\newtheorem{theorem}[equation]{Theorem}
 \newtheorem{lemma}[equation]{Lemma}
 \newtheorem{proposition}[equation]{Proposition}
 \newtheorem{corollary}[equation]{Corollary}
\theoremstyle{definition}
\newtheorem{definition}[equation]{Definition}
\newtheorem{remark}[equation]{Remark}
\newtheorem{example}[equation]{Example}
\newtheorem*{acknowledgments}{Acknowledgments}
\let\goth\mathfrak
\newcommand{\ovk}{\overline{K} }
\newcommand{\gp}{\operatorname{gp} }
\newcommand{\dr}{\operatorname{dR} } 
  \newcommand{\hk}{\operatorname{HK} }   
  \newcommand{\LL}{\mathrm {L} }
 \newcommand{\holim}{\operatorname{holim} }
 \newcommand{\eet}{\operatorname{\acute{e}t} }
 \newcommand{\an}{\operatorname{an} }
 \newcommand{\Spec}{\operatorname{Spec} }
 \newcommand{\Spf}{\operatorname{Spf} }
 \newcommand{\Hom}{\operatorname{Hom} }
 \newcommand{\Gal}{\operatorname{Gal} }
 \newcommand{\tr}{ \operatorname{tr} }
\newcommand{\can}{ \operatorname{can} }
\newcommand{\id}{ \operatorname{Id} }
\newcommand{\synt}{ \operatorname{syn} }
 \newcommand{\Res}{\operatorname{Res} }
\newcommand{\Lie}{ \operatorname{Lie} }
\newcommand{\Sp}{ \operatorname{Sp} }
 \newcommand{\Cone}{\operatorname{Cone} }
\newcommand{\st}{\operatorname{st} }
 \newcommand{\cont}{\operatorname{cont} } 
 \newcommand{\crr}{\operatorname{cr} }
 \newcommand{\gr}{\operatorname{gr} }
\newcommand{\im}{\operatorname{Im} }
 \newcommand{\kr}{^{\jcdot }}
 \newcommand{\sh}{{\mathcal{H}}}
\newcommand{\stt}{{\mathcal{T}}}
 \newcommand{\su}{{\mathcal{U}}}
 \newcommand{\so}{{\mathcal O}}
 \newcommand{\sj}{{\mathcal J}}
 \newcommand{\sa}{{\mathcal{A}}}
 \newcommand{\szz}{{\mathcal{Z}}}
 \newcommand{\sx}{{\mathcal{X}}}
 \newcommand{\sss}{{\mathcal{S}}}
\newcommand{\sd}{{\mathcal{D}}} 
\newcommand{\sm}{{\mathcal{M}}}
 \newcommand{\wh}{\widehat}
  \newcommand{\Z}{ {\mathbf Z} }
    \newcommand{\laz}{ {\mathcal  L}\hskip-.05cm{ az} }
   \newcommand{\Q}{ {\mathbf Q}}
   \newcommand{\N}{{\mathbf N}}
   \newcommand{\C}{{\mathbf C}}
\def\R{{\mathrm R}}
\def\O{{\mathcal O}} 
\def\epsilon{\varepsilon}
\def\E{{\bf E}} \def\A{{\bf A}} 
\def\B{{\bf B}}
\def\bst{{\bf B}_{{\rm st}}}
\def\bstp{{\bf B}_{{\rm st}}^+}
\def\bdr{{\bf B}_{{\rm dR}}}
\def\bdrp{{\bf B}_{{\rm dR}}^+}
\def\acris{{\bf A}_{{\rm cr}}}
\def\vst{{\bf V}_{\rm st}}
\def\ve{v_{\E}}
\begin{document}
 \title[Syntomic complexes and $p$-adic nearby cycles. ]
 {Syntomic complexes and $p$-adic nearby cycles. }
 \author{Pierre Colmez} 
\address{CNRS, IMJ-PRG, Universit\'e Pierre et Marie Curie, 4 place Jussieu,
75005 Paris, France}
\email{pierre.colmez@imj-prg.fr} 
\author{Wies{\l}awa Nizio{\l}}
\address{CNRS, UMPA, \'Ecole Normale Sup\'erieure de Lyon, 46 all\'ee d'Italie, 69007 Lyon, France}
\email{wieslawa.niziol@ens-lyon.fr}
 \date{\today}
\thanks{The authors' research was supported in part by the grant ANR-14-CE25.}
 \begin{abstract}
We compute syntomic cohomology of semistable affinoids in terms of cohomology of $(\varphi,\Gamma)$-modules which,
thanks to work of Fontaine-Herr, Andreatta-Iovita, and Kedlaya-Liu, is known to compute Galois cohomology of these affinoids.
For a semistable scheme over a mixed characteristic local ring this implies a comparison isomorphism, up to some universal constants, 
between truncated sheaves of $p$-adic nearby cycles and  syntomic cohomology sheaves. 
This generalizes the comparison results of Kato, Kurihara, and Tsuji for small Tate twists  (where no constants are necessary) as well as the comparison result of 
Tsuji that holds over the algebraic closure of the field. As an application, we combine this local comparison isomorphism with the theory of finite dimensional 
Banach Spaces and finiteness of \'etale cohomology of rigid analytic spaces 
proved by Scholze to  prove a Semistable conjecture for formal schemes with semistable reduction. 
 \end{abstract}
 \maketitle
 \tableofcontents
 \section{Introduction}
Let $\so_K$ be a complete discrete valuation ring with fraction field
$K$ of characteristic 0 and with perfect
residue field $k$ of characteristic $p$. 
Let $\so_F=W(k)$ and $F=\so_F[\frac{1}{p}]$ so that $K$ is a totally ramified
extension of $F$; let $e=[K:F]$ be the absolute ramification index of $K$.
Let $\so_{\overline K}$ denote the integral closure of $\so_K$ in $\ovk$. 
Set $G_K=\Gal(\overline {K}/K)$, and 
let $\phi=\phi_{W(\overline{k})}$ be the absolute
Frobenius on $W(\overline {k})$.   For a log-scheme $X$ over $\so_K$, $X_n$ will denote its reduction mod $p^n$, $X_0$ will denote its special fiber.

\subsection{Statement of the main results}
\subsubsection{The Fontaine-Messing map}
Let $X$ be a  fine and saturated  log-scheme log-smooth over $\so_K$ equipped with the log-structure coming from the closed point. Denote by $X_{\tr}$ the locus where the log-structure is trivial. This is an open dense subset of the generic fiber of $X$.   For $r\geq 0$, let $\sss_n(r)_{X}$ be the (log) syntomic sheaf modulo $p^n$ on $X_{0,\eet}$. It can be thought of as a derived Frobenius and filtration eigenspace of crystalline cohomology or as a relative Fontaine functor.  Fontaine-Messing \cite{FM}, Kato \cite{K1} have constructed  period morphisms  ($i: X_0\hookrightarrow X, j: X_{\tr}\hookrightarrow X$)
$$\alpha^{\rm FM}_{r,n}:  \quad \sss_n(r)_{X}  \rightarrow i^*Rj_*{\mathbf Z}/p^n(r)^{\prime}_{X_{\tr}},\quad r\geq 0.
$$
from logarithmic syntomic cohomology  to logarithmic $p$-adic nearby cycles. Here  we set  $\Z_p(r)^{\prime}:=\tfrac{1}{p^{a(r)}}\Z_p(r)$,  for $r=(p-1)a(r)+b(r),$ $0\leq b(r)\leq p-1$.

    Assume now that $X$ has semistable reduction over $\so_K$ or is a base change of a scheme with semistable reduction over the ring of integers of a subfield of $K$. That is, locally, $X$ can be written as $\Spec(A)$  for a ring $A$ \'etale over 
  $$
 \so_K[X_1^{\pm 1},\cdots,X_a^{\pm 1}, X_{a+1},\cdots, X_{a+b},X_{a+b+1},\cdots,X_{d},X_{d+1}]/(X_{d+1}X_{a+1}\cdots X_{a+b}-\varpi^h), \quad 1\leq h\leq e.
  $$
  If we put $D:=\{X_{a+b+1}\cdots X_d=0\}\subset \Spec(A)$ then the log-structure on  $\Spec A$  is associated to the special fiber and to the divisor $D$. We have $\Spec(A)_{\tr}=\Spec(A_K)\setminus D_K$. 

  We prove  in this paper that the Fontaine-Messing period map $\alpha^{\rm FM}_{r,n}$, after a suitable truncation,  is essentially a quasi-isomorphism. More precisely, we prove the following theorem.
 \begin{theorem}
 \label{main0}
For   $0\leq i\leq r$,  consider the period map 
\begin{equation}
\label{maineq1}
\alpha^{\rm FM}_{r,n}:\quad  \sh^i(\sss_n(r)_{X}) \rightarrow i^*R^ij_*{\mathbf Z}/p^n(r)'_{X_{\tr}}.
\end{equation}

{\rm (i)} If $K$ has enough roots of unity\footnote{See Section (\ref{def1}) for what it means for a field to contain enough roots of unity. 
For any $K$, the field $K(\zeta_{p^n})$, for $n\geq c(K)+3$, where $c(K)$ is  the conductor of $K$, contains enough roots of unity.} then the kernel  and cokernel of this map are annihilated by $p^{Nr+c_p}$ for a universal constant $N$ {\rm (not depending on $p$, $X$, $K$, $n$ or $r$)} and a constant $c_p$ depending only on $p$ (and $d$ if $p=2$).

{\rm(ii)} In general, the kernel  and cokernel of this map are annihilated by $p^N$ for an integer $N=N(e,p,r)$, which depends on $e$, $r$, but not on $X$ or $n$.
\end{theorem}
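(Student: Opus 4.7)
The plan is to reduce the statement about sheaves to a statement about cohomology of small \'etale affinoid neighborhoods, and on such neighborhoods to pass via $(\varphi,\Gamma)$-modules, as suggested by the abstract. Since both $\sh^i(\sss_n(r)_X)$ and the sheaves $R^ij_*\Z/p^n(r)^{\prime}_{X_{\tr}}$ are computed stalk-wise on $X_{0,\eet}$, I would choose, around each geometric point, a cofinal system of small \'etale neighborhoods $U=\Spec(A)\subset X$ on which $A$ is \'etale over the standard semistable chart. On such a $U$ (really on its henselization or $p$-adic completion) the two stalks are expressed as cohomology of the generic fibre $U_{\tr,K}$: on the right as the continuous Galois cohomology $R\Gamma(G_U,\Z/p^n(r)^{\prime})$, where $G_U$ is the Galois group of a universal cover of $U_{\tr,K}$, and on the left as a Fontaine-Messing syntomic complex built from the log-crystalline cohomology of $U$ with its Frobenius $\varphi$ and Hodge filtration $F^r$, with $\alpha^{\rm FM}_{r,n}$ providing the natural comparison map between the two.

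Next, the key step is to compare each of these two complexes with a common intermediate object, namely the cohomology of the $(\varphi,\Gamma_U)$-module (or rather $(\varphi,N,\Gamma_U)$-module in the semistable case) attached to $U$, where $\Gamma_U$ is the Galois group of the tower obtained by adjoining $p^n$-th roots of the chart coordinates. On the Galois side, this identification is the Fontaine-Herr theorem in dimension one, extended to the relative setting by Andreatta-Iovita and Kedlaya-Liu, and it yields a Koszul-type complex for $R\Gamma(G_U,\Z/p^n(r)^{\prime})$. On the syntomic side, my plan is to extend Tsuji's local analysis: one writes the log-crystalline cohomology of $U$ as an explicit Koszul complex in the coordinates $X_i$ over $\acris$, takes the derived $\varphi=p^r$, $F^r$-eigenspace, and then, after inverting $t$ and changing to the Kedlaya-Liu period ring, recognises the result as essentially the same Koszul complex appearing on the Galois side. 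The Fontaine-Messing map will be identified with the map induced by the inclusion of period rings.

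The heart of the matter, and the main obstacle, is to bound uniformly the denominators that appear in this comparison. Passing from $\acris$ to $\bstp$, $\bst$ and $\bdr$, and from there to the Robba/Kedlaya-Liu rings, introduces denominators bounded by powers of $t=\log[\epsilon]$, and exchanging $\varphi$ for the operator $\psi$ used on the Galois side introduces further denominators on auxiliary complexes. When $K$ contains enough roots of unity, so that the chart coordinates are close to the canonical ones adapted to the $(\varphi,\Gamma)$-module picture, each of these factors can be bounded by a power of $p$ that is linear in $r$ plus a $p$-dependent constant, yielding the $p^{Nr+c_p}$ bound of (i). For part (ii), one invokes (i) after base change to $K(\zeta_{p^n})$ for $n\geq c(K)+3$ and descends to $K$ by corestriction: the kernel and cokernel over $K$ are killed by $p^N$ for some $N=N(e,p,r)$ that absorbs $c_p$, the linear factor $Nr$, and the degree $[K(\zeta_{p^n}):K]$, which accounts for the dependence on $e$. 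Controlling these constants simultaneously with the matching of the two Koszul complexes, uniformly in $n$ and $X$, is the delicate point that I expect to take up the bulk of the work.
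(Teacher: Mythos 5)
Your plan is essentially the approach taken in the paper: reduce to affinoid/affine neighborhoods \'etale over the standard semistable chart, express both sides as Koszul complexes, pass through the $(\varphi,\Gamma)$-module picture via the relative Fontaine--Herr/Andreatta--Iovita/Kedlaya--Liu theory, bound the denominators coming from $t$ and the $\varphi\leftrightarrow\psi$ exchange by powers of $p$ linear in $r$, and deduce (ii) by base change to $K(\zeta_{p^n})$ followed by Galois descent. This matches the paper's definition of the intermediate period map $\alpha^{\laz}_r$ through the chain ${\rm Syn}(R,r)\simeq{\rm Kum}(R_\varpi^{[u]},r)\simeq{\rm Kum}(R_\varpi^{[u,v]},r)\simeq{\rm Cycl}(R_\varpi^{[u,v]},r)\simeq{\rm Kos}(\varphi,\partial,F^r\A_R^{[u,v]})\simeq\cdots\simeq{\rm R}\Gamma_{\cont}(G_R,\Z_p(r))$, together with the identification $\alpha^{\rm FM}\simeq\alpha^{\laz}$ and the descent Lemma~\ref{Gal}.

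Two small inaccuracies worth flagging, though they do not change the strategy: the local comparison uses the log-structure and the Kummer/cyclotomic embeddings of $R_\varpi$ into period rings rather than a $(\varphi,N,\Gamma)$-module (the monodromy operator $N$ plays no role locally); and the filtration is removed not by literally ``inverting $t$'' but by multiplying by $t^r$ (Lemma~\ref{19saint}, Lemma~\ref{ref1}), which is only a $p^{Nr}$-isomorphism in degrees $\leq r$ and forces the truncation $\tau_{\leq r}$ that appears throughout. You should also make explicit the transfer from the $p$-adic completion back to the henselization/algebraic neighborhood (Elkik, Abhyankar, and the $K(\pi,1)$-lemma and rigid GAGA as in diagram~(\ref{passage})), which is needed since the $(\varphi,\Gamma)$-module machinery lives on the completed side while the sheaf statement lives on $X_{0,\eet}$.
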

For $i\leq r\leq p-1$, it is known that a stronger statement is true: the period map
\begin{equation}
\label{period2}\alpha^{\rm FM}_{r,n}:\quad  \sh^i(\sss_n(r)_{X}) \stackrel{\sim}{\rightarrow} i^*R^ij_*{\mathbf Z}/p^n(r)_{X_{\tr}}.
\end{equation}
is an isomorphism for $X$ a log-scheme log-smooth over a henselian discrete valuation ring $\so_K$ of mixed characteristic. 
This was proved by 
 Kato \cite{K,K1}, Kurihara \cite{Ku}, and Tsuji \cite{Ts,Ts1}. In \cite{Tsc} Tsuji generalized this result  to some \'etale local systems.  As Geisser has shown \cite{Gei}, in the case without log-structure, the isomorphism (\ref{period2}) allows to approximate  the (continuous) $p$-adic motivic cohomology (sheaves) of $p$-adic varieties by their syntomic cohomology; hence to relate $p$-adic algebraic cycles to differential forms. This was used to study algebraic cycles in mixed characteristic \cite{SS}, geometric class field theory \cite{Ku1}, Beilinson's Tate conjecture \cite{AS}, variational Hodge conjecture \cite{BEK}, $p$-adic regulators and special values of $p$-adic $L$-functions \cite{Som}. 
 
 We hope that 
 the ``isomorphism" (\ref{maineq1}) that generalizes (\ref{period2}) will allow to extend the above mentioned applications. Actually, 
(\ref{maineq1}) was already used to approximate motivic cohomology in mixed characteristic. More precisely, in \cite{EN} it is shown that the result of Geisser generalizes to all Tate twists and to the "open" case (i.e., with a possible horizontal divisor at infinity). One gets a higher cycle class map from continuous $p$-adic motivic cohomology to log-syntomic cohomology that is a quasi-isomorphism (by an application of the $p$-adic Beilinson-Lichtenbaum conjectures on the special and the generic fibers). This allows to define well-behaved integral  universal Chern classes to log-syntomic cohomology \cite{NO}.
 Along similar lines, using (\ref{maineq1}), it is shown in \cite{NU}  that the $p$-adic $K$-theory sheaves localized in the log-syntomic topology coincide with log-syntomic Tate twists (up to a direct factor). This is  
analogous to what happens $\ell$-adically \cite{Th,NL}.

  As an application of Theorem~\ref{main0}, one can obtain the following generalization of the
Bloch-Kato's exponential map \cite{BK1}. Let $\sx$ be a quasi-compact formal, semistable scheme over $\so_K$
(for example a semi-stable affinoid). For $i\geq 1$, consider the composition
   $$
   \alpha_{r,i}: \quad H^{i-1}_{\rm dR}(\sx_{K,\tr})\to H^i(\sx,\sss(r))_{\Q}\lomapr{\alpha^{FM}_r}H^i_{\eet}(\sx_{K,\tr},\Q_p(r)).
   $$
If $X$ is  a proper semistable  scheme  $X$ over $\so_K$, and $1\leq i\leq r-1$,
then the $G_K$-representation $V_{i-1}=H^{i-1}_{\eet}(X_{\overline K},\Q_p(r))$ is finite dimensional over $\Q_p$,
$H^{i-1}_{\rm dR}(X_K)$ is finite dimensional over $K$, and $H^{i-1}_{\rm dR}(X_K)=D_{\rm dR}(V_{i-1})$.
The map $\alpha_{r,i}$ for the formal scheme $\sx$ associated to $X$  is then the Bloch-Kato's map \cite{NN}
$$D_{\rm dR}(V_{i-1})\to H^1(G_K,V_{i-1})=H^i_{\eet}(X_K,\Q_p(r)).$$

Easy comparison between de Rham and syntomic cohomologies, together
with Theorem~\ref{main0}, yield the following result.
\begin{corollary}\label{dret}
For $i\leq r-1$, the map
$$
\alpha_{r,i}: H^{i-1}_{\rm dR}(\sx_{K,\tr})\to H^i_{\eet}(\sx_{K,\tr},\Q_p(r))
$$ is an isomorphism. The map
$ \alpha_{r,r}: H^{r-1}_{\rm dR}(\sx_{K,\tr})\rightarrow  H^r_{\eet}(\sx_{K,\tr},\Q_p(r))$ is injective,
but its cokernel can be very large if the dimension of $\sx_K$ is~$\geq 1$.
\end{corollary}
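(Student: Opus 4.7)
The map $\alpha_{r,i}$ factors as
$$H^{i-1}_{\rm dR}(\sx_{K,\tr}) \xrightarrow{\,\beta_{r,i}\,} H^i(\sx, \sss(r))_{\Q} \xrightarrow{\,\alpha^{\rm FM}_r\,} H^i_{\eet}(\sx_{K,\tr}, \Q_p(r)),$$
so it is enough to prove that $\alpha^{\rm FM}_r$ is an isomorphism for $i \leq r$ and that $\beta_{r,i}$ is an isomorphism for $i \leq r-1$, injective for $i = r$. For the first factor I would rationalize Theorem~\ref{main0}: the annihilator $p^{N}$ becomes a unit after inverting $p$, so the sheaf-level period map induces isomorphisms $\sh^j(\sss(r)_{\sx})_{\Q} \xrightarrow{\sim} i^* R^j j_* \Q_p(r)$ for all $0 \leq j \leq r$. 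This says that the natural maps of complexes agree on $\tau_{\leq r}$; since $\tau_{> r}(-)$ only contributes to hypercohomology in degrees $> r$, the canonical triangle then forces $\alpha^{\rm FM}_r$ to be an isomorphism on $H^i(\sx, -)_{\Q}$ for every $i \leq r$.

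For the second factor I would invoke the ``easy comparison'' alluded to in the statement, namely the defining distinguished triangle of rational syntomic cohomology
$$R\Gamma(\sx, \sss(r))_{\Q} \longrightarrow F^r R\Gamma_{\rm cr}(\sx)_{\Q} \xrightarrow{\,1 - \phi/p^r\,} R\Gamma_{\rm cr}(\sx)_{\Q}.$$
On the generic fiber, log crystalline cohomology is identified with log de Rham cohomology $H^j_{\rm cr}(\sx)_{\Q} \cong H^j_{\rm dR}(\sx_{K,\tr})$, with $F^r$ corresponding to the Hodge filtration given by the stupid truncation $\sigma^{\geq r}\Omega^{\bullet}_{\sx_{K,\tr}}$. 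Since the latter complex is concentrated in degrees $\geq r$, one has $F^r H^j_{\rm dR}(\sx_{K,\tr}) = 0$ for $j < r$. The long exact sequence of the triangle at level $i$ reads
$$F^r H^{i-1}_{\rm dR}(\sx_{K,\tr}) \xrightarrow{1-\phi/p^r} H^{i-1}_{\rm dR}(\sx_{K,\tr}) \to H^i(\sx, \sss(r))_{\Q} \to F^r H^i_{\rm dR}(\sx_{K,\tr}) \xrightarrow{1-\phi/p^r} H^i_{\rm dR}(\sx_{K,\tr}).$$
For $i \leq r-1$ both $F^r$-terms vanish, and $\beta_{r,i}$ is an isomorphism; for $i = r$ only the left-hand $F^r H^{r-1}$ vanishes, so $\beta_{r,r}$ is injective with cokernel identified with $\ker\bigl(1 - \phi/p^r\colon F^r H^r_{\rm dR}(\sx_{K,\tr}) \to H^r_{\rm dR}(\sx_{K,\tr})\bigr)$. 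This space already contains the fixed vectors on the global holomorphic $r$-forms on $\sx_K$, which for $\dim \sx_K \geq 1$ and $r \geq 1$ accounts for the asserted ``very large'' cokernel.

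The main technical subtlety, in my view, is setting up the above fiber sequence cleanly in the generality of a quasi-compact formal semistable $\sx$ with horizontal divisor, and in particular transporting the Frobenius $\phi$ across the log Hyodo-Kato/de Rham identification so that the long exact sequence really takes the stated form. Once this compatibility is in place, the proof is just Hodge-theoretic vanishing combined with the rational version of Theorem~\ref{main0}.
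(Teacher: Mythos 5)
Your factorization of $\alpha_{r,i}$ through $H^i_{\synt}(\sx,r)_{\Q}$ and your treatment of the first factor $\alpha^{\rm FM}_r$ (rationalizing Theorem~\ref{main0} and truncating) are correct and match the paper's intent. However, the analysis of the second factor $\beta_{r,i}$ contains a genuine gap, and it is precisely where the arithmetic content of the argument lies. The fiber sequence you write down is the right one, but the claim ``$H^j_{\crr}(\sx)_{\Q}\cong H^j_{\rm dR}(\sx_{K,\tr})$ with $F^r$ going to the Hodge filtration'' is false: the crystalline cohomology carrying the Frobenius is absolute crystalline cohomology over $W(k)=\so_F$, locally computed by $\Omega\kr_{R^{\rm PD}_\varpi}$, which has an extra arithmetic variable $X_0$ and is much larger than de Rham cohomology of the generic fiber. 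Already for $\sx=\Spf\so_K$ one has $H^0_{\crr}(\so_K/\so_F)_{\Q}=F$ while $H^0_{\rm dR}(\Sp K)=K$, and $H^1_{\crr}(\so_K/\so_F)_{\Q}$ (the cokernel of $X_0\frac{d}{dX_0}$ on $r^{\rm PD}_\varpi[\tfrac1p]$) is enormous while $H^1_{\rm dR}(\Sp K)=0$. Feeding these into your long exact sequence shows in the same example that $H^1(F^r\Omega\kr_{r^{\rm PD}_\varpi})_{\Q}\neq 0$ for all $r\geq 1$, so the vanishing $H^j(F^rR\Gamma_{\crr})_{\Q}=0$ for $j<r$ that your argument needs also fails. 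The Hodge-theoretic vanishing you invoke controls only the \emph{quotient} $R\Gamma_{\crr}/F^r$, which by Lemma~\ref{noneedbeilinson} is indeed identified with $\sigma_{\leq r-1}\Omega\kr_R$ after inverting $p$; it says nothing about $F^r$ itself nor about the unfiltered term.

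The paper avoids this by splitting $\Syn(R,r)$ the other way: ${\rm Syn}(R,r)=[{\rm HK}(S,r)\to{\rm DR}(S,r)]$ with ${\rm HK}(S,r)=[\Omega\kr_S\to\Omega\kr_S]$ the fiber of $p^r-p\kr\phi$ on the \emph{unfiltered} complex, and ${\rm DR}(S,r)=\Omega\kr_S/F^r$. The key input is Proposition~\ref{added}(ii): $\tau_{\leq r-1}{\rm HK}(S,r)$ is $p^N$-acyclic. This is not a Hodge-theoretic vanishing; it is a genuinely Frobenius-theoretic statement, proved by passing to the left inverse $\psi$ of $\varphi$ and showing that $p^r\psi-p^i$ is invertible for $i\leq r-1$ (using Lemma~\ref{6saint} and the mechanism of Lemma~\ref{uphipsi}). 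Once ${\rm HK}$ is killed in low degrees, the long exact sequence for $[{\rm HK}\to{\rm DR}]$ identifies $\tau_{\leq r-1}\Syn$ with ${\rm DR}(S,r)[-1]$, and only \emph{then} do Lemma~\ref{noneedbeilinson} and Lemma~\ref{log-comp} translate ${\rm DR}(S,r)$ into $\sigma_{\leq r-1}\Omega\kr$ of the rigid generic fiber. For $i=r$, the large cokernel is explained not by a Frobenius fixed space but by Remark~\ref{avoidpanick}: $H^{r-1}({\rm DR}(S,r))_{\Q}$ is all $(r-1)$-forms modulo exact forms, strictly containing $H^{r-1}_{\rm dR}$ (closed forms modulo exact forms) whenever non-closed $(r-1)$-forms exist. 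So the ``technical subtlety'' you flag at the end is in fact the missing idea: there is no way to reduce the comparison to pure Hodge theory, and the Frobenius acyclicity of $\tau_{\leq r-1}{\rm HK}(S,r)$ is an indispensable input.
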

 
 Recall how one shows that the period map $\alpha^{\rm FM}_{r,n}$ from (\ref{period2}) is an isomorphism.  Under the stated assumptions one can do d\'evissage and reduce to $n=1$. 
Then one passes to the tamely ramified extension obtained by attaching the $p$'th root of unity $\zeta_p$. There both sides of the period map (\ref{period2}) are invariant under twisting by $t\in\A_{\crr}$ and $\zeta_p$, respectively, so one reduces to the case $r=i$. 
This is the Milnor case: both sides  compute  Milnor $K$-theory modulo $p$. To see this, one uses symbol maps from Milnor $K$-theory  to the groups on both sides (differential on the syntomic side and Galois on the \'etale side). Via these maps all groups can be filtered compatibly in a way similar to the filtration of the unit group of a local field.  Finally,  the quotients can be computed explicitly by symbols \cite{BK}, \cite{H1}, \cite{Ku}, \cite{Tsc} and they turn out to be isomorphic. This approach to the computation of $p$-adic nearby cycles goes back to the work of Bloch-Kato \cite{BK} who treated the case of good reduction and whose approach was later generalized to semistable reduction by Hyodo \cite{H1}.

\subsubsection{Galois cohomology of semistable affinoid algebras}
 Our proof is of very different nature: 
we construct another local (i.e., on affinoids of a special type, see below) period map, that we call $\alpha^{\laz}_r$. 
Modulo some $(\phi, \Gamma)$-modules theory reductions, it is a version of an integral 
 Lazard isomorphism between Lie algebra cohomology and continuous group cohomology.  We prove directly that it is
a quasi-isomorphism and coincides with Fontaine-Messing's map
up to constants as in Theorem~\ref{main0}.
The (hidden) key input is the purity theorem of Faltings~\cite{FH}, Kedlaya-Liu~\cite{KL}, and Scholze~\cite{Sch1}: it
shows up in the computation of Galois cohomology in terms of $(\varphi,\Gamma)$-modules~\cite{AI,KL}.

 More precisely, let $R$ be the $p$-adic completion of an \'etale algebra over 
 \begin{align*}
R_\Box :=
\O_F\{X^{\pm 1}_1,\cdots, X^{\pm 1}_a, X_{a+1},\cdots,X_{d+1}\}/(X_{d+1}X_{a+1}\cdots X_{a+b}-\varpi),
\end{align*}
 where $a,b$ are integers and $d\geq a+b$. 
We equip the associated formal schemes with the log-structure induced by the ``divisor at infinity'': 
$X_{a+b+1}\cdots X_d=0$ and the special fiber. These are  formal log-schemes with semistable reduction over $\so_K$. 

To compute the syntomic cohomology of $R$, we need to choose good crystalline coordinates for $R$, i.e., we need to write it as
a quotient of a log-smooth $\so_F$-algebra $R^+_{\varpi}$. The easiest way to do that is to add one variable. We start with $\so_K$. Take the algebra  $r^+_\varpi:=\O_F[[X_0]]$
equipped with the log-structure associated to $X_0$.  Sending $X_0$ to $\varpi$ induces
a surjective morphism $r_\varpi^+\to \O_K$.

Let now $R^+_{\varpi,\Box}$ be the completion
of $\O_F[X_0,X^{\pm 1}_1,\cdots, X^{\pm 1}_a, X_{a+1},\cdots,X_{d+1},\frac{X_0}{X_{a+1}\cdots X_{a+b}}]$
for the $(p,X_0)$-adic topology. We add $X_0$ to the log-structure induced from $R_{\Box}$.
Sending $X_0$ to $\varpi$ induces a surjective morphism
$R^+_{\varpi,\Box}\to R_{\Box}$ whose kernel is generated by
$P=P_\varpi(X_0)$.
This provides a closed embedding
of ${\rm Spf}\,R_\Box$ into a formal log-scheme $\Spf R^+_{\varpi,\Box}$ that is log-smooth over $\O_F$.
Let $R^+_\varpi$ be the unique \'etale lift of $R$ over $R^+_{\varpi,\Box}$ complete for the
$(p,P)$-adic topology (which is also the $(p,X_0)$-adic topology). We equip it with the log-structure induced from $R^+_{\varpi,\Box}$. 
Sending $X_0$ to $\varpi$ induces a surjective morphism
$R^+_{\varpi}\to R$, whose kernel is generated by $P=P_\varpi(X_0)$. We denote by $R^{\rm PD}_{\varpi}$ the $p$-adic PD-envelope of $R$ in $R^+_{\varpi}$. We endow it with Frobenius $\phi_{\rm Kum}$ induced by $X_i\to X_i^p$, $0\leq i\leq  d+1$. This is our PD-coordinate system of~$R$. 
 
 The syntomic cohomology of $R$ can then be computed by the complex 
 \begin{equation}
 \label{def2}
    {\rm Syn}(R,r):=\Cone(\xymatrix@C=1.6cm{F^r\Omega_{R^{\rm PD}_{\varpi}}\kr\ar[r]^-{p^r-p\kr\phi_{\rm Kum}}&\Omega_{R^{\rm PD}_{\varpi}}\kr})[-1],
 \end{equation}
 where $\Omega\kr_{R^{\rm PD}_{\varpi}}:=R^{\rm PD}_{\varpi}\otimes_{R^{+}_{\varpi}}\Omega\kr_{R^{+}_{\varpi}/\so_F}$:
we have $H^i_{\rm syn}(R,r)=H^i({\rm Syn}(R,r))$.

Now, let $\overline R$ be the ``maximal extension of $R$ unramified outside the divisor
$X_{a+b+1}\cdots X_d=0$ in characteristic~$0$ (i.e.,~after inverting $p$)''.
Let $G_R={\rm Gal}(\overline R/R)$.  Modulo the identification of $\alpha^{\rm FM}_r$ and
$\alpha^{\laz}_r$, claim (i) of Theorem~\ref{main0} is a consequence of the
following more precise statement that relates Galois cohomology of $G_R$ (and \'etale cohomology
of the associated rigid space) with values
in $\Z_p(r)$ and syntomic cohomology in degrees~$\leq r$. This is the first main result of our paper.
\begin{theorem}\label{main00}
If $K$ contains enough roots of unity then the maps 
\begin{align*}
\alpha_r^{\laz}: & \quad \tau_{\leq r}{\rm Syn}(R,r)\to \tau_{\leq r}{\rm R}\Gamma_{\rm cont}(G_R,\Z_p(r)),\\
\alpha_{r,n}^{\laz}: & \quad  \tau_{\leq r}{\rm Syn}(R,r)_n \to \tau_{\leq r}{\rm R}\Gamma_{\rm cont}(G_R,\Z/p^n(r))\to  \tau_{\leq r}\R\Gamma((\Sp R[1/p])_{\tr,\eet},\Z/p^n(r))
\end{align*}
are quasi-isomorphisms up to $p^{Nr}$, for a universal constant $N$.
\end{theorem}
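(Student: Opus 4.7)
The plan is to compute both sides through a common Koszul-type complex attached to a $(\varphi,\Gamma)$-module and then match the two maps out of it. On the Galois side, the purity theorem of Faltings, Kedlaya--Liu, and Scholze (together with Scholze's finiteness) identifies $\R\Gamma_{\cont}(G_R,\Z_p(r))$ with the cohomology of a $(\varphi,\Gamma)$-module $D(R,r)$ in the framework of Andreatta--Iovita. Choose the tower $R_\infty/R$ obtained by adjoining all $p$-power roots of the coordinates $X_0,\dots,X_d$ and of unity; the quotient $\Gamma_R=\Gal(R_\infty/R)$ is, up to a bounded factor, the semidirect product of a $\Z_p^{d+1}$ (Kummer generators $\gamma_0,\dots,\gamma_d$) with a $\Z_p^\times$ (cyclotomic generator $\tau$). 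Under the enough-roots-of-unity hypothesis, the continuous $\Gamma_R$-cohomology of $D(R,r)$ is computed by an explicit Koszul complex on the commuting operators $\gamma_i-1$ together with $\tau-\chi(\tau)^r$.

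On the syntomic side, ${\rm Syn}(R,r)$ from~(\ref{def2}) is by construction the cone on $p^r-p^{\bullet}\phi_{\rm Kum}$ applied to the filtered log-de Rham complex of $R^{\rm PD}_\varpi/\so_F$, which in the coordinates $X_0,\dots,X_d$ is itself a Koszul complex on the log-derivations $\partial_i=X_i\partial_{X_i}$. I would construct $\alpha_r^{\laz}$ by sending each log-differential $d\log X_i$ to $t\nabla_i$, where $\nabla_i\in\Lie\Gamma_R$ is the generator corresponding to $\gamma_i$ and $t\in\acris$ is Fontaine's period; a wedge of $k$ log-differentials is sent to the corresponding Koszul wedge twisted by $t^k$. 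The filtration $F^r$ on the left is then absorbed into the Tate-twist factor $t^r$ on the right, while the Frobenius factor $p^r-p^{\bullet}\phi_{\rm Kum}$ matches $\tau-\chi(\tau)^r$ via the identity $\phi(t)=pt$. An integral Lazard-style relation $\gamma_i-1=t\nabla_i\cdot u_i$, with $u_i$ a unit modulo a bounded power of $t$, then promotes this formal match into a quasi-isomorphism of Koszul complexes, up to a bounded power of $p$ in each degree.

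The main obstacle is uniform control of the exponent $N$. Denominators enter in three places: in identifying $\R\Gamma_{\cont}(H_R,-)$ with the underlying module of $D(R,r)$ via almost purity (a bounded error); in replacing $\gamma_i-1$ by $t\nabla_i$ integrally, which contributes roughly one factor of $t$ per cohomological degree and hence a $p^{O(r)}$ error; and in inverting $\phi-1$ or $\phi-p^r$ on $\acris$-style period rings. The truncation $\tau_{\leq r}$ is essential: beyond degree $r$ the Frobenius eigenspace condition on the syntomic side diverges from the $(\varphi,\Gamma)$-Koszul complex, so the match breaks down. A separate compatibility step would identify $\alpha_r^{\laz}$ with $\alpha_r^{\rm FM}$ up to $p^{Nr}$ by unwinding both maps on log-differentials and symbol classes; this, together with Theorem~\ref{main00}, is what yields Theorem~\ref{main0}(i).
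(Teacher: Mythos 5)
Your proposal captures the broad shape of the argument (translate ${\rm Syn}(R,r)$ into a Koszul complex, convert $\partial_j$ to the action of $\Gamma_R$ via $\nabla_j=t\partial_j$, absorb the filtration into the Tate twist by multiplying by powers of $t$, then identify with $(\varphi,\Gamma)$-module cohomology via almost purity and decompletion), but two essential steps are missing, and the structural description is off in a way that would derail the construction.

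\emph{Missing: the change of Frobenius.} The syntomic complex ${\rm Syn}(R,r)$ is built on $R_\varpi^{\rm PD}$ with the \emph{Kummer} Frobenius $\varphi_{\rm Kum}(X_j)=X_j^p$ for all $j$, including the arithmetic variable $X_0$. But the $(\varphi,\Gamma)$-module theory one wants to invoke on the Galois side runs on the \emph{cyclotomic} Frobenius $\varphi_{\rm cycl}$, which acts differently on $X_0$ (through $T\mapsto(1+T)^p-1$). These two Frobenii genuinely differ, and one cannot simply identify the complexes: the paper's Section~3.5 (Proposition~\ref{RtoA}) bridges them via the filtered Poincar\'e Lemma applied to the fat ring $E_R^{[u,v]}$ (the log-PD-envelope of the diagonal), showing ${\rm Kum}(R_\varpi^{[u,v]},r)\simeq{\rm Cycl}(R_\varpi^{[u,v]},r)$. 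Without this step, the map ``$d\log X_i\mapsto t\nabla_i$'' is comparing complexes with incompatible Frobenius structures, and the cones do not match. Moreover, because $\varphi_{\rm cycl}$ is not defined on $R_\varpi^{\rm PD}$ (it does not preserve the disk of convergence), one must first change domains ${\rm PD}\to[u]\to[u,v]$, which is handled in the paper by the $\psi$-operator and acyclicity of the $\psi=0$ part (Lemmas~\ref{uphipsi} and~\ref{16saint}); this is the step that forces the truncation, not the Frobenius-eigenspace mismatch you invoke.

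\emph{Structural error: the shape of $\Gamma_R$ and the role of the Frobenius.} In the paper's cyclotomic setup, $\Gamma_R$ fits in $1\to\Gamma'_R\to\Gamma_R\to\Gamma_K\to 1$ with $\Gamma'_R\simeq\Z_p^d$ (one geometric generator per $X_1,\dots,X_d$) and $\Gamma_K\simeq 1+p^{i(K)}\Z_p$ (a single cyclotomic generator $\gamma_0$), for a total of $d+1$ generators matching the $d+1$ log-derivations $\partial_0,\dots,\partial_d$. There is no extra ``Kummer'' generator for $X_0$: the cyclotomic tower does not adjoin $p$-power roots of $\varpi$. More importantly, the Frobenius factor $p^r-p^\bullet\phi$ does \emph{not} match the cyclotomic operator $\gamma_0-\chi(\gamma_0)^r$. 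In the target complex ${\rm Kos}(\varphi,\Gamma_R,\A_R(r))$ there are two \emph{separate} mapping fibers: a Koszul complex over the $d+1$ operators $\gamma_j-1$, and a cone on $1-\varphi$. The Frobenius and the $\Gamma_K$-action are independent; identifying them via $\phi(t)=pt$ would collapse two distinct structures and the resulting complex would have the wrong amplitude.
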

This statement is more precise than that of Theorem \ref{main0} because we do not require \'etale localization. 
Claim (ii) of Theorem \ref{main0}  follows by a simple descent argument from claim (i). The constants are very crude and no doubt can be improved upon. 
\begin{remark}
The same descent argument would allow to remove the condition ``$K$ contains enough roots of unity'' in
Theorem~\ref{main00}, at the cost of introducing constants depending on $K$.  It seems likely that one could
make the constants depend only on the valuation of the different of $K$ and not on $K$ itself.

In fact, it does not seem unreasonnable to think that a statement analogous to
Corollary~\ref{dret}, describing \'etale cohomology in terms of differential forms,
 should be valid for a general affinoid algebra, smooth
in characteristic~$0$, with constants depending on the ``defect of smoothness in mixed
characteristic'' (i.e.,~the $p$-adic valuation of suitable Jacobians).
The point is that one can use the purity theorems to pass to a finite
cover with very small defect of smoothness where one could try to
apply the methods of this paper (suitably modified).

We hope to come back to these questions in a future work.
\end{remark}

\subsubsection{Period isomorphisms in the semistable case}
  Theorem \ref{main0} holds also for base changes of semistable schemes and implies that we have a quasi-isomorphism (up to $p^{Nr}$ for a universal constant $N$)
  \begin{equation}
\label{maineq}
\alpha^{\rm FM}_{r,n}: \quad \sh^i(\sss_n(r)_{X_{\so_{\overline K}}}) \rightarrow \overline i^*R^i\overline j_*{\mathbf Z}/p^n(r)'_{X_{\overline K,\tr}},\quad i\leq r,
\end{equation}
where $\overline i :X_{0,\overline k}\hookrightarrow X_{\so_{\overline K}}$, $\overline j: X_{\overline K,\tr}\hookrightarrow X_{\so_{\overline K}}$. 
Hence an isomorphism
 \begin{equation}
 \label{etale}
 \alpha^{\rm FM}_{r}: \quad H^i_{\synt}(X_{\so_{\ovk}},r)_{\Q}\stackrel{\sim}{\to} H^i(X_{\ovk,\tr},\Q_p(r)),\quad i\leq r.
 \end{equation}
 This recovers Tsuji's result \cite[Theorem  3.3.4]{Ts} that he proves by similar techniques as his results over $K$. Namely, in the case when $\zeta_{p^n}\in K$ and $r=i$,  Tsuji notices that twisting the nearby cycles by $\zeta_{p^n}$  allows  to perform  d\'evissage and to reduce to $n=1$, where 
one again can use explicit computations by symbols. This allows him to prove Theorem \ref{main0} in this case  \cite[Theorem 3.3.2]{Ts} up to $p^{N}$ with a  constant $N$ that depends only on $p$ and $r$. To pass to all $i\leq r$ he needs to show that twisting by $t$ does not change the syntomic cohomology sheaves. This he is able   to do over $\ovk$ \cite[Theorem 2.3.2]{Ts} and up to a constant $N$ that depends only on $p$, $r$, and $i$. 

   The isomorphism (\ref{etale}) is used traditionally to prove $p$-adic comparison theorems by the syntomic method \cite {FM}, \cite{K1}, \cite{Ts}, \cite{Tso}, \cite{YY}. Recall how the argument goes in the case of a trivial divisor at infinity, i.e., 
when $X_{\tr}=X_K$. One composes the map $\alpha^{\rm FM}_r$ with the natural map 
$$H^i(X_{\ovk},\Q_p(r))\to H^i_{\hk}(X)\otimes_{F}\bst \{r\},
$$
where $H^i_{\hk}(X)$ is the Hyodo-Kato cohomology of $X$,
 to obtain the period map
$$\alpha:\quad H^i(X_{\ovk},\Q_p)\otimes_{\Q_p}\bst \simeq H^i_{\hk}(X)\otimes_{F}\bst .
$$
This map is shown to be compatible with Poincar\'e duality. Hence it is an isomorphism. 

  We have realized that we can prove that the period map $\alpha$ is an isomorphism without evoking Poincar\'e duality by 
 techniques supplied by the theory 
of finite dimensional Banach Spaces \cite{CF}. This reproves the classical comparison theorem for semistable schemes via a modified syntomic method (including the case of non-trivial divisor at infinity treated in Tsuji \cite{Tso} and Yamashita-Yasuda \cite{YY}). Recall that the semistable comparison theorem for schemes was proved also by different methods in \cite{Fa0}, \cite{Fa}, \cite{Ni0}, \cite{Ni}, \cite{Bh}, \cite{BE2} (see \cite{NE} for the proof that most of these methods yield the same period map). They all use Poincar\'e duality. The only proof of a general comparison theorem that does not use Poincar\'e duality is the proof of the de Rham conjecture for rigid analytic spaces by Scholze \cite{Sch}.

   The fact that we do not need Poincar\'e duality anymore allows us to combine the comparison isomorphism (\ref{etale}) with finiteness of \'etale cohomology of proper rigid analytic spaces proved by Scholze \cite{Sch} to prove
 the second main result of this paper -- a comparison theorem for semistable formal schemes.
\begin{corollary}
\label{comp20}{\rm (Semistable conjecture)}
 Let $\sx$ be a proper semistable formal scheme  over $\so_K$. 
There exists a natural $\bst$-linear Galois equivariant period isomorphism
$$
\alpha:\quad H^i(\sx_{\ovk,\tr},\Q_p)\otimes_{\Q_p}\bst \simeq H^i_{\hk}(\sx)\otimes_{F}\bst ,
$$
that preserves the Frobenius and the monodromy operators, and induces a filtered  isomorphism 
$$
\alpha:\quad H^i(\sx_{\ovk,\tr},\Q_p)\otimes_{\Q_p}\bst
 \simeq H^i_{\dr}(\sx_{K,\tr})\otimes_{K}\bdr.
$$
 \end{corollary}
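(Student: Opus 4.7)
The plan is to build the period isomorphism from the local syntomic-to-\'etale comparison (\ref{etale}) together with the canonical map from syntomic to Hyodo-Kato cohomology, and to prove bijectivity by combining Scholze's finiteness theorem for \'etale cohomology of proper rigid analytic spaces with the theory of finite-dimensional Banach Spaces of Colmez-Fontaine \cite{CF}; this circumvents the traditional appeal to Poincar\'e duality, which in any event is not directly available in the formal-scheme setting.

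For the construction of $\alpha$, I would proceed as follows. For $r\geq i$, the base-changed comparison (\ref{etale}) produces, rationally, an isomorphism $\alpha^{\rm FM}_r:H^i_{\synt}(\sx_{\so_{\ovk}},r)_{\Q}\stackrel{\sim}{\to} H^i(\sx_{\ovk,\tr},\Q_p(r))$. The definition (\ref{def2}) of syntomic cohomology as a filtered Frobenius eigenpart of log-crystalline cohomology supplies a canonical map $H^i_{\synt}(\sx_{\so_{\ovk}},r)_{\Q}\to H^i_{\hk}(\sx)\otimes_F\bst\{r\}$. Composing the inverse of $\alpha^{\rm FM}_r$ with this map, letting $r$ grow, and untwisting by the Tate twist (inverting $t$), one obtains the candidate
$$\alpha:\quad H^i(\sx_{\ovk,\tr},\Q_p)\otimes_{\Q_p}\bst\to H^i_{\hk}(\sx)\otimes_F\bst,$$
which by construction is $\bst$-linear, $G_K$-equivariant, and compatible with $\phi$ and $N$.

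The main obstacle is proving that $\alpha$ is bijective. By Scholze's theorem, $V:=H^i(\sx_{\ovk,\tr},\Q_p)$ is finite-dimensional over $\Q_p$; Hyodo-Kato cohomology $D:=H^i_{\hk}(\sx)$ is finite-dimensional over $F$ and carries a filtration, via the Hyodo-Kato comparison with $H^i_{\dr}(\sx_{K,\tr})$, making it a filtered $(\phi,N)$-module. Both sides of $\alpha$ can then be regarded as objects of the category of effective finite-dimensional Banach Spaces \cite{CF}; their Banach-Colmez dimensions are, respectively, $\dim_{\Q_p}V$ (via the fundamental exact sequence for $\bst$) and the Newton-Hodge dimension of $D$. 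One shows the two dimensions agree---locally reducing via (\ref{etale}) and the Hyodo-Kato isomorphism to known de Rham information, which in particular forces admissibility of $D$---and then invokes the general fact that a $\bst$-linear, $\phi$- and $N$-equivariant morphism of finite-dimensional Banach Spaces of equal dimension that is non-degenerate in one direction must be an isomorphism. Tensoring $\alpha$ with $\bdr$ over $\bst$, using $D\otimes_F K\simeq H^i_{\dr}(\sx_{K,\tr})$, then yields the second, filtered isomorphism; compatibility of filtrations is inherited from the filtration built into the syntomic side (\ref{def2}). The delicate point throughout is the dimension coincidence and the verification that $\alpha$ is a morphism in the Banach-Colmez category; this is precisely where the theory of \cite{CF} replaces Poincar\'e duality.
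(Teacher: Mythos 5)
Your high-level strategy matches the paper's: use the comparison isomorphism (\ref{etale}) (Corollary~\ref{impl2}), Scholze's finiteness of étale cohomology of proper rigid spaces, and the Banach Space theory of~\cite{CF} in place of Poincar\'e duality. But the mechanism you describe for proving bijectivity does not work as stated, and the ingredient that makes the paper's argument go through is missing.

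The spaces $H^i(\sx_{\ovk,\tr},\Q_p)\otimes_{\Q_p}\bst$ and $H^i_{\hk}(\sx)\otimes_F\bst$ are \emph{not} finite-dimensional Banach Spaces in the sense of~\cite{CB,CF}: $\bst$ contains $\log[\tilde p]$ and is far from being an extension of a finite $C$-vector space by a finite $\Q_p$-vector space. So there is no way to ``regard both sides of $\alpha$ as effective finite-dimensional Banach Spaces'' and compare Dimensions, and the ``general fact'' you invoke — that a $\bst$-linear $(\phi,N)$-equivariant map of finite-dimensional Banach Spaces of equal Dimension which is non-degenerate in one direction is an isomorphism — is neither applicable nor, in that generality, true. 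The paper never puts a Banach-Space structure on $\alpha$ itself.

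What the paper does instead, and what your sketch does not supply, is the mapping-fiber presentation (\ref{presentation}) of geometric syntomic cohomology,
\[
\R\Gamma_{\synt}(\sx_{\so_{\ovk}},r)_{\Q}\simeq
\bigl[\,[\R\Gamma_{\hk}(\sx)\otimes_F\bstp]^{\phi=p^r,\,N=0}\to(\R\Gamma_{\dr}(\sx_K)\otimes_K\bdrp)/F^r\,\bigr],
\]
which in the formal-scheme setting requires a nontrivial argument with Beilinson's derived de Rham complex ($\kappa_r$, $\gamma_r$, and the compatibility with $\iota_{\crr}$ and $\iota_{\dr}$). The objects that \emph{are} finite-dimensional Banach Spaces are the two terms $X^r_{\rm st}(D^i)=(H^i_{\hk}(\sx)\otimes_F\bstp)^{\phi=p^r,N=0}$ and $X^r_{\rm dR}(D^i)=(H^i_{\dr}(\sx_K)\otimes_K\bdrp)/F^r$, and the engine of the proof is the long exact sequence relating them to $H^i(r):=H^i_{\synt}(\sx_{\so_{\ovk}},r)_{\Q}\simeq H^i(\sx_{\ovk,\tr},\Q_p(r))$. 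Feeding the finiteness of the $H^i(r)$ into Proposition~\ref{VS2} (a global Dimension count on that long exact sequence, together with the ``successive extension of $\mathbb V^1$'' structure of $X^r_{\rm dR}$, cf.\ Corollary~\ref{BS2}) yields simultaneously that each $D^i$ is weakly admissible, that the sequence splits into short exact sequences for $i<r$, and that $H^i(r)=\vst(D^i)$ — from which the two displayed isomorphisms follow by the standard formalism of $\vst$. Your phrase ``locally reducing via (\ref{etale}) to known de Rham information, which forces admissibility'' does not constitute an argument: weak admissibility is a global condition and is precisely what the Banach-Space count establishes; it cannot be read off affine-locally or from the Hodge-de Rham spectral sequence alone.
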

\subsection{Sketch of the proofs  of the main results}We will now sketch the proofs of Theorem \ref{main00} and Corollary \ref{comp20}. 
\subsubsection{Local computations}
 We will start with Theorem \ref{main00}. 
If $v>u>0$, let 
$r_\varpi^{[u]}$ (resp.~$r_\varpi^{[u,v]}$) be the ring
of analytic functions over $F$ convergent on the  disk $v_p(X_0)\geq u/e$ (resp.~the  annulus  $v/e\geq v_p(X_0)\geq u/e$), where $e=[K:F]$,
and let 
$R_\varpi^{[u]}=r_\varpi^{[u]}\widehat\otimes_{r_\varpi^+}R_\varpi^+$ (resp.~$R_\varpi^{[u,v]}=r_\varpi^{[u,v]}\widehat\otimes_{r_\varpi^+}R_\varpi^+$).
Our ring $R^{\rm PD}_{\varpi}$ is very close to $R_\varpi^{[u]}$, $u=\frac{1}{p-1}$.

Set $u=\frac{p-1}{p}$ and $v=p-1$ in what follows (for $p=2$, one has to modify slightly the arguments and take $u=\frac{3}{4}$, $v=\frac{3}{2}$,
but we will ignore this for the introduction).  To define the period map
$$
\alpha_r^{\laz}:  \quad \tau_{\leq r}{\rm Syn}(R,r)\to \tau_{\leq r}{\rm R}\Gamma_{\rm cont}(G_R,\Z_p(r)),
$$
  using $(\varphi,\partial)$-modules techniques,
we  produce a string of ``quasi-isomorphisms'' (a {\it ``quasi-isomorphism''} is a map of complexes
whose associated map on cohomology has kernels and cokernels killed by $p^{Cr}$ for
some absolute constant~$C$). We start with quasi-isomorphisms

\begin{equation}
\label{qis1}
{\rm Syn}(R,r)\simeq{\rm Kum}(R_\varpi^{[u]},r)
\stackrel{\tau_{\leq r}}{\simeq}{\rm Kum}(R_\varpi^{[u,v]},r),
\end{equation}
where the Kummer complexes ${\rm Kum}(\cdot,r)$ are defined by formulas analogous to  (\ref{def2}).
Here we are forced to truncate the morphism   ${\rm Kum}(R_\varpi^{[u]},r)
\to {\rm Kum}(R_\varpi^{[u,v]},r)$ because it is a quasi-isomorphism up to  too large constants in degrees~$>r$. The second quasi-isomorphism in (\ref{qis1}) is proved using the $\psi$ operator  -- left inverse to $\varphi$ -- and acyclicity of the $\psi=0$ eigencomplexes. 

   We called the complexes in (\ref{qis1}) Kummer because they are related to the Kummer extension
$K(\varpi^{1/p^\infty})$ of $K$.
Let us explain what we mean by that.
Let $\E_{\overline R}^+$ be the  tilt of $\overline R$ and set $\A_{\overline R}^+=W(\E_{\overline R}^+)$. 
Choose, inside $\overline{R}$, elements $X_i^{p^{-n}}$, for
$i=1,\dots,d$ and $n\in\N$, satisfying the obvious relations
(i.e. $X_i^{p^{-0}}=X_i$ and $(X_i^{p^{-(n+1)}})^p=X_i^{p^{-n}}$ if $n\geq 0$).
If $i=1,\dots,d$, let $x_i=(X_i,X_i^{1/p},\dots)\in \E^+_{\overline{R}}$.

Sending $X_0$ to $[\varpi^\flat]$, where $\varpi^{\flat}$ is a sequence of $p$'th roots of $\varpi$,  and $X_i$ to $[x_i]$, if $i=1,\dots,d$,
induces an embedding $\iota_{\rm Kum}:R_{\varpi}^+\hookrightarrow 
\A_{\overline{R}}^+$ which commutes with Frobenius  $\varphi$
and is compatible with filtrations (with filtration on $\A_{\overline{R}}^+$ by powers of the
Kernel of the natural map $\theta:\A_{\overline{R}}^+\to \widehat{\overline{R}}$).  
By continuity, this extends to
embeddings 
$$
\iota_{\rm Kum}:R_\varpi^{[u]}\hookrightarrow \A^{[u]}_{\overline R},\quad
\iota_{\rm Kum}:R_\varpi^{[u,v]}\hookrightarrow \A^{[u,v]}_{\overline R},$$
which commute with Frobenius and filtration. 

  But, {\it if $K$ contains enough roots of unity}, the ring $R_\varpi^{[u,v]}$ can also be embedded into period rings via a cyclotomic embedding
(i.e.~using the cyclotomic extension of $K$ instead of its Kummer extension).
 This will however change the Kummer Frobenius into the cyclotomic Frobenius $\varphi_{\rm cycl}$. 
Let us sketch how this is done. 
  Let $i(K)$ be the largest integer $i$ such that $K$ contains $\zeta_{p^i}$ and, if $n\in\N$, let $K_n=K(\zeta_{p^{n+i(K)}})$.
The assumption that $K$ contains enough roots of unity implies (thanks to the field of norms theory and
extra work~\cite{CF}) the existence of $\pi_K\in\A_{\overline K}$, fixed
by ${\rm Gal}(\overline K/K_\infty)$, such that $\varphi(\pi_K)=f(\pi_K)$, with
$f(X_0)$ analytic and bounded on the annulus $0<v_p(X_0)\leq v/e$, and such that
modulo $(p,[p^\flat]^{1/p})$ we have $\overline \pi_K=(\varpi_{K_n})_{n\in\N}\in\E^+_{\overline K}$ -- a tower of uniformizers of the fields $K_n$.
We can embed $r_\varpi^+$ into $\A_{\overline K}$, sending $X_0$ to $\pi_K$ and this extends
to the completion $r_\varpi$ of $r_\varpi^+[X_0^{-1}]$ for the $p$-adic topology.  The image\footnote{This is the image 
by $\varphi^{-i(K)}$ of the usual
$\A_K$ from the theory of $(\varphi,\Gamma)$-modules.}  $\A_K$ is stable under the action
of $\varphi$, hence $r_\varpi$ inherits a Frobenius that we note $\varphi_{\rm cycl}$.
This Frobenius does not preserve $r_\varpi^+$, 
but we have $\varphi_{\rm cycl}(X_0)\in r_{\varpi}^{(0,v]+}$ - the ring of analytic functions over $F$ with integral values on the annulus $0 < v_p(X_0)\leq v/e$.

   For a general $R$, first we extend $\varphi_{\rm cycl}$ to a Frobenius on $R_{\varpi}$ by setting 
   $$
   \varphi_{\rm cycl}(X_i)=X_i^p, \quad 1\leq i\leq d; \quad \quad \varphi_{\rm cycl}(\tfrac{X_0}{X_{a+1}\cdots X_{a+b}})=(\tfrac{X_0}{X_{a+1}\cdots X_{a+b}})^p\tfrac{\varphi_{\rm cycl}(X_0)}{X_0^p}
   $$
 Then, for $n\in\N$,
we consider $R_{n}$  --  the subalgebra of $\overline{R}$
generated by $R$
and $\O_{K_n}$, $X_i^{p^{-n}}$ for $i=1,\dots,d$,
and $\frac{\varpi_{K_n}}{(X_{a+1}\cdots X_{a+b})^{p^{-n}}}$.
Set $R_{\infty}=\cup_{n\in\N}R_{n}$.
The ring $R_{\infty}[\frac{1}{p}]$ is a Galois extension of
$R[\frac{1}{p}]$, with Galois group
$\Gamma_R$ which is the semi-direct product
$$1\to\Gamma'_R\to\Gamma_R\to\Gamma_K\to 1,$$
where 
\begin{align*}
\Gamma'_R={\rm Gal}(R_\infty[\tfrac{1}{p}]/K_\infty\cdot R[\tfrac{1}{p}])\simeq\Z_p^d,\quad 
\Gamma_K={\rm Gal}(K_\infty/K)\simeq 1+p^{i(K)}\Z_p,
\end{align*}
and $a\in 1+p^{i(K)}\Z_p$
acts on $\Z_p^d$ by multiplication by~$a$.

We define the cyclotomic embedding\footnote{Here, and everywhere in the paper,
``deco'' stands for ``decoration'', and is one of PD, $[u]$, $[u,v]$, $(0,v]+$, \dots}
 $\iota_{\rm cycl}:
R_\varpi^{\rm deco}\to\A_{\overline R}^{\rm deco}$ using the embedding $\iota_{\rm cycl}:
r_\varpi^{\rm deco}\to\A_{\overline K}^{\rm deco}$ and 
sending $X_j$ to $[x_j]$ if $1\leq j\leq d$. It is compatible with Frobenius and with filtration. 
We denote by $\A_R^{\rm deco}$  the image
of $R_\varpi^{\rm deco}$ 
by $\iota_{\rm cycl}$.
Then, the rings
$\A_R, \A_R^{[u,v]}, \A_R^{(0,v]+}$ are stable
by $G_R$ which acts through $\Gamma_R$. 

  Coming back to cohomology, using standard crystalline techniques, we show that change of Frobenius does not affect syntomic cohomology. That is,  we produce quasi-isomorphisms
$${\rm Kum}(R_\varpi^{[u,v]},r)\simeq{\rm Syn}((R_\varpi^{[u,v]}\widehat{\otimes} R_\varpi^{[u,v]})^{\rm PD},
\varphi_{\rm Kum}\otimes\varphi_{\rm cycl},r)\simeq {\rm Cycl}(R_\varpi^{[u,v]},r),$$
where the last complex is defined as in (\ref{def2}) using the ring $R_\varpi^{[u,v]}$ and the cyclotomic Frobenius $\varphi_{\rm cycl}$.

    Next, choosing a basis of $\Omega^1$ and using the isomorphism $R_\varpi^{[u,v]}\simeq \A_R^{[u,v]}$,
we change ${\rm Cycl}(R_\varpi^{[u,v]},r)$ into a Koszul complex:
$${\rm Cycl}(R_\varpi^{[u,v]},r)\simeq{\rm Kos}(\varphi,\partial,F^r\A_R^{[u,v]}).$$
Then, multiplying by suitable powers of $t$, we can get rid of the filtration
(in degrees~$\leq r$; this is the only place where the truncation is absolutely necessary),
which changes the derivatives into the action of the Lie algebra of $\Gamma_R$, to obtain:
$$\tau_{\leq r}{\rm Kos}(\varphi,\partial,F^r\A_R^{[u,v]})\simeq 
\tau_{\leq r}{\rm Kos}(\varphi,\Lie\Gamma_R,\A_R^{[u,v]}(r)).$$
Standard analytic arguments \`a la Lazard change this into a Koszul complex for the group:
$${\rm Kos}(\varphi,\Lie\Gamma_R,\A_R^{[u,v]}(r))\simeq {\rm Kos}(\varphi,\Gamma_R,\A_R^{[u,v]}(r)).$$
Then, using $(\varphi,\Gamma)$-module techniques, we get  ``quasi-isomorphisms''
$${\rm Kos}(\varphi,\Gamma_R,\A_R^{[u,v]}(r))
\simeq {\rm Kos}(\varphi,\Gamma_R,\A_R^{(0,v]+}(r))
\simeq {\rm Kos}(\varphi,\Gamma_R,\A_R(r)).$$
Here we use the operator $\psi_{\rm cycl}$ -- the left inverse to $\phi_{\rm cycl}$ and argue by acyclicity of the $\psi=0$ eigencomplex.

  Finally, general nonsense about Koszul complexes gives us a quasi-isomorphism
$${\rm Kos}(\varphi,\Gamma_R,\A_R(r))\simeq[\xymatrix{{\rm R}\Gamma_{\rm cont}(\Gamma_R,\A_R(r))
\ar[r]^-{1-\varphi}&{\rm R}\Gamma_{\rm cont}(\Gamma_R,\A_R(r))}];$$
and general relative $(\varphi,\Gamma)$-module theory
gives quasi-isomorphisms
\begin{align*}
 & [\xymatrix{{\rm R}\Gamma_{\rm cont}(\Gamma_R,\A_R(r))
\ar[r]^-{1-\varphi}&{\rm R}\Gamma_{\rm cont}(\Gamma_R,\A_R(r))}]\simeq \\
 & [\xymatrix{{\rm R}\Gamma_{\rm cont}(G_R,\A_{\overline R}(r))
\ar[r]^-{1-\varphi}&{\rm R}\Gamma_{\rm cont}(G_R,\A_{\overline R}(r))}]\simeq 
  \R\Gamma_{\rm cont}(G_R,\Z_p(r)).
\end{align*}
The first quasi-isomorphism  is proved by the almost \'etale and decompletion techniques developed in the relative setting by Andreatta-Iovita \cite{AI} and by Kedlaya-Liu \cite{KL}; the second one follows from the relative Artin-Schreier theory
(i.e.~the exact sequence $0\to \Z_p\to \xymatrix{\A_{\overline R}\ar[r]^-{1-\varphi}&\A_{\overline R}}\to 0$). This finishes the definition of the quasi-isomorphism $\alpha^{\laz}_r$ from Theorem \ref{main00}.

  Since, by the $p$-adic $K(\pi,1)$-Lemma \cite{Sch} and by Abkhyankar's Lemma, we have 
  $$
   \R\Gamma_{\rm cont}(G_R,\Z/p^n(r))  \simeq \R\Gamma((\Sp R[1/p])_{\tr,\eet},\Z/p^n(r)), 
   $$ the above sequence of quasi-isomorphisms constructs a quasi-isomorphism
  $$
 \alpha_{r,n}^{\laz}:\quad  \tau_{\leq r}{\rm Syn}(R,r)_n\simeq  \tau_{\leq r}\R\Gamma((\Sp R[1/p])_{\tr,\eet},\Z/p^n(r)).
    $$

\subsubsection{Finite dimensional Banach Spaces and semi-stable conjecture} To prove Corollary \ref{comp20},
first we show  (this is a simplification for the sake of the introduction)   
  that we have the long exact sequence
\begin{align}
\label{long}
\to  (H^{i-1}_{\dr}(\sx_{K,\tr})\otimes_K\bdrp)/F^r & \to H^i_{\synt}(\sx_{\so_{\ovk}},r)_{\Q}\to  (H^i_{\hk}(\sx)\otimes_{F}\bstp)^{\phi=p^r,N=0} \\
  &     \to (H^i_{\dr}(\sx_{K,\tr})\otimes_K\bdrp)/F^r \to \notag
\end{align}
For $i < r$, the above long exact sequence yields short exact sequences
$$
0\to H^i_{\synt}(\sx_{\so_{\ovk}},r)_{\Q}\to  (H^i_{\hk}(\sx)\otimes_{F}\bstp)^{\phi=p^r,N=0}\to     (H^i_{\dr}(\sx_{K,\tr})\otimes_K\bdrp)/F^r)\to 0
$$
To prove this, we observe that 
$f_i:(H^i_{\hk}(\sx)\otimes_{F}\bstp)^{\phi=p^r,N=0}\to     (H^i_{\dr}(\sx_{K,\tr})\otimes_K\bdrp)/F^r)$ is the evaluation on $C=\overline K^{\wedge}$ of a map
of finite dimensional Banach Spaces~\cite{CB}.
Recall that these Spaces are to be thought of
as finite dimensional $C$-vector spaces up to finite dimensional
$\Q_p$-vector spaces, and come equipped with a Dimension, which is a pair of numbers $(a,b)$, $a\in \N$, $b\in \Z$, where $a$ is the $C$-dimension, 
and $b$ is the $\Q_p$-dimension. Dimension is additive on short exact sequences. 

But, $H^i_{\synt}(\sx_{\so_{\ovk}},r)_{\Q}$, $i\leq r$, 
is a  finite dimensional $\Q_p$-vector space:
we have the quasi-isomorphism (\ref{etale}) with \'etale cohomology and Scholze proved finite dimensionality of the latter \cite{Sch}. 
This implies that the cokernel of $f_i$, viewed as a map of Banach Spaces,
 is of Dimension $(0,d_i)$.
On the other hand, the Space $(H^i_{\dr}(\sx_{K,\tr})\otimes_K\bdrp)/F^r$ is a successive extension of $C$-vector spaces. 
The theory of Banach Spaces implies that the map $(H^{i-1}_{\dr}(\sx_{K,\tr})\otimes_K\bdrp)/F^r\to 
{\rm Coker}\,f_i$ is zero, hence ${\rm Coker}\,f_i=0$,
as wanted. 

 Now, since we have the Hyodo-Kato isomorphism $$H^i_{\hk}(\sx)\otimes_{K_0}K\simeq H^i_{\dr}(\sx_{K,\tr}),$$ the pair $(H^i_{\hk}(\sx), H^i_{\dr}(\sx_{K,\tr}))$ 
is a $(\varphi,N)$-filtered module (in the sense of Fontaine). The above short exact 
sequence and a ``weight" argument shows that ${\vst}(H^i_{\hk}(\sx), H^i_{\dr}(\sx_{K,\tr}))\simeq H^i(X_{\ovk,\tr},\Q_p)$. Here ${\vst}(\cdot)$ is Fontaine's functor from filtered Frobenius modules to Galois representations. 
 The short exact sequence and dimension count give also that $t_N(H^i_{\hk}(\sx))=t_H(H^i_{\dr}(\sx_{K,\tr}))$, where $t_N(D)=v_p(\det \phi)$ and $t_H(D)=\sum_{i\geq 0}i\dim_K( F^iD/F^{i+1}D)$. 
 The theory of Banach Spaces and finite dimensionality of ${\vst}(H^i_{\hk}(\sx), H^i_{\dr}(\sx_{K,\tr}))$ imply now that the pair $(H^i_{\hk}(\sx), H^i_{\dr}(\sx_{K,\tr}))$ is weakly admissible and this proves Corollary \ref{comp20}. 
\begin{acknowledgments}
Parts of this article were written during our visit to MSRI for the program ``New geometric
Methods in Number Theory and Automorphic Forms".
 We would like to thank the Institute for  its support and hospitality. 
 
 We thank Peter Scholze for pointing out an inaccuracy  in one of our  arguments. 
We thank Piotr Achinger for helping us with the proof of Lemma \ref{Abh}, and 
Fabrizio Andreatta, Kiran Kedlaya, Ruochuan Liu, Shanwen Wang 
for helpful conversations related to the subject of this paper. 
Last but not least, we thank the referee for a careful reading and 
her/his demands concerning the exposition. 
\end{acknowledgments}

\subsubsection{Notation and Conventions}
\begin{definition}
\label{1saint}
Let $N\in {\mathbf N}$. For a morphism $f: M\to M^{\prime}$ of ${\mathbf Z}_p$-modules, we say that $f$ is 
{\it $p^N$-injective} (resp. {\it $p^N$-surjective}) if its kernel (resp. its cockernel) is annihilated by $p^N$ 
and we say that $f$ is {\it $p^N$-isomorphism} if it is $p^N$-injective and $p^N$-surjective. 
We define in the same way the notion of {\it $p^N$-exact sequence} or $p^N$-acyclic complex 
(complex whose cohomology groups are annihilated by $p^N$) as well as the notion of {\it $p^N$-quasi-isomorphism}
 (map in the derived category that induces a $p^N$-isomorphism on cohomology). 
\end{definition}

We will use a shorthand for certain homotopy limits. Namely,  if $f:C\to C'$ is a map  in the dg derived category of an abelian category, we set
$$[\xymatrix{C\ar[r]^f&C'}]:=\holim(C\to C^{\prime}\leftarrow 0).$$ 
We also set
$$
\left[\begin{aligned}
\xymatrix{C_1\ar[d]\ar[r]^f & C_2\ar[d]\\
C_3\ar[r]^g & C_4
}\end{aligned}\right]
:=[[C_1\stackrel{f}{\to} C_2]\to [C_3\stackrel{g}{\to} C_4]],
$$ 
where the diagram in the brackets  is a commutative diagram in the dg derived category. If this diagram is strictly commutative this  simply amounts to taking the total complex of the associated double complex. In this paper this is the case everywhere but in Section 5.

\section{Formal log-schemes and  period rings}
This is a preliminary section introducing all the rings that we are going
to use in the next three sections.  Most of its content consists
of variations on standard techniques from the theory of $(\varphi,\Gamma)$-modules~\cite{Hr,CC,CF,AA,AB,AI,KL}.

\subsection{The implicit function theorem}
Let $\lambda:\Lambda_1\to\Lambda_2$ be a continuous morphism
of topological rings.
Let $\Lambda'_1=\Lambda_1\{Z\}/(Q)$, where\footnote{If $\Lambda$ is a topological
ring, and $Z=(Z_1,\dots,Z_s)$, we let $\Lambda\{Z\}$ design the ring of
power series $\sum_{{\bf k}\in\N^s}a_{\bf k}Z^{\bf k}$, where $a_{\bf k}\in\Lambda$
goes to $0$ when ${\bf k}\to\infty$.}
$Z=(Z_1,\dots,Z_s)$ and $Q=(Q_1,\dots,Q_s)$.
We would like to extend $\lambda$ to $\Lambda'_1$; this amounts to solving
the equation $Q^\lambda(Y)=0$ in $\Lambda_2$, where, if
$F\in\Lambda_1\{Z\}$, we note $F^\lambda\in\Lambda_2\{Z\}$ the series obtained
by applying $\lambda$ to the coefficients of $F$.

Let $J=(\frac{\partial Q_j}{\partial Z_i})_{1\leq i,j\leq s}\in 
{\bf M}_s(\Lambda_1\{Z_1,\dots,Z_s\})$.

\begin{proposition}\label{extr6}
Assume there exists:

$\bullet$  $z\in\Lambda_2$, 

$\bullet$ an ideal $I$ of $\Lambda_2$ such
that $z^{-2}I\subset \Lambda_2$ and 
$\Lambda_2$ is separated and complete with respect to the $z^{-2}I$-adic topology,

$\bullet$ $Z_\lambda=(Z_{1,\lambda},\dots,Z_{s,\lambda})\in\Lambda_2^s$
and $H_\lambda\in z^{-1}{\bf M}_s(\Lambda_2)$, 

\noindent such that:

$\bullet$ the entries of $Q^\lambda(Z_\lambda)$ belong to $I$,

$\bullet$ the entries of
$H_\lambda J^\lambda(Z_\lambda)-1$
belong to $z^{-1}I$.

\noindent Then the equation $Q^\lambda(Y)=0$ has a unique solution
in $Z_\lambda+ (z^{-1}I)^s$.
\end{proposition}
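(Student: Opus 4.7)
The plan is to solve $Q^\lambda(Y) = 0$ by Newton iteration, using $H_\lambda$ as an approximate inverse to the Jacobian $J^\lambda(Z_\lambda)$. First I would set $Y_0 = Z_\lambda$ and define inductively $Y_{n+1} = Y_n - H_\lambda Q^\lambda(Y_n)$, so that $\delta_n := Y_{n+1} - Y_n = -H_\lambda Q^\lambda(Y_n)$. The hypothesis $Q^\lambda(Z_\lambda) \in I^s$ together with $H_\lambda \in z^{-1}{\bf M}_s(\Lambda_2)$ yields $\delta_0 \in (z^{-1}I)^s$. The main claim, to be established by induction on $n$, is
\[ Q^\lambda(Y_n) \in (z^{-2}I)^n \cdot I^s \quad\text{and}\quad \delta_n \in (z^{-2}I)^n \cdot (z^{-1}I)^s. \]
Completeness of $\Lambda_2$ for the $z^{-2}I$-adic topology then identifies $(Y_n)$ as a Cauchy sequence converging to a limit $Y_\infty \in Z_\lambda + (z^{-1}I)^s$, and continuity of $Q^\lambda$ forces $Q^\lambda(Y_\infty) = 0$.

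For the inductive step I would expand via Taylor
\[ Q^\lambda(Y_{n+1}) = Q^\lambda(Y_n + \delta_n) = (1 - J^\lambda(Y_n)H_\lambda)\, Q^\lambda(Y_n) + T_n, \]
where $T_n$ is quadratic in $\delta_n$ with coefficients in $\Lambda_2$. Since $J^\lambda$ is polynomial and $Y_n - Z_\lambda \in (z^{-1}I)^s$, one has $J^\lambda(Y_n) - J^\lambda(Z_\lambda) \in z^{-1}I \cdot {\bf M}_s(\Lambda_2)$; combining this with the hypothesis on $H_\lambda J^\lambda(Z_\lambda)$ (and inverting $H_\lambda J^\lambda(Z_\lambda) = 1 + M$ in ${\bf M}_s(\Lambda_2)$ via the series $\sum(-M)^k$, which converges because $M \in z^{-1}I \cdot {\bf M}_s$ is topologically nilpotent for the $z^{-2}I$-adic topology), a short calculation yields $J^\lambda(Y_n)H_\lambda - 1 \in z^{-2}I \cdot {\bf M}_s(\Lambda_2)$. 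Together with the quadratic bound on $T_n$ coming from $(z^{-1}I)^2 \subset (z^{-2}I)^2$, this propagates both inductive bounds from $n$ to $n+1$.

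For uniqueness, if $Y$ and $Y'$ both lie in $Z_\lambda + (z^{-1}I)^s$ and satisfy $Q^\lambda = 0$, set $\eta = Y - Y' \in (z^{-1}I)^s$ and expand $0 = Q^\lambda(Y) - Q^\lambda(Y') = J^\lambda(Y')\eta + T(\eta)$ with $T$ quadratic. Multiplying by $H_\lambda$ and rearranging gives $\eta = -\epsilon\eta - H_\lambda T(\eta)$ for some $\epsilon \in z^{-1}I \cdot {\bf M}_s$; both right-hand terms lie in $(z^{-2}I)^2 \cdot \Lambda_2^s$, and iterating this identity forces $\eta \in \bigcap_n (z^{-2}I)^n \Lambda_2^s = 0$ by separatedness.

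The hard part is essentially bookkeeping: one must track the interaction of the ideals $I$, $z^{-1}I$, $z^{-2}I$ and their powers to make sure Newton's quadratic self-improvement is realized in the $z^{-2}I$-adic topology, rather than being lost to the factors of $z^{-1}$ present in $H_\lambda$. The key algebraic inputs are that $z^{-1}I$ is an honest ideal of $\Lambda_2$ with $z^{-1}I \subset z^{-2}I$, and $(z^{-1}I)^2 = z^{-2}I^2 \subset (z^{-2}I)^2$.
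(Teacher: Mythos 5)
Your argument is correct and is essentially the same successive-approximation proof as the paper's: iterating $Y_{n+1}=Y_n-H_\lambda Q^\lambda(Y_n)$ is exactly unrolling the contraction-mapping argument for $f(x)=x-H_\lambda Q^\lambda(x+Z_\lambda)$ on $(z^{-1}I)^s$, and the rate $(z^{-2}I)^n$ you exhibit is the contraction constant the paper extracts. One remark on the execution: by writing $Q^\lambda(Y_{n+1})=(1-J^\lambda(Y_n)H_\lambda)Q^\lambda(Y_n)+T_n$ you must control the product $J^\lambda(Y_n)H_\lambda$, whose factors are in the \emph{opposite} order from the hypothesis $H_\lambda J^\lambda(Z_\lambda)-1\in z^{-1}I\,{\bf M}_s(\Lambda_2)$. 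Your geometric-series fix does work, but it needs the extra remarks that a square matrix over the commutative ring $\Lambda_2$ with a one-sided inverse is invertible, that $H_\lambda^{-1}=J^\lambda(Z_\lambda)(1+M)^{-1}\in{\bf M}_s(\Lambda_2)$, and that the conjugate $H_\lambda^{-1}MH_\lambda$ then lands in $z^{-2}I\,{\bf M}_s(\Lambda_2)$ (one power of $z^{-1}$ worse than $M$ itself, still good enough). The paper sidesteps this entirely: in its decomposition of $f(x)-f(y)$, $H_\lambda$ only ever appears as a left factor, so the single smallness input used is precisely $1-H_\lambda J^\lambda(Z_\lambda)\in z^{-1}I\,{\bf M}_s(\Lambda_2)$. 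You could reach the same economy by tracking $\delta_n$ instead of $Q^\lambda(Y_n)$, since $\delta_{n+1}=(1-H_\lambda J^\lambda(Y_n))\delta_n-H_\lambda T_n$ and the wrong-sided product never arises.
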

\begin{proof}
Consider $f:\Lambda_2^s\to(\Lambda_2[z^{-1}])^s$ defined by
$$f(x)=x-H_\lambda Q^\lambda(x+Z_\lambda).$$
We have $f(0)\in (z^{-1}I)^s$.
Now, we can write $f(x)-f(y)$ as:
\begin{align*}
(1-H_\lambda J^\lambda(Z_\lambda))(x-y)& \\ +
H_\lambda \Big(\big(Q^\lambda(y+Z_\lambda)&-Q^\lambda(x+Z_\lambda)-J^\lambda(x+Z_\lambda)(y-x)\big)
  +
\big(J^\lambda(x+Z_\lambda)-J^\lambda(Z_\lambda)\big)(y-x)\Big).
\end{align*}
The hypothesis implies that, if $x-y\in (z^{-1}I)^s$,
then $f(x)-f(y)\in z^{-2}I\cdot (z^{-1}I)^s$.
Hence $f$ is contracting on $(z^{-1}I)^s$ and has a unique fixed
point in this module.  As $x$ is a fixed point if and only if
$Q^\lambda(x+Z_\lambda)=0$, this concludes the proof.
\end{proof}
\begin{remark}\label{extr6.1}
If $\Lambda'_1$ is \'etale over $\Lambda$, there exists $H\in {\bf M}_s(\Lambda_1\{Z_1,\dots,Z_s\})$
such that $HJ-1$ has its entries in $(Q_1,\dots,Q_s)$, hence
if $Q^\lambda(Z_\lambda)$ has coordinates in an ideal $I$,
then $H^\lambda J^\lambda-1$ has entries in $I$. 
If $\Lambda_2$ is separated and complete for the $I$-adic topology, we can take
$z=1$ and $H_\lambda=H^\lambda(Z_\lambda)$ in the above proposition, hence the equation
$Q^\lambda(Y)$ has a unique solution in $Z_\lambda+I^s$.
\end{remark}

\subsection{Kummer theory}
\subsubsection{Local fields}\label{def1}
Let $F\subset K$ be as in the introduction, so that
$K$ is a finite, totally ramified extension of $F$. 
Choose a uniformiser $\varpi$ of $K$, and let $P_\varpi$ be
its minimal polynomial over $F$ (hence $P_\varpi$ is an Eisenstein polynomial
of degree $e=[K:F]$).

Choose a compatible system $(\zeta_{p^n})_{n\in\N}$ of
roots of unity, with $\zeta_{p^0}=1$, $\zeta_p\neq 1$, and $\zeta_{p^n}^p=\zeta_{p^{n-1}}$,
and define $F_n=F(\zeta_{p^n})$,
and $F_\infty=\cup_n F_n$.

Let $i(K)$ be the largest integer $n$ such that $K$ contains
$\zeta_{p^n}$.
If $n\in\N$, we set $K_n=K(\zeta_{p^{n+i(K)}})$, so that $K_0=K$, but
$K$ is strictly contained in $K_n$ if $n\geq 1$, even if $K$ contains
some roots of unity. Set $K_\infty=\cup_{n\in\N}K_n$.
Let $$\delta_K=ev_p({\goth d}_{K/F_{i(K)}})\in\N.$$
We say that $K$ {\it contains enough roots of unity} if 
$$v_p({\goth d}_{K/F_{i(K)}})<\tfrac{1}{2p}-\tfrac{1}{[F_{i(K)}:F]}\quad{\text{ or, equivalently,}}\quad 
\delta_K+[K:F_{i(K)}]<\tfrac{e}{2p}.$$
If we fix $K$,
then $K(\zeta_{p^n})$ contains enough roots of unity for all
$n$ big enough (this is a restatement of the fact that
$K_\infty/F_\infty$ is almost \'etale). More precisely,
 by \cite[Proposition ~4.5]{CF}, it suffices to take 
$n\geq c(K)+2$ (if $p=2$, one needs $n\geq c(K)+3$), where
$c(K)$ is the conductor of $K$ (i.e.~$t\geq c(K)$ if and only if
$K$ is fixed by the higher ramification subgroup $G_F^t$ of $G_F={\rm Gal}(\overline F/F)$).

\subsubsection{Formal log-schemes}
\label{formalscheme}
Let $h\in[1,e]$ be an integer (this $h$ is the multiplicity that appears when we base
change a semi-stable scheme over $\O_{K'}$, with $K'\subset K$, to $\O_K$; hence it is~$\leq e$
in applications, although almost everywhere it could as well be arbitrary).

Let $a,b,c$ be integers and $d=a+b+c$. Set $X=(X_1,\dots,X_{d})$ and define
\begin{align*}
R_\Box &=\O_K\{X,\frac{1}{X_1\cdots X_a},\frac{\varpi^h}{X_{a+1}\cdots X_{a+b}}\}\\ &=
\O_K\{X,X_{d+1},X_{d+2}\}/(X_{d+2}X_1\cdots X_a-1,X_{d+1}X_{a+1}\cdots X_{a+b}-\varpi^h).
\end{align*}
We endow $R_\Box$ with the spectral norm.

Let $R$ be the $p$-adic completion of an \'etale algebra over $R_\Box$,
so that $R_\Box$ provides a {\it system of coordinates} for $R$
(or ${\rm Spf}\,R_\Box$ a {\it frame} for ${\rm Spf}\,R$). We equip the
associated formal schemes with the log-structure induced by the divisor at infinity: 
$X_{a+b+1}\cdots X_d=0$ and the special fiber. We obtain that way formal log-schemes that are log-smooth over $\so_K^{\times}$ 
(in fact, both schemes have semistable reduction over $\so_K$).

\subsubsection{Adding an arithmetic variable}
To compute the syntomic cohomology of $R$, we need to write it as
a quotient of a log-smooth algebra over $\O_F$ (it is already log-smooth
over $\O_K^{\times}$).  The cheapest way to do so is to add an extra variable.

We denote by $r^+_\varpi$ and $r_\varpi$ the algebras $\O_F[[X_0]]$
and $\O_F[[X_0]]\{X_0^{-1}\}$ with the log-structure associated to $X_0$.  Sending $X_0$ to $\varpi$ induces
a surjective morphism $r_\varpi^+\to \O_K$.

Let $X'=(X_0,X)$ and let $R^+_{\varpi,\Box}$ be the completion
of $\O_F[X',\frac{1}{X_1\cdots X_a},\frac{X_0^h}{X_{a+1}\cdots X_{a+b}}]$
for the $(p,X_0)$-adic topology. We add $X_0$ to the log-structure induced from $R_{\Box}$.
Sending $X_0$ to $\varpi$ induces a surjective morphism
$R^+_{\varpi,\Box}\to R_{\Box}$ whose kernel is generated by
$P=P_\varpi(X_0)$.
This provides a closed embedding
of ${\rm Spf}\,R_\Box$ into a formal log-scheme $\Spf R^+_{\varpi,\Box}$ that is log-smooth over $\O_F$.

We can write $R$ as
$$R=R_\Box\{Z_1,\dots,Z_t\}/(Q_1,\dots,Q_t),$$
with $\det(\frac{\partial Q_i}{\partial Z_j})$ invertible in $R$.
Choose liftings $\tilde Q_j$ of the $Q_j$'s in $R^+_{\varpi,\Box}$,
and let $R^+_\varpi$ be the quotient
of the completion of $R^+_{\varpi,\Box}[Z_1,\dots,Z_t]$ for the
$(p,P)$-adic topology (which is also the $(p,X_0)$-adic topology)
by $(\tilde Q_1,\dots,\tilde Q_t)$. We equip it with the log-structure induced from $R^+_{\varpi,\Box}$. 
We have that  $\det(\frac{\partial \tilde Q_i}{\partial Z_j})$ is invertible in $R^+_\varpi$
(since it is modulo $P$). Hence  $R^+_\varpi$ is \'etale over $R^+_{\varpi,\Box}$
and  log-smooth over $\O_F$.

Sending $X_0$ to $\varpi$ induces a surjective morphism
$R^+_{\varpi}\to R$, whose kernel is generated by $P=P_\varpi(X_0)$.

\subsubsection{Divided powers and localizations on smaller balls or annuli} 
We let $R_\varpi^{\rm PD}$ be the $p$-adic completion of 
$R^+_\varpi[\frac{P^k}{k!},\,k\in\N]$.  As $P=X_0^e$ modulo $p$,
we have $R^+_\varpi[\frac{P^k}{k!},\,k\in\N]=R^+_\varpi[\frac{X_0^k}{[\frac{k}{e}]!},\ k\in\N]$. We equip $R_\varpi^{\rm PD}$ with the log-structure induced from $R^+_{\varpi}$ and $X_0$. We have defined the following diagram of formal log-schemes.
\begin{equation}
 \label{diagram}
 \xymatrix@R=20pt@C=15pt{
  & \Spf R^{\rm PD}_{\varpi}\ar[rd] & \\
 \Spf R\ar[d]\ar@{^{(}->}[ru]\ar@{^{(}->}[rr]\ar@{}[rr] & &\Spf R^{+}_{\varpi}\ar[d]\\
\Spf R_{\Box}\ar[d]\ar@{^{(}->}[rr] & & \Spf R^+_{\varpi,\Box}\ar[d]\\
  \Spf \so_K\ar[d]\ar@{^{(}->}[rr] & & \Spf r^+_{\varpi}\ar[lld]\\
 \Spf \so_F 
 }
 \end{equation}
Crystalline cohomology, hence syntomic cohomology, is defined by means of
$R_\varpi^{\rm PD}$, but we will show that, up to absolute constants
depending only on $u,v,r$, we can replace $R_\varpi^{\rm PD}$
by the rings $R_\varpi^{[u]}$ or $R_\varpi^{[u,v]}$ below.
So, if $0<u\leq v$, let\footnote{We will need $R_\varpi^{[u,v]}$, with
$u,v$ satisfying a number of inequalities: $\frac{p-1}{p}\leq u\leq\frac{v}{p}<1<v<pv_K$
and $\big(1+\frac{p+1}{2p(p-1)})u>\frac{1}{p-1}$.  For $p\geq 3$, we can take
$u=\frac{p-1}{p}$ and $v=p-1$.  For $p=2$, we can take $u=\frac{3}{4}$ and $v=\frac{3}{2}$.}

$\bullet$ $R_\varpi^{(0,v]+}$ be the $p$-adic completion of
$R_\varpi^+[\frac{p^{\lceil vi/e\rceil}}{X_0^i}, i\in\N]$,
and $R_\varpi^{(0,v]}=R_\varpi^{(0,v]+}[\frac{1}{X_0}]$,

$\bullet$ $R_\varpi^{[u]}$ be the $p$-adic completion of
$R_\varpi^+[\frac{X_0^i}{p^{[ui/e]}},\ i\in\N]$,

$\bullet$ $R_\varpi^{[u,v]}$ be the $p$-adic completion of
$R_\varpi^+[\frac{X_0^i}{p^{[ ui/e]}},\frac{p^{\lceil vi/e\rceil}}{X_0^i},\ i\in\N]$,

$\bullet$ $R_\varpi$ be the $p$-adic completion of
$R_\varpi^+[\frac{1}{X_0}]$.
                                                         
\smallskip
We have $R^{[u]}_\varpi\subset R_\varpi^{\rm PD}$ if $u\leq \frac{1}{p}$
and $R_\varpi^{\rm PD}\subset R_\varpi^{[u]}$ if $u\geq\frac{1}{p-1}$. 
If $\frac{1}{p}<u < \frac{1}{p-1}$ then there exists a constant $C(u)$ such that $p^{C(u)}R^{[u]}_\varpi\subset R_\varpi^{\rm PD}$. 

\smallskip
We note $v^{(0,v]}$, $v^{[u]}$, $v^{[u,v]}$ the spectral valuations
on $R_\varpi^{(0,v]}$, $R_\varpi^{[u]}$ and $R_\varpi^{[u,v]}$ respectively.

\begin{remark}
Denote by $r_\varpi^{\rm deco}$ the ring $R_\varpi^{\rm deco}$ corresponding to
$R=\O_K$.
One can describe the rings $r_\varpi^{\rm deco}$ as rings of Laurent series
with coefficients satisfying growth conditions.  Namely:

$\bullet$ $r_\varpi^{\rm PD}$ is the set of $f=\sum_{i\in\N}a_i\frac{X_0^i}{[\frac{i}{e}]!}$,
with $a_i\in\O_F$ going to $0$ when $i\to\infty$.

$\bullet$ $r_\varpi^{[u]}$ is the set of $f=\sum_{i\in\N}a_i\frac{X_0^i}{p^{[\frac{iu}{e}]}}$,
with $a_i\in\O_F$ going to $0$ when $i\to\infty$.
It is the ring of analytic functions over $F$ with integral values on the disk $v_p(X_0)\geq u/e$.

$\bullet$ $r_\varpi^{[u,v]}$ is the set of $f=\sum_{i\in\Z}a_iX_0^i$,
with $a_i\in F$ and $v_p(a_i)\geq (-iv)/e$ if $i\leq 0$ and $v_p(a_i)\geq (-iu)/e$ if $i\geq 0$, and with the differences going to $+\infty$ with $i\to \pm\infty$. 
It is the ring of analytic functions over $F$ with integral values on the annulus $\frac{u}{e}\leq
v_p(X_0)\leq \frac{v}{e}$.

We have $r_\varpi^{[u]}\subset r_\varpi^{[u,v]}$
 and the quotient $r_\varpi^{[u,v]}/r_\varpi^{[u]}$ involves only negative powers of $X_0$.

$\bullet$ $r_\varpi^{(0,v]+}$ is the set of $f=\sum_{i\in\Z}a_iX_0^i$, 
with $a_i\in \O_F$ and $v_p(a_i)\geq (-iv)/e$ if $i\leq 0$, the differences going to $+\infty$ with $i\to -\infty$. 
It is the ring of analytic functions over $F$ with integral values on the annulus $0<
v_p(X_0)\leq \frac{v}{e}$.

$\bullet$ $r_\varpi$ is the set of $f=\sum_{i\in\Z}a_iX_0^i$, 
with $a_i\in \O_F$ and $a_i\to 0$ when $i\to -\infty$.     
\end{remark}

Note that going from $R^+_\varpi$ to $R_\varpi^{\rm PD}$,
$R_\varpi^{[u]}$, $R_\varpi^{[u,v]}$ or $R_\varpi$ involves only the ``arithmetic''
variable $X_0$ (going from
$R^+_\varpi$ to $R_\varpi^{[u]}$ (resp.~$R_\varpi^{[u,v]}$)
amounts to localization of 
$X_0$ on the disk $v_p(X_0)\geq\frac{u}{e}$ 
(resp.~on the annulus $\frac{v}{e}\geq v_p(X_0)\geq\frac{u}{e}$). 
This can be translated into the following isomorphisms:
\begin{align*}
R_\varpi^{\rm PD}=r_\varpi^{\rm PD}\widehat\otimes_{r_\varpi^+}R_\varpi^+,&\quad
R_\varpi=r_\varpi\widehat\otimes_{r_\varpi^+}R_\varpi^+,\\
R_\varpi^{[u]}=r_\varpi^{[u]}\widehat\otimes_{r_\varpi^+}R_\varpi^+,\quad
R_\varpi^{(0,v]+}=r_\varpi^{(0,v]+}&\widehat\otimes_{r_\varpi^+}R_\varpi^+,\quad
R_\varpi^{[u,v]}=r_\varpi^{[u,v]}\widehat\otimes_{r_\varpi^+}R_\varpi^+,
\end{align*}
where the $\widehat\otimes$ is the tensor product completed for
the $p$-adic topology.

\begin{remark}\label{LIFT0}
If $\Lambda$ is a topological ring, and $X=\N,\Z$, let $\ell_0(X,\Lambda)$ denote
the space of sequences $(x_n)_{n\in X}$ of elements of $\Lambda$ such that
$x_n\to 0$ when $n\to\infty$.

It is plain from the definitions that:

\quad $\bullet$ $(x_n)_{n\in\N}\mapsto\sum_{n\in\N}x_nX_0^{-n}$ induces a surjection
$\ell_0(\N,R_\varpi^+)\to R_\varpi$,

\quad $\bullet$ $(x_n)_{n\in\N}\mapsto\sum_{n\in\N}x_n p^{\lceil nv/e\rceil}X_0^{-n}$ induces a surjection
$\ell_0(\N,R_\varpi^+)\to R_\varpi^{(0,v]+}$,

\quad $\bullet$ $(x_n)_{n\in\Z}\mapsto\sum_{n\geq 0}x_n p^{\lceil nv/e\rceil}X_0^{-n}+
\sum_{n<0}x_n p^{\lceil nu/e\rceil}X_0^{-n}$ induces a surjection
$\ell_0(\Z,R_\varpi^+)\to R_\varpi^{[u,v]}$.

If $N\geq 1$ and $u\leq v < \frac{e}{N}$, the same is true with
$r_\varpi^+$ replaced by $R_\varpi^+[[\frac{p}{X_0^N}]]$.  

Now, let $\lambda:R_\varpi^+\to R_\varpi^+[[\frac{p}{X_0^N}]]$ be a continuous morphism
such that $\frac{\lambda(X_0)}{X_0^A}$ is a unit in $R_\varpi^+[[\frac{p}{X_0^N}]]$, with
$A\geq 1$. The above
descriptions
of $R_\varpi^{\rm deco}$ allow to extend $\lambda$ by continuity
to morphisms $R_\varpi\to R_\varpi$ and $R_\varpi^{(0,v]+}\to R_\varpi^{(0,v/A]+}$,
$R_\varpi^{[u,v]}\to R_\varpi^{[u/A,v/A]}$, 
if $u\leq v < \frac{e}{N}$.
\end{remark}

\subsubsection{Filtration}
We filter all the above rings $S$ by order of vanishing at $X_0=\varpi$.

$\bullet$ If $P$ is invertible in $ S[\frac{1}{p}]$ (this is the case if
$ S=R_\varpi^{(0,v]+}$ or $ S=R_\varpi^{(0,v]}$ and
$v<1$, or if $ S=R_\varpi^{[u,v]}$ and $1\notin[u,v]$, or if $ S=R_\varpi$),
 put the trivial
filtration on $ S$ (i.e. ${F}^r S= S$ for all $r$).

$\bullet$ If $P$ is not invertible in $ S[\frac{1}{p}]$
(i.e.~in all the cases that are not listed above), we
have a natural embedding $ S\to R[\frac{1}{p}][[P]]=R[\frac{1}{p}][[X_0-\varpi]]$ by completing
$ S[\frac{1}{p}]$ for the $P$-adic topology
(the completion of $\O_F[X_0,\frac{1}{p}]$ for the $P$-adic topology
is the complete discrete valuation ring $K[[P]]$, and $P$ or $X_0-\varpi$
are uniformizers). We use this embedding to endow $ S$
with the natural filtration of $R[[P]]$ (by powers of $P$ or, equivalently, $X_0-\varpi$).

 \begin{remark} \label{0wiesia} 
 An element $f\in r_\varpi^{\rm PD}$ (resp.~$f\in r_\varpi^{[u]}$)
can be written (uniquely)
 in the form $f=f^++f^-$ with $f^+\in F^rr_\varpi^{\rm PD}$ 
and $f^-$ of degree $\leq re-1$ (this implies
$f^-\in \frac{1}{(r-1)!}\O_F[X_0]$, resp.~$f^-\in \frac{1}{p^{[ru]}}\O_F[X_0]$).

It follows that we can write any $f\in R_\varpi^{\rm PD}$ as $f_1+f_2$ with
$f_1\in F^rR_\varpi^{\rm PD}$ and $f_2\in \frac{1}{(r-1)!}R_\varpi^+$
and we have the same statement for $f\in R_\varpi^{[u]}$, with $f_1\in F^rR_\varpi^{[u]}$
and $f_2\in \frac{1}{p^{[ru]}}\R_\varpi^+$.
 \end{remark}

\subsubsection{Frobenius}
A Frobenius $\varphi$ on $R_\varpi$ is a ring homomorphism lifting
$x\mapsto x^p$ on $R_\varpi/p$ (in particular, $\varphi$ restricts to
the absolute Frobenius on $\O_F$).
We say that $\varphi$ is {\it admissible} if:
\begin{itemize}
\item  $\varphi(r_\varpi)\subset r_\varpi$,
\item 
 there exist $v_0>1$ such that
$\varphi(R_\varpi^{(0,v_0]+})\subset R_\varpi^{(0,v_0/p]+}$,
and $v^{(0,v_0/p]}\big(\frac{\varphi(X_j)}{X_j^p}-1\big)>\inf(\frac{1}{p-1},\tfrac{1}{2})$, if
$0\leq j\leq d$.
\end{itemize}
(The second condition ensures that $\varphi(x)-x^p$ has divided powers if $p\neq 2$. If $p=2$,
it ensures that it has higher divided powers: $\frac{(\varphi(x)-x^p)^k}{[k/2]!}$ is defined.)
If $\varphi$ is admissible then, for all $v\leq v_0$ ($v_0$ as above),
we have $\varphi(R_\varpi^{(0,v]+})\subset R_\varpi^{(0,v/p]+}$,
and $\varphi$ extends by continuity to
$R_\varpi^{[u,v]}$, for all $0<u\leq v\leq v_0$.
Moreover, the above minoration is valid with $v^{(0,v_0/p]}$ replaced by
$v^{(0,v/p]}$ or $v^{[u/p,v/p]}$, accordingly.

\medskip
We are going to use two admissible Frobenii: $\varphi_{\rm Kum}$ and
$\varphi_{\rm cycl}$.  
The two maps $\varphi_{\rm Kum}$ and $\varphi_{\rm cycl}$ are very similar
(they differ only on the arithmetic variable).
The advantage of $\varphi_{\rm Kum}$ is that it is already defined on $R_\varpi^+$
(which is not the case of $\varphi_{\rm cycl}$) and can be used
to compute crystalline and syntomic cohomology. The reason for considering
$\varphi_{\rm cycl}$ is that it is the Frobenius coming from the theory
of $(\varphi,\Gamma)$-modules and it is closely related to \'etale cohomology
(see~Proposition \ref{KLiu}), through the works of Herr \cite{Hr} (in dimension~$0$),
Andreatta-Iovita \cite{AI} (in the case of good reduction and without divisor at infinity), and Kedlaya-Liu \cite{KL} (for the general case).
It is not inconceivable that one could use Breuil-Kisin modules \cite{Ki} or
Caruso's $(\varphi,\tau)$-modules \cite {Ca} instead of
$(\varphi,\Gamma)$-modules (and hence $\varphi_{\rm Kum}$ instead
of $\varphi_{\rm cycl}$) to compute \'etale cohomology.

\subsubsection{The Frobenius $\varphi_{\rm Kum}$ and its left inverse $\psi_{\rm Kum}$}
We define $\varphi_{\rm Kum}$
on $R^+_{\varpi,\Box}$ by $\varphi_{\rm Kum}(X_i)=X_i^p$
if $0\leq i\leq d$.
As $R^+_{\varpi}$ is \'etale over $R^+_{\varpi,\Box}$, $\varphi_{\rm Kum}$
extends, uniquely, to $R^+_\varpi$ (use Remark~\ref{extr6.1} with $\Lambda_1=R^+_{\varpi,\Box}$,
$\Lambda'_1=\Lambda_2=R_\varpi^+$, $\lambda=\varphi_{\rm Kum}$, $I=(p)$ and $Z_\lambda=Z^p$).
Finally, if $ S=R_{\varpi,\Box},R_\varpi$,
$\varphi_{\rm Kum}$ extends, by continuity, to ring endomorphisms of
$ S$, $ S^{\rm PD}$, $ S^{[u]}$ and to ring morphisms
$ S^{[u,v]}\to  S^{[u,v/p]}$.

\medskip
If $0\leq j\leq d$, set
$$\partial_{{\rm Kum},j}=X_j\tfrac{\partial}{\partial X_j}.$$
These are well defined on $R_{\varpi,\Box}^+$, hence also on $R_\varpi^+$ by \'etaleness.
We extend them by the obvious rule on $R_\varpi^+[X_0^{-1}]$, and by linearity and continuity
to $R_\varpi^{\rm deco}$, for ${\rm deco}\in\{\ ,{\rm PD}, [u], (0,v]+, [u,v]\}$.
The resulting operators commute pairwise.

If $\alpha=(\alpha_0,\dots,\alpha_d)\in\{0,\dots,p-1\}^{[0,d]}$, set
$$u_{{\rm Kum},\alpha}=X_0^{\alpha_0}\cdots X_d^{\alpha_d}.$$
We have 
$$\partial_{{\rm Kum},j}u_{{\rm Kum},\alpha}=\alpha_j\,u_{{\rm Kum},\alpha}
\quad{\rm and}\quad
\varphi^{\rm cycl}(u_{{\rm Kum},\alpha})=u_{{\rm Kum},\alpha}^p.$$

\begin{lemma}\label{ku1}
{\rm (i)} Any $x\in R_\varpi/p$ can be written uniquely as $x=\sum_{\alpha}c_{{\rm Kum},\alpha}(x)$, with
$\partial_{{\rm Kum},j}c_{{\rm Kum},\alpha}(x)=\alpha_jc_{{\rm Kum},\alpha}(x)$, if $0\leq j\leq d$.

{\rm (ii)} There exists a unique $x_\alpha\in R_\varpi/p$ such that $c_{{\rm Kum},\alpha}(x)=x_\alpha^pu_{{\rm Kum},\alpha}$.

{\rm (iii)} If $x\in R_\varpi^+/p$, then $c_{{\rm Kum},\alpha}(x)\in R_\varpi^+/p$, $x_\alpha\in X_0^{-h-1}R_\varpi^+/p$,
and $x_\alpha\in R_\varpi^+/p$, if $\alpha_i=0$ for $i\neq 0$;
in particular, $x_0\in R_\varpi^+/p$.
\end{lemma}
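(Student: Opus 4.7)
The lemma describes the Frobenius-module structure of $R_\varpi/p$: namely, that $R_\varpi/p$ is free over its subring $(R_\varpi/p)^p = \varphi_{\rm Kum}(R_\varpi/p)$ of $p$-th powers with basis $\{u_{{\rm Kum},\alpha}\}_{\alpha\in\{0,\dots,p-1\}^{[0,d]}}$, and that this decomposition coincides with the joint $\partial_{{\rm Kum},j}$-eigenspace decomposition. I would proceed in three stages: reduce to the framed case $R=R_\Box$, carry out the decomposition via monomial expansion modulo the semistable relation, and then control denominators for the integrality statement.

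\emph{Reduction and parts (i)--(ii).} Since $R^+_\varpi$ is \'etale over $R^+_{\varpi,\Box}$, and in characteristic $p$ an \'etale morphism has invertible relative Frobenius, any Frobenius-module basis of $R_{\varpi,\Box}/p$ over its $p$-th powers transfers to a basis of $R_\varpi/p$; thus it suffices to treat $R=R_\Box$. In $R_\varpi/p$ the element $X_0$ is invertible, so the relation $X_{d+1}X_{a+1}\cdots X_{a+b}=X_0^h$ forces all of $X_{a+1},\dots,X_{a+b},X_{d+1}$ to be invertible. Eliminating $X_{d+1}$, every element admits a (possibly infinite) monomial expansion in $X_0^{\pm 1},\dots,X_{a+b}^{\pm 1},X_{a+b+1},\dots,X_d$; grouping such an expansion of $x$ by exponent class modulo $p$ produces a decomposition $x=\sum_\alpha c_{{\rm Kum},\alpha}(x)$ with $c_{{\rm Kum},\alpha}(x)\in u_{{\rm Kum},\alpha}(R_\varpi/p)^p$. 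Since $\partial_{{\rm Kum},j}$ acts on a monomial $X^\beta$ by multiplication by $\beta_j\bmod p$, these pieces are exactly the joint $(\alpha_j)_j$-eigenvectors of $(\partial_{{\rm Kum},j})_j$, proving (i); the element $x_\alpha$ in (ii) exists by freeness of the basis and is unique because Frobenius is injective on the reduced ring $R_\varpi/p$.

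\emph{Integrality bounds (iii).} The ring $R_\varpi^+/p$ is stable under each $\partial_{{\rm Kum},j}$, so its $\alpha$-eigencomponents remain inside it; this gives $c_{{\rm Kum},\alpha}(x)\in R_\varpi^+/p$. For the denominator bound on $x_\alpha$ I would compute on monomials: the image in $R_\varpi/p$ of $X_0^{n_0}\cdots X_d^{n_d}X_{d+1}^{n_{d+1}}\in R_{\varpi,\Box}^+/p$ after eliminating $X_{d+1}$ has exponent vector $(n_0+hn_{d+1},\,n_1,\,\ldots,\,n_a,\,n_{a+1}-n_{d+1},\,\ldots,\,n_{a+b}-n_{d+1},\,n_{a+b+1},\,\ldots,\,n_d)$. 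Dividing by $u_{{\rm Kum},\alpha}$, extracting the $p$-th root, and returning to $R_\varpi^+/p$ by reintroducing a power $X_{d+1}^k$ (with $k=\max_i\max(0,(n_{d+1}+\alpha_{a+i}-n_{a+i})/p)$) to absorb the negative $X_{a+i}$-exponents costs $hk$ in the $X_0$-exponent. Using $\alpha_0,\alpha_{a+i}\leq p-1$ together with the $p$-divisibility congruences forced on the $n_i$'s, one checks that the resulting $X_0$-exponent is $\geq -(h+1)$, giving $x_\alpha\in X_0^{-h-1}R_\varpi^+/p$. When $\alpha_i=0$ for all $i\neq 0$, the congruences simplify to $n_{a+i}\equiv n_{d+1}\pmod p$, and the final $X_0$-exponent equals $(n_0+h\min_i n_{a+i}-\alpha_0)/p$, which is non-negative by the $j=0$ congruence $n_0+hn_{d+1}\equiv\alpha_0\pmod p$; hence $x_\alpha\in R_\varpi^+/p$.

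\textbf{Main obstacle.} The delicate point is the uniform denominator bound in (iii): one must verify that the $X_{d+1}$-power needed to absorb negative $X_{a+i}$-exponents when pushing $x_\alpha$ back into $R_\varpi^+/p$ never consumes more than $h+1$ negative powers of $X_0$, regardless of $\alpha$ and of the specific monomial contributions.
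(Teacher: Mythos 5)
Your parts (i)--(ii) run along the same lines as the paper: you reduce to the framed case by \'etaleness and decompose by exponent class mod $p$, which is the explicit-monomial version of the paper's appeal to Tyc's theorem that $X_0,\dots,X_d$ is a $p$-basis of $S=R_\varpi/p$ over $\varphi(S)$. That is fine. The interesting divergence, and the place where your argument has a real gap, is part~(iii).

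For (iii), the paper avoids monomial bookkeeping entirely with a short conceptual step: from $c_{{\rm Kum},\alpha}(x)\in S^+$ one writes
$(X_0\cdots X_d\,x_\alpha)^p = X_0^{p-\alpha_0}\cdots X_d^{p-\alpha_d}\,c_{{\rm Kum},\alpha}(x)\in S^+$,
and since $S^+$ is integrally closed in $S$ this gives $X_0\cdots X_d\,x_\alpha\in S^+$ at once, uniformly in $\alpha$; multiplying by $X_{d+1}/(X_1\cdots X_a)\in S^+$ turns this into $X_0^{h+1}X_{a+b+1}\cdots X_d\,x_\alpha\in S^+$, and the fact that $X_0$ is a non-zero-divisor in each $S^+/(X_{a+b+k})$ then strips off the factor $X_{a+b+1}\cdots X_d$. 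Your route instead expands $x$ into monomials, eliminates $X_{d+1}$, takes $p$-th roots termwise, and then reintroduces a power $X_{d+1}^k$ to clear negative $X_{a+i}$-exponents at a cost of $hk$ in the $X_0$-exponent. You explicitly flag the resulting bound as the ``main obstacle'' and do not close it: the inequality $\tfrac{n_0+hn_{d+1}-\alpha_0}{p}-hk\ge -(h+1)$ has to be checked against the worst case $k=(n_{d+1}+\alpha_{a+j}-n_{a+j})/p$, using both the integrality of the exponents and $\alpha_0,\alpha_{a+j}\le p-1$, and the clause $\alpha_i=0$ for $i\neq 0$ needs its own (shorter) argument. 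That verification is exactly the content of (iii), so as written your proof proposal stops short of the statement. If you do push it through, the monomial analysis seems to yield the marginally sharper bound $x_\alpha\in X_0^{-h}R_\varpi^+/p$ (the paper only needs $X_0^{-h-1}$ for Corollary~\ref{ku2}), so the two methods are genuinely different; but the integral-closedness argument is both shorter and insensitive to the case analysis on signs, and you should either complete the monomial computation or adopt the paper's argument.
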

\begin{proof}
Let $S=R_\varpi/p$, $S^+=R_\varpi^+/p$, and $\partial_j=\partial_{{\rm Kum},j}$, for $0\leq j\leq d$.

If $0\leq j\leq d$, then $\partial_j(\partial_j-1)\cdots(\partial_j-(p-1))$ is identically $0$
on $R_{\varpi,\Box}/p$, hence also on $S$ by \'etaleness.  It follows that $\partial_j$ is diagonalizable
for all $j$ and, since these operators commute pairwise, that we can decompose $S$ and $S^+$
into the direct sum of common eigenspaces.  This proves (i) as well as the the fact that
$c_{{\rm Kum},\alpha}(x)\in S^+$ if $x\in S^+$.

Now, $X_0,\dots,X_d$ is a basis of the module of differentials of
$R_{\varpi,\Box}/p$, hence also of $S$, since $S$ is obtained as the
completion of an \'etale algebra over $R_{\varpi,\Box}/p$.  It follows from~\cite{tyc}
that $X_0,\dots,X_d$ is a $p$-basis of $S$ which can be rephrased by saying that
any element $x$ of $S$ can be written uniquely as
$x=\sum_\alpha x_\alpha^p u_{{\rm Kum},\alpha}$.
Since $\partial_j(x_\alpha^p u_{{\rm Kum},\alpha})=\alpha_j x_\alpha^p u_{{\rm Kum},\alpha}$,
this proves (ii).

Finally, 
$(X_0\cdots X_dx_\alpha)^p=X_0^{p-\alpha_0}\cdots X_d^{p-\alpha_d}c_{{\rm Kum},\alpha}(x)\in S^+$,
hence $X_0\cdots X_dx_\alpha\in S^+$
(because $S^+$ is integrally closed),
 which implies
$X_0^{h+1}X_{a+b+1}\cdots X_d x_\alpha\in S^+$.
But $x_\alpha\in S=S^+[X_0^{-1}]$ and $X_0$ is not a zero divisor
in $S^+/(X_{a+b+k})$, if $1\leq k\leq d-a-b$; hence $x_\alpha\in X_0^{-h-1}S^+$, as wanted. 

In the case $\alpha_i=0$ if $i\neq 0$, we get $(X_0x_\alpha)^p=X_0^{p-\alpha_0}c_{{\rm Kum},\alpha}(x)\in S^+$,
hence $X_0x_\alpha\in S^+$.  But $(X_0x_\alpha)^p$ is actually divisible par $X_0$,
since $\alpha_0\leq p-1$,
hence $X_0x_\alpha\in X_0S^+$ and $x_\alpha\in S^+$.
\end{proof}

\begin{corollary}\label{ku2}
{\rm (i)} Any $x\in R_\varpi$ can be written uniquely as $\sum_\alpha\varphi_{\rm Kum}(x_\alpha)u_{{\rm Kum},\alpha}$,
with $x_\alpha\in R_\varpi$.

{\rm (ii)} 
If $x\in R_\varpi^+$, then $x_0\in R_\varpi^+$ and, if 
$c_{{\rm Kum},\alpha}(x)=\varphi_{\rm Kum}(x_\alpha)u_{{\rm Kum},\alpha}$, then
$c_{{\rm Kum},\alpha}(x)\in R_\varpi^+$ for all $\alpha$ and 
$$\partial_{{\rm Kum},j} c_{{\rm Kum},\alpha}(x)-\alpha_jc_{{\rm Kum},\alpha}(x)\in pR_\varpi^+,
\quad{\text{ if $0\leq j\leq d$.}}$$
\end{corollary}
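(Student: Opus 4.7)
The plan is to iteratively lift the mod-$p$ decomposition of Lemma~\ref{ku1} to $R_\varpi$ by $p$-adic approximation. For part~(i), given $x\in R_\varpi$, apply Lemma~\ref{ku1} to $\bar x\in R_\varpi/p$ to produce $\bar x_{\alpha,0}\in R_\varpi/p$, lift each to $x_{\alpha,0}\in R_\varpi$, and observe that $x-\sum_\alpha\varphi_{\rm Kum}(x_{\alpha,0})u_{{\rm Kum},\alpha}$ reduces to $0$ mod~$p$ (since $\varphi_{\rm Kum}\equiv(\cdot)^p\pmod p$), hence equals $px^{(1)}$ with $x^{(1)}\in R_\varpi$ unique. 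Iterate to obtain $x_{\alpha,n}\in R_\varpi$ and define $x_\alpha:=\sum_{n\geq 0}p^nx_{\alpha,n}$, which converges in the $p$-adically complete $R_\varpi$. Using that $\varphi_{\rm Kum}$ is a ring homomorphism (so $\varphi_{\rm Kum}(p^ny)=p^n\varphi_{\rm Kum}(y)$), a telescoping argument gives $x=\sum_\alpha\varphi_{\rm Kum}(x_\alpha)u_{{\rm Kum},\alpha}$. Uniqueness follows by reducing a zero relation mod~$p$, using Lemma~\ref{ku1} plus non-zero-divisibility of each $u_{{\rm Kum},\alpha}$ in $R_\varpi/p$ to force $x_\alpha\in pR_\varpi$, then iterating (via $\varphi_{\rm Kum}(py)=p\varphi_{\rm Kum}(y)$ and $p$-torsion freeness of $R_\varpi$) to conclude $x_\alpha\in\bigcap_n p^nR_\varpi=0$. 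The resulting $c_{{\rm Kum},\alpha}\colon R_\varpi\to R_\varpi$ are well-defined, $\Z_p$-linear, $p$-adically continuous idempotents with $c_{{\rm Kum},\alpha}c_{{\rm Kum},\beta}=\delta_{\alpha,\beta}c_{{\rm Kum},\alpha}$.

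For part~(ii), one first checks that $R_\varpi^+$ is $p$-saturated and $p$-adically closed in $R_\varpi$: the injectivity of $R_\varpi^+/p\hookrightarrow R_\varpi/p$ (a consequence of $X_0$ being a non-zero divisor in $R_\varpi^+/p$, as used in the proof of Lemma~\ref{ku1}) makes $R_\varpi/R_\varpi^+$ $p$-torsion free, and $R_\varpi^+$ is $p$-adically complete, hence closed in $R_\varpi$. Running the iteration above starting from $x\in R_\varpi^+$: Lemma~\ref{ku1}(iii) lets one pick $x_{0,n}\in R_\varpi^+$, yielding $x_0=\sum_np^nx_{0,n}\in R_\varpi^+$ directly. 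To obtain $c_{{\rm Kum},\alpha}(x)\in R_\varpi^+$, one instead lifts the mod-$p$ eigenspace components $\bar c_{{\rm Kum},\alpha}(\bar x^{(n)})\in R_\varpi^+/p$ (provided by Lemma~\ref{ku1}(iii)) to arbitrary integral lifts $\tilde c^{(n)}_\alpha\in R_\varpi^+$, without insisting on the $\varphi_{\rm Kum}(\cdot)u_{{\rm Kum},\alpha}$ form; $p$-saturation propagates the iteration inside $R_\varpi^+$ and produces an auxiliary integral decomposition $x=\sum_\alpha\tilde c_\alpha$ with $\tilde c_\alpha\in R_\varpi^+$ and $\tilde c_\alpha\equiv c_{{\rm Kum},\alpha}(x)\pmod{pR_\varpi}$. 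Using the projector identity $c_{{\rm Kum},\alpha}c_{{\rm Kum},\beta}=\delta_{\alpha,\beta}c_{{\rm Kum},\alpha}$ together with the fact that each $\tilde c^{(n)}_\alpha$ is an eigenvector mod~$p$ allows one to iterate the approximation $c_{{\rm Kum},\alpha}(x)\equiv\tilde c_\alpha\pmod{p^nR_\varpi}$, and closedness of $R_\varpi^+$ then gives $c_{{\rm Kum},\alpha}(x)\in\bigcap_n(R_\varpi^++p^nR_\varpi)=R_\varpi^+$.

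The derivative congruence is then a direct computation: applying the Leibniz rule to $c_{{\rm Kum},\alpha}(x)=\varphi_{\rm Kum}(x_\alpha)u_{{\rm Kum},\alpha}$ together with the identity $\partial_{{\rm Kum},j}\circ\varphi_{\rm Kum}=p\,\varphi_{\rm Kum}\circ\partial_{{\rm Kum},j}$ (which holds on the generators $X_i$ and extends by linearity and continuity) and $\partial_{{\rm Kum},j}u_{{\rm Kum},\alpha}=\alpha_ju_{{\rm Kum},\alpha}$ yields
\[
\partial_{{\rm Kum},j}c_{{\rm Kum},\alpha}(x)-\alpha_jc_{{\rm Kum},\alpha}(x)=p\,\varphi_{\rm Kum}(\partial_{{\rm Kum},j}x_\alpha)\,u_{{\rm Kum},\alpha}.
\]
The left-hand side lies in $R_\varpi^+$ (since $c_{{\rm Kum},\alpha}(x)\in R_\varpi^+$ by the integrality just proved, and $\partial_{{\rm Kum},j}$ preserves $R_\varpi^+$), so $p$-saturation of $R_\varpi^+$ in $R_\varpi$ places the right-hand side in $pR_\varpi^+$.

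The main obstacle is the integrality claim $c_{{\rm Kum},\alpha}(x)\in R_\varpi^+$ in part~(ii). The unique $x_\alpha$ pinned down by~(i) is only known mod~$p$ (via Lemma~\ref{ku1}(iii)) to lie in $X_0^{-h-1}R_\varpi^+/p$, so $\varphi_{\rm Kum}(x_\alpha)u_{{\rm Kum},\alpha}$ can a~priori carry a pole in $X_0$ of order as large as $(h+1)p-\alpha_0$. One cannot perform the iteration directly in the form $\varphi_{\rm Kum}(\cdot)u_{{\rm Kum},\alpha}$ while keeping each stage integral, so the required cancellation has to be extracted indirectly via the auxiliary integral decomposition, the orthogonality of the projectors $c_{{\rm Kum},\alpha}$, and repeated invocation of $p$-saturation and closedness of $R_\varpi^+\subset R_\varpi$ to peel off one power of~$p$ at a time.
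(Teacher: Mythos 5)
Part (i) is correct; the uniqueness step really uses that the $u_{{\rm Kum},\alpha}$ form a $\varphi_{\rm Kum}(R_\varpi/p)$-basis of $R_\varpi/p$ (Lemma~\ref{ku1}(ii)) rather than literal non-zero-divisibility, but the successive-approximation skeleton is sound.

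Part (ii) has a genuine gap in both integrality claims. As you yourself observe, the iteration from (i) does not stay inside $R_\varpi^+$ once the components with $\alpha\neq 0$ enter: already $x^{(1)}=p^{-1}\big(x-\sum_\alpha\varphi_{\rm Kum}(x_{\alpha,0})u_{{\rm Kum},\alpha}\big)$ need not lie in $R_\varpi^+$, so $\bar x^{(1)}$ need not lie in $R_\varpi^+/p$ and Lemma~\ref{ku1}(iii) no longer controls $\bar x_0^{(1)}$; the claim $x_0=\sum_n p^n x_{0,n}\in R_\varpi^+$ is therefore not obtained ``directly''. The auxiliary integral decomposition $x=\sum_\alpha\tilde c_\alpha$ with $\tilde c_\alpha\in R_\varpi^+$ and $\tilde c_\alpha\equiv c_{{\rm Kum},\alpha}(x)\pmod{pR_\varpi}$ does exist, but since the $\tilde c^{(n)}_\alpha$ are arbitrary integral lifts and not elements of $\varphi_{\rm Kum}(R_\varpi)u_{{\rm Kum},\alpha}$, the mod-$p$ congruence cannot be upgraded: the vector $(\tilde c_\alpha-c_{{\rm Kum},\alpha}(x))_\alpha$ is $p$ times an essentially arbitrary vector summing to zero, and applying $c_{{\rm Kum},\alpha}$ to $x=\sum_\beta\tilde c_\beta$ gives $c_{{\rm Kum},\alpha}(x)-\tilde c_\alpha=\sum_{\beta\neq\alpha}c_{{\rm Kum},\alpha}(\tilde c_\beta)-\sum_{\beta\neq\alpha}c_{{\rm Kum},\beta}(\tilde c_\alpha)$, each term of which is only in $pR_\varpi$. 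The projector identity merely reproduces the base congruence; it does not yield $c_{{\rm Kum},\alpha}(x)\equiv\tilde c_\alpha\pmod{p^2R_\varpi}$, and $\tilde c_\alpha\neq c_{{\rm Kum},\alpha}(x)$ in general.

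A correct route is to lift the projectors rather than the decomposition, using the operators $\partial_{{\rm Kum},j}$, which manifestly preserve $R_\varpi^+$. The Lagrange interpolation operator
$$e_\alpha:=\prod_{j=0}^d\ \prod_{0\leq k\leq p-1,\ k\neq\alpha_j}\frac{\partial_{{\rm Kum},j}-k}{\alpha_j-k}$$
has coefficients in $\Z_p$ because every $\alpha_j-k$ is a unit of $\Z_p$, hence $e_\alpha(R_\varpi^+)\subset R_\varpi^+$; and by the diagonalizability of $\partial_{{\rm Kum},j}$ on $R_\varpi/p$ with spectrum in $\{0,\dots,p-1\}$ (proof of Lemma~\ref{ku1}), $e_\alpha$ reduces mod $p$ to $c_{{\rm Kum},\alpha}$. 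From $\partial_{{\rm Kum},j}\circ\varphi_{\rm Kum}=p\,\varphi_{\rm Kum}\circ\partial_{{\rm Kum},j}$ one sees that $\partial_{{\rm Kum},j}-\beta_j$ sends $\varphi_{\rm Kum}(R_\varpi)u_{{\rm Kum},\beta}$ into $p\,\varphi_{\rm Kum}(R_\varpi)u_{{\rm Kum},\beta}$, so $e_\alpha$ acts as $1+p(\cdot)$ on the $\alpha$-summand and as $p(\cdot)$ on the others; therefore $e_\alpha^{p^n}\to c_{{\rm Kum},\alpha}$ $p$-adically in ${\rm End}(R_\varpi)$. Each $e_\alpha^{p^n}$ preserves $R_\varpi^+$, and $R_\varpi^+$ is $p$-adically closed in $R_\varpi$ (your $p$-saturation argument makes the subspace and intrinsic $p$-adic topologies on $R_\varpi^+$ coincide, and $R_\varpi^+$ is complete), so $c_{{\rm Kum},\alpha}(R_\varpi^+)\subset R_\varpi^+$. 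Then $\varphi_{\rm Kum}(x_0)=c_{{\rm Kum},0}(x)\in R_\varpi^+$ and $\varphi_{\rm Kum}^{-1}(R_\varpi^+)=R_\varpi^+$ (use $\bar y^p\in R_\varpi^+/p\Rightarrow\bar y\in R_\varpi^+/p$ by integral closure, then $p$-saturation and completeness) give $x_0\in R_\varpi^+$. Your computation of the $\partial_{{\rm Kum},j}$-congruence and the final appeal to $p$-saturation are correct once these integralities are established.
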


We define $\psi_{\rm Kum}$ on $R_\varpi$ and $R_\varpi^+$ by the formula:
$$\psi_{\rm Kum}(x)= \varphi_{\rm Kum}^{-1}(c_{{\rm Kum},0}(x)).$$
A more conceptual definition of $\psi_{\rm Kum}$ is as follows:
$R_\varpi^+[X_0^{-1}]$ is an
extension of degree $p^{d+1}$ of $\varphi_{\rm Kum}(R_\varpi^+[X_0^{-1}])$,
with basis the $u_{{\rm Kum},\alpha}$'s. and since
$\varphi_{\rm Kum}(u_{{\rm Kum},\alpha})=u_{{\rm Kum},\alpha}^p$,
we have
$$\psi_{\rm Kum}(x)=p^{-(d+1)}\varphi_{\rm Kum}^{-1}\big({\rm Tr}_{ S/\varphi_{\rm Kum}( S)}(x)\big).$$
Note that $\psi_{\rm Kum}$ is not a ring morphism; it is a left inverse to $\varphi_{\rm Kum}$
and, more generally, we have $\psi_{\rm Kum}(\varphi_{\rm Kum}(x)y)=x\psi_{\rm Kum}(y)$.
We have
$$
\partial_{{\rm Kum},i}\circ\varphi_{\rm Kum}=p\,\varphi_{\rm Kum}\circ \partial_{{\rm Kum},i}\ \ 
\partial_{{\rm Kum},i}\circ\psi_{\rm Kum}=p^{-1}\,\psi_{\rm Kum}\circ \partial_{{\rm Kum},i}
\quad{\text{if $i=0,1,\dots,d$}}$$

The above explicit formula for
$\psi_{\rm Kum}$ extends, by continuity, to 
$ R_\varpi^{(0,v]+}$, $ R_\varpi^{(0,v]}$ and to maps
$ R_\varpi^{[u]}\to  R_\varpi^{[pu]}$,
$ R_\varpi^{[u,v]}\to  R_\varpi^{[pu,pv]}$ (surjective in all cases since $\psi_{\rm Kum}\circ\varphi_{\rm Kum}={\rm id}$).
The maps $x\mapsto c_{{\rm Kum},\alpha}(x)$ also extend and lead
to decompositions
$S=\oplus_\alpha S_\alpha$, for 
$S=R_{\varpi}^{\rm deco}$, with ${\rm deco}\in\{\ ,+,[u],(0,v]+,[u,v]\}$.
Since $\psi_{\rm Kum}(x)=\varphi_{\rm Kum}^{-1}(c_{{\rm Kum},0}(x))$, we have
$$S^{\psi_{\rm Kum}=0}=\oplus_{\alpha\neq 0}S_\alpha.$$

\begin{lemma}\label{prepsi}
If $S=R_{\varpi}^{\rm deco}$, and ${\rm deco}\in\{\ ,+,[u],(0,v]+,[u,v]\}$,
then $\partial_{{\rm Kum},j}=\alpha_j$ on
$S_\alpha/p S_\alpha^{\rm deco}$.
\end{lemma}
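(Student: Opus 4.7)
The plan is to write any $x\in S_\alpha$ in the explicit form $x=\varphi_{\rm Kum}(y)\,u_{{\rm Kum},\alpha}$ and then apply Leibniz. The congruence $\partial_{{\rm Kum},j}x\equiv \alpha_j x\pmod{pS_\alpha}$ will drop out almost immediately once this is set up.

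First I would identify $S_\alpha$ with the subset $\varphi_{\rm Kum}(S')\cdot u_{{\rm Kum},\alpha}\subseteq S$ for $S'$ the appropriate companion ring in the family. For $S=R_\varpi$ and $S=R_\varpi^+$ this is exactly Corollary~\ref{ku2}(i): every element of $S_\alpha$ is $c_{{\rm Kum},\alpha}(x)=\varphi_{\rm Kum}(y)u_{{\rm Kum},\alpha}$ for a unique $y$. For the remaining decorations $[u]$, $(0,v]+$, $[u,v]$ the same description follows from the continuous extensions of $c_{{\rm Kum},\alpha}$ and of $\psi_{\rm Kum}=\varphi_{\rm Kum}^{-1}\circ c_{{\rm Kum},0}$ discussed right before the lemma, which give exactly the decomposition $S=\oplus_\alpha S_\alpha$ with $S_\alpha$ of the same shape. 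Once this description is granted, for $x=\varphi_{\rm Kum}(y)u_{{\rm Kum},\alpha}\in S_\alpha$ I would compute, using Leibniz together with the already recorded identities $\partial_{{\rm Kum},j}\circ\varphi_{\rm Kum}=p\,\varphi_{\rm Kum}\circ\partial_{{\rm Kum},j}$ and $\partial_{{\rm Kum},j}u_{{\rm Kum},\alpha}=\alpha_j u_{{\rm Kum},\alpha}$:
$$\partial_{{\rm Kum},j}(x)=p\,\varphi_{\rm Kum}(\partial_{{\rm Kum},j}y)\,u_{{\rm Kum},\alpha}+\alpha_j\,\varphi_{\rm Kum}(y)\,u_{{\rm Kum},\alpha}=\alpha_j x+p\,\varphi_{\rm Kum}(\partial_{{\rm Kum},j}y)\,u_{{\rm Kum},\alpha}.$$
The remainder is again of the form $p\cdot\varphi_{\rm Kum}(y')\,u_{{\rm Kum},\alpha}$ with $y'=\partial_{{\rm Kum},j}y$, hence lies in $pS_\alpha$, which is exactly the claimed congruence.

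The only mildly delicate point is bookkeeping the shifts in decoration caused by $\varphi_{\rm Kum}$: for instance $\varphi_{\rm Kum}$ sends $R_\varpi^{[u,v]}$ only into $R_\varpi^{[u,v/p]}\subseteq R_\varpi^{[u,v]}$, so in writing $x=\varphi_{\rm Kum}(y)u_{{\rm Kum},\alpha}$ the element $y$ may lie in a ring with slightly shifted parameters. This causes no trouble, because $\varphi_{\rm Kum}(y)\,u_{{\rm Kum},\alpha}$ and $\varphi_{\rm Kum}(\partial_{{\rm Kum},j}y)\,u_{{\rm Kum},\alpha}$ both lie back in $S$ by construction, and the final membership $\partial_{{\rm Kum},j}x-\alpha_j x\in pS_\alpha$ is entirely internal to $S$. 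The Leibniz step itself is forced once the shape $x=\varphi_{\rm Kum}(y)u_{{\rm Kum},\alpha}$ is available, so the bulk of the argument is really the unwinding of the definition of $S_\alpha$ across the five decorations.
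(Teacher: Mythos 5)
Your Leibniz computation is clean and, for ${\rm deco}\in\{\ ,+\}$, it is essentially the argument behind Corollary~\ref{ku2}(ii), which is all the paper invokes in that case. However, there is a genuine gap in the decorated cases ${\rm deco}\in\{[u],(0,v]+,[u,v]\}$, which is exactly where the paper's proof departs from yours. Your identity $\partial_{{\rm Kum},j}x-\alpha_jx=p\,\varphi_{\rm Kum}(\partial_{{\rm Kum},j}y)\,u_{{\rm Kum},\alpha}$ exhibits divisibility by $p$ inside $R_\varpi$ (the $y$ supplied by Corollary~\ref{ku2} lives in $R_\varpi$), but the lemma asks for divisibility in $S_\alpha$ for $S=R_\varpi^{\rm deco}$. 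The step ``hence lies in $pS_\alpha$'' is the whole content of the lemma, and the remark that $\varphi_{\rm Kum}(\partial_{{\rm Kum},j}y)\,u_{{\rm Kum},\alpha}$ ``lies back in $S$ by construction'' is not justified. For $\alpha\neq 0$, what $\psi_{\rm Kum}$ controls is $y\,u_{{\rm Kum},\alpha}=\psi_{\rm Kum}(x\,u_{{\rm Kum},\alpha}^{p-1})$, and $u_{{\rm Kum},\alpha}$ is not a unit, so neither $y$ nor $\partial_{{\rm Kum},j}y$ is pinned down inside a companion ring that $\varphi_{\rm Kum}$ carries into $S$.

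Moreover, one cannot get ``$\in pS$'' from ``$\in S$ and $\in pR_\varpi$'': unlike $R_\varpi^+$, the decorated rings are not $p$-saturated. For instance $p^{\lceil v/e\rceil}X_0^{-1}\in R_\varpi^{[u,v]}\cap pR_\varpi$ while $p^{\lceil v/e\rceil-1}X_0^{-1}\notin R_\varpi^{[u,v]}$. This is precisely the mechanism that makes the $\{\ ,+\}$ case go through (there $R_\varpi^+\cap pR_\varpi=pR_\varpi^+$), and precisely what breaks in the decorated cases. The paper sidesteps the issue by writing $x=\sum_{k}p^{r_k}X_0^ka_k$ with $a_k\in S^+_{(\alpha_0-k,\alpha_1,\dots,\alpha_d)}$ and applying the $S^+$ statement term by term, so the needed factor of $p$ is produced in $S^+$ and is visibly compatible with the decoration. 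Making your version rigorous would require first establishing a statement about the ring in which $y$ (or $\partial_{{\rm Kum},j}y$) actually lives for each decoration, and at that point you would essentially be reproducing the paper's reduction to $S^+$.
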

\begin{proof}
If ${\rm deco}\in\{\ ,+\}$, this is part of Corollary~\ref{ku2}.
If ${\rm deco}\in\{[u],(0,v]+,[u,v]\}$, elements of $S^{\rm deco}$ are those
of the form $\sum_{k\in\Z}p^{r_k}X_0^ka_k$, where $a_k\in S^+$ goes to $0$ when
$k\to\infty$ and $r_k$ is determined
by ``deco''.  Now, if $x=\sum_{k\in\Z}p^{r_k}X_0^ka_k$, then
$c_{{\rm Kum},\alpha}(x)=\sum_{k\in\Z}p^{r_k}X_0^kc_{{\rm Kum},(\alpha_0-k,\alpha_1,\dots,\alpha_d)}(a_k)$, 
where $\alpha_0-k$ is to be understood
as its representative modulo $p$ between $0$ and $p-1$.
So, if $x\in S_\alpha$, we can assume that
$a_k\in S^+_{(\alpha_0-k,\alpha_1,\dots,\alpha_d)}$, for all $k$. 
A direct computation,
using the result for $S^+$, shows then that $\partial_{{\rm Kum},j}(X_0^ka_k)-\alpha_j X_0^ka_k\in pS^+$,
which allows to conclude. 
\end{proof}

\subsection{Cyclotomic theory}
Suppose that $K$ contains enough roots of unity throughout this subsection.
\subsubsection{The cyclotomic variable~$T$}
Let 
$$i=i(K), \quad e_0=[F_i:F]=(p-1)p^{i(K)-1},
\quad f=[K:F_i]=\tfrac{e}{e_0}, \quad {\rm and}\quad \zeta=\zeta_{p^i}.$$
 As $\zeta-1$ is a uniformiser of $F_i$,
we can find polynomials $A_0,\dots,A_{f-1}\in\O_F[T]$
with $v_T(A_i)\geq 1$ and $v_T(A_0)=1$, such that, if
$$Q(X_0,T)=X_0^f+A_{f-1}(T)X_0^{f-1}+\cdots+A_0(T),$$
then $Q(X_0,\zeta-1)$ is the minimal polynomial
of $\varpi$ over $F_i$ (of course the $A_i$ are not uniquely determined by these requirements).

Let $T\in r_\varpi^+$ be the solution of $Q(X_0,T)=0$.
Since $A_0=\alpha T+\cdots$, with $\alpha$ a unit in $\O_F^*$, and $v_T(A_i)\geq 1$
if $i\geq 1$, one gets (by successive approximations) that $T=-\alpha^{-1}X_0^f+\alpha_{f+1}X_0^{f+1}+\cdots
\in r_\varpi^+$.  In particular, {\it $X_0^{-f}T$ is a unit in $r_\varpi^+$}.
We have
$$r_\varpi^+=\O_F[[T,X_0]]/Q(X_0,T)=\oplus_{i=0}^{f-1}\O_F[[T]] X_0^i,$$
and $r_\varpi$ is an \'etale extension of $\O_F[[T]]\{T^{-1}\}$. This is a consequence
of the following, more precise, lemma.
\begin{lemma}\label{new1}
Let $Q'(X_0,T)=\frac{\partial Q}{\partial X_0}(X_0,T)$.

{\rm (i)} There exists $U\in\O_F[X_0]$ which is a unit in $\O_F[[X_0]]$,
$V,W\in\O_F[X_0,T]$, such that $Q'(X_0,T)=X_0^{\delta_K}U+pV+Q(X_0,T)W$.

{\rm (ii)} The image
$\Delta$ of $Q'(X_0,T)$ in $r_\varpi^+$ is invertible in $r_\varpi$
and 
$X_0^{-\delta_K}\Delta$ is a unit in
$\O_F[[X_0,\frac{p}{X_0^{\delta_K}}]]$.
\end{lemma}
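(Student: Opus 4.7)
\textbf{Proof plan for Lemma~\ref{new1}.}

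The strategy rests on identifying $r_\varpi^+$ with $\O_F[[X_0]]$ via Weierstrass preparation and then translating the different calculation for $K/F_i$ into an $X_0$-adic valuation statement.

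First I would apply Weierstrass preparation to $Q(X_0,T)$ viewed as a power series in $T$ over $\O_F[[X_0]]$. Because $A_0(T)=\alpha T+\cdots$ with $\alpha\in\O_F^*$, we have $\frac{\partial Q}{\partial T}(0,0)=\alpha$, so $Q$ is $T$-regular of order one and factors as $Q=u\cdot(T-t(X_0))$ with $u\in\O_F[[X_0,T]]^{*}$ and $t(X_0)\in X_0^{f}\O_F[[X_0]]^{*}$. (Setting $T=0$ and using $A_i(0)=0$ gives $X_0^{f}=-u(X_0,0)t(X_0)$, from which $t(X_0)$ has $X_0$-valuation exactly $f$ with unit leading coefficient.) Under $T\mapsto t(X_0)$ this realises $r_\varpi^+ = \O_F[[X_0]]$, with $X_0$ playing the role of $\varpi$ and $T$ the role of $\zeta-1$.

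Next I would exploit the fact that $Q(X_0,\zeta-1)$ is the minimal polynomial $P$ of $\varpi$ over $F_{i(K)}$, so $Q'(\varpi,\zeta-1)=P'(\varpi)$ generates the different $\goth d_{K/F_{i(K)}}$. Hence $v_K(P'(\varpi))=\delta_K$, which after passage to $r_\varpi^+=\O_F[[X_0]]$ says precisely that $\Delta:=Q'(X_0,t(X_0))$ satisfies $v_{X_0}(\Delta\bmod p)=\delta_K$. Write therefore
$$\Delta = X_0^{\delta_K}\,\widetilde U(X_0) + p\,g(X_0),\qquad \widetilde U\in\O_F[[X_0]]^{*},\ g\in\O_F[[X_0]].$$
For the polynomial version demanded in (i), truncate $\widetilde U$ to a polynomial $U\in\O_F[X_0]$ with $U(0)=\widetilde U(0)\in\O_F^{*}$, so $U$ is still a unit in $\O_F[[X_0]]$; since $Q'(X_0,T)-X_0^{\delta_K}U(X_0)$ is a polynomial in $\O_F[X_0,T]$ whose image in $r_\varpi^+=\O_F[[X_0]]$ lies in $p\cdot\O_F[[X_0]]$, one uses Euclidean division by $Q$ (which is monic in $X_0$ of degree $f$) together with the distinguished polynomial structure $Q=u(T-t(X_0))$ to write the discrepancy as $pV+QW$ with $V,W\in\O_F[X_0,T]$; the division algorithm terminates because $Q'-X_0^{\delta_K}U$ has bounded $T$-degree.

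For (ii), evaluating the identity of (i) in $r_\varpi^+$ kills the $QW$ term, giving $\Delta = X_0^{\delta_K}U(X_0)+p\,V(X_0,t(X_0))$ in $\O_F[[X_0]]$. Dividing by $X_0^{\delta_K}$,
$$X_0^{-\delta_K}\Delta = U(X_0) + \tfrac{p}{X_0^{\delta_K}}\,V(X_0,t(X_0))\ \in\ \O_F\bigl[\bigl[X_0,\tfrac{p}{X_0^{\delta_K}}\bigr]\bigr].$$
The ring $\O_F[[X_0,p/X_0^{\delta_K}]]$ is local with maximal ideal $(X_0,p/X_0^{\delta_K})$ and residue field $\O_F/p$; modulo this ideal our element reduces to $U(0)\in\O_F^{*}$, so it is a unit there, and in particular invertible in the overring $r_\varpi$, which gives both assertions of (ii).

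The main obstacle is the last step of (i), namely the passage from the power-series identity $\Delta=X_0^{\delta_K}\widetilde U+pg$ in $\O_F[[X_0]]$ to a polynomial identity in $\O_F[X_0,T]$; this is where the explicit Weierstrass factorisation $Q=u(T-t(X_0))$ and the bounded $T$-degree of $Q'$ are essential to absorb the truncation error into $pV+QW$ with $V,W$ polynomial.
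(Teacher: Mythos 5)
Your approach parallels the paper's: both reduce to showing $v_{X_0}(\overline{\Delta})=\delta_K$ in $k[[X_0]]$ via the different, then factor; the paper computes $v_{X_0}(\overline\Delta)$ from the explicit formula $\inf_{(k,p)=1}(fv_T(\overline A_k)+(k-1))$, while you use Weierstrass preparation $Q=u\cdot(T-t(X_0))$, which is a legitimate alternative yielding the same information. Note however that the step ``$v_K(P'(\varpi))=\delta_K$ says precisely that $v_{X_0}(\overline\Delta\bmod p)=\delta_K$'' uses that the reduction map $k[[X_0]]\to\O_K/p\cong k[X_0]/(X_0^e)$ does not kill $\overline\Delta$, which requires $\delta_K<e$, i.e.\ $v_p(\goth d_{K/F_{i(K)}})<1$; this is supplied by the standing hypothesis that $K$ has enough roots of unity, the paper invokes it at exactly this point, and you should too.

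The genuine gap is in the truncation step of (i). After replacing $\widetilde U$ by a polynomial $U$ with $U(0)=\widetilde U(0)$, the discrepancy is $\Delta-X_0^{\delta_K}U=X_0^{\delta_K}(\widetilde U-U)+pg$, and the term $X_0^{\delta_K}(\widetilde U-U)$ is \emph{not} in $p\,\O_F[[X_0]]$: it has high $X_0$-order, not high $p$-divisibility, so the asserted containment is false, and Euclidean division by $Q$ cannot repair it (modulo $p$ you would need $\overline\Delta/X_0^{\delta_K}$ to be a polynomial in $X_0$, and it is a genuine power series for general choices of the $A_k$). In fact the conclusion with $V,W$ literally in $\O_F[X_0,T]$ seems overstated: the only place the lemma is applied, in the proof of Proposition~\ref{LIFT}, the paper takes $V,W\in\O_F[[X_0,T]]$, and with power series the truncation is unnecessary. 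Your identity $\Delta=X_0^{\delta_K}\widetilde U+pg$, read in $r_\varpi^+=\O_F[[X_0,T]]/(Q)$, lifts directly: $Q'(X_0,T)-X_0^{\delta_K}\widetilde U(X_0)-p\,g(X_0)\in(Q)\subset\O_F[[X_0,T]]$, which already produces $W$. So drop the truncation and keep $\widetilde U$.
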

\begin{proof}
We have $v_p({\goth d}_{K/F_i})=v_p(Q'(\varpi,\zeta-1))$.
  Now, since
$v_p({\goth d}_{K/F_i})<1$ by assumption,
we have 
$$\delta_K=e\,v_p(Q'(\varpi,\zeta-1))
=\inf_{(k,p)=1}(fv_T(\overline A_k)+(k-1))=
v_{X_0}\big(\overline Q'(X_0,T)\big),$$
where $\overline{\phantom{A}}$ is the reduction modulo~$p$.
Hence $Q'(X_0,T)$ is an element of $r_\varpi^+$
congruent to $\alpha X_0^{\delta_K}$
mod~$(p,X_0^{\delta_K+1}, Q)$, where $\alpha$ is a unit in $\O_F$.
The lemma follows.
\end{proof}

\subsubsection{Filtration}
The morphism $r_\varpi^+\to \O_K$ sends $T$ to $\zeta-1$. Hence
the subring $\O_F[[T]]$ of $r_\varpi^+$ can be thought of as $r_{\zeta-1}^+$
and $\O_F[[T]]\{T^{-1}\}$ as $r_{\zeta-1}$.
In particular, the filtrations on $r_\varpi^+$ and its siblings $r_\varpi^{\rm PD}$, etc.
are also
given by the
order of vanishing at $T=\zeta-1$, which means
that {\it we can use $$P_0=\frac{(1+T)^{p^i}-1}{(1+T)^{p^{i-1}}-1}$$ instead of $P$
to define the filtrations.}

\medskip
  Set $t=p^i\log(1+T)$.
\begin{lemma}\label{19saint}
If $\frac{p-1}{p}\leq u\leq \frac{v}{p}<1<v$ and\footnote{This is an
additional condition only for $p=2$.}
 $\frac{1}{p}<u$, then:

{\rm (i)} $t$ belongs to $p\,r_\varpi^{[u,v]}$ and to $p\,r_\varpi^{[u,v/p]}$,

{\rm (ii)} $x\mapsto t^rx$ induces 
a $p^{r}$-isomorphism $r_\varpi^{[u,v]}\simeq F^rr_\varpi^{[u,v]}$
and a $p^{2r}$-isomorphism
$r_\varpi^{[u,v/p]}\simeq r_\varpi^{[u,v/p]}$.
\end{lemma}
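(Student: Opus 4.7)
The plan is to use the explicit power series $\log(1+T)=\sum_{n\ge 1}(-1)^{n+1}T^n/n$ together with a careful analysis of the zeros of $\log(1+T)$ and $P_0$ on the relevant annuli, via Newton polygons.

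For (i), I first observe that the spectral valuation of $T$ on either annulus $r_\varpi^{[u,v]}$ or $r_\varpi^{[u,v/p]}$ is bounded below by $u/e_0$, since $T=-\alpha^{-1}X_0^f\cdot(\text{unit})$ in $r_\varpi^+$, so $v_p(T)=fv_p(X_0)$ is minimal on the inner boundary $v_p(X_0)=u/e$. For each term of the log series we get $v_p(T^n/n)\ge nu/e_0-v_p(n)$. Writing $n=p^km$ with $(m,p)=1$, the hypothesis $u\ge(p-1)/p$ together with $e_0=(p-1)p^{i-1}$ yields $nu/e_0\ge p^{k-i}m$, and a case analysis on the sign of $k-i$ shows $nu/e_0-v_p(n)\ge 1-i$ always. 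Hence the spectral valuation of $\log(1+T)$ is $\ge 1-i$ on each annulus, so $t=p^i\log(1+T)$ lies in both $p\cdot r_\varpi^{[u,v]}$ and $p\cdot r_\varpi^{[u,v/p]}$.

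For (ii), first claim, I identify $F^r r_\varpi^{[u,v]}=P_0^r\cdot r_\varpi^{[u,v]}$ using the earlier remark that $P_0$ may replace $P$ in defining the filtration. Injectivity of multiplication by $t^r$ is clear. For the cokernel, I observe that on this annulus $P_0$ and $\log(1+T)$ have the same zeros with the same (simple) multiplicities: $P_0$ vanishes precisely at the $e_0$ shifted primitive $p^i$-th roots of unity, which sit on $v_p(X_0)=1/e\in[u/e,v/e]$; a hypothetical zero of $\log(1+T)$ at a primitive $p^j$-th root would lie at $v_p(X_0)=p^{i-j}/e$, and the assumptions $u\ge(p-1)/p$ and $v<p$ (with the extra $u>1/p$ for $p=2$) force $j=i$. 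Hence $g:=\log(1+T)/P_0$ is analytic and non-vanishing on the annulus. A local computation at $T=\zeta-1$, using $(\log(1+T))'|_{\zeta-1}=1/\zeta$ and the standard formula $v_p(P_0'(\zeta-1))=i-1/(p-1)$ coming from the different of $F_i/F$, gives $v_p(g(\zeta-1))=1/(p-1)-i$. A Newton polygon comparison between $\log(1+T)$ and $P_0$ on the annulus then gives $v(g^{-1})\ge i-1$ globally. Since $t^r=p^{ir}g^rP_0^r$, the required $p^r$-surjectivity reduces to $p^{r(1-i)}g^{-r}\in r_\varpi^{[u,v]}$, which follows from the last inequality.

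For (ii), second claim, the assumption $v/p<1$ implies that $P$ (hence $P_0$) is invertible in $r_\varpi^{[u,v/p]}$, so $F^r r_\varpi^{[u,v/p]}=r_\varpi^{[u,v/p]}$ and $t^r$ is a self-map. One needs $p^{2r}/t^r\in r_\varpi^{[u,v/p]}$. By the same root analysis $\log(1+T)$ has no zero in this annulus, so $1/\log(1+T)$ is analytic, and a Newton polygon argument (using that the maximum of $v_p(T)$ on $[u,v/p]$ is $v/(e_0p)$ together with the slope structure $1/((p-1)p^k)$ of $\log$) gives $v(\log(1+T))\le 2-i$. Multiplying through by $p^{ir}$ yields $v(p^{2r}/t^r)\ge 0$, hence the $p^{2r}$-isomorphism. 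The main obstacle is precisely this two-sided control: part (i) gives only lower bounds, but part (ii) requires matching upper bounds on $v_p(\log(1+T))$ and $v_p(g)$, which is a uniform Newton polygon comparison between the logarithm and the cyclotomic factor $P_0$ on the annulus. The case $p=2$ is most delicate because primitive $p^{i+1}$-roots of unity land on the inner boundary unless $u$ is strictly larger than $1/p$, which is exactly why the extra condition $\tfrac{1}{p}<u$ appears in the hypotheses.
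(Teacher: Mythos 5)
Your proposal follows the same overall route as the paper: for (i), expand $\log(1+T)$ and bound each term of the series on the annulus (your $n=p^km$ bookkeeping is just a restatement of the paper's minimization at $s=u/e$, $k=p^i$); for (ii), reduce to controlling the spectral valuation of $t$, resp.\ of $f=t/P_0$, from \emph{above} on the relevant annulus after verifying nonvanishing. Where you differ is in how you get the upper bound for $f$: you anchor with a local L'H\^opital computation $v_p(g(\zeta-1))=1/(p-1)-i$ using the different of $F_i/F$, and then invoke ``a Newton polygon comparison between $\log(1+T)$ and $P_0$'' as a black box, whereas the paper writes out the telescoping product $\log(1+T)=T\prod_{k\ge1}\frac{(1+T)^{p^k}-1}{p((1+T)^{p^{k-1}}-1)}$, which yields $f=((1+T)^{p^{i-1}}-1)\prod_{n\ge i+1}\frac{(1+T)^{p^{n}}-1}{p((1+T)^{p^{n-1}}-1)}$ and lets one bound $\sup_s v_s(f)$ factor by factor. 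The local value you compute sits at $s=1/e_0$, but $s\mapsto v_s(g)$ is \emph{increasing} on the annulus (with slope $p^{i-1}$), so the supremum is attained at the outer circle $s=v/e_0$, not at $1/e_0$; the ``Newton polygon comparison'' is precisely the step that propagates your local anchor out to that boundary, and it deserves to be spelled out (the paper's product formula handles it automatically). One further imprecision: $F^r r_\varpi^{[u,v]}$ is \emph{not} equal to $P_0^r r_\varpi^{[u,v]}$; since $v_s(P_0)=\min(e_0 s,1)$ is strictly positive, the latter is a proper subset and the identification already loses a factor $p^r$. The paper avoids this by phrasing the target bounds directly as $f^{-1}\in p^{-1}r_\varpi^{[u,v]}$ and $t^{-1}\in p^{-2}r_\varpi^{[u,v/p]}$. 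None of this changes the conclusion up to the allowed $p$-power constants, and your diagnosis of why the extra hypothesis $\tfrac1p<u$ is needed for $p=2$ (the zero of $\log$ at the primitive $p^{i+1}$-th roots) matches the paper's reasoning exactly.
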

\begin{proof}

For (i), we start from $t=\sum_{k\geq 1}\frac{(-1)^{k-1}p^i}{k}T^k$,
so we have to estimate $\inf_{s\in[u/e,v/e]}\inf_{k\geq 1}(i+ks-v_p(k))$.
The minimum is reached for $s=u/e$ and $k=p^i$, and is $\frac{pu}{p-1}\geq 1$,
which means that each of the terms of the series belongs to
$p\,r_\varpi^{[u,v]}$ (and, a fortiori, to $p\,r_\varpi^{[u,v/p]}$).

For (ii), we first check that
$f=\big(\frac{(1+T)^{p^i}-1}{(1+T)^{p^{i-1}}-1}\big)^{-1}t$ does not vanish
on $\frac{u}{e}\leq v_p(T)\leq\frac{v}{e}$ and
$t$ does not vanish on $\frac{u}{e}\leq v_p(T)\leq\frac{v}{pe}$
(this is ensured by the inequalities put on $u$ and $v$, as the valuations
of zeroes of $t$ are of the form $p^k\frac{1}{e}$ for $k\in\Z$, $k\leq i-1$, or $+\infty$).
This implies that $f$ is a unit in $r_\varpi^{[u,v]}[\frac{1}{p}]$
and $t$ is a unit in $r_\varpi^{[u,v/p]}[\frac{1}{p}]$.
To conclude we need to show that $f^{-1}\in p^{-1}r_\varpi^{[u,v]}$
and $t^{-1}\in p^{-2}r_\varpi^{[u,v/p]}$.
Equivalently, we need to check that, if $s\in[u/e,v/e]$
(resp.~$s\in[u/e,v/pe]$), $\inf_{v_p(T)=s}v_p(f(T)^{-1})\geq -1$
(resp.~$\inf_{v_p(T)=s}v_p(t^{-1})\geq -2$).
By additivity of the valuation on a circle, this amounts to checking
that $\inf_{v_p(T)=s}v_p(f(T))\leq 1$
(resp.~$\inf_{v_p(T)=s}v_p(t)\geq 2$). 

$\bullet$ For $t$ on $s\in[u/e,v/pe]$, we have to estimate
$\max_{s\in[u/e,v/pe]}\inf_{k\geq 1}(i+ks-v_p(k))$.
The maximum is reached for $s=\frac{v}{pe}$ and, 
taking $k=p^i$, we see that it is $\leq \frac{p^iv}{pe}<\frac{p}{p-1}\leq 2$.

$\bullet$ For $f$ on $s\in[u/e,v/e]$, we use the formula
$f=((1+T)^{p^{i-1}}-1)\prod_{n\geq i+1}\frac{(1+T)^{p^{n}}-1}{p((1+T)^{p^{n-1}}-1)}$.
Again the maximum of $\inf_{v_p(T)=s}v_p(f(T))$ is obtained for $s=\frac{v}{pe}$
and, since $1>\frac{v}{p}>\frac{1}{p}$, we have
$$\inf_{v_p(T)=s}v_p\big(\frac{(1+T)^{p^{n}}-1}{p((1+T)^{p^{n-1}}-1)}\big)=0,\quad{\text{if $n\geq i+1$}}$$
(the constant term, i.e.~$1$, has the minimum valuation because
$\frac{v}{p}>\frac{1}{p}$),
and 
(because $\frac{v}{p}<1$)
$$\inf_{v_p(T)=s}v_p\big((1+T)^{p^{i-1}}-1\big)\leq p^{i-1}s\leq \frac{1}{p-1}\leq 1.$$
This allows to conclude.
\end{proof}

\subsubsection{Extension of morphisms}

Let $R^+_{\zeta-1,\Box}=\O_F[[T]]\{X_1,\dots,X_d\}$. We have
$$R_{\varpi,\Box}^+=
R^+_{\zeta-1,\Box}\{X_0,X_{d+1},X_{d+2}\}/\big(Q(X_0,T),X_{d+1}X_{a+1}\cdots X_{a+b}-X_0^h,
X_{d+2}X_1\cdots X_a-1\big).$$

Let ${\mathcal M}$ be the monoid generated by $X_0,X_1,\dots,X_{d+2}$,
and let\footnote{One could consider setting
$\delta_R=\lceil\frac{\delta_K}{f}\rceil$ if $b=0$
and $\delta_R=\lceil\frac{\delta_K}{f}\rceil+1$ if $b\neq 0$ in order to differentiate
between good and semi-stable reduction.  Such a distinction would only be useful when $K$
is unramified, so we will not bother.}
$\delta_R=\lceil\frac{\delta_K}{f}\rceil$, so that
$$f\delta_R-\delta_K\geq 0\quad{\rm and}\quad
\delta_R+1<\tfrac{e_0}{2p}=\tfrac{1}{2}(p-1)p^{i-2},\ {\text{since $K$ has enough roots of unity.}}$$
\begin{proposition}\label{LIFT}
Let:

$\bullet$ $\lambda:R_{\zeta-1,\Box}^+\to\Lambda$, a continuous ring morphism,
such that $\lambda(T)^{2\delta_R}$ divides $p$ and 
$\Lambda$ is separated and complete for the $\big(\frac{p}{\lambda(T)^{2\delta_R}}\big)$-adic topology.

$\bullet$ $\beta:{\mathcal M}\to \Lambda$, a morphism of monoids, with $\beta(X_i)=u_i\lambda(X_i)$,
if $1\leq i\leq d$, $\beta(X_0)^f=\lambda(T)u$,
where $u_i,u\in\Lambda^*$,

$\bullet$ $\mu\in r_{\zeta-1}^+$ such that $\frac{\lambda(\mu)}{\lambda(T)^{2\delta_R}}\in\Lambda$
and is topologically nilpotent in $\Lambda$.

\noindent Suppose that:

$\bullet$  $\lambda(\mu)$ divides $Q(\beta(X_0),\lambda(T))$,

$\bullet$ There exists an ideal $J$ of $\Lambda$, containing 
$\frac{\lambda(\mu)}{\lambda(T)^{1+\delta_R}}$, such that $\Lambda$ is separated and complete
for the $J$-adic topology, and such that
$\lambda:R_{\zeta-1,\Box}^+\to\Lambda/J$ extends to $\lambda_\varpi:R_\varpi^+\to
\Lambda/J$,
with $\lambda_\varpi=\beta$ on ${\mathcal M}$.

Then $\lambda$ admits a unique extension to $R_\varpi^+$, lifting $\lambda_\varpi$,
with $\lambda(X_0)\in \beta(X_0)+\frac{\lambda(\mu)}{\lambda(T)^{\delta_R}}\Lambda$.
\end{proposition}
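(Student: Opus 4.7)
The plan is to build the extension in two stages via the factorisation $R^+_{\zeta-1,\Box}\to R^+_{\varpi,\Box}\to R^+_\varpi$: the first step adjoins $X_0,X_{d+1},X_{d+2}$ subject to the three displayed relations, and the second is étale. Once we have produced $\lambda(X_0)\in\Lambda$ solving $Q(X_0,\lambda(T))=0$ near $\beta(X_0)$, the other two relations force $\lambda(X_{d+2})=(\lambda(X_1)\cdots\lambda(X_a))^{-1}$ and $\lambda(X_{d+1})=\lambda(X_0)^h/(\lambda(X_{a+1})\cdots\lambda(X_{a+b}))$; both expressions make sense in $\Lambda$ because the monoid morphism $\beta$ supplies the needed inverse and divisibility, up to the units $u_i,u$. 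The extension from $R^+_{\varpi,\Box}$ to $R^+_\varpi$ is then immediate from Remark~\ref{extr6.1}: since $R^+_\varpi$ is étale over $R^+_{\varpi,\Box}$ with defining equations $\tilde Q_j=0$, and $\lambda_\varpi$ already provides an approximate solution modulo the $J$-adically closed ideal $J\subset\Lambda$, there is a unique ring map $R^+_\varpi\to\Lambda$ lifting $\lambda$ and reducing to $\lambda_\varpi$ modulo $J$.

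The core step is the construction of $\lambda(X_0)$ via the implicit function theorem, Proposition~\ref{extr6}, applied with a single unknown $Z=X_0$ and approximate solution $Z_\lambda=\beta(X_0)$. The hypothesis $\lambda(\mu)\mid Q(\beta(X_0),\lambda(T))$ gives $Q^\lambda(Z_\lambda)\in\lambda(\mu)\Lambda$, supplying the first input. Lemma~\ref{new1} writes $Q'(X_0,T)=X_0^{\delta_K}U+pV+Q(X_0,T)W$ with $U$ a unit in $\O_F[[X_0]]$; evaluating at $(\beta(X_0),\lambda(T))$ one finds that $Q'(\beta(X_0),\lambda(T))$ agrees with $\beta(X_0)^{\delta_K}U(\beta(X_0))$ up to terms in $p\Lambda+\lambda(\mu)\Lambda$. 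Because $\beta(X_0)^f=\lambda(T)u$ with $u$ a unit and $f\delta_R\geq\delta_K$, the element
$$H_\lambda:=\frac{\beta(X_0)^{f\delta_R-\delta_K}}{\lambda(T)^{\delta_R}u^{\delta_R}\,U(\beta(X_0))}\in\lambda(T)^{-\delta_R}\Lambda$$
is an exact inverse of the leading term $\beta(X_0)^{\delta_K}U(\beta(X_0))$. Taking $z=\lambda(T)^{\delta_R}$ and $I$ a suitable ideal containing $\lambda(\mu)\Lambda$ together with the required amount of $p\Lambda$, the conditions $z^{-2}I\subset\Lambda$ and the associated separation/completeness follow from $\lambda(\mu)/\lambda(T)^{2\delta_R}$ being topologically nilpotent combined with the $(p/\lambda(T)^{2\delta_R})$-adic completeness of $\Lambda$, while $H_\lambda J^\lambda(Z_\lambda)-1\in z^{-1}I$ is a direct computation. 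Proposition~\ref{extr6} then produces a unique $\lambda(X_0)\in\beta(X_0)+z^{-1}I\subset\beta(X_0)+\lambda(\mu)\lambda(T)^{-\delta_R}\Lambda$ solving $Q(X_0,\lambda(T))=0$. Compatibility with $\lambda_\varpi$ modulo $J$ follows from $\lambda(\mu)\lambda(T)^{-\delta_R}=\lambda(T)\cdot(\lambda(\mu)\lambda(T)^{-1-\delta_R})\in\lambda(T)\cdot J\subset J$, and from $\beta$ already matching $\lambda_\varpi$ on $\mathcal M$.

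The main obstacle is calibrating the ideal $I$ (and the scaling $z$) so that three competing requirements are met simultaneously: $I$ must be large enough to absorb both $Q(\beta(X_0),\lambda(T))$ and the residual $H_\lambda J^\lambda(Z_\lambda)-1$ (which picks up the $pV$-contribution from Lemma~\ref{new1}); $z^{-2}I$ must be topologically nilpotent so that the contraction in the implicit function theorem converges in the right topology; and $z^{-1}I$ must still be small enough to localise the root in $\beta(X_0)+\lambda(\mu)\lambda(T)^{-\delta_R}\Lambda$. The assumption that $K$ has enough roots of unity, i.e.\ $\delta_R+1<e_0/(2p)$, together with the numerical hypotheses tying $p$, $\lambda(\mu)$ and $\lambda(T)^{2\delta_R}$, is exactly what makes these inequalities simultaneously satisfiable. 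Uniqueness in both stages is built into Proposition~\ref{extr6} and Remark~\ref{extr6.1}, which combines to give the global uniqueness asserted in the statement.
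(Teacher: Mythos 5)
Your overall architecture matches the paper's: you single out $\lambda(X_0)$ as the genuinely hard variable and produce it via the implicit function theorem (Proposition~\ref{extr6}) with $z=\lambda(T)^{\delta_R}$ and $Z_\lambda=\beta(X_0)$; the other variables ($X_{d+1}$, $X_{d+2}$, and the \'etale coordinates $Z_j$) are then handled by explicit formulas and by Remark~\ref{extr6.1} exactly as in the paper.

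However, there is a gap in the core step, and it is precisely the spot you flag as ``the main obstacle.'' You choose
$$H_\lambda=\frac{\beta(X_0)^{f\delta_R-\delta_K}}{\lambda(T)^{\delta_R}u^{\delta_R}\,U(\beta(X_0))},$$
which is the exact inverse of the leading term $\beta(X_0)^{\delta_K}U(\beta(X_0))$, but only an \emph{approximate} inverse of the full Jacobian $J^\lambda(Z_\lambda)=Q'(\beta(X_0),\lambda(T))=\beta(X_0)^{\delta_K}U+pV+QW$. The residual
$$H_\lambda J^\lambda(Z_\lambda)-1=H_\lambda\big(pV(\beta(X_0),\lambda(T))+Q(\beta(X_0),\lambda(T))W(\beta(X_0),\lambda(T))\big)$$
has a $QW$-part that does land in $z^{-1}(\lambda(\mu))$, but also a $pV$-part that lands only in $z^{-1}p\Lambda$, and there is no hypothesis giving $p\in(\lambda(\mu))$ (e.g.\ in Proposition~\ref{extr14} one has $\lambda(\mu)=\xi$, which does not divide $p$). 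You therefore enlarge $I$ to absorb the $pV$-term, but Proposition~\ref{extr6} then only localizes the root in $\beta(X_0)+z^{-1}I$ for that larger $I$, whereas the statement requires $\lambda(X_0)\in\beta(X_0)+\frac{\lambda(\mu)}{\lambda(T)^{\delta_R}}\Lambda$, forcing $I\subseteq(\lambda(\mu))$. You cannot have both: the numerical hypotheses ($\delta_R+1<e_0/(2p)$, etc.) do not reconcile these two demands, because the obstruction is an algebraic divisibility ($p\in(\lambda(\mu))$), not an estimate.

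The fix is the one the paper uses: take $H_\lambda=Q'(\beta(X_0),\lambda(T))^{-1}$, the \emph{exact} inverse, so that $H_\lambda J^\lambda(Z_\lambda)-1=0$ and the ideal can stay $I=(\lambda(\mu))$. The real content is then that this inverse exists and lies in $z^{-1}\Lambda$: factor out $\alpha=\beta(X_0)^{f\delta_R-\delta_K}u^{\delta_R}$ to write $zH_\lambda=\alpha\cdot(\text{unit}+\text{small correction})^{-1}$, where the correction comes from the $pV$ and $QW$ terms divided by $\lambda(T)^{\delta_R}$; these corrections are multiples of $\frac{p}{\lambda(T)^{2\delta_R}}$ and $\frac{\lambda(\mu)}{\lambda(T)^{2\delta_R}}$, which are topologically nilpotent by hypothesis, so the big parenthesis is invertible in $\Lambda$. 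This is exactly where the completeness and nilpotence hypotheses do their work—inverting a Neumann series—rather than in calibrating $I$.
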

\begin{proof}
We will proceed in three steps, extending (uniquely) $\lambda$ first from $r_{\zeta-1}^+$ to
$r_\varpi^+$ (this boils down to defining~$\lambda(X_0)$), then from
$r_\varpi^+\{X_1,\dots,X_d\}$ to $R_{\varpi,\Box}^+$, and finally from $R_{\varpi,\Box}^+$
to $R_\varpi^+$.

For the first step, we use Proposition ~\ref{extr6}, with:

$\bullet$ $\Lambda_1=r_{\zeta-1}^+$, $\Lambda'_1=r_\varpi^+$ (and $J=Q'(X_0,T)$), $\Lambda_2=\Lambda$,

$\bullet$ $z=\lambda(T)^{\delta_R}$, $Z_\lambda=\beta(X_0)$, 
$H_\lambda=J^\lambda(Z_\lambda)^{-1}=
Q'(\beta(X_0),\lambda(T))^{-1}$, and $I=(\lambda(\mu))$.

(We have $Q^\lambda(Z_\lambda)=Q(\beta(X_0),\lambda(T))\in I$ by assumption,
and $H_\lambda J^\lambda(Z_\lambda)-1=0$, so the requirements of that proposition
are satisfied provided we show that $H_\lambda\in z^{-1}\Lambda$. But
$H_\lambda=Q'(\beta(X_0),\lambda(T))^{-1}$ and there exists a unit $U$ of $\O_F[[X_0]]$ and
$V,W\in\O_F[[X_0,T]]$ such that $Q'(X_0,T)=X_0^{\delta_K}U+pV+Q(X_0,T)W$ (Lemma~\ref{new1}).
  Hence, setting $\alpha= \beta(X_0)^{f\delta_R-\delta_K}u^{\delta_R}\in\Lambda$, one can write
$zH_\lambda$ as 
$$zH_\lambda=\alpha\Big(U(\beta(X_0))+
\alpha
\big(\tfrac{p}{\lambda(T)^{\delta_R}}V(\beta(X_0),\lambda(T))+
\tfrac{Q(\beta(X_0),\lambda(T))}{\lambda(T)^{\delta_R}}
W(\beta(X_0),\lambda(T))\big)\Big)^{-1},$$
and the expression in the big parenthesis is indeed a unit in $\Lambda$ since
$U(\beta(X_0))$ is, and $\tfrac{Q(\beta(X_0),\lambda(T))}{\lambda(T)^{\delta_R}}$ 
and $\tfrac{p}{\lambda(T)^{\delta_R}}$ are 
divisible by $\frac{\lambda(\mu)}{\lambda(T)^{2\delta_R}}$ and $\tfrac{p}{\lambda(T)^{2\delta_R}}$
which are assumed to be topologically
nilpotent in $\Lambda$.) In particular,
$\lambda(X_0)\in \beta(X_0)+\frac{\lambda(\mu)}{\lambda(T)^{\delta_R}}\Lambda$.

For the second step, we must (and can) extend $\lambda$ by setting:
$$\lambda(X_{d+1})=\big(\tfrac{\lambda(X_0)}{\beta(X_0)}\big)^h
(u_{a+1}\cdots u_{a+b})^{-1}\beta(X_{d+1}),
\quad
\lambda(X_{d+2})=(u_1\cdots u_a)^{-1}\beta(X_{d+2}).$$
(Note that $\beta(X_0)$ divides $\lambda(T)$ by assumption, hence
$\tfrac{\lambda(X_0)}{\beta(X_0)}\in 1+\frac{\lambda(\mu)}{\lambda(T)^{1+\delta_R}}\Lambda\subset\Lambda$ makes sense.)

For the third step, we use Remark~\ref{extr6.1} with
$\Lambda_1=R_{\varpi,\Box}^+$, $\Lambda'_1=R_\varpi^+$, $\Lambda_2=\Lambda$,
$Z_\lambda=(Z_{\lambda,1},\dots,Z_{\lambda,t})$ where $Z_{\lambda,i}$ is an arbitrary
lift of $\lambda_\varpi(Z_i)$, and $I=J$.

\end{proof}

\subsubsection{The Frobenius $\varphi_{\rm cycl}$}
We endow $R^+_{\zeta-1,\Box}$ with the Frobenius $\varphi_{\rm cycl}$
sending $T$ to $(1+T)^p-1$ and $X_i$ to $X_i^p$, if $1\leq i\leq d$.
Then $\varphi_{\rm cycl}(x)-x^p\in p R^+_{\zeta-1,\Box}$, if $x\in R^+_{\zeta-1,\Box}$.
Let
 $$v_R=\tfrac{e_0}{2p\delta_R}=\tfrac{e}{2pf\delta_R}\quad{\rm and}\quad
R_\varpi^{(0,v_R)}=R_\varpi^+[[\tfrac{p}{T^{2p{\delta_R}}}]]=R_\varpi^+[[\tfrac{p}{X_0^{2pf\delta_R}}]],
\ R_\varpi^{(0,pv_R)}=R_\varpi^+[[\tfrac{p}{T^{2{\delta_R}}}]]=R_\varpi^+[[\tfrac{p}{X_0^{2f\delta_R}}]].$$
Note that $v_R>1$, by assumption.
\begin{proposition}\label{extr8}
$\varphi_{\rm cycl}$ admits a unique extension to $R_\varpi^+$ such that
$$
\varphi_{\rm cycl}(X_0)-X_0^p\in\tfrac{p}{T^{p\delta_R}}R_\varpi^{(0,v_R)}
\quad{\rm and}\quad
\varphi_{\rm cycl}(x)-x^p\in\tfrac{p}{T^{p(1+{\delta_R})}}R_\varpi^{(0,v_R)},
\ {\text{if $x\in R_\varpi^+$.}}$$
\end{proposition}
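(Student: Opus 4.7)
The plan is to apply Proposition \ref{LIFT} to lift $\varphi_{\rm cycl}$ from $R^+_{\zeta-1,\Box}$ to $R_\varpi^+$, with target $\Lambda = R_\varpi^{(0,v_R)}$. Concretely, take $\lambda = \varphi_{\rm cycl}|_{R^+_{\zeta-1,\Box}}$, $\mu = p \in r_{\zeta-1}^+$, ideal $J = (p/\lambda(T)^{2\delta_R})\Lambda$, and the monoid map $\beta : \mathcal M \to \Lambda$ with $\beta(X_i) = X_i^p$ for $0 \le i \le d$ (and $\beta(X_{d+1}), \beta(X_{d+2})$ forced by the monoid relations). The preliminary observation underpinning everything is that $\lambda(T) = (1+T)^p - 1$ factors in $\Lambda$ as $T^p \cdot u$ with $u = 1 + p/T^{p-1} + \cdots \in \Lambda^\times$, since $p/T^{p-1}$ is topologically nilpotent in $\Lambda$ (using $p - 1 \le 2p\delta_R$). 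Combined with Lemma \ref{new1}, this gives $\beta(X_0)^f = X_0^{pf} = \lambda(T)\cdot(\text{unit})$, as required, and identifies the ideal $(p/\lambda(T)^{2\delta_R})\Lambda$ with the defining ideal $(p/X_0^{2pf\delta_R})\Lambda$ of $\Lambda$.

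The hypotheses of Proposition \ref{LIFT} are then verified. The divisibility $\lambda(T)^{2\delta_R}\mid p$ in $\Lambda$ and the topological nilpotence of $p/\lambda(T)^{2\delta_R}$ are immediate from the above. The divisibility $p \mid Q^\varphi(X_0^p, (1+T)^p - 1)$ in $R_\varpi^+$ follows from $Q(X_0, T) = 0$: since $F/\Q_p$ is unramified, $\varphi(c) \equiv c^p \pmod p$ for $c \in \O_F$, whence $Q^\varphi(X^p, Y^p) \equiv Q(X, Y)^p \pmod p$; applied to $(X_0, T)$ this gives $Q^\varphi(X_0^p, T^p) \equiv 0 \pmod p$, and replacing $T^p$ by $(1+T)^p - 1$ costs only a further $p$-multiple. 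Finally, $p/\lambda(T)^{1+\delta_R} = (p/\lambda(T)^{2\delta_R})\cdot\lambda(T)^{\delta_R - 1} \in J$ (assuming $\delta_R \ge 1$), one has $p \in J$, and $\lambda \bmod J$ is the natural Frobenius $X_i \mapsto X_i^p$ on $R_\varpi^+/p$, agreeing with $\beta$ on $\mathcal M$.

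Proposition \ref{LIFT} then produces the unique extension $\varphi_{\rm cycl} : R_\varpi^+ \to R_\varpi^{(0,v_R)}$ with $\varphi_{\rm cycl}(X_0) - X_0^p \in (p/T^{p\delta_R}) R_\varpi^{(0,v_R)}$, which is the first congruence. For the second, set $I = (p/T^{p(1+\delta_R)})R_\varpi^{(0,v_R)}$; since $pR_\varpi^+ \subset I$, the identity $\varphi_{\rm cycl}(xy) - (xy)^p = (\varphi_{\rm cycl}(x) - x^p)\varphi_{\rm cycl}(y) + x^p(\varphi_{\rm cycl}(y) - y^p)$ shows that the elements $x \in R_\varpi^+$ with $\varphi_{\rm cycl}(x) - x^p \in I$ form a subring, so it suffices to check the congruence on generators. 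The error vanishes on $X_1,\dots,X_d$ and on $X_{d+2}$ (since $X_{d+2}X_1\cdots X_a = 1$ forces $\varphi_{\rm cycl}(X_{d+2}) = X_{d+2}^p$); on $X_0$ the error lies in the smaller ideal $(p/T^{p\delta_R})\Lambda \subset I$; on $X_{d+1}$, the relation $X_{d+1}X_{a+1}\cdots X_{a+b} = X_0^h$ gives $\varphi_{\rm cycl}(X_{d+1}) = X_{d+1}^p(1 + \epsilon/X_0^p)^h$ with $\epsilon \in (p/T^{p\delta_R})\Lambda$, and using $X_0^{pf} \sim T^p$ up to units one has $\epsilon/X_0^p \sim X_0^{p(f-1)}\epsilon/T^p \in I$. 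The étale extension from $R^+_{\varpi,\Box}$ to $R_\varpi^+$ is then handled by Remark \ref{extr6.1} and preserves the bound.

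The principal technical point is the estimate on $X_{d+1}$: the extra factor of $T^p$ in the denominator (the enlargement from $T^{p\delta_R}$ to $T^{p(1+\delta_R)}$) is precisely the conversion needed to replace $X_0^p$ by $T^p$ via $T \sim X_0^f$, which requires $f\delta_R \ge 1$ (automatic since $f, \delta_R \ge 1$). The enough-roots-of-unity hypothesis $\delta_R + 1 < e_0/(2p)$ ensures $v_R > 1$ and that all valuation inequalities needed by Proposition \ref{LIFT} are satisfied.
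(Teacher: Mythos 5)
Your overall strategy -- apply Proposition~\ref{LIFT} with $\Lambda = R_\varpi^{(0,v_R)}$, $\lambda = \varphi_{\rm cycl}$, $\mu = p$, $\beta(x) = x^p$ -- is exactly the paper's. Your preliminary reduction ($\lambda(T)/T^p$ a unit in $\Lambda$, $Q^\varphi(X_0^p,(1+T)^p-1) \equiv 0 \bmod p$) is also the same. Where you diverge is in the choice of $J$, and this is the one place where the paper takes a shortcut that you miss.

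You set $J = (p/\lambda(T)^{2\delta_R})\Lambda \sim (p/T^{2p\delta_R})\Lambda$, the defining topological ideal of $\Lambda$. The paper instead takes $J = (p/T^{p(1+\delta_R)})$, which for $\delta_R \geq 2$ is a \emph{strictly smaller} ideal. The payoff of the smaller choice is that the ``lifting $\lambda_\varpi$ modulo $J$'' clause in the conclusion of Proposition~\ref{LIFT} then reads, verbatim, $\varphi_{\rm cycl}(x) - x^p \in J = \tfrac{p}{T^{p(1+\delta_R)}}\Lambda$ for \emph{all} $x\in R_\varpi^+$ -- i.e., the second congruence of the proposition is literally the statement that the constructed extension reduces to $x \mapsto x^p$ modulo $J$. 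The hypothesis $J \ni \lambda(\mu)/\lambda(T)^{1+\delta_R}$ is satisfied because $p/\lambda(T)^{1+\delta_R}$ differs from the generator of $J$ by a unit, and $\lambda_\varpi(x)=x^p$ is a well-defined ring map $R_\varpi^+ \to \Lambda/J$ since $p \in J$. So both congruences (and uniqueness) fall out of a single application of Proposition~\ref{LIFT} with no further work.

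Because you took the larger $J$, Proposition~\ref{LIFT} only hands you $\varphi_{\rm cycl}(x) - x^p \in (p/T^{2p\delta_R})\Lambda$, which is weaker than the claim for $\delta_R \geq 2$, and so you have to re-derive the stronger bound by hand. Your subring-plus-generators argument (twisted Leibniz for products, $p\in I$ for sums, explicit estimates on $X_0$ and $X_{d+1}$, Remark~\ref{extr6.1} for the \'etale generators $Z_j$) is in the right spirit and the individual estimates check out, but it is appreciably more work than the paper's one-line observation, and a few steps are treated somewhat summarily: (a) to pass from the monoid generators to all of $R_\varpi^+$ you need the set $\{x : \varphi_{\rm cycl}(x)-x^p \in I\}$ to be \emph{topologically closed}, not merely a subring -- this holds because $I$ is a principal ideal of a complete domain and $\varphi_{\rm cycl}$, $x\mapsto x^p$ are continuous, but it needs saying; (b) the \'etale step requires verifying $\tilde Q^\varphi(Z^p) \in I$, which in turn uses the bound already established on $R^+_{\varpi,\Box}$ together with $p \in I$ -- again fine, but compressed. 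Finally, you explicitly assume $\delta_R \geq 1$ in a couple of places (to get $p/\lambda(T)^{1+\delta_R} \in J$ and to show $p/T^{p-1}$ is topologically nilpotent); the paper's argument has the same implicit dependence, so this is not a defect of your proof relative to theirs, but it is worth flagging that $\delta_R = 0$ (i.e., $K = F_{i(K)}$) is not obviously covered by either reading.

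In short: correct in substance, same main tool, but you re-prove by hand a statement that is already built into the conclusion of Proposition~\ref{LIFT} if one simply chooses $J$ to be the target ideal of the second congruence rather than the defining ideal of $\Lambda$.
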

\begin{proof}
This is a direct consequence of Proposition~\ref{LIFT}, applied to
$\Lambda=R_\varpi^{(0,v_R)}$, $\lambda=\varphi_{\rm cycl}$,
$\beta(x)=x^p$, $\mu=p$ and $\lambda_\varpi(x)=x^p$, 
$J=\big(\tfrac{p}{T^{p(1+{\delta_R})}}\big)$, taking into account that
$\frac{\varphi_{\rm cycl}(T)}{T^p}=\frac{(1+T)^p-1}{T^p}$ is a unit in $\Lambda$.
(We have
$Q(X_0^p,(1+T)^p-1)\equiv Q(X_0,T)^p=0$ mod~$p$, hence $\mu$ divides
$Q(\beta(X_0),\lambda(T))$, 
which shows that the assumptions of Proposition~\ref{LIFT} are indeed fullfilled.)
\end{proof}

Now, $\frac{\varphi_{\rm cycl}(X_0)}{X_0^p}$ is a unit in $R_\varpi^{(0,v_R)}$.
Hence we can use
Remark~\ref{LIFT0} to extend $\varphi_{\rm cycl}$ to morphisms (still noted $\varphi_{\rm cycl}$)
$R_\varpi\to R_\varpi$, $R_\varpi^{(0,v]+}\to R_\varpi^{(0,v/p]+}$ and
$R_\varpi^{[u,v]}\to R_\varpi^{[u/p,v/p]}$, if $u\leq v< v_R$.
This Frobenius is admissible.

\begin{lemma}\label{patch2}
If $v<pv_R$, and if $x\in R_\varpi$ is such that $\varphi_{\rm cycl}(x)\in R_\varpi^{(0,v/p]+}$,
then $x\in R_\varpi^{(0,v]+}$.
\end{lemma}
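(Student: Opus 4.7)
The lemma is the converse of the inclusion $\varphi_{\rm cycl}(R_\varpi^{(0,v]+})\subseteq R_\varpi^{(0,v/p]+}$ obtained just above by combining Proposition~\ref{extr8} with Remark~\ref{LIFT0}. Geometrically, $R_\varpi^{(0,v]+}$ is the ring of functions integral on the annulus $\{0<v_p(X_0)\le v/e\}$, and $\varphi_{\rm cycl}$ is a near-Frobenius lift of $X_0\mapsto X_0^p$, which on annuli is the $p$-th power map sending $\{0<v_p(X_0)\le v/(pe)\}$ surjectively onto $\{0<v_p(X_0)\le v/e\}$. So integrality of $\varphi_{\rm cycl}(x)$ on the small annulus should force integrality of $x$ on the large one. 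My plan is to make this picture rigorous via spectral valuations.

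I would first extend $v^{(0,v]}$ from $R_\varpi^{(0,v]+}$ to all of $R_\varpi$ (with values in $\Z\cup\{-\infty\}$) and verify the characterization
\[
R_\varpi^{(0,v]+}=\{f\in R_\varpi:v^{(0,v]}(f)\ge 0\}.
\]
This is essentially built into the explicit description of $R_\varpi^{(0,v]+}$ as the image of the surjection $(x_n)_{n\in\N}\mapsto\sum_n x_n p^{\lceil nv/e\rceil}X_0^{-n}$ from Remark~\ref{LIFT0}: such a series has non-negative spectral valuation on the annulus, and conversely any $f\in R_\varpi$ with $v^{(0,v]}(f)\ge 0$ admits a negative-power expansion satisfying exactly this $p$-adic growth.

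I would then establish the Frobenius compatibility
\[
v^{(0,v/p]}(\varphi_{\rm cycl}(f))=v^{(0,v]}(f),\qquad f\in R_\varpi.
\]
The hypothesis $v<pv_R$ combined with Proposition~\ref{extr8} gives $\varphi_{\rm cycl}(X_0)=X_0^p(1+\eta)$ with $\eta$ topologically nilpotent on $\{0<v_p(X_0)\le v/(pe)\}$, so a Hensel-type argument shows that for every continuous $\O_F$-algebra homomorphism $\alpha$ from $R_\varpi$ to a complete valued extension of $F$ with $v_p(\alpha(X_0))\in(0,v/e]$ there is a unique such $\beta$ with $v_p(\beta(X_0))\in(0,v/(pe)]$ and $\beta\circ\varphi_{\rm cycl}=\alpha$, with $v_p(\beta(X_0))=v_p(\alpha(X_0))/p$; since $\varphi_{\rm cycl}(X_j)=X_j^p$ for $j\ge 1$ already has the desired property, pullback along $\varphi_{\rm cycl}$ then preserves spectral valuations as claimed. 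Combining the two steps yields the lemma: $\varphi_{\rm cycl}(x)\in R_\varpi^{(0,v/p]+}$ gives $v^{(0,v/p]}(\varphi_{\rm cycl}(x))\ge 0$, hence $v^{(0,v]}(x)\ge 0$, hence $x\in R_\varpi^{(0,v]+}$.

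The main obstacle is the surjectivity and valuation-preservation in the second step, which requires carefully controlling how $\varphi_{\rm cycl}$ acts pointwise on the annulus despite the correction $\epsilon=\varphi_{\rm cycl}(X_0)-X_0^p$; it is precisely the hypothesis $v<pv_R$ that makes $\epsilon/X_0^p$ topologically nilpotent on the small annulus and allows the Hensel argument to go through.
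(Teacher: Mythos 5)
Your plan founders on the very first step, and the obstacle is a genuine one, not a matter of bookkeeping. The claimed characterization
\[
R_\varpi^{(0,v]+}=\{f\in R_\varpi:\ v^{(0,v]}(f)\ge 0\}
\]
is false. Take $R=\O_K$ and look at the Laurent-series description of the rings in the remark following the definition of $R_\varpi^{\rm deco}$: an element of $r_\varpi^{(0,v]+}$ is $\sum_{i\in\Z}a_iX_0^i$ with $a_i\in\O_F$, $v_p(a_i)\ge -iv/e$ for $i\le 0$, \emph{and the differences $v_p(a_i)+iv/e$ tending to $+\infty$ as $i\to-\infty$}, while an element of $r_\varpi$ only needs $a_i\to 0$ as $i\to-\infty$. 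The element $f=\sum_{i\ge 0}p^{\lceil vi/e\rceil}X_0^{-i}$ therefore lies in $r_\varpi$ (its coefficients have $p$-adic valuation $\lceil vi/e\rceil\to\infty$), and any reasonable ``formal spectral valuation'' assigns it $v^{(0,v]}(f)\ge 0$ since $\lceil vi/e\rceil-is\ge i(v/e-s)\ge 0$ on the annulus; yet $f\notin r_\varpi^{(0,v]+}$, because the differences $\lceil vi/e\rceil-iv/e$ lie in $[0,1)$ and do not tend to infinity. In other words, the series does not converge at the outer boundary $v_p(X_0)=v/e$, so $f$ is not an analytic function on the closed-outer annulus at all. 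Membership in $R_\varpi^{(0,v]+}$ is a \emph{convergence} condition, strictly finer than a boundedness-of-norm condition, and your spectral-valuation argument cannot detect the difference. (This also signals a problem with your Step 2: a ``formal spectral valuation'' on all of $R_\varpi$, with the equality $v^{(0,v/p]}(\varphi_{\rm cycl}(f))=v^{(0,v]}(f)$, would have to control behavior uniformly down to $v_p(X_0)\to 0$, where the correction term in $\varphi_{\rm cycl}(X_0)$ is precisely not controlled.)

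The paper's proof attacks the convergence question head-on, by successive approximation modulo $p$: it writes $R_\varpi^{(0,v]+}=\sum_{n\ge 0}\frac{p^n}{X_0^{[ne/v]}}R_\varpi^+$ and shows, by an induction on $n$, that if $\varphi_{\rm cycl}(x)\in R_\varpi^{(0,v/p]+}$ then $x$ admits an expansion $x=\sum_n a_n\frac{p^n}{X_0^{[ne/v]}}$ with $a_n\in R_\varpi^+$. The base case ($\overline{x}^p\in R_\varpi^+/p\Rightarrow\overline{x}\in R_\varpi^+/p$) uses integral closedness of $R_\varpi^+/p$ as in Lemma~\ref{ku1}, and the inductive step uses the hypothesis $v<pv_R$ exactly where you invoke it — to guarantee $\varphi_{\rm cycl}(R_\varpi^+)\subset R_\varpi^{(0,v/p]+}$ — and the arithmetic inequality $p([e/v]+1)-[pe/v]\ge 1$ to gain an extra power of $X_0$ at each step. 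That bookkeeping is what produces the required decay of the coefficients, which is precisely what your spectral-valuation framework leaves unproven.
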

\begin{proof}
Write $\varphi$ for $\varphi_{\rm cycl}$.
We have $R_\varpi^{(0,v]+}=\sum_{n\geq 0}\frac{p^n}{X_0^{[ne/v]}}R_\varpi^+$.
Now, if $\varphi(x)\in R_\varpi^{(0,v/p]+}$, this implies, in particular, that the
image $\overline x$ of $x$ in $R_\varpi/p$ is such that $\overline x^p\in R_\varpi^+/p$; hence 
$\overline x\in R_\varpi^+/p$.  This means that we can find $a_0\in R_\varpi^+$ such
that $x-a_0\in p R_\varpi$.  But, $\varphi(a_0)\in R_\varpi^{(0,v/p]+}$ thanks
to the assumption $v<pv_R$, which means that $\varphi(x-a_0)\in\sum_{n\geq 1}\frac{p^n}{X_0^{[ne/v]}}R_\varpi^+$.
It follows that, if we write $x=a_0+\frac{p}{X_0^{[e/v]+1}}x_1$, the image
of $\varphi(x_1)$ in $R_\varpi/p$ belongs to $X_0R_\varpi^+/p$ (since $p\big([e/v]+1\big)-[pe/v]\geq 1$),
hence the image of $x_1$ belongs to $X_0R_\varpi^+/p$, and we can find $a_1\in R_\varpi^+$
such that $x_1-X_0a_1\in pR_\varpi$.  But then
$\varphi\big(x-a_0-\frac{p}{X_0^{[e/v]}}a_1\big)\in \sum_{n\geq 2}\frac{p^n}{X_0^{[ne/v]}}R_\varpi^+$,
and we can iterate the process, writing $x=a_0+\frac{p}{X_0^{[e/v]}}a_1+\frac{p^2}{X_0^{[2e/v]+1}}x_2$,
and deducing that $x_2=X_0a_2+py$, with $a_2\in R_\varpi^+$ and $y\in R_\varpi$.
Going to the limit, we find $x=\sum_{n\in\N}a_n\frac{p^n}{X_0^{[ne/v]}}$, with $a_n\in R_\varpi^+$,
for all $n$, which concludes the proof.
\end{proof}

\subsubsection{The operator $\psi_{\rm cycl}$}
 Let
$$u_{{\rm cycl},\alpha}=(1+T)^{\alpha_0}X_1^{\alpha_1}\cdots
X_d^{\alpha_d},\quad{\text{ if $\alpha\in\{0,1,\dots,p-1\}^{[0,d]}$.}}$$
\begin{proposition}\label{patch1}
{\rm (i)}
Any $x\in R_\varpi$ can be written uniquely $x=\sum_\alpha c_{{\rm cycl},\alpha}(x)$, with
$c_{{\rm cycl},\alpha}(x)\in\varphi_{\rm cycl}(R_\varpi)u_{{\rm cycl},\alpha}$.

{\rm (ii)} If $v<v_R$, and $x\in R_\varpi^{(0,v]+}$, then
$x_{{\rm cycl},\alpha}\in X_0^{-p\delta_K} R_\varpi^{(0,v]+}$, for all $\alpha$.
\end{proposition}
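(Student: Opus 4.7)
The strategy is to mirror the proofs of Lemma~\ref{ku1} and Corollary~\ref{ku2} in the Kummer case, replacing the Kummer Frobenius and coordinates by their cyclotomic counterparts. The new feature is that $T$ is not literally a coordinate on $R_\varpi^+$ but only satisfies the relation $Q(X_0,T)=0$, so any derivation in the $T$-direction acquires a chain-rule factor $\Delta^{-1}$; by Lemma~\ref{new1}(ii), $\Delta$ differs from $X_0^{\delta_K}$ by a unit, and it is these powers of $X_0^{-\delta_K}$ that ultimately control the denominators appearing in~(ii).

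For (i), I would introduce the cyclotomic derivations
$$\partial_{{\rm cycl},0} = (1+T)\tfrac{\partial}{\partial T}, \qquad \partial_{{\rm cycl},j} = X_j\tfrac{\partial}{\partial X_j} \quad (1\leq j\leq d),$$
defined first on $R^+_{\zeta-1,\Box} = \O_F[[T]]\{X_1,\dots,X_d\}$ and extended to $R_\varpi$ by \'etaleness (the extension to $X_0$ is forced by $dQ=0$ and only lives in $R_\varpi$ because it involves $\Delta^{-1}$). These operators commute pairwise, act on $u_{{\rm cycl},\alpha}$ as multiplication by $\alpha_j$, and satisfy $\partial_{{\rm cycl},j}\circ\varphi_{\rm cycl}=p\,\varphi_{\rm cycl}\circ\partial_{{\rm cycl},j}$ since $\varphi_{\rm cycl}(T)=(1+T)^p-1$ and $\varphi_{\rm cycl}(X_j)=X_j^p$. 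I would then prove the diagonalization identity $\partial_{{\rm cycl},j}(\partial_{{\rm cycl},j}-1)\cdots(\partial_{{\rm cycl},j}-(p-1))\equiv 0 \pmod{p}$ on $R_\varpi$: for $j\geq 1$ this is the Kummer argument verbatim, while for $j=0$ it reduces, by \'etaleness, to the elementary decomposition $\O_F[[T]]/p = \bigoplus_{\alpha_0=0}^{p-1}(1+T)^{\alpha_0}\O_F[[(1+T)^p]]/p$, on which $\partial_{{\rm cycl},0}$ acts diagonally by $\alpha_0$. Combined with Tyc's $p$-basis theorem~\cite{tyc} applied to the generators $dT, dX_1,\dots,dX_d$ of $\Omega^1$ modulo $p$, followed by the unipotent change of basis $T^{\alpha_0}\leftrightarrow(1+T)^{\alpha_0}$, this gives $R_\varpi/p = \bigoplus_\alpha (R_\varpi/p)^p\,u_{{\rm cycl},\alpha}$. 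A successive-approximation argument, identical in form to Corollary~\ref{ku2}, then lifts the decomposition to $R_\varpi$ itself, using the $p$-adic completeness of $R_\varpi$ and the fact that $\varphi_{\rm cycl}$ reduces to the absolute Frobenius modulo $p$.

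For (ii), I would extract $x_{{\rm cycl},\alpha}$ by the Lagrange projection $c_{{\rm cycl},\alpha}(x) = \prod_{j}\prod_{k\neq\alpha_j}\frac{\partial_{{\rm cycl},j}-k}{\alpha_j-k}(x)$, followed by dividing by the unit $u_{{\rm cycl},\alpha}$ and inverting $\varphi_{\rm cycl}$. The operators $\partial_{{\rm cycl},j}$ with $j\geq 1$ preserve $R_\varpi^{(0,v]+}$, whereas $\partial_{{\rm cycl},0}$ maps $R_\varpi^{(0,v]+}$ into $X_0^{-\delta_K}R_\varpi^{(0,v]+}$ by Lemma~\ref{new1}(ii). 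Iterating $p-1$ times yields $c_{{\rm cycl},\alpha}(x)\in X_0^{-(p-1)\delta_K}R_\varpi^{(0,v]+}$, and then inverting $\varphi_{\rm cycl}$ via Lemma~\ref{patch2} (with one further $X_0^{-\delta_K}$ factor lost because $\varphi_{\rm cycl}(X_0)/X_0^p$ is a unit only in $R_\varpi^{(0,v_R)}$ and not already in $R_\varpi^{(0,v]+}$) delivers the desired bound $x_{{\rm cycl},\alpha}\in X_0^{-p\delta_K}R_\varpi^{(0,v]+}$, provided $v<v_R$.

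The main obstacle will be the delicate integrality bookkeeping in (ii): each eigenspace projection must be upgraded from a modulo-$p$ identity to one over $R_\varpi^{(0,v]+}$ with sharp $X_0$-denominators, and the inversion of $\varphi_{\rm cycl}$ must be executed without losing more than a single additional factor of $X_0^{-\delta_K}$. The decomposition in (i) is essentially routine once the diagonalization identity is proved and Tyc's $p$-basis theorem is applied, the only genuinely new input being the $p$-eigenspace decomposition of $\O_F[[T]]/p$ with respect to $(1+T)\partial/\partial T$.
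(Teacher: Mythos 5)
Your proposal takes a genuinely different route than the paper. The paper does not work directly with cyclotomic derivations; it first proves the decomposition with respect to the \emph{Kummer} monomials $u_{{\rm Kum},\alpha}$ together with integrality control (Lemma~\ref{patch4}), and this requires the localization rings $R_{\varpi,i}$, the patching Lemma~\ref{patch3}, and the lifting Lemma~\ref{boring}. The key advantage of this detour is that the Kummer derivations $\partial_{{\rm Kum},j}=X_j\partial/\partial X_j$ carry \emph{no denominators} on $R_\varpi^+$ (Lemma~\ref{ku1}(iii) gives $c_{{\rm Kum},\alpha}(x)\in R_\varpi^+/p$), so the successive-approximation lift is denominator-free. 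Only after that is the Kummer-to-cyclotomic change of basis performed, once, via the trace computation in Lemma~\ref{ku3}, and it is this single step that introduces the $X_0^{-\delta_K}$ factors. Your approach front-loads the denominators into every iteration step, which is conceptually more direct but harder to control.

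There are concrete gaps that prevent the proposal from going through as written.

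First, the Lagrange projector $c_{{\rm cycl},\alpha}(x)=\prod_j\prod_{k\neq\alpha_j}\frac{\partial_{{\rm cycl},j}-k}{\alpha_j-k}(x)$ is \emph{not} an exact formula for $c_{{\rm cycl},\alpha}(x)$: the identity $\prod_{k=0}^{p-1}(\partial_{{\rm cycl},j}-k)=0$ holds only modulo $p$, so this operator is a projector only on $R_\varpi/p$. Over $R_\varpi^{(0,v]+}$ one must iterate, as in the paper's Lemma~\ref{boring}. You acknowledge this (``upgraded from a modulo-$p$ identity'') but do not perform the bookkeeping. Since each application of $\partial_{{\rm cycl},0}$ costs a factor $X_0^{-\delta_K}$, the $n$-th correction term (weight $p^n$) picks up roughly $X_0^{-n(p-1)\delta_K}$; convergence in $X_0^{-(p-1)\delta_K}R_\varpi^{(0,v]+}$ requires that $p/X_0^{(p-1)\delta_K}\in R_\varpi^{(0,v]+}$, i.e., $v(p-1)\delta_K\leq e$. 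This is true when $v<v_R$, but it must be checked; your sketch treats the mod-$p$ bound as if it were the final one.

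Second, the last step of your argument for (ii) --- dividing by $u_{{\rm cycl},\alpha}$ (which you call ``the unit'') and inverting $\varphi_{\rm cycl}$ --- contains two errors. The monomial $u_{{\rm cycl},\alpha}=(1+T)^{\alpha_0}X_1^{\alpha_1}\cdots X_d^{\alpha_d}$ is not a unit when some $\alpha_j\neq 0$ for $a+b<j\leq d$, since those $X_j$ vanish on the divisor at infinity; removing it requires the integral-closure argument of Lemma~\ref{ku1}(iii) and the localization/patching machinery. More to the point, neither operation is needed here: the quantity $x_{{\rm cycl},\alpha}$ in (ii) is the eigenspace component $c_{{\rm cycl},\alpha}(x)=\varphi_{\rm cycl}(a_\alpha)u_{{\rm cycl},\alpha}$ itself (compare with the paper's convention ``set $x_\alpha=x_{d+1,\beta}$''), and the subsequent extraction of $a_\alpha$ belongs to the proof of Proposition~\ref{extr10}(iii), not to Proposition~\ref{patch1}.

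Third, your reduction to the diagonalization identity on $\O_F[[T]]/p$ ``by \'etaleness'' is glossing over the fact that $R_\varpi/p$ is \'etale over $R_{\zeta-1,\Box}/p$ only after inverting $X_0$ (equivalently $T$, since $X_0^{-f}T$ is a unit), because $\Delta=Q'_{X_0}(X_0,T)$ is $X_0^{\delta_K}$ times a unit. This is harmless for (i), where $X_0$ is already inverted; but it is precisely the source of the denominators that must be tracked in (ii), and it is the reason the paper prefers to localize and patch rather than rely on a global $p$-basis argument.
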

Before turning to the proof of this proposition, let us draw some consequences.
We define the left inverse $\psi_{\rm cycl}$ of $\varphi_{\rm cycl}$,
 on $R_\varpi$, by the formula
$$\psi_{\rm cycl}(x)=\varphi_{\rm cycl}^{-1}(c_{{\rm cycl},0}(x)).$$
Since $\varphi_{\rm cycl}(u_{{\rm cycl},\alpha})=u_{{\rm cycl},\alpha}^p$
for all $\alpha$, we have ${\rm Tr}_{R_\varpi/\varphi_{\rm cycl}(R_\varpi)}(u_{{\rm cycl},\alpha})=0$
if $\alpha\neq 0$, and we can define $\psi_{\rm cycl}$, intrinsically,
by the formula
$$\psi_{\rm cycl}=p^{-(d+1)}\varphi_{\rm cycl}^{-1}\circ{\rm Tr}_{R_\varpi/\varphi_{\rm cycl}(R_\varpi)}.$$
\begin{proposition}\label{extr10}
If $v<pv_R$, then:

{\rm (i)}
$\psi_{\rm cycl}\big(T^{-pN}R_\varpi^{(0,v/p]+})\subset T^{-N-{\delta_R}}R_\varpi^{(0,v]+}$.

{\rm (ii)} If $\ell_R=\lceil\frac{p}{p-1}\delta_R\rceil$, then $T^{-\ell_R}R_\varpi^{(0,v]+}$
is stable by $\psi_{\rm cycl}$.

{\rm (iii)} If $v\leq p$, the natural map $\oplus_{\alpha\neq 0}\varphi_{\rm cycl}(R_\varpi^{(0,v]+})
u_{{\rm cycl},\alpha}\to (R_\varpi^{(0,v/p]+})^{\psi_{\rm cycl}=0}$ is a $p^{p+1}$-isomorphism.
\end{proposition}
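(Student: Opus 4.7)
The three parts should be proved in order, with (ii) being a direct consequence of (i), and (iii) requiring the most care. The main inputs will be Proposition~\ref{patch1}(i)--(ii) (existence, uniqueness, and boundedness of the $\varphi_{\rm cycl}$-adic decomposition), the intrinsic trace formula for $\psi_{\rm cycl}$, and the ``smoothing'' Lemma~\ref{patch2} (which says $\varphi_{\rm cycl}(x)\in R_\varpi^{(0,v/p]+}\Rightarrow x\in R_\varpi^{(0,v]+}$ whenever $v<pv_R$).

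For (i), the plan is to peel off $T^{-pN}$ using the identity $\psi_{\rm cycl}(\varphi_{\rm cycl}(a)b)=a\psi_{\rm cycl}(b)$. Since $\varphi_{\rm cycl}(T)=(1+T)^p-1=T^p+pT(\text{unit})$, the ratio $w:=T^{pN}/\varphi_{\rm cycl}(T^N)=\bigl(T^p/((1+T)^p-1)\bigr)^N$ is, by the proof of Proposition~\ref{extr8} together with the bound $v<pv_R$, a unit in $R_\varpi^{(0,v/p]+}$ (the key point is that $p/T^{p-1}$ is topologically nilpotent there). Writing $T^{-pN}=\varphi_{\rm cycl}(T^{-N})w$, one gets $\psi_{\rm cycl}(T^{-pN}y)=T^{-N}\psi_{\rm cycl}(wy)$ for $y\in R_\varpi^{(0,v/p]+}$. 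Then apply Proposition~\ref{patch1}(ii) with parameter $v'=v/p<v_R$ to $wy$ to get $\psi_{\rm cycl}(wy)\in X_0^{-p\delta_K}R_\varpi^{(0,v/p]+}$; finally, upgrade the radius from $v/p$ to $v$ via Lemma~\ref{patch2} applied to $\psi_{\rm cycl}(wy)$ (whose $\varphi_{\rm cycl}$-image $c_{{\rm cycl},0}(wy)$ visibly lies in $R_\varpi^{(0,v/p]+}$), and convert the $X_0$-pole into a $T$-pole using $T=X_0^f\cdot\text{unit}$ together with $f\delta_R\geq\delta_K$.

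Part (ii) is a pure bookkeeping consequence of (i): take $N=\lceil\ell_R/p\rceil$, so that $T^{-\ell_R}R_\varpi^{(0,v]+}\subset T^{-pN}R_\varpi^{(0,v/p]+}$, and check that $\lceil\ell_R/p\rceil+\delta_R\leq\ell_R$ by the definition $\ell_R=\lceil\tfrac{p}{p-1}\delta_R\rceil$, so that the output $T^{-N-\delta_R}R_\varpi^{(0,v]+}$ is contained in $T^{-\ell_R}R_\varpi^{(0,v]+}$.

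For (iii), the map is well defined (the image is in $R_\varpi^{(0,v/p]+}$ since $\varphi_{\rm cycl}(R_\varpi^{(0,v]+})\subset R_\varpi^{(0,v/p]+}$ and $u_{{\rm cycl},\alpha}\in R_\varpi^+$), lands in the $\psi_{\rm cycl}=0$ part because $\psi_{\rm cycl}(\varphi_{\rm cycl}(x_\alpha)u_{{\rm cycl},\alpha})=x_\alpha\psi_{\rm cycl}(u_{{\rm cycl},\alpha})=0$ for $\alpha\neq 0$, and is injective on the nose by the uniqueness clause of Proposition~\ref{patch1}(i). For surjectivity up to $p^{p+1}$, take $z\in(R_\varpi^{(0,v/p]+})^{\psi_{\rm cycl}=0}$, apply Proposition~\ref{patch1}(i) to write $z=\sum_\alpha\varphi_{\rm cycl}(x_\alpha)u_{{\rm cycl},\alpha}$ with $x_0=\psi_{\rm cycl}(z)=0$, and use Proposition~\ref{patch1}(ii) at level $v/p$ to obtain the crude bound $x_\alpha\in X_0^{-p\delta_K}R_\varpi^{(0,v/p]+}$. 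The main obstacle here is turning this into the cleaner statement $x_\alpha\in R_\varpi^{(0,v]+}$ up to a bounded power of $p$: the radius is improved from $v/p$ to $v$ via Lemma~\ref{patch2} applied to each $x_\alpha$ (using that $\varphi_{\rm cycl}(x_\alpha)=u_{{\rm cycl},\alpha}^{-1}c_{{\rm cycl},\alpha}(z)$ and controlling the $X_i^{-\alpha_i}$-poles introduced by $u_{{\rm cycl},\alpha}^{-1}$ on the $R_\varpi^+$-variables), and the remaining $X_0^{-p\delta_K}$-pole is killed at a cost of $p^{\lceil v\delta_K/e\rceil}$ using the definition of $R_\varpi^{(0,v]+}$; the hypothesis $v\leq p$, together with $\delta_K<e/(2p)$ and $\delta_R<e_0/(2p)$ (the enough-roots-of-unity assumption), will force the total loss to be absorbed by $p^{p+1}$. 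Keeping track of these constants cleanly is where the argument is most delicate.
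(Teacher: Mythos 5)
Your overall strategy coincides with the paper's: for (i), combine $\psi_{\rm cycl}(\varphi_{\rm cycl}(T)^{-N}y)=T^{-N}\psi_{\rm cycl}(y)$ with the fact that $\varphi_{\rm cycl}(T)/T^p$ is a unit, then apply Proposition~\ref{patch1}(ii), improve the radius from $v/p$ to $v$ via Lemma~\ref{patch2}, and convert the $X_0^{-\delta_K}$-pole into a $T^{-\delta_R}$-pole using $X_0^{\delta_K}\mid T^{\delta_R}$; (ii) is the same bookkeeping as the paper's; and for (iii), decomposing via Proposition~\ref{patch1}, applying Lemma~\ref{patch2} to halve the $X_0$-pole, and trading the remaining pole for a power of $p$ is exactly the right plan. (In (i) you briefly write that Proposition~\ref{patch1}(ii) bounds $\psi_{\rm cycl}(wy)$ directly, when it actually bounds $c_{{\rm cycl},0}(wy)$; but your parenthetical clause and the appeal to Lemma~\ref{patch2} recover the correct argument.)

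In (iii), however, there is a genuine gap, and it is exactly the piece you flag as ``most delicate.'' The key input the paper uses, and which is missing from your account, is the explicit divisibility (up to a unit) $u_{{\rm cycl},\alpha}\mid X_0^{p(h+1)}\prod_{i=a+b+1}^dX_i^p$. The point, as in the mechanism of Lemma~\ref{ku1}(iii), is that the $X_i^{-\alpha_i}$-poles you mention do \emph{not} stay as poles in the $X_i$: for $a+1\leq i\leq a+b$ the relation $X_{a+1}\cdots X_{a+b}X_{d+1}=X_0^h$ converts them into $X_0$-poles of size up to $ph$, while for $a+b+1\leq i\leq d$ they must cancel because $\varphi_{\rm cycl}(a_\alpha)\in R_\varpi$ and $X_0$ is not a zero divisor modulo those $X_i$. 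This gives $\varphi_{\rm cycl}(a_\alpha)\in X_0^{-p(h+1+\delta_K)}R_\varpi^{(0,v/p]+}$, hence (Lemma~\ref{patch2}) $a_\alpha\in X_0^{-(h+1+\delta_K)}R_\varpi^{(0,v]+}$; finally $X_0^{h+1+\delta_K}\mid p^{p+1}$ in $R_\varpi^{(0,v]+}$ because $h\leq e$, $\delta_K+1\leq e/p$, and $v\leq p$. Your accounting of ``$p^{\lceil v\delta_K/e\rceil}$'' only covers the $\delta_K$ piece, which, under the enough-roots-of-unity assumption, costs at most $p^1$; the dominant contribution from $h\leq e$, which alone can cost $p^p$ when $v=p$ and $h=e$, is unquantified, so the constant $p^{p+1}$ is asserted rather than derived.
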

\begin{proof}
(i) follows from (ii) of Proposition ~\ref{patch1}, taking into account that $\psi(\varphi(T)^{-N}x)=T^{-N}\psi(x)$,
that $\frac{\varphi(T)}{T^p}$ is a unit in $R_\varpi^{(0,v/p]+}$, and
that $X_0^{\delta_K}$ divides $T^{\delta_R}$.
 (ii) is a direct consequence of (i) and
the inclusion $R_\varpi^{(0,v]+}\subset R_\varpi^{(0,v/p]+}$.

Finally, if $x\in (R_\varpi^{(0,v/p]+})^{\psi_{\rm cycl}=0}$, we can write,
using Proposition ~\ref{patch1}, $x=\sum_{\alpha\neq 0}\varphi(a_\alpha) u_\alpha$,
with $\varphi(a_\alpha) u_\alpha\in X_0^{-p\delta_K}R_\varpi^{(0,v/p]+}$.
But $u_\alpha$ divides $X_0^{p(h+1)}\prod_{i=a+b+1}^dX_i^p$, hence
$\varphi(a_\alpha)\in X_0^{-p(h+1+\delta_K)}R_\varpi^{(0,v/p]+}$.
(See the proof of Lemma~\ref{ku1} for a similar argument.)
This implies, thanks to Lemma~\ref{patch2}, that $a_\alpha\in X_0^{-(h+1+\delta_K)}R_\varpi^{(0,v]+}$.
This proves (iii) since $X_0^{h+1+\delta_K}$ divides $p^{p+1}$
in $R_\varpi^{(0,v]+}$, as $h\leq e$, $\delta_K+1\leq\frac{e}{p}$, and $v\leq p$.
\end{proof}

We let $$\partial_{{\rm cycl},0}=(1+T)\frac{\partial}{\partial T}
\quad{\rm and}\quad
\partial_{{\rm cycl},i}=X_i\frac{\partial}{\partial X_i},\ {\text{if $1\leq i\leq d$.}}$$
Then we have
\begin{align*}
\partial_{{\rm cycl},i}\circ\partial_{{\rm cycl},j}=&\ \partial_{{\rm cycl},j}\circ\partial_{{\rm cycl},i}\quad{\rm and}\quad
\partial_{{\rm cycl},i}\circ\varphi_{\rm cycl}=p\,\varphi_{\rm cycl}\circ \partial_{{\rm cycl},i},\\
\partial_{{\rm cycl},i}\circ\psi_{\rm cycl}=&\ p^{-1}\,\psi_{\rm cycl}\circ \partial_{{\rm cycl},i},
\quad{\text{if $i,j=0,1,\dots,d$.}}
\end{align*}

\begin{lemma}
If $v<2pv_R$, then $\partial_{{\rm cycl},0}X_0\in X_0^{-\delta_K}r_\varpi^{(0,v]+}$ and
$v^{(0,v]}(\partial_0X_0)\geq -1$.
\end{lemma}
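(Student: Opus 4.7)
The plan is to use implicit differentiation of the defining relation $Q(X_0,T)=0$ to obtain an explicit formula for $\partial_{{\rm cycl},0}X_0$, and then invoke Lemma~\ref{new1}(ii) to bound the denominator that appears. Since the only non-trivial point is estimating the pole order in $X_0$, this will give both claims at once. The main obstacle is bookkeeping: one must identify the ring $\O_F[[X_0,p/X_0^{\delta_K}]]$ appearing in Lemma~\ref{new1}(ii) as a subring of $r_\varpi^{(0,v]+}$, for which the precise hypothesis $v<2pv_R$ is critical.

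First I would differentiate the equation $Q(X_0,T)=0$ with respect to $T$ (using the fact that $r_\varpi$ is \'etale over $\O_F[[T]]\{T^{-1}\}$ once we invert $\Delta=Q'(X_0,T)$, which is a unit in $r_\varpi$ by Lemma~\ref{new1}(ii)). This yields
\[
\partial_{{\rm cycl},0}X_0=(1+T)\,\tfrac{\partial X_0}{\partial T}=-(1+T)\,\tfrac{\partial Q/\partial T(X_0,T)}{Q'(X_0,T)}.
\]
The numerator $(1+T)(\partial Q/\partial T)(X_0,T)$ visibly lies in $\O_F[X_0,T]\subset r_\varpi^+$ because each $A_i$ is an element of $\O_F[T]$, so it is harmless for both claims.

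Next I would unpack the denominator using Lemma~\ref{new1}(ii): the element $X_0^{-\delta_K}\Delta$ is a unit in $\O_F[[X_0,p/X_0^{\delta_K}]]$, so its inverse $X_0^{\delta_K}/\Delta$ lies in that ring. The hypothesis $v<2pv_R=e_0/\delta_R$ combined with $f\delta_R\geq\delta_K$ gives $v\delta_K<e$, hence $\lceil v\delta_K/e\rceil=1$ (assuming $\delta_K\geq 1$; the case $\delta_K=0$ is trivial), so $p/X_0^{\delta_K}\in r_\varpi^{(0,v]+}$ and consequently $\O_F[[X_0,p/X_0^{\delta_K}]]\subset r_\varpi^{(0,v]+}$ after $p$-adic completion. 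Hence $X_0^{\delta_K}/\Delta$ is a unit of $r_\varpi^{(0,v]+}$, and multiplying with the numerator above we conclude
\[
X_0^{\delta_K}\,\partial_{{\rm cycl},0}X_0=-(1+T)\,(\partial Q/\partial T)(X_0,T)\,\tfrac{X_0^{\delta_K}}{\Delta}\in r_\varpi^{(0,v]+},
\]
which is the first assertion.

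For the second assertion, the spectral valuation is computed on the annulus $0<v_p(X_0)\leq v/e$, where $v^{(0,v]}(X_0^{-\delta_K})=-\delta_K v/e$. The factor exhibited above as belonging to $r_\varpi^{(0,v]+}$ has spectral valuation $\geq 0$, so by sub-multiplicativity
\[
v^{(0,v]}(\partial_{{\rm cycl},0}X_0)\;\geq\;-\tfrac{\delta_K v}{e}\;\geq\;-1,
\]
where the last inequality is again $v\delta_K\leq e$, a consequence of $v<2pv_R$. This finishes the proof.
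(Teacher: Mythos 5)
Your proof is correct and follows the same route as the paper's: implicit differentiation of $Q(X_0,T)=0$ to get $\partial_0X_0=-(1+T)\Delta^{-1}\frac{\partial Q}{\partial T}(X_0,T)$, then Lemma~\ref{new1}(ii) to control the denominator. The paper simply says ``the result follows from Lemma~\ref{new1}''; you have filled in the implicit arithmetic, correctly checking that $v<2pv_R$ forces $v\delta_K<e$ so that $\O_F[[X_0,p/X_0^{\delta_K}]]\subset r_\varpi^{(0,v]+}$.
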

\begin{proof}
Write $\partial_0$ for $\partial_{{\rm cycl},0}$.
As $Q(X_0,T)=0$, we have $\frac{\partial Q}{\partial X_0}(X_0,T)
\partial_0X_0+\frac{\partial Q}{\partial T}(X_0,T)
\partial_0T=0$, hence
$$\partial_0X_0=-(1+T)\big(\frac{\partial Q}{\partial X_0}(X_0,T)\big)^{-1}
\frac{\partial Q}{\partial T}(X_0,T).$$
Since $\frac{\partial Q}{\partial T}(X_0,T)\in r_\varpi^+$, the result follows from Lemma~\ref{new1}.
\end{proof}

\subsubsection{Proof of Proposition~\ref{patch1}}
Write $\varphi$ for $\varphi_{\rm cycl}$ in all that follows.
We will first prove the existence of a decomposition $x=\sum_\alpha x_\alpha$ as in Proposition ~\ref{patch1},
but with $x_\alpha\in \varphi(R_\varpi) u_{{\rm Kum},\alpha}$, and then use the relationship (Lemma~\ref{ku3})
between the $u_{{\rm Kum},\alpha}$'s and the $u_{{\rm cycl},\alpha}$'s to conclude.

\smallskip

Let $I=\{a+1,\dots,a+b,d+1\}$, so that $\prod_{i\in I}X_i=X_0^h$.
If $i\in I$, let $R_{\varpi,\Box,i}^+$, $R_{\varpi,i}^+$, be the rings obtained
by localizing on the polycircle where $X_j$ is a unit if $j\in I-\{i\}$ (hence
$X_i$ is $X_0^h$ times a unit). Concretely,
if $S=R_{\varpi,\Box}^+,R_\varpi^+$, then
$S_i=S\{Y\}/(1-Y\prod_{j\in I-\{i\}}X_i)$.
Then $R_{\varpi,i}^+$ is \'etale over $R_{\varpi,\Box,i}^+$,
and $R_{\varpi,\Box,i}^+$ is smooth over $\O_F$ (the equation $\prod_{i\in I}X_i=X_0^h$
becomes $X_i=YX_0^h$, which allows to remove $X_i$ from the variables and
the associated space is just a product of closed circles of radius $1$ and unit balls).

From $R_{\varpi,i}^+$ we can construct rings $R_{\varpi,i}^{\rm deco}$
by the same process we used to construct $R_\varpi^{\rm deco}$ from $R_\varpi^+$.
Note that $\varphi$ extends naturally to $R_{\varpi,i}$, and induces
morphisms $R_{\varpi,i}^{(0,pv_R)}\to R_{\varpi,i}^{(0,v_R)}$, 
and $R_{\varpi,i}^{(0,pv]+}\to R_{\varpi,i}^{(0,v]+}$, if $v<v_R$.

\begin{lemma}\label{patch3}
If $x\in R_\varpi$, 
the following conditions are equivalent:

{\rm (i)} $x\in R_\varpi^{(0,r_R)}$,

{\rm (ii)} $x\in R_{\varpi,i}^{(0,r_R)}$, for all $i\in I$.
\end{lemma}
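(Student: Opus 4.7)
The direction (i) $\Rightarrow$ (ii) is immediate from the inclusion $R_\varpi^+ \subset R_{\varpi,i}^+$: running the same construction (adjoining the generators $p^{\lceil nv/e\rceil}/X_0^n$ and $p$-adically completing) on the two sides gives $R_\varpi^{(0,v]+} \subset R_{\varpi,i}^{(0,v]+}$ for each $i \in I$, and likewise after inverting $X_0$.

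For the nontrivial direction, my plan is to combine two ingredients. The first is the explicit parameterization from Remark~\ref{LIFT0}: elements of $R_\varpi^{(0,v]+}$ (resp.~$R_{\varpi,i}^{(0,v]+}$) are exactly the $p$-adically convergent sums $\sum_{n\geq 0} a_n\, p^{\lceil nv/e\rceil}/X_0^n$ with $a_n \in R_\varpi^+$ (resp.~$a_n \in R_{\varpi,i}^+$) tending to $0$. The second is a purely algebraic glueing statement at each finite level,
\[
R_\varpi^+/p^n \;=\; \bigcap_{i\in I}\bigl(R_{\varpi,i}^+/p^n\bigr),
\]
viewed inside $R_\varpi/p^n$. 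Given these, the argument proceeds by successive approximation: starting from $x \in \bigcap_i R_{\varpi,i}^{(0,v]+}$, extract local expressions for $x$ in each $R_{\varpi,i}^{(0,v]+}$, apply the mod-$p^n$ glueing coefficient by coefficient to produce expressions with coefficients in $R_\varpi^+/p^n$, and then take $p$-adic limits using the completeness of $R_\varpi^{(0,v]+}$ to recover an element of $R_\varpi^{(0,v]+}$ that must coincide with $x$.

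The hard part will be the algebraic glueing. Since $R_{\varpi,i}^+$ is obtained from $R_\varpi^+$ by inverting $\prod_{j\in I-\{i\}}X_j$, membership in $R_{\varpi,i}^+$ prohibits poles along $X_i=0$ while allowing them along $X_j=0$ for $j \in I - \{i\}$. Requiring simultaneous membership in every $R_{\varpi,i}^+$ thus prohibits poles along each $X_j=0$ with $j\in I$ and, combined with $p$-adic completeness, should force the element back into $R_\varpi^+$. To make this rigorous I would use the étaleness of $R^+_\varpi$ over $R^+_{\varpi,\Box}$ (Remark~\ref{extr6.1}) to reduce the check to the monomial-style ring $R^+_{\varpi,\Box}$, where an element decomposes into monomials in the $X_j$'s with coefficients in $\O_F[[X_0]]$ subject only to the relation $\prod_{i\in I}X_i=X_0^h$; the condition of lying in every $R_{\varpi,i}^+$ then becomes a transparent sign condition on the exponents of the $X_j$ with $j\in I$, whose conjunction yields membership in $R_\varpi^+$ directly.
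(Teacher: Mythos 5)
Your overall strategy---reduce to a glueing statement modulo $p$ (or $p^n$), prove it by \'etale descent to $R_{\varpi,\Box}$, and iterate---is the same as the paper's, but there are three concrete gaps.

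First, you have misread the ring in the statement. The ring $R_\varpi^{(0,r_R)}$ (a typo for $R_\varpi^{(0,v_R)}$, introduced just before Proposition~\ref{extr8}) is the formal power series ring $R_\varpi^+[[\frac{p}{X_0^{2pf\delta_R}}]]$, not the $p$-adic completion of $R_\varpi^+[\frac{p^{\lceil vi/e\rceil}}{X_0^i}]$ you describe. So the parametrization you invoke from Remark~\ref{LIFT0} is for a different ring. This does not invalidate the general shape of the argument---one still approximates successively in powers of $\frac{p}{X_0^M}$ with $M=2pf\delta_R$---but it changes the bookkeeping and means your ``coefficients $a_n$ tending to $0$'' condition is spurious.

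Second, your claim that ``the condition of lying in every $R_{\varpi,i}^+$ becomes a transparent sign condition on exponents, whose conjunction yields membership in $R_\varpi^+$ directly'' is underestimating the difficulty. Because of the relation $\prod_{i\in I}X_i = X_0^h$, a given element of $R_{\varpi,\Box}/p$ has many monomial representations; deciding membership in $R_{\varpi,\Box,i}^+/p$ involves choosing a good representative, and it is not just a matter of reading off signs. The paper finesses this by reformulating the $n=1$ glueing as a statement about divisibility by $X_0$, then arguing geometrically: the sets $Z_i = \mathrm{Spec}(R_{\varpi,i}^+/(p,X_0))$ cover $Z = \mathrm{Spec}(R_{\varpi}^+/(p,X_0))$ Zariski-densely because each irreducible component of $Z_\Box$ contains exactly one $Z_{\Box,i}$ as a Zariski-open subset (and \'etaleness transfers this). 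A direct combinatorial argument in $R_{\varpi,\Box}$ could surely be made to work, but it is a real argument, not a transparency.

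Third, ``take $p$-adic limits using completeness'' does not suffice as stated. The paper's iteration is tied to a specific choice of set-theoretic section $s : R_\varpi^+/p \to R_\varpi^+$ of reduction mod $p$ satisfying $s(0)=0$ and $s(X_0 x) = X_0 s(x)$; this multiplicativity in $X_0$ is what makes the approximants $x - s(a_0) - \tfrac{p}{X_0^M}s(a_1) - \cdots$ land back in $\tfrac{p^{n+1}}{X_0^{(n+1)M}}R_\varpi$ at each step, so that the series one builds is a genuine element of $R_\varpi^+[[\frac{p}{X_0^M}]]$. Without such a normalization the iterates will generically leave the annulus. You should also note that the paper only uses the glueing modulo $p$ (it reserves all the higher-$n$ work for the iteration with $s$); your plan to prove the glueing for all $n$ is not wrong---one can dévisser using that the rings are $p$-torsion-free---but it is extra work.
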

\begin{proof}
There is only the implication (ii)$\Rightarrow$(i) to prove.
Let us first show that the divisibility of $x$ by $X_0$ in $R_{\varpi,i}^+/p$ for all $i\in I$
implies its divisibility in $R_\varpi^+/p$.
Geometrically, this is equivalent to $\cup_{i}Z_i$ Zariski dense in
$Z$, where $Z_i={\rm Spec}(R_{\varpi,i}^+/(p,X_0))$ and $Z={\rm Spec}(R_\varpi^+/(p,X_0))$.
The same statement for $R_{\varpi,\Box}$ instead of $R_\varpi$ is immediate as
each irreducible component of $Z_\Box$ contains exactly one of the
$Z_{\Box,i}$'s (with obvious notations), as a Zariski open subset.
The statement for $R_\varpi$ follows by \'etaleness of $Z\to Z_{\Box}$
($Z_i$ is the inverse image of $Z_{\Box,i}$ in $Z$).
Since $R_\varpi/p=(R_\varpi^+/p)[X_0^{-1}]$,
one infers from the above statement that if $x\in R_\varpi/p$ is such
that $x\in R_{\varpi,i}^+/p$ for all $i\in I$, then $x\in R_\varpi^+/p$.

Choose a section $s:R_\varpi^+/p\to R_\varpi^+$ of the reduction modulo~$p$
such that $s(0)=0$ and $s(X_0x)=X_0s(x)$ so that $s$ is continuous.
If $x\in R_\varpi$ belongs to $R_{\varpi,i}^{(0,r_R)}$ for all $i$, 
then its image $a_0$ in $R_\varpi/p$ belongs to $R_{\varpi,i}^+/p$, for all $i$,
and hence $a_0\in R_\varpi^+/p$.
Now, by definition of $R_{\varpi,i}^{(0,r_R)}$, if $M=2pf\delta_R$,
the image $a_1$ of $\frac{X_0^{M}}{p}(x-s(a_0))$ in $R_\varpi/p$
belongs to $R_{\varpi,i}^+/p$ for all $i$, hence $a_1\in R_\varpi^+/p$.
One can iterate, and get that the image $a_2$ of $\frac{X_0^{2M}}{p^2}(x-s(a_0)-\frac{p}{X_0^{M}}s(a_1))$
belongs to $R_{\varpi,i}^+/p$ for all $i$, hence $a_2\in R_\varpi^+/p$.
This process gives us a sequence $(a_n)_{n\in\N}$ of elements
of $R_\varpi^+/p$, 
such that $x=\sum s(a_n)\frac{p^n}{X_0^{nM}}$.
This concludes the proof.
\end{proof}

\begin{lemma}\label{boring}
Let $S=r_\varpi, R_{\varpi,i}$, and let $(u_j)_{j\in J}$ be a family of elements
of $S^+$ such that:

\quad $\bullet$  any 
$x\in S/p$ can be written, uniqueley, $x=\sum_j c_j(x)^pu_j$, with $c_j(x)\in S$, for all $j\in J$,

\quad $\bullet$ there exists $N\leq 2f\delta_R$ such that, if $x\in S^+/p$, then $c_j(x)\in X_0^{-N}S^+/p$, for all $j$.

Then any $x\in S$ can be written, uniquely,
$x=\sum_j\varphi(c_j(x))\,u_j$, with $c_j(x)\in S$, and if
$x\in S^{(0,v_R)}$,
then 
$c_j(x)\in X_0^{-N}S^{(0,pv_R)}$ for all $j$.
\end{lemma}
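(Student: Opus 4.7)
The plan is to prove both halves by $p$-adic successive approximation, using the mod-$p$ decomposition supplied by the first hypothesis as the engine of the induction.

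For the algebraic statement in $S$: given $x \in S$, reduce modulo $p$ and apply the hypothesis to write $\bar{x} = \sum_j c_j(\bar{x})^p u_j$. Lift each $c_j(\bar{x})$ to some $c_{j,0} \in S$. Since $\varphi$ lifts the $p$-th power map, $\varphi(c_{j,0}) \equiv c_{j,0}^p \pmod{p}$, so $x - \sum_j \varphi(c_{j,0}) u_j \in pS$. Write this as $p x^{(1)}$ and repeat the procedure on $x^{(1)}$, producing successive correction terms $c_{j,n}$ so that
\[
x \equiv \sum_j \varphi\Bigl(\sum_{m \leq n} p^m c_{j,m}\Bigr) u_j \pmod{p^{n+1}}.
\]
Define $c_j(x) := \sum_{n \geq 0} p^n c_{j,n}$; this converges $p$-adically in $S$ and satisfies $x = \sum_j \varphi(c_j(x))\,u_j$. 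Uniqueness is the same argument run backwards: a relation $\sum_j \varphi(c_j) u_j = 0$ with $c_j \in S$ forces each $c_j \in pS$ modulo $p$ by uniqueness of the mod-$p$ decomposition, and iterating yields $c_j \in \bigcap_n p^n S = 0$.

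For the growth bound when $x \in S^{(0, v_R)} = S^+[[p/X_0^{2pf\delta_R}]]$: the reduction $\bar x \pmod{p}$ lies in the image of $S^+/p$, so the second hypothesis gives a starting lift $c_{j,0} \in X_0^{-N} S^+$. I then estimate $\varphi(c_{j,0})$ using the explicit control $\varphi(X_0) - X_0^p \in (p/T^{p\delta_R}) S^{(0, v_R)}$ from Proposition~\ref{extr8}, together with the fact that $X_0^{\delta_K}$ divides $T^{\delta_R}$ up to a unit (Lemma~\ref{new1}). This lets me show that $x^{(1)} := p^{-1}(x - \sum_j \varphi(c_{j,0}) u_j)$ lies in $X_0^{-2pf\delta_R} S^{(0, v_R)}$, and inductively that $x^{(n)} \in X_0^{-2pf\delta_R \cdot n} S^{(0, v_R)}$, whence $c_{j,n} \in X_0^{-N-2pf\delta_R n} S^+$. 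Rewriting
\[
p^n c_{j,n} = X_0^{-N}\bigl(p/X_0^{2f\delta_R}\bigr)^n \cdot (\text{element of }S^+),
\]
valid because $N \leq 2f\delta_R$, one sees that the partial sums converge in $X_0^{-N} S^{(0, pv_R)} = X_0^{-N} S^+[[p/X_0^{2f\delta_R}]]$, as required.

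The main obstacle is the bookkeeping in the growth argument: I need the $X_0$-pole of $c_{j,n}$ to grow linearly in $n$ at the precise rate $2pf\delta_R$, so that after multiplication by $p^n$ the residual growth rate is exactly $2f\delta_R$, matching the topology of the smaller ring $S^{(0, pv_R)}$ rather than only of $S^{(0, v_R)}$. The bound $N \leq 2f\delta_R$ in the hypothesis is what absorbs the initial pole shift, and the sharp form of $\varphi(X_0) - X_0^p$ from Proposition~\ref{extr8} keeps the error terms inside the expected rings at each step.
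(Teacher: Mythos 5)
Your overall strategy (the $p$-adic contraction iteration, then tracking $X_0$-poles inductively) is the same as the paper's, and the existence/uniqueness half is fine. The growth estimate, however, has a genuine gap centred on a missing observation about $c_j$. From the uniqueness of the decomposition $x=\sum_j c_j(x)^p u_j$ one gets the scaling relation $c_j(X_0^p y)=X_0\,c_j(y)$, i.e.\ passing to $c_j$ divides the $X_0$-pole by $p$. So from $x^{(n)}\in X_0^{-2pnf\delta_R}S^{(0,v_R)}$, whose image mod~$p$ lies in $X_0^{-2pnf\delta_R}S^+/p$, the hypothesis gives $c_{j,n}\in X_0^{-N-2nf\delta_R}S^+$, not $X_0^{-N-2pnf\delta_R}S^+$ as you claim. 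With your stated bound the rewrite $p^n c_{j,n}=X_0^{-N}(p/X_0^{2f\delta_R})^n\cdot(\text{element of }S^+)$ is arithmetically false: $p^nX_0^{-N-2pnf\delta_R}=X_0^{-N}(p/X_0^{2pf\delta_R})^n$, so the partial sums converge only in $X_0^{-N}S^{(0,v_R)}=X_0^{-N}S^+[[p/X_0^{2pf\delta_R}]]$, which is strictly larger than the required $X_0^{-N}S^{(0,pv_R)}=X_0^{-N}S^+[[p/X_0^{2f\delta_R}]]$. The invocation of $N\leq 2f\delta_R$ does not repair this exponent mismatch; that hypothesis is actually used in the inductive step on $x^{(n)}$ (to absorb the $X_0^{-pN}$ that $\varphi(c_{j,n})$ introduces), not in the final resummation.

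The paper makes the scaling mechanism rigorous by first fixing a section $s:S^+/p\to S^+$ of the reduction with the functional equation $s(X_0 y)=X_0 s(y)$ (extended to $S/p$), which turns the contraction operator $f(y)=\tfrac{1}{p}\bigl(y-\sum_j\varphi(s(c_j(\bar y)))u_j\bigr)$ into a $\varphi(X_0)$-equivariant map: $f(\varphi(X_0)^{-M}y)=\varphi(X_0)^{-M}f(y)$. You need some such device to justify that the lift $c_{j,n}$ can be chosen with the exact pole order dictated by the scaling; simply ``lifting $c_j(\bar x)$ to $S^+$'' won't by itself control the $X_0$-pole. Once you insert the scaling (replacing $2pnf\delta_R$ by $2nf\delta_R$ in your bound on $c_{j,n}$) and make the lifting step precise via the equivariant section, your argument coincides with the paper's.
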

\begin{proof}
The $c_j(x)$'s are obtained by the following algorithm:

$\bullet$ Choose a section $s:S^+/p\to S^+$ of the reduction $y\mapsto \overline y$ modulo~$p$, such that $s(X_0y)=X_0y$,
and extend $s$ to $S/p$ using this functional equation.

$\bullet$ 
Define $f:S^{(0,v_R)}[X_0^{-1}]\to S^{(0,v_R)}[X_0^{-1}]$ by
$f(y)=\tfrac{1}{p}\big(y-\sum_j\varphi(s(c_j(\overline y)))\,u_j\big)$ (that $f$ exists rests upon
the fact that $\varphi(S^+)\subset S^{(0,pv_R)}$ and the observation that
$f(\varphi(X_0)^{-N}y)=\varphi(X_0)^{-N}f(y)$ thanks to the functional equation
$s(X_0y)=X_0s(y)$). 

$\bullet$ Set $x_0=x$ and let $x_{n+1}=f(x_n)$, if $n\geq 0$, so that
$x=p^{n+1}x_{n+1}+\sum_j\varphi\big(\sum_{i=0}^np^is(c_j(\overline{x_i}))\big)\,u_j$.

An easy induction, using the fact that $N\leq 2f\delta_R$, shows that $x_n\in X_0^{-2pnf\delta_R}
S^{(0,v_R)}$ and $s(c_j(\overline x_n))\in X_0^{-N-2nf\delta_R}S^{(0,pv_R)}$.
Hence $c_j(x)=\sum_{i\geq 0}p^is(c_j(\overline{x_i}))\in X_0^{-N}S^{(0,pv_R)}$.
The result follows.
\end{proof}

If $i\in I$, let 
$$A_i=\{\beta=(\beta_0,\dots,\beta_{d+1})\in\{0,1,\dots,p-1\}^{[0,d+1]},\ \beta_i=0\}.$$
If $\beta\in A_i$, let $u_{i,\beta}=X_0^{\beta_0}\cdots X_{d+1}^{\beta_{d+1}}$.
Note that $u_{d+1,\beta}=u_{{\rm Kum},(\beta_0,\dots,\beta_d)}$.
\begin{lemma}\label{extr9}
Let $i\in I$.

{\rm (i)}
An element $x$ of $R_{\varpi,i}$ can be written uniquely
$x=\sum_{\beta\in A_i}x_{i,\beta}$,
with $x_{i,\beta}\in \varphi(R_{\varpi,i}) u_{i,\beta}$ 
for all~$\beta$.

{\rm (ii)}
If $x\in R_{\varpi,i}^{(0,v_R)}$, then
$x_{i,\beta}\in R_{\varpi,i}^{(0,v_R)}$ for all $\beta$.
\end{lemma}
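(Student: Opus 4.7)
The plan is to reduce Lemma~\ref{extr9} to a direct application of Lemma~\ref{boring}, taken with $S = R_{\varpi,i}$ and with $(u_j)_{j \in J} = (u_{i,\beta})_{\beta \in A_i}$. For this I need to verify the two hypotheses of that lemma modulo~$p$: that every $x \in R_{\varpi,i}/p$ admits a unique decomposition $x = \sum_{\beta \in A_i} y_\beta^p u_{i,\beta}$ with $y_\beta \in R_{\varpi,i}/p$, and that there exists $N \leq 2f\delta_R$ such that $y_\beta \in X_0^{-N} R_{\varpi,i}^+/p$ whenever $x \in R_{\varpi,i}^+/p$.

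Both points should follow from the smoothness of $R_{\varpi,i,\Box}^+$ over $\O_F$ already observed in the proof of Lemma~\ref{patch3}: after localizing so that $\prod_{j \in I \setminus \{i\}} X_j$ is a unit, the relation $\prod_{j \in I} X_j = X_0^h$ can be used to eliminate $X_i$ (writing $X_i = Y X_0^h$ with $Y$ a unit), and the relation $X_{d+2} X_1 \cdots X_a = 1$ eliminates $X_{d+2}$; the remaining $d+1$ variables $X_0$ and $X_j$ for $j \in [1,d+1] \setminus \{i\}$ give a smooth parametrization over $\O_F$. By \'etaleness of $R_{\varpi,i}^+$ over $R_{\varpi,i,\Box}^+$, these same $d+1$ elements form a basis of the module of differentials of $R_{\varpi,i}$, and hence -- by the result of Tyc used in the proof of Lemma~\ref{ku1} -- a $p$-basis of $R_{\varpi,i}/p$. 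The monomials of degree $<p$ in each of these variables are exactly the $u_{i,\beta}$ with $\beta \in A_i$, which gives the required unique decomposition. Moreover, since $R_{\varpi,i,\Box}^+/p$ is smooth over $\F_p$ (no singular relation survives), the decomposition preserves integrality without any $X_0$-denominator: one can take $N=0$ in Lemma~\ref{boring}.

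Part~(i) is then the direct translation of the conclusion of Lemma~\ref{boring}, giving $x = \sum_\beta x_{i,\beta}$ with $x_{i,\beta} = \varphi(c_{i,\beta}(x)) u_{i,\beta} \in \varphi(R_{\varpi,i}) u_{i,\beta}$. For part~(ii), Lemma~\ref{boring} with $N=0$ gives moreover $c_{i,\beta}(x) \in R_{\varpi,i}^{(0,pv_R)}$ whenever $x \in R_{\varpi,i}^{(0,v_R)}$; combined with the fact (noted just before Lemma~\ref{patch3}) that $\varphi_{\rm cycl}$ induces a morphism $R_{\varpi,i}^{(0,pv_R)} \to R_{\varpi,i}^{(0,v_R)}$, and with the trivial inclusion $u_{i,\beta} \in R_{\varpi,i}^+ \subset R_{\varpi,i}^{(0,v_R)}$, this yields $x_{i,\beta} \in R_{\varpi,i}^{(0,v_R)}$ as required.

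The only real verification is thus the $p$-basis statement, and the (modest) obstacle lies in cleanly identifying the smooth model of $R_{\varpi,i,\Box}^+$ after eliminating $X_i$ and $X_{d+2}$, and matching its monomial $p$-basis with the family $(u_{i,\beta})_{\beta \in A_i}$. Once this book-keeping is settled, the statement is a black-box consequence of Lemma~\ref{boring}.
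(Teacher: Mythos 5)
Your proposal is correct and follows essentially the same route as the paper's proof: one observes that $R_{\varpi,\Box,i}^+$ is smooth over $\O_F$ (eliminating $X_i$ and $X_{d+2}$ via the two relations), that by \'etaleness the $X_j$ for $j\in[0,d+1]\setminus\{i\}$ are a basis of $\Omega^1$ of $R_{\varpi,i}^+/p$, hence a $p$-basis by the result of Tyc, so the monomials $u_{i,\beta}$ for $\beta\in A_i$ give the required decomposition modulo $p$, and one then invokes Lemma~\ref{boring} with $N=0$ to conclude. You spell out the book-keeping a bit more explicitly than the paper does, but the key lemma and all the intermediate steps are the same.
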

\begin{proof}
If $S=R_{\varpi,\Box,i}^+/p$, 
the $X_j$'s, for $j\neq i$, form a basis of $\Omega^1_S$.  By \'etaleness, this is
also the case if $S=R_{\varpi,i}^+/p$.  Hence, according to~\cite{tyc}, the
$u_{i,\beta}$'s, for $\beta\in A_i$, form 
a basis of $S$ over $\varphi(S)$, if $S=R_{\varpi,i}^+/p$ and, as a consequence, if $S=R_{\varpi,i}/p$.
It follows that we can use Lemma~\ref{boring} (with $N=0$) to conclude.
\end{proof}

If $\beta\in A_{d+1}$, let $\alpha=(\beta_0,\dots,\beta_d)$ be the corresponding element
of $\{0,\dots,p-1\}^{[0,d]}$, and set $x_\alpha=x_{d+1,\beta}$, so that the 
decomposition $x=\sum_{\beta\in A_{d+1}}x_{d+1,\beta}$
becomes simply 
$x=\sum_\alpha x_\alpha$.
\begin{lemma}\label{patch4}
{\rm (i)} If $x\in R_\varpi$, then
$x_\alpha\in\varphi(R_\varpi)\,u_{{\rm Kum},\alpha}$.

{\rm (ii)}
If $v<v_R$, and if $x\in R_\varpi^{(0,v]+}$, then
$x_\alpha\in R_\varpi^{(0,v]+}$, for all $\alpha$.
\end{lemma}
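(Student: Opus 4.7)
My plan is to derive Lemma~\ref{patch4} by descending the decomposition of Lemma~\ref{extr9} from the larger ring $R_{\varpi,d+1}$ down to $R_\varpi$ for part~(i), and then to $R_\varpi^{(0,v]+}$ for part~(ii).

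For part~(i), I first apply Lemma~\ref{extr9}(i) with $i=d+1$ to obtain, inside $R_{\varpi,d+1}$, a decomposition $x=\sum_\alpha x_\alpha$ with $x_\alpha=\varphi(y_\alpha)u_{{\rm Kum},\alpha}$ and $y_\alpha\in R_{\varpi,d+1}$. The crucial input is that, by Lemma~\ref{ku1}(ii), the family $(u_{{\rm Kum},\alpha})$ is also a $p$-basis of $R_\varpi/p$ over its Frobenius image, so the iterative lifting algorithm from the proof of Lemma~\ref{boring} can be run directly with $S=R_\varpi$: starting from $x$, decompose modulo $p$ by Lemma~\ref{ku1}, lift each coefficient to $R_\varpi$ via a section of the reduction, subtract the $\varphi$-expansion of the lift from $x$, divide by $p$, and iterate. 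The $p$-adic limit yields $y'_\alpha\in R_\varpi$ with $x=\sum_\alpha\varphi(y'_\alpha)u_{{\rm Kum},\alpha}$, and uniqueness of the decomposition in the larger ring $R_{\varpi,d+1}$ forces $y'_\alpha=y_\alpha$, giving $y_\alpha\in R_\varpi$.

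For part~(ii), when $x\in R_\varpi^{(0,v]+}$ with $v<v_R$, I combine two ingredients. The first is the eigenspace decomposition $R_\varpi^{(0,v]+}=\oplus_\alpha (R_\varpi^{(0,v]+})_\alpha$ under $\partial_{{\rm Kum},j}$ recorded just before Lemma~\ref{prepsi}: it yields $x=\sum_\alpha c_\alpha$ with $c_\alpha\in (R_\varpi^{(0,v]+})_\alpha$, and by Corollary~\ref{ku2} each $c_\alpha$ has the form $\varphi_{{\rm Kum}}(\tilde y_\alpha)u_{{\rm Kum},\alpha}$. The second is that $\varphi_{{\rm cycl}}\equiv\varphi_{{\rm Kum}}$ modulo $p$, so this Kummer-type decomposition coincides modulo $p$ with the cyclotomic decomposition from~(i); hence $x_\alpha-c_\alpha\in pR_\varpi$. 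Proposition~\ref{extr8} controls the discrepancy $\varphi_{{\rm cycl}}-\varphi_{{\rm Kum}}$, so I can iterate, correcting at each step by the eigenspace projection of the residual divided by $p$, and produce $x_\alpha$ as a $p$-adically convergent sum whose partial sums lie in $R_\varpi^{(0,v]+}$.

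The hardest part will be the integrality bookkeeping in~(ii). Lemma~\ref{ku1}(iii) only puts the natural Kummer coefficient $\tilde y_\alpha$ in $X_0^{-(h+1)}R_\varpi^+$, so the raw lifts escape $R_\varpi^{(0,v]+}$ and one must rely on $u_{{\rm Kum},\alpha}$ to absorb the bad denominators. Propagating this control through the Frobenius-correction iteration requires carefully combining the hypothesis $v<v_R$ with the ``enough roots of unity'' bound $\delta_R+1<e_0/(2p)$, along the same lines as the convergence analysis in the proof of Lemma~\ref{boring}; this is where the tight balancing of the radii and the ramification invariant $\delta_R$ becomes essential.
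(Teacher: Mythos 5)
Your approach to part~(i) is sound, though slightly roundabout: since $u_{{\rm Kum},\alpha}$ is a $p$-basis of $R_\varpi/p$ (Lemma~\ref{ku1}), the $p$-adic lifting argument can be run directly in $R_\varpi$ without detouring through $R_{\varpi,d+1}$ and invoking uniqueness there; this is what the paper does and is a one-line reduction to Lemma~\ref{ku1} mod~$p$.

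Part~(ii) has a genuine gap, and you have put your finger on it without resolving it. You want to iterate a correction procedure anchored on the eigenspace projections $c_{{\rm Kum},\alpha}$ and the comparison $\varphi_{\rm cycl}\equiv\varphi_{\rm Kum}$ mod~$p$, and you claim the monomials $u_{{\rm Kum},\alpha}$ will ``absorb'' the denominators $X_0^{-(h+1)}$ coming from Lemma~\ref{ku1}(iii). But they do not: the iteration requires you to extract the Frobenius cofactor $\tilde y_\alpha$ on its own, divide the residual by $p$, and repeat, and this step takes place in a ring where $p$ is comparable to a power of $X_0$. The convergence condition in the proof of Lemma~\ref{boring} is precisely the bound $N\leq 2f\delta_R$ on the $X_0$-denominator of the $p$-basis coefficients, and for $S=R_\varpi$ with the Kummer basis one only has $N=h+1$, which is not $\leq 2f\delta_R$ in general (indeed $h$ can be as large as $e$ while $\delta_R$ can be small). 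So the residuals need not shrink in $R_\varpi^{(0,v]+}$, and the $p$-adic sum can fail to land there even though every finite partial sum does formally.

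The paper avoids this entirely. Instead of iterating on $R_\varpi$, it localizes on the polycircles $R_{\varpi,i}$ for $i\in I$: there the log-scheme becomes smooth rather than semistable, the family $(u_{i,\beta})_{\beta\in A_i}$ is a $p$-basis of $R_{\varpi,i}/p$ with coefficients genuinely in $R_{\varpi,i}^+/p$ (so $N=0$ in Lemma~\ref{boring}), and Lemma~\ref{extr9}(ii) gives $x_{i,\beta}\in R_{\varpi,i}^{(0,v_R)}$ cleanly. It then shows, by an explicit monomial computation expressing the Kummer $x_\alpha$ as an $r_\varpi^{(0,v_R)}$-linear combination of the $x_{i,\beta}$'s, that $x_\alpha\in R_{\varpi,i}^{(0,v_R)}$ for every $i\in I$, and glues via Lemma~\ref{patch3}. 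You make no use of the $R_{\varpi,i}$'s or of Lemma~\ref{patch3}, and without some such device the ``careful combination of the radii and $\delta_R$'' you defer to cannot be made to work with the Kummer basis alone.
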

\begin{proof}
(i) is a direct consequence of the analogous statement modulo~$p$ (Lemma~\ref{ku1}, we have
$\varphi(x)=x^p$ modulo~$p$).
To prove (ii),
an adaptation of Remark~\ref{LIFT0} shows that it is enough to prove the same 
statement for $R_\varpi^{(0,v_R)}$: write an element
of $S^{(0,v]+}$ as $\sum_{n\in\N}x_n\frac{p^n}{\varphi(X_0)^{[ne/v]-d_n}}$, with $x_n\in S^{(0,v_R)}$
and $d_n\to +\infty$,
and use the result for $R_\varpi^{(0,v]+}$.

According to Lemma~\ref{patch3}, it is enough to show that $x_\alpha\in R_{\varpi,i}^{(0,v_R)}$,
for all $i\in I$.  For this purpose it is enough, considering Lemma~\ref{patch4}, to show
that $x_\alpha$ is a linear combination of the $x_{i,\beta}$'s, with coefficients
in $R_{\varpi,i}^{(0,v_R)}$, and we will in fact exhibit such a combination
with coefficients in $r_\varpi^{(0,v_R)}$.

First, let us decompose elements of $r_\varpi^{(0,v_R)}$.
If $N\in\Z$, if $x\in X_0^Nr_\varpi^{(0,v_R)}$, and
if $N=pq+r$, with $0\leq r\leq p-1$, one can write $x$, modulo $X_0^{N+1}$,
as $X_0^r\varphi(X_0)^q\sum_{n}\varphi(a_n)\frac{p^n}{\varphi(X_0)^{2f\delta_R}}$,
with $a_n\in\O_F$.  Hence, there exist $x_{N+1}\in X_0^{N+1}r_\varpi^{(0,v_R)}$,
and $a_N\in r_\varpi^{(0,pv_R)}$ such that $x=X_0^r\varphi(X_0^qa_N)+x_{N+1}$.  Iterating this process,
one can write $x$ as $\sum_{pq+r\geq N, 0\leq r\leq p-1}X_0^r\varphi(X_0^qa_{pq+r})$, with
$a_{pq+r}\in r_\varpi^{(0,pv_R)}$.  Setting $c_r(x)=\sum_{pq+r\geq N}X_0^r\varphi(X_0^qa_{pq+r})$,
we obtain a decomposition 
$$x=\sum_{r=0}^{p-1}c_r(x),\quad {\text{with $c_r(x)\in X_0^r\varphi(r_\varpi)\cap X_0^Nr_\varpi^{(0,v_R)}$.}}$$
In particular, if $N\in\N$, we can write $X_0^N=\sum_{r=0}^{p-1}X_0^{N}c_{N,r}$, with $c_{N,r}\in r_\varpi^{(0,v_R)}$, 
in such a way
that $X_0^{N}c_{N,r}\in X_0^r\varphi(r_\varpi)$, for all $r$.

Next, notice that $u_{i,\beta}=X_0^{\beta_0+\beta_{d+1}h}X_1^{\beta_1}\cdots X_a^{\beta_a}
X_{a+1}^{\beta_{a+1}-\beta_{d+1}}\cdots X_{a+b}^{\beta_{a+b}-\beta_{d+1}}X_{a+b+1}^{\beta_{a+b+1}}\cdots
X_d^{\beta_d}$.
This implies that $$(x_{i,\beta})_\alpha=\begin{cases}
c_{\beta_0+\beta_{d+1}h,\alpha_0}x_{i,\beta}, 
&{\text{if $(\alpha_1,\dots,\alpha_d)=(\beta_1,\dots,\beta_a,\beta_{a+1}-\beta_{d+1},\dots,\beta_{a+b}-\beta_{d+1}, \beta_{a+b+1},\dots,\beta_d)$ modulo~$p$,}}\\
0, & {\text{otherwise.}}
\end{cases}$$
Summing over $\beta\in A_i$ gives the desired expression of $x_\alpha$ as a combination
of the $x_{i,\beta}$'s.
\end{proof}

\begin{lemma}\label{ku3}
If $k\geq0$, there exists $a_{k,i}\in T^{-\delta_R}r_\varpi^{(0,pv_R)}$ such that
$X_0^k=\sum_{i=0}^{p-1}\varphi(a_{k,i})\,(1+T)^i$.
\end{lemma}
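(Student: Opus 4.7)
The strategy is to apply Lemma~\ref{boring} in the one-variable case $S = r_\varpi$, with the family $u_j = (1+T)^j$ for $0 \le j \le p-1$, and then to convert the resulting $X_0$-adic bound into a $T$-adic one via Lemma~\ref{new1}. This is the one-variable (no $X_1,\dots,X_d$) analogue of the cyclotomic decomposition in Proposition~\ref{patch1}, specialized to the particular element $X_0^k$, and with the Kummer and cyclotomic monomials now playing genuinely different roles (which is precisely why this lemma is needed in the proof of Lemma~\ref{patch4}).

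First I would verify the two hypotheses of Lemma~\ref{boring}. For the basis hypothesis, identify $r_\varpi/p$ with $k((X_0))$ and $(r_\varpi/p)^p$ (the image of $\varphi_{\rm cycl}$ modulo $p$, since $\varphi_{\rm cycl}(x) \equiv x^p \bmod p$) with $k((X_0^p))$, giving a degree~$p$ extension. Since $(1+T)^p = \varphi_{\rm cycl}(1+T) \in k((X_0^p))$, the element $1+T$ has degree dividing $p$; I would show it actually generates the extension by proving that $T$ is not a $p$-th power in $k((X_0))$. For this, note that the leading $X_0$-adic term of $T$ modulo $p$ is $-\bar\alpha^{-1}X_0^f$, and using the explicit form of $Q$ together with the ``enough roots of unity'' assumption on $K$ (which controls $f$ and $\delta_K$ relative to $e$), one exhibits a nonvanishing Laurent coefficient of $T$ at some exponent coprime to $p$.

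For the denominator hypothesis, I would use the decomposition $r_\varpi^+ = \oplus_{j=0}^{f-1} \O_F[[T]] X_0^j$ coming from $Q(X_0,T)=0$ to rewrite any $x \in r_\varpi^+/p$ as an $\O_F[[T]]/p$-linear combination of $\{X_0^j\}_{0\le j<f}$. Then, by the classical fact that $\O_F[[T]]$ is free of rank $p$ over $\varphi_{\rm cycl}(\O_F[[T]])$ with basis $\{(1+T)^i\}$ (which is the cornerstone of one-dimensional $(\varphi,\Gamma)$-module theory), the $\O_F[[T]]$-coefficients decompose cleanly. The $X_0$-adic denominators then come from expressing each individual $X_0^j$ ($0 \le j < f$) in the cyclotomic basis over $\varphi_{\rm cycl}(r_\varpi)$, and a careful analysis shows they are controlled by the valuation of $\partial Q/\partial X_0$, which by Lemma~\ref{new1} is exactly $\delta_K$; hence $N \le f\delta_R$.

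Having established the hypotheses, Lemma~\ref{boring} applied to $x = X_0^k \in r_\varpi^+ \subset r_\varpi^{(0,v_R)}$ yields $a_{k,i} = c_i(X_0^k) \in X_0^{-f\delta_R}\, r_\varpi^{(0,pv_R)}$ together with the identity $X_0^k = \sum_{i=0}^{p-1}\varphi_{\rm cycl}(a_{k,i})(1+T)^i$. To finish, I convert the denominator: since $T = -\alpha^{-1}X_0^f U^{-1}$ for some $U \in (r_\varpi^+)^\times$ (the discussion after the definition of $T$), we have $X_0^{f\delta_R} = (-\alpha)^{\delta_R} U^{\delta_R}\, T^{\delta_R}$ with $(-\alpha)^{\delta_R} U^{\delta_R}$ a unit in $r_\varpi^+$, so $X_0^{-f\delta_R}\, r_\varpi^{(0,pv_R)} = T^{-\delta_R}\, r_\varpi^{(0,pv_R)}$, giving the claimed bound. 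The main obstacle is the sharp bookkeeping in the denominator hypothesis: getting $N \le f\delta_R$ (so that the $T$-adic pole is exactly $\delta_R$ and not $2\delta_R$) requires that each $X_0^j$ with $j<f$ be expressed in the cyclotomic basis with denominators no worse than $X_0^{-\delta_K}$, which is the content of Lemma~\ref{new1}(ii).
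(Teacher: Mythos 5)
Your proposal correctly identifies Lemma~\ref{boring} as the engine, correctly specializes it to $S=r_\varpi$ with $u_j=(1+T)^j$, and the final conversion of the $X_0$-adic pole into a $T$-adic one (using that $X_0^{-f}T$ is a unit in $r_\varpi^+$) is exactly right. However, there is a genuine gap in the heart of the argument: the verification of the \emph{denominator hypothesis} of Lemma~\ref{boring} is not actually carried out. You propose to decompose $x\in r_\varpi^+/p$ via $\bigoplus_{j=0}^{f-1}\O_F[[T]]X_0^j$, decompose each $\O_F[[T]]$-coefficient over $\{(1+T)^i\}$ (which costs no denominator), and then re-expand each $X_0^j$ in the cyclotomic basis over $\varphi(r_\varpi)$; the last step is announced as ``a careful analysis shows'' with denominator $\delta_K$, but no argument is given. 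This step is the entire point of the lemma, and the obvious tool for computing those coefficients — the trace form for the degree-$p$ extension $r_\varpi/\varphi(r_\varpi)$ mod $p$ — is unavailable because that extension is \emph{inseparable} in characteristic $p$ (its trace is identically zero). So as written, your route stalls exactly at the substantive calculation.

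The paper's proof avoids this difficulty by never expanding $X_0^j$ in the cyclotomic basis over $\varphi(r_\varpi)$. Instead it exploits the tower $r_\varpi\supset r_{\zeta-1}\supset \varphi(r_{\zeta-1})$: it decomposes $x$ in the basis $\{X_0^{jp}\}_{0\le j<f}$ of $r_\varpi/p$ over $r_{\zeta-1}/p$ (the $p$-th power of the usual Kummer basis), computing the coefficients ${\rm Tr}(g_j^p x)$ via the trace form of the \emph{separable} extension $r_\varpi/r_{\zeta-1}$. The trace-dual basis $g_j=\Delta^{-1}R_j$ (with $R_j\in k[X_0]$ monic of degree $f-1-j$ and $\Delta$ a unit times $X_0^{\delta_K}$, by Lemma~\ref{new1}) is what controls the poles: $g_j\in X_0^{-\delta_K}r_\varpi^+/p$, whence ${\rm Tr}(g_j^px)\in T^{-p\delta_R}r_{\zeta-1}^+/p$, and then the integral cyclotomic decomposition over $r_{\zeta-1}$ plus absorbing the $T^{-p\delta_R}=\varphi(T^{-\delta_R})$ factor gives the claimed bound. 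This dual-basis computation, rather than any appeal to an expansion of $X_0^j$, is the crucial missing ingredient in your sketch. (As a smaller point, your verification of the basis hypothesis — showing $T\notin k((X_0^p))$ by finding a Laurent coefficient at an exponent prime to $p$ — is also left incomplete; the paper's direct construction makes this unnecessary.)
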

\begin{proof}
Let us denote by ${\rm Tr}$ the trace from $r_\varpi$ to $r_{\zeta-1}$.
If $0\leq j\leq f-1$, let $f_j=X_0^j$.  The
$f_j$'s are a basis of $r_\varpi/p$ over $r_{\zeta-1}/p$;
let $(g_j)_{0\leq j\leq f-1}$ be the dual basis for the bilinear form
${\rm Tr}(xy)$.  Now, if $\Delta$ is as in Lemma~\ref{new1},
we have ${\rm Tr}(\Delta^{-1}X_0^j)=0$ if $0\leq j\leq f-2$ and
${\rm Tr}(\Delta^{-1}X_0^{f-1})=1$.  It follows that
$g_j=\Delta^{-1}R_j$, where $R_j\in k[X_0]$ is unitary, of degree~$f-1-j$;
in particular, $g_j\in X_0^{-\delta_K}r_\varpi^+/p$.

Now the $f_j^p$'s are also a basis of $r_\varpi/p$ over $r_{\zeta-1}/p$ and
the $g_j^p$'s are the dual basis.
So, if $x\in r_\varpi/p$, we have $x=\sum_{j=0}^{f-1}{\rm Tr}(g_j^px)\,f_j^p$.
But
$g_j^px\in X_0^{-p\delta_K}r_\varpi^+/p\subset X_0^{-\delta_K}T^{-p\delta_R}
r_\varpi^+/p$ (we could improve this to $X_0^{-\delta_K}T^{-\frac{p-1}{p}\delta_R}$).
It follows that if $x\in r_\varpi^+/p$, then
${\rm Tr}(g_j^px)\in T^{-p\delta_R}r_{\zeta-1}^+/p$.
Hence one can write ${\rm Tr}(g_j^px)$ as $T^{-p\delta_R}\sum_{i=0}^{p-1}
(1+T)^ia_{j,i}^p$, with $a_{j,i}\in r_{\zeta-1}^+/p$.
It follows that $x=\sum_{i=0}^{p-1}(1+T)^ic_i(x)^p$,
with 
$c_i(x)=T^{-\delta_R}\sum_{j=0}^{f-1}a_{j,i}f_j\in T^{-\delta_R}r_\varpi^+/p$.

The result follows from Lemma~\ref{boring} with $S=r_\varpi$, $N=f\delta_R$, and
taking the $(1+T)^i$'s as the $u_j$'s.
\end{proof}

From this lemma, we get the relation:
$$u_{{\rm Kum},\alpha}=\sum_{i=0}^{p-1}\varphi(a_{\alpha_0,i})\,u_{{\rm cycl},\beta_i},\quad
{\text{with $\beta_i=(i,\alpha_1,\dots,\alpha_d)$.}}$$ 
This implies that $c_{{\rm cycl},\alpha}(x)=\sum_{i=0}^{p-1}\varphi(a_{i,\alpha_0})x_{(i,\alpha_1,\dots,\alpha_d)}$,
and makes it possible to use Lemmas~\ref{patch4} and~\ref{ku3} to finish the proof of Proposition ~\ref{patch1}.

\subsection{Period rings}
let
$\overline R$ be the ``maximal extension of $R$ unramified outside $X_{a+b+1}\cdots X_{d}=0$
in characteristic~$0$ (i.e.~after inverting $p$)". Let $G_R={\rm Gal}(\overline R/R)$.  Define $v_p$
 on ${\overline R}[\frac{1}{p}]$
to be 
the spectral valuation.

\subsubsection{$p$-adic Hodge theory rings}
   If $S=K,R[\frac{1}{p}]$, denote by
$\C(\overline S)$ the completion of $\overline S$
for $v_p$, and let $\C^+(\overline S)$ be the sub-ring of
$x$'s with $v_p(x)\geq 0$.
  $\C(\overline S)$
is a perfectoid algebra.  
Denote by $\E_{\overline S}$ its tilt
and set $\A_{\overline S}=W(\E_{\overline S})$.
We can describe $\E_{\overline S}$ as the set of sequences
$(x_n)_{n\in\N}$, with $x_n\in \C(\overline S)$ and
$x_{n+1}^p=x_n$ for all $n\in\N$.
If $x\in\C(\overline S)$, denote by $x^\flat$ any element
$(x_n)_{n\in\N}$ of $\E_{\overline S}$ with $x_0=x$.

The inclusion $K\subset R[\frac{1}{p}]$ induce
inclusions
$$\overline K\subset\overline R[\tfrac{1}{p}],\quad
\E_{\overline K}\subset \E_{\overline R},\quad
\A_{\overline K}\subset \A_{\overline R}.$$
Define $\ve$ on $\E_{\overline S}$ by
$\ve(x)=v_p(x^\sharp)$, $x^{\sharp}:=x_0$.
This is a valuation on $\E_{\overline{S}}$
for which it is complete.

Let $\E^+_{\overline S}$ be the subring of
$\E_{\overline S}$ of $x$'s such that $\ve(x)\geq 0$, and let
$\A^+_{\overline S}=W(\E^+_{\overline S})$.
So $\E^+_{\overline S}$ is the tilt of
$\C^+(\overline S)$ and, as a ring, it is the projective limit (over $\N$)
of $\C^+(\overline S)/{\goth a}$ for the transition maps $x\mapsto x^p$,
where ${\goth a}$ is the ideal $\{x,\,v_p(x)\geq\frac{1}{p}\}$ (we
could have replaced $\frac{1}{p}$ by any number in $]0,1]$).
The inclusion $\O_K\subset R$ induce
inclusions
$$
\E_{\overline K}^+\subset \E_{\overline R}^+,\quad
\A_{\overline K}^+\subset \A_{\overline R}^+.$$
Let 
$$\epsilon=(1,\zeta_p,\zeta_{p^2},\dots)\in\E^+_{\overline K},
\quad\pi=[\epsilon]-1\in\A^+_{\overline K}
\quad{\rm  and}\quad
\xi=\tfrac{\pi}{\varphi^{-1}(\pi)}
.$$

Any element $x$ of $\A_{\overline S}$ (resp.~$\A^+_{\overline S}$)
can be written uniquely as
$$x=\sum_{k\in\N}p^k[x_k],$$
with $x_k\in \E_{\overline S}$ (resp.~$\E^+_{\overline S}$).
Then $\theta:\A^+_{\overline S}\to \C^+(\overline S)$
defined by $\theta(\sum_{k\in\N}p^k[x_k])=\sum_{k\in\N}p^kx_k^\sharp$
is a surjective ring homomorphism whose kernel is principal, generated by
$p-[p^\flat]$ (or by $P_\varpi([\varpi^\flat])$ or $\xi$).
We extend $\theta$, by $\Q_p$-linearity, to
$\theta:\A^+_{\overline S}[\frac{1}{p}]\to \C(\overline S)$,
and we define $\bdrp(\overline S)$ to be the completion
of $\A^+_{\overline S}[\frac{1}{p}]$ with respect to the ideal
$(p-[p^\flat])$.  We filter $\bdrp(\overline S)$
by the powers of the ideal $(p-[p^\flat])$, i.e.~we
set $F^i\bdr(\overline S)=(p-[p^\flat])^i\bdrp(\overline S)$.

  We define $\A_{\crr}(\overline S)$ as the $p$-adic completion
of $\A^+_{\overline S}[\frac{(p-[p^\flat])^k}{k!},\ k\in\N]=
\A^+_{\overline S}[\frac{[p^\flat]^k}{k!},\ k\in\N]$. 
It is naturally a subring of $\bdrp(\overline S)$.

\subsubsection{$(\varphi,\Gamma)$-modules theory rings}
If $v>0$ and $S=K,R$, let
$\A_{\overline S}^{(0,v]}$ be the subring of
$\A_{\overline S}$ defined by
$$\A_{\overline S}^{(0,v]}=\big\{\sum_{k\in\N}p^k[x_k],\ 
v\ve(x_k)+k\to +\infty {\text{ when $k\to +\infty$}}\big\},$$
and let $\A_{\overline S}^{(0,v]+}$ be the subring of
$\A_{\overline S}^{(0,v]}$ of
the $x=\sum_{k\in\N}p^k[x_k]$ such that
$v\ve(x_k)+k\geq 0$ for all $k\in\N$.
We have
$$\A_{\overline S}^{(0,v]}=\A_{\overline S}^{(0,v]+}[\tfrac{1}{[p^\flat]}].$$
If $\alpha\in \E^+_{\overline K}$ satisfies $\ve(\alpha)=\frac{1}{v}$,
then $\A_{\overline S}^{(0,v]+}$ is also the completion
of $\A^+_{\overline S}[\frac{p}{[\alpha]}]$ for the $p$-adic topology.
If $v\geq 1$, the natural
map $\A^+_{\overline S}[\frac{p}{[\alpha]}]\to \bdrp(\overline S)$
extends, by continuity, to injections
$$\A_{\overline S}^{(0,v]+}\to \bdrp(\overline S),\quad
\A_{\overline S}^{(0,v]}\to \bdrp(\overline S).$$

  If $0 < u$, and if $\beta\in \E^+_{\overline K}$ satisfies
$\ve(\beta)=\frac{1}{u}$,
we define $\A_{\overline S}^{[u]}$ as the completion
of $\A^+_{\overline S}[\frac{[\beta]}{p}]$ for the $p$-adic topology.
If $u\leq 1$, the natural
map $\A^+_{\overline S}[\frac{[\beta]}{p}]\to \bdrp(\overline S)$
extends, by continuity, to an injection
$$\A_{\overline S}^{[u]}\to \bdrp(\overline S).$$

If $0 < u\leq v$, and if $\alpha,\beta\in \E^+_{\overline K}$ satisfy 
$\ve(\alpha)=\frac{1}{v}$ and $\ve(\beta)=\frac{1}{u}$,
we define $\A_{\overline S}^{[u,v]}$ as the completion
of $\A^+_{\overline S}[\frac{p}{[\alpha]},\frac{[\beta]}{p}]$ for the $p$-adic topology.
If $u\leq 1\leq v$, the natural
map $\A^+_{\overline S}[\frac{p}{[\alpha]},\frac{[\beta]}{p}]\to \bdrp(\overline S)$
extends, by continuity, to an injection
$$\A_{\overline S}^{[u,v]}\to \bdrp(\overline S).$$
We use these embeddings into $\bdrp(\overline S)$ to induce filtrations on all the rings $\A_{\overline S}^{\rm deco}$. 
We have $$\varphi(\A_{\overline S}^{(0,v]+})=\A_{\overline S}^{(0,v/p]+},\quad
\varphi(\A_{\overline S}^{(0,v]})=\A_{\overline S}^{(0,v/p]},\quad \varphi(\A_{\overline S}^{[u]})=\A_{\overline S}^{[u/p]},\quad
\varphi(\A_{\overline S}^{[u,v]})=\A_{\overline S}^{[u/p,v/p]}.$$

The relative crystalline ring of periods $\A_{\crr}(\overline{R})=\A^+_{\overline{R}}[\frac{[{p}^{\flat}]^k}{k!},k\in\N]^{\wedge}$ is related to the above rings. We have  
\begin{align*}
\A_{\crr}(\overline{R}) & \subset \A^+_{\overline{R}}[\tfrac{[\beta]}{p}]^{\wedge}, \mbox{ for } \quad 0 < v_{\E}(\beta)\leq p-1;\\
\A_{\crr}(\overline{R}) & \supset \A^+_{\overline{R}}[\tfrac{[\beta]}{p}]^{\wedge}, \mbox{ for } \quad v_{\E}(\beta)\geq p.
\end{align*}
(If $p-1<\ve(\beta)<p$, there exists $C(v)$, $v=\ve(\beta)$, such that $p^{C(v)}\A^+_{\overline{R}}[\frac{[\beta]}{p}]^{\wedge}\subset\A_{\crr}(\overline{R})$.)

To see this, write  $[\beta]=[{p}^{\flat}]^{v}u$, for a unit $u\in \A_{\overline K}^+$, $v=v_{\E}(\beta)$. Since $v_p(k!)=(k-s_k)/(p-1)$, where $s_k\geq 1$
is the sum of digits in the $p$-adic presentation of $k$, we have 
$[{p}^{\flat}]^{[k]}=[{p}^{\flat}]^kp^{-(k-s_k)/(p-1)}u_0$, for a unit $u_0\in \so_F$. For the first inclusion above, it suffices to show that, for all $k$, 
$[{p}^{\flat}]^kp^{-(k-s_k)/(p-1)}\in \A^+_{\overline{R}}[\tfrac{[\beta]}{p}]^{\wedge}$. But 
$([\beta]/p)^{(k-s_k)/(p-1)}=[{p}^{\flat}]^{v(p-1)^{-1}(k-s_k)}p^{-(k-s_k)/(p-1)}u_1$, for a unit $u_1\in\A_{\overline K}^+ $. It follows that, if $0 < v\leq (p-1)$, then $[{p}^{\flat}]^{[k]}\in (([\beta]/p)^{(k-s_k)/(p-1)})$, as wanted. 

 For the second inclusion,  if  $ v\geq p$, then $([\beta]/p)^{k}=[{p}^{\flat}]^{pk}p^{-k}a_2,$ $a_2\in \A_{\crr}(\overline{R}) $. It suffices to show that $(pk)!p^{-k}\in\N$.
But $$
v_p(\tfrac{(pk)!}{p^k})=v_p((pk)!)-v_p(p^k)=\tfrac{pk-s_{pk}}{p-1}-k
$$
and this is nonnegative since $k\geq s_{pk}=s_k$.

  Hence $\A_{\crr}(\overline{R})\subset \A^{[u]}_{\overline{R}}$ for $u\geq 1/(p-1)$, and $\A_{\crr}(\overline{R})\supset \A^{[u]}_{\overline{R}}$ for $u\leq 1/p$.  That means that, in the case of $u_p=(p-1)/p$, for $p>2$, $\A_{\crr}(\overline{R})\subset \A^{[u_p]}_{\overline{R}}$ and, for $p=2$ ($u_p=1/2$),  $\A_{\crr}(\overline{R})\supset \A^{[1/2]}_{\overline{R}}$ (we also have $\A_{\crr}(\overline{R})\subset \phi^{-1}(\A^{[1/2]}_{\overline{R}})$). 

\subsubsection{Fundamental exact sequences}
Recall that, 
  if $r\in\N$, we have  {\it the fundamental exact sequence}
$$0\to\Z_pt^{\{r\}}\to \xymatrix{F^r\acris\ar[r]^-{p^r-\varphi}&\acris}$$
where $t^{\{r\}}:=t^{b(r)}(t^{p-1}/p)^{a(r)}$, for $r=(p-1)a(r)+b(r),$ $0\leq b(r)\leq p-1$. Moreover the map $p^r-\phi$ is $p^r$-surjective. Set $\Z_p(r)^{\prime}:=\tfrac{1}{p^{a(r)}}\Z_p(r)$.
We will need the following generalizations of the above fundamental exact sequence. 
\begin{lemma}
\label{AS}
{\rm (i)} Let $0 <  v$.  We have the following Artin-Schreier  exact sequences 
\begin{equation}
\label{AS11}
  0\to \Z_p\to \xymatrix{\A_{\overline{R}}\ar[r]^-{1-\phi}&\A_{\overline{R}}} \to 0, \quad 
  0\to \Z_p\to \xymatrix{\A^{(0,v]+}_{\overline{R}}\ar[r]^-{1-\phi}&\A^{(0,v/p]+}_{\overline{R}}} \to 0
\end{equation}
{\rm (ii)} The following sequence is $p^r$-exact.
\begin{equation*}
 0\to \Z_p(r)^{\prime}\to \xymatrix{F^{r}\A_{\crr}(\overline{R})\ar[r]^-{p^r-\phi}&\A_{\crr}(\overline{R})}\to 0
 \end{equation*}
{\rm (iii)} Let $0 < u\leq 1\leq v$.  The following sequence is $p^{4r}$-exact
\begin{equation*}
   0\to \Z_p(r)\to \xymatrix{F^r\A^{[u,v]}_{\overline{R}}\ar[r]^-{p^r-\phi}&\A^{[u,v/p]}_{\overline{R}}}\to 0 \end{equation*}
Moreover, all the surjections above have continuous sections.
\end{lemma}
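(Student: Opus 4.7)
The plan is to prove the three parts in order, obtaining (ii) and (iii) by twisting the Artin--Schreier statement of (i) with appropriate powers of $t^{\{r\}}$.

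\textbf{Part (i).} For the first sequence, the strategy is to reduce modulo $p$ and then lift. Modulo $p$, the map $1-\phi$ on $\A_{\overline{R}}=W(\E_{\overline{R}})$ becomes $1-F$ on $\E_{\overline{R}}$, where $F$ is the characteristic-$p$ Frobenius. Surjectivity of $1-F$ on $\E_{\overline{R}}$ follows from the maximality of $\overline{R}$: every Artin--Schreier equation $y^p-y=a$ with $a\in \overline{R}[\tfrac{1}{p}]$ defines an extension étale away from the divisor $X_{a+b+1}\cdots X_d=0$, hence is solvable in $\overline{R}$; passage to compatible $p$-power roots on the tilt side gives surjectivity of $1-F$, and the kernel is $\Z/p$ by the same argument. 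The lift to $\A_{\overline{R}}$ is the standard inductive procedure: given $x\in\A_{\overline{R}}$, solve $(1-\phi)y_0\equiv x\pmod{p}$ with $y_0\in\A_{\overline{R}}$, replace $x$ by $p^{-1}(x-(1-\phi)y_0)\in\A_{\overline{R}}$, and iterate; the series converges $p$-adically, produces $y$ with $(1-\phi)y=x$ and simultaneously yields a continuous section. The kernel lifts to $\Z_p$. For the second sequence, one uses that $\phi$ is an isomorphism $\A^{(0,v]+}_{\overline{R}}\xrightarrow{\sim}\A^{(0,v/p]+}_{\overline{R}}$, and applies the same mod-$p$-then-lift strategy, noting that an Artin--Schreier solution $y$ of $y^p-y=a$ in characteristic $p$ satisfies $\ve(y)\geq \tfrac{1}{p}\ve(a)$; this ensures that the bounds defining $\A^{(0,v]+}_{\overline{R}}/p$ are met by the solution whenever the input satisfies the (stronger) bounds defining $\A^{(0,v/p]+}_{\overline{R}}/p$.

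\textbf{Part (ii).} Starting from the fundamental exact sequence for $\acris$ (the absolute case, which is assumed), extend to $\A_{\crr}(\overline{R})$ via the identity
\begin{equation*}
(p^r-\phi)\bigl(t^{\{r\}}\,y\bigr)\;=\;p^r\,t^{\{r\}}\,(1-\phi)(y),
\end{equation*}
which follows from $\phi(t)=pt$ and the relation $t^{\{r\}}=t^{b(r)}(t^{p-1}/p)^{a(r)}$, giving $\phi(t^{\{r\}})=p^r\,t^{\{r\}}$. This identifies $p^r-\phi$ on $F^r\A_{\crr}(\overline{R})$ with $1-\phi$ on $\A_{\crr}(\overline{R})$, up to multiplication by $t^{\{r\}}$. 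Combined with the Artin--Schreier surjection of part (i) (restricted to $\A_{\crr}(\overline{R})\subset\A_{\overline{R}}$, using that the inclusion is compatible with $\phi$ and that $\A_{\crr}(\overline R)$-solutions can be extracted from $\A_{\overline R}$-solutions by a divided-power argument), this yields $p^r$-surjectivity. The kernel, computed via $\Z_p\cdot t^{\{r\}}=\Z_p(r)'$, matches the absolute fundamental sequence inside the relative one.

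\textbf{Part (iii).} Argue analogously to (ii) for the annulus ring $\A^{[u,v]}_{\overline{R}}$. The hypothesis $0<u\leq 1\leq v$ guarantees the embedding $\A^{[u,v]}_{\overline{R}}\hookrightarrow\bdrp(\overline{R})$, so that $t$, $\xi=p-[p^\flat]$, and $t^{\{r\}}$ all lie in $\A^{[u,v]}_{\overline{R}}$ and the filtration $F^r\A^{[u,v]}_{\overline{R}}$ (by powers of $\xi$) is well-defined. First establish an Artin--Schreier surjection $1-\phi:\A^{[u,v]}_{\overline{R}}\to\A^{[u,v/p]}_{\overline{R}}$ by the same mod-$p$-then-lift procedure as in part (i), using the relation $\phi(\A^{[u,v]}_{\overline{R}})=\A^{[u/p,v/p]}_{\overline{R}}\subset\A^{[u,v/p]}_{\overline{R}}$ (valid since $u\leq 1$). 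Then use multiplication by $t^{\{r\}}$ to convert $1-\phi$ to $p^r-\phi$ and to land in $F^r$. The constant $p^{4r}$ is the compounded loss from: a factor $p^r$ for the conversion $1-\phi\leftrightarrow p^r-\phi$; a factor $p^r$ for multiplication by $t^{\{r\}}$ as a $p^r$-isomorphism between $\A^{[u,v]}_{\overline{R}}$ and $F^r\A^{[u,v]}_{\overline{R}}$; and further factors of order $p^{2r}$ coming from the fact that the $\xi$-adic filtration interacts with the annulus topology only up to bounded $p$-torsion.

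The main obstacle is careful bookkeeping of the $p$-torsion constants, especially in part (iii), where three separate sources of $p^r$-losses (from Artin--Schreier lifting, the $t^{\{r\}}$-twist, and the filtration mismatch) must be quantified simultaneously. Conceptually the argument is classical (Artin--Schreier plus multiplication by $t^{\{r\}}$), but the inflated exponent $4r$ versus $r$ in (ii) reflects that the filtration on $\A^{[u,v]}_{\overline{R}}$ is not divided-power, so each passage between $F^r$ and the ambient ring absorbs additional powers of $p$. Continuous sections arise throughout from the functoriality of the explicit mod-$p$ Artin--Schreier solutions and the inductive $p$-adic lifting.
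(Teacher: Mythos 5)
Your approach for part (i) is essentially that of \cite[8.1]{AI}, which the paper simply cites, so that part is fine in spirit (though the passage from surjectivity of $1-F$ modulo $p$ to surjectivity on $\A^{(0,v]+}_{\overline R}$ needs the observation that $\A^{(0,v]+}_{\overline R}/p=\E^+_{\overline R}$ and a careful handling of the $p$-adic filtration, which you gesture at but don't spell out). For part (ii) the paper cites \cite[A3.26]{Ts}, and your ``divided-power argument'' to extract $\A_{\crr}(\overline R)$-solutions from $\A_{\overline R}$-solutions is too vague to count as a proof; the $p^r$-surjectivity of $p^r-\phi$ on $F^r\A_{\crr}(\overline R)$ is genuinely delicate (Tsuji's argument is substantially more than Artin--Schreier plus a twist).

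The real problem is part (iii), where your proposal fails for a structural reason. You propose to establish surjectivity of $1-\phi:\A^{[u,v]}_{\overline R}\to\A^{[u,v/p]}_{\overline R}$ and then twist by $t^{\{r\}}$. The identity $(p^r-\phi)(t^{\{r\}}y)=p^rt^{\{r\}}(1-\phi)(y)$ shows that the image of $p^r-\phi$ restricted to $t^{\{r\}}\A^{[u,v]}_{\overline R}$ is exactly $p^rt^{\{r\}}\cdot\mathrm{Im}(1-\phi)$. Even granting surjectivity of $1-\phi$, this gives $p^rt^{\{r\}}\A^{[u,v/p]}_{\overline R}$, and $t^{\{r\}}$ is \emph{not} a $p^{Cr}$-unit in $\A^{[u,v/p]}_{\overline R}$ for any constant $C$: since $\theta(t^{\{r\}})=0$, the element $p^{Cr}/t^{\{r\}}$ cannot lie in any ring mapping to $\bdrp(\overline R)$, so $t^{\{r\}}\A^{[u,v/p]}_{\overline R}$ does not contain $p^{Cr}\A^{[u,v/p]}_{\overline R}$. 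Concretely, the constant function $x=1$ cannot be written as $p^{Cr}t^{\{r\}}z$ with $z\in\A^{[u,v/p]}_{\overline R}$. So your twisting argument only produces the image of the $F^r$-ideal, which is far from being $p^{4r}$-cofinite. (Your claimed $p^r$-isomorphism $t^{\{r\}}:\A^{[u,v]}_{\overline R}\to F^r\A^{[u,v]}_{\overline R}$ is also unestablished; Lemma~\ref{19saint} proves the analogous statement only for the arithmetic ring $r_\varpi^{[u,v]}$, not for $\A^{[u,v]}_{\overline R}$.) Moreover, the ``mod-$p$-then-lift'' Artin--Schreier surjectivity of $1-\phi$ on $\A^{[u,v]}_{\overline R}$ itself is not as transparent as you suggest, since $\A^{[u,v]}_{\overline R}$ is a completed localization of $\A^+_{\overline R}$ and its mod-$p$ reduction is not simply $\E_{\overline R}^+$. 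The paper's actual argument is quite different and designed precisely to avoid these issues: it breaks (iii) into the two short sequences~(\ref{exact1}), compares the first with the already-known $p^r$-exact sequence~(\ref{exactcr}) for $\A_{\crr}(\overline R)$ (showing $(\A^{[u,v]}_{\overline R})^{\phi=p^r}$ is $p^r$-isomorphic to $\A_{\crr}(\overline R)^{\phi=p^r}$, and $\A_{\crr}(\overline R)/F^r\hookrightarrow\A^{[u,v]}_{\overline R}/F^r$ has $p^r$-torsion cokernel), and for the second proves surjectivity of $p^r-\phi$ by splitting $\A^{[u,v]}_{\overline R}$ into $\A^{[u]}_{\overline R}$ and the quotient $\A^{[u,v]}_{\overline R}/\A^{[u]}_{\overline R}\simeq\A^{(0,v]+}_{\overline R}/\A^+_{\overline R}$, using the convergent series $\sum_k\phi^k/p^{kr}$ (resp.\ $\sum_k p^{kr}\phi^{-k}$) on each piece. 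You should restructure (iii) along these lines rather than trying to twist.
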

\begin{proof} The exactness of the first two sequences is  proved as in \cite[8.1]{AI}. The third  sequence was treated in \cite[A3.26]{Ts}. The exactness
 of the last one will follow from the exactness of  the following two sequences
\begin{align}
\label{exact1}
 & 0\to \Z_p(r)\to (\A^{[u,v]}_{\overline{R}})^{\phi=p^r}{\to}\A^{[u,v]}_{\overline{R}}/F^r \to 0\\
 & 0\to (\A^{[u,v]}_{\overline{R}})^{\phi=p^r}\to 
\xymatrix{\A^{[u,v]}_{\overline{R}}\ar[r]^-{p^r-\phi}&\A^{[u,v/p]}_{\overline{R}}}\to 0\nonumber
\end{align}

 To treat the first sequence, assume that $p>2$ and map the  sequence ($p^r$-exact) \begin{equation}
\label{exactcr} 
0\to  \Z_p(r)\to \A_{\crr}(\overline{R})^{\phi=p^r}{\to}\A_{\crr}(\overline{R})/F^r\to 0
\end{equation}
into it. We claim that the cokernel of this map is killed by $p^{2r}$: Indeed, note that we have the $p^r$-isomorphisms
$$\A_{\crr}(\overline{R})^{\phi=p^r}\stackrel{\sim}{\to}(\A^{[u]}_{\overline{R}})^{\phi=p^r}
\stackrel{\sim}{\to}(\A^{[u,v]}_{\overline{R}})^{\phi=p^r}.$$ For the first map the  injection is clear and the $p^r$-surjection follows from the fact that
$\phi(\A^{[u]}_{\overline{R}})\subset \A_{\crr}(\overline{R})
$. For the second map, again the injection is clear and to show surjection  it suffices  to prove that the map 
\begin{equation}
\label{exact2}
(p^r-\varphi):\quad \A^{[u,v]}_{\overline{R}}/\A^{[u]}_{\overline{R}}\stackrel{\sim}{\to}
\A^{[u,v/p]}_{\overline{R}}/\A^{[u]}_{\overline{R}}
\end{equation}
 is an isomorphism.
Since we have
 $$
 \A^{(0,v]+}_{\overline{R}}/\A^+_{\overline{R}}\simeq\A^{[u,v]}_{\overline{R}}/\A^{[u]}_{\overline{R}},\quad
 \A^{(0,v/p]+}_{\overline{R}}/\A^+_{\overline{R}}\simeq\A^{[u,v/p]}_{\overline{R}}/\A^{[u]}_{\overline{R}},
 $$
 we are reduced to showing that the map
 $$(p^r-\varphi):\quad  \A^{(0,v]+}_{\overline{R}}/\A^+_{\overline{R}}\to \A^{{(0,v/p]+}}_{\overline{R}}/\A^+_{\overline{R}}
 $$
 is an isomorphism. For $r=0$ this follows from the third sequence in our lemma  and  the Artin-Schreier theory as in  \cite[8.1.1]{AI}:
\begin{equation}
\label{AS1}
0\to \Z_p\to \A^+_{\overline{R}}\stackrel{1-\phi}{\longrightarrow} \A^+_{\overline{R}}\to 0
\end{equation} 
 For $r>0$, we can use the fact that 
the formal inverse of $p^r-\phi$ 
is $-(\phi^{-1}+ p^r\varphi^{-2}+p^{2r}\varphi^{-3}+\cdots)$ and it clearly converges.

 Note also that  the cokernel of the map $\A_{\crr}(\overline{R})/F^r\hookrightarrow \A^{[u,v]}_{\overline{R}}/F^r$ is killed by $p^r$. This follows from the fact that already
 the cokernel of the map $ \A^+_{\overline{R}}/F^r \hookrightarrow \A^{[u,v]}_{\overline{R}}/F^r$ is killed by $p^r$. 
 To see this  write $\A^{[u,v]}_{\overline{R}}=\A^+_{\overline{R}}[p/[\alpha],[\beta]/p]^{\wedge}$, $v_{\E}(\alpha)=1/v, v_{\E}(\beta)=1/u$,  and note that 
\begin{align*}
\tfrac{[\beta]}{p} & =\tfrac{[{p}^{\flat}]}{p}[\beta^{\prime}]=\tfrac{([{p}^{\flat}]-p)}{p}[\beta^{\prime}]+[\beta^{\prime}],\quad  v_{\E}(\beta^{\prime})=1/u-1;\\
\tfrac{p}{[\alpha]} & =(\tfrac{[{p}^{\flat}]}{p})^{-1}[\alpha^{\prime}]=(1+\tfrac{([{p}^{\flat}]-p)}{p})^{-1}[\alpha^{\prime}], \quad v_{\E}(\alpha^{\prime})=
1-1/v,
\end{align*}
and use the fact that the kernel of $\theta$ is generated by $[{p}^{\flat}]-p$. We have shown that the first sequence in (\ref{exact1}) is $p^{3r}$-exact for $p>2$. 

   For $p=2$ the argument is similar. We map the sequence 
$$
0\to\Z_p(r)\to (\A^{[u]}_{\overline{R}})^{\phi=p^r}\to \A^{[u]}_{\overline{R}}/F^r\to 0
$$
to both the sequence (\ref{exact1}) and  the sequence (\ref{exactcr}). By the above, the first map has the cokernel annihilated by $p^r$. The cokernel of the second map  is  annihilated by $p^{2r}$: use the fact that the cokernel of the map $\A^{[u]}_{\overline{R}}/F^r\hookrightarrow \A_{\crr}(\overline{R})/F^r$ is killed by $p^{r}$ and that we have $p^{r}$-isomorphism 
$$(\A^{[u]}_{\overline{R}})^{\phi=p^r}\stackrel{\sim}{\to}\A_{\crr}(\overline{R})^{\phi=p^r}\stackrel{\sim}{\to}
\phi^{-1}(\A^{[u]}_{\overline{R}})^{\phi=p^r}$$

   To show that the second sequence in (\ref{exact1}) is $p^{2r}$-exact it suffices to show that it is $p^{2r}$-exact on the right and for that,
   using the isomorphism (\ref{exact2}),   we can pass to the following sequence
\begin{equation}
\label{exact}
0\to(\A^{[u]}_{\overline{R}})^{\phi=p^r} \to \A^{[u]}_{\overline{R}}\stackrel{p^r-\phi}{\longrightarrow}\A^{[u]}_{\overline{R}}\to 0 \end{equation}
 That is, it remains to show that   the above sequence is $p^{2r}$-exact on the right. Write $\A^{[u]}_{\overline{R}}=
\A^{+}_{\overline{R}}[[\beta]/p]^{\wedge}$, $v_{\E}(\beta)=1/u$. Since the map $
\A^{+}_{\overline{R}}+[\beta]^r\A^{[u]}_{\overline{R}}\hookrightarrow  \A^{[u]}_{\overline{R}}$ is $p^r$-surjective it suffices to show that
$p^r-\phi$ is $p^r$-surjective on $\A^+_{\overline{R}}$ and on $[\beta]^r\A^{[u]}_{\overline{R}}$ separately. Surjectivity on $\A^+_{\overline{R}}$ is clear 
since for $r=0$ this is just the Artin-Schreier theory from (\ref{AS1}) and, for $r>0$, 
the series $-(1+p^r\varphi^{-1}+p^{2r}\varphi^{-2}+\cdots)$ converges
on
$\A^+_{\overline{R}}$ to an inverse of $(p^r\varphi^{-1}-1)$. 
 To check $p^r$-surjectivity on $[\beta]^r\A^{[u]}_{\overline{R}}$, for $x=[\beta]^ry$,
$y\in \A^{[u]}_{\overline{R}}$, it is enough to check that $(1+\tfrac{\phi}{p^r}+\tfrac{\phi^2}{p^{2r}}+\cdots)$
converges (pointwise), as this gives an inverse to $1-\frac{\varphi}{p^r}$.  We have
\begin{align*}
(1+\tfrac{\phi}{p^r}+\tfrac{\phi^2}{p^{2r}}+\cdots)(x)=\sum_{k\geq 0}\tfrac{\phi^k([\beta]^ry)}{p^{kr}}=
\sum_{k\geq 0}\tfrac{[\beta]^{p^kr}}{p^{kr}}\phi^k(y)=
\sum_{k\geq 0}p^{p^kr-kr}(\frac{[\beta]}{p})^{p^{k}r}\phi^k(y).
\end{align*}
We conclude, noticing that $p^kr- kr\geq 0$ and goes to $\infty$ when $k\to \infty$.

  Concerning the last claim of the lemma -- can be  proved for the first 
  two sequences as in \cite[8.1,8.1.2,8.1.3]{AI}. 
  The last two sequences are sequences of spaces with $p$-adic topology that are complete for that topology -- hence the existence of a continuous section is clear.  
\end{proof}
\begin{remark}
The same arguments can be used to prove the following result, where $\B^+_{\crr}=\A_{\crr}[\frac{1}{p}]$.

{\it Let $M$ be a $\phi$-module\footnote{A finite rank vector space over $F$ with a semilinear Frobenius isomorphism $\phi$.}
 over $F$,
$P\in F[X]$ such that $P(0)\neq 0$. 
Then $P(\varphi):\B^+_{\crr}\otimes_F M\to \B^+_{\crr}\otimes_FM$
is surjective.}

Since $\B^+_{\crr}$ contains $W(\overline k)$, Dieudonn\'e-Manin's theorem allows
us to assume that $M$ is the standard module of slope $\lambda=\frac{a}{h}$,
i.e., $M=Fe_1\oplus\cdots\oplus Fe_h$, and $\varphi(e_i)=e_{i+1}$, if $i\leq h$, $\varphi(e_h)=p^a e_1$.
Also, the result is true for $P$ if and only if it is true for all of its irreducible
divisors, which allows to assume that $P(0)=1$, that all roots of $P$ have the same valuation $\alpha$
and that $P(X)=Q(X^h)$ for some $Q\in F[X]$ (by replacing $P=\prod (1-a_i X)$ by its multiple $\prod (1-a_i^hX^h)$).
So, we just have to check that $R(\varphi^h)=Q(p^a\varphi^h)$ is surjective on $\B^+_{\crr}$
(note that all roots of $R$ have valuation $\beta=(\alpha-\lambda)h)$.

Write $1/R=1+b_1X+b_2X+\cdots$. We have $v_p(b_i)\geq -i\beta$, for all $i$,
which implies (by the arguments above: $p^{p^{hk}}-k\beta\to +\infty$ when $k\to +\infty$) 
that $1+b_1\varphi^h+b_2\varphi^{2h}+\cdots$ converges (pointwise)
on $[p^\flat]\B^+_{\crr}$, and that $P(\varphi)$ has an inverse (and hence is surjective)
on $[p^\flat]\B^+_{\crr}$.  

So, we are left to check that
$R(\varphi^h)$ is surjective on $\A^+_{{{\ovk}}}[1/p]$.  
Let us write $R$ as $1+a_1X+\cdots +a_dX^d$, with $a_d\neq 0$.
There are two cases:

$\bullet$ $\beta\leq 0$, which implies that $R\in\O_F[X]$ and $R(\varphi)$ sends $\A^+_{\ovk}$
into itself. Now,  modulo $p$, $R(\varphi^h)$
becomes $x\mapsto x+a_1x^{p^h}+\cdots+a_dx^{p^{dh}}$, and is surjective since
$\E_{\ovk}$ is algebraically closed and $\A^+_{{{\ovk}}}/p$ is its ring of integers
(if $\beta<0$, things are even simpler:
all $a_i$ are $0$ modulo~$p$, and $R(\varphi^h)$ is just $x\mapsto x$ modulo~$p$).
Since $\A^+_{\ovk}$ is $p$-adically complete, this implies that $R(\varphi^h):\A^+_{\ovk}\to\A^+_{\ovk}$
is, indeed, surjective.

$\bullet$ $\beta>0$, in which case $a_d^{-1}R\in\O_F[X]$ and is $X^d$ modulo~$p$.
It follows that $a_d^{-1}R(\varphi^h)$
becomes $x\mapsto x^{p^{dh}}$ modulo~$p$, which makes it clear that it is surjective.
\end{remark}

\subsection{Embeddings into period rings}
\subsubsection{Kummer embeddings}
Choose, inside $\overline{R}$, elements $X_i^{p^{-n}}$, for
$i=1,\dots,d$ and $n\in\N$, satisfying the obvious relations
(i.e. $X_i^{p^{-0}}=X_i$ and $(X_i^{p^{-(n+1)}})^p=X_i^{p^{-n}}$ if $n\geq 0$).
If $i=1,\dots,d$, let $X_i^\flat=(X_i,X_i^{1/p},\dots)\in \E^+_{\overline{R}}$.

Sending $X_0$ to $[\varpi^\flat]$ and $X_i$ to $[X_i^\flat]$, if $i=1,\dots,d$,
induces an embedding $\iota_{\rm Kum}$ of $R_{\varpi,\Box}^+$ into
$\A_{\overline{R}}$ which commutes with Frobenius  $\varphi$
and is compatible with filtrations.  
As $R_\varpi^+$ is \'etale over $R_{\varpi,\Box}^+$,
one can extend $\iota_{\rm Kum}$ to an embedding 
$R_\varpi^+\to \A_{\overline R}$ and, by continuity, to
embeddings 
$$R_\varpi^{\rm PD}\to \acris(\overline R),\quad
R_\varpi^{[u]}\to \A^{[u]}_{\overline R},\quad
R_\varpi^{[u,v]}\to \A^{[u,v]}_{\overline R},$$
which commute with Frobenius (with $\varphi_{\rm Kum}$ on
$R_\varpi^{\rm deco}$) and filtration (if $1\notin [u,v]$,
there is no filtration on the corresponding rings).

\subsubsection{Cyclotomic embedding of $R_\varpi$}
Let $R_\Box^{\rm cyl}=\O_{F_i}\{X_1,\dots,X_d\}$.
If $n\in\N$,
let $R_{\Box,n}^{\rm cycl}=\O_{F_{i+n}}\{X_1^{p^{-n}},\dots,X_d^{p^{-n}}\}$,
and let $R_n$ be the integral closure of $R$ in
the subalgebra of 
$\overline{R}[\frac{1}{p}]$ generated by $R$
and $R_{\Box,n}^{\rm cycl}$.
Set $R_{\Box,\infty}^{\rm cyl}=\cup_{n\in\N}R_{\Box,n}^{\rm cyl}$ and
$R_{\infty}=\cup_{n\in\N}R_{n}$.
Then $R_{\Box,\infty}^{\rm cyl}[\frac{1}{p}]$ and $R_\infty[\frac{1}{p}]$ are Galois extensions of
$R_\Box^{\rm cyl}[\frac{1}{p}]$ and $R[\frac{1}{p}]$ respectively, with Galois group
$\Gamma_R$ which is the semi-direct product
$$1\to\Gamma'_R\to\Gamma_R\to\Gamma_K\to 1,$$
where
\begin{align*}
\Gamma'_R={\rm Gal}(R_\infty[\tfrac{1}{p}]/K_\infty R[\tfrac{1}{p}])\simeq\Z_p^d,\quad
\Gamma_K={\rm Gal}(K_\infty/K)\simeq 1+p^{i(K)}\Z_p,
\end{align*}
and $a\in 1+p^{i(K)}\Z_p$
acts on $\Z_p^d$ by multiplication by~$a$.

\medskip
We define an embedding $\iota_{\rm cycl}:R_{\zeta-1,\Box}^+\to \A_{\overline{R_\Box}}^+$,
by sending $T$ to $\pi_i=\varphi^{-i}(\pi)$ and $X_i$ to $[X_i^\flat]$, where
$X_i^\flat=(X_i,X_i^{1/p},\dots)$.
This embedding commutes with Frobenius
(i.e.~$\varphi\circ\iota_{\rm cyl}=\iota_{\rm cycl}\circ\varphi_{\rm cycl}$)
and filtration (since $P_0$ is sent to $P_0(\pi_i)=\xi$).
\begin{proposition}\label{extr14}
$\iota_{\rm cycl}$ has a unique extension
to an embedding $R_\varpi^+\to\A^+_{\overline R}[[\frac{p}{\pi_i^{2\delta_R}}]]$
such that
$$\iota_{\rm cycl}(X_0)-[\varpi^\flat]\in \tfrac{\xi}{\pi_i^{\delta_R}} \A^+_{\overline R}[[\frac{p}{\pi_i^{2\delta_R}}]],
\quad{\text{ and $\theta\circ\iota_{\rm cycl}$ is the projection
$R_\varpi^+\to R$.}}$$
\end{proposition}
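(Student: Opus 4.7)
The plan is to apply Proposition~\ref{LIFT} to the following data: $\Lambda = \A^+_{\overline R}[[\frac{p}{\pi_i^{2\delta_R}}]]$, $\lambda = \iota_{\rm cycl}:R_{\zeta-1,\Box}^+ \to \Lambda$ (the already-defined embedding sending $T \mapsto \pi_i$ and $X_j \mapsto [X_j^\flat]$ for $1 \le j \le d$), monoid map $\beta:{\mathcal M}\to\Lambda$ with $\beta(X_0) = [\varpi^\flat]$ and $\beta(X_j) = [X_j^\flat]$ for $1 \le j \le d$, and the auxiliary element $\mu = P_0 = \Phi_{p^i}(1+T) \in \O_F[T] \subset r_{\zeta-1}^+$. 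The key point is that $\iota_{\rm cycl}(P_0) = \Phi_{p^i}([\epsilon^{p^{-i}}]) = \pi/\varphi^{-1}(\pi) = \xi$, so $\lambda(\mu) = \xi$. This choice is natural because $\xi$ generates $\ker\theta$, the ideal modulo which we wish to control the extension.

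Three verifications must be made. First, the monoid relation $[\varpi^\flat]^f = \pi_i \cdot u$ with $u \in \Lambda^\ast$ follows from $\ve([\varpi^\flat]^f) = f/e = 1/e_0 = \ve(\pi_i)$, so the ratio is a unit modulo $\xi$ (with $\theta$-image $\varpi^f/(\zeta-1)$, a genuine unit of $\O_K$ since both are uniformizers of $\O_{F_i}$), and Hensel lifts this unit to $\Lambda$ once $\xi/\pi_i^{1+\delta_R}$ is known to be topologically nilpotent. Second, $\lambda(\mu) = \xi$ divides $Q([\varpi^\flat],\pi_i)$ since $\theta(Q([\varpi^\flat],\pi_i)) = Q(\varpi,\zeta-1) = 0$ by definition of $Q$, and $\ker\theta = (\xi)$. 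Third, the topological nilpotence of $\xi/\pi_i^{2\delta_R}$ and of $\xi/\pi_i^{1+\delta_R}$ in $\Lambda$ rests on $\ve(\xi)=1$ together with $1+\delta_R < 2\delta_R < e_0$, which is guaranteed by the ``enough roots of unity'' hypothesis: $\ve(\xi/\pi_i^{2\delta_R}) = 1 - 2\delta_R/e_0 > 0$.

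The remaining ingredient is an ideal $J \ni \xi/\pi_i^{1+\delta_R}$ with $\Lambda$ being $J$-adically complete and a mod-$J$ extension $\lambda_\varpi:R_\varpi^+ \to \Lambda/J$ restricting to $\beta$ on ${\mathcal M}$. I take $J = (\xi/\pi_i^{1+\delta_R})$; the $J$-adic topology is coarser than the $(p/\pi_i^{2\delta_R})$-adic topology (thanks to the nilpotence above), so $\Lambda$ is complete for it. Since $\xi \in J$, we may work modulo $\xi$: the composition $\theta\circ\iota_{\rm cycl}$ lands in $\C^+(\overline R)$, sends $T \mapsto \zeta-1$ and $X_j \mapsto X_j$, and extends to $R_\varpi^+$ via the projection $R_\varpi^+ \twoheadrightarrow R \hookrightarrow \C^+(\overline R)$ sending $X_0 \mapsto \varpi$. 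This matches $\beta$ on ${\mathcal M}$ modulo $\xi$ via $\theta([\varpi^\flat])=\varpi$, and the \'etale step from $R_{\varpi,\Box}^+$ to $R_\varpi^+$ is handled by Remark~\ref{extr6.1}.

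Proposition~\ref{LIFT} then delivers a unique extension with $\iota_{\rm cycl}(X_0) - [\varpi^\flat] \in \frac{\xi}{\pi_i^{\delta_R}}\Lambda$, which is the first claim. For the second, $\theta$ extends continuously from $\A^+_{\overline R}$ to $\Lambda$ since $v_p(\theta(p/\pi_i^{2\delta_R})) = 1 - 2\delta_R/e_0 > 0$, and it annihilates $\xi/\pi_i^{\delta_R}$; hence $\theta(\iota_{\rm cycl}(X_0)) = \theta([\varpi^\flat]) = \varpi$, matching the projection $R_\varpi^+ \to R$ on $X_0$, and the two continuous ring maps agree on the remaining generators $T,X_1,\dots,X_d$, forcing equality throughout. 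The main obstacle I anticipate is the delicate valuation bookkeeping required to simultaneously verify the topological nilpotence of both $\xi/\pi_i^{2\delta_R}$ (needed for completeness of $\Lambda$) and $\xi/\pi_i^{1+\delta_R}$ (needed for the ideal $J$), and to lift the unit $[\varpi^\flat]^f/\pi_i$ from $\Lambda/(\xi)$ to $\Lambda$ itself in a way that genuinely satisfies the monoid hypothesis of Proposition~\ref{LIFT}.
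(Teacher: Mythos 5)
Your proof is correct and follows essentially the same route as the paper: apply Proposition~\ref{LIFT} with $\Lambda=\A^+_{\overline R}[[p/\pi_i^{2\delta_R}]]$, $\lambda=\iota_{\rm cycl}$, $\beta(X_0)=[\varpi^\flat]$, $\mu=P_0$ (so $\lambda(\mu)=\xi$), and $\lambda_\varpi$ induced by the natural map $R_\varpi^+\to R\hookrightarrow\C^+(\overline R)$, the key verification being that $\theta(Q([\varpi^\flat],\pi_i))=Q(\varpi,\zeta-1)=0$ forces $\xi\mid Q(\beta(X_0),\lambda(T))$; the only difference is your choice $J=(\xi/\pi_i^{1+\delta_R})$ instead of the paper's $J=\ker\theta=(\xi/\pi_i^{2\delta_R})$, which is immaterial. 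One small slip worth fixing: the chain $1+\delta_R<2\delta_R$ fails when $\delta_R\leq 1$; what you actually need (and what the enough-roots-of-unity hypothesis $\delta_R+1<e_0/(2p)$ gives) is the two separate bounds $1+\delta_R<e_0$ and $2\delta_R<e_0$, which make both $\xi/\pi_i^{1+\delta_R}$ and $\xi/\pi_i^{2\delta_R}$ have positive $\theta$-valuation.
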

\begin{proof}
This is a consequence of Proposition ~\ref{LIFT}, with 
$\Lambda=\A^+_{\overline R}[[\frac{p}{\pi_i^{2\delta_R}}]]$,
$\lambda=\iota_{\rm cycl}$, $\beta:{\mathcal M}\to\Lambda$ defined by
$\beta(X_0)=[\varpi^\flat]$, $\beta(X_i)=[X_i^\flat]$ if $1\leq i\leq d$,
$\beta(X_{d+1})=\big[\frac{(\varpi^\flat)^h}{X_{a+1}^\flat\cdots X_{a+b}^\flat}\big]$,
$\mu= P_0$ (hence $\lambda(\mu)=\xi$), $J={\rm Ker}\,\theta=\big(\frac{xi}{\pi_i^{2\delta_R}}\big)$
and $\lambda_\varpi:R_\varpi^+\to \Lambda/J=\C^+(\overline R)$ being the natural map $R_\varpi^+\to R$.
 (We have $\theta(Q([\pi^\flat],\pi_i))=Q(\varpi,\zeta-1)=0$, hence
$Q(\beta(X_0),\lambda(T))$ is divisible by $\xi=\lambda(\mu)$ in $\A^+_{\ovk}\subset\Lambda$,
which shows that we can indeed apply Proposition ~\ref{LIFT}.)
\end{proof}

Let $$\pi_K=\iota_{\rm cycl}(X_0).$$
By construction $\theta(\pi_K)=\varpi$, and one can show that, if $n\in\N$,
then $\varpi_n=\theta(\varphi^{-1}(\pi_K))$ is a uniformizer of $K_n$.
Now, $\frac{\pi_K}{[\varpi^\flat]}$ is a unit in $\A^+_{\overline R}[[\frac{p}{\pi_i^{2\delta_R}}]]$.
This allows, using an obvious variant of Remark~\ref{LIFT0}, to extend
$\iota_{\rm cyl}$ to embeddings $R_\varpi\hookrightarrow \A_{\overline R}$,
$R_\varpi^{(0,v]+}\hookrightarrow \A_{\overline R}^{(0,v]+}$
and 
$R_\varpi^{[u,v]}\hookrightarrow \A_{\overline R}^{[u,v]}$, for $u\leq v<p v_R$,
which commute with Frobenius (by unicity)
and filtration (by construction).
\subsubsection{The action of $\Gamma_R$}
We denote by $\A_R^{\rm deco}$ (resp.~$\A_{R,\Box}^{\rm deco}$) the image
of $R_\varpi^{\rm deco}$ (resp.~$R_{\zeta-1,\Box}^{\rm deco}$)
by $\iota_{\rm cycl}$.
It is quite clear that $\A_{R,\Box}^+$ is stable under the action of $G_{R}$.
More precisely, $G_{R}$ acts through $\Gamma_R$, and
we can choose topological
generators $\gamma_j$, $0\leq j\leq d$, of $\Gamma_R$ with the following properties.
Let $$c=\exp(p^i);\quad{\text{ in particular $a:=p^{-i}(c-1)\in\Z_p^*$}}.$$
Then
$$\begin{cases}
\gamma_0(\pi_i)=(1+\pi_i)^c-1=(1+\pi)^{a}(1+\pi_i)-1
\hskip.1cm{\rm and}\hskip.1cm
\gamma_j(\pi_i)=\pi_i {\text{ if $1\leq j\leq d$.}}\\
\gamma_k([X_k^\flat])=[\epsilon]\,[X_k^\flat]=(1+\pi)\,[X_k^\flat]
\hskip.1cm{\rm and}\hskip.1cm
\gamma_j([X_k^\flat])=[X_k^\flat] {\text{ if $j\neq k$ and $1\leq k\leq d$.}}
\end{cases}
$$
It follows that $\gamma_1,\dots,\gamma_d$ are topological generators
of $\Gamma'_R$.

The induced action of $\Gamma_R$ on $R_{\zeta-1,\Box}^+$ is given by the formulas:
\begin{align*}
\gamma_0(T)=(1+T)^{p^ia}(1+T)-1,\quad \gamma_j(X_j)=(1+T)^{p^i}X_j,\ {\text{if $1\leq j\leq d$,}}\\
\gamma_k(T)=T,\ {\text{if $k\neq 0$}},\quad\gamma_k(X_j)=X_j,\ {\text{if $k\neq j$.}}
\end{align*}

\begin{proposition}\label{extr12.1}
Let $\gamma:R_{\zeta-1,\Box}^+\to R_{\zeta-1,\Box}^+$ be a continuous
ring morphism such that there exists $a_j\in\Z_p$, for $0\leq j\leq d$, such that:
$$\gamma(X_j)=(1+T)^{p^ia_j}X_j,\ {\text{if $1\leq j\leq d$}},\quad
\gamma(T)=(1+T)^{p^ia_0}(1+T)-1.$$
Then $\gamma$ extends
uniquely to a continuous ring morphism $R_\varpi^+\to R_\varpi^+[[\tfrac{p}{T^{2\delta_R}}]]$,
such that
$$\gamma(x)-x\in \tfrac{(1+T)^{p^i}-1}{T^{1+\delta_R}}R_\varpi^+[[\tfrac{p}{T^{2\delta_R}}]],
\ {\text{ if
$x\in R_\varpi^+$}}
\quad{\rm and}\quad
\gamma(X_0)-X_0\in \tfrac{(1+T)^{p^i}-1}{T^{\delta_R}}R_\varpi^+[[\tfrac{p}{T^{2\delta_R}}]].$$
\end{proposition}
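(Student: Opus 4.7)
The plan is to apply Proposition~\ref{LIFT} in the same spirit as in the proofs of Propositions~\ref{extr8} and~\ref{extr14}, with target $\Lambda = R_\varpi^+[[p/T^{2\delta_R}]]$ and $\lambda = \gamma$. I will set $\beta(X_0) = X_0$ and $\beta(X_j) = \gamma(X_j) = (1+T)^{p^i a_j}X_j$ for $1\leq j\leq d$, with $\beta(X_{d+1})$ and $\beta(X_{d+2})$ then forced by the defining monoid relations $X_{d+1}X_{a+1}\cdots X_{a+b}=X_0^h$ and $X_{d+2}X_1\cdots X_a=1$; the correction factors $(1+T)^{\pm p^i a_k}$ are units in $\Lambda$, so each $\beta(X_k)$ is a unit multiple of $\gamma(X_k)$ (resp.~of $X_k$ for $k=0$), as required by Proposition~\ref{LIFT}. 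For the auxiliary element, I take $\mu = P_0 = ((1+T)^{p^i}-1)/((1+T)^{p^{i-1}}-1)$, and let $\lambda_\varpi : R_\varpi^+ \to \Lambda/J$ be the natural reduction, with $J$ chosen so as to contain $\lambda(\mu)/\lambda(T)^{1+\delta_R}$.

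Set $b = p^i a_0 + 1 \in \Z_p^*$ and $\sigma = (1+T)^{p^i}-1$. The easy checks run as follows. First, $\gamma(T) = (1+T)^b - 1 = T\cdot U_T$ with $U_T \in \O_F[[T]]^*$ (the constant term is $b$), so $\gamma(T)^{2\delta_R}$ and $T^{2\delta_R}$ are associates in $\Lambda$ and divide $p$; in particular the $(p/\gamma(T)^{2\delta_R})$-adic and $(p/T^{2\delta_R})$-adic topologies agree. Second, $\beta(X_0)^f = X_0^f$ equals $\gamma(T)$ times a unit of $\Lambda$, since $X_0^{-f}T$ is a unit in $r_\varpi^+$. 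Third, $\lambda(\mu) = P_0(\gamma(T))$ is a unit multiple of $P_0$: from $1+\gamma(T) = (1+T)^b$ one gets $(1+\gamma(T))^{p^i}-1 = (1+\sigma)^b - 1 = b\sigma\cdot V_1$ for a unit $V_1 \in \O_F[[\sigma]]^*$, and an analogous formula for $(1+\gamma(T))^{p^{i-1}}-1$, whose ratio is $P_0$ times a unit.

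The crucial divisibility $\lambda(\mu) \mid Q(\beta(X_0),\lambda(T))$ comes from $Q(X_0,T) = 0$ in $r_\varpi^+$, which forces $Q(X_0,\gamma(T)) = Q(X_0,\gamma(T)) - Q(X_0,T)$ to be divisible in $r_\varpi^+$ by $\gamma(T) - T = (1+T)\bigl((1+\sigma)^{a_0}-1\bigr) \in \sigma\cdot r_{\zeta-1}^+$; combining this with the factorisation $\sigma = P_0\cdot((1+T)^{p^{i-1}}-1)$ gives $P_0 \mid Q(X_0,\gamma(T))$ in $r_\varpi^+\subset\Lambda$. The remaining points are the topological nilpotency of $\lambda(\mu)/\lambda(T)^{2\delta_R} = (P_0/T^{2\delta_R})\cdot(\text{unit})$ in $\Lambda$ and the $J$-adic completeness of $\Lambda$; these are the most delicate items and will be handled as in the proof of Proposition~\ref{extr14}, using the Eisenstein decomposition $P_0 = T^{(p-1)p^{i-1}} + p\cdot G(T)$ together with the inequality $2\delta_R < (p-1)p^{i-2}$ guaranteed by the assumption that $K$ contains enough roots of unity, to identify the $(P_0/T^{2\delta_R})$-adic and $(p/T^{2\delta_R})$-adic topologies on $\Lambda$.

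Once the hypotheses of Proposition~\ref{LIFT} are in place, its conclusion produces the unique extension of $\gamma$, first from $r_{\zeta-1}^+$ to $r_\varpi^+$ (by solving $Q(\gamma(X_0),\gamma(T))=0$), then from $R_{\zeta-1,\Box}^+$ to $R_{\varpi,\Box}^+$ via the monoid relations, and finally to $R_\varpi^+$ by \'etaleness (Remark~\ref{extr6.1}). Moreover it gives directly $\gamma(X_0) \in X_0 + (\lambda(\mu)/\lambda(T)^{\delta_R})\Lambda \subset X_0 + \bigl((1+T)^{p^i}-1\bigr)/T^{\delta_R}\cdot\Lambda$, which is the first required estimate. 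The bound $\gamma(x) - x \in \bigl((1+T)^{p^i}-1\bigr)/T^{1+\delta_R}\cdot\Lambda$ for general $x \in R_\varpi^+$ then follows from the fact that $\lambda_\varpi$ is the identity modulo $J$ and $J$ contains $\lambda(\mu)/\lambda(T)^{1+\delta_R}$. The main obstacle I expect is precisely the verification of the topological hypotheses of \textbf{LIFT} in the $(p/T^{2\delta_R})$-adic setting --- the same subtle interplay between the arithmetic variable and the period-ring topology that makes the analogous step in Proposition~\ref{extr14} nontrivial.
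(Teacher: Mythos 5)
Your choice of $\mu = P_0$ is wrong, and the mistake is not cosmetic: it yields a strictly weaker conclusion than the proposition asserts. The paper takes $\mu = (1+T)^{p^i}-1$. The conclusion of Proposition~\ref{LIFT} is tied to $\mu$: it gives $\gamma(X_0)\in\beta(X_0)+\frac{\lambda(\mu)}{\lambda(T)^{\delta_R}}\Lambda$, and, since the extension lifts $\lambda_\varpi$ modulo $J$ and $J$ must contain $\frac{\lambda(\mu)}{\lambda(T)^{1+\delta_R}}$, the general estimate one extracts is $\gamma(x)-x\in J$. With your $\mu = P_0$, the smallest allowable $J$ is (up to units) $(P_0/T^{1+\delta_R})$, and you get $\gamma(X_0)-X_0\in\frac{P_0}{T^{\delta_R}}\Lambda$ and $\gamma(x)-x\in\frac{P_0}{T^{1+\delta_R}}\Lambda$. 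But $\big((1+T)^{p^i}-1\big)/P_0 = (1+T)^{p^{i-1}}-1$ is \emph{not} a unit in $\Lambda$ (it is $\equiv T^{p^{i-1}}$ mod $p$), so $(P_0/T^{1+\delta_R})\Lambda \supsetneq \big((1+T)^{p^i}-1\big)/T^{1+\delta_R}\cdot\Lambda$, and your estimates are genuinely weaker than the ones in the statement. (These precise bounds are used downstream, e.g.~in Corollary~\ref{extr12.3} and Lemma~\ref{extr12.2}.) You cannot repair this by choosing $J = \big((1+T)^{p^i}-1\big)/T^{1+\delta_R}\cdot\Lambda$, since that $J$ does not contain $\lambda(P_0)/\lambda(T)^{1+\delta_R}$, violating the hypothesis of~\ref{LIFT}.

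The fix is to notice --- as you in fact do, en route to the divisibility check --- that $\gamma(T)-T = (1+T)\big((1+\sigma)^{a_0}-1\big)$ is already divisible by $\sigma = (1+T)^{p^i}-1$ itself, not merely by $P_0$. So one should take $\mu = (1+T)^{p^i}-1$ directly. Then $\lambda(\mu)/\mu$ is a unit, $Q(X_0,\gamma(T)) = Q(X_0,\gamma(T)) - Q(X_0,T)$ lies in $\mu\,r_\varpi^+$, and Proposition~\ref{LIFT} with $J = \big(\frac{(1+T)^{p^i}-1}{T^{1+\delta_R}}\big)$ gives exactly the stated estimates. Your choice was presumably motivated by the proof of Proposition~\ref{extr14}, where $\mu = P_0$ is forced because there $\lambda(P_0) = \xi$ is the natural generator of $\ker\theta$; here the natural quantity controlling $\gamma-{\rm id}$ is $\sigma$, not $P_0$. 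As a minor remark, the paper simply takes $\beta = {\rm id}$, which avoids the bookkeeping with correction units $(1+T)^{\pm p^i a_j}$ in your version of $\beta$; both work, and the topological hypotheses you flag as the "main obstacle" are in fact the routine part here (the paper disposes of them in one line).
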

\begin{proof}
This is a consequence of Proposition ~\ref{LIFT}, with 
$\Lambda=R_\varpi^+[[\tfrac{p}{T^{2\delta_R}}]]$, $\lambda=\gamma$, $\beta(x)=x$,
$\mu=(1+T)^{p^i}-1$, $\lambda_\varpi(x)=x$, and $J=\big(\tfrac{(1+T)^{p^i}-1}{T^{1+\delta_R}}\big)$.
To check the requirements of that proposition, note that $\frac{\lambda(\mu)}{\mu}$
is a unit and that $\gamma(T)-T$ is divisible by $\mu$ in $r_{\zeta-1}^+$, hence
$Q(X_0,\gamma(T))\equiv Q(X_0,T)$ mod~$\mu r_\varpi^+$, and
$Q(\beta(X_0),\lambda(T))\in\mu r_\varpi^+\subset \mu \Lambda$.
The rest of the requirements are obvious.
\end{proof}

\begin{corollary}\label{extr12.3}
{\rm (i)} The action of $\Gamma_R$ extends uniquely to an action
on $R_\varpi$, which stabilizes $R_\varpi^{(0,v]+}$ if $v<p v_R$
and extends to a continuous action on $R_\varpi^{[u,v]}$, if $u\leq v<p v_R$.
Moreover this action is trivial modulo $T^{-1-\delta_R}((1+T)^{p^i}-1)$
on all these rings.

{\rm (ii)} The rings
$\A_R, \A_R^{(0,v]+},\A_R^{(0,v]},\A_R^{[u,v]}$ are stable
by $G_R$ which acts through $\Gamma_R$, and $\iota_{\rm cycl}$ commutes with
the action of $\Gamma_R$.
\end{corollary}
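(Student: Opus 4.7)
For (i), I apply Proposition~\ref{extr12.1} to each of the topological generators $\gamma_j$, $0\le j\le d$, of $\Gamma_R$ to obtain unique continuous extensions to ring morphisms $R_\varpi^+\to R_\varpi^+[[p/T^{2\delta_R}]]$. To pass to the decorated rings $R_\varpi^{(0,v]+}$, $R_\varpi^{[u,v]}$, and $R_\varpi$ (for $v<pv_R$), I invoke Remark~\ref{LIFT0} with $A=1$ and $N=2f\delta_R$: the congruence $\gamma_j(X_0)-X_0\in \tfrac{(1+T)^{p^i}-1}{T^{\delta_R}}R_\varpi^+[[p/T^{2\delta_R}]]$ from Proposition~\ref{extr12.1}, together with $T\in X_0^f\cdot (r_\varpi^+)^\times$ and the fact that $(1+T)^{p^i}-1$ is divisible by $T$, ensures that $\gamma_j(X_0)/X_0$ is a unit of $R_\varpi^+[[p/T^{2\delta_R}]]$, which is exactly what Remark~\ref{LIFT0} requires; the range $v<pv_R=e/(2f\delta_R)=e/N$ is the corresponding admissibility condition. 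The relations in $\Gamma_R$ hold on $R^+_{\zeta-1,\Box}$ by definition and, by the uniqueness part of Proposition~\ref{extr12.1}, propagate to $R_\varpi^+$ and then by continuity to the decorated rings, so the individual $\gamma_j$'s assemble into a continuous action of the whole group $\Gamma_R$. The triviality modulo $T^{-1-\delta_R}((1+T)^{p^i}-1)$ is the first congruence in Proposition~\ref{extr12.1}, transported to the decorated rings by continuity.

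For (ii), the natural $G_R$-action on the rings $\A_{\overline R}^{\rm deco}$ stabilizes $\A_{R,\Box}^+=\iota_{\rm cycl}(R^+_{\zeta-1,\Box})$ and factors through $\Gamma_R$ there, via the explicit formulas for $\gamma_j(\pi_i), \gamma_j([X_k^\flat])$ recorded just above the corollary. Fix $g\in G_R$ with image $\gamma\in\Gamma_R$. I compare the two continuous morphisms $g\circ\iota_{\rm cycl}$ and $\iota_{\rm cycl}\circ\gamma$ from $R_\varpi^+$ to $\A_{\overline R}^+[[p/\pi_i^{2\delta_R}]]$: they agree on $R^+_{\zeta-1,\Box}$, both lift the projection $R_\varpi^+\to R$, and both satisfy the defining congruence $\iota(X_0)-[\varpi^\flat]\in \tfrac{\xi}{\pi_i^{\delta_R}}\A^+_{\overline R}[[p/\pi_i^{2\delta_R}]]$ of Proposition~\ref{extr14}. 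The uniqueness clause of Proposition~\ref{extr14} then forces $g\circ\iota_{\rm cycl}=\iota_{\rm cycl}\circ\gamma$ on $R_\varpi^+$, giving both the $G_R$-stability of $\A_R^+$ and the $\Gamma_R$-equivariance of $\iota_{\rm cycl}$. Passage to the decorated rings is by continuity of the further extensions furnished by Remark~\ref{LIFT0} applied on both sides of the equality.

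The main obstacle is the verification, for the uniqueness argument in~(ii), that $g(\pi_K)=g\circ\iota_{\rm cycl}(X_0)$ actually satisfies the required congruence $g(\pi_K)-[\varpi^\flat]\in\tfrac{\xi}{\pi_i^{\delta_R}}\A^+_{\overline R}[[p/\pi_i^{2\delta_R}]]$. Divisibility by $\xi$ follows from $\theta(g(\pi_K))=g(\varpi)=\varpi$ (as $\varpi\in K$), so $g(\pi_K)-[\varpi^\flat]\in\ker\theta$. The improvement to $\xi/\pi_i^{\delta_R}$ comes from applying $g$ to the equation $Q(\pi_K,\pi_i)=0$ and using the explicit action $\gamma_0(\pi_i)=(1+\pi)^a(1+\pi_i)-1$ together with Lemma~\ref{new1}, exactly in the spirit of the proof of Proposition~\ref{extr14}.
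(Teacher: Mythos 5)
Your proof is correct and follows essentially the same route as the paper, which for (i) simply cites Remark~\ref{LIFT0} and for (ii) the uniqueness in Propositions~\ref{extr12.1} and~\ref{extr14}; you are supplying the details the authors leave implicit. One small remark on your last paragraph: since $\pi_i^{\delta_R}\in\Lambda$, divisibility by $\xi$ already implies the required congruence modulo $\tfrac{\xi}{\pi_i^{\delta_R}}\Lambda$ (the ideal $\xi\Lambda$ is \emph{contained} in $\tfrac{\xi}{\pi_i^{\delta_R}}\Lambda$), so no genuine ``improvement'' is needed; moreover, strictly speaking the uniqueness you need is the general one of Proposition~\ref{LIFT} (applied to the $g$-twisted $\lambda$), or equivalently you should precompose with $g^{-1}$ to reduce to the original $\lambda$ so that Proposition~\ref{extr14}'s uniqueness clause applies verbatim.
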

\begin{proof}
For (i), just use Remark~\ref{LIFT0}.
For (ii), use the unicity in Proposiiton~\ref{extr12.1} and~\ref{extr14}: it implies that
$\sigma\circ\iota_{\rm cycl}=\iota_{\rm cycl}\circ\overline\sigma$, if $\sigma\in G_R$
and $\overline\sigma$ is its image in $\Gamma_R$. 
\end{proof}

The action of $\Gamma_R$ on $\A_R^{[u,v]}[\frac{1}{p}]$ is analytic,
 and the
action of its Lie algebra ${\rm Lie}\,\Gamma_R$ is given by:
$$\log\gamma_j=t\partial_j,\quad{\text{if $0\leq j\leq d$}},$$
as one sees easily using the above formulas for the action of $\Gamma_R$
(here $\partial_j$ is the derivation of $\A_R^{[u,v]}[\frac{1}{p}]$
deduced from $\partial_{{\rm cycl},j}$ on $R_\varpi^{[u,v]}[\frac{1}{p}]$ by transport of structure).

\begin{lemma}\label{extr12.2}
If $u\leq v<p$ and $S=R_\varpi^{(0,v]+}$, $R_\varpi^{[u,v]}$, then
$$(\gamma-1)\cdot T^n(p,T^{e_0})^kS\subset T^{n+p^{i-1}-{\delta_R}}
(p,T^{e_0})^{k+1}S
\quad{\rm and}\quad
(\gamma-1)^k\cdot S\subset T^{k(p^{i-1}-{\delta_R}-1)}
(p,T^{e_0})^{k}S.$$
\end{lemma}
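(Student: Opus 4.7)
My plan is to prove the first statement and then deduce the second by iteration. For the first, I would reduce to the base case $n = k = 0$, namely $(\gamma-1)(S) \subset T^{p^{i-1}-\delta_R}(p, T^{e_0})\, S$, and then propagate to general $(n, k)$ via the Leibniz identity
\[
(\gamma-1)(xy) = \gamma(x)(\gamma-1)(y) + (\gamma(x) - x)\, y,
\]
applied with $x$ one of the generators $T$, $p$, or $T^{e_0}$ of the filtering ideal. In each case, $\gamma(x) - x$ is either zero (for $x = p$, or for $x = T$ when $\gamma = \gamma_j$, $j \geq 1$) or a multiple of $\mu := (1+T)^{p^i}-1$ (for $x = T$ when $\gamma = \gamma_0$, writing $(1+T)^{p^i a_0} - 1 = a_0\mu + O(\mu^2)$), and similarly for $x = T^{e_0}$; both types of expressions amplify the $T$-exponent and $(p, T^{e_0})$-level as needed for the induction.

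For the base case, the starting point is Proposition~\ref{extr12.1}, which gives $(\gamma-1)(x) \in \tfrac{\mu}{T^{1+\delta_R}}\, R_\varpi^+[[\tfrac{p}{T^{2\delta_R}}]]$ for $x \in R_\varpi^+$. The hypothesis $v < p$, combined with the enough-roots-of-unity condition $\delta_R + 1 < e_0/(2p)$, yields $2p\delta_R < e_0$, hence $R_\varpi^+[[\tfrac{p}{T^{2\delta_R}}]] \subset S$. Using Corollary~\ref{extr12.3} (so that $\gamma$ preserves $S$) and Leibniz, one extends the $R_\varpi^+$-bound to $(\gamma-1)(x) \in \tfrac{\mu}{T^{1+\delta_R}}\, S$ for all $x \in S$.

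The base case then reduces to a monomial-by-monomial analysis fitting $\mu/T^{1+\delta_R}$ into $T^{p^{i-1}-\delta_R}(p, T^{e_0})\, S$. Expanding $\mu = \sum_{k=1}^{p^i} \binom{p^i}{k} T^k$ and using $v_p(\binom{p^i}{k}) = i - v_p(k)$ together with the key identity $p^i = e_0 + p^{i-1}$, one verifies each monomial lands in the target using the explicit generators $p^{\lceil vj/e \rceil}/X_0^j$ of $S = R_\varpi^{(0,v]+}$ (and analogously for $R_\varpi^{[u,v]}$), together with $v < p$ and the bound on $\delta_R$; the numerical content is that the combined bounds exactly allow every fraction $p^a/T^b$ that appears to be accommodated.

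The main obstacle is precisely this monomial bookkeeping: the $(p, T^{e_0})$-adic and $T$-adic filtrations on $S$ interact in a delicate way because $S$ admits controlled negative $X_0$-powers but not arbitrary ones, and the conditions $v < p$ and $\delta_R + 1 < e_0/(2p)$ are the exact numerical assumptions that make the estimates close up. Finally, the second statement follows by iterating the first $k$ times starting from $S$; the $-1$ in the exponent $k(p^{i-1}-\delta_R-1)$ absorbs the cumulative loss of one $T$-factor per iteration coming from the $(\gamma(T^n) - T^n)\, y$ term in the Leibniz expansion.
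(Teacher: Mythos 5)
The paper's proof is considerably shorter than your plan because it avoids the term-by-term estimate entirely.  The key is the factorization
\[
\mu := (1+T)^{p^i}-1 \;=\; \pi_1\cdot\frac{\pi}{\pi_1},\qquad
\pi_1=(1+T)^{p^{i-1}}-1,
\]
together with the two assertions of Lemma~\ref{new2} (precisely the place in the paper where the hypothesis $v<p$ is used): $T^{-p^{i-1}}\pi_1$ is a unit of $S$, and $\pi/\pi_1\in(p,T^{e_0})S$.  Combining this with the twisted-derivation identity and the triviality of the action modulo $T^{-1-\delta_R}\mu$ (Corollary~\ref{extr12.3}(i), not Proposition~\ref{extr12.1}: the Corollary already asserts this on $S$, so your extra Leibniz extension step is unnecessary), one gets $(\gamma-1)\cdot T^nS\subset T^{n-1-\delta_R}\mu S\subset T^{n+p^{i-1}-\delta_R-1}(p,T^{e_0})S$ in one line, and the second inclusion follows by iteration.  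Your plan of expanding $\mu=\sum_{k=1}^{p^i}\binom{p^i}{k}T^k$ and bounding each monomial would, if carried out, essentially reprove Lemma~\ref{new2} by hand; it is a more primitive route to the same conclusion and is where most of the work would go.  The identity $p^i=e_0+p^{i-1}$ that you isolate is indeed the arithmetic content of the factorization, so your approach is not wrong in spirit, just less economical.

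One caution: your announced base case $(\gamma-1)(S)\subset T^{p^{i-1}-\delta_R}(p,T^{e_0})S$ is one power of $T$ stronger than what the factorization argument (or, I believe, any careful monomial count) yields, which is $T^{p^{i-1}-\delta_R-1}(p,T^{e_0})S$.  The displayed exponent $n+p^{i-1}-\delta_R$ in the first inclusion of the lemma appears to be a misprint for $n+p^{i-1}-\delta_R-1$; the exponent $k(p^{i-1}-\delta_R-1)$ in the second inclusion and the hypothesis used downstream (in Lemma~\ref{25saint}) are both consistent with the weaker bound.  So do not spend effort trying to make the monomial estimates ``close up'' to the stronger exponent -- it would be chasing a typo -- and make sure your iteration is calibrated to the $-1$ version, as the $-1$ in the exponent of the second inclusion is exactly the cumulative loss you correctly anticipated.
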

\begin{proof}
(ii) follows from the triviality of $\gamma$ modulo
$T^{-1-{\delta_R}}((1+T)^{p^i}-1)$, the fact that
$\gamma-1$ acts as a twisted derivation [\,$(\gamma-1)\cdot xy=((\gamma-1)\cdot x)y+
\gamma(x)((\gamma-1)\cdot y)$\,],
which implies that $$(\gamma-1)\cdot T^nS\subset T^{n-1-{\delta_R}}((1+T)^{p^i}-1)S,$$
and the fact that $T^{p^{i-1}}$ divides
$(1+T)^{p^{i-1}}-1$ (because $v<p$) and
$\tfrac{(1+T)^{p^i}-1}{(1+T)^{p^{i-1}}-1}\in (p,T^{e_0})$ (see Lemma~\ref{new2} below).
\end{proof}

\begin{lemma}\label{new2}
If $v<p$, then:

$\bullet$ $\pi_i^{-p^{i-1}}\pi_1$ is a unit of $\A_R^{(0,v]+}$.

$\bullet$ $p$ is divisible by $\pi_i^{[\frac{(p-1)p^{i-1}}{v}]}$, hence also
by $\pi_i^{(p-1)p^{i-2}}$.

$\bullet$ $\frac{p^2}{\pi_1}\in\A_R^{(0,v]+}$ and is divisible by
$\pi_i^{(2(p-1)-v)p^{i-2}}$.

$\bullet$ $\frac{\pi}{\pi_1}\in (p,\pi_i^{(p-1)p^{i-1}})\A_R^{(0,v]+}$ 
and is divisible by
$\pi_i^{(p-1)p^{i-2}}$, hence also by $\pi_i^{\delta_R+1}$.

\end{lemma}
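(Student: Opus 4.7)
All four claims flow from the elementary identities $\pi_1 = (1+\pi_i)^{p^{i-1}} - 1$ and $\pi = (1+\pi_1)^p - 1$, combined with the size estimate
\[
\tfrac{p^m}{\pi_i^n} \in \A_R^{(0,v]+} \iff n \leq \tfrac{m(p-1)p^{i-1}}{v},
\]
which comes directly from the description $\A_R^{(0,v]+}$ as the $p$-adic completion of $\A^+_{\overline R}[\tfrac{p}{[\alpha]}]$ with $\ve(\alpha) = 1/v$, and the computation $\ve(\epsilon^{1/p^i}-1) = p/((p-1)p^i)$. Since $\pi_i$ lives in the arithmetic subring $\A_K \subset \A_R$, the statements are checked in the one-variable $\A_K^{(0,v]+}$.

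For {\rm (i)}, I expand
\[
\tfrac{\pi_1}{\pi_i^{p^{i-1}}} = 1 + \sum_{k=1}^{p^{i-1}-1} \binom{p^{i-1}}{k} \pi_i^{k-p^{i-1}},
\]
and use Kummer's theorem $v_p\binom{p^{i-1}}{k} = i-1 - v_p(k) \geq 1$ together with $v<p$ to show each summand is topologically nilpotent in $\A_R^{(0,v]+}$: the size criterion requires $i-1-v_p(k) > (p^{i-1}-k)v/((p-1)p^{i-1})$, whose right side is $<1$ for $v<p$, while the left side is $\geq 1$. Then $\pi_1/\pi_i^{p^{i-1}}$ is $1$ plus something topologically nilpotent, hence a unit in the $p$-adically complete ring.

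For {\rm (ii)}, the maximal $n$ with $p/\pi_i^n \in \A_R^{(0,v]+}$ is $[(p-1)p^{i-1}/v]$, directly from the displayed criterion with $m=1$; the weaker lower bound $(p-1)p^{i-2}$ follows since $v<p$. For {\rm (iii)}, substitute the unit $u$ from {\rm (i)} to write $p^2/\pi_1 = u^{-1} p^2/\pi_i^{p^{i-1}}$, so divisibility by $\pi_i^{(2(p-1)-v)p^{i-2}}$ reduces to $p^{i-2}(3p-2-v) \leq 2(p-1)p^{i-1}/v$, i.e. $(3p-2-v)v \leq 2p(p-1)$. The polynomial $v^2 - (3p-2)v + 2p(p-1)$ factors as $(v-p)(v-(2p-2))$, which is positive for $v<p\leq 2p-2$, giving the inequality strictly. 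For {\rm (iv)}, expand $\pi = (1+\pi_1)^p - 1 = p\pi_1 + \binom{p}{2}\pi_1^2 + \cdots + \pi_1^p$; dividing by $\pi_1$ and using $v_p\binom{p}{k}=1$ for $1 \leq k \leq p-1$ yields $\pi/\pi_1 = p \cdot a + \pi_1^{p-1}$ with $a\in\A_R^{(0,v]+}$, so $\pi/\pi_1 \in (p,\pi_1^{p-1})$. By {\rm (i)}, $\pi_1^{p-1}$ equals a unit times $\pi_i^{(p-1)p^{i-1}}$, giving the first half of {\rm (iv)}. By {\rm (ii)}, $p\in\pi_i^{(p-1)p^{i-2}}\A_R^{(0,v]+}$; and the enough-roots-of-unity hypothesis $\delta_R+1 < (p-1)p^{i-2}/2$ forces both exponents $(p-1)p^{i-1}$ and $(p-1)p^{i-2}$ to exceed $\delta_R+1$, giving divisibility of $\pi/\pi_1$ by $\pi_i^{\delta_R+1}$.

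The only real obstacle is bookkeeping: one must confirm that $v<p$ (rather than the slightly weaker $v\leq p-1$) suffices simultaneously for the four numerical inequalities --- particularly the borderline computation in {\rm (iii)} where $v=p$ is precisely the degenerate case --- and that the ramification bound on $\delta_R$ dominates both lower bounds arising in {\rm (iv)}. No further analytic input is needed beyond the displayed size criterion.
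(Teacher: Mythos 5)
Your proposal is correct and essentially follows the paper's approach: translate to the one-variable analytic ring (the paper uses $r_{\zeta-1}^{(0,v]+}$ via $\iota_{\rm cycl}$, you use the equivalent criterion $p^m/\pi_i^n \in \A_R^{(0,v]+}\iff n\leq m(p-1)p^{i-1}/v$), then estimate valuations. Your proofs of (ii), (iii), (iv) track the paper closely; the explicit factorization $v^2-(3p-2)v+2p(p-1)=(v-p)(v-(2p-2))$ in (iii) is a clean way to verify the inequality the paper only states. For (i), the paper argues more directly: $(1+T)^{p^{i-1}}-1$ has its zeros at $v_p(T)=1/((p-1)p^{j})$ for $j\leq i-2$, all outside the annulus $0<v_p(T)\leq v/((p-1)p^{i-1})$ precisely when $v<p$, whence the valuation is the leading-term valuation $p^{i-1}v_p(T)$ and the ratio is a unit; your term-by-term binomial expansion amounts to the same Newton-polygon check. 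One caveat: your blanket claim that the right-hand side $(p^{i-1}-k)v/((p-1)p^{i-1})$ is $<1$ is not literally true for all $k$ once $i\geq 3$ and $v$ is close to $p$ (take $k=1$); however for those $k$ one has $v_p(k)=0$ so the left side is $i-1\geq 2$, and the inequality still holds. The strict inequality is only tight exactly when $v_p(k)=i-2$, which is why $v<p$ (strict) is the right hypothesis; it would be cleaner to argue, as the paper implicitly does, that the minimum of $v_p\binom{p^{i-1}}{k}-(p^{i-1}-k)v_p(T)$ over $1\leq k<p^{i-1}$ on the circle $v_p(T)=v/((p-1)p^{i-1})$ is attained at $k=p^{i-2}(p-1)$ and equals $1-v/p>0$.
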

\begin{proof}
We can work in $r_{\zeta-1}^{(0,v]+}$,
in which case $\pi_i$ becomes $T$ and $\pi_1$ becomes $(1+T)^{p^{i-1}}-1$,
and we are looking at the annulus $0<v_p(T)\leq \frac{v}{(p-1)p^{i-1}}$
on which $(1+T)^{p^{i-1}}-1$ has no zero
and $v_p((1+T)^{p^{i-1}}-1)=p^{i-1}v_p(T)$ since $v<p$.
This proves the first point.

The second is basically the definition of $\A_R^{(0,v]+}$.

The third point follows from the first two, which give
$\frac{p^2}{\pi_1}$ divisible by $\pi_i^{2[\frac{(p-1)p^{i-1}}{v}]-p^{i-1}}$,
and the inequality $2[\frac{(p-1)p^{i-1}}{v}]-p^{i-1}\geq (2(p-1)-v)p^{i-2}$.

The fourth follows from the first two and the identity $\pi=\pi_1^{p-1}+p\pi_1^{p-2}+\cdots+p$.
\end{proof}

\subsection{Fat period rings}
We are going to make period rings fatter (Scholze~\cite{Sch} even call them $\O{\rm B}$'s) by adding to them
the variables $X_0,\dots,X_d$ (i.e.~by tensoring with $R_\varpi^+$ and its siblings). 
This kind of rings~\cite{Ts,Br} originate in linearizations of differential operators.

\subsubsection{Structure theorem}
Let:

$\bullet$ $S=R_{\varpi}^{\rm PD}$, $R_{\varpi}^{[u,v]}$ (and $S_\Box=R_{\varpi,\Box}^{\rm PD}$, $R_{\varpi,\Box}^{[u,v]}$),

$\bullet$ $\Lambda$ be a $p$-adically complete filtered $\O_F$-algebra,

$\bullet$ $\iota:S\to \Lambda$ be a continuous injective morphism of filtered $\O_F$-algebras,

$\bullet$ $f:S\otimes\Lambda\to\Lambda$ be the morphism sending $x\otimes y$ to $\iota(x)y$,

$\bullet$ $S\Lambda_s$ be 
the $p$-adically completed log-PD-envelope of $S\otimes\Lambda\to\Lambda$ with respect to ${\rm Ker}\,f$.

(We take partial divided powers of level $s$: i.e.~$x^{[k]}=\frac{x^k}{[k/p^s]!}$.) 
By definition, $S\Lambda_s$ is the $p$-adic completion of
$S\otimes\Lambda$ adjoined $(x\otimes 1-1\otimes \iota(x))^{[k]}$, for $x\in S$ and $k\in\N$,
and $(V_j-1)^{[k]}$, for $0\leq j\leq d$ and $k\in\N$, where $V_j=\frac{X_j\otimes 1}{1\otimes\iota(X_j)}$.
(Adding the $(V_j-1)^{[k]}$'s makes all the difference between the log-PD-envelope and the PD-envelope.)
The morphism $f:S\otimes\Lambda\to\Lambda$
extends uniquely to a continuous morphism $f:S\Lambda_s\to\Lambda$.

We consider $S$ and $\Lambda$ as sub-algebras of $S\Lambda_s$
(by $x\mapsto x\otimes 1$ and $y\mapsto 1\otimes y$).
Via these identifications, $f$ induces the identity map on $\Lambda$,
$V_j=\frac{X_j}{\iota(X_j)}$ if $0\leq j\leq d$, and 
$(x\otimes 1-1\otimes \iota(x))^{[k]}$ becomes $(x-\iota(x))^{[k]}$ if $x\in S$.

We filter $S\Lambda_s$ by defining $F^rS\Lambda_s$ to be the topological closure
of the ideal generated by products of the form $x_1x_2\prod(V_j-1)^{[k_j]}$, with
$x_1\in F^{r_1}R_1$, $x_2\in F^{r_2}R_2$, and $r_1+r_2+\sum k_j\geq r$.

\begin{lemma}
\label{18saint}
{\rm (i)}
Any element $x\in S\Lambda_s$ can be written, uniquely,
as 
$$x=\sum_{{\bf k}\in\N^{d+1}}x_{{\bf k}}
\prod_{j=0}^d\big(1-V_j\big)^{[k_j]},$$
 with $x_{{\bf k}}\in \Lambda$
for all ${\bf k}=(k_0,\dots,k_d)\in\N^{d+1}$, and $x_{\bf k}\to 0$ when ${\bf k}\to\infty$.

{\rm (ii)}
$x\in F^rS\Lambda_s$ if and only if $x_{\bf k}\in F^{r-|{\bf k}|}\Lambda$
for all ${\bf k}\in\N^{d+1}$.
\end{lemma}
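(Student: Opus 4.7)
The strategy is to reduce to the case $S=S_\Box$ using \'etaleness of $S$ over $S_\Box$, and to exploit the log-smoothness of $S_\Box$ over $\O_F$ with logarithmic coordinates $X_0,\dots,X_d$. In this setting the elements $1-V_j=1-\tfrac{X_j}{\iota(X_j)}$ serve as logarithmic Taylor coordinates adapted to the closed immersion $\iota$, and the log-PD-envelope along ${\rm Ker}\,f$ ought to be the $p$-adically completed PD-polynomial algebra over $\Lambda$ in these $d+1$ variables.

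For (i), I would first treat $S=S_\Box$. Using the monoidal presentation of $R_{\varpi,\Box}^+$ and the relations $V_{d+1}V_{a+1}\cdots V_{a+b}=V_0^h$ and $V_{d+2}V_1\cdots V_a=1$, the extra variables $V_{d+1},V_{d+2}$ are already determined by $V_0,\dots,V_d$ modulo ${\rm Ker}\,f$. Hence, as a log-ideal, ${\rm Ker}\,f$ is generated by $1-V_0,\dots,1-V_d$, and the associated log-PD-envelope is freely generated over $\Lambda$ by the PD-monomials $\prod_j(1-V_j)^{[k_j]}$; its $p$-adic completion is precisely the module of convergent series $\sum_{\bf k}x_{\bf k}\prod_j(1-V_j)^{[k_j]}$ with $x_{\bf k}\in\Lambda$, $x_{\bf k}\to 0$. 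To pass from $S_\Box$ to the \'etale extension $S$, I would use that log-PD-envelopes commute with \'etale base change: if $Z_1,\dots,Z_t$ are the \'etale coordinates of $R_\varpi^+$ over $R_{\varpi,\Box}^+$, then each $V_{Z_i}-1$ lies in the closed PD-ideal generated by the $1-V_j$ by a Newton-iteration argument analogous to Remark~\ref{extr6.1}, applied with $\Lambda_2=S\Lambda_s$ and $I$ the ideal generated by the $(1-V_j)^{[k]}$'s. Consequently the \'etale variables introduce no new generators, and the expansion for $S_\Box$ transfers to $S$.

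For (ii), one direction is immediate: if all $x_{\bf k}\in F^{r-|{\bf k}|}\Lambda$, then $x_{\bf k}\prod_j(1-V_j)^{[k_j]}\in F^rS\Lambda_s$ since $(1-V_j)^{[k_j]}\in F^{k_j}S\Lambda_s$ by the very definition of the filtration, and $x\in F^rS\Lambda_s$ by $p$-adic completeness of $F^r$. For the reverse direction, I would argue that any generating product $x_1x_2\prod_j(V_j-1)^{[k_j]}$ with $x_1\in F^{r_1}S$, $x_2\in F^{r_2}\Lambda$, $r_1+r_2+\sum k_j\geq r$, can be rewritten in the canonical basis with coefficients respecting the filtration. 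Writing $x_1\otimes 1=\iota(x_1)+(x_1\otimes 1-1\otimes\iota(x_1))$, where the remainder lies in $F^{r_1}S\Lambda_s$ and in the log-PD-ideal generated by the $1-V_j$'s (with the filtration of its expansion thus shifted by one), one obtains inductively the desired bound $x_{\bf k}\in F^{r-|{\bf k}|}\Lambda$ on each coefficient.

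The hardest step will be the uniqueness of the Taylor-type expansion in (i) and its counterpart in the reverse direction of (ii). Uniqueness is cleanly handled by constructing coefficient-extraction operators --- iterated ``twisted derivatives'' $D_j$ in the $V_j$-directions followed by evaluation $V_j\mapsto 1$ (equivalently, restriction along the augmentation $f$) --- and verifying that these are well-defined on $S\Lambda_s$ and recover the coefficients $x_{\bf k}$. Once $D_j$ is shown to decrease the filtration by at most~$1$, the reverse direction of (ii) follows by applying $\prod_j D_j^{k_j}$ to $x\in F^rS\Lambda_s$. The $p$-adic completeness of $\Lambda$ and the convergence of the PD-series under the partial-divided-power structure of level $s$ are what make these manipulations valid.
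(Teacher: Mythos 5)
Your proposal follows essentially the same route as the paper: uniqueness via coefficient-extraction operators built from the logarithmic derivatives $\partial_j$ combined with the augmentation $f$; surjectivity via a Newton-iteration argument on the \'etale coordinates (Remark~\ref{extr6.1}) showing that the \'etale variables introduce no new generators beyond the $(1-V_j)^{[k]}$'s; and the filtration statement by the same kind of induction, applying the derivatives and then $f$. The paper packages the \'etale step a bit differently --- it constructs a section $\iota_0:S_\Box\to\Lambda'$ by an explicit formula and then extends it uniquely to $S$ by \'etaleness, rather than reducing to $S_\Box$ and transferring the expansion afterwards --- but that is a reorganization of the same idea, not a different argument. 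One small caution for your write-up of (ii): the cleanest way to close the induction, as the paper does, is to first deduce $x_{\bf k}\in F^{r-|\mathbf{k}|}\Lambda$ for $\mathbf{k}\neq 0$, then observe that $x_0\in\Lambda\cap F^rS\Lambda_s$ and that $f$ sends $F^rS\Lambda_s$ into $F^r\Lambda$; your sketch of rewriting generating products directly is workable but more delicate than needed.
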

\begin{proof}
Let $\Lambda'=\Lambda [\big(1-V_j\big)^{[k]},\ 0\leq j\leq d,\ k\in\N\big]^\wedge$
and let $\iota':\Lambda'\to
S\Lambda_s$ be the natural map.  We want to prove it is bijective.

For injectivity, use the morphism $f:S\Lambda_s\to \Lambda$:
it kills the
$\prod_{j=0}^d\big(1-V_j\big)^{[k_j]}$ with $|{\bf k}|>0$.
Now $\partial_j=X_{j}\frac{\partial}{\partial X_{j}}$ extends by continuity to 
$S\Lambda_s$ and $\Lambda'$,
and commutes with $\iota'$, so we can prove by
induction on $|{\bf k}|$ that if $\iota'(x)=0$, then $x_{\bf k}=0$ for all ${\bf k}$
(apply $f$ to $\big(\prod\partial_j^{k_j}\big)\circ\iota'$, using the fact that
$\partial_jV_\ell=0$ if $\ell\neq j$ and $\partial_jV_j=V_j=(V_j-1)+1$).

For surjectivity, first note that the kernel of $f\circ\iota':\Lambda'\to \Lambda$
is the PD-ideal generated by the $(V_j-1)$'s since $f\circ\iota'$ is the identity map on $\Lambda $.
Let $S_{\Box}\subset S$.  There exists a unique
morphism $\iota_0:S_{\Box}\to \Lambda'$ such that $\iota'\circ\iota_0$ is the natural injection
$S_{\Box}\to S\Lambda_s$: unicity because $\iota'$ is injective, existence
thanks to the formula
$$\iota_0\big(p^a\prod_{j=0}^dX_{j}^{k_j}\big)=
p^a\prod_{j=0}^d\big(\iota(X_j)^{k_j}\big(1-V_j\big)^{k_j}\big)
= p^a\prod_{j=0}^d\big(\iota(X_j)^{k_j}\big(\sum_{n=0}^{+\infty}
(-1)^n[\tfrac{n}{p^s}]!\tbinom{k_j}{n}\big(V_j-1\big)^{[n]}\big)\big).$$
The above formula shows that $\iota_0(x)-\iota(x)\in {\rm Ker}\,f\circ\iota'$, for all $x\in S_{\Box}$.
By \'etaleness (Remark~\ref{extr6.1}), this implies that $\iota_0$ admits a unique extension to
a morphism $\iota_0:S\to \Lambda'$ such that $\iota_0(x)-\iota(x)\in
{\rm Ker}\,f\circ\iota'$, and by unicity, $\iota'\circ\iota_0$ is the natural injection
$S\to S\Lambda_s$.  This shows that $\iota'(\Lambda')$ contains $S$.
Finally, ${\rm Ker}\,f$ is generated by the $(V_j-1)^{[k]}$'s and by elements of the form
$x-\iota(x)$, for $x\in S$, but these are
in the image of the PD-ideal generated by the $(V_j-1)$'s.
Hence the image of the PD-ideal generated by the $(V_j-1)$'s
by $\iota'$ contains the log-PD-ideal generated by ${\rm Ker}\,f$
which shows that $\iota'$ is indeed surjective.

To prove the statement about filtrations, we argue by induction on $r$.
Applying $\partial_j$, for $0\leq j\leq d$, and arguing by induction on
$|{\bf k}|$, shows that $x_{{\bf k}}\in F^{r-|{\bf k}|}\Lambda $, if ${\bf k}\neq 0$.
Now, this implies that $x_{0}\in \Lambda \cap F^rS\Lambda_s$.
But $x_{0}=f(x_0)$, because $x_{0}\in \Lambda $,
and $f(F^rS\Lambda_s)\subset F^r\Lambda $ because $f$ is the identity
on $F^{r_2}\Lambda $, kills $(V_j-1)^{[k_j]}$ if $k_j\geq 1$, 
and sends $F^{r_1}S$ to $F^{r_1}\Lambda $ since it coincides with $\iota$ on $S$.  
So $x_{0}\in F^r\Lambda $, which concludes the proof.
\end{proof}

\subsubsection{The filtered Poincar\'e Lemma}
Let, as usual, $\Omega^1=\oplus_{j=0}^d\Z\,\tfrac{dX_{j}}{X_{j}}$ and
let $\Omega^n=\wedge^n\Omega^1$. So 
$\Omega^n_{S\Lambda_s/\Lambda}=S\Lambda_s\otimes \Omega^n$.
We filter the de Rham complex of $S\Lambda_s$ as usual by subcomplexes
$$F^r\Omega_{S\Lambda_s/\Lambda}\kr:=F^rS\Lambda_s\to F^{r-1}S\Lambda_s\otimes\Omega^1\to F^{r-2}S\Lambda_s\otimes\Omega^2\to\cdots$$
\begin{lemma}\label{above}
{\rm (Filtered Poincar\'e Lemma)}
The natural map
$$F^r\Lambda\rightarrow F^r\Omega_{S\Lambda_s/\Lambda}\kr$$
is a $p^s$-quasi-isomorphim.
\end{lemma}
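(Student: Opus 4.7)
The plan is to reduce, using the structure theorem in Lemma~\ref{18saint}, to a tensor product of one-variable complexes where the Poincar\'e lemma can be proved by constructing an explicit ``logarithmic'' integration operator.

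\medskip
\textbf{Step 1 (Structure of the de Rham complex).} Using Lemma~\ref{18saint}, I identify $S\Lambda_s$ with the partial PD-polynomial algebra $\Lambda\langle U_0,\dots,U_d\rangle_s$ via $U_j = 1 - V_j$, with filtration $F^rS\Lambda_s=\{\sum x_{\bf k}U^{[{\bf k}]}: x_{\bf k}\in F^{r-|{\bf k}|}\Lambda\}$. Since $\iota(X_j)\in\Lambda$ is a constant for $\partial_j=X_j\partial/\partial X_j$, one computes $\partial_j(U_j)=U_j-1$, hence
$$
dU_j^{[k]}=\big(kU_j^{[k]}-\beta_k U_j^{[k-1]}\big)e_j, \qquad \beta_k:=k\cdot[(k-1)/p^s]!/[k/p^s]!,
$$
where $\beta_k\in\{1,p^s\}$ (and equals $p^s$ exactly when $p^s\mid k$). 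Consequently the de Rham complex decomposes as a tensor product $\Omega\kr_{S\Lambda_s/\Lambda}=\bigotimes_{j=0}^d K_j$ of one-variable complexes $K_j=[\Lambda\langle U_j\rangle_s\to\Lambda\langle U_j\rangle_s\cdot e_j]$, and the Hodge filtration decomposes as the convolution of the one-variable filtrations.

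\medskip
\textbf{Step 2 (One-variable filtered Poincar\'e lemma).} I claim $F^r\Lambda\to F^rK_j$ is a $p^s$-quasi-isomorphism. The equation $d\lambda=0$ for $\lambda=\sum\lambda_k U_j^{[k]}$ yields the recursion $k\lambda_k=\beta_{k+1}\lambda_{k+1}$; since $\beta_1\lambda_1=0$ forces $\lambda_k=0$ for all $k\geq 1$, so $\ker d_j=\Lambda=F^0\Lambda$, compatibly with the filtration. For the cokernel: given $\mu=\sum\mu_k U_j^{[k]}e_j\in F^rK_j^1$ with $\mu_k\in F^{r-k-1}\Lambda$, I solve $k\lambda_k-\beta_{k+1}\lambda_{k+1}=p^s\mu_k$ with $\lambda_0=0$; an induction gives the closed form
$$
\lambda_m=-p^s\sum_{j=0}^{m-1}\mu_j\cdot\frac{[j/p^s]!}{[m/p^s]!}.
$$
The ``logarithmic'' coefficient $p^s[j/p^s]!/[m/p^s]!$ lies in $\Z_p$ thanks to the elementary inequality $s+v_p([m/p^s]!/[j/p^s]!)\geq v_p(m)$ (proved by case analysis according to whether $v_p(m)<s$ or $v_p(m)\geq s$, using Legendre's formula). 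Since $\mu_j\in F^{r-j-1}\Lambda\subset F^{r-m}\Lambda$ and the above coefficient is integral, $\lambda_m\in F^{r-m}\Lambda$; and $v_p(p^s[j/p^s]!/[m/p^s]!)\to+\infty$ as $m\to\infty$ ensures $p$-adic convergence. Thus $p^s\mu=d\lambda$ with $\lambda\in F^rK_j^0$.

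\medskip
\textbf{Step 3 (Multivariable assembly).} For the tensor product $\bigotimes_j K_j$, I assemble the one-variable homotopies. The essential point is that $p^s$-quasi-isomorphism to $\Lambda$ is preserved under $\Lambda$-linear tensor product: if each $H^{\geq 1}(K_j)$ is killed by $p^s$, then so is each $H^{\geq 1}\big(\bigotimes K_j\big)$, because every K\"unneth/Tor contribution in positive degree involves at least one $p^s$-killed factor and is therefore $p^s$-killed itself (the factor of $p^s$ does not multiply). The filtration compatibility passes through since the convolution filtration on the tensor product is exactly $F^r\Omega^n$ described in Step~1, and the one-variable homotopy was shown in Step~2 to preserve the filtration.

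\medskip
The main obstacle is Step~2, the filtered one-variable case: verifying that the division by $\beta_{k+1}\in\{1,p^s\}$ in the recursion does not destroy integrality or the filtration bound. This rests on the arithmetic inequality $s+v_p([m/p^s]!)\geq v_p(m)+v_p([j/p^s]!)$, which precisely captures why the level-$s$ partial divided powers give a $p^s$-quasi-isomorphism (and not worse). The multi-variable step is formal once this is in hand.
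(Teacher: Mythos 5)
Your overall strategy --- decompose the de Rham complex into a tensor product of one-variable complexes using Lemma~\ref{18saint}, prove a one-variable filtered Poincar\'e lemma, then assemble --- is in the same spirit as the paper's, which writes down the composite contracting homotopy for exactly this decomposition. But the execution has several genuine gaps.

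\textbf{Arithmetic errors in Step 2.} Your claim $\beta_k\in\{1,p^s\}$ is false. From $\beta_k=k\cdot[(k-1)/p^s]!/[k/p^s]!$ one gets $\beta_k=k$ whenever $p^s\nmid k$ and $\beta_k=p^s$ when $p^s\mid k$; so for instance $\beta_2=2$ whenever $p^s>2$. The correct statement is $v_p(\beta_k)=\min(v_p(k),s)$, which for $s\geq 2$ takes intermediate values. The closed form $\lambda_m=-p^s\sum_{j<m}\mu_j[j/p^s]!/[m/p^s]!$ also fails to solve the recursion $k\lambda_k-\beta_{k+1}\lambda_{k+1}=p^s\mu_k$: already for $s=0$ (so $\beta_k=1$, $p^s=1$) the recursion gives $\lambda_2=-\mu_0-\mu_1$ while your formula gives $-(\mu_0+\mu_1)/2$. (The factorials in the correct integration operator sit the other way.)

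\textbf{Torsion in $\Lambda$.} The kernel computation ``$\beta_1\lambda_1=0$ forces $\lambda_k=0$ for all $k\geq 1$'' implicitly uses that multiplication by $k$ is injective on $\Lambda$, which fails when $\Lambda$ has $p$-torsion --- and the lemma is applied mod $p^n$. You cannot compute $H^0$ this way; you need the homotopy to control $H^0$ as well.

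\textbf{Step 3 is underjustified.} Even granting the K\"unneth spectral sequence (which requires completed tensor products of free $\Lambda$-modules over a possibly non-noetherian $\Lambda$), the Tor-term contributing to $H^1$ is $\mathrm{Tor}_1(H^0(K_i),H^0(K_j))$, which involves \emph{no} positive-degree factor. Killing this requires the extra observation that each $H^0(K_j)/\Lambda$ is $p^s$-torsion and then a d\'evissage; it is not the immediate ``at least one $p^s$-killed factor'' you assert. And the filtration compatibility of a K\"unneth-type identification is not automatic.

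What the paper does instead is write down the composite tensor-product homotopy $h\kr$ \emph{in one formula}: $h^q$ sends $a\prod_j(V_j-1)^{[k_j]}V_{j_1}\omega_{j_1}\wedge\cdots\wedge V_{j_q}\omega_{j_q}$ (when $k_j=0$ for $j<j_1$) to a single monomial with $k_{j_1}$ incremented, carrying the correction factor $\tfrac{1}{k_{j_1}+1}\cdot\tfrac{[(k_{j_1}+1)/p^s]!}{[k_{j_1}/p^s]!}$, whose valuation lies in $[-s,0]$. Then $\epsilon h^0+h^1d=\mathrm{id}$ and $dh^q+h^{q+1}d=\mathrm{id}$ are checked directly. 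Because each term of the composite homotopy involves exactly one integration, the defect stays $p^s$ (not $p^{s(d+1)}$), it preserves the filtration by inspection, and torsion in $\Lambda$ is harmless --- this is exactly what cures your three issues simultaneously. I'd encourage you to replace Steps 2--3 with this explicit homotopy rather than trying to repair the closed form and K\"unneth argument.
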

\begin{proof}
Let $\epsilon:F^r\Lambda\to F^r S\Lambda_s$ be the natural injection.
A contracting ($\Lambda$-linear) homotopy can be defined as 
follows.
 We define the map $$h^0:F^rS\Lambda_s\to F^r\Lambda ,\quad 
\sum_{{\bf k}\in\N^{d+1}}a_{{\bf k}}\prod_{j=0}^d\big(V_j-1\big)^{[k_j]}\mapsto a_0.$$ 
Clearly $h^0\epsilon=\id$.
For $q >0$, we define  the map $$h^q: F^{j-q}S\Lambda_s\otimes\Omega^q\to 
F^{j-q+1}S\Lambda_s\otimes\Omega^{q-1}$$ by the following formula
\begin{align*}
 & a\prod_{j=0}^{d}\big(V_j-1\big)^{[k_j]}V_{j_1}\tfrac{dX_{j_1}}{X_{j_1}}
\wedge \cdots\wedge V_{j_q}\tfrac{dX_{j_q}}{X_{j_q}} \quad (0\leq j_1 < \cdots < j_q\leq d)\\ 
& \quad \mapsto
\begin{cases}
\frac{k_j!}{(k_j+\delta_{jj_1})!}\,
\frac{[(k_j+\delta_{jj_1})/p^s]!}{[k_j/p^s]!}
a\prod_{j=0}^{d}\big(V_j-1\big)^{[k_j+\delta_{jj_1}]}
V_{j_2}\tfrac{dX_{j_2}}{X_{j_2}}\wedge \cdots\wedge 
V_{j_q}\tfrac{dX_{j_q}}{X_{j_q}}
  &  {\text{if $k_j=0$ for $0\leq j <j_1$,}}\\
0   &  {\text{otherwise.}}
\end{cases}
\end{align*}
We have $\epsilon h^0+h^1d=\id$ and 
$dh^q+h^{q+1}d=\id$, as wanted. 
(The correcting factor $\frac{k_j!}{(k_j+\delta_{jj_1})!}\,
\frac{[(k_j+\delta_{jj_1})/p^s]!}{[k_j/p^s]!}$ is due to the fact that we have partial divided powers;
its valuation is between $0$ and $-s$ depending on the valuation of $k_j+\delta_{jj_1}$, which explains why
we only get a $p^s$-quasi-isomorphism.)
\end{proof}

\subsubsection{Extending the actions of $\varphi$ and $G_R$}
We take
 $s=0$ if $p\geq 3$ and $s=1$ if $p=2$, and define:

\quad $\bullet$  $E_R^{[u,v]}$, $E_{R_\infty}^{[u,v]}$ as $S\Lambda_s$
for $S=R_\varpi^{[u,v]}$, $\Lambda=\A_R^{[u,v]}$ or
$\A_{R_\infty}^{[u,v]}$, and $\iota=\iota_{\rm cycl}$.

\quad $\bullet$ $E_{\overline R}^{\rm PD}$
as $S\Lambda_s$ for $S=R_\varpi^{\rm PD}$,
$\Lambda=\A_{\crr}(\overline R)$, and $\iota=\iota_{\rm Kum}$,

\quad $\bullet$ $E_{\overline R}^{[u,v]}$ 
as $S\Lambda_s$ for $S=R_\varpi^{[u,v]}$,
$\Lambda=\A_{\overline R}^{[u,v]}$
and $\iota=\iota_{\rm Kum}$.

\begin{lemma}\label{fat1}
{\rm (i)}  $E_{\overline R}^{\rm PD}\subset E_{\overline R}^{[u,v]}$.

{\rm (ii)} $
E_R^{[u,v]}\subset E_{R_\infty}^{[u,v]}\subset E_{\overline R}^{[u,v]}.$
\end{lemma}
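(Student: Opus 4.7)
The strategy is to use the structure theorem (Lemma~\ref{18saint}) to describe elements of the fat rings and construct the claimed inclusions by functoriality of the completed log-PD-envelope construction, then chase the divided powers to check that everything makes sense.

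For (i), both $E_{\overline R}^{\rm PD}$ and $E_{\overline R}^{[u,v]}$ are built from the embedding $\iota_{\rm Kum}$. I would invoke the two inclusions already established in the preceding subsection: $R_\varpi^{\rm PD}\hookrightarrow R_\varpi^{[u,v]}$ (valid for our choice $u\geq\frac{1}{p-1}$ when $p\geq 3$; for $p=2$ the partial divided power level $s=1$ absorbs the loss) and $\acris(\overline R)\hookrightarrow \A_{\overline R}^{[u,v]}$ (via $\A_{\rm cr}(\overline R)\subset \A_{\overline R}^{[u]}$ for $u\geq\frac{1}{p-1}$). Both inclusions are compatible with $\iota_{\rm Kum}$ and with the filtrations, so the induced map on the tensor products commutes with the surjections onto $\acris(\overline R)$ and $\A_{\overline R}^{[u,v]}$, respectively. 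Since the elements $V_j=X_j/\iota_{\rm Kum}(X_j)$ are defined identically in both rings, functoriality of the log-PD-envelope gives the desired map, and injectivity follows from the structure theorem applied on both sides.

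For (ii), the first inclusion $E_R^{[u,v]}\subset E_{R_\infty}^{[u,v]}$ is immediate: both rings share the same $S=R_\varpi^{[u,v]}$ and the same embedding $\iota_{\rm cycl}$, only the base $\Lambda$ changes, and $\A_R^{[u,v]}\subset \A_{R_\infty}^{[u,v]}$ by the very definition of the latter. Functoriality of the log-PD-envelope in $\Lambda$ concludes.

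The second inclusion $E_{R_\infty}^{[u,v]}\hookrightarrow E_{\overline R}^{[u,v]}$ is the main obstacle: we must reconcile $\iota_{\rm cycl}$ and $\iota_{\rm Kum}$. On $X_1,\dots,X_d$ these agree (both send $X_j$ to $[X_j^\flat]$), so the corresponding $V_j$'s coincide. The difference is concentrated at $X_0$: $\iota_{\rm cycl}(X_0)=\pi_K$ versus $\iota_{\rm Kum}(X_0)=[\varpi^\flat]$. The plan is to write $V_0^{\rm cycl}=V_0^{\rm Kum}\cdot c$ with $c=[\varpi^\flat]/\pi_K$, and expand
\[
V_0^{\rm cycl}-1=(V_0^{\rm Kum}-1)c+(c-1).
\]
By Proposition~\ref{extr14}, $\pi_K-[\varpi^\flat]\in (\xi/\pi_i^{\delta_R})\A_{\overline R}^+[[p/\pi_i^{2\delta_R}]]$, so $c$ is a unit in $\A_{\overline R}^{[u,v]}$ and $c-1$ lies in an ideal divisible by $\xi$, which has divided powers in $\acris(\overline R)\subset \A_{\overline R}^{[u,v]}$. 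Applying the divided-power binomial formula $(a+b)^{[k]}=\sum_{i+j=k} a^{[i]}b^{[j]}$ with $a=(V_0^{\rm Kum}-1)c$ (which has $a^{[i]}=c^i(V_0^{\rm Kum}-1)^{[i]}$ since $c$ is a unit) and $b=c-1$ (whose divided powers are controlled by those of $\xi$), one verifies that each $(V_0^{\rm cycl}-1)^{[k]}$ is a well-defined element of $E_{\overline R}^{[u,v]}$. The hard part is checking that $(c-1)^{[j]}$ genuinely lies in $\A_{\overline R}^{[u,v]}$ with convergent bounds as $j\to\infty$; this is where one uses the precise divisibility of $\xi$ in Lemma~\ref{AS}-type arguments together with $u\geq\frac{1}{p-1}$ (or the partial PDs if $p=2$). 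Once the map is shown to be well-defined, compatibility with the structure-theorem decomposition gives injectivity.
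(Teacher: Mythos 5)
Your strategy is the same as the paper's: for (i) invoke the structure theorem (Lemma~\ref{18saint}) together with the ring inclusions $R_\varpi^{\rm PD}\subset R_\varpi^{[u,v]}$ and $\A_{\crr}(\overline R)\subset\A_{\overline R}^{[u,v]}$; for (ii) observe that only the variable $X_0$ differs between $\iota_{\rm cycl}$ and $\iota_{\rm Kum}$, hence everything reduces to showing $(1-\tfrac{X_0}{\pi_K})^{[k]}\in E_{\overline R}^{[u,v]}$, and then decompose $1-\tfrac{X_0}{\pi_K}$ into $(1-\tfrac{X_0}{[\varpi^\flat]})$ (which has level-$s$ divided powers by construction) plus a multiple of $\tfrac{\pi_K-[\varpi^\flat]}{[\varpi^\flat]}$. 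Your decomposition $V_0^{\rm cycl}-1=(V_0^{\rm Kum}-1)c+(c-1)$ is algebraically equivalent to the one in the paper, so the shape of the argument is correct.

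The gap is precisely the step you yourself flag as ``the hard part'', and the hints you give for it are off-target. You write that $c-1$ ``lies in an ideal divisible by $\xi$, which has divided powers in $\A_{\crr}(\overline R)$.'' But by Proposition~\ref{extr14}, what one actually knows is that $c-1$ is divisible by $\tfrac{\xi}{\pi_i^{\delta_R+1}}$ (up to a unit); the factor $\pi_i^{\delta_R+1}$ in the denominator is exactly the obstruction, and one cannot discard it. Having level-$s$ divided powers is a metric condition ($v^{[u,v]}(x)\geq\tfrac{1}{(p-1)p^s}$ suffices), so what is needed is an explicit lower bound on $v^{[u,v]}\bigl(\tfrac{\xi}{\pi_i^{\delta_R+1}}\bigr)$; writing $\xi=\pi_i^{e_0}+p\alpha$ and using $v<p$ (so $v^{[u,v]}(p/\pi_i^{2\delta_R})\geq 0$) together with the ``enough roots of unity'' inequality $\delta_R+1<\tfrac{e_0}{2p}$ gives $v^{[u,v]}\bigl(\tfrac{\xi}{\pi_i^{\delta_R+1}}\bigr)\geq\min\bigl((1-\tfrac{1}{2p})u,\;1-\tfrac12\bigr)$, which is what yields $\geq\tfrac{1}{(p-1)p^s}$ in all cases. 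Your appeal to ``Lemma~\ref{AS}-type arguments'' points in the wrong direction --- that lemma is about Artin--Schreier sequences, not about divided-power estimates --- and the appeal to $u\geq\tfrac{1}{p-1}$ only buys you $\A_{\crr}\subset\A^{[u]}$ for $p\geq 3$, not control on the $\pi_i^{\delta_R+1}$ denominator. Until that valuation estimate is made, the argument does not prove the claim; it is exactly here that the hypothesis that $K$ contains enough roots of unity enters. (A minor additional caveat: the formula $(a+b)^{[k]}=\sum_{i+j=k}a^{[i]}b^{[j]}$ is no longer literally true for partial divided powers of level $s\geq 1$; it is cleaner to invoke the fact that the log-PD-ideal of $E_{\overline R}^{[u,v]}$ is closed under sums, which is what the paper implicitly does.)
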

\begin{proof}
(i) is obvious from the structure result of Lemma~\ref{18saint}.
To prove (ii), granted Lemma~\ref{18saint}, we just have to check that
$(1-\frac{X_0}{\pi_K})^{[k]}\in E_{\overline R}^{[u,v]}$.
But we have $1-\frac{X_0}{\pi_K}=\big(1-\frac{X_0}{[\varpi^\flat]}\big)+
\frac{X_0}{[\varpi^\flat]}\frac{\pi_K-[\varpi^\flat]}{\pi_K}$.
Now 
$1-\frac{X_0}{[\varpi^\flat]}$
has divided powers of level $s$ in $E_{\overline R}^{[u,v]}$ by construction, so
we just have to show
that $\frac{\pi_K-[\varpi^\flat]}{[\varpi^\flat]}$
admits divided powers of level $s$ in $\A_{\overline R}^{[u,v]}$.
For that, we are going to use the assumptions $2p(\delta_R+1)<e_0$,
 $u\geq\frac{p-1}{p}$, $v<p$ (and $u=\frac{3}{4}$, $v=\frac{3}{2}$, if $p=2$).
We have
$$\tfrac{\pi_K-[\varpi^\flat]}{[\varpi^\flat]}
\in\tfrac{\xi}{\pi_i^{\delta_R+1}}\A_R^+[[\tfrac{p}{\pi_i^{2\delta_R}}]]
\quad{\rm and}\quad
\xi=\pi^{e_0}+p\alpha,\ {\text{with $\alpha\in\A_R^+$.}}$$
But, $v^{[u,v]}(\tfrac{p}{\pi_i^{2\delta_R}})\geq 0$, since $v<p$, and
$$v^{[u,v]}\big(\tfrac{\xi}{\pi_i^{\delta_R+1}}\big)\geq \min\big((e_0-\delta_R-1)\tfrac{u}{e_0},
1-(\delta_R+1)\tfrac{v}{e_0}\big)
\geq\min\big((1-\tfrac{1}{2p})u,1-\tfrac{1}{2}\big).$$
It follows that $v^{[u,v]}(\tfrac{\pi_K-[\varpi^\flat]}{[\varpi^\flat]})\geq\frac{1}{(p-1)p^s}$,
for all $p$,
which allows to conclude.
\end{proof}
\begin{remark}\label{fat2}
It follows from the proof that we could have also defined
$E_{\overline R}^{[u,v]}$ using $\iota_{\rm cycl}$ instead of~$\iota_{\rm Kum}$.
\end{remark}

If $(S,\Lambda)$ is any of the pairs above, there are natural, commuting, actions
of $G_R$ and $\varphi$ on $S\otimes\Lambda$ (tensor actions, with $G_R$ acting
trivially on $S$, and $\varphi=\varphi_{\rm Kum}$ or $\varphi_{\rm cycl}$ on $S$, if
$\iota=\iota_{\rm Kum}$ or $\iota_{\rm cycl}$).
\begin{lemma}\label{fat3}
{\rm (i)}  $\varphi$ extends uniquely to continuous morphisms
$$E_{\overline R}^{\rm PD}\to E_{\overline R}^{\rm PD},
\quad E_R^{[u,v]}\to E_R^{[u,v/p]}, 
\quad E_{R_\infty}^{[u,v]}\to E_{R_\infty}^{[u,v/p]},
\quad E_{\overline R}^{[u,v]}\to E_{\overline R}^{[u,v/p]}.$$

{\rm (ii)}
The action of $G_R$ extends uniquely to continuous
actions on $E_{\overline R}^{\rm PD}$, $E_R^{[u,v]}$, $E_{R_\infty}^{[u,v]}$ and $E_{\overline R}^{[u,v]}$,
commuting with $\varphi$.
Moreover $E_{R_\infty}^{[u,v]}$ is the fixed point of
$E_{\overline R}^{[u,v]}$ by ${\rm Gal}(\overline R[\frac{1}{p}]/R_\infty[\frac{1}{p}])\subset G_R$.
\end{lemma}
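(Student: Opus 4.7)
The strategy is to promote the natural tensor actions of $\varphi$ and $G_R$ on $S\otimes \Lambda$ to actions on the completed log-PD-envelope $S\Lambda_s$ by universality and $p$-adic continuity, and then to read off the fixed-point statement from the structure theorem of Lemma~\ref{18saint}.

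For the extension of $\varphi$, I would start from the tensor Frobenius $\varphi_S\otimes\varphi_\Lambda$ on $S\otimes\Lambda$. Since $\iota_{\rm Kum}$ and $\iota_{\rm cycl}$ are both Frobenius-equivariant by construction, one has $f\circ(\varphi_S\otimes\varphi_\Lambda)=\varphi_\Lambda\circ f$, so this Frobenius preserves $\ker f$. To extend it to the log-PD-envelope I have to check that $\varphi(V_j-1)$ admits divided powers of level $s$ in the target ring. For $\varphi_{\rm Kum}$ this is automatic: $\varphi(V_j)=V_j^p$, so $\varphi(V_j)-1=\sum_{i=1}^{p}\binom{p}{i}(V_j-1)^{i}$, each summand either being visibly divisible by $p$ or equal to $p!\,(V_j-1)^{[p]}$, and hence admitting partial divided powers of level $s\in\{0,1\}$. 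For $\varphi_{\rm cycl}$, the estimates in Propositions~\ref{extr8} and~\ref{extr14} give $\varphi_{\rm cycl}(X_j)=X_j^p+p\,y_j$ and $\varphi_\Lambda(\iota_{\rm cycl}(X_j))=\iota_{\rm cycl}(X_j)^p+p\,z_j$ with $y_j,z_j$ in suitably localized rings, and the same binomial manipulation applies. Admissibility of both Frobenii then explains the shift $[u,v]\to[u,v/p]$ in the target annulus.

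For the extension of $G_R$: $g\in G_R$ acts trivially on $S$ and via its action on $\Lambda$. By Corollary~\ref{extr12.3} one has $g(\iota(X_j))=\iota(X_j)\,u_j(g)$ for a unit $u_j(g)\in\Lambda$ with $u_j(g)-1$ lying in a topologically nilpotent ideal. Hence $g(V_j)=V_j\,u_j(g)^{-1}$ and
$$g(V_j)-1=(V_j-1)u_j(g)^{-1}+(u_j(g)^{-1}-1),$$
so $g(V_j)-1$ is a sum of a term with divided powers already in $S\Lambda_s$ and a term with divided powers already in $\Lambda$. Thus the tensor $g$-action extends by universality of the log-PD-envelope; uniqueness and $p$-adic continuity are immediate from Lemma~\ref{18saint}, and the commutation with $\varphi$ holds on the dense subring $S\otimes\Lambda$, hence on the completion.

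For the final fixed-point claim, let $H_R=\Gal(\overline R[\tfrac{1}{p}]/R_\infty[\tfrac{1}{p}])$. Apply Lemma~\ref{18saint} to write any $x\in E_{\overline R}^{[u,v]}$ uniquely as $x=\sum_{\bf k}x_{\bf k}\prod_j(1-V_j)^{[k_j]}$ with $x_{\bf k}\in\A_{\overline R}^{[u,v]}$. The $p^n$-th roots $X_j^{1/p^n}$ were chosen inside $R_\infty$, so $\iota_{\rm Kum}(X_j)=[X_j^\flat]$ is fixed by $H_R$; by Remark~\ref{fat2} we may equally use $\iota_{\rm cycl}$ and, via the same argument, the $V_j$ are $H_R$-invariant. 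Uniqueness of the expansion then gives $x^{H_R}=\sum_{\bf k}(x_{\bf k})^{H_R}\prod_j(1-V_j)^{[k_j]}$, and the identity $(\A_{\overline R}^{[u,v]})^{H_R}=\A_{R_\infty}^{[u,v]}$ is standard perfectoid/almost-\'etale Galois descent (as used in~\cite{AI,KL}). Combined with the analogous structure theorem for $E_{R_\infty}^{[u,v]}$, this yields $E_{R_\infty}^{[u,v]}=(E_{\overline R}^{[u,v]})^{H_R}$. The main technical obstacle throughout is the careful bookkeeping of divided powers of level $s$ under $\varphi_{\rm cycl}$, where the arithmetic variable $X_0$ produces correction terms of the form $\frac{p}{T^{p\delta_R}}$; the admissibility and localization estimates of Propositions~\ref{extr8}, \ref{extr12.1} and Lemma~\ref{fat1} are precisely what is needed to absorb them.
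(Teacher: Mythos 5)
Your overall strategy is the same as the paper's: use the structure theorem (Lemma~\ref{18saint}) to reduce everything to checking that $1-\varphi(V_j)$ and $1-\sigma(V_j)$ admit partial divided powers of level $s$, and derive the fixed-point claim from the $V_j$-invariance plus uniqueness of the expansion. The reduction, the treatment of $\varphi_{\rm Kum}$, and the fixed-point argument are all fine. But the two places where the content actually lives are glossed over in a way that leaves real gaps.

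First, the step ``$u_j(g)-1$ lies in a topologically nilpotent ideal, hence $u_j(g)^{-1}-1$ has divided powers already in $\Lambda$'' is a non sequitur. Topological nilpotence does not give divided powers: $\pi_i$ is topologically nilpotent in $\A_R^{[u,v]}$ but has none. What is actually needed, and what the paper supplies, is explicit divisibility by an element that \emph{does} have divided powers. When $\iota=\iota_{\rm Kum}$, or when $j\geq 1$, one has $\sigma(\iota(X_j))=[\epsilon]^{c_j(\sigma)}\iota(X_j)$, so $1-u_j(g)^{-1}$ is a multiple of $\pi=[\epsilon]-1$, which has divided powers of any level. The case $j=0$ and $\iota=\iota_{\rm cycl}$ is genuinely harder and does not fit your generic formula: there $\sigma(\pi_K)$ is not simply a $[\epsilon]$-multiple of $\pi_K$, and one needs the estimate $\frac{\pi_K-\sigma(\pi_K)}{\sigma(\pi_K)}\in\pi_i^{-\delta_R-1}\pi\,\A_R^{[u,v]}$ together with Lemma~\ref{new2} to conclude divisibility by $\xi$ (which has divided powers because it already does in $\A_F^{\rm PD}$). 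Your sketch, by appealing only to Corollary~\ref{extr12.3}, never produces the element with divided powers, and your decomposition $g(V_j)-1=(V_j-1)u_j(g)^{-1}+(u_j(g)^{-1}-1)$ leaves the second summand with no PD-structure.

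Second, for Frobenius, the remark ``$f\circ(\varphi_S\otimes\varphi_\Lambda)=\varphi_\Lambda\circ f$, so this Frobenius preserves $\ker f$'' and the subsequent ``the same binomial manipulation applies'' miss the delicate case. On $E_R^{[u,v]}$ (where $\iota=\iota_{\rm cycl}$), the Frobenius on the $S$-factor is the Kummer one, precisely so that the two morphisms ${\rm S}(R_\varpi^{[u,v]},r,\varphi_{\rm Kum})\to{\rm S}(E_R^{[u,v]},r,\varphi)\leftarrow{\rm S}(R_\varpi^{[u,v]},r,\varphi_{\rm cycl})$ in Proposition~\ref{RtoA} make sense. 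So $\varphi(V_0)=X_0^p/\varphi(\pi_K)$ is \emph{not} a ratio $y/\iota(y)$ and $1-\varphi(V_0)$ is \emph{not} automatically in the PD-ideal. The paper's decomposition $1-\tfrac{X_0^p}{\varphi(\pi_K)}=\bigl(1-\tfrac{X_0^p}{\pi_K^p}\bigr)\tfrac{\pi_K^p}{\varphi(\pi_K)}+\bigl(1-\tfrac{\pi_K^p}{\varphi(\pi_K)}\bigr)$ is what reduces the problem: the first summand has divided powers because $1-V_0^p$ does, and the second because admissibility of $\varphi_{\rm cycl}$ forces $1-\pi_K^p/\varphi(\pi_K)$ to have divided powers of level $s$. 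Your ``binomial manipulation'' handles $1-V_j^p$ but not the extra factor $\pi_K^p/\varphi(\pi_K)$, which is exactly the correction the arithmetic variable introduces. So the approach is right, but the proposal as written has two genuine holes at the verification step, and both require specific inputs (Lemma~\ref{new2}, admissibility of $\varphi_{\rm cycl}$) that you do not invoke.
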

\begin{proof}
Lemma~\ref{18saint} reduces the statement to the verification that
$(1-\varphi(V_j))^{[k]}$ and $(1-\sigma(V_j))^{[k]}$ belong to the relevant ring,
for $0\leq j\leq d$ and $\sigma\in G_R$.

Let us start with Frobenius.  If we use $\iota_{\rm Kum}$, the result is obvious
as, in that case, $\varphi(V_j)=V_j^p$.  If we use $\iota_{\rm cycl}$,
the same argument is still valid for $1\leq j\leq d$, and we are left
to check that $(1-\frac{X_0^p}{\varphi(\pi_K)})^{[k]}\in E_R^{[u,v/p]}$.
But 
$$1-\tfrac{X_0^p}{\varphi(\pi_K)}=\big(1-\tfrac{X_0^p}{\pi_K^p}\big)\tfrac{\pi_K^p}{\varphi(\pi_K)}+
\big(1-\tfrac{\pi_K^p}{\varphi(\pi_K)}\big),$$
and the result follows from the fact that $\varphi_{\rm cycl}$ is admissible, hence
$1-\tfrac{\pi_K^p}{\varphi(\pi_K)}$ has divided powers of level $s$ in
$\A_R^{[u,v/p]}$.

For the action of $G_R$, we have to check that
$\big(1-\frac{X_j}{\sigma(\iota(X_j))}\big)^{[k]}$ makes sense.
If $\iota=\iota_{\rm Kum}$ or if $j\geq 1$, we have
$\sigma(\iota(X_j))=[\epsilon^c_j(\sigma)]\iota(X_j)$ for
some $c(\sigma)\in\Z_p$.  So we can write
$$1-\tfrac{X_j}{\sigma(\iota(X_j))}=
\big(1-\tfrac{X_j}{\iota(X_j)}\big)+(1-[\epsilon]^{-c_j(\sigma)})\tfrac{X_j}{\iota(X_j)},$$
and the results follows from the fact that $1-[\epsilon]^{-c_j(\sigma)}$ is divisible by $\pi$
which has divided powers of any level.
The case $j=0$ and $\iota=\iota_{\rm cycl}$ is more subtle: we have to consider
$$1-\tfrac{X_0}{\sigma(\pi_K)}=\big(1-\tfrac{X_0}{\pi_K}\big)-\tfrac{X_0}{\pi_K}\tfrac{\pi_K-\sigma(\pi_K)}{\sigma(\pi_K)}.$$
But
$\tfrac{\pi_K-\sigma(\pi_K)}{\sigma(\pi_K)}\in \pi_i^{-\delta_R-1}\pi\A_R^{[u,v]}$,
and $\pi_i^{-\delta_R-1}\pi=\pi_i^{-\delta_R-1}\pi_1\xi$ is divisible by $\xi$
in $\A_R^{[u,v]}$ (by
Lemma~\ref{new2}), and 
and $\xi$ has divided powers in $\A_R^{[u,v]}$
since it has already divided powers in $\A_F^{\rm PD}$.

Finally, the ``Moreover'' is obvious from Lemma~\ref{18saint}, Remark ~\ref{fat2}, and the fact that
$V_j$ is fixed by ${\rm Gal}(\overline R[\frac{1}{p}]/R_\infty[\frac{1}{p}])$ if
we use $\iota_{\rm cycl}$.
\end{proof}

\begin{example}
If $R=\so_K$, we have $E^{\rm PD}_{\overline{R}}=\wh{\A}_{\st}$, where
$\wh{\A}_{\st}$ is the $p$-adic completion of the ring
$\A_{\crr}[\frac{Y^k}{k!},\ k\in\N]$, where $Y=\frac{X_0}{[\varpi^\flat]}-1$.
Since $\varphi(X_0)=X_0^p$, we have $\varphi(Y)=(1+Y)^p-1$, and
since $\sigma([\varpi^\flat])=[\epsilon]^{c(\sigma)}[\varpi^\flat]$, with
$c(\sigma)\in\Z_p$ if $\sigma\in G_K$, we have
$\sigma(Y)=[\epsilon]^{-c(\sigma)}(1+Y)-1$.

A more conceptual description would be to define
$\wh{\A}_{\st}$ as the $H^0$ of $\O_{\overline K}$ for the
log-crystalline cohomology:
$$
\wh{\A}_{\st,n}=H^0_{\crr}(\so_{\overline{K},n}^{\times}/r^{\rm PD}_{\varpi,n}), \quad \wh{\A}_{\st}=\projlim_nH^0_{\crr}(\so_{\overline{K},n}^{\times}/r^{\rm PD}_{\varpi,n}).
$$
\end{example}

\section{Local syntomic computations}\label{Losy}
The goal of this section is to construct, {\it if $K$ contains enough roots of unity},
 a natural ``quasi-isomorphism'' between
the complex ${\rm Syn}(R,r)$ computing syntomic cohomology of $R$, and a complex
${\rm Cycl}(R_\varpi^{[u,v]},r)$ that is closer to Galois cohomology.
For example, in dimension~$0$, choosing a basis of $\Omega^1$, the two complexes
become (with $\partial_{\rm Kum}=X_0\frac{d}{dX_0}$ and $\partial_{\rm cycl}=(1+X_0)\frac{d}{dX_0}$):
\begin{align*}
{\rm Syn}(\so_K,r):\quad  & 
\xymatrix@C=2.8cm{F^rr_\varpi^{\rm PD}\ar[r]^-{(\partial_{\rm Kum},p^r-\phi_{\rm Kum})}
&F^{r-1}r_\varpi^{\rm PD}\oplus r_\varpi^{\rm PD}\ar[r]^-{-(p^r-p\phi_{\rm Kum})+\partial_{\rm Kum}}&r_\varpi^{\rm PD}}\,,\\
{\rm Cycl}(r_\varpi^{[u,v]},r):\quad  & 
\xymatrix@C=2.8cm{F^rr_\varpi^{[u,v]}\ar[r]^-{(\partial_{\rm cycl},p^r-\phi_{\rm cycl})}
&F^{r-1}r_\varpi^{[u,v]}\oplus r_\varpi^{[u,v/p]}\ar[r]^-{-(p^r-p\phi_{\rm cycl})+\partial_{\rm cycl}}&r_\varpi^{[u,v/p]}}\,.
\end{align*}
The main step (Proposition ~\ref{RtoA}) is the shift from Frobenius $\varphi_{\rm Kum}$ to $\varphi_{\rm cycl}$
(this uses standard crystalline techniques).  But, since $\varphi_{\rm cycl}$ is not defined on
$R_\varpi^{\rm PD}$, one must first replace $R_\varpi^{\rm PD}$ by $R_\varpi^{[u,v]}$.
This is done through a string of ``quasi-isomorphisms''
$${\rm Syn}(R,r):={\rm Kum}(R_\varpi^{\rm PD},r)\cong {\rm Kum}(R_\varpi^{[u]},r)
\cong {\rm Kum}^\psi(R_\varpi^{[u]},r)\cong {\rm Kum}^\psi(R_\varpi^{[u,v]},r) \cong
{\rm Kum}(R_\varpi^{[u,v]},r),$$
using techniques coming from $(\varphi,\Gamma)$-module theory.
(Actually, it is better to truncate in degree~$r$ as the constants involved in the
``quasi-isomorphism'' ${\rm Kum}^\psi(R_\varpi^{[u]},r)\cong {\rm Kum}^\psi(R_\varpi^{[u,v]},r)$
are too big in degrees~${\geq}\,r$.)
\subsection{Local complexes computing syntomic cohomology}
If $$S=R_\varpi^{\rm PD}, R_\varpi^{[u]}, R_\varpi^{[u,v]},$$
set 
$$S'=R_\varpi^{\rm PD}, R_\varpi^{[u]}, R_\varpi^{[u,v/p]}.$$
We endow all these rings with Frobenius $\varphi_{\rm Kum}$,
and we have $\varphi(S)\subset S'$ in all cases.

If $i\in\N$, let
$${\text{$J_i=\{0\leq j_1<\cdots<j_i\leq d\}$\quad and\quad $\omega_{\bf j}=\big(\tfrac{dX_{j_1}}{X_{j_1}}\wedge\cdots\wedge\tfrac{dX_{j_i}}{X_{j_i}}\big)$
if ${\bf j}=(j_1,\dots,j_i)\in J_i$.}}$$

We define $\varphi_{\rm Kum}$ and $\psi_{\rm Kum}$ on $\Omega_S^i$ by
$$\varphi_{\rm Kum}\big(\sum_{{\bf j}\in J_i}f_{\bf j}\omega_{\bf j}\big)=
\sum_{{\bf j}\in J_i}\varphi_{\rm Kum}(f_{\bf j})\omega_{\bf j}
\quad
\psi_{\rm Kum}\big(\sum_{{\bf j}\in J_i}f_{\bf j}\omega_{\bf j}\big)=
\sum_{{\bf j}\in J_i}\psi_{\rm Kum}(f_{\bf j})\omega_{\bf j}.$$
(This is not the natural definition, as we have $d(\varphi_{\rm Kum}(f))=p\varphi_{\rm Kum}(df)$
with this definition.  The reason for doing this is that we need the left
inverse $\psi$ of $\varphi$ which makes it necessary, for integrality reasons,
to divide the natural Frobenius by suitable powers of $p$.  The price to pay
is that these powers of $p$ are showing up in the complexes that we are
about to define.)

We define
$F^r\Omega^i_S$ as the sub-$S$-module of $\Omega_S^i$ generated by $F^rS\cdot \Omega^i_S$; we have
thus $$F^r\Omega^i_S=\oplus_{{\bf j}\in J_i}F^rS\cdot\omega_{\bf j}.$$
We filter the de Rham complex $\Omega_S\kr$
by subcomplexes
    $$F^r\Omega_{S}\kr :=    F^rS \to F^{r-1}\Omega^1_{S}\to F^{r-2}\Omega^{2}_{S} \to\ldots   $$
We define the complex ${\rm Kum}(S,r)$ as
    $$
    {\rm Kum}(S,r):=[\xymatrix@C=1.6cm{F^r\Omega_{S}\kr\ar[r]^-{p^r-p\kr\phi_{\rm Kum}}&\Omega_{S'}\kr}].
    $$
We define the {\em syntomic complex} ${\rm Syn}(R,r)$ of $R$ and the 
{\it syntomic cohomology} $H^*_{\synt}(R,r)$ of $R$ as:
  $${\rm Syn}(R,r):={\rm Kum}(R_\varpi^{\rm PD},r)\quad{\rm and}\quad H^*_{\synt}(R,r):=H^*({\rm Syn}(R,r)).$$ 
    For $n\in\N$, we define the  syntomic complexes modulo $p^n$ and syntomic cohomology modulo $p^n$ of $R$ as  
$${\rm Syn}(R,r)_n:={\rm Syn}(R,r)\otimes_{\Z}\Z/p^n, \quad {\rm and}\quad H^*_{\synt}(R_n,r):=H^*({\rm Syn}(R,r)_n).$$

If $S=R_\varpi^{[u]},R_\varpi^{[u,v]}$, set $S''=R_\varpi^{[pu]},R_\varpi^{[pu,v]}$.  We define the complex
${\rm Kum}^\psi(S,r)$, using $\psi_{\rm Kum}$ instead of $\varphi_{\rm Kum}$, as:
    $$
    {\rm Kum}^{\psi}(S,r):=[\xymatrix@C=1.6cm{F^r\Omega_{S}\kr\ar[r]^-{p^r\psi_{\rm Kum}-p\kr}&\Omega_{S''}\kr}].
    $$

Finally, if $S=R_\varpi^{[u,v]}$, we define ${\rm Cycl}(S,r)$ using $\varphi_{\rm cycl}$
in place of $\varphi_{\rm Kum}$ (we cannot define
${\rm Cycl}(S,r)$ for $S=R_\varpi^{\rm PD},R_\varpi^{[u]}$ as $\varphi_{\rm cycl}$
does not send $S$ into $S'$ in these cases (except if $K=F$, i.e~if $K$ is absolutely
unramified)):
$${\rm Cycl}(S,r):=[\xymatrix@C=1.6cm{F^r\Omega_{S}\kr\ar[r]^-{p^r-p\kr\phi_{\rm cycl}}&\Omega_{S'}\kr}].$$

\medskip
{\it For the remainder of this section, we write simply $\varphi$ and $\psi$ for
$\varphi_{\rm Kum}$ and $\psi_{\rm Kum}$.}
     
\subsection{Change of disk of convergence}
   We are going to show that ${\rm Kum}(R_\varpi^{[u]},r)$ computes the syntomic cohomology up to a constant depending on $r$.
For that we will need  
Lemma~\ref{6saint} below.
For $ S=r_\varpi^{\rm PD}$ or $r_\varpi^{[u]}$, 
we denote by  
$v_{X_0}: S\to \N\cup\{+\infty\}$ the valuation relative to 
${X_0}$: if  $f=\sum a_i{{X_0}^i}$ then 
$v_{X_0}(f)=\inf \{i\in\N,\ a_i\neq 0\}$ and, if $N\in\N$, we define
$S_N$ as $\{f\in S,\ v_{X_0}(f)\geq N\}$.
We define $R_{\varpi,N}^{\rm PD}$ and $R_{\varpi,N}^{[u]}$
as the topological closures of
$r_{\varpi,N}^{\rm PD}\otimes_{r_\varpi^+} R_\varpi^+$ and $R_{\varpi,N}^{[u]}\otimes_{r_\varpi^+} R_\varpi^+$
 in $R_{\varpi,N}^{\rm PD}$ and $R_{\varpi,N}^{[u]}$.
\begin{lemma}\label{6saint}
Let $S=R_{\varpi}^{\rm PD},R_{\varpi}^{[u]}$.
If  $s\in\Z$ and $N\geq 1$, then
$(1-p^{-s}\varphi): S_N[\frac{1}{p}]\to  S_N[\frac{1}{p}]$ is bijective.
Moreover, if $ S=R_\varpi^{\rm PD}$ {\rm(}resp.~$ S=R_\varpi^{[u]}${\rm)},

$\bullet$ there exists $N(r,e)$ {\rm(}resp.~$N(r,e,u)${\rm)} such that $(1-p^{-s}\varphi)$ is
$p^{N(r,e)}$-bijective {\rm(}resp.~$p^{N(r,e,u)}$-bijective{\rm)} on $S_1$ for all $s\leq r$,

$\bullet$  if  $N\geq{se}$, {\rm(}resp.~$N\geq{se/u(p-1)}${\rm)}, 
then  $1-p^{-s}\varphi$ is bijective on $S_N$.
\end{lemma}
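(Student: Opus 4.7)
The plan is to write the formal inverse of $1-p^{-s}\varphi$ as the geometric series $\sum_{k\geq 0}(p^{-s}\varphi)^k$, and to deduce the three assertions from quantitative control of its terms. The key input is an estimate of the form $\varphi(S_N)\subset p^{\beta(N)}S_{pN}$ for an appropriate function $\beta=\beta_S$. In the PD case, writing $x\in S_N$ in the basis $\{X_0^i/[i/e]!\}_{i\geq N}$ and using $\varphi(X_0^i/[i/e]!)=([pi/e]!/[i/e]!)\cdot X_0^{pi}/[pi/e]!$, a direct $p$-adic valuation computation gives $v_p([pi/e]!/[i/e]!)=[i/e]$, so $\beta_{\rm PD}(N)=[N/e]$ works. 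In the $[u]$-case, $\varphi(X_0^i/p^{[iu/e]})=p^{[piu/e]-[iu/e]}\cdot X_0^{pi}/p^{[piu/e]}$, and the elementary inequality $[p\alpha]-[\alpha]\geq(p-1)[\alpha]$ gives $\beta_{[u]}(N)=(p-1)[Nu/e]$. Iterating yields $\varphi^k(S_N)\subset p^{\beta_k(N)}S_{p^kN}$ with $\beta_k(N):=\sum_{j=0}^{k-1}\beta(p^jN)$, which grows exponentially in~$k$.

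For bijectivity on $S_N[\tfrac{1}{p}]$, the $k$-th term of the series lies in $p^{\beta_k(N)-sk}S_{p^kN}$, and since $\beta_k(N)-sk\to+\infty$, the sum converges $p$-adically in $S_N[\tfrac{1}{p}]$ and defines a two-sided inverse of $1-p^{-s}\varphi$. Injectivity on $S_N$ itself is then automatic, since $S_N$ is $p$-torsion free. For bijectivity directly on $S_N$ one needs every term of the series to already lie in $S_N$, i.e., $\beta_k(N)\geq sk$ for all $k\geq 1$. Since $\beta$ is non-decreasing as a function of $N$, the condition $\beta(N)\geq s$ forces $\beta(p^jN)\geq s$ for all $j$ and hence $\beta_k(N)\geq sk$, so the binding constraint is already at $k=1$. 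This translates precisely into $N\geq se$ in the PD case and $N\geq se/u(p-1)$ in the $[u]$-case.

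Finally, for the $p^{N(r,e)}$-bijectivity on $S_1$ uniformly in $s\leq r$: injectivity is automatic as above. For the cokernel, given $y\in S_1$ the formal inverse $z=\sum_k p^{-sk}\varphi^k(y)$ makes sense a priori in $S_1[\tfrac{1}{p}]$, and the total power of $p$ needed to clear all denominators is at most $D(r,e):=\max_{k\geq 0}(rk-\beta_k(1))^+$, a finite quantity depending only on $r$ and $e$ (resp.~on $r,e,u$), since $\beta_k(1)$ eventually grows exponentially while $rk$ is linear. Setting $N(r,e):=D(r,e)$ then gives $p^{N(r,e)}y\in(1-p^{-s}\varphi)(S_1)$ uniformly in $s\leq r$ (the case $s\leq 0$ being trivial, as $\beta_k(1)-sk\geq 0$ automatically). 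The only real obstacle is bookkeeping: one must verify that the small-$k$ deficit is bounded uniformly for $s\leq r$ by a fixed constant, which it is, because the maximum in the definition of $D(r,e)$ is attained at some $k_0$ depending only on $r$ and $e$.
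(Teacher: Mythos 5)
Your proof is essentially the same as the paper's: you invert $1-p^{-s}\varphi$ by the geometric series $\sum_k p^{-sk}\varphi^k$ and control convergence via the $p$-adic gain of $\varphi$ on $S_N$, which is exactly what the paper does (they compute $v_p\bigl([p^ki/e]!\bigr)-v_p\bigl([i/e]!\bigr)=\sum_{j=0}^{k-1}[p^ji/e]$ directly, you encode this as $\beta_k(N)=\sum_{j<k}\beta(p^jN)$ and iterate; your identity $v_p\bigl([pi/e]!/[i/e]!\bigr)=[i/e]$ is correct and is the telescoping step). The first two bullets are handled correctly.

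There is one small slip in the third bullet for the $[u]$ case. You use $[p\alpha]-[\alpha]\geq(p-1)[\alpha]$, giving $\beta_{[u]}(N)\geq(p-1)[Nu/e]$, and then assert that $\beta(N)\geq s$ ``translates precisely into $N\geq se/(u(p-1))$.'' It does not: $(p-1)[Nu/e]\geq s$ is equivalent to $[Nu/e]\geq\lceil s/(p-1)\rceil$, i.e.\ to $N\geq e\lceil s/(p-1)\rceil/u$, which is strictly larger than $se/(u(p-1))$ when $(p-1)\nmid s$ (e.g.\ $p=3$, $u=1$, $e=2$, $s=5$, $N=5$: then $Nu/e=5/2$, $[Nu/e]=2$, $(p-1)[Nu/e]=4<5$). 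The fix is to use the sharper superadditivity bound $[p\alpha]-[\alpha]\geq[(p-1)\alpha]$ (from $[a+b]\geq[a]+[b]$ applied to $p\alpha=\alpha+(p-1)\alpha$); since $s$ is an integer, $[(p-1)\alpha]\geq s$ is equivalent to $(p-1)\alpha\geq s$, which gives exactly the threshold $N\geq se/(u(p-1))$ claimed by the lemma. (The paper itself only treats the $\mathrm{PD}$ case and remarks that the $[u]$ estimates are ``easier,'' so this detail is glossed over there as well.)
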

\begin{proof}
The estimates being easier for $R_\varpi^{[u]}$ (since $[\frac{i}{e}]!$ is replaced
by $[\frac{iu}{e}]$ which behaves better as a function of $i$),
 we will just treat the case  $ S=R_\varpi^{\rm PD}$.
An element $f$ of $S_N$ can be written as
$\sum_{i\geq N}f_i\frac{X_0^i}{[\frac{i}{e}]!}$, where $f_i\in R_\varpi^+$ goes to $0$
when $i\to\infty$. We have then
$$p^{-ks}\varphi^k(f)=\sum_{i\geq N}p^{-ks}\frac{[\frac{p^{k}i}{e}]!}{[\frac{i}{e}]!}
\varphi^k(f_i)
\frac{X_0^{p^ki}}{[\frac{p^ki}{e}]!}.$$
Now,
$$\inf_{i\geq 1}\big(v_p\big([\frac{p^{k}i}{e}]!\big)-v_p\big([\frac{i}{e}]!\big)-ks\big)=
v_p\big([\frac{p^{k}}{e}]!\big)-ks\geq \frac{p^{k-1}}{e}-1-ks$$
goes  to $+\infty$ when $k\to+\infty$.  We deduce that, for $f\in  S_N[\frac{1}{p}]$, the series
 $\sum_{k\in\N}p^{-ks}{\varphi^k}(f)$ converges in $ S_N[\frac{1}{p}]$.
Since the sum  $g$ satisfies $(1-p^{-s}\varphi)g=f$, we deduce that 
$(1-p^{-s}\varphi)$ is invertible, with inverse $\sum_{k\in\N}p^{-ks}{\varphi^k}$,
and we can take $N(r,e)=\inf_{k\in\N}(-1-kr+p^{k-1}/e)$.

Now, if $i\geq se$, then
$$v_p\big([\frac{p^{k}i}{e}]!\big)-v_p\big([\frac{i}{e}]!\big)=
\sum_{j=0}^{k-1}\big[\frac{p^{j}i}{e}\big]
\geq \sum_{j=0}^{k-1}sp^{j}\geq sk.$$
This implies the last statement of the lemma.
\end{proof}

\begin{lemma}
Let
$r,s\in {\mathbf N}$.

{\rm (i)} If $1/(p-1)\leq u\leq 1$, the map $p^s-\varphi$ induces a $p^{s+r}$-isomorphism 
$F^r\Omega_{R_\varpi^{[u]}}^i/F^r\Omega_{R_\varpi^{\rm PD}}^i\simeq \Omega_{R_\varpi^{[u]}}^i/\Omega_{R_\varpi^{\rm PD}}^i$.

{\rm (ii)} If $u'\leq u\leq pu'$, the map $p^s-\varphi$ induces a $p^{s+r}$-isomorphism
$F^r\Omega_{R_\varpi^{[u]}}^i/F^r\Omega_{R_\varpi^{[u']}}^i\simeq \Omega_{R_\varpi^{[u]}}^i/\Omega_{R_\varpi^{[u']}}^i$.
\end{lemma}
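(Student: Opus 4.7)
Since $\Omega^i_{S}=\bigoplus_{\mathbf{j}\in J_i}S\,\omega_{\mathbf{j}}$ as $S$-modules and $\varphi_{\rm Kum}$ acts coefficient-wise on the $\omega_{\mathbf{j}}$'s, the statement for $\Omega^i$ reduces immediately to the analogous statement at the level of rings. Write $(A,B)=(R_\varpi^{\rm PD},R_\varpi^{[u]})$ in case~(i) and $(A,B)=(R_\varpi^{[u']},R_\varpi^{[u]})$ in case~(ii), so that the goal is to show that $p^s-\varphi\colon F^rB/F^rA\to B/A$ is a $p^{s+r}$-isomorphism.

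The first key observation is that $\varphi=\varphi_{\rm Kum}$ vanishes on the quotient $B/A$. In both cases $\varphi(R_\varpi^{[u]})\subset R_\varpi^{[u/p]}$. In case~(i), the hypothesis $u\leq 1$ gives $u/p\leq 1/p$, and the inclusion $R_\varpi^{[v]}\subset R_\varpi^{\rm PD}$ for $v\leq 1/p$ recalled at the start of the subsection on divided powers then gives $\varphi(B)\subset A$. In case~(ii), $u\leq pu'$ gives $u/p\leq u'$, hence $\varphi(R_\varpi^{[u]})\subset R_\varpi^{[u/p]}\subset R_\varpi^{[u']}=A$. Consequently, modulo $A$ the map $p^s-\varphi$ is just multiplication by $p^s$, and the map of the lemma factors as the composition $p^s\circ\iota$, where $\iota\colon F^rB/F^rA\to B/A$ is the natural inclusion.

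Next I would show that $\iota$ is a $p^r$-isomorphism. Injectivity is automatic: the filtration on either ring is, by definition, the intersection of the ring with $P^rR[\tfrac{1}{p}][[P]]$ under the common embedding, so $F^rB\cap A=A\cap P^rR[\tfrac{1}{p}][[P]]=F^rA$. For $p^r$-surjectivity, I would invoke Remark~\ref{0wiesia}: any $f\in B$ decomposes as $f=f^++f^-$ with $f^+\in F^rB$ and $f^-\in\tfrac{1}{p^{[ru]}}R_\varpi^+$ (a polynomial of degree $\leq re-1$). Under the paper's running conventions $u<1$ (this is explicit in (i) and standing in (ii), where $u\leq v/p<1$), hence $[ru]\leq r$ and $p^rf^-\in R_\varpi^+\subset A$. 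Therefore $p^rf\equiv p^rf^+\pmod A$ with $p^rf^+\in F^rB$, which exhibits the desired $p^r$-surjectivity.

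Combining the two steps: the kernel of $p^s\iota$ consists of classes $f\in F^rB/F^rA$ with $p^sf\in A$, hence (by injectivity of $\iota$) with $p^sf\in F^rB\cap A=F^rA$, i.e.\ classes killed by $p^s$; the cokernel is killed by $p^r$ from the $p^r$-surjectivity of $\iota$ followed by $p^s$ from multiplication on $B/A$, hence by $p^{s+r}$. The only subtle step is the $p^r$-surjectivity of $\iota$, which rests squarely on the polynomial decomposition of Remark~\ref{0wiesia} and the bound $[ru]\leq r$; once this is granted the Frobenius dynamics play no role, because $\varphi$ is zero modulo $A$.
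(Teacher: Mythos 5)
Your proof is correct. The injectivity step coincides with the paper's: both rest on $\varphi(B)\subset A$ together with $F^rA=A\cap F^rB$, via the identity $p^sx=(p^s-\varphi)x+\varphi(x)$. The surjectivity step is genuinely more direct than the paper's. There, the Frobenius on $B$ is controlled by splitting $f\in B$ into a low $X_0$-order part lying in $p^{-s}A$ and a high-order part on which $1-p^{i-s}\varphi$ is inverted by appealing to Lemma~\ref{6saint}, and only then is Remark~\ref{0wiesia} applied to the preimage. You instead exploit $\varphi(F^rB)\subset A$ from the start to see that the Frobenius term dies on passing to $B/A$, so the map in question is literally $p^s\iota$; Remark~\ref{0wiesia} and the bound $[ru]\leq r$ (from $u\leq 1$) then give $p^r$-surjectivity of $\iota$ on the nose, and Lemma~\ref{6saint} is not needed at all. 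Your conclusion ($p^s$-injective, $p^{s+r}$-surjective) recovers the stated $p^{s+r}$-isomorphism. You are also right to flag the tacit hypothesis $u\leq 1$ in part~(ii): the lemma's stated hypotheses do not impose it, but the paper's own proof invokes $[ru]\leq r$ at exactly the same spot, so both arguments share the same implicit dependence on the standing convention $u<1$.
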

\begin{proof}
If ${\bf j}\in J_i$, we have $\varphi(\omega_{\bf j})=p^j\omega_{\bf j}$.  This makes it possible, by decomposing everything
in the basis of the $\omega_{\bf j}$'s, to only treat the case $i=0$ (the map becoming $p^s-p^i\varphi$).
Let $A=R_\varpi^{\rm PD}$ or $R_\varpi^{[u']}$ and $B=R_\varpi^{[u]}$; we have $A\subset B$
and $\varphi(B)\leq A$.

To show $p^s$-injectivity, since $F^rA=A\cap F^rB$, 
it suffices to show that $(p^s-p^i\varphi)x\subset A$ implies that $x\in A$. 
But this follows from the fact that $\varphi(B)\subset A$ and the identity $p^sx=(p^s-p^i\varphi)x+p^i\varphi(x)$.

Now let $f\in B$.  Write $f$ as $f_1+f_2$ with $f_2\in R_{\varpi,se/u(p-1)}^{[u]}$ and 
$f_1\in p^{-[u (se/u)/e(p-1)]}R_\varpi^+\subset p^{-s}A$.
By Lemma~\ref{6saint}, we can write $f_2$ as $(1-p^{i-s}\varphi)g$, with $g\in B$.
By Remark~\ref{0wiesia}, we can write $g=g_1+g_2$ with $g_1\in F^rB$ and
$g_2\in p^{-[ru]}R_\varpi^+$, hence $(1-p^{i-s}\varphi)p^{-s}g_2\in p^{-s-r}A$
and $f-(1-p^{i-s}\varphi)g_1\in p^{-s-r}A$.
Finally, we obtain $f\in p^{-s-r}A+p^{-s}(p^s-p^i\varphi)F^rB$, which allows to conclude. 
 \end{proof}
 
\begin{proposition}\label{8saint}
{\rm (i)} 
For $\frac{1}{p-1}\leq u\leq 1$, 
the morphism of complexes ${\rm Kum}(R_\varpi^{\rm PD},r)\to {\rm Kum}(R_\varpi^{[u]},r)$
induced by the 
injection $R_\varpi^{\rm PD}\subset R_\varpi^{[u]}$
is a $p^{6r}$-quasi-isomorphism.

{\rm (ii)} If $u'\leq u\leq pu'$,
the morphism of complexes ${\rm Kum}(R_\varpi^{[u']},r)\to {\rm Kum}(R_\varpi^{[u]},r)$
induced by the
injection $R_\varpi^{[u']}\subset R_\varpi^{[u]}$
is a $p^{6r}$-quasi-isomorphism.
\end{proposition}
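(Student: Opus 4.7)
The plan is to derive Proposition~\ref{8saint} from the preceding lemma by a short mapping cone argument. In both parts, write $A\subset B$ for the ring inclusion at hand: $A=R_\varpi^{\rm PD}$, $B=R_\varpi^{[u]}$ in (i); $A=R_\varpi^{[u']}$, $B=R_\varpi^{[u]}$ in (ii). In both cases $A'=A$ and $B'=B$ under the convention $S\mapsto S'$. Since the morphism ${\rm Kum}(A,r)\to{\rm Kum}(B,r)$ is a termwise injection, its cofiber is identified with
\[
Q:=\bigl[\,F^r\Omega_B^\bullet/F^r\Omega_A^\bullet\ \xrightarrow{\,p^r-p^\bullet\varphi_{\rm Kum}\,}\ \Omega_B^\bullet/\Omega_A^\bullet\,\bigr],
\]
so the task reduces to showing that $Q$ is $p^{6r}$-acyclic. (One first checks that $p^r-p^\bullet\varphi_{\rm Kum}$ really is a chain map of de Rham complexes: this is exactly the relation $d\circ\varphi_{\rm Kum}=p\cdot\varphi_{\rm Kum}\circ d$ built into the definition of $\varphi_{\rm Kum}$ on forms.)

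The preceding lemma supplies precisely what is needed on each horizontal term of $Q$. Decomposing $\Omega^i$ in the $\omega_{\bf j}$-basis and using that $\varphi_{\rm Kum}(\omega_{\bf j})=\omega_{\bf j}$ in the present convention, the map $p^r-p^i\varphi_{\rm Kum}$ on $\Omega^i$ acts on coefficients as the operator ``$p^r-p^i\varphi$'' that appears in the reduction step of the lemma's proof (after factoring the natural Frobenius on $\omega_{\bf j}$). Applying the lemma with $s=r$---in case (i) of the lemma for part (i), in case (ii) for part (ii)---then gives, in each form-degree $i$, that the horizontal map of $Q$ is a $p^{2r}$-isomorphism of abelian groups.

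To conclude I would use the following standard estimate: if $f\colon C_1\to C_2$ is a morphism of complexes each of whose components $f^i$ is a $p^N$-isomorphism, then $\mathrm{Cone}(f)$ is $p^{2N}$-acyclic. A short proof: choose approximate inverses $g^i\colon C_2^i\to C_1^i$ with $f^i g^i=g^i f^i=p^N$; since $f$ is a chain map, $f\circ(d_1g-gd_2)=d_2\circ(fg)-(fg)\circ d_2=0$, so $d_1g-gd_2$ lands in $\ker f$, which is $p^N$-torsion, and hence $p^Ng$ is a chain map. Given a cocycle $(a,b)\in\mathrm{Cone}(f)^n=C_1^{n+1}\oplus C_2^n$ with $da=0$ and $db=-f(a)$, the pair $(x,y)=(p^Ng(b),0)$ then satisfies $d(x,y)=\pm p^{2N}(a,b)$. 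Applied to $Q$ with $N=2r$, this yields $p^{4r}$-acyclicity, which is already stronger than the claimed $p^{6r}$-quasi-isomorphism.

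\textbf{Main obstacle.} The substantive analytic content has already been carried out in the preceding lemma (which itself rests on Lemma~\ref{6saint} and the decomposition of Remark~\ref{0wiesia}); the present proposition is a formal consequence. The only bookkeeping point is to match the two normalisations of Frobenius on differential forms---the ``natural'' one used in the lemma, with $\varphi(\omega_{\bf j})=p^j\omega_{\bf j}$, versus the renormalised $\varphi_{\rm Kum}$ built into ${\rm Kum}(S,r)$, with $\varphi_{\rm Kum}(\omega_{\bf j})=\omega_{\bf j}$---but this merely shifts the powers of $p$ that appear and leaves the final constant comfortably within the $p^{6r}$ bound.
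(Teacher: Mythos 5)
Your overall approach is the same as the paper's, which dispatches the proposition in a single line by invoking the preceding lemma on the quotient complex. The substantive issue is with the ``standard estimate'' you quote and the way you prove it: for a general morphism of complexes $f\colon C_1\to C_2$ whose components are $p^N$-isomorphisms, approximate inverses $g^i$ satisfying $f^ig^i=g^if^i=p^N$ need \emph{not} exist. Finding $g^i$ with $f^ig^i=p^N$ is a lifting problem, obstructed by $\Ext^1(C_2^i,\ker f^i)$; even when it is solvable, the uniform relation one can guarantee is $f^ig^i=p^{a+b}$ where $a,b$ are the injectivity and surjectivity exponents (and enforcing $g^if^i$ too costs a further factor of $p^a$). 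So the construction on which your $p^{2N}$ bound rests is unjustified, and without it the naive long-exact-sequence argument only gives $p^{4N}=p^{8r}$, which overshoots the claimed $p^{6r}$.

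The estimate you want is nevertheless true, and there is a cleaner way to prove it that also improves the constant. Since $f$ is a chain map, $\ker f^\bullet$ is a subcomplex and $\bar C_1^\bullet:=C_1^\bullet/\ker f^\bullet$ is a quotient complex; factor $f$ as $C_1\twoheadrightarrow\bar C_1\hookrightarrow C_2$. The first map has fiber $\ker f^\bullet$ and the second has fiber $\mathrm{coker}\,f^\bullet[-1]$, so the octahedron gives a triangle $\ker f^\bullet\to\mathrm{Fib}(f)\to\mathrm{coker}\,f^\bullet[-1]\to$. The preceding lemma shows $\ker f^i$ is killed by $p^r$ (the $p^s$-injectivity with $s=r$) and $\mathrm{coker}\,f^i$ is killed by $p^{2r}$ (the $p^{s+r'}$-surjectivity with $s=r$ and filtration level $r'=r-i\le r$); hence $H^n(\mathrm{Fib}(f))$ is killed by $p^{3r}$, comfortably within $p^{6r}$. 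The paper's own $p^{6r}$ arises from running the asymmetric degreewise bounds through the long exact sequence of the cone without this factorization, which effectively doubles the exponent $r+2r$. Either route is valid; the one via the factorization avoids the questionable quasi-inverses entirely.
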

\begin{proof} The above lemma, applied  with $s=r-1$ and  $s=r$, allows to show that
the cohomology of the quotient complexes is annihilated by $p^{6r}$.
\end{proof}
\subsection{$(\varphi,\partial)$-modules and $(\psi,\partial)$-modules}
We are going to show that ${\rm Kum}^\psi(R_\varpi^{[u]},r)$ computes syntomic cohomology.

\begin{lemma}\label{uphipsi}
The following commutative diagram 
$$\xymatrix{
F^r \Omega\kr_{R_\varpi^{[u]}}\ar[r]^-{p^r-p\kr\varphi}\ar[d]^{{\id}}&\Omega\kr_{R_\varpi^{[u]}}\ar[d]^{\psi}\\
F^r \Omega\kr_{R_\varpi^{[u]}}\ar[r]^-{p^r\psi-p\kr}&\Omega\kr_{R_\varpi^{[pu]}}
}$$
defines a quasi-isomorphism from ${\rm Kum}(R_\varpi^{[u]},r)$ to  ${\rm Kum}^\psi(R_\varpi^{[u]},r)$. 
\end{lemma}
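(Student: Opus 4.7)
The plan is as follows. First I will check commutativity of the square on cochains: since $\psi_{\rm Kum}\circ\varphi_{\rm Kum}=\id$, one has $\psi\circ(p^r-p\kr\varphi_{\rm Kum})=p^r\psi-p\kr=(p^r\psi-p\kr)\circ\id$, so the induced map $\alpha\colon{\rm Kum}(R_\varpi^{[u]},r)\to{\rm Kum}^{\psi}(R_\varpi^{[u]},r)$ of homotopy fibers is well defined. Viewing the square as a bicomplex, the cone of $\alpha$ is computed column-by-column; since the left vertical is the identity (with acyclic fiber), the cone of $\alpha$ is quasi-isomorphic, up to a shift, to the cone of $\psi\colon\Omega\kr_{R_\varpi^{[u]}}\to\Omega\kr_{R_\varpi^{[pu]}}$. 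As $\psi\circ\varphi_{\rm Kum}=\id$ makes $\psi$ split surjective on each degree, it will suffice to prove that the kernel complex
\[
K\kr:=\bigl((R_\varpi^{[u]})^{\psi=0}\otimes_{R_\varpi^{[u]}}\Omega\kr,\,d\bigr)
\]
is acyclic.

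For this I will use the decomposition recalled in Section~2,
\[
(R_\varpi^{[u]})^{\psi=0}=\bigoplus_{\alpha\neq 0}S_\alpha,\qquad S_\alpha=\varphi_{\rm Kum}(R_\varpi^{[pu]})\,u_{{\rm Kum},\alpha},
\]
together with the relations $\partial_{{\rm Kum},j}\circ\varphi_{\rm Kum}=p\,\varphi_{\rm Kum}\circ\partial_{{\rm Kum},j}$ and $\partial_{{\rm Kum},j}u_{{\rm Kum},\alpha}=\alpha_j u_{{\rm Kum},\alpha}$. These imply that each $\partial_{{\rm Kum},j}$ preserves $S_\alpha$; under the identification $S_\alpha\simeq R_\varpi^{[pu]}$ given by $\varphi_{\rm Kum}(x)u_{{\rm Kum},\alpha}\mapsto x$, the operator $\partial_{{\rm Kum},j}$ corresponds to the twisted derivation $p\,\partial_{{\rm Kum},j}+\alpha_j$. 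Hence $K\kr$ decomposes as $\bigoplus_{\alpha\neq 0}(S_\alpha\otimes\Omega\kr)$, and it is enough to show each summand is acyclic for $\alpha\neq 0$.

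Given such an $\alpha$, pick $j_0$ with $\alpha_{j_0}\in\{1,\ldots,p-1\}$, so that $\alpha_{j_0}\in\Z_p^{\times}$. Then
\[
p\,\partial_{{\rm Kum},j_0}+\alpha_{j_0}=\alpha_{j_0}\bigl(1+p\alpha_{j_0}^{-1}\partial_{{\rm Kum},j_0}\bigr)
\]
is invertible on $S_\alpha\simeq R_\varpi^{[pu]}$ by a Neumann series $\sum_{n\geq 0}(-p\alpha_{j_0}^{-1}\partial_{{\rm Kum},j_0})^n$, convergent $p$-adically thanks to the $p$-adic completeness of $R_\varpi^{[pu]}$ and the boundedness of $\partial_{{\rm Kum},j_0}$ for the $p$-adic norm. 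Letting $\iota_{j_0}$ denote contraction with the dual of $\tfrac{dX_{j_0}}{X_{j_0}}$, Cartan's formula $d\iota_{j_0}+\iota_{j_0}d=\partial_{{\rm Kum},j_0}$ on $S_\alpha\otimes\Omega\kr$ then yields the contracting homotopy
\[
h:=(p\,\partial_{{\rm Kum},j_0}+\alpha_{j_0})^{-1}\iota_{j_0},\qquad hd+dh=\id,
\]
which finishes the proof. The only non-formal point, and thus the main step to treat carefully, is the verification of $p$-adic convergence for the Neumann series; everything else is a formal manipulation of the decomposition and of Cartan's formula.
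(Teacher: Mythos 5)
Your proposal is correct and follows essentially the same line as the paper: reduce to showing that the $\psi=0$ part of the de Rham complex is acyclic, decompose it along $\bigoplus_{\alpha\neq 0}S_\alpha$, and then observe that for each $\alpha\neq 0$ the resulting Koszul-type complex has an invertible edge, whence it is contractible. The only caveat is your asserted identification $S_\alpha=\varphi_{\rm Kum}(R_\varpi^{[pu]})\,u_{{\rm Kum},\alpha}$: this is not quite accurate (see Lemma~\ref{ku1}, where the ``$x_\alpha$'' can acquire poles in $X_0$ even when $c_{{\rm Kum},\alpha}(x)$ is integral), so deriving the congruence $\partial_{{\rm Kum},j}\equiv\alpha_j$ from that identification is shaky. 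The clean input is Lemma~\ref{prepsi}, which gives $\partial_{{\rm Kum},j}\equiv\alpha_j \pmod{pS_\alpha}$ directly; combined with the facts that $S_\alpha$ is a closed (hence $p$-adically complete) and $p$-torsion-free submodule of $R_\varpi^{[u]}$ preserved by the $\partial_{{\rm Kum},j}$, the Neumann series for $\partial_{{\rm Kum},j_0}^{-1}$ converges exactly as you describe, and your Cartan homotopy finishes the proof. The paper instead reduces modulo~$p$, where the Koszul cube has constant differentials $\alpha_j$ with one invertible, and then lifts acyclicity back to $\Z_p$ using completeness and torsion-freeness; the mathematical content is the same.
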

\begin{proof}
Let $S=R_\varpi^{[u]}$.
Since $\psi:\Omega\kr_{S}\to \Omega\kr_{R_\varpi^{[pu]}}$ is surjective,
the  quotient complex is  trivial.  It suffices thus to show that the kernel complex 
$$0\to S^{\psi=0}\to (\Omega^1_{S})^{\psi=0}\to (\Omega^2_{S})^{\psi=0}\to \ldots $$
is acyclic. 
We have
$$
\Omega_{S}^{j,\psi=0} =S^{\psi=0}\otimes\Omega^j,
$$
where
$$\Omega^1=\oplus_{1\leq j\leq d+1}\Z\,\tfrac{dX_j}{X_j}\quad{\rm and}\quad \Omega^i=\wedge^i\Omega^1.$$

Now, $S^{\psi=0}=\oplus_{\alpha\in\{0,\cdots,p-1\}^{d+1}, \alpha\neq 0} S_\alpha$, and this decomposition
induces a direct sum decomposition of the above de Rham complex, so we can
argue for every $\alpha$ separately that the following de Rham complex
$$0\to S_{\alpha}\to S_{\alpha}\otimes\Omega^1\to S_{\alpha}\otimes\Omega^2\to \ldots $$
is exact, and it is enough to prove this modulo~$p$.  But Lemma~\ref{prepsi} tells us that this complex
has a very simple shape modulo~$p$: if $d=0$, it is just $\xymatrix{S_\alpha\ar[r]^{\alpha_0}&S_\alpha}$,
if $d=1$, it is the total complex attached to the double complex
$$\xymatrix{S_\alpha\ar[r]^{\alpha_0}\ar[d]^{\alpha_1}&S_\alpha\ar[d]^{\alpha_1}\\
S_\alpha\ar[r]^{\alpha_0}&S_\alpha}$$ 
and, for general $d$, it is the total complex attached to a $(d+1)$-dimensional cube,
with all vertices equal to $S_\alpha$ and arrows in the $i$-th direction equal to $\alpha_i$.
As one of the $\alpha_i$ is invertible by assumption, this implies that the cohomology of the
total complex is $0$.
\end{proof}
\subsection{Change of annulus of convergence}
\begin{lemma}
\label{15saint} If $u\leq 1\leq v$,
the natural morphism  ${\rm F}^rR_\varpi^{[u,v]}/{\rm F}^rR_\varpi^{[u]}\to
R_\varpi^{[u,v]}/R_\varpi^{[u]}$ is a $p^{r}$-isomorphism.
\end{lemma}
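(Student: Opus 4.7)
The plan is to split the statement into injectivity (which holds exactly) and $p^r$-surjectivity.

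For injectivity: both filtrations are defined via the common embedding $R_\varpi^{[u]} \hookrightarrow R_\varpi^{[u,v]} \hookrightarrow R[\tfrac{1}{p}][[P]]$, with $F^r$ being the preimage of $P^r R[\tfrac{1}{p}][[P]]$. Hence $F^r R_\varpi^{[u,v]} \cap R_\varpi^{[u]} = F^r R_\varpi^{[u]}$, so the induced map on quotients has trivial kernel; no $p$-factor is needed here.

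For $p^r$-surjectivity, I would use the truncated Taylor expansion at $X_0 = \varpi$. The assumption $u \leq 1 \leq v$ ensures that $\varpi$ lies in the annulus $u/e \leq v_p(X_0) \leq v/e$, so each $f \in R_\varpi^{[u,v]}$ admits a $P$-adic expansion $f = \sum_{i \geq 0} c_i P^i$ with $c_i \in R[\tfrac{1}{p}]$. Setting $h := \sum_{i=0}^{r-1} c_i P^i$ and $g := f - h$, we have $g \in F^r R_\varpi^{[u,v]}$ by construction; moreover, $h$ is a polynomial in $X_0$ of degree less than $er$ and so lies in $R_\varpi^+[\tfrac{1}{p}]$. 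It therefore suffices to establish the coefficient bound $v_p(c_i) \geq -r$ for $0 \leq i < r$, which gives $p^r h \in R_\varpi^+ \subset R_\varpi^{[u]}$.

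The main step is this Taylor coefficient bound, which I would obtain by a Cauchy-type estimate. The disk $\{X_0 : v_p(X_0 - \varpi) > 1/e\}$ around $\varpi$ is contained in the annulus (this uses $u \leq 1 \leq v$), and on this disk the spectral valuation of $f$ is at least~$0$. This yields $v_p(c_i) \geq -i/e$ for every $i$, which comfortably exceeds the required bound: for $0 \leq i < r$ we get $v_p(c_i) \geq -(r-1)/e \geq -r$, hence $p^r c_i \in R$ and $p^r h \in R_\varpi^+$. The main technical obstacle is verifying the Cauchy estimate rigorously, in particular checking that the effective radius of convergence at $\varpi$ is indeed $1/e$ in valuation and that the $R$-structure on the coefficients is preserved throughout the bound, but these amount to standard checks in non-archimedean analytic geometry (and the constant we obtain is actually better than the advertised $p^r$, leaving some slack).
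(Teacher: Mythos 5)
Your injectivity argument is fine, and the overall strategy (truncate the expansion at $X_0=\varpi$, bound the tail by filtration degree and the head by a Cauchy estimate) is a genuinely different and more conceptual route than the paper's, which reduces to $R=\O_K$ and directly manipulates the module generators $p^{r+[kv]}X_0^{-ke}$ of the quotient using the Eisenstein identity $p=X_0^eA+BP$ in $r_\varpi^+$. Your Cauchy estimate $v_p(c_i)\geq -i/e$ is itself correct.

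The gap is in the sentence ``$h$ is a polynomial in $X_0$ of degree less than $er$ and so lies in $R_\varpi^+[\tfrac{1}{p}]$,'' and in the deduction ``$p^rc_i\in R$ hence $p^rh\in R_\varpi^+$.'' The coefficients $c_i$ of the $P$-adic expansion live in the \emph{quotient} $R[\tfrac1p]$ (for $R=\O_K$, in $K$), not in the subring $R_\varpi^+[\tfrac1p]$, whose constants are only $F$. So $h=\sum_{i<r}c_iP^i$ is not an element of $R_\varpi^{[u,v]}[\tfrac1p]$ at all, and neither ``$g\in F^rR_\varpi^{[u,v]}$'' nor ``$p^rh\in R_\varpi^+$'' makes sense as written. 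What you actually need is that the class of $p^rh$ in $K[P]/(P^r)$ lies in the image of $r_\varpi^{[u]}$; but lifting the $c_i$ along $r_\varpi^+\twoheadrightarrow\O_K$ introduces lower-order correction terms whose size is governed by the different $\goth d_{K/F}$ (already modulo $Y^2$, with $Y=X_0-\varpi$, the image of $r_\varpi^+$ meets $KY$ in $\goth d_{K/F}\cdot Y=P'(\varpi)\O_KY$, not in $\O_KY$). The slack $r-(r-1)/e$ coming from your Cauchy bound does \emph{not} dominate $(r-1)v_p(\goth d_{K/F})$ in the wildly ramified case, so the argument as given does not yield the uniform constant $p^r$; some extra arithmetic input is required. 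This is precisely what the paper's identity $p=X_0^eA+BP$ supplies: it converts powers of $p$ into powers of $X_0^e$ modulo $P$, and keeps all coefficients in $\O_F[[X_0]]$ throughout, sidestepping the lifting/different issue entirely. To salvage your approach you would need to replace the Taylor truncation at $\varpi$ by a genuine Weierstrass-type division with remainder in $F[X_0]$ (as in the paper's Remark \ref{0wiesia} for $r_\varpi^{[u]}$), and control that remainder; the necessary control on negative powers of $X_0$ is essentially the computation the paper does explicitly.
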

\begin{proof}
It suffices to treat the case when $R=\so_K$. The above map is clearly injective. To prove $p^r$-surjectivity,
we need to verify that 
$p^{r+[kv]}X_0^{-ke}$ is in the image. For that, note that 
$p$ being divisible by $\varpi ^e$ in  $\so_K$, we can write
 $p=X_0^eA+BP$, with $A,B\in r_\varpi^+$.
This implies that we can write $p^{r+[kv]}$ as 
$B_kP^r+A_kX_0^{e[kv]}$, and  $p^{r+[kv]}X_0^{-ke}$
as  $A_kX_0^{([kv]-k)e}+X_0^{-ke}B_kP^r$.
But $A_kX_0^{([kv]-k)e}\in r_\varpi^+\subset r_\varpi^{[u]}\subset r_\varpi^{[u,v]}$, 
hence $X_0^{-ke}B_kP^r\in r_\varpi^{[u,v]}$,
and its image in  $r_\varpi^{[u,v]}[\frac{1}{p}]$ being divisible by  $P^r$, we have written
 $p^{r+[kv]}X_0^{-ke}$ as a sum of  an element from  $F^rr_\varpi^{[u,v]}$
and an element from  $r_\varpi^{[u]}$.  This allows us to conclude.
\end{proof}

\begin{lemma}\label{16saint}
Let  $u\leq 1\leq v$. The natural morphism 
from  ${\rm Kum}^\psi(R_\varpi^{[u]},r)$ to  ${\rm Kum}^\psi(R_\varpi^{[u,v]},r)$ induces a $p^{2r}$-quasi-isomorphism
$$\tau_{\leq r}{\rm Kum}^\psi(R_\varpi^{[u]},r)\stackrel{\sim}{\to}\tau_{\leq r}{\rm Kum}^\psi(R_\varpi^{[u,v]},r).$$
The same is true for the complexes modulo $p^n$. 
\end{lemma}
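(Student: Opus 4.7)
The plan is to work with the short exact sequence of complexes
\begin{equation*}
0 \to {\rm Kum}^\psi(R_\varpi^{[u]},r) \to {\rm Kum}^\psi(R_\varpi^{[u,v]},r) \to Q \to 0,
\end{equation*}
so that proving the assertion reduces to showing that $\tau_{\leq r}Q$ is $p^{2r}$-acyclic. Writing $A:=R_\varpi^{[u,v]}/R_\varpi^{[u]}$ and $B:=R_\varpi^{[pu,v]}/R_\varpi^{[pu]}$, the quotient $Q$ identifies by construction with $[F^r\Omega^\bullet_A \xrightarrow{p^r\psi-p^\bullet} \Omega^\bullet_B]$. The first key observation I would record is that the natural inclusion-induced morphism $\iota:A\xrightarrow{\sim}B$ is an isomorphism: inspecting the Laurent series description of the $r_\varpi^{\rm deco}$, both quotients identify with the space of series $\sum_{i<0}a_iX_0^i$ with $v_p(a_i)\geq -iv/e$, because the condition governed by $u$ (respectively $pu$) only constrains non-negative exponents, which are killed in the quotient; tensoring with $R_\varpi^+$ via Remark~\ref{LIFT0} lifts this to $A\cong B$. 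Moreover, since $pu\geq 1$ under the standing hypotheses on $u$, the filtration on $B$ is trivial, so only the filtration on the source $A$ needs to be addressed.

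Next, I would apply Lemma~\ref{15saint} componentwise to conclude that $F^{r-i}\Omega^i_A\hookrightarrow\Omega^i_A$ is a $p^{r-i}$-isomorphism, whence $F^r\Omega^\bullet_A\to\Omega^\bullet_A$ is a $p^r$-quasi-isomorphism. Up to a factor of $p^r$, the problem therefore reduces to showing that the map
\begin{equation*}
f'\colon\Omega^\bullet_A\xrightarrow{p^r\widetilde\psi - p^\bullet}\Omega^\bullet_A,\qquad \widetilde\psi:=\iota^{-1}\circ\psi,
\end{equation*}
has fibre $[f']$ that is $p^r$-acyclic in degrees $\leq r$.

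The main obstacle will be the uniform topological nilpotence of $\widetilde\psi$ on $A$. To establish it, note that $\psi$ sends $R_\varpi^{[u,v]}$ into $R_\varpi^{[pu,pv]}$, so iteration yields $\psi^k(R_\varpi^{[u,v]})\subset R_\varpi^{[p^ku,p^kv]}$. Passing to the quotient by $R_\varpi^{[p^ku]}$ and transporting back to $A$ via the isomorphisms of the previous paragraph, the coefficient of $X_0^i$ (for $i\leq -1$) in $\widetilde\psi^k(a)$ satisfies the sharpened growth $v_p\geq -ip^kv/e$, exceeding the minimal $A$-growth $-iv/e$ by at least $(p^k-1)v/e$ uniformly in the coefficient. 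Therefore $\widetilde\psi^k(A)\subset p^{M_k}A$ with $M_k=\lfloor(p^k-1)v/e\rfloor\to\infty$, and the same bound propagates coefficient-wise to $\Omega^i_A$. Consequently, for each $i\leq r$, the geometric series $\sum_{k\geq 0}(p^{r-i}\widetilde\psi)^k$ converges and inverts $1-p^{r-i}\widetilde\psi$, so the factoring
\begin{equation*}
p^r\widetilde\psi-p^i \;=\; -p^i\bigl(1-p^{r-i}\widetilde\psi\bigr)
\end{equation*}
exhibits each degree-$i$ component of $f'$ as injective (since $A$ is $p$-torsion free, as the Laurent series description shows) with cokernel $\Omega^i_A/p^i\Omega^i_A$.

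Finally, injectivity of $f'$ in each degree gives a quasi-isomorphism $[f']\simeq(\operatorname{coker}f')[-1]$, whose degree-$(i-1)$ term is killed by $p^{i-1}$. Hence $H^i[f']$ is killed by $p^{i-1}\leq p^{r-1}$ for $i\leq r$, and combining with the $p^r$ from the filtration reduction yields that $\tau_{\leq r}Q$ is $p^{2r}$-acyclic. The ${\mathbf Z}/p^n$ version follows by the identical argument applied modulo $p^n$, all ingredients being compatible with the reduction.
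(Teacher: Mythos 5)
Your argument follows the paper's own proof essentially verbatim: both work with the quotient mapping fiber, invoke Lemma~\ref{15saint} to strip the filtration, identify the degree-$i$ map as $-p^i(1-p^{r-i}\psi)$ on $R_\varpi^{[u,v]}/R_\varpi^{[u]}$, and invert $1-p^{r-i}\psi$ by appealing to the topological nilpotence of $\psi$ there (your explicit estimate $\widetilde\psi^k(A)\subset p^{\lfloor(p^k-1)v/e\rfloor}A$ simply fills in what the paper asserts in a parenthetical remark, and your bookkeeping returns the slightly sharper $p^{2r-1}$). The one place where you are a bit too quick is the $\Z/p^n$ statement: the paper checks that the quotient $R_\varpi^{[u,v]}/R_\varpi^{[u]}$ is $p$-torsion free, so the short exact sequence of complexes remains exact modulo $p^n$, and notes that the reduction mod $p^n$ of Lemma~\ref{15saint} yields only a $p^r$-injection rather than a genuine injection, neither of which follows without comment from ``all ingredients being compatible with the reduction.''
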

\begin{proof}
Our map is induced by the following diagram
$$
\xymatrix{
F^r\Omega\kr_{R_\varpi^{[u]}}\ar[d]\ar[r]^{p^r\psi-p\kr} & \Omega\kr_{R_\varpi^{[pu]}}\ar[d]\\
F^r\Omega\kr_{R_\varpi^{[u,v]}}\ar[r]^{p^r\psi-p\kr} & \Omega\kr_{R_\varpi^{[pu,v]}}
}
$$
It suffices to show that the mapping fiber
\begin{equation}
\label{modp}
[\xymatrix@C=1.6cm{F^r\Omega\kr_{R_\varpi^{[u,v]}}/F^r\Omega\kr_{R_\varpi^{[u]}}\ar[r]^-{p^r\psi-p\kr} & \Omega\kr_{R_\varpi^{[pu,v]}}/ \Omega\kr_{R_\varpi^{[pu]}}}]
\end{equation}
 is $p^{2r}$-acyclic. By Lemma~\ref{15saint}, we can ignore the filtration and, working in
the basis of the $\omega_{\bf j}$ of $\Omega^i$,  
it is enough to show that $p^r\psi-p^i:R_\varpi^{[u,v]}/R_\varpi^{[u]}\to
R_\varpi^{[pu,v]}/R_\varpi^{[pu]}$ is a $p^r$-isomorphism if $i\leq r$.
But $R_\varpi^{[u,v]}/R_\varpi^{[u]}\simeq
R_\varpi^{[pu,v]}/R_\varpi^{[pu]}$ and 
$1-p^s\psi$, for $s=r-i\geq 0$, is invertible on  $R_\varpi^{[u,v]}/R_\varpi^{[u]}$  with inverse 
$1+p^s\psi+p^{2s}\psi^2+\cdots$ (this converges even if  $s=0$ because $\psi$ is topologically 
nilpotent). This allows us to conclude.

  For complexes modulo $p^n$, since the quotients
  ${\rm F}^rR_\varpi^{[u,v]}/{\rm F}^rR_\varpi^{[u]}\hookrightarrow
R_\varpi^{[u,v]}/R_\varpi^{[u]}  $
  are $p$-torsion free, it again suffices to show that the mod $p^n$ analog of the mapping fiber (\ref{modp}) is $p^{2r}$-acyclic. Note that the mod-$p^n$ version of Lemma \ref{15saint} holds 
(though we now only have a $p^{r}$-injection). Having that the rest of the argument is the same. 
\end{proof}

\begin{remark}
Truncating is not absolutely necessary at this point, as $\psi$ is very
rapidly nilpotent, but the constants that come out involve $\log e$.
\end{remark}
\begin{corollary}\label{17saint}
For  $pu\leq v$, the natural map from  ${\rm Kum}(R_\varpi^{[u]},r)$ to 
$${\rm Kum}(R_\varpi^{[u,v]},r):=
[\xymatrix@C=1.6cm{F^r\Omega\kr_{R_\varpi^{[u,v]}}\ar[r]^-{p^r-p\kr\phi}& \Omega\kr_{R_\varpi^{[u,v/p]}}}]
$$
induces  a  $p^{2c_e+2r}$-quasi-isomorphism
$$\tau_{\leq r}{\rm Kum}(R_\varpi^{[u]},r)\stackrel{\sim}{\to}\tau_{\leq r}{\rm Kum}(R_\varpi^{[u,v]},r)$$
\end{corollary}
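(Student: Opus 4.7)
The plan is to factor the natural map through the $\psi$-versions of the Kummer complexes, using Lemma~\ref{16saint} for the core passage from a disk to an annulus. One assembles three comparisons: Lemma~\ref{uphipsi} gives an exact quasi-isomorphism ${\rm Kum}(R_\varpi^{[u]},r) \stackrel{\sim}{\to} {\rm Kum}^\psi(R_\varpi^{[u]},r)$; Lemma~\ref{16saint} yields the $p^{2r}$-quasi-isomorphism $\tau_{\leq r}{\rm Kum}^\psi(R_\varpi^{[u]},r) \stackrel{\sim}{\to} \tau_{\leq r}{\rm Kum}^\psi(R_\varpi^{[u,v]},r)$; and an annulus analogue of Lemma~\ref{uphipsi} supplies ${\rm Kum}(R_\varpi^{[u,v]},r) \stackrel{\sim}{\to} {\rm Kum}^\psi(R_\varpi^{[u,v]},r)$, losing a constant $p^{c_e}$. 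Composing and truncating at degree $r$ produces the required $p^{2c_e+2r}$-quasi-isomorphism $\tau_{\leq r}{\rm Kum}(R_\varpi^{[u]},r)\stackrel{\sim}{\to}\tau_{\leq r}{\rm Kum}(R_\varpi^{[u,v]},r)$; naturality in the $u$-variable guarantees it coincides with the one induced by the inclusion $R_\varpi^{[u]}\subset R_\varpi^{[u,v]}$.

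For the annulus analogue in the third step, one introduces the commutative diagram
\[
\xymatrix{
F^r\Omega\kr_{R_\varpi^{[u,v]}} \ar[r]^-{p^r-p\kr\varphi} \ar[d]_-{{\id}} & \Omega\kr_{R_\varpi^{[u,v/p]}} \ar[d]^-{\psi} \\
F^r\Omega\kr_{R_\varpi^{[u,v]}} \ar[r]^-{p^r\psi-p\kr} & \Omega\kr_{R_\varpi^{[pu,v]}}
}
\]
which commutes because $\psi\circ\varphi={\id}$, using that $\psi$ surjects from $R_\varpi^{[u,v/p]}$ onto $R_\varpi^{[pu,v]}$ (since $\psi\colon R_\varpi^{[u',v']}\to R_\varpi^{[pu',pv']}$ is surjective in general). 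The cofiber in degree one is the $\psi=0$ subcomplex of $\Omega\kr_{R_\varpi^{[u,v/p]}}$. Mimicking the proof of Lemma~\ref{uphipsi}, one invokes the direct-sum decomposition $R_\varpi^{[u,v/p]}=\bigoplus_{\alpha} R_\varpi^{[u,v/p]}_\alpha$ together with Lemma~\ref{prepsi} (each $\partial_{{\rm Kum},j}$ acts as multiplication by $\alpha_j$ on $R_\varpi^{[u,v/p]}_\alpha/p$) to reduce, modulo~$p$, to Koszul complexes for the tuples $(\alpha_0,\dots,\alpha_d)$ with $\alpha\neq 0$; each such Koszul complex is exact since at least one $\alpha_j$ is a unit mod~$p$, and integral exactness then follows by $p$-adic completeness of the summands.

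The main obstacle is the precise tracking of the constant $c_e$ in the third step. The mod-$p$ exactness is straightforward, but lifting it to an integral quasi-isomorphism requires controlling how the $\psi$-decomposition interacts with the $p$-adic topology on $R_\varpi^{[u,v/p]}$ and with the subtle divisibility of $p$ by powers of $X_0$ in this ring (the source of the factor $c_e$). A careful accounting, parallel to the bookkeeping in the proofs of Lemmas~\ref{15saint} and~\ref{16saint} and using that $\psi$ is topologically nilpotent on the relevant quotients, yields the asserted constant; composing the three comparisons with their respective losses then gives the stated $p^{2c_e+2r}$-quasi-isomorphism.
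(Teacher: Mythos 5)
Your factorization through the $\psi$-complexes — Lemma~\ref{uphipsi} on the disk, Lemma~\ref{16saint} across the change of annulus, and an annulus analogue of Lemma~\ref{uphipsi} — is exactly the route the paper takes, and the commutative diagram you write for the third step is the one appearing in the paper's proof. You also correctly note that one must check the composite agrees with the map induced by the inclusion $R_\varpi^{[u]}\subset R_\varpi^{[u,v]}$, which the paper leaves implicit.

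However, you misdiagnose where the effort lies. You flag the third step as ``the main obstacle'' and assert that it loses a constant $p^{c_e}$ which must be tracked via ``bookkeeping parallel to the proofs of Lemmas~\ref{15saint} and~\ref{16saint}.'' That is not what happens, and it is not backed by any actual estimate in your write-up. The argument proving Lemma~\ref{uphipsi} transfers verbatim to the annulus. Indeed, $\psi\colon \Omega\kr_{R_\varpi^{[u,v/p]}}\to\Omega\kr_{R_\varpi^{[pu,v]}}$ is surjective since $\psi\circ\varphi=\id$, so the map of total complexes $(\id,\psi)\colon {\rm Kum}(R_\varpi^{[u,v]},r)\to{\rm Kum}^\psi(R_\varpi^{[u,v]},r)$ is surjective with kernel the $\psi=0$ eigencomplex of the second column. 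The eigenspace decomposition $S^{\psi_{\rm Kum}=0}=\oplus_{\alpha\neq 0}S_\alpha$ is recorded in the paper for $S=R_\varpi^{[u,v]}$ (hence also $R_\varpi^{[u,v/p]}$), Lemma~\ref{prepsi} makes each resulting Koszul complex exact modulo~$p$ because some $\alpha_j$ is a unit, and since the $S_\alpha$ are direct summands (a finite sum over $\alpha\in\{0,\dots,p-1\}^{[0,d]}$) of a $p$-adically complete, $p$-torsion-free ring, mod-$p$ acyclicity lifts to acyclicity by the standard successive-approximation argument. No constant is lost. Thus the third passage is a genuine quasi-isomorphism, exactly as Lemma~\ref{uphipsi} is, and the composite is in fact a $p^{2r}$-quasi-isomorphism coming purely from Lemma~\ref{16saint}; this is of course a fortiori a $p^{2c_e+2r}$-quasi-isomorphism. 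The constant $c_e$ in the statement is not generated by the third step, and the mechanism you invoke (divisibility of $p$ by $X_0$, topological nilpotence of $\psi$ on quotients) concerns the $[u]$-versus-$[u,v]$ comparison handled by Lemmas~\ref{15saint} and~\ref{16saint}, not the $\varphi$-versus-$\psi$ comparison of the third step.
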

\begin{proof}
By Lemma \ref{uphipsi} we can pass from the complex ${\rm Kum}(R_\varpi^{[u]},r)$ to ${\rm Kum}^{\psi}(R_\varpi^{[u]},r)$ and, by the above lemma, we can pass from 
$\tau_{\leq r}{\rm Kum}^{\psi}(R_\varpi^{[u]},r)$ to $\tau_{\leq r}{\rm Kum}^{\psi}(R_\varpi^{[u,v]},r)$.  It remains thus to show that we can pass from ${\rm Kum}^{\psi}(R_\varpi^{[u,v]},r)$ to ${\rm Kum}(R_\varpi^{[u,v]},r)$. Or that the map induced by the following
commutative diagram 
$$\xymatrix{F^r\Omega\kr_{R_\varpi^{[u,v]}}\ar[d]^{\id} \ar[r]^{p^r-p\kr\phi} & \Omega\kr_{R_\varpi^{[u,v/p]}}\ar[d]^{\psi}\\
F^r\Omega\kr_{R_\varpi^{[u,v]}}\ar[r]^{p^r\psi-p\kr} & \Omega\kr_{R_\varpi^{[pu,v]}}
}$$
is a quasi-isomorphism. 
We can now use the  same arguments as in the proof of Lemma~\ref{uphipsi} to conclude.
\end{proof}
\subsection{Change of Frobenius} 
To pass from syntomic cohomology to \'etale cohomology, we will need to change the crystalline Frobenius into the $(\phi,\Gamma)$-module Frobenius. 
\begin{proposition}\label{RtoA}
The complexes ${\rm Kum}(R_\varpi^{[u,v]},r)$ and
${\rm Cycl}(R_\varpi^{[u,v]},r)$ are $2^{2(d+1)}$-quasi-isomorphic.
\end{proposition}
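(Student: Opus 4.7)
The plan is to interpolate between the two complexes via a ``bi-variable'' syntomic complex built on the log-PD-envelope of $R_\varpi^{[u,v]}\widehat\otimes R_\varpi^{[u,v]}$, on which the product Frobenius $\varphi_{\rm Kum}\otimes\varphi_{\rm cycl}$ lives. This is a specialization of the $S\Lambda_s$ construction of Lemma~\ref{18saint} to the case $S=\Lambda=R_\varpi^{[u,v]}$, but using two genuinely distinct embeddings into the enlargement (via the two tensor factors).

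First, I would form $E := (R_\varpi^{[u,v]}\widehat\otimes R_\varpi^{[u,v]})^{\log\text{-}\rm PD}_s$, with partial divided powers of level $s$ ($s=0$ for $p\geq 3$, $s=1$ for $p=2$), and its analogue $E'$ with $v$ replaced by $v/p$. Equip $E$ with the tensor product filtration (so that $F^rE$ is the topological closure of the module generated by $x_1\otimes x_2\cdot\prod_j(V_j-1)^{[k_j]}$ with $r_1+r_2+|\mathbf{k}|\geq r$) and with the Frobenius $\varphi=\varphi_{\rm Kum}\otimes\varphi_{\rm cycl}$. One has to verify that this Frobenius is admissible on $E$, i.e.\ that $\varphi$ preserves the partial PD-structure and sends $E$ into $E'$; this follows exactly as in Lemma~\ref{fat3}(i), since both $\varphi_{\rm Kum}$ and $\varphi_{\rm cycl}$ are admissible on $R_\varpi^{[u,v]}$ and the ratios $\frac{X_j^p}{\iota(X_j)^p}$ and $\frac{\pi_K^p}{\varphi(\pi_K)}$ have divided powers of level $s$ in $E'$. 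Then form
$$
{\rm BiSyn}(E,r) := \bigl[F^r\Omega^\cdot_{E}\xrightarrow{p^r-p^\cdot\varphi}\Omega^\cdot_{E'}\bigr].
$$

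Second, observe that we have two embeddings $\iota_1,\iota_2:R_\varpi^{[u,v]}\hookrightarrow E$ via the first, respectively the second, tensor factor. By construction, $\varphi\circ\iota_1 = \iota_1\circ\varphi_{\rm Kum}$ and $\varphi\circ\iota_2 = \iota_2\circ\varphi_{\rm cycl}$, so $\iota_1$ induces a morphism of complexes ${\rm Kum}(R_\varpi^{[u,v]},r)\to {\rm BiSyn}(E,r)$ and $\iota_2$ induces ${\rm Cycl}(R_\varpi^{[u,v]},r)\to {\rm BiSyn}(E,r)$. I would then apply the filtered Poincar\'e lemma (Lemma~\ref{above}) to both embeddings: viewing $E$ as the PD-envelope of $R_\varpi^{[u,v]}\otimes R_\varpi^{[u,v]}$ with respect to the surjection onto $R_\varpi^{[u,v]}$ (through either factor), the natural inclusions $F^r\Omega^\cdot_{R_\varpi^{[u,v]}}\to F^r\Omega^\cdot_{E/R_\varpi^{[u,v]}}$ and $F^r\Omega^\cdot_{R_\varpi^{[u,v/p]}}\to F^r\Omega^\cdot_{E'/R_\varpi^{[u,v/p]}}$ are $p^s$-quasi-isomorphisms.

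Third, combining: the cone of ${\rm Kum}(R_\varpi^{[u,v]},r)\to {\rm BiSyn}(E,r)$ is $p^{2s}$-acyclic (one Poincar\'e on the $F^r$-term, one on the target), and similarly for ${\rm Cycl}(R_\varpi^{[u,v]},r)\to {\rm BiSyn}(E,r)$. Inverting gives a zig-zag of quasi-isomorphisms between ${\rm Kum}$ and ${\rm Cycl}$. Tracking constants: the Poincar\'e lemma loses a factor $p^s\leq 2$ per application, but since the $E$-differentials involve the $d+1$ variables $V_0,\dots,V_d$, and each must be ``resolved'' against both embeddings in the contracting homotopy, the total accumulated denominator is at most $2^{2(d+1)}$, which is the stated constant.

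The main obstacle will be the careful verification that $\varphi_{\rm Kum}\otimes\varphi_{\rm cycl}$ extends to an admissible Frobenius $E\to E'$ respecting filtrations (in particular, that the divided-power generators $(V_j-1)^{[k]}$ of level $s$ are preserved and map into the corresponding generators in $E'$). Equally delicate is tracking the constants in the Poincar\'e lemma through the filtration on $E$: one must check that the contracting homotopy of Lemma~\ref{above} respects the filtered subcomplex $F^r\Omega^\cdot_{E/\Lambda}$ up to the claimed $p^s$-error, so that the composition of both Poincar\'e comparisons yields exactly the $2^{2(d+1)}$ bound and not something depending on $r$, $u$, $v$, or $e$.
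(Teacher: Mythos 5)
Your approach is exactly the paper's: the ring $E$ you form is the paper's $E_R^{[u,v]}$ (the $S\Lambda_s$ of Lemma~\ref{18saint} with $S=R_\varpi^{[u,v]}$ and $\Lambda=\A_R^{[u,v]}\cong R_\varpi^{[u,v]}$ via $\iota_{\rm cycl}$; cf.\ the identification $R_3\cong E_R^{[u,v]}$ in Lemma~\ref{above2}), your ${\rm BiSyn}(E,r)$ is the paper's ${\rm S}(E_R^{[u,v]},r,\varphi)$, and the zig-zag ${\rm Kum}\to{\rm BiSyn}\leftarrow{\rm Cycl}$ via the two embeddings is the same. The one slip is in the Poincar\'e step: the comparison you actually need is $F^r\Omega^\cdot_{R_\varpi^{[u,v]}}\to F^r\Omega^\cdot_E$ between the \emph{absolute} de Rham complexes, which is Lemma~\ref{above2} and costs $2^{d+1}$, not $p^s$ --- the $p^s$-quasi-isomorphism of Lemma~\ref{above} has $F^rR_\varpi^{[u,v]}$ (the ring, not its de Rham complex) as its source and the \emph{relative} complex $F^r\Omega^\cdot_{E/R_\varpi^{[u,v]}}$ as its target, so the map you write down is not well-posed and the stated constant for the cone ($p^{2s}$) is wrong. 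Applying Lemma~\ref{above2} once to the filtered source and once to the unfiltered target of the syntomic homotopy fibre yields $(2^{d+1})^2=2^{2(d+1)}$, which is what your closing sentence correctly arrives at despite the mislabelled intermediate step.
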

\begin{proof}
If $A\subset A'$ are filtered rings with a Frobenius $\varphi:A\to A'$,
let $${\rm S}(A,r,\varphi)=
[\xymatrix@C=1.3cm{F^r\Omega_{A}\kr\ar[r]^-{p^r-p\kr\phi}&\Omega_{A'}\kr}],$$
where $\varphi$ on $\Omega^n_A$ is the divided Frobenius (by $p^n$).
So, for example,
$${\rm Kum}(R_\varpi^{[u,v]},r)={\rm S}(R_\varpi^{[u,v]},r,\varphi_{\rm Kum})\quad{\rm and}\quad
{\rm Cycl}(R_\varpi^{[u,v]},r)={\rm S}(R_\varpi^{[u,v]},r,\varphi_{\rm cycl}).$$
Since $(\A_R^{[u,v]},\varphi)\cong (R_\varpi^{[u,v]},\varphi_{\rm cycl})$, Lemma~\ref{fat3} provides us with
morphisms of complexes
$${\rm S}(R_\varpi^{[u,v]},r,\varphi_{\rm Kum})\rightarrow {\rm S}(E^{[u,v]}_R,r,\varphi)
\leftarrow {\rm S}(R_\varpi^{[u,v]},r,\varphi_{\rm cycl}).$$
Lemma~\ref{above2} below shows
that these are $2^{d+1}$-quasi-isomorphisms, which allows to conclude.
\end{proof}

Let $R_1,R_2$ be two copies of $R_\varpi^{[u,v]}$: we have isomorphisms
$\iota_i:R_\varpi^{[u,v]}\to R_i$.  We set $X_{i,j}=\iota_i(X_j)$, if $i=1,2$, $0\leq j\leq d$.

Let $R_3=(R_1\otimes R_2)^{\rm PD}$, with respect to $\iota=\iota_2\circ\iota_1^{-1}$.
Hence $R_3\cong E_R^{[u,v]}$ as a ring, without the actions of $G_R$ or $\varphi$.

If $i=1,2$, let $\Omega^1_i=\oplus_{j=0}^d\Z\,\tfrac{dX_{i,j}}{X_{i,j}}$.
Set $\Omega^1_3=\Omega^1_1\oplus\Omega^1_2$ and, if $i=1,2,3$, 
let $\Omega^n_i=\wedge^n\Omega^1_i$.
Then $\Omega^n_{{R_3}}={R_3}\otimes\Omega_3^n$.
We filter the de Rham complex of ${R_3}$ as usual by subcomplexes
$$F^r\Omega_{R_3}\kr:=F^r{R_3}\to F^{r-1}{R_3}\otimes\Omega^1_3\to F^{r-2}{R_3}\otimes\Omega^2_3\to\cdots$$
\begin{lemma}\label{above2}
{\rm (Filtered Poincar\'e Lemma)}
The natural maps
$$F^r\Omega_{R_1}\kr\rightarrow F^r\Omega_{R_3}\kr\leftarrow F^r\Omega_{R_2}\kr$$
are $2^{d+1}$-quasi-isomorphims.
\end{lemma}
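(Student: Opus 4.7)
The plan is to extend the one-sided filtered Poincar\'e Lemma (Lemma~\ref{above}) to this two-sided setting. Observe that $R_3=(R_1\otimes R_2)^{\rm PD}$ has the structure of $S\Lambda_s$ from Lemma~\ref{18saint} in two symmetric ways, with $(S,\Lambda)=(R_1,R_2)$ or $(R_2,R_1)$, both using partial divided powers of level $s=0$ if $p\geq 3$ and $s=1$ if $p=2$. By this symmetry it will suffice to handle the inclusion $\varepsilon:F^r\Omega_{R_2}\kr\to F^r\Omega_{R_3}\kr$.

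First I would invoke Lemma~\ref{18saint} to expand any $x\in R_3$ uniquely as $x=\sum_{{\bf k}\in\N^{d+1}}x_{\bf k}\prod_{j=0}^d(1-V_j)^{[k_j]}$, with $V_j=X_{1,j}/X_{2,j}$ and $x_{\bf k}\in R_2$, noting that $x\in F^rR_3$ iff $x_{\bf k}\in F^{r-|{\bf k}|}R_2$ for all ${\bf k}$. Then I would perform the change of basis $\Omega^1_3=\Omega^1_2\oplus\bigl(\oplus_{j=0}^d R_3\cdot\tfrac{dV_j}{V_j}\bigr)$, using $\tfrac{dX_{1,j}}{X_{1,j}}=\tfrac{dX_{2,j}}{X_{2,j}}+\tfrac{dV_j}{V_j}$, which splits the de Rham differential on $R_3$ as $d=d_2+d_V$. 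In these variables $\Omega_{R_3}\kr$ becomes the total complex of a double complex, whose $d_V$-direction is the relative PD-de Rham complex $\Omega_{R_3/R_2}\kr$ obtained by the construction of Lemma~\ref{18saint} applied to $(S,\Lambda)=(R_1,R_2)$.

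Applying Lemma~\ref{above} in this relative setting, the complex $(F^r\Omega_{R_3/R_2}\kr,d_V)$ is $p^s$-quasi-isomorphic to $F^rR_2$. To get the announced bound $2^{d+1}$, I would construct the contracting homotopy one $V_j$-variable at a time: each one-dimensional PD-Poincar\'e contraction loses a factor $p^s$ (with $s=1$ for $p=2$, $s=0$ for $p\geq 3$), due to the partial divided-power correcting factors $[(k_j+1)/p^s]!/[k_j/p^s]!$ appearing in the proof of Lemma~\ref{above}. Iterating over $j=0,\ldots,d$ produces a total loss of $p^{s(d+1)}$, equal to $2^{d+1}$ for $p=2$ and $1$ for $p\geq 3$ (certainly bounded by $2^{d+1}$). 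The compatibility with filtrations is automatic from the degree characterization $x\in F^rR_3\Leftrightarrow x_{\bf k}\in F^{r-|{\bf k}|}R_2$, since the one-variable homotopy shifts ${\bf k}$ by exactly one in a single coordinate, and the forms $\omega_{2,j}$ contribute trivially to the $V$-expansion.

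The main technical obstacle I anticipate is the bookkeeping for the iterated contraction: one must verify that the intermediate quotients, obtained by annihilating $V_0,\ldots,V_{j-1}$ but not yet $V_j,\ldots,V_d$, remain of the form $S\Lambda_s$ so that Lemma~\ref{above} applies at each step, and that composing the $d+1$ one-variable homotopies yields the precise bound $p^{s(d+1)}$ rather than some larger constant. Once that verification is carried through, the quasi-isomorphism statement and the filtered refinement both fall directly out of the explicit formulas, exactly as in the proof of Lemma~\ref{above}.
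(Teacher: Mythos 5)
Your proof follows essentially the same route as the paper's: both view $\Omega_{R_3}\kr$ as the total complex of a double complex whose ``$R_1$-direction'' is the relative PD--de Rham complex $\Omega_{R_3/R_2}\kr$ (the $S\Lambda_s$-structure of Lemma~\ref{18saint} with $(S,\Lambda)=(R_1,R_2)$), apply Lemma~\ref{above} in that direction, and then appeal to the total-complex structure. The paper simply keeps the splitting $\Omega_3^1=\Omega_1^1\oplus\Omega_2^1$ and notes that the rows of the augmented double complex are $2$-exact because the Poincar\'e homotopy of Lemma~\ref{above} is $R_2$-linear, hence tensors against $\Omega_2^j$. Your change of basis $\Omega_3^1=\Omega_2^1\oplus(\oplus_jR_3\,\tfrac{dV_j}{V_j})$ is actually a genuine improvement that you do not cash in: in these coordinates the second differential is $\sum_j(X_{1,j}\tfrac{\partial}{\partial X_{1,j}}+X_{2,j}\tfrac{\partial}{\partial X_{2,j}})\,\tfrac{dX_{2,j}}{X_{2,j}}$, and the operator $X_{1,j}\tfrac{\partial}{\partial X_{1,j}}+X_{2,j}\tfrac{\partial}{\partial X_{2,j}}$ annihilates every $V_k$. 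The Poincar\'e homotopy therefore anticommutes with the second differential and assembles directly into a contracting homotopy of the whole mapping cone, giving a $p^s$-quasi-isomorphism in one step. In the paper's splitting $\partial_{R_2}$ acts nontrivially on the $V_j$'s, the homotopy does not pass to the total complex, and one must reassemble the $p^s$-exact rows across the $d+2$ values of the $\Omega_2$-degree; that reassembly is the actual source of the constant $2^{d+1}$.

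This is also where your write-up has a gap. Your explanation that $p^{s(d+1)}$ arises from contracting one $V_j$ at a time is the wrong bookkeeping: Lemma~\ref{above} already contracts all $d+1$ variables simultaneously at a cost of a single $p^s$ per cohomological degree (the correcting factor $\tfrac{k_j!}{(k_j+1)!}\tfrac{[(k_j+1)/p^s]!}{[k_j/p^s]!}$ has valuation in $[-s,0]$ and is applied once per integration step, not once per variable). The loss the lemma is tracking occurs instead in the passage from row-wise $p^s$-exactness to exactness of the total complex, a step your proposal only asserts ``falls directly out of the explicit formulas'' without carrying out. You should either make that step explicit (filter the total complex by $\Omega_2$-degree, observe that each graded piece of the resulting filtration on cohomology is killed by $p^s$, and bound the length of the filtration), or --- better --- push your own change-of-basis observation through as above, in which case the total complex is contracted directly and the step disappears.
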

\begin{proof}
By symmetry, it is enough to consider the first map.
First, we claim that we have the $\partial_{R_1}$  filtered Poincar\'e Lemma, i.e., that the following sequence 
  $$\xymatrix@C=.6cm{0\ar[r]& F^rR_2\ar[r]&F^r{R_3}\ar[r]^-{\partial_{R_1}}& F^{r-1}{R_3}\otimes\Omega^1_1\ar[r]^-{\partial_{R_1}}&\cdots}
  $$
   is $2$-exact.   Indeed, this is a special case of Lemma~\ref{above}, with $S=R_1$ and $\Lambda=R_2$.

  Now, we can extend the above to a sequence of maps of de Rham complexes
  $$\xymatrix@C=.6cm{0\ar[r]& F^{r}\Omega_{R_2}\kr\ar[r]& 
 F^{r}{R_3}\otimes \Omega_2\kr \ar[r]^-{\partial_{R_1}}&
  F^{r-1}{R_3}\otimes(\Omega^1_1\wedge
  \Omega_2\kr)\ar[r]& \cdots}
  $$
  The contracting homotopy used in the proof of Lemma~\ref{above},
 being $R_2$-linear, extends as well and shows that the rows of the above double complex are $2$-exact. Since the total complex 
  of the double complex 
  $$
 \xymatrix@C=.6cm{F^{r}{R_3}\otimes \Omega_2\kr \ar[r]^-{\partial_{R_1}}& 
  F^{r-1}{R_3}\otimes(\Omega^1_1\wedge
  \Omega_2\kr)\ar[r]& \cdots}
  $$
is equal to the de Rham complex $F^r\Omega_{{R_3}}\kr$, we are done.
\end{proof} 
\subsection{Syntomic cohomology and de Rham cohomology} We will show that, up to some universal constants, syntomic cohomology has a simple relation to de Rham cohomology. 
Let $S=R^{\rm PD}_{\varpi}$, $\phi=\phi_{{\rm Kum}}$, $r\geq 0$. Set 
\begin{align*}
{\rm HK}(S,r):=[\Omega\kr_S\lomapr{p^r-p\kr\phi}\Omega\kr_S],\quad
{\rm DR}(S,r):=\Omega\kr_S/F^r.
\end{align*}
We note that 
$\tau_{\leq r-1}{\rm DR}(S,r)\stackrel{\sim}{\to}{\rm DR}(S,r)$ and that the natural map $\tau_{\leq r+1}{\rm HK}(S,r)\to {\rm HK}(S,r)$ is a $p^{2r}$-quasi-isomorphism (since $1-p^s\phi$, $s\geq 1$, is invertible on $\Omega^{r+s}_S$).
\begin{proposition}
\label{added}
{\rm (i)} The natural map
$$\tau_{\leq r+1}{\rm Syn}(R,r)\to {\rm Syn}(R,r)$$
is a $p^{2r}$-quasi-isomorphism and $H^{r+1}({\rm Syn}(R,r))\stackrel{\sim}{\to}H^{r+1}({\rm HK}(S,r))$.

{\rm (ii)}
The complex $\tau_{\leq r-1}{\rm HK}(S,r)$ is $p^{N}$-acyclic, for a constant $N=N(e,d,p,r)$. Hence the natural map ${\rm DR}(S,r)\to \tau_{\leq r-1}{\rm Syn}(R,r)[1]$ is a $p^{N}$-quasi-isomorphism.

{\rm (iii)}
 The above statements are valid also modulo $p^n$. Moreover, $H^{r+1}({\rm HK}(S,r)_n)$ is, \'etale locally on $R_n$, $p^{N(r)}$-trivial.
\end{proposition}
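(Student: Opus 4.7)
The proof analyzes the operator $p^r-p^\bullet\varphi_{\mathrm{Kum}}$ on $\Omega^\bullet_S$ (for $S=R^{\mathrm{PD}}_\varpi$) in high and low degrees separately, and transfers the conclusions from $\mathrm{HK}(S,r)$ to $\mathrm{Syn}(R,r)$ using the distinguished triangle coming from the short exact sequence of complexes $0\to F^r\Omega^\bullet_S\to\Omega^\bullet_S\to\mathrm{DR}(S,r)\to 0$, which yields
$$\mathrm{DR}(S,r)[-1]\to\mathrm{Syn}(R,r)\to\mathrm{HK}(S,r)\xrightarrow{+1}.$$
Since $F^{r-i}\Omega^i_S=\Omega^i_S$ for $i\geq r$, the complex $\mathrm{DR}(S,r)$ is concentrated in cohomological degrees $[0,r-1]$.

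\textbf{Part (i).} The key computational fact is that for every $s\geq 1$ the operator $1-p^s\varphi_{\mathrm{Kum}}$ is bijective on $\Omega^\bullet_S$: the inverse series $\sum_{k\geq 0}(p^s\varphi_{\mathrm{Kum}})^k$ converges in $R^{\mathrm{PD}}_\varpi$ because the divided-power denominators $[k/e]!$ dominate the growth of $\varphi^k$, exactly as in Lemma~\ref{6saint}. Consequently, for $i\geq r+1$ the map $p^r-p^i\varphi_{\mathrm{Kum}}=p^r(1-p^{i-r}\varphi_{\mathrm{Kum}})$ on $\Omega^i_S$ is a $p^r$-isomorphism, so the long exact sequence for the mapping fiber gives that $H^i(\mathrm{HK}(S,r))$ is $p^{2r}$-killed for $i\geq r+2$. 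Using the triangle above, the vanishing of $\mathrm{DR}(S,r)$ in degrees $\geq r$ immediately yields both $H^i(\mathrm{Syn})\xrightarrow{\sim} H^i(\mathrm{HK})$ for all $i\geq r+1$, hence the $p^{2r}$-quasi-isomorphism $\tau_{\leq r+1}\mathrm{Syn}(R,r)\to\mathrm{Syn}(R,r)$ and the displayed isomorphism in degree $r+1$.

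\textbf{Part (ii).} Here one must kill $H^i(\mathrm{HK}(S,r))$ for $0\leq i\leq r-1$. The guiding principle is that $H^\bullet(\Omega^\bullet_S)$ computes log-crystalline cohomology of $R_0/\O_F$, on which the ``natural'' crystalline Frobenius $\varphi_{\mathrm{nat}}=p^\bullet\varphi_{\mathrm{Kum}}$ has Newton slopes in $[0,i]$ on $H^i$. For $i\leq r-1$, the operator $\varphi_{\mathrm{nat}}/p^r$ then has strictly negative slopes, so is topologically nilpotent after inverting $p$, and $p^r-\varphi_{\mathrm{nat}}$ is invertible on $H^i_{\mathrm{dR}}[\tfrac{1}{p}]$. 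Passing étale-locally to the framed algebra $R_\Box$ turns $\Omega^\bullet_S$ into an explicit Koszul complex in $d+1$ variables, allowing a uniform bound $N=N(e,d,p,r)$ on the integral torsion and on the integrality defect of the inverse of $p^r-\varphi_{\mathrm{nat}}$. The long exact sequence for $\mathrm{HK}(S,r)$ then gives the claimed $p^N$-acyclicity of $\tau_{\leq r-1}\mathrm{HK}(S,r)$, and the distinguished triangle directly provides the $p^N$-quasi-isomorphism $\mathrm{DR}(S,r)\to\tau_{\leq r-1}\mathrm{Syn}(R,r)[1]$ by reading off the long exact sequence in the relevant range.

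\textbf{Part (iii) and main obstacle.} The mod $p^n$ versions of (i) and (ii) follow from the integral ones by tensoring the distinguished triangles with $\Z/p^n$ and a standard five-lemma argument applied to $0\to C\xrightarrow{p^n}C\to C_n\to 0$. For the étale-local $p^{N(r)}$-triviality of $H^{r+1}(\mathrm{HK}(S,r)_n)$, one uses that (i) identifies this group with $H^{r+1}(\mathrm{Syn}(R,r)_n)$, and after passage to an étale cover reducing to the framed case, the Koszul description of $\Omega^\bullet_S$ exhibits generators whose representatives can be killed by a power of $p$ that only depends on $r$. The hardest step is the integral part of~(ii): the slope argument is immediate after inverting $p$, but extracting a uniform constant $N(e,d,p,r)$ requires precise control of both the integral torsion of log-crystalline cohomology and the exact failure of $(p^r-\varphi_{\mathrm{nat}})^{-1}$ to be integral, which is the step that forces the dependence on the dimension~$d$ and the ramification~$e$.
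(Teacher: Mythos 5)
Your part (i) is essentially correct and matches the paper: the distinguished triangle ${\rm Syn}(R,r)\to{\rm HK}(S,r)\to{\rm DR}(S,r)$, the vanishing of ${\rm DR}(S,r)$ in degrees $\geq r$, and the $p$-adic invertibility of $1-p^s\varphi$ for $s\geq 1$ (which, contrary to your sketch, does not require the divided-power denominators: $\sum_{k\geq 0}p^{sk}\varphi^k$ converges on $S$ simply because $\varphi$ preserves $S$; the divided powers intervene only for $s\leq 0$, and only on $X_0S$, which is the content of Lemma~\ref{6saint}).

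Part~(ii) has a genuine gap. You justify the $p^N$-acyclicity of $\tau_{\leq r-1}{\rm HK}(S,r)$ by Newton slopes of Frobenius on $H^i(\Omega\kr_S)$, concluding that $p^r-p^i\varphi$ is invertible on $H^i[\tfrac{1}{p}]$ for $i\leq r-1$. But $S=R^{\rm PD}_\varpi$ comes from an \emph{affine} formal scheme, so $H^i(\Omega\kr_S)[\tfrac{1}{p}]$ is in general an infinite-dimensional $F$-vector space and carries no Dieudonn\'e--Manin slope decomposition; the slope heuristic has no content here. And the concrete invertibility claim is false: $1-p^{-s}\varphi$ for $s\geq 1$ is not bijective on $S[\tfrac{1}{p}]$, since $\varphi$ neither decreases valuations (so the series $\sum_k p^{-sk}\varphi^k$ diverges) nor admits an inverse. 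The paper's argument uses no slopes. It splits $\Omega\kr_S$ along $X_0S\hookrightarrow S\twoheadrightarrow\overline S:=S/X_0S$. On the subcomplex $X_0\Omega\kr_S$, the divided-power denominators $[k/e]!$ do render $1-p^{-s}\varphi$ a $p^{N(e,r)}$-bijection (Lemma~\ref{6saint}), giving $p^{N_1(e,r,d)}$-acyclicity of $[X_0\Omega\kr_S\to X_0\Omega\kr_S]$. On $\overline S\cong R^+_\varpi/X_0$, where the divided powers have disappeared, one instead invokes the left inverse $\psi$ of $\varphi$: a lemma inside the proof shows $\psi_{\rm Kum}$ stabilizes $X_0R^+_\varpi$, hence descends to $\overline S$; swapping $\varphi$ for $\psi$ as in Lemma~\ref{uphipsi}, the operator $p^r\psi-p^i$ is a $p^i$-isomorphism in each degree $i\leq r-1$, with inverse $-p^{-i}(1+p^{r-i}\psi+p^{2(r-i)}\psi^2+\cdots)$. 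Your suggested fix --- pass \'etale-locally to $R_\Box$ and control an ``integrality defect'' --- cannot repair the gap, both because part~(ii) involves no \'etale localization and because the purported inverse does not exist to begin with.

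Part (iii): the d\'evissage to mod $p^n$ is fine, but your account of the \'etale-local $p^{N(r)}$-triviality of $H^{r+1}({\rm HK}(S,r)_n)$ is too vague to stand as a proof. The paper's argument is short and direct: since $1-p\varphi$ is injective on $\Omega^{r+1}_{S_n}$, it suffices to show that $1-\varphi$ is \'etale-locally surjective on $S_n$; by d\'evissage reduce to $n=1$, where this is the \'etale-local surjectivity of $x\mapsto x-x^p$, i.e.\ the \'etaleness of Artin--Schreier extensions.
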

\begin{proof} For (i) and (ii), we are going to argue $p$-adically: the mod-$p^n$ argument for (iii) is analogous. Since we have 
\begin{align*}
{\rm Syn}(R,r)={\rm Kum}(R^{\rm PD}_{\varpi},r)=[F^r\Omega\kr_S\lomapr{p^r-p\kr\phi}\Omega\kr_S]=
[{\rm HK}(S,r)\to {\rm DR}(S,r)],
\end{align*}
the first claim of our proposition follows from what we noted just before the proposition.

For the second claim, write $X_0S$ for $X_0S[\frac{1}{p}]\cap S$ and define $X_0\Omega\kr_S$ accordingly.
Writing differentials in the basis of the $\omega_{\bf j}$'s and using Lemma~\ref{6saint}, we obtain
that the subcomplex $[X_0\Omega\kr_S\lomapr{p^r-p\kr\phi}X_0\Omega\kr_S]$ is $p^{N_1(e,r,d)}$-acyclic.
Hence ${\rm HK}(S,r)$ is $p^{N_2(e,r,d)}$-quasi-isomorphic to 
$[\Omega\kr_{\overline S}\lomapr{p^r-p\kr\phi}\Omega\kr_{\overline S}]$, where
$\overline S:=S/X_0S\cong R_\varpi^+/X_0$.

Now, the following lemma shows that $\overline S$ is equipped with a left inverse $\psi$ of $\varphi$ verifying $\psi\circ \partial_i=p\,\partial_i\circ\psi$,
if $1\leq i\leq d$.
\begin{lemma}
$\psi=\psi_{\rm Kum}$ stabilizes $X_0R_\varpi^+$. 
\end{lemma}
\begin{proof}
We have $\psi=p^{-d-1}\varphi^{-1}\circ {\rm Tr}$,
with ${\rm Tr}={\rm Tr}_{R_\varpi^+/\varphi(R_\varpi^+)}$,
and the trace can be computed by taking the sum over the conjugates: if $\eta=(\eta_0,\dots,\eta_d)$ is a tuple of
$p$-th roots of unity, we define an automorphism $\sigma_\eta$ of $R_\varpi^+[\zeta_p]$, sending $X_i$ to $\eta_i X_i$
(this gives us $\sigma_\eta$ on $R_{\varpi,\Box}^+[\zeta_p]$, and we extend it to $R_\varpi^+[\zeta_p]$
by \'etaleness).  The $\sigma_\eta(x)$'s are the conjugates of $x$, hence 
${\rm Tr}(x)=\sum_\eta\sigma_\eta(x)$, which makes it plain that,
if $x$ is divisible by $X_0$, then so is ${\rm Tr}(x)$.
Now, if $\varphi(x)\in X_0R_\varpi^+$, then $x\in X_0R_\varpi^+$: this can be seen by successive approximations, 
using the last line of the proof
of Lemma~\ref{ku1} and the fact that $\varphi(X_0R_\varpi^+)\subset X_0R_\varpi^+$.
Hence, if $x\in X_0R_\varpi^+$, then $p^{d+1}\psi(x)\in X_0R_\varpi^+$ and, since $X_0R_\varpi^+[\frac{1}{p}]\cap R_\varpi^+=
X_0R_\varpi^+$, this implies $\psi(x)\in X_0 R_\varpi^+$.
\end{proof}
We can then use the arguments in the proof of Lemma~\ref{uphipsi} 
to deduce that $[\Omega\kr_{\overline S}\lomapr{p^r-p\kr\phi}\Omega\kr_{\overline S}]$
is quasi-isomorphic to
$[\Omega\kr_{\overline S}\lomapr{p^r\psi-p\kr}\Omega\kr_{\overline S}]$.
But, in degree $i\leq r-1$, $p^r\psi-p^i$ is a $p^r$-isomorphism (and even a $p^i$-isomorphism),
with inverse $-1-p^{r-i}\psi-p^{2(r-i)}\psi^2-\cdots$.  Hence $\tau_{\leq r-1} 
[\Omega\kr_{\overline S}\lomapr{p^r\psi-p\kr}\Omega\kr_{\overline S}]$ is $p^r$-acyclic,
which proves (ii).

  For the very last claim of the proposition, since the map $\Omega^{r+1}_S\lomapr{1-p\phi}\Omega^{r+1}_{S}$ 
is injective, it suffices to show that the map $\Omega^{r}_{S_n}\lomapr{1-\phi}\Omega^{r}_{S_n}$ is surjective,
which amounts to showing that $1-\varphi$ is, \'etale locally,  surjective on $S_n$.  By d\'evissage
we can assume $n=1$, and then we are reduced to the statement that $x-x^p$ is, \'etale locally, a surjection,
which is clear as Artin-Schreier extensions are \'etale.
\end{proof}

  The following lemma relates the complex ${\rm DR}(S,r)$ which appears in (ii) of the above proposition
to the usual de Rham complex.
\begin{lemma}\label{noneedbeilinson}
${\rm DR}(S,r)$ is quasi-isomorphic to $\sigma_{\leq r-1}\Omega\kr_R$ after inverting $p$. Here $\sigma_{\leq r-1}$ denotes the silly truncation.
\end{lemma}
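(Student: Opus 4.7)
My plan is to construct a natural morphism $\bar\pi:\mathrm{DR}(S,r)[1/p]\to\sigma_{\leq r-1}\Omega\kr_R[1/p]$ and to prove it is a quasi-isomorphism by analyzing its kernel. The key simplification is that after inverting $p$ the PD-envelope $S[1/p]=R_\varpi^{\rm PD}[1/p]$ identifies with the $(X_0-\varpi)$-adic completion of $R^+_\varpi[1/p]$: writing $P_\varpi(X_0)=(X_0-\varpi)\cdot Q(X_0)$, the factor $Q$ has value $P_\varpi'(\varpi)\neq 0$ at $X_0=\varpi$ and hence is a unit in the completion, so the $P$-adic topology coincides with the $(X_0-\varpi)$-adic one. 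Setting $Y=X_0-\varpi$, we obtain $S[1/p]\cong R[1/p][[Y]]$ with $F^rS[1/p]=Y^rS[1/p]$. Because $\varpi\in R[1/p]^\times$, $X_0=\varpi+Y$ is a unit of $S[1/p]$; writing $\Omega'\kr$ for the exterior algebra generated by $dX_j/X_j$, $1\leq j\leq d$, we get $\Omega^i_S[1/p]=S[1/p]\otimes\Omega'^i\oplus S[1/p]\cdot dY\otimes\Omega'^{i-1}$ with $d=d'+dY\wedge\tfrac{\partial}{\partial Y}$.

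Next, I would define $\pi:\Omega\kr_S[1/p]\to\Omega\kr_R[1/p]$ by $Y\mapsto 0$, $dY\mapsto 0$, and identity on $X_j$ and $dX_j/X_j$ for $j\geq 1$ (equivalently $X_0\mapsto\varpi$ and $dX_0/X_0\mapsto 0$, using $dX_0/X_0 = dY/(\varpi+Y)$). The splitting of $d$ makes $\pi$ a chain map. Since $\pi$ kills $Y$, it annihilates $F^r\Omega\kr_S[1/p]$ in every degree and descends to $\bar\pi:\mathrm{DR}(S,r)[1/p]\to\sigma_{\leq r-1}\Omega\kr_R[1/p]$; the silly truncation appears because $\mathrm{DR}(S,r)$ vanishes in degrees $\geq r$. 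In degree $n\leq r-1$, the kernel decomposes as
$$K^n=\bigoplus_{k=1}^{r-n-1} R[\tfrac 1p]\cdot Y^k\otimes\Omega'^n \;\oplus\; \bigoplus_{k=0}^{r-n-1} R[\tfrac 1p]\cdot Y^k dY\otimes\Omega'^{n-1}.$$

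Filter $K\kr$ by total weight $\tilde k$, assigning weight $k$ to $Y^k\omega'$ and weight $k+1$ to $Y^k dY\wedge\omega'$. The differential preserves this filtration: $d'$ acts purely on $\Omega'^\bullet$ and preserves weight; $dY\wedge\tfrac{\partial}{\partial Y}$ sends $Y^k\omega'$ (weight $k$) to $kY^{k-1}dY\wedge\omega'$ (also weight $k$); and it vanishes on $dY$-pieces since $dY\wedge dY=0$. For each $\tilde k\in[1,r-1]$, the graded piece $gr^{\tilde k}K\kr$ is, under the identification $Y^{\tilde k}\omega'\leftrightarrow Y^{\tilde k-1}dY\wedge\omega'$, exactly the mapping cone of the multiplication-by-$\tilde k$ endomorphism of the truncated log de~Rham complex $\sigma_{\leq r-\tilde k-1}\Omega\kr_R[1/p]$. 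Since $\tilde k\geq 1$ is invertible in $\Q$, this multiplication is an isomorphism of complexes, so its cone is contractible, hence acyclic. The filtration is finite, so $K\kr$ is acyclic and $\bar\pi$ is a quasi-isomorphism.

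There is no real obstacle: the entire argument reduces, after the coordinate change $Y=X_0-\varpi$, to the standard observation that the cone of an integer-multiplication on a complex over $\Q$ is acyclic. The only mildly delicate point is the verification that the weight filtration is strictly preserved by $d$, which hinges on the clean form $dY\wedge\partial/\partial Y$ of the $X_0$-direction derivative in the $Y$-coordinate.
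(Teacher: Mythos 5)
Your proof is correct and takes essentially the same route as the paper, rewritten in the coordinate $Y=X_0-\varpi$: the paper presents ${\rm DR}(S,r)[1/p]$ as a double complex, shows that the subcomplex (which is exactly your kernel $K\kr$) is acyclic because each column $\partial_0\colon PK[P]/P^s\to K[P]/P^{s-1}$ is an isomorphism, and identifies the quotient with $\sigma_{\leq r-1}\Omega\kr_R$; your weight decomposition into cones of integer multiplications is an explicit unpacking of that column isomorphism. One small slip worth fixing: for $n\geq r$ one has $F^r\Omega^n_S=\Omega^n_S$, so $\pi$ does \emph{not} annihilate $F^r\Omega\kr_S[1/p]$ in those degrees (it kills $Y$, not the forms $\omega'\in\Omega'^n$), but $\bar\pi$ is nonetheless well defined because both ${\rm DR}(S,r)^n$ and $\sigma_{\leq r-1}\Omega^n_R$ vanish there.
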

\begin{proof}
We have $r_\varpi^+[\frac{1}{p}]/F^r\cong K[P]/P^r$.  Hence
$S[1/p]/F^r\cong R\otimes_{\O_K}(K[P]/P^r)$ (we see that $R_{\varpi,\Box}^{\rm PD}/F^r\cong R_\Box\otimes(K[P]/P^r)$
by inspection and deduce the result for $S$ by \'etaleness).

Writing differentials in the basis of the $\omega_{\bf j}$'s  makes it possible to write
${\rm DR}(S,r)$ as the total complex of the double complex
$$\xymatrix@C=.6cm@R=.6cm{
R\otimes(K[P]/P^r)\ar[d]^{\partial_0}\ar[rr]^-{(\partial_j)_{1\leq j\leq d}} &&\big(R\otimes(K[P]/P^{r-1})\big)^d
\ar[d]^{\partial_0}\ar[r] &\cdots\ar[r] &R^{\binom{d}{r-1}}\ar[d]^{\partial_0}\ar[r]&0\ar[r]\ar[d]&\cdots\\
R\otimes(K[P]/P^{r-1})\ar[rr]^-{(\partial_j)_{1\leq j\leq d}} &&\big(R\otimes(K[P]/P^{r-2})\big)^d
\ar[r] &\cdots\ar[r] &0\ar[r]&0\ar[r]&\cdots}$$
Now, $\partial_0$ induces an isomorphism $PK[P]/P^r\cong K[P]/P^{r-1}$.
Hence the subcomplex obtained by replacing $K[P]/P^s$ by $PK[P]/P^s$ in the first row is acyclic,
and the quotient complex is clearly isomorphic to $\sigma_{\leq r-1}\Omega\kr_R$.
\end{proof}
\begin{remark}\label{avoidpanick}
It follows from Lemma~\ref{noneedbeilinson} that, if $i\leq r-2$,
then $H^i({\rm DR}(S,r))=H^i_{\rm dR}({\rm Spf}\,R)$
after inverting $p$.  On the other hand, $H^{r-1}({\rm DR}(S,r))$ is much
bigger than $H^{r-1}_{\rm dR}({\rm Spf}\,R)$, since {\it we take all differential forms} modulo exact ones
instead of closed ones. For example, if $r=1$, $r-1=0$, and  $H^0({\rm DR}(S,r))$ is actually $R$, and not just the constants.
\end{remark}

 Let $\sx$ be a  semistable formal scheme over $\so_K$ (i.e., locally of the form described in section 2.1.2 with $h=1$). 
 Set  $\sx_{K,\tr}=\sx_K^{\an}\setminus \sd$, $\sd$ being the divisor at infinity, and $\sx^{\an}_K$ -- the rigid analytic space associated to $\sx$. Define
 $$\R\Gamma_{\dr}(\sx_K,\log\sd):=\R\Gamma(\sx_K^{\an},\Omega\kr_{\sx_K^{\an}}(\log \sd)),$$
 where 
  $\Omega\kr_{\sx^{\an}_K}(\log \sd)$ denotes  the (logarithmic) de Rham complex of $\sx^{\an}_K$. If $\sx$ is quasi-compact, we have
  $$
  \R\Gamma_{\rm dR}(\sx_{K},\log\sd)\simeq\R\Gamma(\sx,\Omega^{\kr}_{\sx/\so_K^{\times}})_{\Q}:=(\holim_n \R\Gamma(\sx_n,\Omega^{\kr}_{\sx_n/\so_{K,n}^{\times}}))_{\Q}.
    $$
  \begin{corollary}
\label{derham}Let $\sx$ be a quasi-compact formal semistable scheme over $\so_K$. 
There is a natural map
$$
\alpha_{r,i}:H^{i-1}_{\rm dR}(\sx_{K,\tr}){\to} H^i_{\synt}(\sx,r)_{\Q}.
$$
It is an isomorphism for $i\leq r-1$ and an injection for $i=r$.
\end{corollary}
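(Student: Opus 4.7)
The plan is to construct $\alpha_{r,i}$ locally by combining Proposition~\ref{added}(ii) with Lemma~\ref{noneedbeilinson}, and then globalize by sheafification. On a semistable chart $\Spf R \hookrightarrow \sx$ equipped with the PD-coordinates $S = R^{\rm PD}_\varpi$ from Section~2, the triangle ${\rm Syn}(R,r) \to {\rm HK}(S,r) \to {\rm DR}(S,r) \to {\rm Syn}(R,r)[1]$ combined with Proposition~\ref{added}(ii), which asserts that $\tau_{\leq r-1}{\rm HK}(S,r)$ is $p^N$-acyclic, yields after inverting $p$ an isomorphism $H^{i-1}({\rm DR}(S,r))_{\Q} \stackrel{\sim}{\to} H^i({\rm Syn}(R,r))_{\Q}$ for $i \leq r-1$, and an injection with the same source and target for $i = r$ (the cokernel being $H^r({\rm HK}(S,r))_\Q$). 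Lemma~\ref{noneedbeilinson} identifies ${\rm DR}(S,r)_\Q$ with the silly truncation $\sigma_{\leq r-1}\Omega^\kr_R[\tfrac{1}{p}]$ of the log de Rham complex of $R$.

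Composing with the canonical map $\tau_{\leq r-1}\Omega^\kr_R \to \sigma_{\leq r-1}\Omega^\kr_R$ from the good truncation to the silly truncation produces, on cohomology,
$$\alpha_{r,i}:\quad H^{i-1}_{\dr}(R)_\Q \to H^{i-1}(\sigma_{\leq r-1}\Omega^\kr_R)_\Q \stackrel{\sim}{\leftarrow} H^{i-1}({\rm DR}(S,r))_\Q \to H^i_\synt(R,r)_\Q.$$
For $i \leq r-1$, the index $i-1 \leq r-2$ lies strictly below the truncation level, so the first arrow is an isomorphism, and combined with the isomorphism from the triangle, $\alpha_{r,i}$ is an isomorphism. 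For $i = r$, the first arrow becomes the natural injection $H^{r-1}_{\dr}(R) \hookrightarrow \Omega^{r-1}_R / d\Omega^{r-2}_R$ (closed forms modulo exact inject into all forms modulo exact); composed with the injection coming from the long exact sequence, $\alpha_{r,r}$ is injective.

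To globalize, one observes that the local complexes ${\rm Syn}(R,r)$, ${\rm DR}(S,r)$ and $\sigma_{\leq r-1}\Omega^\kr_R$, together with the maps between them, are functorial with respect to \'etale localizations of semistable charts. Hence they sheafify on $\sx_{\eet}$ to a morphism of complexes of sheaves (after inverting $p$), and applying $\R\Gamma(\sx,-)$ with the identification $\R\Gamma(\sx, \Omega^\kr_{\sx/\so_K^\times})_\Q \cong \R\Gamma_\dr(\sx_{K,\tr})$ (logarithmic de Rham computes de Rham cohomology of the complement of the divisor at infinity) yields the global map $\alpha_{r,i}$. The local isomorphism/injection statements transfer upon taking cohomology.

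The hard part will be the sheafification step: Lemma~\ref{noneedbeilinson} is proved via the chart-dependent identification $R^{\rm PD}_\varpi[\tfrac{1}{p}]/F^r \cong R \otimes_{\O_K}(K[P]/P^r)$ involving the uniformiser $\varpi$ and the Eisenstein polynomial~$P$. One must verify that the resulting quasi-isomorphism is canonical up to functoriality in \'etale morphisms of semistable charts, so that it descends to a genuine morphism of complexes of \'etale sheaves on $\sx$. An alternative, perhaps cleaner, route would be to phrase the whole construction intrinsically on the (log-)crystalline site of $\sx$, where both ${\rm DR}$ and ${\rm Syn}$ admit more invariant descriptions free of any choice of PD-coordinates.
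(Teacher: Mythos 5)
Your construction matches the paper's: the key local inputs are Proposition~\ref{added}(ii) and Lemma~\ref{noneedbeilinson}, and the isomorphism/injection dichotomy at $i=r$ arises, just as you say, from the natural map $\tau_{\leq r-1}\Omega\kr\to\sigma_{\leq r-1}\Omega\kr$. The globalization worry you flag at the end is resolved precisely by the intrinsic route you sketch in your last paragraph: the paper defines $\alpha_{r,i}$ starting from the globally defined object $\R\Gamma_{\crr}(\sx,\so/\sj^{[r]})$ together with its canonical map to $\R\Gamma_{\dr}(\sx_K,\log\sd)/F^r\to\R\Gamma_{\dr}(\sx_{K,\tr})/F^r$ (Lemma~\ref{log-comp}), so that Lemma~\ref{noneedbeilinson} serves only as a local verification that a chart-free, globally defined map is a quasi-isomorphism.
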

\begin{proof}
The very definition of syntomic cohomolgy gives us a natural map
$$
(\holim_n \R\Gamma_{\crr}(\sx_n,\so_{\sx_n/W_n(k)}/\sj^{[r]}_{\sx_n/W_n(k)}))_{\Q}\to H^i_{\synt}(\sx,r)_{\Q}
$$
from crystalline cohomology to syntomic cohomology. 
We claim that  the domain of this map can be identified with analytic de Rham cohomology. Indeed, we have a natural map from crystalline cohomology to analytic de Rham cohomology
\begin{align*}
(\holim_n \R\Gamma_{\crr}(\sx_n,\so_{\sx_n/W_n(k)}/\sj^{[r]}_{\sx_n/W_n(k)}))_{\Q} & \to (\holim_n \R\Gamma(\sx_n,\Omega^{\kr}_{\sx_n/\so_{K,n}^{\times}}/F^r))_{\Q}\\
 & \simeq \R\Gamma_{\rm dR}(\sx_{K},\log\sd)/F^r \to \R\Gamma_{\rm dR}(\sx_{K,\tr})/F^r
 \end{align*}
   This is a quasi-isomorphism: the last map is an quasi-isomorphism by  Lemma \ref{log-comp} below;
the first map is a quasi-isomorphism since 
 it suffices to argue locally and there we can use Lemma~\ref{noneedbeilinson}.

  Hence we have a natural map
  $$H^{i-1}_{\rm dR}(\sx_{K,\tr}){\to} H^i_{\synt}(\sx,r)_{\Q}, \quad i\leq r.
  $$
   By 
  point (ii) of Proposition~\ref{added}  it
is an isomorphism  for $i< r$ and   an injection for $i=r$.
\end{proof}
\begin{lemma}
\label{log-comp}
For a  semistable formal scheme $\sx$ over $\so_K$, the natural morphism
$$
\R\Gamma_{\rm dR}(\sx_{K},\log\sd)\to \R\Gamma_{\rm dR}(\sx_{K,\tr})$$
is a filtered quasi-isomorphism.
\end{lemma}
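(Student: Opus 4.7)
The plan is to reduce the lemma to a local, Koszul-type calculation which is the rigid-analytic analogue of Deligne's classical comparison between the logarithmic de Rham complex and the meromorphic de Rham complex of the complement of a normal crossings divisor. The statement is local on $\sx_K^{\an}$, so by the local description of semistable formal schemes recalled in Section~\ref{formalscheme} we may reduce to the case where $\sx=\Spf R$ with $R$ \'etale over the standard chart and $\sd$ is cut out by $X_{a+b+1}\cdots X_{d}=0$. Writing $j:\sx_{K,\tr}\hookrightarrow \sx_K^{\an}$ for the open immersion, the lemma becomes the assertion that the inclusion of complexes of sheaves
\[
\Omega^{\bullet}_{\sx_K^{\an}}(\log\sd)\hookrightarrow Rj_{*}\Omega^{\bullet}_{\sx_{K,\tr}}
\]
is a filtered quasi-isomorphism with respect to the Hodge (stupid) filtration.

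First I would compute $Rj_{*}\Omega^{\bullet}_{\sx_{K,\tr}}$ locally. Exhausting $\sx_{K,\tr}$ by an increasing family of affinoid subdomains $U_{n}$ defined by $|X_{i}|\geq p^{-1/n}$ for the divisor variables $X_{a+b+1},\dots,X_{d}$, the coherence and Tate acyclicity of $\Omega^{p}$ on each $U_{n}$ yield the vanishing $R^{q}j_{*}\Omega^{p}_{\sx_{K,\tr}}=0$ for $q>0$, together with an explicit description of $j_{*}\Omega^{p}_{\sx_{K,\tr}}$ as the sheaf of forms admitting Laurent expansions along the divisor variables with unbounded negative powers.

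The core computation is then a Poincar\'e-lemma style Koszul argument along the divisor variables. For each $i\in\{a+b+1,\dots,d\}$ and each $n\geq 2$, the identity $\tfrac{dX_{i}}{X_{i}^{n}}=d\bigl(\tfrac{-1}{n-1}X_{i}^{-(n-1)}\bigr)$ shows that pure-pole contributions are $d$-exact, while $dX_{i}/X_{i}$ is kept as a log form. Using the multivariate Laurent decomposition inductively over the divisor variables, every cohomology class of $j_{*}\Omega^{\bullet}_{\sx_{K,\tr}}$ has a representative in $\Omega^{\bullet}_{\sx_K^{\an}}(\log\sd)$, and every log form exact in $j_{*}\Omega^{\bullet}_{\sx_{K,\tr}}$ is already exact in the log complex. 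Since these homotopies shift the column index in $\Omega^{\bullet}$ by at most one and respect the ambient grading column-by-column, the whole argument is automatically compatible with the stupid filtration $F^{r}=\sigma^{\geq r}\Omega^{\bullet}$, giving the filtered statement.

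The main obstacle is checking that the Koszul homotopy converges in the rigid-analytic Banach norm uniformly on the $U_{n}$ and patches compatibly as $n\to\infty$. However the homotopy operator $X_{i}^{-n}\mapsto \tfrac{-1}{n-1}X_{i}^{-(n-1)}$ strictly improves the norm of each monomial, so after breaking forms into their Laurent parts the needed convergence is automatic. This reduces the lemma to the combinatorial Koszul computation, which is standard, and combined with the vanishing of $R^{q}j_{*}$ yields the filtered quasi-isomorphism globally.
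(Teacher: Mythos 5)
Your overall strategy (reduce to a local Koszul/Poincar\'e-lemma computation along the divisor variables, using the quasi-Stein property to replace $Rj_*$ by $j_*$) is close to what the paper does, since the paper's citation to Kiehl's theorem is exactly such a local integration argument. However there is a concrete gap in your convergence claim.

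You assert that ``the homotopy operator $X_i^{-n}\mapsto \tfrac{-1}{n-1}X_i^{-(n-1)}$ strictly improves the norm of each monomial, so \dots the needed convergence is automatic.'' This is false $p$-adically. On an annulus with inner radius $\rho_1<1$, the Gauss norm of $\tfrac{-1}{n-1}X_i^{-(n-1)}$ equals $\big|\tfrac{1}{n-1}\big|_p\,\rho_1\cdot\rho_1^{-n}$, and $\big|\tfrac{1}{n-1}\big|_p=p^{v_p(n-1)}$ is \emph{unbounded} along $n-1=p^k$; once $p^{v_p(n-1)}>\rho_1^{-1}$ the norm strictly \emph{increases}. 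So the ``monotone improvement'' heuristic is not available, and the termwise integrated series $\sum_n \tfrac{-a_n}{n-1}X_i^{-(n-1)}$ is not obviously a form in $j_*\Omega^{\jcdot}$. The convergence does hold, but the reason is subtler: one must use that a section of $j_*\Omega^{\jcdot}$ converges on \emph{every} annulus $\rho_1\le|X_i|\le\rho_2$ with $\rho_1>0$, which forces the coefficients $a_n$ to have $p$-adic decay that eventually dominates the polynomial factor $|n-1|$. Checking this carefully, including the multivariable case with mixed Laurent terms, is precisely the content of Kiehl's Theorem~2.3 that the paper invokes.

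The paper also structures the argument differently in a way that isolates the delicate step: it factors the map through the complex $\Omega^{\jcdot}_{\sx_K^{\an}}(*\sd)$ of meromorphic forms, so that the passage $\Omega^{\jcdot}(\log\sd)\to\Omega^{\jcdot}(*\sd)$ is the clean homotopy with a residue map and \emph{no} convergence issues (everything has poles of finite order), while the passage $\Omega^{\jcdot}(*\sd)\to j_*\Omega^{\jcdot}_{\sx_{K,\tr}}$ is the place where the Kiehl-type analytic estimate on essentially singular forms is needed. By collapsing the two steps and then waving at convergence, your proposal hides the actual difficulty. To repair it, you should either (a) reinstate the intermediate meromorphic complex and cite Kiehl for the hard half, or (b) replace your monotonicity claim by a genuine estimate of the form $v_p(a_n)+n\,v_p(\rho)\to\infty$ for all $\rho>0$ being preserved under division by $n-1$, using $v_p(n-1)=O(\log|n|)$ against the super-linear decay of $v_p(a_n)$.
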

\begin{proof}
Let $\Omega\kr_{\sx_K}(*\sd)$ be the complex of meromorphic (along $\sd$) differentials \cite[p.17]{Kih}.  There is  a natural morphism
  $$\Omega\kr_{\sx^{\an}_K}(*\sd) \to j_*\Omega\kr_{\sx_{K,{\tr}}}. $$
  By \cite[Theorem 2.3]{Kih}, this is a quasi-isomorphism: after some preliminary reductions, this amounts to showing that the usual integration works on essentially singular differentials. Since 
  $j:\sx_{K,{\tr}}\hookrightarrow \sx^{\an}_{K}$ is quasi-Stein, we have $\R j_*\Omega\kr_{\sx_{K,{\tr}}}=j_*\Omega\kr_{\sx_{K,{\tr}}}$.

 We also have a natural morphism
$$r: \Omega\kr_{\sx^{\an}_K}(\log \sd)\to \Omega\kr_{\sx^{\an}_K}(*\sd).
$$
Integration causes no convergence issues here. In fact, the computations in \cite[p. 18]{Kih} go through in this case: there is a residue map $\Res: \Omega\kr_{\sx^{\an}_K}(*\sd)\to
\Omega\kr_{\sx^{\an}_K}(\log \sd)$ such that $\Res r=\id$. Integrating gives a homotopy between $r \Res$ and $\id$. 
Our lemma follows. 
\end{proof}

\begin{remark}One can compute syntomic cohomology of $R=\so_K^{\times}$ rather explicitly, and the
result shows that $\alpha_{r,r}$ is not necessarily surjective: in the case $r=1$ below,
there is an extra $\Z_p$ appearing in $H^1_{\synt}(\O^{\times}_K,r)$.
In higher dimensions the difference is much more serious (see Remark~\ref{avoidpanick}).
\end{remark}
Denote by $\so_K^{(r)}$ the intersection of $\so_K$ with $R^{\rm PD}_{\varpi}+P^rF[X_0]_P$. 
We have $\so_K^{(1)}=\so_K$.  We will state the following proposition without a proof.
\begin{proposition}
{\rm (i)} For $r\geq 2$, we have
$$\begin{cases}
 H^0_{\synt}(\so^{\times}_K,r) \mbox{ is } p^{13r} \mbox{-isomorphic to } 0,\\
  H^1_{\synt}(\so^{\times}_K,r) \mbox{ is } p^{N(r,e)}\mbox{-isomorphic to }\so_K^{(r)},\\
    H^2_{\synt}(\so^{\times}_K,r) \mbox { is } p^{N(r,e)}\mbox{-isomorphic to } 0.
\end{cases}
$$
{\rm (ii)} 
For $r=1$, we have $p^{N(r,e)}$-isomorphisms 
$$\begin{cases} H^0_{\synt}(\so^{\times}_K,r)\cong 0,\\
H^1_{\synt}(\so^{\times}_K,r)\cong \so_K^{(r)}\oplus\Z_p\\
H^2_{\synt}(\so^{\times}_K,r)\cong  (\O_F/(\varphi-1)).
\end{cases} 
$$
\end{proposition}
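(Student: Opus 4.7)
My plan is to compute the cohomology of the explicit syntomic complex for $R=\so_K$ directly. In this dimension-zero case, $\Omega^1_{r^{\rm PD}_\varpi}=r^{\rm PD}_\varpi\cdot\omega$ with $\omega=\tfrac{dX_0}{X_0}$ and higher forms vanish, so ${\rm Syn}(\so_K,r)$ is the length-three complex
\begin{equation*}
F^rr^{\rm PD}_\varpi\longrightarrow F^{r-1}r^{\rm PD}_\varpi\cdot\omega\,\oplus\,r^{\rm PD}_\varpi\longrightarrow r^{\rm PD}_\varpi\cdot\omega,
\end{equation*}
with differentials $d^0(x)=(\partial_{\rm Kum}x\cdot\omega,\,(p^r-\phi_{\rm Kum})x)$ and $d^1(y\omega,z)=((p^r-p\phi_{\rm Kum})y-\partial_{\rm Kum}z)\omega$. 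I would exploit the distinguished triangle ${\rm Syn}(\so_K,r)\to{\rm HK}(r^{\rm PD}_\varpi,r)\to{\rm DR}(r^{\rm PD}_\varpi,r)$ coming from $0\to F^r\Omega\kr\to\Omega\kr\to\Omega\kr/F^r\to 0$, together with the key inputs Proposition \ref{added}(ii) (asserting $\tau_{\leq r-1}{\rm HK}$ is $p^{N(r,e)}$-acyclic) and Lemma \ref{noneedbeilinson} (relating ${\rm DR}$ to $\sigma_{\leq r-1}\Omega\kr_{\so_K}$ after inverting $p$).

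The vanishing $H^0_{\synt}=0$ is immediate: $\ker\partial_{\rm Kum}=\so_F$ inside $r^{\rm PD}_\varpi$, and $\so_F\cap F^rr^{\rm PD}_\varpi=0$ for $r\geq 1$, since no nonzero constant vanishes at $X_0=\varpi$. In dimension zero $H^0({\rm DR})=r^{\rm PD}_\varpi/F^r$, and working integrally one identifies this with $\so_K^{(r)}$ up to a bounded $p$-power (the definition of $\so_K^{(r)}$ captures precisely the divided-power discrepancy $P^k/k!\notin r^+_\varpi$), while $H^1({\rm DR})=0$. For $r\geq 2$ the long exact sequence, after absorbing the $p^{N(r,e)}$-acyclic part of ${\rm HK}$, collapses to $H^1_{\synt}\simeq\so_K^{(r)}$ and $H^2_{\synt}\simeq H^2({\rm HK})$ up to $p^{N(r,e)}$; for $r=1$ one instead gets a short exact sequence $0\to\so_K\to H^1_{\synt}\to H^1({\rm HK})\to 0$ and $H^2_{\synt}\simeq H^2({\rm HK})$.

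The remaining work reduces to the top cohomology groups of ${\rm HK}(r^{\rm PD}_\varpi,r)$. For $H^2({\rm HK})$, the projection $r^{\rm PD}_\varpi\to\so_F$ onto the $X_0$-constant term kills the image of $\partial_{\rm Kum}$, identifying $H^2({\rm HK})$ modulo $p^{N(r,e)}$-torsion with the cokernel of $y\mapsto(\text{constant term of }(p^r-p\phi_{\rm Kum})y)$ as $y$ runs over $F^{r-1}r^{\rm PD}_\varpi$. Since $P(0)\in p\so_F^\times$, for $y=P^{r-1}c$ the constant term of $(p^r-p\phi_{\rm Kum})y$ equals $p^r(p^{r-1}u_0^{r-1}c-\phi(u_0)^{r-1}\phi(c))$ with $u_0=P(0)/p$. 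For $r\geq 2$ this operator on $\so_F$ is a unit modulo $p$ (the second summand dominates) hence $p^{N(r,e)}$-invertible by Neumann series, giving $H^2_{\synt}=0$; for $r=1$ it reduces to $p(1-\phi_{\rm Kum})$ whose cokernel on $\so_F$ is $\so_F/(\phi-1)$ up to $p$-torsion. Finally, the extra $\Z_p$ summand in $H^1_{\synt}(\so_K^\times,1)$ comes from $H^1({\rm HK})$: the cocycle equation $p(1-\phi_{\rm Kum})y=\partial_{\rm Kum}z$ admits solutions with $y\in\so_F^{\phi_{\rm Kum}=1}=\Z_p$ and $z=0$, and these are not boundaries since no $x\in r^{\rm PD}_\varpi$ satisfies $\partial_{\rm Kum}x=c$ for a nonzero constant $c$ (as the formal solution $x=c\log X_0$ does not lie in $r^{\rm PD}_\varpi$).

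The principal obstacle is the bookkeeping of universal $p$-power constants: the divided-power denominators $[k/e]!$ in $r^{\rm PD}_\varpi$ force the use of Lemma \ref{6saint} to invert $p^r-p^s\phi$-type operators on high $X_0$-order subspaces with controlled loss; the passage between $F^rr^{\rm PD}_\varpi$ and $P^rr^{\rm PD}_\varpi$ costs a factor computed via Remark \ref{0wiesia}; and the $p^{N(r,e)}$ errors from the acyclicity of $\tau_{\leq r-1}{\rm HK}$, the identification of $\so_K^{(r)}$ with $H^0({\rm DR})$, and the cokernel computations above must all be combined into a single uniform bound in each degree.
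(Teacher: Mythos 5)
The paper explicitly states this proposition \emph{without proof}, so there is no argument in the paper to compare yours against; I can only assess your sketch on its own. Your overall strategy is sound: in dimension zero ${\rm Syn}(\so_K,r)$ is the explicit three-term complex you wrote, the distinguished triangle ${\rm Syn}\to{\rm HK}\to{\rm DR}$ together with Proposition \ref{added}(ii) reduces $H^1_{\synt}$ to $H^0({\rm DR})$ up to bounded $p$-torsion, and Lemma \ref{6saint} is the right tool to control $(p^r-p^s\phi)$-type operators on subspaces of high $X_0$-order.

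However, there is a genuine error at the crucial step. You assert ``$H^0({\rm DR})=r^{\rm PD}_\varpi/F^r$.'' This cannot be right, already for rank reasons: rationally $r^{\rm PD}_\varpi/F^r\cong K[P]/P^r$ has $K$-dimension $r$, whereas the target $\so_K^{(r)}$ is a lattice in $K$ of rank $1$, and Lemma \ref{noneedbeilinson} confirms $H^0({\rm DR})[1/p]\cong K$ is $1$-dimensional. In dimension zero ${\rm DR}(r^{\rm PD}_\varpi,r)$ is the two-term complex $r^{\rm PD}_\varpi/F^r\stackrel{\partial_0}{\to} r^{\rm PD}_\varpi/F^{r-1}$, so $H^0({\rm DR})$ is the \emph{kernel} of $\partial_0$, not the whole quotient. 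That kernel is the set of classes $[f]$ with $f\in r^{\rm PD}_\varpi$ congruent to a constant $c\in K$ modulo $P^rK[[P]]$, and matching this set with $\so_K\cap\big(r^{\rm PD}_\varpi+P^rF[X_0]_P\big)=\so_K^{(r)}$ is precisely the integral content of the proposition; your sketch elides this computation entirely. The parenthetical remark about ``the divided-power discrepancy $P^k/k!\notin r^+_\varpi$'' does gesture at the right phenomenon, but it is attached to the wrong object. You would also need, to make the $H^1({\rm DR})$ claim usable, a uniform bound on the $p$-torsion of $\operatorname{coker}(\partial_0:r^{\rm PD}_\varpi/F^r\to r^{\rm PD}_\varpi/F^{r-1})$; Lemma \ref{noneedbeilinson} only gives vanishing after inverting $p$. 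Finally, for $r=1$ your argument exhibits an injection $\Z_p\hookrightarrow H^1({\rm HK})$ but does not exclude non-constant solutions $y\in r^{\rm PD}_\varpi$ to $\phi_{\rm Kum}(y)=y$ (which do a priori exist since the coefficients $a_{p^ki}=\phi^k(a_i)\,[p^ki/e]!/[i/e]!$ tend $p$-adically to $0$), nor does it show these are coboundaries up to bounded torsion.
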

\section{Syntomic cohomology  and $(\phi,\Gamma)$-modules}
Assume that $K$ has enough roots of unity. Let $u=(p-1)/p,$ $v=p-1$ if $p\geq 3$, and $u=\frac{3}{4}$, $v=\frac{3}{2}$ if $p=2$. 
In particular, $\big(1+\frac{p^{i-1}-\delta_R-1}{e_0}\big)u>\big(1+\frac{1}{p-1}-\frac{1}{2p}\big)u
>\frac{1}{p-1}$, for all $p$.

We use the isomorphism $R_\varpi^{[u,v]}\cong \A_R^{[u,v]}$ of filtered rings
with a Frobenius to add an action of $\Gamma_R$ on the complexes that we consider.
This allows us to relate the complex
${\rm Cycl}(R_\varpi^{[u,v]},r)$, that we showed to be $p^{Nr}$-quasi-isomorphic, for a universal constant $N$, 
to our original syntomic complex, to complexes coming from the theory
of $(\varphi,\Gamma)$-modules and known to compute Galois cohomology.
In dimension~$0$, this amounts to relating the complex
$${\rm Kos}(\varphi,\partial,F^r\A_K^{[u,v]}):\quad
\xymatrix@C=1.8cm{F^r\A_K^{[u,v]}\ar[r]^-{(\partial,p^r-\phi)}
&F^{r-1}\A_K^{[u,v]}\oplus \A_K^{[u,v/p]}\ar[r]^-{-(p^r-p\phi)+\partial}&\A_K^{[u,v/p]}}\,,
$$
that we obtain from ${\rm Cycl}(r_\varpi^{[u,v]},r)$,
to the complex of Herr's thesis~\cite{Hr}:
$${\rm Kos}(\varphi,\Gamma_K,\A_K(r)):\quad
\xymatrix@C=1.8cm{\A_K(r)\ar[r]^-{(\tau_0,1-\phi)}
&\A_K(r)\oplus \A_K(r)\ar[r]^-{-(1-\phi)+\tau_0}&\A_K(r)}\,,
$$
where $\tau_0=\gamma_0-1$, and $\gamma_0$ is our topological generator of $\Gamma_K$.

There are two main steps:

--- One gets rid of the filtration by dividing by suitable powers of $t$; this turns
the differential $\partial$ into the action of the Lie algebra of $\Gamma_R$ (Lemma~\ref{ref1}). This looks similar to the construction in \cite{BMS}, where  $[\varepsilon]-1$ appears in the place of $t$ (note that $t/([\varepsilon]-1)$ is a unit in $\A_{\crr}$).

--- One uses the analyticity of the action to pass from the Lie algebra of $\Gamma_R$
to $\Gamma_R$ itself (Lemma~\ref{ref2}).

The rest is standard $(\varphi,\Gamma)$-modules techniques to change the domains
of convergence and move from $\A_R^{[u,v]}$ to $\A_R$.

\subsection{From complexes of differential forms to Koszul complexes}\label{moved}
To start, we are going to transform ${\rm Cycl}(R_{\varpi}^{[u,v]},r)$
into a ``Koszul complex'' by expressing the differentials
$d:F^{r-i}\Omega^i_S\to F^{r-i-1}\Omega^{i+1}_S$ in suitable basis.
Namely, we define $\omega_0=\frac{dT}{1+T}$ and $\omega_j=\frac{dX_j}{X_j}$ if $1\leq j\leq d$.
If 
$${\bf j}=(j_1,\dots,j_i)\in J_i=\{0\leq j_1<\cdots<j_i\leq d\},\quad{\text{ we set
$\omega_{\bf j}=\omega_{j_1}\wedge\dots\wedge\omega_{j_i}$,}}$$
 so that an element of
$F^a\Omega^i_S$ can be written uniquely as
$\sum_{{\bf j}\in J_i}x_{\bf j}\omega_{\bf j}$, with $x_{\bf j}\in F^aS$.
In other words the $\omega_{\bf j}$, for ${\bf j}\in J_i$, form a ``basis''
of $F^a\Omega^i_S$ over $F^aS$; these are the basis that we use to describe $d$.
In this way, $d$ becomes a map, involving the differential operators $\partial_j$, for $0\leq j\leq d$,
from $(F^{r-i}S)^{J_i}$ to $(F^{r-i-1}S)^{J_{i+1}}$.
We denote by ${\rm Kos}(\partial,F^rS)$ the complex
$F^r\Omega\kr_S$ expressed in these basis and ${\rm Kos}(\varphi,\partial,F^rR_\varpi^{[u,v]})$
the complex ${\rm Cycl}(R_\varpi^{[u,v]},r)$ written in the same way.
By definition, 
\begin{align*}
{\rm Kos}(\partial,F^rS)=&\ F^rS\stackrel{(\partial_j)}{\to}(F^{r-1}S)^{J_1}\to (F^{r-2}S)^{J_2}\to \cdots\\
{\rm Kos}(\varphi,\partial,F^rR_\varpi^{[u,v]})=&\ 
[\xymatrix@C=1.6cm{{\rm Kos}(\partial,F^rR_\varpi^{[u,v]})\ar[r]^-{p^r-p\kr\phi_{\rm cycl}}&
{\rm Kos}(\partial,R_\varpi^{[u,v/p]})}].
\end{align*}
and
$${\rm Kos}(\varphi,\partial,F^rR_\varpi^{[u,v]})\simeq {\rm Cycl}(R_\varpi^{[u,v]},r).$$

The arithmetic and the geometric variables behave quite differently in what follows,
so it is convenient to separate them.
Let $J'_i\subset J_i$ be the set of ${\bf j}$'s with $j_1\neq 0$,
hence 
$$J'_i=\{(j_1,\dots,j_d),\ 1\leq j_1<\cdots<j_d\leq d\},$$
and let $\partial'=(\partial_1,\dots,\partial_d)$.
We denote by ${\rm Kos}(\partial',F^rS)$ the subcomplex of ${\rm Kos}(\partial,F^rS)$
made of the $(F^{r-i}S)^{J'_i}$.
Then
${\rm Kos}(\partial,F^rS)$ is the homotopy limit\footnote{Strictly speaking, we should multiply $\partial_0$ by $p$ to
have it defined, but we will ignore this as it does not change anything in the arguments
that follow.}
$${\rm Kos}(\partial,F^rS)=
\big[\xymatrix{{\rm Kos}(\partial',F^rS)\ar[r]^-{\partial_0}&
{\rm Kos}(\partial',F^rS)}\big],$$
and ${\rm Kos}(\varphi,\partial,F^rR_\varpi^{[u,v]})$
is the homotopy limit
$$
{\rm Kos}(\varphi,\partial,F^rR_\varpi^{[u,v]})=\left[\begin{aligned}\xymatrix@C=2cm{
{\rm Kos}(\partial',F^rR_\varpi^{[u,v]})
\ar[d]^{\partial_0}\ar[r]^{p^r-p\kr\phi} & {\rm Kos}(\partial',R_\varpi^{[u,v/p]})\ar[d]^{\partial_0}\\
{\rm Kos}(\partial',F^{r-1}R_\varpi^{[u,v]})
\ar[r]^{p^r-p^{\jcdot+1}\phi} &{\rm Kos}(\partial',R_\varpi^{[u,v/p]}) 
}\end{aligned}\right]
$$

We can now use the isomorphisms $\iota_{\rm cycl}:R_\varpi^{[u,v]}\simeq \A_R^{[u,v]}$
and $\iota_{\rm cycl}:R_\varpi^{[u,v/p]}\simeq \A_R^{[u,v/p]}$, which commute with Frobenius and filtration,
to obtain a (tautological) quasi-isomorphism
$$
{\rm Kos}(\varphi,\partial,F^rR_\varpi^{[u,v]})
\simeq{\rm Kos}(\varphi,\partial,F^r\A_R^{[u,v]})=\left[\begin{aligned}\xymatrix@C=2cm{
{\rm Kos}(\partial',F^r\A_R^{[u,v]})
\ar[d]^{\partial_0}\ar[r]^{p^r-p\kr\phi} & {\rm Kos}(\partial',\A_R^{[u,v/p]})\ar[d]^{\partial_0}\\
{\rm Kos}(\partial',F^{r-1}\A_R^{[u,v]})
\ar[r]^{p^r-p^{\jcdot+1}\phi} &{\rm Kos}(\partial',\A_R^{[u,v/p]})
}\end{aligned}\right]
$$
What we have achieved by this procedure is twofold:

\quad$\bullet$ we moved to the world of period rings,

\quad$\bullet$ we gained an action of $\Gamma_R$ whose infinitesimal action is
related to the differentials $\partial$ by the very useful formula 
$$\nabla_j:=\log\gamma_j=t\partial_j,\quad{\text{for $0\leq j\leq d$.}}$$
  
\subsection{Continuous group cohomology and Koszul complexes}\label{CGC}
  Before we proceed, we will make a little digression on Koszul complexes (we will need explicit  formulas). Consider the Iwasawa algebra $S=\Z_p[[\tau_1,\ldots,\tau_d]]$. The Koszul complex associated to $(\tau_1,\cdots, \tau_d)$ is the following complex \begin{align*}
K(\tau_1,\cdots,\tau_d) &: =K(\tau_1)\wh{\otimes}_{\Z_p}K(\tau_2)\wh{\otimes}_{\Z_p}\cdots \wh{\otimes}_{\Z_p}K(\tau_d),\\
K(\tau_i) & :=(0\to \Z_p[[\tau_i]]\stackrel{\tau_i}{\to}\Z_p[[\tau_i]]\to 0)
\end{align*}
Here the right hand term is placed in degree $0$. 
Degree $q$ of this complex equals the exterior power $\bigwedge_{S} ^q S^d$. In the standard basis $\{e_{i_1\cdots i_q}\}, 1\leq i_1 <\cdots <i_q\leq d,$
of $\bigwedge_S ^q S^d$ the differential $d^1_q:\bigwedge_{S} ^q S^d\to \bigwedge_{S} ^{q-1} S^d $ is given by the formula
\begin{equation}
\label{formula}
d^1_q(a_{i_1\cdots i_q})=\sum_{k=1}^{q}(-1)^{k+1}a_{i_1\cdots \widehat{i_k}\cdots i_q}\tau_{i_k}
\end{equation}
The augmentation map $S\to \Z_p$ makes $K(\tau_1,\ldots,\tau_d)$ into a resolution of $\Z_p$ in the category of topological $S$-modules.

 Let $\Z_p[[\Gamma_R^{\prime}]]$ denote the Iwasawa algebra of $\Gamma_R^{\prime}$, i.e., the completed group ring 
$$
\Z_p[[\Gamma_R^{\prime}]]:=\invlim\Z_p[\Gamma_R^{\prime}/H],
$$
where the limit is taken over all the open normal subgroups $H$  of $\Gamma_R^{\prime}$ and every group ring $\Z_p[\Gamma_R^{\prime}/H]$ is equipped with the $p$-adic topology. We have $\Z_p[[\Gamma_R^{\prime}]]\simeq \Z_p[[\tau_1,\cdots,\tau_d]]$, $\tau_j:=\gamma_{j}-1$, $j\in \{1,\cdots, d\}$. The Koszul complex $K(\tau_1,\ldots,\tau_d)$ is the complex
$$
\xymatrix{0\ar[r]& \Z_p[[\Gamma_R^{\prime}]]^{J'_d}\ar[r]^-{d^1_{d-1}}&\cdots\cdots
\ar[r]^-{d^1_1} &\Z_p[[\Gamma_R^{\prime}]]^{J'_1}\ar[r]^-{d^1_0}
&\Z_p[[\Gamma_R^{\prime}]]^{J'_0}\ar[r]& 0},
$$
with  differentials given by formula \ref{formula}. It is easy to see (for example, by induction on $d$) that this is a resolution of $\Z_p$ in the category of topological  $\Z_p[[\Gamma_R^{\prime}]]$-modules. Similarly, we define the Koszul complex $K(\tau^c_1,\ldots,\tau^c_d)$ (with differentials $d^c_q$), $\tau^c_j:=\gamma^c_{j}-1$, $c=\exp(p^i)$. Since $(\gamma_{j}^c)$, $1\leq i\leq d$, is a basis of $\Gamma'_R$, this is also a resolution of $\Z_p$.

  Set $\Lambda:=\Z_p[[\Gamma_{R}]]$ (recall that we have an exact sequence
$1\to \Gamma_R'\to\Gamma_R\to\Gamma_K\to 1$). Consider the complex $K(\Lambda)$:
$$
\xymatrix{0\ar[r]& \Lambda^{J'_d}\ar[r]^-{d^1_{d-1}}&\cdots\cdots
\ar[r]^-{d^1_1} &\Lambda^{J'_1}\ar[r]^-{d^1_0}
&\Lambda^{J'_0}\ar[r]& 0},
$$
By \cite[Lemma 4.3]{KM}, we have the isomorphism 
  $\invlim_m(\Z_p[\Gamma_{K}/(\Gamma_{K})^{p^m}]\otimes_{\Z_p}K(\tau_1,\cdots, \tau_d))\simeq K(\Lambda)$ of 
  left $\Lambda$- and right $\Z_p[[\tau_1,\cdots,\tau_d]]$-modules. It follows that the 
  complex $K(\Lambda)$  is a resolution of $\Z_p[[\Gamma_{K}]]$ in the category of topological   left $\Lambda$-modules. 
  Similarly, we have the complex $K^c(\Lambda)$ (obtained from $K(\tau_1^c,\cdots, \tau^c_d$)) that is also a resolution of $\Z_p[[\Gamma_{K}]]$. 
  
  We define the map
$$
\tau_{0}:\quad
K^c(\Lambda)\to K(\Lambda)
$$
by the following  commutative diagram of topological left $\Lambda$-modules
 $$
 \xymatrix{
 0\ar[r]  & \Lambda^{J'_d}\ar[r]^{d^c_{d-1} }\ar[d]^{\tau^d_{0}}& \cdots\cdots \ar[r]^{d^c_1}& \Lambda^{J'_1}\ar[r]^{d^c_0} \ar[d]^{\tau^1_{0}}& \Lambda\ar[r]
 \ar[d]^{\tau^0_{0}} & \Z_p[[\Gamma_{K}]]\ar[d]^{\gamma_{0}-1}\ar[r] & 0\\
 0\ar[r]  & \Lambda^{J'_d}\ar[r]^{d^1_{d-1} }& \cdots\cdots \ar[r]^{d^1_1} & \Lambda^{J'_1}\ar[r]^{d^1_0} & \Lambda\ar[r] & \Z_p[[\Gamma_{K}]]\ar[r] & 0
 }
 $$
The vertical maps are defined as follows
\begin{align*}
\tau^0_{0} =\gamma_{0}-1,\quad 
 & \tau^q_{0}: (a_{i_1\cdots i_q})\mapsto (a_{i_1\cdots i_q}(\gamma_{0}-\delta_{i_1\cdots i_q})),\quad 1\leq q\leq d,\quad 1\leq i_1 <\cdots < i_q\leq d\\
 & \delta_{i_1\cdots i_q}=\delta_{i_q}\cdots\delta_{i_1},\quad \delta_{i_j}=(\gamma^c_{i_j}-1)(\gamma_{i_j}-1)^{-1}
\end{align*}
Note that 
$$
\frac{\gamma^c_{j}-1}{\gamma_{j}-1}=\sum_{n\geq 1}\binom{c}{n}(\gamma_{j}-1)^{n-1}
$$
makes sense since $\gamma_{j}-1$ is  topologically nilpotent. It is a unit in $\Z_p[[\Gamma_R^{\prime}]]$.
To see that $\tau_{0}$ is a map of complexes note that $$
(\gamma_{0}-\delta_{i_1\cdots i_q})=\tau_{i_q}^{c}
\tau_{i_{q-1}}^{c}\cdots \tau_{i_{1}}^{c}\tau_{0}^0\tau_{i_{1}}^{-1}\cdots \tau_{i_{q-1}}^{-1} \tau_{i_q}^{-1}$$
as the $\tau_j$'s commute for $j\in \{1,\cdots,d\}$. 

  Set $K(\Lambda,\tau):=[\xymatrix{K^{c}(\Lambda)\ar[r]^-{\tau_{0}}& K(\Lambda)}]$. Since we have the exact sequence 
  $$0\to \Z_p[[\Gamma_{K}]]\stackrel{\gamma_{0}-1}{\longrightarrow} \Z_p[[\Gamma_{K}]]  \to 0$$
  the complex $K(\Lambda,\tau)$ resolves $\Z_p$ in the category of topological left $\Lambda$-modules. 
    
   For a topological $\Gamma_{R}$-module $M$, denote by 
${\rm Kos}(\Gamma_R^{\prime},M)$ the complex 
$${\rm Kos}(\Gamma_R^{\prime},M):=\Hom_{\Lambda,\cont}(K(\Lambda),M)=
   \Hom_{\Lambda}(K(\Lambda),M)$$ (which we will also call  the Koszul complex). Similarly,  we define the Koszul complex  
${\rm Kos}^c(\Gamma_R^{\prime},M)$ using the resolution $K^c(\Lambda)$ of $\Z_p$.   In degree $q$ of these two complexes there are $\binom{d}{q}$ copies of $M$.   
If this does not cause confusion we will write the Koszul complexes ${\rm Kos}(\Gamma_R^{\prime},M)$ and ${\rm Kos}^c(\Gamma_R^{\prime},M)$ as
 $$
{\rm Kos}(\Gamma_R^{\prime},M)= M\stackrel{(\tau_j)}{\to} M^{J'_1}\to\cdots\to M^{J'_d},\quad 
{\rm Kos}^c(\Gamma_R^{\prime},M)=M\stackrel{(\tau^c_j)}{\to} M^{J'_1}\to\cdots\to M^{J'_d}. $$
 The map $\tau_{0}:
K^c(\Lambda)\to K(\Lambda)$ induces a map of complexes
$$
\tau_{0}:\quad 
{\rm Kos}(\Gamma_R^{\prime},M)\to {\rm Kos}^c(\Gamma_R^{\prime},M)
$$
which we represent by the following diagram:
$$
\xymatrix{    M^{J'_0}\ar[r]^{(\tau_{j})}\ar[d]^{\tau^0_{0}} & M^{J'_1}\ar[r]\ar[d]^{\tau^1_{0}} & M^{J'_2}\ar[d]^{\tau^2_{0}}  \ar[r] & \\
   M^{J'_0}\ar[r]^{(\tau^c_{j})}    &  M^{J'_1}\ar[r] & M^{J'_2} \ar[r] &
}
$$
Set $$
{\rm Kos}(\Gamma_{R},M):=\Hom_{\Lambda,\cont}(K(\Lambda,\tau),M)=
[\xymatrix{{\rm Kos}(\Gamma_R^{\prime},M)\ar[r]^-{\tau_{0}}&{\rm Kos}^c(\Gamma_R^{\prime},M)})]
$$

  Let $X_{\jcdot}$ denote the standard complex \cite[V.1.2.1]{Laz} computing the continuous cohomology $\R\Gamma_{\cont}(\Lambda,M):=\Hom_{\Lambda,\cont}(X_{\jcdot},M)$. We have 
  $X_n=\Lambda\widehat{\otimes}T^n\Lambda$, where $T^n$ denotes the completed tensor product. 
Let $Y_{\jcdot}$ denote the standard complex computing the group cohomology of 
  $\Gamma_{R}$:
  we have $Y_n=T^{n+1}(\Z_p[\Gamma_{R}])$ and $\R\Gamma(\Gamma_{R},M)=\Hom_{\Z_p[\Gamma_{R}]}(Y_{\jcdot},M)$. Continuous group cohomology
  $\R\Gamma_{\cont}(\Gamma_{R},M)$ is computed by the complex $\Hom_{\Z_p[\Gamma_{R}],\cont}(Y_{\jcdot},M)$ of continuous cochains. The continuous map $Y_{\jcdot}\to X_{\jcdot}$ 
  induces a morphism
  $\Hom_{\Lambda,\cont}(X_{\jcdot},M)\to \Hom_{\Z_p[\Gamma_{R}]}(Y_{\jcdot},M)$ and Lazard shows \cite[V.1.2.6]{Laz} that Mahler's theorem implies
  that this morphism factors through $\Hom_{\Z_p[\Gamma_{R}],\cont}(Y_{\jcdot},M)$ and induces an isomorphism
  $\Hom_{\Lambda,\cont}(X_{\jcdot},M)\to \Hom_{\Z_p[\Gamma_{R}],\cont}(Y_{\jcdot},M)$. Hence 
  $\R\Gamma_{\cont}(\Lambda,M)\stackrel{\sim}{\to}\R\Gamma_{\cont}(\Gamma_{R},M)$. By choosing  a map between the two projective resolutions 
  $K(\Lambda,\tau)$ and $X_{\jcdot}$ of $\Z_p$ (by \cite[V.1.1.5.1]{Laz} such a map exists between any two projective resolutions and is unique up to a homotopy) 
  we obtain a functorial  quasi-isomorphism (unique in the derived category) 
  $$ {\rm Kos}(\Gamma_{R},M)\stackrel{\sim}{\to} \R\Gamma_{\cont}(\Lambda,M)    $$
  Adding up, we have obtained a quasi-isomorphism
  $$
 \lambda:\quad  {\rm Kos}(\Gamma_{R},M)\stackrel{\sim}{\to} \R\Gamma_{\cont}(\Gamma_{R},M).$$

\subsection{$(\varphi,\partial)$-modules and $(\varphi,\Gamma)$-modules}
If 
$$S=\A_R^{[u,v]},\A_R^{(0,v]+},\A_R,$$
set
$$S'=\A_R^{[u,v/p]},\A_R^{(0,v/p]+},\A_R.$$
Write $S(r),S'(r)$
for the $\Gamma_{R}$-module $S,S'$
with the action of $\Gamma_{R}$ twisted by  $\chi^r$.
Define the complex
\begin{align*}
{\rm Kos}(\varphi,\Gamma_R,S(r)): 
 = \left[\begin{aligned}\xymatrix{{\rm Kos}(\Gamma^{\prime}_R,S(r) )\ar[d]^{\tau_{0}}\ar[r]^{1-\phi} & {\rm Kos}(\Gamma^{\prime}_R,S'(r) )\ar[d]^{\tau_{0}}\\
{\rm Kos}^c(\Gamma^{\prime}_R,S(r) )\ar[r]^{1-\phi} & {\rm Kos}^c(\Gamma^{\prime}_R,S'(r) )
}\end{aligned}\right]
\end{align*}

\begin{proposition}\label{26saint}
 There exists
 a universal constant $N$ and a natural\footnote{We use ``natural'' to mean ``defined by universal formulas''.}
 $p^{Nr}$-quasi-isomorphism
$$\tau_{\leq r}{\rm Kos}(\varphi,\Gamma_R,\A_R^{[u,v]}(r))\simeq 
\tau_{\leq r}{\rm Kos}(\varphi,\partial,F^r\A_R^{[u,v]}).$$
\end{proposition}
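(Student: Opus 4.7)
The plan is to insert an intermediate \emph{Lie algebra Koszul complex} ${\rm Kos}(\varphi,\Lie\,\Gamma_R,\A_R^{[u,v]}(r))$, built with the same $2\!\times\!2$ layout as ${\rm Kos}(\varphi,\Gamma_R,\A_R^{[u,v]}(r))$ but with the differentials $\tau_j$, $\tau_j^c$, $\tau_0$ replaced by the infinitesimal generators $\nabla_j:=\log\gamma_j$. The key identity, recorded just after Corollary~\ref{extr12.3}, is $\nabla_j=t\,\partial_j$ on $\A_R^{[u,v]}[\tfrac{1}{p}]$. The proof then factors into a de-filtering step (multiplication by $t$) and a Lazard-type step passing from the Lie algebra to the group.

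\smallskip
\noindent\textbf{Step 1 (multiplication by $t$).} First, construct a morphism
$$m_t:\quad\tau_{\leq r}{\rm Kos}(\varphi,\Lie\,\Gamma_R,\A_R^{[u,v]}(r))\longrightarrow\tau_{\leq r}{\rm Kos}(\varphi,\partial,F^r\A_R^{[u,v]})$$
whose component in total degree $q$ is multiplication by $t^{r-q}$ on each of the four corners of the square (with obvious shifts to account for position in the homotopy limit). Using $\partial_j(t)=p^i\delta_{j,0}$ one checks that $m_t$ intertwines $\nabla_j$ on the source with $\partial_j$ on the target, while $\varphi(t)=pt$ intertwines $1-\varphi$ (acting on the twist $(r)$) with $p^r-p^{\jcdot}\varphi$ on the target; the Leibniz term $\partial_0(t^{r-q})=(r-q)p^it^{r-q-1}$ matches the $\delta_{i_1\cdots i_q}$-shifts built into the $\tau_0$-arrow in Section~\ref{CGC}. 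By Lemma~\ref{19saint}, $t^r\cdot$ is a $p^r$-isomorphism $\A_R^{[u,v]}\cong F^r\A_R^{[u,v]}$ (and a $p^{2r}$-isomorphism on $\A_R^{[u,v/p]}$), so $m_t$ is a $p^{Nr}$-quasi-isomorphism. The truncation is essential: in degrees $q>r$ the exponent $r-q$ becomes negative and $t^{-1}$ carries uncontrollable denominators.

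\smallskip
\noindent\textbf{Step 2 (Lazard).} For the equivalence with the group Koszul complex, factor $\gamma_j-1=\exp(\nabla_j)-1=\nabla_j\,U_j$ with $U_j=\sum_{n\geq 0}\nabla_j^n/(n+1)!$, and similarly $\gamma_j^c-1=\nabla_j\,U_j^c$ for the $\tau_j^c$-arrows, with an analogous factorization for the off-diagonal $\tau_0$ obtained from the explicit formula for $\delta_{i_1\cdots i_q}$. By Lemma~\ref{extr12.2}, $\nabla_j=\log\gamma_j$ converges on $\A_R^{[u,v]}(r)$ and is topologically nilpotent with divisibility controlled by $T^{p^{i-1}-\delta_R-1}(p,T^{e_0})$; the hypothesis that $K$ contains enough roots of unity gives $p^{i-1}-\delta_R-1>0$, so the $U_j$ and $U_j^c$ are units up to a bounded power of $p$. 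The resulting diagonal change of basis identifies the two Koszul complexes up to $p^{Nr}$ and commutes with $\varphi$ because $\varphi\gamma_j=\gamma_j\varphi$. Composing with Step~1 yields the desired quasi-isomorphism.

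\smallskip
\noindent\textbf{Main obstacle.} The delicate point is the uniformity of the constants in Step~2: the series $U_j$, as well as the iterated Lazard data needed to compare entire Koszul complexes rather than single operators, must be shown to converge with a $p$-adic valuation bounded linearly in $r$ and independently of $p$, $K$, $R$, and $n$. This requires combining the divisibility estimates of Lemma~\ref{extr12.2} for $\nabla_j$ with the $t$-bounds of Lemma~\ref{19saint} and the inequality $\delta_R+1<e_0/(2p)$ from the definition of ``enough roots of unity,'' with the case $p=2$ requiring the sharper choice $u=\tfrac{3}{4}$, $v=\tfrac{3}{2}$ to ensure that every homotopy produced along the way stays inside the targeted annulus.
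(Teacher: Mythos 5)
Your proposal matches the paper's own argument: the paper likewise introduces an intermediate complex ${\rm Kos}(\varphi,\Lie\Gamma_R,\A_R^{[u,v]}(r))$, handles the de-filtering by multiplication by powers of $t$ (giving a $p^{40r}$-quasi-isomorphism in degrees $\leq r$), and passes from the group Koszul complex to the Lie-algebra Koszul complex via a Lazard-type change of basis built from the series $\nabla_j/\tau_j=\log(1+\tau_j)/\tau_j$ and its inverse. One small refinement in the paper worth noting: the divisibility estimate of Lemma~\ref{extr12.2} together with the inequality $(1+\tfrac{p^{i-1}-\delta_R-1}{e_0})u>\tfrac{1}{p-1}$ actually makes $\nabla_j/\tau_j$ an honest isomorphism of $S(r)$ (not merely a unit up to a bounded power of $p$), so the Lazard step contributes no loss of constants and all the $p^{Nr}$ comes from the $t$-multiplication step.
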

\begin{proof} 
    Denote by ${\rm Kos}(\Lie \Gamma'_R,{\A_R^{[u,v]}(r)})$ the complex 
    $$
   {\A_R^{[u,v]}(r)}     \to  {\A_R^{[u,v]}(r)}^{J'_1}\to\cdots \to  {\A_R^{[u,v]}(r)}^{J'_d}
    $$
    with differentials dual to those in formula \ref{formula} (with $\tau_j$
 replaced by $\nabla_{j}$).  Consider 
     the maps
$$\nabla_{0}:\quad {\rm Kos}(\Lie \Gamma'_R,{\A_R^{[u,v]}(r)})\to {\rm Kos}(\Lie \Gamma'_R,{\A_R^{[u,v]}(r)})$$
 defined in the following way:
 $$ 
 \xymatrix{
 \A_R^{[u,v]}(r)\ar[r]^{(\nabla_{j})} \ar[d]^{\nabla_{0}}&  {\A_R^{[u,v]}(r)}^{J'_1}
\ar[r]\ar[d]^{\nabla_{0}+p^i}&  \cdots \ar[r] & {\A_R^{[u,v]}(r)}^{J'_q}\ar[r] \ar[d]^{\nabla_{0}+qp^i}  & \cdots\\ 
 \A_R^{[u,v]}(r)\ar[r]^{(\nabla_{j})} &  {\A_R^{[u,v]}(r)}^{J'_1}\ar[r]&  \cdots \ar[r] & {\A_R^{[u,v]}(r)}^{J'_q}\ar[r]  & \cdots
 }
 $$
Define the complex
    \begin{align*}
{\rm Kos}(\varphi,\Lie\Gamma_R,\A_R^{[u,v]}(r)): 
 = \left[\begin{aligned}\xymatrix{{\rm Kos}(\Lie\Gamma_R^{\prime},\A_R^{[u,v]}(r) )\ar[d]^{\nabla_{0}}\ar[r]^{1-\phi} & {\rm Kos}(\Lie\Gamma_R^{\prime},\A_R^{[u,v/p]}(r) )\ar[d]^{\nabla_{0}}\\
{\rm Kos}(\Lie\Gamma_R^{\prime},\A_R^{[u,v]}(r) )\ar[r]^{1-\phi} & {\rm Kos}(\Lie\Gamma_R^{\prime},\A_R^{[u,v/p]}(r) )
}\end{aligned}\right]
\end{align*}
Our proposition follows from Lemma \ref{ref1} and Proposition \ref{ref2} below.
\end{proof}
\begin{remark}
\label{Lie}
The Lie algebra $\Lie \Gamma^{\prime}_R$ of the $p$-adic Lie group $\Gamma^{\prime}_R$ is a free $\Z_p$-module of rank $d$: $\Lie\Gamma^{\prime}_R=\Z_p[\nabla_j,\ 1\leq j\leq d]$.
 It is commutative.
 The Lie algebra $\Lie \Gamma_R$ of the $p$-adic Lie group $\Gamma_R$ is a free $\Z_p$-module of rank $d+1$: $\Lie\Gamma_R=\Z_p[\nabla_j,\ 0\leq j\leq d]$.
 We have $[\nabla_j,\nabla_l]=0,$ $1\leq j,l\leq d,$ and $[\nabla_j,\nabla_0]=p^i\nabla_j$, $1\leq j\leq d.$ One easily checks that
the above Koszul complexes compute Lie algebra cohomology of $\Lie \Gamma^{\prime}_R$ and $\Lie \Gamma_R$ with values in ${\A_R^{[u,v]}(r)}$. That is, we have quasi-isomorphisms
\begin{align*}
\R\Gamma(\Lie \Gamma^{\prime}_R, {\A_R^{[u,v]}(r)}) & \simeq {\rm Kos}(\Lie \Gamma'_R,{\A_R^{[u,v]}(r)}),\\
\R\Gamma(\Lie \Gamma_R, {\A_R^{[u,v]}(r)}) & \simeq [{\rm Kos}(\Lie \Gamma'_R,{\A_R^{[u,v]}(r)})\stackrel{\nabla_0}{\longrightarrow}{\rm Kos}(\Lie \Gamma'_R,{\A_R^{[u,v]}(r)})]\,.
\end{align*}
This implies that 
$$
{\rm Kos}(\varphi,\Lie\Gamma_R,\A_R^{[u,v]}(r))\simeq [\R\Gamma(\Lie \Gamma^{\prime}_R, {\A_R^{[u,v]}(r)})\stackrel{1-\phi}{\longrightarrow}\R\Gamma(\Lie \Gamma^{\prime}_R, {\A_R^{[u,v]}(r)})]\,.$$
\end{remark}
By Lemma \ref{19saint}, multiplication by $t^r$ induces $p^{2r}$-isomorphisms 
$$F^r\A_R^{[u,v]}\simeq t^r\A_R^{[u,v]}\quad{\rm and}\quad
 t^r\A_R^{[u,v/p]}\simeq \A_R^{[u,v/p]}.$$
If $S=\A_R^{[u,v]},\A_R^{[u,v/p]}$, and if we twist the Galois action by
$\chi^r$ on the source (i.e.~replace $S$ by $S(r)$), then multiplication
by $t^r$ becomes Galois-equivariant.  
\begin{lemma}
\label{ref1}
{\rm (i)} There is
 a natural $p^{40r}$-quasi-isomorphism
$$\tau_{\leq r}{\rm Kos}(\varphi,\Lie\Gamma_R,\A_R^{[u,v]}(r))
\simeq \tau_{\leq r}{\rm Kos}(\varphi,\partial,F^r\A_R^{[u,v]}).
$$

{\rm (ii)} There is a natural $p^{80r}$-quasi-isomorphism
$$\tau_{\leq r}{\rm Kos}(\varphi,\Lie\Gamma_R,\A_R^{[u,v]}(r))_n
\simeq \tau_{\leq r}{\rm Kos}(\varphi,\partial,F^r\A_R^{[u,v]})_n.
$$
\end{lemma}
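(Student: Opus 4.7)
The strategy is a Lazard-style comparison, realizing at the level of Koszul complexes the isomorphism between Lie algebra cohomology and filtered de Rham cohomology via multiplication by appropriate powers of $t$. The fundamental identity is $\nabla_j = t\partial_j$ for $0 \leq j \leq d$ (from the paragraph after Corollary~\ref{extr12.3}), together with $\varphi(t) = pt$, $\partial_0(t) = p^i$, and the commutation relations $[\nabla_j,\nabla_0] = p^i \nabla_j$ for $1 \leq j \leq d$ in $\Lie\Gamma_R$ (Remark~\ref{Lie}).

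I would construct the comparison morphism from ${\rm Kos}(\varphi,\Lie\Gamma_R,\A_R^{[u,v]}(r))$ to ${\rm Kos}(\varphi,\partial,F^r\A_R^{[u,v]})$ slot by slot. In the Koszul bidegree $(q,\epsilon)$, with $q \in \{0,\dots,d\}$ and $\epsilon \in \{0,1\}$ indexing the two rows produced by the $\partial_0$-direction, send an element of $\A_R^{[u,v]}(r)$ in the source (non-Frobenius) column to $t^{r-q-\epsilon}$ times itself, landing in $F^{r-q-\epsilon}\A_R^{[u,v]}$; in the target (Frobenius) column, send an element of $\A_R^{[u,v/p]}(r)$ to $p^r t^{r-q-\epsilon}$ times itself, landing in $\A_R^{[u,v/p]}$. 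Chain-map compatibility in the horizontal $\partial'$-direction uses $\partial_j(t) = 0$ for $j \geq 1$, giving $\partial_j(t^{r-q-\epsilon}a) = t^{r-q-\epsilon-1}\nabla_j(a)$; in the vertical $\partial_0$-direction, the Chevalley--Eilenberg bracket corrections $+qp^i$ on the Lie side (produced by $[\nabla_j,\nabla_0] = p^i\nabla_j$) must be matched against the $(r-q-\epsilon)p^i$ terms coming from $\partial_0(t^{r-q-\epsilon})$, with the twist-by-$\chi^r$ contribution $rp^i$ to $\nabla_0\vert_{(r)}$ filling the remaining gap; in the Frobenius direction, $\varphi(t) = pt$ yields $(p^r - p^q\varphi)(t^{r-q-\epsilon}a) = p^r t^{r-q-\epsilon}(1-\varphi)(a)$, balancing the extra $p^r$ factor inserted in the target column.

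The quasi-isomorphism estimate then follows from Lemma~\ref{19saint}: for each $0 \leq k \leq r$, multiplication by $t^k$ is a $p^{2k}$-isomorphism $\A_R^{[u,v]} \simeq F^k\A_R^{[u,v]}$, and likewise a $p^{2k}$-isomorphism $\A_R^{[u,v/p]} \simeq \A_R^{[u,v/p]}$. Combined with the $p^r$ factor on the target column and summed through the double-double complex (two Frobenius positions, two $\partial_0$ rows, Koszul degrees $q$ bounded by $r$ after truncation), careful bookkeeping yields the claimed $p^{40r}$-quasi-isomorphism. Truncation at $\tau_{\leq r}$ is essential: for $q > r$ the exponent $r-q-\epsilon$ becomes negative and the map would require inverting $t$, causing a loss unbounded in $q$ that would destroy the estimate.

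For part~(ii), since all the rings in play are $p$-torsion-free and the comparison morphism is defined integrally, applying the snake lemma to the short exact sequences $0 \to C \xrightarrow{p^n} C \to C/p^n \to 0$ propagates a $p^{Nr}$-quasi-isomorphism to a $p^{2Nr}$-quasi-isomorphism after reduction modulo $p^n$, giving the claimed constant $p^{80r}$. The main obstacle in the argument is the careful verification of the vertical chain-map identity: aligning the Lie-side twists $\nabla_0 + qp^i$ (arising from both the $\chi^r$-twist and the non-abelian bracket structure of $\Lie\Gamma_R$) against the de Rham-side shifts coming from differentiating powers of $t$ is a delicate bookkeeping exercise and the heart of the proof, whereas the Frobenius and horizontal compatibilities and the final quasi-isomorphism bounds are straightforward consequences of Lemma~\ref{19saint}.
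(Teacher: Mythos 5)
Your strategy is the same as the paper's---compare the two Koszul complexes by multiplying by powers of $t$ determined by the Koszul degree, using $\nabla_j = t\partial_j$, $\varphi(t)=pt$, and Lemma~\ref{19saint}. The paper phrases it as a zigzag through an intermediate complex (all entries $F^r\A^{[u,v]}$, differentials $\nabla_j$), with top arrow $t^r$ in every degree and bottom arrow $t^q$ in degree $q$; your direct map $t^{r-q-\epsilon}$ is exactly the composite of the paper's two arrows where it is defined. The Frobenius balancing with the extra $p^r$ also matches the paper's step of passing to $p^r(1-\varphi)$ on the Lie side.

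There are, however, two concrete problems. First, the ``delicate bookkeeping'' you defer is precisely where the argument as written fails: computing the vertical square (for $\epsilon=0$, degree $q$) gives
$\partial_0(t^{r-q}a)=(r-q)p^it^{r-q-1}a+t^{r-q}\partial_0 a$, while $t^{r-q-1}(\nabla_0+qp^i)(a)=t^{r-q}\partial_0 a+(r+q)p^it^{r-q-1}a$ once one expands $\nabla_0|_{\A(r)}=t\partial_0+rp^i$. These differ by $2qp^it^{r-q-1}a$, so with the $+qp^i$ correction taken from Remark~\ref{Lie} you do not get a chain map. The sign in Remark~\ref{Lie} is in fact wrong: a direct check from $\nabla_j=t\partial_j$ and $\partial_0 t=p^i$ gives $[\nabla_0,\nabla_j]=p^i\nabla_j$ (so $[\nabla_j,\nabla_0]=-p^i\nabla_j$), which makes the Chevalley--Eilenberg correction $-qp^i$, and then the vertical identity does balance. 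You need to flag and repair this sign, not merely assert the bookkeeping works.

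Second, the reduction mod $p^n$ is not a formal snake-lemma consequence with the constant you claim. If $f$ is an integral $p^{N}$-quasi-isomorphism between $p$-torsion-free complexes, the standard argument (pass to the cone, which has cohomology killed by $p^{2N}$, then use $0\to H^i(\mathrm{Cone})/p^n\to H^i(\mathrm{Cone}/p^n)\to H^{i+1}(\mathrm{Cone})[p^n]\to 0$) only gives that $f\otimes\Z/p^n$ is a $p^{4N}$-quasi-isomorphism, i.e.\ $p^{160r}$, not the stated $p^{80r}$. The paper gets $p^{80r}$ by re-running the $t$-multiplication argument directly modulo $p^n$, using that $t$ is divisible by at most $p^2$ in $S$ so multiplication by $t^{k}$ remains $p^{2k}$-injective mod $p^n$; this is the route you need for the claimed constant.
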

\begin{proof}
We will present the proof of the first claim. The proof for the complexes modulo $p^n$ is similar and we will just point out  the key point where it differs. 

We first construct $p^{4r}$-quasi-isomorphisms
 $$\tau_{\leq r}{\rm Kos}(\Lie \Gamma'_R,S(r))\simeq \tau_{\leq r}{\rm Kos}(\partial',F^rS),$$
 via the following diagram (with the convention $F^jS=S$ for all $j$, if $S=\A_R^{[u,v/p]}$):
 $$
 \xymatrix{
 S(r)\ar[r]^{(\nabla_{j})} \ar[d]_{\wr}^{t^r}&  S(r)^{J'_1}\ar[r]\ar[d]_{\wr}^{t^r}&  \cdots \ar[r] & {S(r)}^{J'_r}\ar[r] \ar[d] _{\wr}^{t^r}& {S(r)}^{J'_{r+1}}\ar[r]\ar[d]_{\wr}^{t^r}  & \cdots\\ 
 F^rS\ar[r]^{(\nabla_{j})} &  F^{r}{S}^{J'_1}\ar[r]&  \cdots \ar[r] 
& F^r{S}^{J'_r}\ar[r]  & F^r{S}^{J'_{r+1}}\ar[r]  & \cdots\\
F^rS\ar[u]^{t^0=\id}_{\wr}\ar[r]^{(\partial_j)}& (F^{r-1}S)^{J'_1}\ar[u]^{t^1}_{\wr}\ar[r] & \cdots\ar[r] & S^{J'_r}\ar[u]^{t^r}_{\wr}\ar[r] & S^{J'_{r+1}}\ar[u]^{t^{r+1}}\ar[r] & \cdots }
 $$
 The top vertical map, being multiplication by $t^r$, 
is a $p^{4r}$-quasi-isomorphism. For the same reason, the bottom map
is a $p^{4r}$-isomorphism in degree~$\leq r$ and injective in higher degrees. (Note, however that {\it it
is not a $p^N$-isomorphism in degree~$\geq r+1$ (for any $N$),
 which explains why we have to truncate
at degree~$r$.}) 
In the case of complexes modulo $p^n$, the above argument works (with double exponents in the error terms) since, by Lemma \ref{19saint}, $t$ is divisible in $S$ by at most $p^2$ (hence multiplication by $t^{r+1}$ is $p^{2(r+1)}$-injective). 
Compatibility with Frobenius and  with $\partial_0$ 
gives us a $p^{36r}$-quasi-isomorphism
between
$$
  \tau_{\leq r} \left[\begin{aligned}
\xymatrix@C=1.5cm{
{\rm Kos}(\partial',F^r{\A_R^{[u,v]}})\ar[r]^-{p^{r}-p^{\jcdot}\varphi}\ar[d]^{\partial_{0}} & 
{\rm Kos}(\partial',{\A_R^{[u,v/p]}})\ar[d]^{\partial_{0}} \\
{\rm Kos}(\partial',F^{r-1}{\A_R^{[u,v]}})   \ar[r]^-{p^{r}-p^{\jcdot+1}\varphi} &  {\rm Kos}(\partial',{\A_R^{[u,v/p]}})}\end{aligned}\right]
$$
and 
$$
  \tau_{\leq r} \left[\begin{aligned}
\xymatrix@C=1.3cm{
{\rm Kos}(\Lie \Gamma'_R,{\A_R^{[u,v]}(r)})\ar[r]^-{p^{r}(1-\varphi)}\ar[d]^{\nabla_{0}} & {\rm Kos}(\Lie \Gamma'_R,{\A_R^{[u,v/p]}(r)})\ar[d]^{\nabla_{0}} \\
{\rm Kos}(\Lie \Gamma'_R,{\A_R^{[u,v]}(r)})   \ar[r]^-{p^{r}(1-\varphi)} &  {\rm Kos}(\Lie \Gamma'_R,{\A_R^{[u,v/p]}(r)})}\end{aligned}\right]
$$
This last complex being $p^r$-quasi-isomorphic to $\tau_{\leq r}{\rm Kos}(\varphi,\Lie\Gamma_R,\A^{[u,v]}_R(r))$,
our lemma follows.
\end{proof}
\begin{proposition}
\label{ref2}
There exists
 a natural quasi-isomorphism 
$$
\laz:\quad  {\rm Kos}(\varphi,\Gamma_R,\A_R^{[u,v]}(r))
\stackrel{\sim}{\to}{\rm Kos}(\phi,\Lie\Gamma_R,\A_R^{[u,v]}(r))
$$
\end{proposition}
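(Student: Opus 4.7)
The plan is to realize $\laz$ as a Koszul-complex version of the Lazard--Serre comparison between continuous group cohomology of a $p$-adic analytic Lie group and its Lie algebra cohomology, exploiting the analyticity of the action of $\Gamma_R$ on $\A_R^{[u,v]}$.

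First I would verify that the formal power series relating the group generators to the Lie algebra generators converge on $\A_R^{[u,v]}$. Write $\tau_j = \gamma_j - 1 = \nabla_j\cdot U_j$ with $U_j := \sum_{n\ge 0}\nabla_j^n/(n+1)!$. The iterate estimates of Lemma~\ref{extr12.2}, together with the identity $\nabla_j = t\partial_j$ and Lemma~\ref{19saint} (which ensures $t$ has $p$-adic valuation $\ge 1$ on $\A_R^{[u,v]}$, in fact $\ge 3/2$ when $p=2$), show that each application of $\nabla_j$ gains at least one power of $p$. This dominates the $p$-adic denominators in the Bernoulli-type inverse series $V_j := \nabla_j/(e^{\nabla_j}-1)$ (by von Staudt--Clausen), so both $U_j$ and $V_j$ converge on $\A_R^{[u,v]}$ and are mutually inverse operators.

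Second, I would define $\laz$ component-wise on each Koszul summand. On the ``commutative'' part involving $\tau_1,\dots,\tau_d$ versus $\nabla_1,\dots,\nabla_d$, the degree-$q$ map sends the $(i_1,\dots,i_q)$-component $x_{i_1\cdots i_q}$ from the group side to $\bigl(\prod_{k=1}^{q} V_{i_k}\bigr) x_{i_1\cdots i_q}$ on the Lie side. Since $\Gamma'_R$ is abelian, all the $V_{i_l}$ commute with all the $\nabla_{i_k}$; combined with the identity $V_{i_k}\tau_{i_k} = \nabla_{i_k}$, this makes the diagonal assignment a morphism of complexes, and invertibility of each $V_j$ promotes it to an isomorphism.

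Third, I would glue this to the $\tau_0$/$\nabla_0$ direction of the double complex, and this is where the main obstacle lies. Using $\gamma_0 = \exp(\nabla_0)$ and the bracket $[\nabla_0,\nabla_j] = p^i\nabla_j$ (which exponentiates to $\gamma_0\tau_j\gamma_0^{-1} = \tau_j^c$, the origin of the twists $\delta_{i_j}$), an explicit expansion of $\gamma_0 - \delta_{i_1\cdots i_q}$ combined with the component-wise multiplication by $\prod_k V_{i_k}$ should produce the shift $\nabla_0 + qp^i$ required on the Lie side. Compatibility with $\varphi = \varphi_{\rm cycl}$ is then immediate since the Frobenius agrees on both sides and the series defining $\laz$ are built from $\Gamma_R$-equivariant operators. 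The hard part is matching the non-abelian twist $\gamma_0 - \delta_{i_1\cdots i_q}$ against the Lie-theoretic shift $\nabla_0 + qp^i$ through the factors $V_{i_k}$: it reduces to an identity of the form $\gamma_0 \cdot \prod_k V_{i_k} = \bigl(\prod_k V_{i_k}\bigr)\cdot(\text{twist})$, a careful manipulation of the relations $[\nabla_0,\nabla_{i_k}] = p^i\nabla_{i_k}$ that is the essential calculation encoding why the Lazard comparison is an honest isomorphism in this non-abelian analytic setting.
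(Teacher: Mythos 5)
Your outline follows the same Lazard-type strategy as the paper: express the group Koszul differentials in terms of the Lie ones via the convergent series $\nabla_j/\tau_j$, act diagonally on the Koszul components, and then deal with the non-abelian direction coming from $\gamma_0$. Your first two steps are essentially identical to the paper's construction of the map $\beta$: the operator you call $V_j=\nabla_j/\tau_j$ is exactly the one used there, and your convergence argument (Bernoulli series with von Staudt--Clausen bounds, combined with the gain of roughly one power of $p$ per application of $\nabla_j$ via $\nabla_j=t\partial_j$ and Lemma~\ref{19saint}) is a legitimate variant of the paper's expansion of $\log(1+X)/X$ and $X/\log(1+X)$ controlled by Lemma~\ref{extr12.2}.

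The gap is in step three, and it is genuine. You propose to apply the \emph{same} componentwise operator $\prod_k V_{i_k}$ on both rows of the double complex and then rearrange using the bracket relation. That cannot give a chain map: the bottom row of the group-side Koszul complex is ${\rm Kos}^c(\Gamma'_R,M)$, whose differentials are built from $\tau^c_{i_k}=\gamma^c_{i_k}-1$, not from $\tau_{i_k}$. Multiplying by $\prod_k\nabla_{i_k}/\tau_{i_k}$ therefore does not transport the ${\rm Kos}^c$ differential onto the $\nabla$ differential --- one needs $\tau^{c,-1}_{i_k}$ there instead --- and, separately, a factor $\nabla_0\tau_0^{-1}$ must be inserted to match the $\tau_0$ column with the shifted $\nabla_0+qp^i$ column. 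The paper encodes precisely this by using two \emph{different} maps $\beta_q(a)=\nabla_{i_q}\cdots\nabla_{i_1}\tau^{-1}_{i_1}\cdots\tau^{-1}_{i_q}(a)$ on the top row and $\beta^c_q(a)=\nabla_{i_q}\cdots\nabla_{i_1}\nabla_0\tau_0^{-1}\tau^{c,-1}_{i_1}\cdots\tau^{c,-1}_{i_q}(a)$ on the bottom row, so the identity you want to reduce to, $\gamma_0\cdot\prod_kV_{i_k}=(\prod_kV_{i_k})\cdot(\text{twist})$, is not the right target. Moreover, proving that the non-commutative product $\nabla_{i_q}\cdots\nabla_{i_1}\nabla_0\tau_0^{-1}\tau^{c,-1}_{i_1}\cdots\tau^{c,-1}_{i_q}$ is a well-defined bijection is a real piece of analysis --- one must expand $\tau_0^{-1}$ as a series, push the $\tau^c_{i_k}$ through the powers of $(\gamma_0-1)$ using the conjugation formula $(\gamma^a_j-1)(\gamma_0-x)=(\gamma_0-x\delta(\gamma^a_j))(\gamma^{a/c}_j-1)$, and verify convergence of the resulting rearranged series in the ideal $(p^2,\gamma_0-1,\dots,\gamma_d-1)$. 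Your proposal names this as ``the hard part'' but stops short of the operator formula and the estimate that make it go through.
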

\begin{proof}
If $M=\A_R^{[u,v]}(r), \A_R^{[u,v/p]}(r)$,
  consider the map $\beta: {\rm Kos}(\Gamma_R^{\prime},M)\to  {\rm Kos}(\Lie \Gamma'_R,{M})$:
$$
\xymatrix@C=1.2cm{    
M \ar[r]^{(\gamma_{j}-1)}\ar[d]^{\id}_{\wr} & M^{J'_1}\ar[r]\ar[d]^{\beta_{1}}& 
M^{J'_2}\ar[r]\ar[d]^{\beta_{2}}\ar[r] & \\
  M\ar[r]^{(\nabla_j)} &{M}^d\ar[r]&   {M^{d_2}\ar[r] } & 
  }
  $$
with
  $$
  \beta_{q}: (a_{i_1\cdots i_q})\mapsto (\nabla_{i_q}\cdots\nabla_{i_1}\tau_{i_1}^{-1}\cdots\tau_{i_q}^{-1}(a_{i_1\cdots i_q}));\quad 1\leq q\leq d.
 $$
 Similarly, we define the map $\beta^c: {\rm Kos}^c(\Gamma_R^{\prime},M)\to  {\rm Kos}(\Lie \Gamma'_R,{M})$ as:
 $$
 \xymatrix@C=1.2cm{
   M\ar[r]^{(\gamma_{j}^c-1)} \ar[d]^{\beta^c_{0}}&    M^{J'_1}\ar[r] \ar[d]^{\beta^c_{1}}& M^{J'_2}\ar[r] \ar[d]^{\beta^c_{2}}&\\
 M\ar[r]^{(\nabla_j)} & M^{J'_1}\ar[r]&   M^{J'_2}\ar[r]  & 
}
$$
Here    
$$\beta^c_{0}=\nabla_0\tau_0^{-1},\quad  \beta^c_{q}: (a_{i_1\cdots i_q})\mapsto (\nabla_{i_q}\cdots\nabla_{i_1}\nabla_{0}\tau_{0}^{-1}\tau_{i_1}^{c,-1}\cdots\tau_{i_q}^{c,-1}(a_{i_1\cdots i_q}));\quad 1\leq q\leq d.
 $$
\begin{lemma}\label{25saint}
The maps $\beta$ and $\beta^c$ are well-defined isomorphisms.
\end{lemma}
\begin{proof}
We will treat the map $\beta$ first. Since, for $j,k\in \{1,\cdots,d\}$, $\nabla_j\nabla_k\tau_k^{-1}\tau_j^{-1}=(\nabla_j/\tau_j)(\nabla_k/\tau_k)$, 
it suffices to show that for $S=\A_R^{[u,v]}$ or $\A_R^{[u,v/p]}$
 and for $j\in \{1,\cdots,d\}$, the map $\nabla_j/\tau_j:\Lambda\to\Lambda$ is a well-defined isomorphism of $S(r)$. 
Write
$$\frac{\log(1+X)}{X}=1+a_1X+a_2X^2+\cdots\quad 
\frac{X}{\log(1+X)}=1+b_1X+b_2X^2+\cdots$$
We have $v_p(a_k)\geq -\frac{k}{p-1}$ for all $k$ (immediate) which implies that
$v_p(b_k)\geq -\frac{k}{p-1}$ for all $k$.

Now, $\tau_j=(\gamma_j-1)$ if $1\leq j\leq d$, and 
$\tau_0=c^r\gamma_0-1=(c^r-1)\gamma_0+(\gamma_0-1)$ (the $c^r$ comes from the fact that
we are in $S(r)$, not $S$).  Since $c^r-1$ is divisible by $p^2$,
one infers from Lemma~\ref{extr12.2}, that $\tau_j^k(S(r))\subset
\pi_i^{k(p^{i-1}-\delta_R-1)}(p,\pi_i^{e_0})^kS(r)$.
It follows that, since $\big(1+\frac{p^{i-1}-\delta_R-1}{e_0}\big)u>\frac{1}{p-1}$,
 the series
$1+a_1\tau_j+a_2\tau_j^2+\cdots$ and $1+b_1\tau_j+b_2\tau_j^2+\cdots$
converge as series of operators on $S(r)$, and the limit operators are inverse
of each other.  As the first one is $\nabla_j/\tau_j$, this allows to conclude
that $\nabla_j/\tau_j$ is an isomorphism of $S(r)$ for $0\leq j\leq d$.

That settles the case of $\beta$.
  Let us show that the map $\beta_{q}^c$, for $1\leq q\leq d$, is a well-defined  isomorphism, i.e., that the maps $\nabla_{i_q}\cdots\nabla_{i_1}\nabla_0\tau^{-1}_0\tau_{i_1}^{c,-1}\cdots\tau_{i_q}^{c,-1}$, $1\leq i_1 <\cdots <i_q\leq d$,  are well-defined  isomorphisms. We can write the last  map   as 
$(\nabla_{i_q}/\tau_{i_q})\cdots(\nabla_{i_1}/\tau_{i_1})\tau_{i_q}\cdots\tau_{i_1}\nabla_0\tau^{-1}_0\tau_{i_1}^{c,-1}\cdots\tau_{i_1}^{c,-1}$.
Hence, by the computations done above,  we are reduced to showing that the map $\tau_{i_q}\cdots\tau_{i_1}\nabla_0\tau_0^{-1}\tau_{i_1}^{c,-1}\cdots\tau_{i_1}^{c,-1}$ is well-defined and an isomorphism. 

  We have
\begin{align*}
\tau_{i_q}\cdots\tau_{i_1}\nabla_0\tau_0^{-1}\tau_{i_1}^{c,-1}\cdots\tau_{i_q}^{c,-1} & 
=\sum_{k\in \N}a_k\tau_{i_q}\cdots\tau_{i_1}((c^r-1)\gamma_{0}+(\gamma_0-1))^k\tau_{i_1}^{c,-1}\cdots\tau_{i_q}^{c,-1}
\end{align*}
The formula
$$(\gamma_j^a-1)(\gamma_{0}-x)=(\gamma_{0}-x\delta(\gamma^a_j))(\gamma_j^{a/c}-1),\quad \delta(\gamma_j^a):=\frac{\gamma^a_j-1}{\gamma_j^{a/c}-1},
$$
yields
\begin{align*}
(\gamma_j^a-1)(\gamma_{0}-1)^k=(\gamma_{0}-\delta(\gamma_j^a))
(\gamma_{0}-\delta(\gamma^a_j)\delta(\gamma_j^{a/c}))\cdots(\gamma_{0}-\delta(\gamma_j^a)\cdots
\delta(\gamma_j^{a/c^{k-1}}))(\gamma_j^{a/c^k}-1)
\end{align*}
Hence we can write
\begin{align*}
\tau_{i_q}\cdots\tau_{i_1}(\gamma_{0}-1)^k\tau_{i_1}^{c,-1}\cdots\tau_{i_1}^{c,-1} & =(\gamma_{0}-\delta_k)\cdots(\gamma_{0}-\delta_1)
\frac{\gamma_{i_q}^{1/c^k}-1}{\gamma_{i_q}^c-1}\cdots\frac{\gamma_{i_1}^{1/c^k}-1}{\gamma_{i_1}^c-1}\\
 & =(\gamma_{0}-\delta_k)\cdots(\gamma_{0}-\delta_1)\delta_0,
\end{align*}
where $\delta_j\in 1+(p^2,(\gamma_1-1),\cdots,(\gamma_d-1))$ (because $c-1$ is divisible by $p^2$). 

Writing $(\gamma_{0}-\delta_j)=(\gamma_{0}-1)+(1-\delta_j)$, one concludes
that 
$$\tau_{i_q}\cdots\tau_{i_1}(\gamma_{0}-1)^k\tau_{i_1}^{c,-1}\cdots\tau_{i_1}^{c,-1}
\in (p^2,\gamma_0-1,\dots,\gamma_d-1)^k.$$
It follows that the series
of operators $\sum_{k\in \N}a_k\tau_{i_q}\cdots\tau_{i_1}((c^r-1)\gamma_{0}+(\gamma_0-1))^k\tau_{i_1}^{c,-1}\cdots\tau_{i_q}^{c,-1}$ converges, which shows that
$\nabla_{i_q}\cdots\nabla_{i_1}\nabla_0\tau^{-1}_0\tau_{i_1}^{c,-1}\cdots\tau_{i_q}^{c,-1}$
is well-defined.
The same arguments show that the series of operators
$\sum_{k\in \N}b_k\tau_{i_q}^c\cdots\tau_{i_1}^c((c^r-1)\gamma_{0}+(\gamma_0-1))^k
\tau_{i_1}^{-1}\cdots\tau_{i_q}^{-1}$ converges and the sum is the inverse of the
previous operator.

This proves the lemma.
\end{proof}
\begin{remark}\label{251saint}
By Remark \ref{Lie}, the above isomorphisms
can be written as a  quasi-isomorphism
$$
(\beta,\beta^c): \quad \R\Gamma_{\cont}(\Gamma_R,M)\stackrel{\sim}{\to} \R\Gamma(\Lie\Gamma_R,M).
$$
Hence this is a (very simple -- almost commutative) example of integral Lazard isomorphism between Lie algebra cohomology and continuous  group cohomology for $p$-adic Lie groups (see \cite{HKN} for a detailed treatment of integral Lazard isomorphism and \cite{Tam} for an analytic (rational) version of the Lazard isomorphism). 
\end{remark}
  The above maps 
yield the following isomorphism of complexes (where $M=\A_R^{[u,v]}(r)$ and $M'=\A_R^{[u,v/p]}(r)$):
$$
 \left[\begin{aligned}
\xymatrix@C=1.2cm{
{\rm Kos}(\Lie \Gamma'_R,{M})\ar[r]^-{1-\phi}\ar[d]^{\nabla_{0}} &{\rm Kos}(\Lie \Gamma'_R,{M'}) \ar[d]^{\nabla_{0}} \\
{\rm Kos}(\Lie \Gamma'_R,{M})  \ar[r]^-{1-\varphi} &  {\rm Kos}(\Lie \Gamma'_R,{M'})
}\end{aligned}\right]
  \stackrel{(\beta,\beta^c)}{\longleftarrow} 
  \left[\begin{aligned}\xymatrix@C=1.2cm{
 {\rm Kos}(\Gamma^{\prime}_R,M)\ar[d]^{\tau_{0}}\ar[r]^-{1-\phi} & {\rm Kos}(\Gamma^{\prime}_R,M')\ar[d]^{\tau_{0}}\\
 {\rm Kos}^c(\Gamma^{\prime}_R,M)\ar[r]^-{1-\phi}
& {\rm Kos}^c(\Gamma^{\prime}_R,M')}
\end{aligned}\right]
$$
Set $\laz:=(\beta,\beta^c)$. 
This fullfills the requirements of Proposition ~\ref{ref2}.
\end{proof}

\subsection{Change of annulus of convergence}
\begin{lemma}\label{string1}
The natural map 
$${\rm Kos}(\phi,\Gamma_R,\A_R^{(0,v]+}(r))\to {\rm Kos}(\phi,\Gamma_R,\A_R^{[u,v]}(r))$$
is a quasi-isomorphism.
\end{lemma}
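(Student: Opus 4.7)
The plan is to show that the mapping cone of the natural map is acyclic, by reducing to a statement about $\psi_{\rm cycl}$ in the manner of Lemmas~\ref{uphipsi} and~\ref{16saint}. First, I would rewrite ${\rm Kos}(\phi, \Gamma_R, S(r))$, for $S = \A_R^{(0,v]+}$ and $S = \A_R^{[u,v]}$, as the $\Gamma_R$-Koszul complex applied to the two-term complex of topological $\Gamma_R$-modules $[S(r) \xrightarrow{1-\phi} S'(r)]$. The cone of the map in the lemma then becomes ${\rm Kos}(\Gamma_R, C)$, where
$$C \;=\; \left[\,\A_R^{[u,v]}/\A_R^{(0,v]+}(r) \xrightarrow{1-\phi} \A_R^{[u,v/p]}/\A_R^{(0,v/p]+}(r)\,\right].$$
Thus it suffices to show that $1-\phi$ is a $\Gamma_R$-equivariant topological isomorphism between these two quotients, so that $C$ itself is acyclic.

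Next, following the pattern of Lemma~\ref{uphipsi}, I would replace $1-\phi$ by a ``$\psi$-version'' $1-p^r\psi_{\rm cycl}$ (with appropriate powers of $p$ absorbed into the Koszul differentials), connected to the original complex by the standard commutative diagram involving $\psi_{\rm cycl}$. The discrepancy between the two versions is controlled by Koszul subcomplexes with coefficients in $(\ker \psi_{\rm cycl})$-eigenspaces; these decompose into $u_{{\rm cycl},\alpha}$-summands by Proposition~\ref{patch1}, each of which is $\Gamma_R$-stable and Koszul-acyclic by the eigenvalue argument of Lemma~\ref{uphipsi}.

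Finally, I would establish the $\psi$-version of the statement following Lemma~\ref{16saint}: the operator $\psi_{\rm cycl}$ sends $\A_R^{[u,v]}$ into $\A_R^{[pu,v]}$, so iterating $n$ times lands in $\A_R^{[p^nu,v]}$; since $p^nu$ exceeds $v$ for $n$ large (indeed for $n = 2$ with the choices $u = (p-1)/p$, $v = p-1$), the operator $\psi_{\rm cycl}$ is topologically nilpotent on the quotient $\A_R^{[u,v]}/\A_R^{(0,v]+}$, and therefore $1 - p^r\psi_{\rm cycl}$ admits the convergent (in fact finite) geometric-series inverse $\sum_{k\geq 0}(p^r\psi_{\rm cycl})^k$.

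The main obstacle will be the careful verification of $\Gamma_R$-equivariance throughout, in particular showing that the $u_{{\rm cycl},\alpha}$-decomposition and the isomorphisms between quotients at different annuli are all compatible with the action of $\Gamma_R$; and ensuring that the nilpotency of $\psi_{\rm cycl}$ on $\A_R^{[u,v]}/\A_R^{(0,v]+}$ (rather than the quotient $\A_R^{[u,v]}/\A_R^{[u]}$ treated in Lemma~\ref{16saint}) is genuinely stronger, so that the full $1 - p^r\psi_{\rm cycl}$ (and hence $1-\phi_{\rm cycl}$) is an honest isomorphism without the truncation that was necessary there.
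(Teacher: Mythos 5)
Your reduction to showing that $1-\phi$ is an isomorphism on the quotient module $M = \A_R^{[u,v]}/\A_R^{(0,v]+}$ (after identifying it with $\A_R^{[u,v/p]}/\A_R^{(0,v/p]+}$ via the natural inclusion) matches the paper exactly. But from there the paper takes a direct route that avoids $\psi$ entirely: it shows that $\varphi(M) \subset pM$ by a straightforward Laurent-series estimate (in coordinates, $\varphi$ multiplies the exponent of $X_0$ by $p$, so the ``slack'' in the $[u,v]$-integrality condition grows linearly, and $[pku/e]-[ku/e]\geq 1$ once $[ku/e]\neq 0$), then checks that $M$ has no $p$-divisible elements, whence $1-\varphi$ is invertible by the geometric series $\sum_k\varphi^k$. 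This is cleaner than your route and needs no truncation and no $\ker\psi$-decomposition.

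Your proposed $\psi$-detour has a genuine gap. You claim $\psi_{\rm cycl}$ sends $\A_R^{[u,v]}$ into $\A_R^{[pu,v]}$, but the paper's definitions give $\psi_{\rm Kum}\colon R_\varpi^{[u,v]}\to R_\varpi^{[pu,pv]}$ (both endpoints scale), and for $\psi_{\rm cycl}$ there is an additional shift by a negative power of $T$ (Proposition~\ref{extr10}: $\psi_{\rm cycl}(T^{-pN}R_\varpi^{(0,v/p]+})\subset T^{-N-\delta_R}R_\varpi^{(0,v]+}$). So iterating $\psi$ does not give you a self-map of the quotient $\A_R^{[u,v]}/\A_R^{(0,v]+}$ in any obvious way, and the claimed ``$p^n u > v$ so $\psi$ is nilpotent'' argument is not well-formed: the quotients $\A_R^{[p^nu,p^nv]}/\A_R^{(0,p^nv]+}$ change with $n$. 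Moreover, passing from the $(1-\phi)$-complex to the $(1-p^r\psi)$-complex requires the $\ker\psi = \oplus_{\alpha\neq 0} S_\alpha$ decomposition to descend to the quotient $M$ and to remain $\Gamma_R$-stable, which you flag but do not establish; this is exactly the machinery the paper deliberately avoids here (it is used in the $\psi$-based Lemmas~\ref{later1} and~\ref{later2} for Proposition~\ref{string2}, where the $\pi_i^{-\ell}$-shift is handled explicitly). Finally, you also omit the verification that $M$ has no $p$-divisible elements, which is needed for any of these geometric-series inverses to converge.
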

\begin{proof}
Use the isomorphism $R_\varpi^{\rm deco}\simeq \A_R^{\rm deco}$
which defines $\A_R^{\rm deco}$ to translate everything into the language
of anlytic functions.
It suffices to prove that the map 
$1-\varphi:R_\varpi^{[u,v]}/R_\varpi^{(0,v]+}\to R_\varpi^{[u,v/p]}/ R_\varpi^{(0,v/p]+}$ is an
isomorphism. 

First note that the natural map $ R_\varpi^{[u,v]}/ R_\varpi^{(0,v]+}\to R_\varpi^{[u,v/p]}/ R_\varpi^{(0,v/p]+}$
(induced by the inclusion of $ R_\varpi^{[u,v]}$ into $ R_\varpi^{[u,v/p]}$) is an isomorphism
(injectivity is true because elements of the kernel are analytic functions which take integral values on
$\frac{u}{e_0}\leq v_p(X_0)\leq \frac{v}{e_0}$ and which extend to analytic
functions taking integral values on $0<v_p(X_0)\leq \frac{v}{pe_0}$, hence belong
to $ R_\varpi^{(0,v]+}$; and surjectivity -  because $r_{\zeta-1}^{[u,v/p]}=r_{\zeta-1}^{[u]}+r_{\zeta-1}^{(0,v/p]+}$
as is clear from the definitions).  Denote by $M$ the module
$ R_\varpi^{[u,v]}/ R_\varpi^{(0,v]+}$; by the above $1-\varphi$ can be considered
as an endomorphism of $M$.

Now, an element $x$ of $ R_\varpi^{[u,v]}$ can be written as
$x=\sum_{k\in\N}x_k\,\frac{X_0^k}{p^{[ku/e_0]}}$, with $x_k\in R_\varpi^{(0,v]+}$
going to $0$ $p$-adically.
So $\varphi(x)=\sum_{k\in\N}p^{[pku/e_0]-[ku/e_0]}\varphi(x_k)\big(\frac{\varphi(X_0)}{X_0^p}\big)^k
\frac{X_0^{pk}}{p^{[pku/e_0]}}$ and, since $[pku/e_0]-[ku/e_0]\geq 1$ if $[ku/e_0]\neq 0$,
we see that $\varphi(x)\in  R_\varpi^{(0,v/p]+}+p R_\varpi^{[u,v/p]}$; hence $\varphi(M)\subset pM$.

To conclude, it remains to check that $M$ does not contain $p$-divisible elements.
Let $(e_i)_{i\in I}$ be a collection of elements of
$R_\varpi^+$ whose images form a basis of $R_\varpi^+/(p,X_0)$ over $k=r_{\zeta-1}^+/(p,X_0)$.
Then $(e_i)_{i\in I}$ is a topological basis of
$R_\varpi^{[u,v]}$ over $r_{\zeta-1}^{[u,v]}$ and of $R_\varpi^{(0,v]+}$ over
$r_{\zeta-1}^{(0,v]+}$.  
Writing everything in the $(e_i)_{i\in I}$ basis,
reduces the question to proving that $r_{\zeta-1}^{[u,v]}/r_{\zeta-1}^{(0,v]+}$ has
no divisible element.  This is rather obvious if you look at Laurent expansions.
\end{proof}  

\subsection{Change of disk of convergence} 
\begin{proposition}\label{string2}
 The natural map  
$${\rm Kos}(\phi,\Gamma_R,\A_R^{(0,v]+}(r))
\to  {\rm Kos}(\phi,\Gamma_R, \A_{R}(r))$$
is a $p^{8}$-quasi-isomorphism 
\end{proposition}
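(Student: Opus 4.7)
The plan is to show that the mapping fiber of the displayed map is $p^8$-acyclic. By the standard bicomplex manipulation, this fiber is quasi-isomorphic to the $\varphi$-Koszul complex on the two-term quotient complex
\[
C\;:=\;[\A_R(r)/\A_R^{(0,v]+}(r)\xrightarrow{\,1-\varphi\,}\A_R(r)/\A_R^{(0,v/p]+}(r)],
\]
so it suffices to prove that this $\mathrm{Kos}(\varphi,\Gamma_R,C)$ is $p^8$-acyclic. The strategy, following the template of Lemma \ref{uphipsi}, is to replace $1-\varphi$ by $1-\psi$ (where $\psi=\psi_{\rm cycl}$ is the left inverse of $\varphi_{\rm cycl}$) and then invert $1-\psi$ explicitly on the relevant quotient.

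The $\varphi$-to-$\psi$ transition proceeds via the square
\[
\xymatrix@C=2.2cm{
\A_R(r)/\A_R^{(0,v]+}(r)\ar[r]^-{1-\varphi}\ar[d]^{\mathrm{id}} & \A_R(r)/\A_R^{(0,v/p]+}(r)\ar[d]^{\psi}\\
\A_R(r)/\A_R^{(0,v]+}(r)\ar[r]^-{\psi-1} & \A_R(r)/T^{-\delta_R}\A_R^{(0,v]+}(r),
}
\]
well-defined thanks to Proposition \ref{extr10}(i). The cone of the induced morphism between $\varphi$- and $\psi$-Koszul bicomplexes reduces to a $\Gamma_R$-Koszul complex supported on the $\psi=0$-eigenspace, which is acyclic by the overconvergent analog of Lemma \ref{prepsi}: the $\psi=0$-part decomposes under $\Gamma_R'$ as $\bigoplus_{\alpha\neq 0}S_\alpha$ (using Proposition \ref{patch1}), and modulo $p$ at least one $\tau_j$ acts on each $S_\alpha$ by a unit scalar $\alpha_j$, forcing the Koszul complex to be contractible as in the proof of Lemma \ref{uphipsi}.

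One is then reduced to showing that $1-\psi$ is a $p^8$-isomorphism on $\A_R(r)/\A_R^{(0,v]+}(r)$ (up to the $T^{-\delta_R}$-twist on the target). The inverse is the formal geometric series $-\sum_{k\geq 0}\psi^k$. By Proposition \ref{extr10}(ii), the subring $T^{-\ell_R}\A_R^{(0,v]+}$ is $\psi$-stable, and iterating Proposition \ref{extr10}(i) shows that for any $y\in\A_R$ the sequence $\psi^k(y)$ lands in $T^{-\ell_R}\A_R^{(0,v]+}$ after finitely many steps; hence the series is a \emph{finite} sum modulo $\A_R^{(0,v]+}$, which provides the required inverse. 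The $p^8$ denominator arises from inverting the cumulative $T^{-\ell_R}$-shift against the overconvergent filtration: each $T$-power contributes a factor bounded by $p^{v/e}$, and under the hypothesis of enough roots of unity the numerical inequalities of Section 2 (notably $\delta_R+1<e_0/(2p)$, $v\leq p-1$, $\ell_R=\lceil p\delta_R/(p-1)\rceil$) force the total denominator to be at most $p^8$, uniformly in $r$, $n$, and $R$.

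The main obstacle is the precise bookkeeping of the $T$-power shifts introduced at each stage: the $T^{-\delta_R}$ from the $\psi$-replacement and the $T^{-\ell_R}$ from the inversion step must both be inverted against the $v$-filtration in a compatible way, and the delicate numerical check is that their combined effect remains within the universal bound $p^8$ rather than producing a power of $p$ depending on $\delta_R$, $\ell_R$, or $v$. This is precisely where the hypothesis ``$K$ contains enough roots of unity'' enters, since it provides the quantitative control on $\delta_R$ and forces the shifts to be small.
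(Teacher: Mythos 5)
Your basic strategy matches the paper's: both pass from the $\varphi$-Koszul complexes to $\psi$-Koszul complexes (the paper via Lemma~\ref{later1}, applied to each ring), and then exploit that $\psi$ is topologically nilpotent on $\A_R$ modulo $\pi_i^{-\ell}\A_R^{(0,v]+}$ so that $1-\psi$ becomes invertible via the geometric series (Lemma~\ref{later2}). Working directly on the two-term quotient complex $C$ is a legitimate repackaging of the same homological algebra.

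There is, however, a genuine misidentification of where the $p$-power constants actually come from, and this would derail your ``delicate numerical check.'' In the paper's argument, the inversion of $1-\psi$ on the quotient $\A_R/\pi_i^{-\ell}\A_R^{(0,v]+}$ is \emph{exact}: $\psi$ is pointwise topologically nilpotent there, so $-\sum_{k\geq 0}\psi^k$ converges and is an honest inverse, with no $p$-torsion introduced at this stage. All of the $p$-power loss occurs in the $\varphi\to\psi$ replacement step (Lemma~\ref{later1}), from three separate sources: the quotient $\pi_i^{-\ell}\A_R^{(0,v]+}/\A_R^{(0,v]+}$ (needed because $\psi$ does not stabilize $\A_R^{(0,v]+}$ itself, only its $\pi_i^{-\ell}$-twist) is killed by $p^2$ via Lemma~\ref{new2}; the $\psi=0$ eigenspace decomposition of Proposition~\ref{extr10}(iii) is only a $p^{p+1}$-isomorphism; and the $\Gamma_R$-Koszul acyclicity on each eigenspace contributes a further $p^2$. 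Your proposed bookkeeping of ``cumulative $T^{-\ell_R}$-shifts against the $v$-filtration'' to extract $p^8$ from the inversion step is not how the constant arises, and attempting it would not produce a bound independent of the geometry.

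A second, related, imprecision concerns your treatment of the $\psi=0$ eigenspace. You assert that ``modulo $p$ at least one $\tau_j$ acts on each $S_\alpha$ by a unit scalar $\alpha_j$,'' by analogy with Lemma~\ref{prepsi}. But Lemma~\ref{prepsi} is a statement about the differential operators $\partial_{{\rm Kum},j}$, not the group elements $\tau_j=\gamma_j-1$. For the group action, what one actually shows (cf.\ the proof of Lemma~\ref{later1} and Remark~\ref{KL20}) is that $(\gamma_k-1)\cdot(\varphi(y)[x^\alpha])=\varphi(\pi_1 G(y))[x^\alpha]$ with $G$ an invertible $\pi_1$-linear operator congruent to multiplication by $\alpha_k$ modulo $\pi_1$. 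Consequently $\tau_k$ is injective on $\varphi(\A_R^{(0,v]+})[x^\alpha]$ with cokernel annihilated by $\pi_1$ (hence by $p^2$, again by Lemma~\ref{new2}), rather than being a unit on the nose modulo $p$. This is exactly where ``enough roots of unity'' is used quantitatively: it makes $\delta_R$ small enough that $\pi_1$ divides $p^2$, $\pi_i^{\delta_R+1}\pi_1$ divides $\pi$, and so on. Without substituting this more careful $\Gamma_R$-analysis for the naive $\partial_j$-modulo-$p$ picture, your contractibility claim for the $\psi=0$ part would not produce a universal torsion bound.
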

\begin{proof}
We will use the quasi-isomorphisms  with $\psi$-complexes. The proposition is a
direct consequence of Lemmas~\ref{later1} and~\ref{later2} below.
\end{proof}
 Set $\ell=p^{i-1}$. 
From  Proposition~\ref{extr10},
we know that
$\pi_{i}^{-\ell}\A_R^{(0,v]+}$ is stable under the action of   $\psi$ (we have $\ell\geq \ell_R$).  
 For $S=\A_{R},\A_R^{(0,v]+}$,
let ${\rm Kos}(\psi,\Gamma_R,S(r))$ be the complex:
$$
{\rm Kos}(\psi,\Gamma_R,S(r)):=\left[\begin{aligned}\xymatrix{
{\rm Kos}(\Gamma^{\prime}_R,\pi_i^{-\ell}S(r))\ar[r]^-{\psi-1}\ar[d]^{\tau_{0}} &  {\rm Kos}(\Gamma^{\prime}_R,\pi_i^{-\ell}S(r))\ar[d]^{\tau_{0}} \\
{\rm Kos}^c(\Gamma^{\prime}_R,\pi_i^{-\ell}S(r))\ar[r]^-{\psi-1}
& {\rm Kos}^c(\Gamma^{\prime}_R,\pi_i^{-\ell}S(r))}
\end{aligned}
\right]
$$
(For $S=\A_R$, there is no difference between $\pi^{-\ell}_iS$ and $S$.)
 
\begin{lemma}
\label{later1}
If $S=\A_{R},\A_R^{(0,v]+}$,
the natural map
$${\rm Kos}(\phi,\Gamma_R,S(r))\to {\rm Kos}(\psi,\Gamma_R,S(r)),$$
induced by the identity on the first column  and $\psi$
on the second column, is a $p^{p+7}$-quasi-isomorphism.
\end{lemma}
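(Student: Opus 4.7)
The plan is to reduce the statement to a chain-map analysis at the level of the underlying two-term complexes and then to invoke Proposition~\ref{extr10}(iii) as the single essential input. At the coefficient level, the map of the lemma is induced by the chain map
\begin{equation*}
f = (\iota,\psi):\; \bigl[S \xrightarrow{1-\phi} S'\bigr] \longrightarrow \bigl[\pi_i^{-\ell}S \xrightarrow{\psi-1} \pi_i^{-\ell}S\bigr],
\end{equation*}
where $\iota$ denotes the natural inclusion. It is a chain map because $\psi\circ\phi = \id$ forces $\psi(1-\phi)(s)=(\psi-1)(s)$ on $S$. Since the lemma's assertion is obtained by applying the $\Gamma_R$-Koszul functor componentwise (recall Section~\ref{CGC}), and this functor preserves $p^C$-quasi-isomorphisms (up to a small, universal loss coming from the bounded length of the Koszul complex), it suffices to prove that the mapping cone of $f$ becomes $p^{p+7}$-acyclic after applying ${\rm Kos}(\Gamma_R,-)$.

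I then identify the kernel and cokernel of $f$ at the coefficient level. Because $\psi\circ\phi=\id$, the map $f^0=\iota:S\to \pi_i^{-\ell}S$ is injective; its cokernel is $\pi_i^{-\ell}S/S$. The map $f^1 = \psi: S'\to\pi_i^{-\ell}S$ has kernel $(S')^{\psi=0}$ and image $\psi(S')$. Consequently the cone of $f$ fits into two short exact sequences of complexes whose outer terms are (shifted copies of) $(S')^{\psi=0}$ and $\pi_i^{-\ell}S/\psi(S')$. The problem thereby splits into two independent Koszul-acyclicity estimates:
\begin{equation*}
\R\Gamma_{\rm Kos}(\Gamma_R,(S')^{\psi=0}) \text{ is } p^{C_1}\text{-acyclic}, \qquad \R\Gamma_{\rm Kos}(\Gamma_R,\pi_i^{-\ell}S/\psi(S')) \text{ is } p^{C_2}\text{-acyclic},
\end{equation*}
with $C_1+C_2\leq p+7$ after bookkeeping.

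For the first estimate, the key input is Proposition~\ref{extr10}(iii), which provides a $\Gamma'_R$-equivariant $p^{p+1}$-quasi-isomorphism $\bigoplus_{\alpha\neq 0}\phi(S)\,u_{{\rm cycl},\alpha}\to (S')^{\psi=0}$. For each $\alpha\neq 0$, some coordinate $\alpha_k$ is nonzero; $\gamma_k$ acts on $u_{{\rm cycl},\alpha}$ as multiplication by the unit $(1+\pi)^{\alpha_k}$ (and, for $k=0$, by the analogous cyclotomic factor coming from $\gamma_0(T)=(1+T)^c-1$). Hence $\tau_k=\gamma_k-1$ acts on this summand as multiplication by $(1+\pi)^{\alpha_k}-1$, a divisor of $\pi$; a standard Koszul-acyclicity argument (analogous to the one used in the proof of Lemma~\ref{uphipsi}) then kills the contribution of each summand, giving the required $p^{C_1}$-acyclicity. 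For the second estimate, the quotient $\pi_i^{-\ell}S/\psi(S')$ is controlled by the polar part in $\pi_i^{-k}$ for $1\leq k\leq \ell$; the action of $\tau_0=\gamma_0-1$ described in Corollary~\ref{extr12.3} and Lemma~\ref{extr12.2} is sufficiently close to an isomorphism on these polar pieces (divisibility by $(1+T)^{p^i}-1$) to force Koszul acyclicity in the $\gamma_0$-direction, up to a controlled power of $p$.

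The main difficulty in this argument is not any single step but rather the combinatorial bookkeeping of constants: tracking how the losses in the two short exact sequences combine with the $p^{p+1}$ of Proposition~\ref{extr10}(iii) and the Koszul-acyclicity losses, so that the total is no worse than $p^{p+7}$. The ``$p^{p+1}$'' from Proposition~\ref{extr10}(iii) is by far the dominant contribution; the remaining budget of $p^6$ must absorb the two long-exact-sequence passages, the small constants arising from making $\tau_k$ invertible on each $\alpha$-summand, and the passage from the coefficient-level cone to the Koszul-$\Gamma_R$ cone.
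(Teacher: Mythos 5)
Your overall strategy (reduce to a coefficient-level chain map, factor its cone through kernel and cokernel, invoke Proposition~\ref{extr10}(iii) to decompose the $\psi=0$ part, and argue Koszul-acyclicity $\alpha$-summand by $\alpha$-summand) is the same as the paper's, and your identification of Proposition~\ref{extr10}(iii) as the dominant input is right. However, there are two concrete gaps.

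First, on the $\psi=0$ part you assert that $\tau_k=\gamma_k-1$ acts on the summand $\varphi(\A_R^{(0,v]+})[x^\alpha]$ as multiplication by the scalar $(1+\pi)^{\alpha_k}-1$. This is not correct: $\gamma_k$ does multiply the basis element $[x^\alpha]$ by $(1+\pi)^{\alpha_k}$, but it acts nontrivially on the $\varphi(\A_R^{(0,v]+})$-coefficient as well. The actual computation (which the paper carries out) gives $(\gamma_k-1)\cdot(\varphi(y)[x^\alpha])=\varphi(\pi_1 G(y))[x^\alpha]$ with $G$ a twisted-derivation-type operator equal to multiplication by $\alpha_k$ modulo $\pi_1$; the $p^2$-acyclicity of the Koszul in that direction then comes from the invertibility of $G$ together with $\pi_1\mid p^2$ (Lemma~\ref{new2}). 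The purely scalar reading is not a well-defined operator on the summand and would give the wrong bound.

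Second, your treatment of the cokernel is both incorrect and unnecessary. You propose to show Koszul-acyclicity of $\pi_i^{-\ell}S/\psi(S')$ by arguing that $\tau_0=\gamma_0-1$ is ``sufficiently close to an isomorphism'' on the polar pieces, citing Corollary~\ref{extr12.3} and Lemma~\ref{extr12.2}. Those results say exactly the opposite: $\gamma_0-1$ strictly increases the $T$-adic order, i.e.\ it is close to zero on these rings, so this route breaks down. Moreover, you omit the degree-$0$ cokernel $\pi_i^{-\ell}S/S$ from your decomposition. The paper's argument here is far simpler and avoids both problems: $\pi_i^{-\ell}\A_R^{(0,v]+}/\A_R^{(0,v]+}$ is killed by $p^2$ (Lemma~\ref{new2}), the degree-$1$ cokernel $\pi_i^{-\ell}S/\psi(S')$ is a further quotient of it (since $S=\psi(\varphi(S))\subset\psi(S')$), and therefore the entire quotient complex is killed by $p^2$ termwise, with no Koszul analysis needed. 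Finally, the constant bookkeeping you defer is genuinely part of the proof: the $p^{p+7}$ comes from $p^2$ (quotient) plus $p^{p+1}$ (Proposition~\ref{extr10}(iii)) plus $p^4$ (the two columns of the $\alpha$-wise Koszul argument), and one does need to verify this adds up.
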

\begin{proof}
The arguments are the same in both cases, but the case $S=\A_R^{(0,v]+}$
is a little bit subtler (because of the difference between $\pi_i^{-\ell}S$
and $S$), so we will only treat that case.

The quotient complex is annihilated by 
$p^{2}$, since so is $\pi^{-\ell}_{i}\A_R^{(0,v]+}/\A_R^{(0,v]+}$ (by Lemma \ref{new2}).
It remains to check that  the kernel complex 
$$\big[\xymatrix{{\rm Kos}\big(\Gamma^{\prime}_R,\big(\A_R^{(0,v/p]+}(r)\big)^{\psi=0}\big)
\ar[r]^-{\tau_{0}}
& {\rm Kos}^c\big(\Gamma^{\prime}_R,\big(\A_R^{(0,v/p]+}(r)\big)^{\psi=0}\big)}\big]
$$
is  $p^{4}$-acyclic.  
Now, according to Proposition ~\ref{extr10}, 
we have a $p^{p+1}$-isomorphism
$$\big(\A_R^{(0,v/p]+}(r)\big)^{\psi=0}\simeq
\bigoplus_{\alpha\in\{0,\dots,p-1\}^{[0,d]}, \,\alpha\neq 0}
\varphi(\A_R^{(0,v]+})[x^\alpha],\quad{\text{where $[x^\alpha]=(1+\pi_i)^{\alpha_0}[x_1^{\alpha_1}]\cdots
[x_d^{\alpha_d}]$.}}$$
Hence, up to $p^{p+1}$, we can replace the kernel complex
by
$$\bigoplus_{\alpha\in\{0,\dots,p-1\}^{[0,d]},\,\alpha\neq 0}
\big[\xymatrix{{\rm Kos}\big(\Gamma^{\prime}_R,\varphi(\A_R^{(0,v]+})(r)[x^\alpha]\big)
\ar[r]^-{\tau_{0}}
& {\rm Kos}^c\big(\Gamma^{\prime}_R,\varphi(\A_R^{(0,v]+})(r)[x^\alpha]\big)}\big],$$
and we can treat the complexes corresponding to each $\alpha$ separately.
There are two cases:

$\bullet$ 
$\alpha_k\neq 0$ for some $k\neq 0$.
We claim that then 
${\rm Kos}\big(\Gamma^{\prime}_R,\varphi(\A_R^{(0,v]+})[x^\alpha]\big)$
and
${\rm Kos}^c\big(\Gamma^{\prime}_R,\varphi(\A_R^{(0,v]+})[x^\alpha]\big)$
are $p^{2}$-acyclic
 (the twist by $(r)$
has disappeared because the cyclotomic character is trivial on $\Gamma'_R$).
As the proof is the same in both cases, we will only treat the first complex.
Write it as the double complex:
 $$
\begin{CD}
 \phi(\A_R^{(0,v]+})[x^\alpha]@>(\gamma_j-1)>>\phi(\A_R^{(0,v]+})^{J''_1}[x^\alpha]@>>>\phi(\A_R^{(0,v]+})^{J''_2}[x^\alpha]@>>> \cdots \\
@VV\gamma_{k}-1V @VV\gamma_k-1V @VV\gamma_{k}-1V\\
\phi(\A_R^{(0,v]+})[x^\alpha]@>(\gamma_j-1)>> \phi(\A_R^{(0,v]+})^{J''_1} [x^\alpha]@>>> \phi(\A_R^{(0,v]+})^{J''_2}[x^\alpha]@>>> \cdots
\end{CD}
$$
 where the first horizontal maps involve $\gamma_j$'s with $j\neq k$, $1\leq j\leq d$.
Now, we have:
$$(\gamma_k-1)\cdot(\varphi(y)[x^\alpha])=\varphi(\pi_1G(y))[x^\alpha],$$
where
$$G(y)=(1+\pi_1)^{\alpha_k}\pi_1^{-1}(\gamma_k-1)y+\pi_1^{-1}((1+\pi_1)^{\alpha_k}-1)y.$$
(We use the fact that $\gamma_k([x^\alpha])=[\epsilon]^{\alpha_k}[x^\alpha]=
\varphi((1+\pi_1)^{\alpha_k})[x^\alpha]$.)

Now, $G$ is $\pi_i$-linear and, since $(\gamma_k-1)$ is trivial modulo~$\pi_i^{-\delta_R-1}\pi$
(cf.~Corollary~\ref{extr12.3}),
 it follows that, modulo~$\pi_1$,
$G$ is just multiplication by $\alpha_k$, since $\pi_i^{\delta_R+1}\pi_1$ divides $\pi$ (Lemma~\ref{new2}). 
 Hence $G$ is invertible.
It follows, that $\gamma_k-1$ is injective
on $\phi(\A_R^{(0,v]+})[x^\alpha]$ and, since $\frac{p^2}{\pi_1}\in\A_R^{(0,v]+}$, that
the cokernel of $(\gamma_k-1)$ is killed by $p^2$.

$\bullet$ 
$\alpha_k=0$ for all $k\neq 0$, and
$\alpha_0\neq 0$. Then to prove that the kernel complex is $p^{2}$-acyclic,
we will prove that $\tau_0:{\rm Kos}\to {\rm Kos}^c$ is injective and the cokernel
complex is killed by $p^{2}$.
This amounts to showing the same statement for
$$\gamma_0-\delta_{i_1}\cdots\delta_{i_q}:\phi(\A_R^{(0,v]+})[x^\alpha](r)\to
\phi(\A_R^{(0,v]+})[x^\alpha](r),\quad\delta_{i_j}=\tfrac{\gamma_{i_j}^c-1}{\gamma_{i_j}-1}.$$
We have
$$(\gamma_0-\delta_{i_1}\cdots\delta_{i_q})\cdot(\varphi(y)[x^\alpha])(r)
=\big(c^r\varphi(\gamma_0(y))(1+\pi)^{p^{-i}(c-1)\alpha_0}[x^\alpha]\big)(r)-
\big(\varphi(\delta_{i_1}\cdots\delta_{i_q}\cdot y)[x^\alpha]\big)(r).$$
So, we are lead to study the map
$F$ defined by
$$F=c^r(1+\pi_1)^{a}\gamma_0-\delta_{i_1}\cdots\delta_{i_q},
\quad {\text{where $a=p^{-i}(c-1)\alpha_0\in\Z_p^*$.}}$$
Now, $c^r-1$ is divisible by $p^2$, $(1+\pi_1)^a=1+a\pi_1$ mod~$\pi_1^2$,
and $\delta_{i_j}-1\in(\gamma_{i_j}-1)\Z_p[[\gamma_{i_j}-1]]$.
Hence, we can write $\pi_1^{-1}F$ in the form
$\pi_1^{-1}F=a+\pi_1^{-1}F',$
with $F'\in(p^2,\pi_1^2,\gamma_0-1,\dots,\gamma_d-1)\Z_p[[\pi_1,\Gamma_R]]$.
It follows from Lemmas~\ref{new2} and~\ref{extr12.2}
 that there exists $N>0$
such that $\pi_1^{-1}F'=0$ on
$\pi_i^k\A_R^{(0,v]+}/\pi_i^{k+N}
\A_R^{(0,v]+}$ for all $k\in\N$.  Hence $\pi_1^{-1}F$
induces multiplication by $a$ on $\pi_i^k\A_R^{(0,v]+}/\pi_i^{k+N}
\A_R^{(0,v]+}$ for all $k\in\N$, which implies
that it is an isomorphism of $\A_R^{(0,v]+}$, and
proves what we want since $\pi_1$ divides $p^2$ by Lemma~\ref{new2}.
\end{proof}

\begin{remark}\label{KL20}
If $\alpha\in\{0,1,\dots,p-1\}^{[0,d]}$, let $M_\alpha=x^\alpha\varphi(\E_R^+)$.
The above proof shows that there exists $N>0$ such that $\frac{\gamma_j-1}{\pi}$
is the multiplication by $\alpha_j$ on $\pi_i^{pk}M_\alpha/\pi_i^{pk+pN}M_\alpha$,
for all $k\in\Z$.  Hence, if $\alpha_j\neq 0$, then $\gamma_j-1$ is invertible
on $x^\alpha\varphi(\E_R)$ and $(\gamma_j-1)^{-1}\cdot \pi_i^{pk}M_\alpha\subset
\pi^{-1}\pi_i^{pk}M_\alpha$, for all $k\in\Z$.
In view of the relationship between Koszul complexes and group cohomology,
this implies that $H^i(\Gamma_R,\pi_i^{pk}M_\alpha)$ is killed by $\pi$,
for all $k\in\Z$, and $\alpha\neq 0$.

Now, we defined maps $c_{{\rm cycl},\alpha}$ on $R_\varpi$ (cf.~Proposition ~\ref{patch1}),
and it follows from Proposition~\ref{patch1} (using $\iota_{\rm cycl}$ to transfer the result
to $\A_R$), that $c_{{\rm cycl},\alpha}(\pi_i^{pk}\E_R^+)\subset \pi_i^{pk-\delta_R} M_\alpha$.
As $x\mapsto (c_{{\rm cycl},\alpha}(x))_{\alpha\neq 0}$ gives an isomorphism
between $\E_R^{\psi=0}$ and $\oplus_{\alpha\neq 0} x^\alpha\varphi(\E_R)$,
commuting with the action of $\Gamma_R$, we infer that
$H^i(\Gamma_R, (\pi^{pk}\E_R^+)^{\psi=0})$ is killed by $\pi\pi_i^{\delta_R}$, for
all $k\in\Z$ and $i\in\N$.
\end{remark}

\begin{lemma}
\label{later2}
The natural map
$${\rm Kos}(\psi,
\Gamma_R,\A_R^{(0,v]+}(r))\to {\rm Kos}(\psi,\Gamma_R,\A_{R}(r))$$ is a 
quasi-isomorphism. 
\end{lemma}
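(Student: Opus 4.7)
The plan is to show that the natural map is a quasi-isomorphism by proving that its mapping fiber is acyclic. Set $N := \A_R / \pi_i^{-\ell}\A_R^{(0,v]+}$. By Proposition~\ref{extr10}(ii), $\psi$ stabilizes $\pi_i^{-\ell}\A_R^{(0,v]+}$, and by Corollary~\ref{extr12.3} the same is true of $\Gamma_R$, so $N(r)$ inherits commuting actions of $\psi$ and $\Gamma_R$. The mapping fiber is then identified, up to a shift, with ${\rm Kos}(\psi,\Gamma_R,N(r))$: the double complex whose two $\tau_0$-connected columns are ${\rm Kos}(\Gamma_R',N(r))$ and ${\rm Kos}^c(\Gamma_R',N(r))$, with horizontal maps given by $\psi-1$ applied componentwise.

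The key claim I would establish is that $\psi-1$ acts bijectively on $N(r)$. Since each term of the two Koszul complexes is a direct sum of copies of $N(r)$ and $\psi-1$ commutes with both $\tau_0$ and the Koszul differentials (by $\Gamma_R$-equivariance of $\psi$), bijectivity of $\psi-1$ makes the $(\psi-1)$-direction of the double complex exact, yielding acyclicity of the total complex.

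To produce an inverse for $\psi-1$ on $N$, I would construct it as the convergent series $-(1+\psi+\psi^2+\cdots)$. Convergence uses Proposition~\ref{extr10}(i): the estimate $\psi(\pi_i^{-pM}\A_R^{(0,v/p]+})\subset \pi_i^{-M-\delta_R}\A_R^{(0,v]+}$ shows $\psi$ is strictly contracting on the negative-$X_0$-degree direction. Iterating gives $\psi^k(\pi_i^{-p^k M}\A_R^{(0,v]+})\subset \pi_i^{-M-k\delta_R}\A_R^{(0,v]+}$, so for any bound $M$ one eventually lands in $\pi_i^{-\ell}\A_R^{(0,v]+}$, i.e.\ in the zero class of $N$. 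Thus $\psi$ is topologically nilpotent on the filtered pieces $\pi_i^{-M}\A_R^{(0,v]+}/\pi_i^{-\ell}\A_R^{(0,v]+}$ of $N$, and $\psi-1$ is bijective there.

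The main obstacle is that a general element of $\A_R$, having the form $\sum_n c_n X_0^{-n}$ with $c_n\in\A_R^+$ and $c_n\to 0$ only $p$-adically, need not lie in any single $\pi_i^{-M}\A_R^{(0,v]+}$: the $p$-adic valuations $v_p(c_n)$ may fail to grow linearly in $n$. The remedy is to work modulo $p^n$ and pass to the inverse limit. Modulo $p^n$, every element of $\A_R$ reduces to a genuine Laurent polynomial in $X_0^{-1}$ with coefficients in $\A_R^+/p^n$, which does lie in some $\pi_i^{-M}\A_R^{(0,v]+}/p^n$, so the nilpotence argument applies per layer; the $p$-adic completeness of $N$ then yields global convergence of the inverse series.
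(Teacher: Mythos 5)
Your overall plan is exactly the paper's: reduce to acyclicity of the cokernel, identify that cokernel with the Koszul complex for $N:=\A_R/\pi_i^{-\ell}\A_R^{(0,v]+}$ (which is what $\psi$ acts on in all the entries of both complexes, thanks to the $\pi_i^{-\ell}$ twist in the definition of ${\rm Kos}(\psi,\Gamma_R,\cdot)$), prove $\psi$ is pointwise topologically nilpotent on $N$, invert $1-\psi$ by the geometric series, and use the fact that $\A_R$ is the $p$-adic completion of $\A_R^{(0,v]+}[\pi_i^{-1}]$ --- which is precisely what makes your ``work modulo $p^n$ and use $p$-adic completeness'' remedy legitimate. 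That identification of the obstacle and its resolution is the right content; the paper states it in one line.

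There is, however, a genuine gap in the step where you justify the topological nilpotence. The iterated containment you write, $\psi^k(\pi_i^{-p^kM}\A_R^{(0,v]+})\subset \pi_i^{-M-k\delta_R}\A_R^{(0,v]+}$, is a correct inclusion, but it cannot yield the conclusion you draw from it: for a \emph{fixed} starting element $x$ of $X_0$-degree $-D$, the best you extract from it is $\psi^k(x)\in\pi_i^{-\lceil D/p^k\rceil-k\delta_R}\A_R^{(0,v]+}$, and the exponent $\lceil D/p^k\rceil+k\delta_R$ tends to $+\infty$ with $k$ whenever $\delta_R>0$. That is, the containing module is \emph{growing}, not shrinking, so ``eventually lands in $\pi_i^{-\ell}\A_R^{(0,v]+}$'' does not follow from the estimate as stated. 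What one actually needs is the sharper observation, obtained by combining (i) and (ii) of Proposition~\ref{extr10}: if $x\in\pi_i^{-D_k}\A_R^{(0,v]+}$ then $\psi(x)\in\pi_i^{-D_{k+1}}\A_R^{(0,v]+}$ with $D_{k+1}=\lceil D_k/p\rceil+\delta_R$, and this integer sequence is \emph{strictly decreasing} as long as $D_k$ exceeds roughly $\ell_R=\lceil\frac{p}{p-1}\delta_R\rceil$, so it eventually drops to a value $\leq\ell_R\leq\ell$, after which part~(ii) guarantees it stays there. It is this monotone decrease to a bounded fixed region, not the loose bound $-M-k\delta_R$, that makes $\psi$ topologically nilpotent on each filtered piece. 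With that correction the rest of your argument goes through and coincides with the paper's proof.
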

\begin{proof}
The map is injective, so we just have to check that the quotient complex is acyclic.
Since $\A_{R}$ is the completion of $\A_R^{(0,v]+}[\pi^{-1}_{i}]$ with respect to the $p$-adic topology,
 Proposition~\ref{extr10}
 implies that $\psi: \A_{R}/\pi^{-\ell}_i\A_R^{(0,v]+}\to \A_{R}/\pi^{-\ell}_i\A_R^{(0,v]+}$ is (pointwise) topologically
nilpotent, hence $1-\psi$ is bijective.
It follows that $1-\psi$ is bijective on 
${\rm Kos}(\Gamma^{\prime}_{R},\A_{R}/\pi^{-\ell}_i\A_R^{(0,v]+})$ and 
${\rm Kos}^c(\Gamma^{\prime}_{R},\A_{R}/\pi^{-\ell}_i\A_R^{(0,v]+})$.
This allows to conclude.
\end{proof}
\subsection{Passage to Galois cohomology}
\subsubsection{Galois cohomology of $R$}
Let $\widetilde R_\infty$ be the integral closure of $R$ in the sub-$R[\frac{1}{p}]$-algebra of $\overline R[\frac{1}{p}]$
generated by $\zeta_{m}, X^{m^{-1}}_{a+b+1},\cdots,X^{m^{-1}}_{d}$, for all $m\geq 1$.
We have $R_\infty\subset \widetilde R_\infty$ and, by Abhyankar's Lemma (Lemma \ref{Abh} below), $\overline R$ is 
the maximal extension of $\widetilde R_\infty$ which is \'etale in characteristic zero.

Let $\widetilde\Gamma_R=\Gal(\widetilde R_{\infty}[1/p]/R[1/p])$.  Then $\Gamma_R$ is a quotient
of $\widetilde\Gamma_R$ and the kernel $\Gamma_1$ of the projection is isomorphic
to $\prod_{\ell\neq p}(\Z_\ell(1))^c$; in particular, it is of ``order'' prime to $p$.

The algebra $R_\infty[\frac{1}{p}]$ is perfectoid; let
Let $\E_{\widetilde R_\infty}$ be its tilt,
and let $\E_{\widetilde R_\infty}^+$ be its ring of integers (it is
the tilt of $\widetilde R_\infty$).  Finally, let $\A_{\widetilde R_\infty}=W(\E_{\widetilde R_\infty})$;
this is the fixed points of $\A_{\overline R}$ by ${\rm Gal}(\overline R[\frac{1}{p}]/\widetilde R_\infty[\frac{1}{p}])$.

\begin{proposition}
\label{KLiu}
{\rm (i)} The exact sequence (\ref{AS11}) induces a quasi-isomorphism
$$[\xymatrix{\R\Gamma_{\cont}(G_R,\A_{\overline{R}}(r))\ar[r]^-{1-\phi}&
\R\Gamma_{\cont}(G_R,\A_{\overline{R}}(r))}]\stackrel{\sim}{\leftarrow}\R\Gamma_{\cont}(G_R,\Z_p(r)).
$$

{\rm (ii)}
The inclusions $\A_{R_\infty}\subset\A_{{R}^{\prime}_\infty}\subset\A_{\overline R}$ induce  quasi-isomorphisms
\begin{align*}
\mu_H:\quad \R\Gamma_{\cont}(\Gamma_{R},\A_{R_\infty}(r))\stackrel{\sim}{\to} \R\Gamma_{\cont}(\widetilde\Gamma_{R},
\A_{{\widetilde R}_\infty}(r))\stackrel{\sim}{\to}
 \R\Gamma_{\cont}(G_R,\A_{\overline{R}}(r)).
  \end{align*}

{\rm (iii)} 
The inclusion $\A_R\subset\A_{R_\infty}$ induces a quasi-isomorphism
\begin{align*}
\R\Gamma_{\cont}(\Gamma_R,\A_R(r))\stackrel{\mu_{\infty}}{\to}
  \R\Gamma_{\cont}(\Gamma_R,\A_{R_\infty}(r)).
  \end{align*}
\end{proposition}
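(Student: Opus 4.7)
The proof splits along the three parts. For part~(i), I would apply $\R\Gamma_{\cont}(G_R,-)$ to the short exact sequence
\[
0\to \Z_p(r)\to \A_{\overline R}(r)\xrightarrow{1-\varphi}\A_{\overline R}(r)\to 0
\]
coming from Lemma~\ref{AS}(i); since that sequence admits a continuous section, the resulting distinguished triangle is precisely the homotopy fibre displayed in the statement.

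For part~(ii), two maps must be handled. The second (from $\widetilde\Gamma_R$ to $G_R$) is the essential input from almost purity: by Abhyankar's lemma, $\overline R$ is the maximal extension of $\widetilde R_\infty$ that is \'etale after inverting~$p$, and $\widetilde R_\infty[\tfrac{1}{p}]$ is perfectoid (it is a pro-\'etale, prime-to-$p$ cover of the perfectoid algebra $R_\infty[\tfrac{1}{p}]$). The Faltings/Kedlaya--Liu/Scholze almost purity theorem then gives, for $H=\Gal(\overline R[\tfrac{1}{p}]/\widetilde R_\infty[\tfrac{1}{p}])$, the identification $\R\Gamma_{\cont}(H,\A_{\overline R}(r))\simeq \A_{\widetilde R_\infty}(r)$ (an almost quasi-isomorphism upgraded to a genuine one by $p$-adic completeness, since the almost vanishing is by ideals generated by elements of arbitrarily small positive valuation). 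Hochschild--Serre for $1\to H\to G_R\to\widetilde\Gamma_R\to 1$ then yields the second quasi-isomorphism. The first map (from $\Gamma_R$ to $\widetilde\Gamma_R$) is easier: the kernel $\Gamma_1$ of $\widetilde\Gamma_R\to\Gamma_R$ has pro-order prime to~$p$, so since $\A_{\widetilde R_\infty}(r)$ is $p$-adically complete and torsion-free, reducing modulo~$p^n$ gives $H^i_{\cont}(\Gamma_1,\A_{\widetilde R_\infty}(r)/p^n)=0$ for $i>0$ while the invariants are $\A_{R_\infty}(r)/p^n$; passing to the inverse limit and applying a second Hochschild--Serre concludes.

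Part~(iii) is the honest decompletion step, and is where the relative $(\varphi,\Gamma)$-module machinery of Andreatta--Iovita and Kedlaya--Liu enters. Following the Tate--Sen formalism, my plan is to exhibit a topological decomposition of $\A_{R_\infty}$ as an $\A_R$-module compatible with $\Gamma_R$, of the form $\A_R\oplus C$, where $C$ is the completed direct sum of eigenpieces of the form $[x^\alpha]\varphi^{-m}(\A_R)$ indexed by the non-zero $\alpha\in(\Z[\tfrac{1}{p}]/\Z)^{d+1}$ arising from the tower $R=R_0\subset R_1\subset\cdots\subset R_\infty$ (this is a relative avatar, obtained by iterating Frobenius, of the $\psi_{\rm cycl}$-decomposition of Proposition~\ref{patch1}). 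On each such piece, some component $\alpha_j$ is non-zero, hence---reproducing and strengthening the computation in Lemma~\ref{later1} and Remark~\ref{KL20}---the corresponding operator $\gamma_j-1$ (or $\gamma_j^c-1$) acts bijectively with continuous inverse on that piece, making the associated Koszul complex honestly acyclic. Summing over $\alpha\neq 0$ and passing to the limit gives the quasi-isomorphism.

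The principal obstacle will be (iii). The $p^N$-vanishings provided by Remark~\ref{KL20} are not sufficient on their own, so one must verify that on each eigenpiece the inverse of the relevant $\gamma_j-1$ lands back in the same piece rather than in its $\pi$-localisation; this is exactly the content of the relative Tate--Sen axioms verified in \cite{AI,KL}, and I would invoke them (or re-prove them in this setup using $\psi_{\rm cycl}$ as a trace-like projector onto $\A_R$). A secondary complication is that the semidirect product structure $\Gamma_R=\Gamma_R'\rtimes\Gamma_K$ permutes the indices $\alpha$, so the decomposition has to be arranged $\Gamma_K$-equivariantly, or else the argument run stepwise: first over $\Gamma_R'$ at each finite stage $R_n$ using the Kummer coordinates $X_j^{1/p^n}$, then combined with the cyclotomic argument of Lemma~\ref{later1} after passing to the limit.
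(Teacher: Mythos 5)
Your parts~(i) and~(ii) match the paper's argument: (i)~is immediate by applying $\R\Gamma_{\cont}(G_R,-)$ to the sequence with continuous section, and (ii)~combines the prime-to-$p$ vanishing for $\Gamma_1$ with almost \'etale descent for $H=\Gal(\overline R[\tfrac{1}{p}]/\widetilde R_\infty[\tfrac{1}{p}])$, exactly as in the references the paper cites.

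Part~(iii) is where your proposal has a genuine gap. You propose decomposing $\A_{R_\infty}$ into eigenpieces $[x^\alpha]\varphi^{-m}(\A_R)$ indexed by non-zero $\alpha\in(\Z[\tfrac{1}{p}]/\Z)^{d+1}$ and then showing each piece has acyclic Koszul complex because some $\gamma_j-1$ acts invertibly. But you correctly observe, without resolving it, that $(\gamma_j-1)^{-1}$ does \emph{not} preserve the eigenpiece: by the estimates in Lemma~\ref{later1}/Remark~\ref{KL20}, $\gamma_j-1$ is only invertible up to a uniformly bounded $\pi\pi_i^{\delta_R}$-torsion, not honestly invertible. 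You also correctly note that your $\alpha$-indexed decomposition is not stable under $\Gamma_K$ (the semidirect factor permutes the indices), which breaks any attempt to apply Hochschild--Serre piece by piece. The paper avoids both problems simultaneously: it reduces mod~$p$ by d\'evissage, and instead of the $\alpha$-decomposition it uses the \emph{level} decomposition $\varphi^{-n}(\E_R)=\E_R\oplus\varphi^{-1}(\E_R^{\psi=0})\oplus\cdots\oplus\varphi^{-n}(\E_R^{\psi=0})$, whose projectors $s_n=\varphi^{-n}\circ\psi^n\circ\varphi^{n+k}$ are genuine normalised traces and hence $\Gamma_R$-equivariant (not just $\Gamma_R'$-equivariant). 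The paper then proves that the $s_n$ are uniformly continuous (bounded by $\pi_i^{-\ell_R}$, using Proposition~\ref{extr10}) so that the decomposition extends to the completion $\E_{R_\infty}=\E_R\oplus Z$, and finally shows $H^*(\Gamma_R,Z)=0$ by a cocycle argument: a cocycle $c$ has $s_n^*(c)$ with values $\pi_i^{k_n}$-small for $k_n\to\infty$, and Remark~\ref{KL20} produces primitives $c_n$ at the cost of a \emph{uniformly} bounded $\pi$-denominator; since $k_n\to\infty$ the series $\sum_n c_n$ converges and trivialises $c-s_0(c)$. This convergence argument is exactly the ingredient your proposal lacks; punting to the Tate--Sen axioms of Andreatta--Iovita or Kedlaya--Liu would also work, but the paper gives a self-contained estimate tailored to its own $\pi_i$-bounds rather than invoking those frameworks wholesale.
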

\begin{proof}
(i) is immediate.  The first quasi-isomorphism in (ii) follows from the fact that $\Gamma_1$ is of order prime to~$p$,
and the second 
follows by almost \'etale descent as in \cite[3.2,3.3]{Co}, \cite[\S2]{AB}, \cite[\S7]{AI}.
Finally, we will prove (iii) by an adaptation of the usual decompletion techniques as in
 \cite[Theorem\,7.16]{AI}, \cite{KL}.

By d\'evissage, it is enough to prove the statement modulo~$p$, in which case the twist disappears
and $\A_R$, $\A_{R_\infty}$ are replaced by $\E_R$, $\E_{R_\infty}$.  Now, $\E_{R_\infty}$
is the completion of the perfectisation $\varphi^{-\infty}(\E_R)$ of $\E_R$.  
But $\E_R=\varphi(\E_R)\oplus \E_R^{\psi=0}=\varphi^2(\E_R)\oplus\varphi(\E_R^{\psi=0})\oplus
\E_R^{\psi=0}=\cdots$, which 
gives us that $$\varphi^{-n}(\E_R)=\E_R\oplus\varphi^{-1}(\E_R^{\psi=0})\oplus\cdots\oplus
\varphi^{-n}(\E_R^{\psi=0}).$$  Hence we can write an element $x$ of $\varphi^{-\infty}(\E_R)$,
uniquely, as $x=s_0(x)+\sum_{n\geq 1}s_n^*(x)$, with $s_0(x)\in\E_R$ and $s_n^*(x)\in
\varphi^{-n}(\E_R^{\psi=0})$ if $n\geq 1$, with $s_n^*(x)=0$ for all but a finite number of $n$'s.  

The map $\E_R$-linear map $s_n=s_0+s_1^*+\cdots+s_n^*$
is the normalised trace map from $\varphi^{-\infty}(\E_R)$ to $\varphi^{-n}(\E_R)$.
On $\varphi^{-n-k}(\E_R)$ it is given by the formula $\varphi^{-n}\circ\psi^n\circ\varphi^{n+k}$,
which shows, using (ii) of Proposition ~\ref{extr10} (or rather its reduction modulo~$p$),
that $$s_n(\varphi^{-n+k}(\E_R^+))\subset \varphi^{-n}(\pi_i)^{-\ell_R}\varphi^{-n}(\E_R^+)\subset
\pi_i^{-\ell_R}\varphi^{-n}(\E_R^+);$$
hence the $s_n$'s are a uniform family of uniformly continuous 
maps $\varphi^{-\infty}(\E_R)$, and they extend to $\E_{R_\infty}$,
and any element $x\in \E_{R_\infty}$  
can be written, uniquely,
as $x=s_0(x)+\sum_{n\geq 1}s_n^*(x)$, with $s_0(x)\in\E_R$ and $s_n^*(x)\in
\varphi^{-n}(\E_R^{\psi=0})$ if $n\geq 1$, with $s_n^*(x)\to 0$ when $n\to\infty$
(this last condition means that $s_n^*(x)\in \pi^{k_n}\varphi^{-n}((\E_R^+)^{\psi=0})$, with $k_n\to +\infty$).
In particular, this gives a $\Gamma_R$-equivariant decomposition $\E_{R_\infty}=\E_R\oplus Z$
(sending $x$ to $(s_0(x),x-s_0(x))$).

Now, if $c$ is a continuous cocycle on $\Gamma_R$, the same is true of the $s_n^*(c)$,
and there exists some $N$ such that
 $s_n^*(c)$ has values in $\pi_i^{-N+k_n} \varphi^{-n}((\E_R^+)^{\psi=0})$, 
with $k_n\geq 0$ and $k_n\to +\infty$.
But Remark~\ref{KL20} tells us that $\varphi^n(s_n^*(c))$ is the coboundary of a cochain
with values in $\pi_i^{p^n(-N+k_n)}\pi^{-1}\pi_i^{-\delta_R}(\E_R^+)^{\psi=0}$.
Hence if we set $M=N+p^i+\delta_R$, it follows that $s_n^*(c)$ is the coboundary of a cochain
$c_n$ with values in $\pi_i^{-M+k_n}\varphi^{-n}((\E_R^+)^{\psi=0})$,
and $c-s_0(c)$ is the coboundary of $\sum_{n\geq 1}c_n$; hence $H^i(\Gamma_R,Z)=0$.

This concludes the proof.
\end{proof}

\subsubsection{The map $\alpha_r^{\laz}$}
  \begin{theorem}
  \label{mainCN}
Assume that $K$ contains enough roots of unity. 
There exist  universal constants $N$ and $c_p$ such that there exist
 $p^{Nr+c_p}$-quasi-isomorphisms,
\begin{align*}
\alpha_r^{\laz}:\quad \tau_{\leq r}{\rm Syn}(R,r) & \simeq\tau_{\leq r}
\R\Gamma_{\cont}(G_{R},\Z_p(r)),\\
\alpha_{r,n}^{\laz}:\quad \tau_{\leq r}{\rm Syn}(R,r)_n & \simeq\tau_{\leq r}
\R\Gamma_{\cont}(G_{R},\Z/p^n(r)).
\end{align*}
  \end{theorem}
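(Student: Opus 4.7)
The plan is to assemble $\alpha_r^{\laz}$ by concatenating all the quasi-isomorphisms that have been established throughout Sections~\ref{Losy} and~4, and then to identify the resulting Koszul complex with continuous Galois cohomology. The integral and mod~$p^n$ statements are proved in parallel: every individual step either has a mod~$p^n$ analogue (stated explicitly) or reduces to one by d\'evissage, so only a minor thickening of the constants is required.

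First I would produce the ``crystalline'' half of the chain. Proposition~\ref{8saint} gives a $p^{6r}$-quasi-iso\-mor\-phism
$${\rm Syn}(R,r)={\rm Kum}(R_\varpi^{\rm PD},r)\to {\rm Kum}(R_\varpi^{[u]},r),$$
Corollary~\ref{17saint} promotes this to $\tau_{\leq r}{\rm Kum}(R_\varpi^{[u]},r)\simeq \tau_{\leq r}{\rm Kum}(R_\varpi^{[u,v]},r)$ up to $p^{2c_e+2r}$, and Proposition~\ref{RtoA} changes the Frobenius to obtain ${\rm Kum}(R_\varpi^{[u,v]},r)\simeq {\rm Cycl}(R_\varpi^{[u,v]},r)$ up to a factor $2^{2(d+1)}$. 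Rewriting the complex in the basis $\omega_{\bf j}$ (Section~\ref{moved}) and transporting via the cyclotomic embedding $\iota_{\rm cycl}:R_\varpi^{[u,v]}\simeq \A_R^{[u,v]}$ gives a tautological identification $\tau_{\leq r}{\rm Cycl}(R_\varpi^{[u,v]},r)\simeq \tau_{\leq r}{\rm Kos}(\varphi,\partial,F^r\A_R^{[u,v]})$.

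Next I would traverse the ``$(\varphi,\Gamma)$-module'' half. Proposition~\ref{26saint} yields a $p^{Nr}$-quasi-isomorphism $\tau_{\leq r}{\rm Kos}(\varphi,\partial,F^r\A_R^{[u,v]})\simeq \tau_{\leq r}{\rm Kos}(\varphi,\Gamma_R,\A_R^{[u,v]}(r))$ via multiplication by $t^r$ (Lemma~\ref{ref1}) combined with the integral Lazard-type isomorphism $(\beta,\beta^c)$ of Proposition~\ref{ref2}; this is the step that forces truncation at degree~$r$. Lemma~\ref{string1} and Proposition~\ref{string2} then allow the zig-zag $\A_R^{[u,v]}(r)\leftarrow \A_R^{(0,v]+}(r)\to \A_R(r)$ on coefficients of the Koszul complex, each a quasi-isomorphism up to a universal constant. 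Using the identification ${\rm Kos}(\Gamma_R,M)\simeq \R\Gamma_{\cont}(\Gamma_R,M)$ from Section~\ref{CGC}, the resulting complex becomes $\bigl[\R\Gamma_{\cont}(\Gamma_R,\A_R(r))\xrightarrow{1-\varphi}\R\Gamma_{\cont}(\Gamma_R,\A_R(r))\bigr]$. Finally Proposition~\ref{KLiu} -- decompletion in~(iii), almost \'etale descent in~(ii), and the Artin--Schreier mapping fiber of~(i) -- turns this into $\R\Gamma_{\cont}(G_R,\Z_p(r))$, completing the definition of $\alpha_r^{\laz}$.

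The hard part is purely bookkeeping. What must be verified carefully is: (a) that the individual constants $p^{6r},p^{2c_e+2r},p^{Nr},p^{40r},p^8,\dots$ add up to something of the shape $p^{Nr+c_p}$ with $N$ universal and $c_p$ depending only on $p$ (and $d$ if $p=2$, which enters only through Proposition~\ref{RtoA} and through the partial divided powers needed when $p=2$); (b) that each intermediate identification is compatible with $\tau_{\leq r}$, since the multiplication-by-$t^r$ step is genuinely not a $p^N$-isomorphism in degrees $>r$; and (c) that the mod~$p^n$ version goes through with constants independent of $n$. For~(c), Lemmas~\ref{16saint} and~\ref{ref1} are already stated modulo $p^n$, and the remaining ingredients (Propositions~\ref{RtoA}, \ref{ref2}, \ref{string2}, \ref{KLiu}(i),(iii)) reduce to the integral case by d\'evissage along the exact sequences $0\to p^n\to p^{n-1}\to \F_p\to 0$, which at worst doubles the exponents. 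Putting everything together yields the required $p^{Nr+c_p}$-quasi-isomorphisms $\alpha_r^{\laz}$ and $\alpha_{r,n}^{\laz}$.
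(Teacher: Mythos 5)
Your proposal traces exactly the chain of quasi-isomorphisms that the paper uses: ${\rm Syn}(R,r) \to {\rm Kum}(R_\varpi^{[u]},r)\to {\rm Kum}(R_\varpi^{[u,v]},r)\to {\rm Cycl}(R_\varpi^{[u,v]},r)\to {\rm Kos}(\varphi,\partial,F^r\A_R^{[u,v]}) \to {\rm Kos}(\varphi,\Gamma_R,\A_R^{[u,v]}(r)) \to \cdots \to \R\Gamma_{\cont}(G_R,\Z_p(r))$, invoking the same ingredients (Propositions~\ref{8saint}, \ref{RtoA}, \ref{26saint}, \ref{KLiu}, Corollary~\ref{17saint}, Lemmas~\ref{string1}, \ref{later1}, \ref{later2}, and \S\ref{CGC}). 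The assembly, the identification of where truncation is indispensable, and the constant-tracking in points (a) and (b) all match the intended proof.

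Point (c) is however not quite right as stated, and as stated it would break the universality of the constants. D\'evissage along $0\to\Z/p^{n-1}\xrightarrow{p}\Z/p^n\to\Z/p\to 0$, iterated to reduce to $n=1$, adds up the error exponents at each stage of the induction, so the resulting bound would grow linearly with $n$ — exactly what the theorem must exclude. The mechanism that actually gives $n$-independent constants, and which is what Lemma~\ref{16saint} (``since the quotients \dots are $p$-torsion free'') and Lemma~\ref{ref1}(ii) really use, is that the complexes in question have $p$-torsion-free terms. One therefore tensors the integral $p^N$-quasi-isomorphism with $\Z/p^n$ in one shot: the long exact sequence coming from $0\to A\xrightarrow{p^n}A\to A/p^n\to 0$ (valid termwise by torsion-freeness) shows the mod-$p^n$ map is a $p^{2N}$-quasi-isomorphism, with the exponent visibly independent of $n$. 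With this correction your proof closes as you describe.
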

\begin{proof}We will argue integrally -- the case modulo $p^n$ being analogous. 
Section~\ref{Losy} provides us with a ``quasi-isomorphism''
(in degrees~$\leq r$ as, in larger degrees the constants become too large)
$${\rm Syn}(R,r)\simeq {\rm Cycl}(R_\varpi^{[u,v]},r).$$
Choosing a basis of $\Omega^1$ and using the isomorphism $R_\varpi^{[u,v]}\simeq \A_R^{[u,v]}$,
we change ${\rm Cycl}(R_\varpi^{[u,v]},r)$ into a Koszul complex
and obtain the isomorphism (see \S\,\ref{moved}):
$${\rm Cycl}(R_\varpi^{[u,v]},r)\simeq{\rm Kos}(\varphi,\partial,F^r\A_R^{[u,v]}).$$
Then, multiplying by suitable powers of $t$, we can get rid of the filtration (Lemma~\ref{ref1})
(in degrees~$\leq r$; this is the only place where the truncation is absolutely necessary),
and change the derivatives into the action of the Lie algebra of $\Gamma_R$, to obtain:
$$\tau_{\leq r}{\rm Kos}(\varphi,\partial,F^r\A_R^{[u,v]})\simeq 
\tau_{\leq r}{\rm Kos}(\varphi,\Lie\Gamma_R,\A_R^{[u,v]}(r)).$$
Standard analytic arguments change this into a Koszul complex for the group (Lemma~\ref{ref2}):
$${\rm Kos}(\varphi,\Lie\Gamma_R,\A_R^{[u,v]}(r))\simeq {\rm Kos}(\varphi,\Gamma_R,\A_R^{[u,v]}(r)).$$
Then, using $(\varphi,\Gamma)$-module techniques, we get a string of ``quasi-isomorphisms''
(Lemmas~\ref{string1}, \ref{later1}, \ref{later2}):
\begin{align*}{\rm Kos}(\varphi,\Gamma_R,\A_R^{[u,v]}(r))
 & \simeq {\rm Kos}(\varphi,\Gamma_R,\A_R^{(0,v]+}(r))
\simeq {\rm Kos}(\psi,\Gamma_R,\A_R^{(0,v]+}(r))\\
 & \simeq {\rm Kos}(\psi,\Gamma_R,\A_R(r))
  \simeq {\rm Kos}(\varphi,\Gamma_R,\A_R(r)).
\end{align*}
General nonsense about Koszul complexes (see~\S\,\ref{CGC}) gives us a quasi-isomorphism:
$${\rm Kos}(\varphi,\Gamma_R,\A_R(r))\simeq[\xymatrix{{\rm R}\Gamma_{\rm cont}(\Gamma_R,\A_R(r))
\ar[r]^-{1-\varphi}&{\rm R}\Gamma_{\rm cont}(\Gamma_R,\A_R(r))}].$$
Finally, general $(\varphi,\Gamma)$-module theory (Proposition~\ref{KLiu})
gives a quasi-isomorphism:
$$[\xymatrix{{\rm R}\Gamma_{\rm cont}(\Gamma_R,\A_R(r))
\ar[r]^-{1-\varphi}&{\rm R}\Gamma_{\rm cont}(\Gamma_R,\A_R(r))}]\simeq
{\rm R}\Gamma_{\rm cont}(G_R,\Z_p(r)).$$
\end{proof}
\begin{remark}
Quasi-isomorphisms between continuous Galois cohomology and Koszul complexes of the type
$$
{\rm Kos}(\varphi,\Gamma_R,\A_R(r))\simeq
{\rm R}\Gamma_{\rm cont}(G_R,\Z_p(r)),
$$
that we have proved above, 
were derived before in the case of local fields with imperfect residue fields by 
Morita \cite{KM} (for torsion representations) and  Zerbes \cite{Zer} 
(for rational representations), and in the case of perfect residue field,
but with a complex similar to our complex for $d=1$, by Tavares Ribeiro \cite{TR}. 
They are all generalizations of the complex from Herr's thesis~\cite{Hr}.
\end{remark}

\subsection{Comparison with local Fontaine-Messing  period map}
The aim of this section is to prove that our period map coincides with that of 
Fontaine-Messing. We will first recall the definition of the latter. 

    Let $E^{\rm PD}_{\overline{R},n}$ denote the log-PD-envelope of $\overline{R}_n$
in $\A_{\crr}^{\times}(\overline{R})_n\otimes R^+_{\varpi}$ 
compatible with the PD-structures on $\A_{\crr,n}$ and $\O_F$.   
We have $E^{\rm PD}_{\overline{R},n}=E^{\rm PD}_{\overline{R}}/p^n$, where
$E^{\rm PD}_{\overline{R}}$ is the ring appearing in Lemma~\ref{fat1}, hence
$E^{\rm PD}_{\overline{R},n}$ has a natural action of $G_{R}$ (trivial on $R_\varpi^+$)
and a Frobenius $\phi$ compatible with the Kummer Frobenius on 
$R^+_{\varpi}$.

Diagram (\ref{diagram}) extends to 
the following diagram of maps of schemes.
$$
\xymatrix@R=.4cm{
 & \Spec E^{\rm PD}_{\overline{R},n}\ar[dd]\ar[dr]\\
\Spec {\overline{R}}_n\ar@{^{(}->}[ru]\ar[dd]\ar@{^{(}->}[rr] & &  \Spec (\A_{\crr}(\overline{R})_n \otimes R^+_{\varpi})\ar[dd]\\
& \Spec R^{\rm PD}_{\varpi,n}\ar[rd]\ar[dd] & \\
\Spec R_n\ar[dd]\ar@{^{(}->}[ru]\ar@{^{(}->}[rr]  & & \Spec R^+_{\varpi,n}\ar[dd]\\
 & \Spec r^{\rm PD}_{\varpi,n} \ar[rd]& \\
\Spec \so_{K,n}\ar@{^{(}->}[rr]\ar@{^{(}->}[ru]  & & \Spec r^+_{\varpi,n}
}
$$
As always, the bottom map is defined by $X_0\mapsto \varpi$.

\smallskip
 Set $\Omega_{E^{\rm PD}_{\overline{R},n}}:=E^{\rm PD}_{\overline{R},n}\otimes_{R^+_{\varpi,n}}\Omega_{R^+_{\varpi,n}}$. For $r\in \N$, we filter the de Rham complex $\Omega_{E^{\rm PD}_{\overline{R},n}}^{\jcdot}$ by subcomplexes
$$
F^r\Omega_{E^{\rm PD}_{\overline{R},n}}\kr:=F^rE^{\rm PD}_{\overline{R},n}\to F^{r-1}E^{\rm PD}_{\overline{R},n}\otimes_{R^+_{\varpi,n}}\Omega^1_{R^+_{\varpi,n}}\to 
F^{r-2}E^{\rm PD}_{\overline{R},n}\otimes_{R^+_{\varpi,n}}\Omega^2_{R^+_{\varpi,n}}\to \cdots
$$
 Define the syntomic complexes
$$
{\rm Syn}(\overline{R},r)_n:=
[F^r\Omega_{E^{\rm PD}_{\overline{R},n}}^{\jcdot}\veryverylomapr{{p^r-p^{\jcdot}\phi}}\Omega_{E^{\rm PD}_{\overline{R},n}}^{\jcdot}].
$$

  For a continuous $G_{R}$-module $M$, let  $C(G_{R},M)$ denote the complex of continuous cochains of $G_{R}$ with values in $M$. One
defines the Fontaine-Messing period map  $$ \tilde{\alpha}^{\rm FM}_{r,n}: \quad {\rm Syn}(R,r)_n \to  C(G_{R},\Z/p^n(r)^{\prime})$$ 
as the composition
\begin{align*}
 {\rm Syn}(R,r)_n & =[F^r\Omega_{R^{\rm PD}_{\varpi,n}}\kr\verylomapr{p^r-p\kr\phi}\Omega_{R^{\rm PD}_{\varpi,n}}^{\jcdot}]
  \to C(G_{R},[F^r\Omega_{E^{\rm PD}_{\overline{R},n}}\kr\verylomapr{p^r-p^{\jcdot}\phi}\Omega_{E^{\rm PD}_{\overline{R},n}}^{\jcdot}])\\
   & \stackrel{\sim}{\leftarrow} 
   C(G_{R},[F^r \A_{\crr}(\overline{R})_n\verylomapr{p^r-\phi}
   \A_{\crr}(\overline{R})_n])\stackrel{\sim}{\leftarrow}C(G_{R},\Z/p^n(r)^{\prime})
\end{align*}
The second quasi-isomorphism above follows from the Poincar\'e Lemma, i.e., from the quasi-isomorphism  
$$F^r\A_{\crr}(\overline{R})_n\stackrel{\sim}{\to}F^r\Omega_{E^{\rm PD}_{\overline{R},n}}^{\jcdot},\quad r\geq 0,
$$
proved in \cite[Lemma 3.1.7]{Ts} (cf. also Lemma \ref{above}).
 
   The following theorem allows us to pass from the local Fontaine-Messing period map to the Lazard type period map   we have defined in sections 3 and 4.  
 \begin{theorem}
 \label{Laz=FM} If  $K$ has enough roots of unity,   the map $\tilde{\alpha}^{\rm FM}_{r,n}$  is 
   $p^{Nr+c_p}$-equal to the  map $\tilde{\alpha}^{\laz}_{r,n}$ from Theorem~\ref{mainCN}.
\end{theorem}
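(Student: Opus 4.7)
The plan is to build, up to constants of the form $p^{Nr+c_p}$, a single commutative diagram that contains both maps as compositions of arrows in it. The strategy is to widen every complex appearing in the Lazard construction into a ``fat'' version involving the rings $E^{\rm PD}_{\overline R}$ and $E^{[u,v]}_{\overline R}$ of \S2.6, which carry simultaneously the Kummer embedding, the cyclotomic embedding, and the Galois action, and then observe that the Fontaine--Messing map is manifestly the composition obtained by reading the same diagram along the ``Kummer side'', while $\alpha^{\laz}_r$ is what one gets reading along the ``cyclotomic side''.

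More precisely, first I would replace ${\rm Syn}(R,r)_n={\rm Kum}(R_\varpi^{\rm PD},r)_n$ by its fattened analogue
\[
{\rm Syn}(E^{\rm PD}_{\overline R},r)_n=\bigl[F^r\Omega_{E^{\rm PD}_{\overline R,n}}\kr\xrightarrow{p^r-p\kr\phi_{\rm Kum}}\Omega_{E^{\rm PD}_{\overline R,n}}\kr\bigr],
\]
the map being a $p^{N_1}$-quasi-isomorphism by the filtered Poincar\'e Lemma (Lemma~\ref{above}) together with the fundamental exact sequence (Lemma~\ref{AS}(ii)); this is precisely the factorisation used to define $\tilde\alpha^{\rm FM}_{r,n}$, so the Fontaine--Messing map is identified, in the derived category, with the natural map ${\rm Syn}(R,r)_n\to C(G_R,{\rm Syn}(E^{\rm PD}_{\overline R},r)_n)$ followed by the $p^{N_2 r}$-quasi-isomorphism back to $C(G_R,\Z/p^n(r)')$ coming from Lemma~\ref{AS}. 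I would then perform the same fattening on each step of the Lazard chain: ${\rm Kum}(R_\varpi^{[u,v]},r)$ becomes ${\rm Kum}(E^{[u,v]}_{\overline R},r)$, ${\rm Cycl}(R_\varpi^{[u,v]},r)$ becomes ${\rm Cycl}(E^{[u,v]}_{\overline R},r)$, etc., and by Lemma~\ref{fat1}, Lemma~\ref{fat3} and the filtered Poincar\'e Lemma~\ref{above2} the maps into these fat versions are $p^{N_3r+c_p}$-quasi-isomorphisms. This gives a diagram in which the change-of-Frobenius step from $\varphi_{\rm Kum}$ to $\varphi_{\rm cycl}$ (Proposition~\ref{RtoA}) appears as two parallel arrows into a common fat object, which is the crucial tautology that glues the two sides.

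Next, over this fat side, I would factor both $\tilde\alpha^{\rm FM}_{r,n}$ and $\tilde\alpha^{\laz}_{r,n}$ through continuous Galois cochains of the complexes built from $\A_{\overline R}^{\rm deco}$. The Fontaine--Messing side passes through $C(G_R,[F^r\A_{\rm cr}(\overline R)_n\to\A_{\rm cr}(\overline R)_n])$ which, via the inclusions $\A_{\rm cr}(\overline R)\subset\A_{\overline R}^{[u]}\subset\A_{\overline R}^{[u,v]}$ of \S2.5, is $p^{N_4r}$-quasi-isomorphic to $C(G_R,[F^r\A_{\overline R}^{[u,v]}\to\A_{\overline R}^{[u,v/p]}])$, which is exactly the object one obtains from the Lazard route after the Koszul rewriting (\S\ref{moved}) and Proposition~\ref{KLiu}(ii). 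So once both maps are translated to maps with target the Koszul--Galois complex ${\rm Kos}(\varphi,G_R,\A_{\overline R}^{[u,v]}(r))$, comparison reduces to a diagram chase among the rings $\A_{\rm cr}(\overline R)$, $\A_{\overline R}^{[u]}$, $\A_{\overline R}^{[u,v]}$ and the two embeddings $\iota_{\rm Kum}$, $\iota_{\rm cycl}$; the diagram commutes on the nose at the level of rings (Remark~\ref{fat2}), and hence up to explicit constants coming from divided powers and from Lemma~\ref{19saint} at the level of complexes.

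The hard part will be the compatibility with the Lazard step of Proposition~\ref{ref2}, where derivatives are turned into the infinitesimal action of $\Gamma_R$ by the formula $\nabla_j=t\partial_j$ and then the infinitesimal Koszul complex is compared with the group Koszul complex via the operators $\nabla_j/\tau_j$. On the Fontaine--Messing side this transition is invisible: Galois cochains with values in $\A_{\rm cr}(\overline R)$ appear directly, with no reference to $\Lie\Gamma_R$. What one has to check is that the connecting map, constructed via the fundamental exact sequence and the explicit generator $t^{\{r\}}\in\Z_p(r)'$, agrees, up to controlled powers of $p$ and of $t$ coming from Lemma~\ref{19saint}, with the composite $(\beta,\beta^c)\circ\lambda$ of Remark~\ref{251saint}. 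Concretely, one picks $r$-cocycles on $G_R$ with values in $\Z_p(r)$ represented by $\nabla_{j_1}\cdots\nabla_{j_r}$ applied to suitable lifts of $t^{\{r\}}$, tracks them through the identifications $\nabla_j=t\partial_j$ and through the Lazard isomorphism, and checks equality with the image of the same class under the Fontaine--Messing recipe modulo $p^{Nr+c_p}$. Once this cocycle-level comparison is done for $i\leq r$, the resulting universal formulas propagate through the rest of the diagram and yield the theorem.
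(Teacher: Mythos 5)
Your proposal follows essentially the same strategy as the paper: fatten every complex by passing to $E^{\rm PD}_{\overline R}$ and $E^{[u,v]}_{\overline R}$ so that the Kummer and cyclotomic embeddings become two arrows into a common target (the filtered Poincar\'e Lemmas \ref{above} and \ref{above2} doing the comparison), then arrange the whole chain of quasi-isomorphisms of Sections 3--4, together with the Fontaine--Messing recipe, into a single commutative diagram in which $\tilde\alpha^{\rm FM}_{r,n}$ reads along one boundary and $\tilde\alpha^{\laz}_{r,n}$ along the other. The paper's diagram makes the commutativity structural -- the $\lambda$, $\laz$, ${\rm PL}$, $t^{\jcdot}$, $\mu_H$, $\mu_\infty$ arrows are all defined by formulas that commute term-by-term with the natural inclusions, so there is no explicit cocycle computation -- whereas you propose to verify the Lazard step (the $(\beta,\beta^c)$ isomorphism of Remark~\ref{251saint}) by tracking explicit generators built from $t^{\{r\}}$; that is a legitimate, more pedestrian way to certify the same square, but note that the FES/Artin--Schreier compatibility in the top-right corner and the $\laz$/$\lambda$ compatibility in the lower rows are two independent checks, and your sketch runs them together somewhat.
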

\begin{proof}
Choose $u,v$ as usual.  
The equality of the two maps follows from the commutative diagram below (where we did the $p$-adic version for simplicity). 
The objects and maps of this diagram are described after the diagram. It will be clear from this description
that $K_{\partial,\varphi}(F^rR_\varpi^{\rm PD})={\rm Syn}(R,r)$ and that
$\tilde\alpha^{\rm FM}_r$ is exactly the map obtained form the quasi-isomorphisms in the first row
(except for the fact we should get $C_G(\Z_p(r)')$ instead of $C_G(\Z_p(r))$, but the two are
$p^r$-isomorphic), whereas $\tilde{\alpha}_r^{\laz}$ is obtained by composing the quasi-isomorphisms
forming the lower boundary of the diagram.
        Tildas in the diagram denote maps that we have proved above to be 
   $p^{Nr}$-quasi-isomorphisms or that are known to be such. Little diagram chase shows that this implies
Theorem ~\ref{Laz=FM}.
\end{proof}
$$
{\small
\xymatrix@C=0.5cm{
K_{\partial,\varphi}(F^rR_\varpi^{\rm PD})\ar[d]^{\wr}_{\tau_{\leq r}}\ar[r] & C_G(K_{\partial,\varphi}(F^rE_{\overline R}^{\rm PD}))\ar[d] & 
C_G(K_\varphi(F^r\A_{\crr}(\overline R)))
\ar[l]^{\sim}_{\rm PL}\ar[d] & 
C_G(\Z_p(r))\ar[l]^-{\sim}_-{\rm FES}\ar[dl]^{\sim}\ar[d]^{\wr}\ar[dr]_{\sim}^{\rm AS}
\\
K_{\partial,\varphi}(F^rR_\varpi^{\rm [u,v]})\ar[r]  \ar[rdd]\ar@/_35pt/_{\rm PL}[rddddd]^{\wr}_{\rm PL}& C_G(K_{\partial,\varphi}(F^rE_{\overline R}^{[u,v]})) & C_G(K_{\varphi}(F^r\A_{\overline R}^{[u,v]}))
\ar[l]^{\sim}_{\rm PL} & C_G(K_{\varphi}(\A_{\overline R}^{(0,v]+}(r))) \ar[l]_-{t^r}\ar[r] & C_G(K_{\varphi}(\A_{\overline R}(r)))
\\
& C_\Gamma(K_{\partial,\varphi}(F^rE_{R_\infty}^{[u,v]}))\ar[u]_{\mu_H} & C_\Gamma(K_{\varphi}(F^r\A_{R_\infty}^{[u,v]})) \ar[u]_{\mu_H}
\ar[l]^{\sim}_{\rm PL} & C_\Gamma(K_{\varphi}(\A_{R_\infty}^{(0,v]+}(r)))\ar[u] _{\mu_H}\ar[r]\ar[l]_-{t^r}& C_\Gamma(K_{\varphi}(\A_{R_\infty}(r)))\ar[u]^{\wr}_{\mu_H}
\\
& C_\Gamma(K_{\partial,\varphi}(F^rE_{R}^{[u,v]}))\ar[u]_{\mu_\infty} & C_\Gamma(K_{\varphi}(F^r\A_{R}^{[u,v]})) \ar[l]^{\sim}_{\rm PL}\ar[u]_{\mu_\infty} & 
 C_\Gamma(K_{\varphi}(\A_{R}^{(0,v]+}(r)))\ar[l]_-{t^r}\ar[u]_{\mu_\infty}\ar[r]& C_\Gamma(K_{\varphi}(\A_{R}(r)))\ar[u]^{\wr}_{\mu_\infty}
\\
 & K_{\partial,\varphi,\Gamma}(F^rE_{R}^{[u,v]})\ar[d]^{\laz}_{\wr}\ar[u]^{\wr}_{\lambda} & 
 K_{\varphi,\Gamma}(F^r\A_{R}^{[u,v]})\ar[u]^{\wr}_{\lambda}\ar[l]^{\sim}_{\rm PL}\ar[d]^{\laz}_{\wr}& 
 K_{\varphi,\Gamma}(\A_{R}^{(0,v]+}(r))\ar[l]_-{t^r}\ar[r]^{\sim}\ar[d]^{\laz\can}_{\wr}\ar[u]^{\wr}_{\lambda} & 
 K_{\varphi,\Gamma}(\A_{R}(r)) \ar[u]^{\wr}_{\lambda}
\\
 & K_{\partial,\varphi,\Lie\Gamma}(F^rE_{R}^{[u,v]})
& K_{\varphi,\Lie\Gamma}(F^r\A_{R}^{[u,v]})\ar[l]^{\sim}_{\rm PL}  & K_{\varphi,\Lie\Gamma}(\A_{R}^{[u,v]}(r))\ar[l]_-{t^r}^-{\sim}
\\
  & K_{\partial,\varphi,\partial_{\A}}(F^rE_{R}^{[u,v]})\ar[u]_{t\kr} & K_{\varphi,\partial_\A}(F^r\A_{R}^{[u,v]}) \ar[l]^{\sim}_{\rm PL}\ar[u]^{\wr}_{t\kr, \tau_{\leq r}}
}
}
$$

In the diagram:

\begin{itemize}
\item[$\bullet$] $G$ and $\Gamma$ are $G_R$ and $\Gamma_R$,

\item[$\bullet$] $C_G$ or $C_\Gamma$ denotes the complex of continuous cochains of $G$ or $\Gamma$,

\item[$\bullet$] $K$ denotes a complex of Koszul type:

\begin{itemize}
\item[---] the indices indicate the operators involved in the complex: 
\begin{itemize}

\item[$\diamond$] $\partial$
is a shorthand for $\big(X_0\frac{\partial}{\partial X_0},\dots, X_d\frac{\partial}{\partial X_d}\big)$, 

\item[$\diamond$] $\Gamma$ is a shorthand for $(\gamma_0-1,\dots,\gamma_d-1)$, where the $\gamma_i$'s are our
chosen topological generators of~$\Gamma$,

\item[$\diamond$] $\Lie\Gamma$ is a shorthand for $\big(\nabla_0,\dots,\nabla_d)$, where $\nabla_i=\log\gamma_i$,
so that the $\nabla_i$'s are a basis of $\Lie\Gamma$ over $\Z_p$,

\item[$\diamond$] $\partial_\A$ is a shorthand for 
$\big((1+T)\frac{\partial}{\partial T},X_1\frac{\partial}{\partial X_1},\dots, X_d\frac{\partial}{\partial X_d}\big)$,
viewed as operators on $\A_R^{[u,v]}$ or $E_R^{[u,v]}$, via the isomorphism $\iota_{\rm cycl}:R_{\varpi}^{[u,v]}\cong
\A_R^{[u,v]}$,
\end{itemize}

\item[---] only the first term of the complex is indicated: the rest is implicit and obtained from the first term
so that the maps involved make sense: $\varphi$ does not respect filtration or annulus of convergence,
and $\partial$ or $\partial_\A$ decrease the degrees of filtration by $1$.
\end{itemize}
\end{itemize}

\smallskip
For example, choosing a basis of $\Omega_{R_\varpi/\O_F}$ transforms complexes involving differentials
into complexes of Koszul type: ${\rm Kum}(S,r)=K_{\partial,\varphi}(F^rS)$ if $S=R_\varpi^{\rm PD}$
or $R_\varpi^{[u,v]}$.

\medskip
Let us now turn our attention to the maps between rows.

\begin{itemize}
\item[$\bullet$] The maps AS and FES originating for the upper right corner come from the fundamental exact sequences
of Lemma~\ref{AS} and Artin-Schreier theory of Proposition ~\ref{KLiu}.

\item[$\bullet$] Going from first row to second row just uses the injection $R_\varpi^{\rm PD}\subset R_\varpi^{[u,v]}$.

\item[$\bullet$] Going from third row to second row is the inflation map from $\Gamma_R$ to $G_R$, using the injection
$R_\infty\subset \overline R$.  Note that we could use almost \'etale descent (i.e.~Faltings' purity
theorem or its extension by Scholze or Kedlaya-Liu) to prove that it is a quasi-isomorphism.
\item[$\bullet$] Going form fourth row to third row just uses the injection of $R$ into $R_\infty$.
We could prove that this induces quasi-isomorphisms using the usual decompletion techniques,
but we do not need it for the theorem.

\item[$\bullet$] The maps $\lambda$ connecting the fourth row to the fifth are the maps connecting
continuous cohomology of $\Gamma_R$ to Koszul complexes; they are defined in \S~\ref{CGC}.

\item[$\bullet$] All the maps $\laz$ connecting the sixth row to the fifth
are defined as in Lemma \ref{25saint} and Remark ~\ref{251saint}. 
The same lemma shows that they are well-defined isomorphisms.
(In the fourth column, $\laz$ is composed with the canonical map
$\A_R^{(0,v]+}\to\A_R^{[u,v]}$; this is just to save space and not add an extra column.)

\item[$\bullet$] The maps $t^{\jcdot}$ connecting the last row to the sixth are the maps appearing
in the proof of Lemma~\ref{ref1} (multiplication by suitable powers of $t$).
\end{itemize}

\medskip
Finally, let us describe the maps between columns.

\begin{itemize}
\item[$\bullet$] The maps from the first column to the second are induced
by the natural injections of rings; the PL-map is a quasi-isomorphism by Lemma~\ref{above2}.

\item[$\bullet$] The maps from the third column to the second are also induced
by the natural injections of rings; the PL-map are quasi-isomorphisms by 
Lemma~\ref{above} (for the first 6 rows) and Lemma~\ref{above2} (for the last row).

\item[$\bullet$] From the fourth to the third the map is multiplication by $t^r$
(as explained before Lemma~\ref{ref1}),
composed with inclusion of rings.

\item[$\bullet$] From the fourth to the last the map is just
inclusion of rings.
\end{itemize}

\section{Global applications}
Unless otherwise stated, we work in the category of integral quasi-coherent  log-schemes. 
We will denote by $\so_K$,
$\so_K^{\times}$, and $\so_K^0$ the scheme $\Spec (\so_K)$ with the trivial, canonical
(i.e., associated to the closed point), and $({\mathbf N}\to \so_K, 1\mapsto 0)$
log-structure, respectively.
\subsection{Syntomic cohomology and $p$-adic nearby cycles}
\subsubsection{Definition of syntomic cohomology}
 Let $X$ be a log-scheme, log-smooth over $\so_K^{\times}$. For any $r\geq 0$, consider its absolute (meaning over $\O_F$) log-crystalline cohomology complexes
\begin{align*}
\R\Gamma_{\crr}(X^{},\sj^{[r]})_n: &  =\R\Gamma(X_{\eet},\R u_{X^{}_n/W_n(k)*}\sj^{[r]}_{X^{}_n/W_n(k)}),\quad
\R\Gamma_{\crr}(X^{},\sj^{[r]}):=\holim_n\R\Gamma_{\crr}(X^{},\sj^{[r]})_n,
\end{align*}
where  $u_{X^{}_n/W_n(k)}: (X^{}_n/W_n(k))_{\crr}\to X_{\eet}$ is the
projection from the log-crystalline to the \'etale topos and  $\sj^{[r]}_{X^{}_n/W_n(k)}$ is the r'th divided 
power of the canonical PD-ideal $\sj_{X^{}_n/W_n(k)}$ ( for $r\leq 0$, by convention,
$\sj^{[r]}_{X^{}_n/W_n(k)}:=\so_{X^{}_n/W_n(k)}$ and we will often omit it from the notation).

   We have the mod $p^n$ and completed syntomic complexes
\begin{align*}
\R\Gamma_{\synt}(X^{},r)_n & :=
[\R\Gamma_{\crr}(X^{},\sj^{[r]})_n\verylomapr{p^r-\phi} \R\Gamma_{\crr}(X^{}_n)],\\
  \R\Gamma_{\synt}(X^{},r) & :=\holim _n\R\Gamma_{\synt}(X^{},r)_n.
\end{align*}
Here the Frobenius $\phi$ is defined by the composition
\begin{align*}
\phi: \R\Gamma_{\crr}(X^{},\sj^{[r]})_n   \to \R\Gamma_{\crr}(X^{}_n) \stackrel{\sim}{\to }\R\Gamma_{\crr}(X^{}_1/\O_F)_n
 \stackrel{\phi}{\to}
\R\Gamma_{\crr}(X^{}_1/\O_F)_n
  \stackrel{\sim}{\leftarrow}\R\Gamma_{\crr}(X^{}_n)
\end{align*}
The mapping fibers are taken in the $\infty$-derived category of abelian groups. 

We have
$ \R\Gamma_{\synt}(X^{},r)_n\simeq \R\Gamma_{\synt}(X^{},r)\otimes^L{\mathbf Z}/p^n. $
There is  a canonical quasi-isomorphism 
\begin{align*}
\R\Gamma_{\synt}(X^{},r)_n  \stackrel{\sim}{\to}[\R\Gamma_{\crr}(X^{})_n\verylomapr{(p^r-\phi,\can)} \R\Gamma_{\crr}(X^{})_n\oplus \R\Gamma_{\crr}(X^{},\so/\sj^{[r]})_n].
\end{align*}
Similarly in the completed  case.

  The above definitions sheafify.
 Let $\sj^{[r]}_{\crr,n}$, $\sa_{\crr,n}$, and $\sss_n(r)$ for $r\geq 0$ be the
 \'etale sheafifications  of the presheaves sending \'etale map $U^{}\to X^{}$ to $
\R\Gamma_{\crr}(U^{},J^{[r]})_n$,  $
\R\Gamma_{\crr}(U^{})_n$, and $\R\Gamma_{\synt}(U^{},r)_n$, respectively.  We have
$$
\sss_n(r)\simeq [\sj^{[r]}_{\crr,n}\stackrel{p^r-\phi}{\longrightarrow}\sa_{\crr,n}],\quad \R\Gamma_{\synt}(X^{},r)_n=\R\Gamma(X^{}_{\eet},\sss_n(r)).
$$

\begin{remark}Recall that syntomic cohomology can also be defined  as cohomology of the syntomic site with values in  syntomic Tate twists. Let us explain briefly how. 
 We will be working with fine log-schemes. For a log-scheme $X$ we denote by $X_{\synt}$ the small log-syntomic site of $X$ defined as follows. The objects are
  morphisms $f:Y\to Z$ that are {\em  log-syntomic} in the sense of Kato, i.e., 
the morphism  $f$ is integral, the underlying morphism of schemes is flat and locally of finite presentation, 
and, \'etale locally on $Y$, there is a factorization $Y\stackrel{i}{\hookrightarrow}W\stackrel{h}{\rightarrow}Z$ 
where $h$ is log-smooth and $i$ is an exact closed immersion that is transversally regular over $Z$.
 We also require $f$ to be locally quasi-finite on the underlying schemes and the cokernel of the map
 $(f^*M_Z)^{\gp}\to M^{\gp}_Y$ ($M_?$ being the log-structure sheaf) to be torsion. 

 For a log-scheme $X$ log-syntomic over $\Spec(W(k))$, define 
$$
\so^{\crr}_n(X) =H^0_{\crr}(X,\so)_n,\qquad 
\sj_n^{[r]}(X) =H^0_{\crr}(X,\sj^{[r]})_n.
$$
We know  that the presheaves $\sj_n^{[r]} $ are sheaves on $X_{n,\synt}$, flat over ${\mathbf Z}/p^n$, and that
$\sj^{[r]}_{n+1}\otimes{\mathbf Z}/p^n\simeq  \sj^{[r]}_{n}$.  There is a  canonical 
isomorphism 
$$
\R\Gamma(X_{\synt},\sj_n^{[r]})\simeq \R\Gamma_{\crr}(X,\sj^{[r]})_n
$$
that is compatible with Frobenius. 
Set $$\sss_n(r):=[\sj_n^{[r]} \stackrel{p^r-\phi}{\longrightarrow}\so^{\crr}_n].$$
This is the syntomic Tate twist. 
In the same way we can define log-syntomic sheaves $\sss_n(r)$ on $X_{m,\synt}$ for $m\geq n$. 
Abusing notation, we define $\sss_n(r)=i_*\sss_n(r)$ for the natural map $i: X_{m,\synt}\to X_{\synt}$. Since $i_*$ is exact, $\R\Gamma(X_{m,\synt},\sss_n(r))=\R\Gamma(X_{\synt},\sss_n(r))$.
We have
\begin{align*}
 \sss_n(r) & =\R\epsilon_*\sss_n(r),\quad 
\varepsilon: X_{\synt}\to X_{\eet},\\
\R\Gamma_{\synt}(X,r)_n & =\R\Gamma(X_{\eet},\sss_n(r))=\R\Gamma(X_{\synt},\sss_n(r)).
\end{align*}
\end{remark}

  Proposition~\ref{added}
gives a simple description, up to some universal torsion, of syntomic cohomology sheaves.
Namely, assume that $X$ is semistable over $\so_K$ or a base change of such. We write
\begin{align*}
\sss_n(r)  \simeq [\sj^{[r]}_{\crr,n}\lomapr{p^r-\phi}\sa_{\crr,n}]
  \simeq [\sa_{\crr,n}^{\phi=p^r}\to \sa_{\crr,n}/\sj_{\crr,n}^{[r]}],
\end{align*}
where we set $\sa_{\crr,n}^{\phi=p^r}:=[\sa_{\crr,n}\lomapr{p^r-\phi}\sa_{\crr,n}]$.
\begin{corollary}\label{synt}
{\rm (i)}
 For $i\neq r$, the sheaf $\sh^i(\sa_{\crr,n}^{\phi=p^r})$ is annihilated by $p^{N}$, for a constant $N=N(e,d,p,r)$.

{\rm (ii)}
  For $i\geq r+1$, the sheaf $\sh^i(\sss_n(r))$ is annihilated by $p^{N(r)}$.

{\rm (iii)}
  For $i\leq r-1$, the natural map  $\sh^{i-1}(\sa_{\crr,n}/\sj^{[r]}_{\crr,n}){\to}\sh^i(\sss_n(r))$ is a $p^{N}$-quasi-isomorphism, for a constant $N=N(e,d,p,r)$. Moreover we have the short exact sequence
$$
0\to \sh^{r-1}(\sa_{\crr,n}/\sj^{[r]}_{\crr,n})\to \sh^r(\sss_n(r))\to \sh^r(\sa_{\crr,n}^{\phi=p^r})\to 0
$$
\end{corollary}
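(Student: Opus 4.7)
The argument reduces to a local computation and then invokes Proposition~\ref{added}. Since all three claims concern étale sheaves on $X$, it suffices to check them on stalks, or equivalently on any sufficiently fine étale cover. Étale locally $X$ admits coordinates as in Section~2 (so that it is a base change of a semistable scheme), and on such an open affine $\Spf R$ the sheaves $\sj^{[r]}_{\crr,n}$ and $\sa_{\crr,n}$ are computed by $F^r\Omega\kr_{R^{\rm PD}_{\varpi,n}}$ and $\Omega\kr_{R^{\rm PD}_{\varpi,n}}$ respectively. Hence locally
\begin{equation*}
\sa_{\crr,n}^{\phi=p^r}\simeq {\rm HK}(R^{\rm PD}_{\varpi},r)_n,\quad
\sa_{\crr,n}/\sj^{[r]}_{\crr,n}\simeq {\rm DR}(R^{\rm PD}_{\varpi},r)_n,\quad
\sss_n(r)\simeq {\rm Syn}(R,r)_n,
\end{equation*}
and the three statements become statements about cohomology of these explicit complexes.

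For part~(i), when $i\leq r-1$ the $p^N$-acyclicity of $\tau_{\leq r-1}{\rm HK}(R^{\rm PD}_{\varpi},r)_n$ is exactly Proposition~\ref{added}(ii), whose mod-$p^n$ version is recorded in Proposition~\ref{added}(iii). For $i\geq r+2$, I factor the Frobenius term as $p^r-p^i\phi = p^r(1-p^{i-r}\phi)$: since $i-r\geq 2$, the geometric series $\sum_{k\geq 0}(p^{i-r}\phi)^k$ converges $p$-adically on $\Omega^i_{R^{\rm PD}_{\varpi,n}}$ and inverts $1-p^{i-r}\phi$, so $p^r-p^i\phi$ is a $p^r$-isomorphism and the resulting mapping-fiber cohomology is $p^{2r}$-trivial. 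The borderline degree $i=r+1$ is precisely the last sentence of Proposition~\ref{added}(iii): étale locally, $1-\phi$ is surjective on $R^{\rm PD}_{\varpi,n}$ by Artin--Schreier after devissage to $n=1$, which forces $\sh^{r+1}(\sa_{\crr,n}^{\phi=p^r})$ to be $p^{N(r)}$-trivial étale locally.

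For part~(ii), the distinguished triangle
\begin{equation*}
\sss_n(r)\to \sa_{\crr,n}^{\phi=p^r}\to \sa_{\crr,n}/\sj^{[r]}_{\crr,n}\to \sss_n(r)[1]
\end{equation*}
arising from the second presentation of $\sss_n(r)$ reduces the statement to killing both flanking terms in degree~$i$. Part~(i) kills $\sh^i(\sa_{\crr,n}^{\phi=p^r})$ for $i\geq r+1$; on the de Rham side, the $i$-th term of ${\rm DR}(R^{\rm PD}_{\varpi},r)_n$ is $\Omega^i_{R^{\rm PD}_{\varpi,n}}/(F^{r-i}R^{\rm PD}_{\varpi,n}\cdot\Omega^i_{R^{\rm PD}_{\varpi,n}})$, which vanishes as soon as $r-i\leq 0$, so $\sh^i(\sa_{\crr,n}/\sj^{[r]}_{\crr,n})=0$ for $i\geq r$. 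The long exact sequence of the triangle then yields part~(ii). For part~(iii), the same long exact sequence
\begin{equation*}
\sh^{i-1}(\sa_{\crr,n}^{\phi=p^r})\to \sh^{i-1}(\sa_{\crr,n}/\sj^{[r]}_{\crr,n})\to \sh^i(\sss_n(r))\to \sh^i(\sa_{\crr,n}^{\phi=p^r})\to \sh^i(\sa_{\crr,n}/\sj^{[r]}_{\crr,n})
\end{equation*}
is the key tool. For $i\leq r-1$ both outside terms $\sh^{i-1}(\sa_{\crr,n}^{\phi=p^r})$ and $\sh^i(\sa_{\crr,n}^{\phi=p^r})$ are $p^N$-trivial by part~(i), forcing the connecting map $\sh^{i-1}(\sa_{\crr,n}/\sj^{[r]}_{\crr,n})\to \sh^i(\sss_n(r))$ to be a $p^{2N}$-isomorphism. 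For $i=r$ the right flank $\sh^r(\sa_{\crr,n}/\sj^{[r]}_{\crr,n})$ vanishes by the degree count, and the left flank $\sh^{r-1}(\sa_{\crr,n}^{\phi=p^r})$ is $p^N$-trivial by part~(i); extracting the exact sequence then yields the displayed short exact sequence (with the understanding that the left-hand injectivity holds up to the universal constant $p^N$).

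The content is conceptually light given Proposition~\ref{added}; the main obstacle is bookkeeping, namely (a) checking that the local identifications above are functorial enough under étale maps to glue the local $p^N$-quasi-isomorphisms into genuine sheaf statements independently of the choice of coordinates, and (b) ensuring that the étale localization used for the surjectivity of $1-\phi$ in the borderline degree $i=r+1$ does not inflate the constant $N(e,d,p,r)$.
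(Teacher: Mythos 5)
Your argument is correct and is precisely the route the paper intends: the corollary is stated without proof as an immediate consequence of Proposition~\ref{added} together with the presentation $\sss_n(r)\simeq[\sa_{\crr,n}^{\phi=p^r}\to\sa_{\crr,n}/\sj_{\crr,n}^{[r]}]$, and your reduction to stalks on a standard affine chart, the identification with ${\rm HK}$, ${\rm DR}$ and ${\rm Syn}$, and the degree-by-degree bookkeeping in the resulting long exact sequence recover exactly the content of Proposition~\ref{added}(i)--(iii) at the sheaf level. Your explicit caveat that the displayed short exact sequence in (iii) is only $p^N$-exact on the left (since one cannot conclude $\sh^{r-1}(\sa_{\crr,n}^{\phi=p^r})\to\sh^{r-1}(\sa_{\crr,n}/\sj_{\crr,n}^{[r]})$ vanishes, only that its source is $p^N$-torsion) is the honest reading and consistent with the $p^N$-precision asserted elsewhere in the corollary; the paper's unqualified phrasing should be interpreted the same way.
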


\subsubsection{Syntomic complexes and $p$-adic nearby cycles}
  Let $X$ be a  fine and saturated  log-scheme log-smooth over $\so^{\times}_K$. Denote by $X_{\tr}$ the locus where the log-structure is trivial. This is an open dense subset of the generic fiber of $X$.  Fontaine-Messing \cite{FM}, Kato \cite{K1} have constructed  period morphisms  ($i: X_0\hookrightarrow X, j: X_{\tr}\hookrightarrow X$)
$$\alpha^{\rm FM}_{r,n}:  \quad \sss_n(r)_{X}  \rightarrow i^*Rj_*{\mathbf Z}/p^n(r)^{\prime}_{X_{\tr}},\quad r\geq 0.
$$

    Assume now that $X$ has semistable reduction over $\so_K$ or is a base change of a scheme with semistable reduction over the ring of integers of a subfield of $K$. That is, locally, $X$ can be written as $\Spec(A)$  for a ring $A$ \'etale over 
  \begin{equation}
  \label{form}
 \so_K[X_1^{\pm 1},\cdots,X_a^{\pm 1}, X_{a+1},\cdots, X_{a+b},X_{a+b+1},\cdots,X_{d},X_{d+1}]/(X_{d+1}X_{a+1}\cdots X_{a+b}-\varpi^h), \quad 1\leq h\leq e.
  \end{equation}
  If we put $D:=\{X_{a+b+1}\cdots X_d=0\}\subset \Spec(A)$ then the log-structure on  $\Spec A$  is associated to the special fiber and to the divisor $D$. We have $\Spec(A)_{\tr}=\Spec(A_K)\setminus D_K$.

    The purpose of this section is to prove the following theorem.
 \begin{theorem}
 \label{main}
Let $X$ be a scheme with semistable reduction over $\so_K$ or  a base change of a scheme with semistable reduction over the ring of integers of a subfield of $ K$.  For $0\leq i\leq r$, consider the period map
$$
\alpha^{\rm FM}_{r,n}:\quad  \sh^i(\sss_n(r)_{X}) \rightarrow i^*R^ij_*{\mathbf Z}/p^n(r)'_{X_{\tr}}.
$$

{\rm (i)}
If $K$ has enough roots of unity then the kernel  and cokernel of this map is annihilated by $p^{Nr+c_p}$ for universal constants $N$ and $c_p$.

{\rm (ii)}
In general, the kernel  and cokernel of this map is annihilated by $p^N$ for an integer $N=N(K,p,r)$, which depends on $K$, $p$, $r$.
\end{theorem}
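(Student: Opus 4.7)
The plan is to deduce Theorem \ref{main} from the local comparison results already established, namely Theorem \ref{mainCN} and Theorem \ref{Laz=FM}, via sheafification and a descent argument.

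For part (i), I would work étale-locally on $X$. Since $X$ (or the semistable scheme it is a base change of) admits, étale locally on $X_0$, a chart of the shape (\ref{form}), the stalk of $\sh^i(\sss_n(r)_X)$ at a geometric point $\bar x$ of the special fiber is a filtered colimit of groups of the form $H^i(\mathrm{Syn}(R,r)_n)$, where $R$ runs over the $p$-adic completions of étale neighborhoods of $\bar x$ of the type considered in Section 2.1.2. On the other side, the $p$-adic $K(\pi,1)$-Lemma of Scholze combined with Abhyankar's Lemma (Lemma \ref{Abh}) identifies the corresponding stalk of $i^*R^ij_*{\mathbf Z}/p^n(r)'_{X_{\tr}}$ with the colimit of $H^i_{\rm cont}(G_R,\Z/p^n(r))$ (up to the universal factor $p^{-a(r)}$, which contributes only to $c_p$). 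Theorem \ref{mainCN} gives a $p^{Nr}$-quasi-isomorphism at the level of complexes between $\tau_{\leq r}\mathrm{Syn}(R,r)_n$ and $\tau_{\leq r}\R\Gamma_{\rm cont}(G_R,\Z/p^n(r))$ with universal constant $N$, and Theorem \ref{Laz=FM} says that this quasi-isomorphism agrees with $\alpha^{\rm FM}_{r,n}$ up to the same kind of universal constant. Passing to stalks yields (i).

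For part (ii), I would use a descent argument. Choose $m \geq c(K)+3$ (or $m \geq c(K)+4$ for $p=2$) so that $K':=K(\zeta_{p^m})$ contains enough roots of unity in the sense of Section \ref{def1}. The base change $X' := X_{\so_{K'}}$ is again of the type covered by the theorem, and $G:=\Gal(K'/K)$ is a finite group whose order is bounded in terms of $K$ and $p$. Applying part (i) to $X'$ yields that $\alpha^{\rm FM}_{r,n}$ on $X'$ has kernel and cokernel killed by $p^{Nr+c_p}$. Since both sides of $\alpha^{\rm FM}_{r,n}$ commute with restriction to $X'$ and carry a compatible $G$-action whose fixed points (via a Hochschild--Serre / trace argument) recover the groups on $X$ up to $p^{v_p(|G|)}$-torsion, the kernel and cokernel on $X$ are annihilated by $p^{Nr+c_p+2v_p(|G|)}$, which gives a constant $N(K,p,r)$ as required.

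The most delicate step is the sheafification in part (i). One must verify that the string of quasi-isomorphisms used in the proof of Theorem \ref{mainCN} --- change of radius (Proposition \ref{8saint} and Corollary \ref{17saint}), the change-of-Frobenius step (Proposition \ref{RtoA}), the Koszul-to-Lie passage (Lemma \ref{ref1} and Proposition \ref{ref2}), and the $(\varphi,\Gamma)$-module decompletion (Proposition \ref{KLiu}) --- are all functorial in étale morphisms $R_\Box \to R_\Box'$ preserving the chart, and that the constants appearing depend only on $r$ and $p$ (and on $d$ when $p=2$), not on the particular étale neighborhood. This functoriality is built into each construction: the local periods rings and the operators $\varphi_{\rm cycl}$, $\psi_{\rm cycl}$, $\partial_j$ are defined by universal formulas over the chart, and the Galois group $\Gamma_R$ is likewise compatible with étale base change by Corollary \ref{extr12.3}. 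Once this compatibility is recorded, the stalk-level isomorphism follows, and compatibility with $\alpha^{\rm FM}_{r,n}$ is the content of Theorem \ref{Laz=FM}. The remaining work --- the $K(\pi,1)$ identification and the invocation of Abhyankar's Lemma to pass from $(\Sp R[1/p])_{\tr,\eet}$ to continuous $G_R$-cohomology --- is standard, but verifying that the Fontaine--Messing map on stalks factors through these identifications in the expected way is what really makes the argument work.
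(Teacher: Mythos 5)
Your overall strategy matches the paper's: reduce to the local computation (Theorem \ref{mainCN}), identify it with the Fontaine--Messing map (Theorem \ref{Laz=FM}), and descend from a field with enough roots of unity. Part (i) of your proposal is in essence the paper's argument, though you gloss over a real issue: identifying the stalk of $i^*R^ij_*{\mathbf Z}/p^n(r)'_{X_{\tr}}$ (algebraic \'etale cohomology of a Zariski-open in $\Spec A[1/p]$) with continuous Galois cohomology of the $p$-adic completion $G_{\widehat{R}}$ is \emph{not} just $K(\pi,1)$ plus Abhyankar --- one must also pass from the algebraic to the completed world. The paper does this through the commutative diagram (\ref{passage}): it uses Elkik's approximation (Lemma \ref{Abh}(ii)) to identify $G_{R}$ with $G_{\widehat{R}}$ for the henselization $R=A^h$, and a rigid GAGA argument (\S\ref{rgaga}, by induction on the number of components of the divisor at infinity, invoking Gabber's affine base change) to identify $H^i(U_{\eet})$ with $H^i(\su_{\eet})$. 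Your closing paragraph acknowledges that ``verifying that the Fontaine--Messing map on stalks factors through these identifications'' is the crux, but you do not actually do it; in the paper this is precisely the content of (\ref{passage}).

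The more serious gap is in part (ii). Your descent step rests on the assertion that syntomic cohomology of $X$ can be recovered from the $G$-fixed points of syntomic cohomology of $X'=X_{\so_{K'}}$ ``up to $p^{v_p(|G|)}$-torsion via a Hochschild--Serre / trace argument.'' This is not a trace argument --- it is exactly the nontrivial claim that the paper isolates as Lemma \ref{Gal}, and the constant there is $p^{cre_1}$ (depending multiplicatively on $r$ and the ramification index $e_1$ of $K'/K$), not $p^{v_p(|G|)}$. The difficulty is that the syntomic complex over $\so_K$ and the one over $\so_{K'}$ live over different log bases with different filtrations and divided-power envelopes, so the naive $G$-equivariant comparison you appeal to does not come for free. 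The paper's proof of Lemma \ref{Gal} splits the syntomic complex into the Frobenius-eigenspace part, which it handles by a factorization trick through $\R\Gamma_{\crr}(R_0)^{\phi=p^r}$ and a power $\phi^m$ of Frobenius, and the filtration-quotient part, which it handles via Beilinson's identification of filtered log-crystalline cohomology with derived log de Rham cohomology and an explicit comparison of de Rham complexes over $\so_K$ and $\so_{K'}$. Without establishing something like Lemma \ref{Gal} first, your trace argument cannot even be set up, because you do not yet know that the syntomic sheaf on $X$ maps to the $G$-invariants of the syntomic sheaf on $X'$ in a way that is close to an isomorphism.
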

\begin{proof}
 It suffices to argue locally. Take an $\so_K$-algebra $A$ as in (\ref{form}) and such that $\Spec (A/p)$ is nonempty and connected. 
We have 
\begin{align*}
\R\Gamma_{\synt}(\Spec (A)_,r)_n ={\rm Syn}(\widehat{A},r)_n,\quad 
\R\Gamma_{\synt}(\Spec (A),r)={\rm Syn}(\widehat{A},r).
\end{align*}
 The Fontaine-Messing period map $$\alpha^{\rm FM}_{r,n}=\alpha^{\rm FM}_{r,n,A}:\quad {\rm Syn}(A,r)_n \to \R\Gamma(Y_{\tr,\eet},\Z/p^n(r)^{\prime}),\quad Y:=\Spec A^h,$$  can be described as the composition of the henselian version of the map $\tilde{\alpha}^{\rm FM}_{r,n}$  with the natural map $C(G_{R},\Z/p^n(r)^{\prime})\to \R\Gamma(Y_{\tr,\eet},\Z/p^n(r)^{\prime})$ for $R:=A^h$ and $G_R$ - the Galois group of the maximal extension of $R$ unramified in characteristic zero outside the divisor $D_K$. The henselian version of the period map $\tilde{\alpha}^{\rm FM}_{r,n}$ is obtained by replacing $\overline{\widehat{R}}$ with $\overline{R}$ and $G_{\widehat{R}}$ with $G_R$. We set ${\rm Syn}(A,r)_n={\rm Syn}(\widehat{A},r)_n$.

 Let $i\leq r$.  We need to show that  the map 
\begin{equation}
\label{def-1}
\alpha^{\rm FM}_{r,n}:\quad H^i{\rm Syn}(A,r)_n \lomapr{\tilde{\alpha}^{\rm FM}_{r,n}} H^i C(G_{R},\Z/p^n(r)^{\prime})\to H^i\R\Gamma(Y_{\tr,\eet},\Z/p^n(r)^{\prime}),
\end{equation}
is an isomorphism (up to the wanted constants). To do that we will pass to the completion of $A$. Consider the following commutative diagram.
\begin{equation}
\label{passage}
\xymatrix{
 H^i({\rm Syn}(A,r)_n) \ar@{=}[d] \ar[r]^{\tilde{\alpha}^{\rm FM}_{r,n,A}} &  H^i(G_R,{\mathbf Z}/p^n(r)')\ar[d]_{\wr}\ar[r] &  H^i(U_{\eet},\Z/p^n(r)')\ar[d]^{g}_{\wr}\\
H^i({\rm Syn}(\widehat{A},r)_n) \ar[r]^{\tilde{\alpha}^{\rm FM}_{r,n,\widehat{A}}}_{\sim} &   H^i(G_{\widehat{R}},{\mathbf Z}/p^n(r)')\ar[r]^{f}_{\sim} &  H^i(\su_{\eet},\Z/p^n(r)'), 
}
\end{equation}
 where $U=(\Spec R)_{\tr}, \su:=(\Sp \widehat{R}[1/p])_{\tr}$. 
Now, the middle vertical arrow is an isomorphism because the two Galois groups are equal by Lemma~\ref{Abh} 
(combination of Abhyankar's Lemma and Elkik's approximation
techniques), the map $f$ is an isomorphism by a $K(\pi,1)$-Lemma (see  \ref{kpi1}),  and the map $g$ is an isomorphism by a rigid GAGA argument (see~\ref{rgaga}). All of which we prove below. 
  It thus remains  to prove that the map $$\tilde{\alpha}^{\rm FM}_{r,n}=\tilde{\alpha}^{\rm FM}_{r,n,\widehat{A}}:\quad  H^i({\rm Syn}(\widehat{A},r)_n) \to  H^i(G_{\widehat{R}},{\mathbf Z}/p^n(r)'),\quad i\leq r, $$
  is an isomorphism (up to the wanted constants). In the case that  $K$ has enough roots of unity this follows from Theorem \ref{Laz=FM} and 
   Theorem \ref{mainCN}, which proves the first claim of the theorem. 

  To prove the second claim, we pass from $R={A}$  to $R^i:=R(\zeta_{p^i})$, $i\geq c(K)+3$, so that $R^i$ has enough roots of unity. There the Fontaine-Messing period map is
  a $p^{Nr}$-quasi-isomorphism, for a universal constant $N$ (as we have just shown). To descend note that this period map is $G_i=\Gal(K_i/K)$-equivariant, i.e., that the following diagram commutes 
  (cf. Remark \ref{action5} below). 
$$
\xymatrix{
\R\Gamma(G_i,{\rm Syn}(R^i,r)_n) \ar[r]^-{\tilde{\alpha}^{\rm FM}_{r,n}} & \R\Gamma(G_i,C(G_{R^i},{\mathbf Z}/p^n(r)'))\\
    {\rm Syn}(R,r)_n \ar[u]\ar[r]^-{\tilde{\alpha}^{\rm FM}_{r,n}} & C(G_R,{\mathbf Z}/p^n(r)')\ar[u]^{\wr}
}
$$
Hence to finish the proof of our theorem it suffices to quote Lemma~\ref{Gal} below.
\end{proof}
 
\subsubsection{Comparison of Galois groups}
Recall the following theorem of L\"utkebohmert \cite{Lut}. 
\begin{theorem}{\rm (Riemann's Existence)}
\label{riemann}
Let $\sx$ be a smooth quasi compact rigid space. Let $\szz\subset \sx$ be a normal crossings divisor and set $\su=\sx\setminus \szz$. Then any finite \'etale covering of $\su$ extends (uniquely) to a finite flat normal covering of $\sx$. 
\end{theorem}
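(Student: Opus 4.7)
The plan is to reduce to a local model and then use an Abhyankar--Kummer descent argument. The uniqueness is the easy part: since $\sx$ is smooth, hence normal, $\su$ is schematically dense in $\sx$, and any two finite flat normal covers of $\sx$ that agree on $\su$ must agree globally. This uniqueness immediately shows that the question is local on $\sx$, so by choosing a covering of $\sx$ by smooth affinoids on which $\szz$ is cut out by coordinate functions, I may assume $\sx = \Sp A$ with $A$ an \'etale extension of a Tate algebra and $\szz = V(z_1 \cdots z_r)$.

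The next step is an Abhyankar reduction. Since we are in characteristic zero on the generic fibre, the fundamental group of $\su$ is automatically tame along each branch $z_i = 0$. Given a finite \'etale cover $\mathcal{V} \to \su$, pick an integer $m$ divisible by all ramification indices along $\szz$ and form the Kummer cover $\widetilde{\sx} \to \sx$ obtained by adjoining $w_i = z_i^{1/m}$ for $1 \leq i \leq r$. Then $\widetilde{\sx}$ is again smooth affinoid, and Abhyankar's lemma guarantees that the pullback of $\mathcal{V}$ to $\widetilde{\su} := \widetilde{\sx} \setminus \{w_1 \cdots w_r = 0\}$ becomes unramified over every codimension-one point of $\widetilde{\sx}$. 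By the rigid-analytic purity theorem for smooth spaces, this pullback then extends uniquely to a finite \'etale cover $\widetilde{\mathcal{Y}} \to \widetilde{\sx}$.

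The main obstacle is the descent back to $\sx$. The Kummer cover $\widetilde{\sx} \to \sx$ is Galois with group $G = \mu_m^r$, but it is ramified along $\szz$, so ordinary \'etale descent is not available. The idea is to build $\mathcal{Y}$ as a ring of invariants: the $G$-action on $\widetilde{\su} \to \su$ extends by uniqueness to an action on $\widetilde{\mathcal{Y}}$, and I define $\so_{\mathcal{Y}} := (\pi_* \so_{\widetilde{\mathcal{Y}}})^G$, where $\pi : \widetilde{\mathcal{Y}} \to \sx$ is the composite. I then have to verify three things: that $\so_{\mathcal{Y}}$ is a finite $\so_{\sx}$-algebra (using that passing to invariants under a finite group preserves coherence and finiteness in the rigid setting), that it is normal (checked stalk-locally by identifying the completed local rings as invariant subrings of explicit Kummer-type extensions, which are products of normal local domains), and that the morphism $\mathcal{Y} \to \sx$ is flat (\'etale, hence flat, away from $\szz$; across $\szz$ one appeals to miracle flatness for a finite morphism between regular rigid spaces, or checks flatness after faithfully flat base change to $\widetilde{\sx}$).

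The most delicate technical ingredient in this plan is the rigid-analytic purity statement used to extend the pulled-back cover over $\widetilde{\sx}$; this rests on L\"utkebohmert's extension theorems for analytic objects across closed subspaces of codimension $\geq 2$ in smooth rigid spaces, which in turn rely on his finer results on meromorphic functions and formal models. Once the local construction is available, the uniqueness clause glues the pieces into a global finite flat normal cover of $\sx$ extending the given $\mathcal{V} \to \su$, completing the proof.
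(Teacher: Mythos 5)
Your route is genuinely different from the one the paper sketches. The paper follows L\"utkebohmert directly: it invokes Kiehl's tubular neighborhood theorem to model a neighborhood of $\szz$ as (smooth stratum)\,$\times$\,(punctured polydisk), and then applies L\"utkebohmert's extension Lemma~3.3, which analyzes finite covers of punctured polydisks directly and extends them across the puncture. There is no Kummer cover, no Abhyankar's lemma, and no explicit descent by group invariants in that route. Your plan is the more classical algebraic-geometry template (Abhyankar reduction to the unramified case, then purity of the branch locus, then Galois descent), transplanted into the rigid setting. Both routes are viable, and both concentrate the hard work in a single rigid-analytic extension statement; in the paper's version it is Lemma~3.3 on punctured polydisks, in yours it is what you call ``rigid-analytic purity.''

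That said, the way you invoke your key ingredient is imprecise, and the imprecision hides the real technical content. What you actually need after the Kummer cover is Zariski--Nagata \emph{purity of the branch locus}: the normalization $\widetilde{\mathcal Y}\to\widetilde{\sx}$ of $\widetilde{\sx}$ in the pulled-back cover is finite, is \'etale away from a closed analytic subset of codimension $\geq 2$, and one must conclude it is \'etale everywhere. You attribute this to ``L\"utkebohmert's extension theorems for analytic objects across closed subspaces of codimension $\geq 2$,'' i.e.\ to Remmert--Stein type results on extending \emph{analytic subsets} or meromorphic functions. Those results do not by themselves give purity of the branch locus: extending the underlying analytic set of $\widetilde{\mathcal Y}$ across the bad locus does not control ramification, and étaleness is a statement about the local rings, not just about the support. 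The correct statement is the rigid analogue of Nagata's purity theorem, which one obtains by passing to completed local rings of the smooth space $\widetilde{\sx}$ (regular, excellent), applying algebraic purity there, and descending étaleness from the completion; alternatively it is proved in L\"utkebohmert's paper en route to the main theorem, so citing it as a prior input risks circularity if one is not careful about the logical order. If you repair the citation to point at the genuine purity-of-branch-locus statement (and check it is established independently of the theorem you are proving), the rest of your plan — finiteness of the invariant ring, normality via stalk-local Kummer computations, flatness by miracle flatness over the regular base, and gluing via uniqueness — is sound.
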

\begin{proof}
This is  the main theorem of \cite{Lut}. One uses the description of  the tubular neighborhoods of the divisor $\szz$ from \cite[Theorem 1.18]{Kih} to pass to pointed disks (of varied dimensions), which then can be treated by the extension lemma \cite[Lemma 3.3]{Lut}. 
\end{proof}

   Let $R$ be a ring as in (\ref{form}) or a henselization at $(p)$ of such a ring or a ring as in \ref{formalscheme}. 
 Let $R_m:=R(\zeta_m)[X^{m^{-1}}_{a+b+1},\cdots,X^{m^{-1}}_{d}]$, for a choice of a primitive $m$'th root of unity $\zeta_m$, and $R^{\prime}_\infty:=\cup_{m}R_m$.
Let $\overline R_\infty^{\prime}$ be the maximal extension of $R^{\prime}_\infty$ which is \'etale in characteristic zero. We define similarly $\widehat{R}^{\prime}_\infty$ and $\overline{ \widehat{R}}_\infty^{\prime}$. 
\begin{lemma}
\label{Abh}
  {\rm (i)} {\rm (Abhyankar's Lemma)}
The natural inclusion $\overline R_\infty^{\prime}\subset \overline R$ is an equality. 
In particular, we have  the natural isomorphism
$$
 G_R =\Gal(\overline R[1/p]/R[1/p])\stackrel{\sim}{\to}\Gal(\overline R^{\prime}_\infty[1/p]/R[1/p]).
$$

 {\rm (ii)} {\rm (Approximation)} Let $R$ be a henselization of a ring of the form (\ref{form}). 
The natural map 
  $G_{\widehat{R}}\to G_R
  $
  is an isomorphism.
  \end{lemma}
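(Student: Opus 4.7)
The plan is to establish (i) first, by combining L\"utkebohmert's rigid analytic Riemann existence theorem (Theorem \ref{riemann}) with the classical Abhyankar lemma, and then to derive (ii) from (i) via Elkik's approximation for the henselian pair $(R,pR)$.

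For (i), the inclusion $\overline R_\infty^{\prime} \subset \overline R$ is built into the definitions, so I only need the reverse inclusion: every finite \'etale cover of $\Spec R[1/p] \setminus D_K$, where $D = \{X_{a+b+1}\cdots X_d = 0\}$, is to become split after base change to $R_m^{\prime}[1/p]$ for some~$m$. First pass to the rigid analytic generic fibre, apply Theorem \ref{riemann} to extend the cover to a finite flat normal cover of the whole rigid space, and then analyse the ramification component by component along $\sd$. The formal neighbourhood along any component is regular, cut out by one of $X_{a+b+1},\dots,X_d$, so the extended finite cover is determined, after completion, by its restriction to the punctured formal disk in that coordinate. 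Since we work in characteristic $0$, all inertia along $\sd$ is tame once enough roots of unity are available, and Kummer theory (i.e.\ the classical Abhyankar lemma) shows that adjoining $X_j^{1/m}$ and $\zeta_m$ for $m$ divisible by the ramification indices at every component trivialises the ramification. The construction of $R_\infty^{\prime}$ incorporates exactly these extractions, so for $m$ large enough the cover becomes \'etale over $R_m^{\prime}[1/p]$, giving $\overline R \subset \overline R_\infty^{\prime}$.

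For (ii), the pair $(R,pR)$ is henselian by construction, and $R/pR = \widehat R/p\widehat R$. Elkik's theorem on the equivalence of finite \'etale algebras over a henselian pair and over its completion, applied to $R$ and to each $R_m$ (which is again a henselisation of a finite \'etale extension of a ring of type (\ref{form})), yields, after inverting $p$ and passing to the filtered colimit in $m$, an equivalence between finite \'etale covers of $\Spec R_\infty^{\prime}[1/p]$ and those of $\Spec \widehat R_\infty^{\prime}[1/p]$. In view of (i), applied both to $R$ and to $\widehat R$, the groups $G_R$ and $G_{\widehat R}$ are identified with $\Gal(\overline{R_\infty^{\prime}}[1/p]/R[1/p])$ and $\Gal(\overline{\widehat R_\infty^{\prime}}[1/p]/\widehat R[1/p])$ respectively, and the Elkik equivalence is compatible with these identifications by functoriality, yielding the desired isomorphism $G_{\widehat R} \stackrel{\sim}{\to} G_R$.

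The hard part will be the very first step of (i): Theorem \ref{riemann} extends the cover only as a finite flat normal cover, not obviously as a tamely ramified one in the classical algebraic sense, so one must check that the rigid analytic normalisation coincides with the algebraic normalisation of the integral closure along each component of $\sd$, and that the classical Abhyankar lemma applies after completion at that component. Concretely, this reduces to the local model of a cover of an open polydisk minus a union of coordinate hyperplanes, whose fundamental group is generated by loops around the hyperplanes with tame quotient computed by Kummer theory; once this local picture is in place the global argument is routine, and (ii) is then a formal consequence of (i) together with Elkik.
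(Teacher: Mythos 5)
Your treatment of part (ii) is essentially the paper's: apply Elkik's theorem to each henselian pair $(R_m, pR_m)$ (whose $p$-adic completion is $\widehat R_m$) to identify finite \'etale covers of $\Spec R_m[1/p]$ and $\Spec \widehat R_m[1/p]$, pass to the filtered limit over $m$, and combine with (i) to conclude $G_{\widehat R}\simeq G_R$. One inaccuracy: $R_m$ is not a henselisation of a finite \'etale extension of a ring of type (\ref{form}) — the extension $R\to R_m$ is finite but ramified along the divisor and the special fibre. What the argument actually needs, and what the paper uses, is simply that $R_m$ is henselian at $(p)$ (integral extensions of henselian rings are henselian) and that its $p$-adic completion is $\widehat R_m$.

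For (i), you take a genuinely different route, and it has a gap. You propose to pass to the rigid analytic generic fibre and apply L\"utkebohmert's Riemann existence theorem uniformly, even when $R$ is a ring of form (\ref{form}) or its henselisation. The paper instead treats the algebraic cases purely algebraically: $\Spec R[1/p]$ is regular and $D_K$ is a normal crossings divisor, so the algebraic Abhyankar lemma [SGA1, XIII, Prop.~5.2] applies directly, giving an $m$ for which the normalised base change $Y_m$ over $\Spec(R[1/p][X_j^{1/m}])$ is \'etale — no analytification and no descent back from the rigid category. Your detour to the rigid side for the algebraic case creates exactly the difficulty you flag as ``the hard part'': having extended the cover on the analytification and trivialised its ramification there, you must still descend the conclusion to a statement about the algebraic extension $R_m'[1/p]$, and this rigid-to-algebraic comparison of \'etale fundamental groups is essentially what (ii) asserts. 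So as written, your proof of (i) for the algebraic case is either circular (using a statement equivalent to (ii) before (ii) is proved) or needs an unproved GAGA input. The paper sidesteps this entirely by proving (i) algebraically for (\ref{form}) and for the henselisation, and invokes the rigid Riemann existence theorem only when $R$ is $p$-adically complete as in (\ref{formalscheme}) — the one case where the rigid object is the only object available and there is no algebraic model to descend to. Your local Kummer analysis along the components of $D_K$ is the right idea; the fix is just to run it on the algebraic scheme when one is available, reserving Theorem \ref{riemann} for the formal case.
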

\begin{proof}
For the first statement, we first assume that $R$ is of the form (\ref{form}). Recall that $\overline R$ is the direct limit of a maximal chain of normal $R$-algebras, which are domains and, after inverting $p$, are finite and \'etale extensions of $R[1/p][X^{{-1}}_{a+b+1},\cdots,X^{-1}_{d}]$. Similarly,
$\overline R_\infty^\prime$ is the direct limit of a maximal chain of normal $R$-algebras, which are domains and, after inverting $p$, are finite and \'etale extensions of $R^{\prime}_\infty[1/p]$.
Let $Y\to\Spec(R[1/p])$ be a normal, connected, finite scheme such that the base change $Y_{ (\Spec R)_{\tr}}\to  (\Spec R)_{\tr}$, $(\Spec R)_{\tr}:=\Spec( R[1/p])\setminus 
\{X_{a+b+1}\cdots X_{d}=0\}$,  is a finite \'etale extension.  
Since $R[1/p]$ is regular, by Abhyankar's Lemma \cite[XIII, Proposition  5.2]{SGA1},
there exists a number $m\geq1$ such that  the finite extension $Y_m:=(Y\times_{\Spec(R[1/p])}\Spec(R(\zeta_m)[1/p][X^{m^{-1}}_{a+b+1},\cdots,X^{m^{-1}}_{d}]))^{\rm norm}$ of $\Spec(R[1/p][X^{m^{-1}}_{a+b+1},\cdots,X^{m^{-1}}_{d}])$  is \'etale. 
The first statement of the lemma for follows for $R$ and for its henselization at $(p)$. For $R$ as in \ref{formalscheme}, we argue in the same way noting that the rigid space $Y$ above exists by Theorem \ref{riemann}.  

  For the second statement,   since clearly $\Gal(\widehat{R}^{\prime}_{\infty}[1/p]/\widehat{R}[1/p]))\stackrel{\sim}{\to}\Gal({R}^{\prime}_{\infty}[1/p]/R[1/p]))$, by  (i),     it suffices to show that
  the natural map 
  $$\Gal(\overline{\widehat{R}}^{\prime}_{\infty}[1/p]/\widehat{R}_{\infty}[1/p])) \to \Gal(\overline{R}^{\prime}_{\infty}[1/p]/R_{\infty}[1/p])) $$
  is an isomorphism. 
  But
  \begin{align*}
  \Gal(\overline{\widehat{R}}^{\prime}_{\infty}[1/p]/\widehat{R}_{\infty}[1/p])) & =\lim_m \Gal(\overline{\widehat{R}}_{m}[1/p]/\widehat{R}_{m}[1/p])),\\
  \Gal(\overline{R}^{\prime}_{\infty}[1/p]/R_{\infty}[1/p]))  &=\lim_m  \Gal(\overline{{R}}_{m}[1/p]/{R}_{m}[1/p])),
    \end{align*}
    where $\overline{\widehat{R}}_{m}[1/p]$ (resp. $\overline{{R}}_{m}[1/p]$) denotes the maximal \'etale extension of $\widehat{R}_{m}[1/p]$ (resp. ${R}_{m}[1/p]$). 
Since $R_m$ is henselian at $(p)$ and $(R_m)^{\wedge}=\widehat{R}_m$, we have, by Elkik's theorem  \cite[Corollary p. 579]{Elk}, that, for $m\geq 0$, 
$$\Gal(\overline{\widehat{R}}_{m}[1/p]/\widehat{R}_{m}[1/p])) \stackrel{\sim}{\to}\Gal(\overline{{R}}_{m}[1/p]/{R}_{m}[1/p])).$$
To conclude we pass to the limit over $m$.
  \end{proof}
  
\subsubsection{$K(\pi,1)$-Lemmas}\label{kpi1}
  To show that the map  $f$ in diagram \ref{passage} is an  isomorphism, note that this is just 
 the  $K(\pi,1)$-Lemma for $\su$ and $p$-coefficients. In the case of the divisor at infinity $D$ being trivial  this is proved by Scholze \cite[Theorem 1.2]{Sch}. The general case we will treat the way 
  Faltings treated  the algebraic case.  We have to show that, for any locally constant constructible $p^n$-torsion sheaf $\sm$,
    any class $\beta\in H^i(\su_{\eet},\sm)$, $i >0$, dies in a finite \'etale extension of $\su$. During the proof we will often pass to the extension of $\su$ generated by taking $k$'th roots of all the $U_i$'s, $k\geq 0$. This is possible  since the normalization of $\su$ in this extension satisfies the same hypothesis as $\su$. 
    
    By Abhyankar's Lemma \ref{Abh} we have $G_{\widehat{R}}=\Gal(\overline{\widehat{R}}^{\prime}_{\infty}[1/p]/\widehat{R}[1/p])$. Hence, by adjoining $k$'th roots of $U_i$'s, for some $k\geq 0$,  and adding some roots of unity if necessary, we can assume that the sheaf $\sm$ is unramified, i.e., that it is a module for the fundamental group of $\Spec(\widehat{R}[1/p])$.  Let $j: \su\hookrightarrow \Sp(\widehat{R}[1/p]), f=U_1\cdots U_b$.  There is a spectral sequence
 $$
 E^{s,t}_2=H^s(\Sp(\widehat{R}[1/p])_{\eet},\R^t j_*\sm)\Rightarrow H^{s+t}(\su_{\eet},\sm).
 $$
Taking $p^k$'th roots of $f$ induces multiplication by $p^{tk}$ on $E^{s,t}_2$. This can be checked on the finite extension of $\Spec(R[1/p])$ that trivializes $\sm$ and there it follows from the purity statement \cite[Thm 7.2.2]{dJvP}, \cite[Proposition  2.0.2]{Nan}, i.e., the fact that
$$
R^tj_*\Z/p^n\simeq \wedge^t\overline{M}^{\gp}_{Y}\otimes\Z/p^n(-1),\quad Y:=\Sp(\widehat{R}[1/p]),
$$
where $M_{Y}$ is the log-structure on $Y$ associated to $D_K$: $M_{Y}=j_*\so^*_{\su}\cap\so_{Y}$, $M^{\gp}_Y$ is the induced sheaf of abelian groups, and $\overline{M}_Y:=M_Y/\so^*_Y$. 
  
    Repeating this procedure several times we reduce to the case when $\beta$ is a restriction of some cohomology class on $\Sp(\widehat{R}[1/p])$. Then we can use Scholze's $K(\pi,1)$-Lemma again. 
\subsubsection{Rigid GAGA}\label{rgaga}
 To show that the map $g$ in diagram (\ref{passage}) is an isomorphism, we will argue by induction on the number of irreducible components of the divisor $D_K$, i.e., on $s$. If $s=0$ this is a result of Gabber \cite[Thm 1]{Ga}. Assume that the isomorphism is true for $s-1$. The case of $s$ follows now easily from the following commutative diagram of localization sequences in the \'etale and rigid \'etale cohomology \cite[Thm 7.2.2]{dJvP}(we assumed $s=1$ for simplicity).
$$
\xymatrix{
\cdots \to H^i(D_{K,\eet},\Z/p^n)\ar[r]^-{i_*}\ar[d]^{\wr}& H^i(T_{\eet},\Z/p^n)\ar[r]^{j^*}\ar[d]^{\wr} & H^i(U_{\eet},\Z/p^n)\ar[r] \ar[d] & H^{i-1}(D_{K,\eet},\Z/p^n)\ar[d]^{\wr}\to\cdots \\
\cdots \to H^i(\sd_{\eet},\Z/p^n)\ar[r]^-{i_*} & H^i(\stt_{\eet},\Z/p^n)\ar[r]^{j^*} & H^i(\su_{\eet},\Z/p^n)\ar[r]  & H^{i-1}(\sd_{\eet},\Z/p^n)\to\cdots}
$$
Here we set $T:=\Spec(R[1/p])$, $\stt:=\Sp(\widehat{R}[1/p])$, and $\sd$ is the rigid analytic spaces associated to $D_K$. The vertical isomorphisms follow from the inductive assumption.

\subsubsection{Galois descent}
\begin{lemma}\label{Gal}Let $K_1$ be a Galois extension of 
 $K$, with relative ramification index $e_1$ and Galois group $G$. Set $R_1:=R\otimes_{\so_K} \so_{K_1}$. 
 The natural map
 $$\R\Gamma_{\synt}(R,r)\to  \R\Gamma(G,\R\Gamma_{\synt}(R_1,r))$$
 is a $p^{cre_1}$-quasi-isomorphism for some universal constant $c$.
\end{lemma}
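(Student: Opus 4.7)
The plan is to reduce the statement to filtered crystalline descent via the mapping-fiber definition of the syntomic complex, then to compute this descent explicitly using the log-smooth lifts of Section~\ref{formalscheme}, separating the unramified and totally ramified parts of $K_1/K$.

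First, since $\R\Gamma(G,-)$ preserves mapping fibers in the derived category, and since the Frobenius $p^r-\phi$ is functorial, hence $G$-equivariant on $\R\Gamma_{\crr}(R_1,-)$, it suffices to show that the natural base-change map
$$\R\Gamma_{\crr}(R,\sj^{[r]})\to \R\Gamma(G,\R\Gamma_{\crr}(R_1,\sj^{[r]}))$$
is a $p^{cre_1}$-quasi-isomorphism (the unfiltered case needed for the target of the mapping fiber is the special case $r=0$). Using the log-smooth lifts $R^+_\varpi$ and $R^+_{1,\varpi_1}$ over $\so_F$ from Section~\ref{formalscheme}, the crystalline Poincar\'e lemma identifies this with
$$F^r\Omega\kr_{R^{\rm PD}_\varpi/\so_F}\to \R\Gamma\big(G,F^r\Omega\kr_{R^{\rm PD}_{1,\varpi_1}/\so_F}\big),$$
and the problem becomes descent for these filtered log de Rham complexes.

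Second, factor $K\subset K'\subset K_1$ with $K'/K$ the maximal unramified subextension (of degree $f_1$) and $K_1/K'$ totally ramified of degree $e_1$; set $R'=R\otimes_{\so_K}\so_{K'}$ and $G'=\Gal(K_1/K')\subset G$, so that $G'$ is cyclic of order $e_1$. The passage $R\to R'$ merely enlarges the arithmetic coefficient ring $W(k)\subset W(k')$, leaving the uniformiser and the variable $X_0$ unchanged; in particular $R^{\prime,+}_\varpi\cong R^+_\varpi\,\widehat{\otimes}_{W(k)}W(k')$, and classical (flat) Galois descent for the finite \'etale extension $W(k')/W(k)$ gives a genuine quasi-isomorphism with no constant loss. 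We are reduced to the totally ramified case where $G=G'$ is cyclic of order $e_1$.

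Third, in that case only the arithmetic variable is affected: one has $X_0=uY_0^{e_1}$ for some unit $u\in R^+_{1,\varpi_1}$, making $R^+_{1,\varpi_1}$ a finite flat $G$-cover of $R^+_\varpi$ of degree $e_1$, with relative different generated (up to units) by $e_1uY_0^{e_1-1}$, of $p$-valuation at most $v_p(e_1)+(e_1-1)v_p(\varpi_1)\leq ce_1$. For the unfiltered log de Rham complex, the composition of restriction with the normalised trace $\tfrac{1}{e_1}\tr$ is the identity up to multiplication by the different, yielding a $p^{ce_1}$-quasi-isomorphism. On the filtered piece $F^r$, the divided-power ideal $\sj^{[r]}$ is controlled by the $r$-th power of the augmentation ideal, and the Galois defect compounds multiplicatively across the filtration, producing an extra $r$-th power of the different and hence the bound $p^{cre_1}$.

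The main obstacle is making rigorous the claim in the last step that the descent loss on $F^r$ grows only as the $r$-th power of the different, rather than, say, as an $r$-th divided power, which would give a much worse constant. The cleanest way to handle this is to transfer, via Proposition~\ref{8saint}, from the PD-envelope $R^{\rm PD}_\varpi$ to the ring $R^{[u]}_\varpi$ of bounded analytic functions on a smaller disk, where the filtration is given by honest powers of $P_\varpi$ and the descent constants track transparently as in Corollary~\ref{17saint}; then transfer back to the PD-envelope at the cost of a further $p^{6r}$ factor, absorbed into the universal constant $c$.
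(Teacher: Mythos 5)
Your proposal takes a genuinely different route from the paper's proof, and in the crucial third step it relies on a comparison between the lifts $R^+_\varpi$ and $R^+_{1,\varpi_1}$ that I do not think exists with the structure you need. You postulate a map $X_0\mapsto u\,Y_0^{e_1}$ exhibiting $R^+_{1,\varpi_1}$ as a finite flat $G$-cover of $R^+_\varpi$; even granting that such a ring map can be chosen over $\O_F$, it will not intertwine the Kummer Frobenii $\varphi_{\rm Kum}:X_0\mapsto X_0^p$ and $Y_0\mapsto Y_0^p$ (the unit $u$ does not satisfy $\varphi(\tilde u)=\tilde u^p$), so it does not induce a map of Kummer syntomic complexes. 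Repairing this would require the whole ``change of Frobenius'' machinery of Section~3.5 (fat period rings), which your argument silently assumes away. Moreover, even as a trace argument on the de Rham complexes of the lifts, the defect is not a power of $p$: the relative different of $\O_F[[Y_0]]/\O_F[[X_0]]$ is (up to unit) $Y_0^{e_1-1}$, which is a \emph{coordinate} power, not a power of $p$; modules over $R^{\rm PD}_{1,\varpi_1}$ are not annihilated by powers of $Y_0$, so you cannot directly conclude a $p^{ce_1}$-quasi-isomorphism from it. Your estimate $v_p(e_1)+(e_1-1)v_p(\varpi_1)\leq ce_1$ is also conflating the ideal $(Y_0^{e_1-1})$ in the lift with the element $\varpi_1^{e_1-1}\in\so_{K_1}$; the latter has $p$-valuation less than $1$, which should have been a warning that this is not where the constant in the lemma comes from. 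Finally, you acknowledge that the passage to $F^r$ is the main unresolved step.

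The paper's proof avoids all of these difficulties by never comparing the arithmetic lifts directly. It writes ${\rm Syn}(R,r)$ as the fiber of $\R\Gamma_{\crr}(R)^{\phi=p^r}\to\R\Gamma_{\crr}(R)/F^r$ and handles the two pieces by completely separate reductions. For the Frobenius eigenspace, the observation is that restriction to the special fiber $i_R^*:\R\Gamma_{\crr}(R)^{\phi=p^r}\to\R\Gamma_{\crr}(R_0)^{\phi=p^r}$ is a $p^{4rm}$-quasi-isomorphism (with $p^m\geq e$), because $\phi^m$ factors through it and $\phi^m$ is a $p^{2rm}$-quasi-isomorphism on the $\phi=p^r$ part. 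Since a totally ramified base change does not change the special fiber, $G$ acts trivially on $\R\Gamma_{\crr}(R_0)$ and the descent map is an $e_1$-quasi-isomorphism, full stop. For the de Rham quotient, Beilinson's comparison reduces $\R\Gamma_{\crr}(R)/F^r$ to relative de Rham cohomology $\R\Gamma_{\dr}(R/\so_K)/F^r$ (at a cost controlled by $v_p({\goth d}_{K/K_0})$), and then the base-change identity $\Omega\kr_{R_1/\so_{K_1}}=\Omega\kr_{R/\so_K}\otimes_{\so_K}\so_{K_1}$ makes descent trivial. In short: the Frobenius part never sees the arithmetic variable, and the filtration part is pushed into the relative theory where the base change is honest tensor product. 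Neither of these ideas appears in your proposal, and both are necessary to circumvent the absence of a $G$- and $\varphi$-equivariant comparison between the absolute lifts.
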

\begin{remark}
\label{action5}
One can see the $G$-action on $\R\Gamma_{\synt}(R_1,r)$ and
the map $\R\Gamma_{\synt}(R,r)\to \R\Gamma_{\synt}(R_1,r)$  very explicitly. Namely, instead of the coordinate ring 
$\so_F[T_1]$ of $\so_{K_1}$, $T_1\mapsto \varpi_1$, one can take 
the coordinate ring  $\so_F[T,X_{\sigma}, \sigma\in G]$, $T\mapsto \varpi, X_{\sigma}\mapsto \sigma(\varpi_1)$. The action of 
$G$ on this coordinate ring is defined by
$g(T)=T,g(X_{\sigma})=X_{g\sigma},g\in G$. 
\end{remark}

 \begin{proof} 
 Since crystalline cohomology satisfies \'etale descent we may assume that our extension is totally ramified. Write
 \begin{align*}
 \R\Gamma_{\synt}(R,r) & =[\R\Gamma_{\crr}(R)^{\phi=p^r}\stackrel{\can}{\to}\R\Gamma_{\crr}(R)/F^r],\\
   \R\Gamma_{\crr}(R)^{\phi=p^r} & :=[\R\Gamma_{\crr}(R)\lomapr{p^r-\phi}\R\Gamma_{\crr}(R)]
 \end{align*}
  It suffices to show that the maps
 \begin{align*}
 \R\Gamma_{\crr}(R)^{\phi=p^r}  \stackrel{\sim}{\to}\R\Gamma(G,\R\Gamma_{\crr}(R_1)^{\phi=p^r}),\quad 
 \R\Gamma_{\crr}(R)/F^r  \stackrel{\sim}{\to}\R\Gamma(G,\R\Gamma_{\crr}(R_1)/F^r)
 \end{align*}
 are $p^{cre_1}$-quasi-isomorphisms. 
 
    To treat the first map consider
 the following diagram of maps of schemes 
 $$
 \xymatrix{
 R_0\ar@{^{(}->}[dr]^{i_{R}} \ar[dd]\ar@{^{(}->}[r]^{i_{R_1}} & R_1\ar[d]\\
& R\ar[d]\\
 k\ar@{^{(}->}[r] & \so_K
 }
 $$
It yields the  commutative diagram
$$
\xymatrix{
\R\Gamma_{\crr}(R)\ar[r]^{i_{R}^*}\ar[d] & \R\Gamma_{\crr}(R_0)\ar[d]\\
\R\Gamma(G,\R\Gamma_{\crr}(R_1))\ar[r]^{i_{R_1}^*} & \R\Gamma(G,\R\Gamma_{\crr}(R_0))
}
$$
Since the right vertical map is a $e_1$-quasi-isomorphism, 
it follows that 
  it suffices to show that the morphism
$$\R\Gamma_{\crr}(R)^{\phi=p^r}\stackrel{i_{R}^*}{\to}  \R\Gamma_{\crr}(R_0)^{\phi=p^r}
$$
and its analog for $R_1$ are $p^{cr}e_1$-quasi-isomorphisms. To see this for $R$ consider
the following factorization
$$
\phi^m:\quad 
 \R\Gamma_{\crr}({R/\O_F)^{\phi=p^r}}\stackrel{i_R^*}{\to} \R\Gamma_{\crr}(R_0/\O_F)^{\phi=p^r}\stackrel{j_m}{\to}
 \R\Gamma_{\crr}(R/\O_F)^{\phi=p^r}$$
of the $m$'th power of the Frobenius, where $p^m\geq e$. We also have $i_R^*j_m= \phi^m.$ Since $\phi^m$
is a $p^{2rm}$-quasi-isomorphism on $\R\Gamma_{\crr}(R)^{\phi=p^r}$ and on $\R\Gamma_{\crr}(R_0)^{\phi=p^r}$ 
both $i^*_R$ and $j_m$ are $p^{4rm}$-quasi-isomorphisms as
well. The reasoning in the case of $R_1$ is analogous.

  It remains to show that the map
  $$
  \R\Gamma_{\crr}(R)/F^r  \stackrel{\sim}{\to}\R\Gamma(G,\R\Gamma_{\crr}(R_1)/F^r)
  $$ is a $rp^{v_p(\delta_{K_1/K_0})}+e_1$-quasi-isomorphism. For that  recall that Beilinson's identification of  filtered log-crystalline cohomology with filtered derived log de Rham complex
  \cite[1.9.2]{Be} allows to prove that, for $n\geq 1$,  the natural map 
  $$\R\Gamma_{\crr}(R)_n/F^r\to \R\Gamma_{\crr}(R/\so^{\times}_K)_n/F^r\simeq \R\Gamma_{\dr}(R/\so_K)_n/F^r
  $$
  is a $rp^{v_p(\delta_{K/K_0})}$-quasi-isomorphism \cite[proof of Corollary 2.4]{NN}. Same is true for $R_1$ with a constant $rp^{v_p(\delta_{K_1/K_0})}$. It follows that it suffices to show that the map
  $$\R\Gamma_{\dr}(R/\so_K)_n/F^r\to \R\Gamma(G,\R\Gamma_{\dr}(R_1/\so_{K_1})_n/F^r)
  $$
  is a $e_1$-quasi-isomorphism. But, since, $\Omega\kr_{R_1/\so_{K_1}}=\Omega\kr_{R/\so_{K}}\otimes_{\so_K}\so_{K_1}$, this is clear.
 \end{proof}
 
   Using Corollary \ref{derham} and the proof of Theorem \ref{main},
we can relate de Rham and $p$-adic rigid \'etale cohomology. More specifically, let $\sx$ be a quasi-compact formal, semistable scheme over $\so_K$ (i.e., locally of the form described in section 2.1.2 with $h=1$). For $i\geq 0$, consider the composition
   $$
   \alpha_{r,i}: H^{i-1}_{\rm dR}(\sx_{K,\tr})\to H^i_{\synt}(\sx,r)_{\Q}\lomapr{\alpha^{FM}_r}H^i_{\eet}(\sx_{K,\tr},\Q_p(r)).
   $$
\begin{corollary}
For $i\leq r-1$, the map
$$
\alpha_{r,i}: H^{i-1}_{\rm dR}(\sx_{K,\tr}){\to} H^i_{\eet}(\sx_{K,\tr},\Q_p(r))
$$ is an isomorphism. Moreover, the map
$ \alpha_{r,r}: H^{r-1}_{\rm dR}(\sx_{K,\tr})\rightarrow  H^r_{\eet}(\sx_{K,\tr},\Q_p(r))$ is injective.
\end{corollary}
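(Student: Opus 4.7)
The plan is to combine Corollary \ref{derham} (de Rham to syntomic) with the étale version of Theorem \ref{main} (syntomic to étale), exploiting the factorization $\alpha_{r,i} = \alpha^{\rm FM}_r \circ \alpha^{\rm dR}_{r,i}$.

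First, I would quote Corollary \ref{derham}: the map $H^{i-1}_{\rm dR}(\sx_{K,\tr})\to H^i_{\synt}(\sx,r)_{\Q}$ is an isomorphism for $i\leq r-1$ and an injection for $i=r$. So the claim reduces to showing that, for all $i\leq r$, the global Fontaine-Messing period map
$$\alpha^{\rm FM}_r:\quad H^i_{\synt}(\sx,r)_{\Q}\longrightarrow H^i_{\eet}(\sx_{K,\tr},\Q_p(r))$$
is an isomorphism.

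Second, I would globalize Theorem \ref{main}. The truncation $\tau_{\leq r}$ of the sheaf map $\alpha^{\rm FM}_{r,n}: \sss_n(r)_{\sx}\to i^*Rj_*\Z/p^n(r)'_{\sx_{K,\tr}}$ on $\sx_{0,\eet}$ has cone annihilated by $p^{N}$ (with $N=N(K,p,r)$ independent of $n$ and of $\sx$, by Theorem \ref{main} applied locally to the semistable pieces of $\sx$). Taking hypercohomology on the quasi-compact formal scheme $\sx$, passing to $R\lim_n$, and then inverting $p$ converts this into a genuine isomorphism
$$H^i\big(\sx_0,\tau_{\leq r}\sss(r)\big)_{\Q}\stackrel{\sim}{\longrightarrow} H^i\big(\sx_0,\tau_{\leq r}i^*Rj_*\Z_p(r)'\big)_{\Q}$$
for every $i$; for $i\leq r$ the truncation is invisible on cohomology, so the left side is $H^i_{\synt}(\sx,r)_{\Q}$ and the right side equals $H^i_{\eet}(\sx_{K,\tr},\Q_p(r))$ (the twist $\Z_p(r)'=p^{-a(r)}\Z_p(r)$ equals $\Q_p(r)$ after inverting $p$, and by Leray $H^i(\sx_0,i^*Rj_*\cdot)=H^i(\sx_{K,\tr},\cdot)$ for a quasi-compact formal scheme in the rigid/étale topology). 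Composing gives the corollary.

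The main technical point, and the only step requiring any real care, is the third: passing from the $p^{N}$-quasi-isomorphism at the level of sheaves to an honest isomorphism at the level of global cohomology after inverting $p$, uniformly in $n$. This uses that $N=N(K,p,r)$ does not depend on $n$, so the cones of the hypercohomology maps modulo $p^n$ are uniformly annihilated by $p^N$; taking the derived inverse limit and tensoring with $\Q$ kills this uniform torsion. One also needs to know that the rigid cohomology of the open $\sx_{K,\tr}$ agrees with the limit of mod $p^n$ cohomology, which is fine for quasi-compact $\sx$ by the $K(\pi,1)$-lemma and rigid GAGA arguments already invoked in Theorem \ref{main}. Everything else is a formal consequence of Corollary \ref{derham} and the shape of the composition defining $\alpha_{r,i}$.
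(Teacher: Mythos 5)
Your proof is correct and takes essentially the same route as the paper's: Corollary~\ref{derham} handles the first map in the factorization $\alpha_{r,i}=\alpha^{\rm FM}_r\circ\alpha^{\rm dR}_{r,i}$, and the local $p^{N(K,p,r)}$-quasi-isomorphism from the proof of Theorem~\ref{main} (together with the $K(\pi,1)$ and GAGA arguments) shows $\alpha^{\rm FM}_r$ becomes an isomorphism rationally in degrees $\leq r$. The paper's own proof is a one-line pointer to Corollary~\ref{derham} and ``the proof of Theorem~\ref{main}''; what you have done is fill in the details that pointer leaves implicit, in particular the uniformity of $N$ in $n$ needed for $R\lim_n$, and the observation that truncation at $r$ is harmless in hypercohomology degree $\leq r$, both of which are right.
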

\subsubsection{Geometric syntomic cohomology}
   Recall the definition of the geometric syntomic cohomology, i.e., syntomic cohomology over $\ovk$. For a log-scheme $X$, log-smooth over $\so_K^{\times}$ and universally saturated, we have the absolute log-crystalline cohomology complexes and their completions
\begin{align*}
\R\Gamma_{\crr}(X_{{\so}_{\ovk}},\sj^{[r]})_n: &   =\R\Gamma_{\crr}(X_{\so_{\ovk},\eet},\R u_{X_{\so_{\overline{K}},n}/W_n(k)*}\sj^{[r]}_{X_{\so_{\ovk},n}/W_n(k)}),\\
\R\Gamma_{\crr}(X_{\so_{\ovk}},\sj^{[r]}): &    =
\holim_n\R\Gamma_{\crr}(X_{\so_{\ovk}},\sj^{[r]})_n,\\
\R\Gamma_{\crr}(X_{\so_{\ovk}},\sj^{[r]})_{\Q_p}: &    =\R\Gamma_{\crr}(X_{\so_{\ovk}},\sj^{[r]})\otimes{\Q_p}
\end{align*}
 By \cite[Theorem 1.18]{BE2}, if $X$ is proper, the complex $\R\Gamma_{\crr}(X_{\so_{\ovk}})$ is a perfect $\A_{\crr}$-complex and 
$$\R\Gamma_{\crr}(X_{\so_{\ovk}})_n\simeq \R\Gamma_{\crr}(X_{\so_{\ovk}})\otimes^{L}_{\A_{\crr}}{\A_{\crr}}/p^n\simeq \R\Gamma_{\crr}(X_{\so_{\ovk}})\otimes^{L}\Z/p^n.$$ In general, we have:
$$\R\Gamma_{\crr}(X_{\so_{\ovk}},\sj^{[r]})_n\simeq \R\Gamma_{\crr}(X_{\so_{\ovk}},\sj^{[r]})\otimes^{L}{\mathbf Z}/p^n.$$
Moreover,  $F^r\A_{\crr}=\R\Gamma_{\crr}(\Spec(\so_{\ovk}),\sj^{[r]})$ \cite[1.6.3,1.6.4]{Ts}. 

 For $r\geq 0$, the mod-$p^n$, completed, and rational syntomic complexes $\R\Gamma_{\synt}(X_{\so_{\ovk}},r)_n$, $\R\Gamma_{\synt}(X_{\so_{\ovk}},r)$, and $\R\Gamma_{\synt}(X_{\so_{\ovk}},r)_{\Q}$ are defined by the analogs of formulas we have used in the arithmetic case. We have $\R\Gamma_{\synt}(X_{\so_{\ovk}},r)_n\simeq \R\Gamma_{\synt}(X_{\so_{\ovk}},r)\otimes^{L}\Z/p^n$.
  Let $\sss(r)$  be the \'etale-sheafification of the presheaf sending 
\'etale map $U\to X_{\so_{\ovk}}$ to  $\R\Gamma_{\synt}(U,r)$.  Let  $\sss_n(r)$ denote
the \'etale-sheafification of
the mod-$p^n$ version of this presheaf.

 \begin{corollary}
\label{impl1}
{\rm (i)}
Let $X$ be a semistable scheme   over $\so_K$ or  a base change of a semistable scheme over the ring of integers of  a subfield of $K$. Then
for $0\leq j\leq r$, the kernel and cokernel of the map
$$
\alpha^{\rm FM}_{r,n}:\quad \sh^j(\sss_n(r)_{X_{\so_{\ovk}}})\to \overline{i}^*R^j\overline{j}_*\Z/p^n(r)^{\prime}_{X_K}
$$
is annihilated by $p^{Nr}$, for a universal constant $N$. Here $\overline{i}:X_0\hookrightarrow X_{\so_{\ovk}},\overline{j}:X_{\tr,\ovk}\hookrightarrow X_{\so_{\ovk}}$.

{\rm (ii)}
Let $X$ be a fine and saturated log-scheme log-smooth  over $\so_K^{\times}$. If $X$ is proper then the map
\begin{align*}
\alpha^{\rm FM}_r: \quad H^j_{\synt}(X_{\so_{\ovk}},r)_{\Q}\stackrel{\sim}{\to} H^j(X_{\tr,\ovk},\Q_p(r)),\quad j\leq r, 
\end{align*}
is  an isomorphism.
\end{corollary}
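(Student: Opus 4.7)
For part (i), the plan is to deduce the result from the arithmetic comparison in Theorem \ref{main}(i) by descent along finite subextensions $K' \subset \ovk$. Both sides are \'etale sheaves, so it suffices to verify the statement at each geometric point of $X_{\so_{\ovk}}$. Locally, an \'etale neighborhood of $X_{\so_{\ovk}}$ is the base change to $\so_{\ovk}$ of an \'etale neighborhood of a semistable model $X'$ defined over some finite extension $\so_{K'}$ of $\so_K$ inside $\so_{\ovk}$. Enlarging $K'$, I arrange that $K'$ contains enough roots of unity, which is possible by taking $K' \supset K(\zeta_{p^n})$ for $n$ large relative to the conductor of $K$, as in Section \ref{def1}. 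Theorem \ref{main}(i) applied to $X'/\so_{K'}$ then gives a $p^{Nr+c_p}$-quasi-isomorphism at that intermediate level. Passing to the filtered colimit over all such $K' \subset \ovk$, to which both sides are compatible (the syntomic side by functoriality of crystalline cohomology under base change, the nearby cycle side by standard continuity of \'etale cohomology under colimits of rings), yields the statement over $\so_{\ovk}$. The constant $c_p$ gets absorbed into a universal bound in the final statement because over $\ovk$ we have the element $t\in\A_{\crr}$ available, whose presence removes the finer dependence on $p$ that shows up at the arithmetic level.

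For part (ii), I would first reduce the general log-smooth case to the semistable one: every fine saturated log-scheme log-smooth over $\so_K^\times$ is \'etale-locally of the form (\ref{form}), so part (i) furnishes a $p^{Nr}$-quasi-isomorphism of sheaves in degrees $j \leq r$. Taking hypercohomology on the proper scheme $X_{\so_{\ovk}}$ and using the Leray spectral sequence for $\overline j$ globalizes this to a $p^{N'r}$-quasi-isomorphism
\[
H^j_{\synt}(X_{\so_{\ovk}},r)_n \longrightarrow H^j(X_{\tr,\ovk}, \Z/p^n(r)'), \qquad j\leq r,
\]
with $N'$ still universal. Taking $\holim$ over $n$ and inverting $p$ then kills the uniformly bounded $p$-power torsion and produces the claimed rational isomorphism. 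The crucial input is finiteness of both sides in the limit: on the \'etale side by Scholze's finiteness theorem for proper rigid analytic spaces \cite{Sch}, and on the crystalline/syntomic side by perfectness of $\R\Gamma_{\crr}(X_{\so_{\ovk}})$ as an $\A_{\crr}$-complex, proved in \cite{BE2}. Without finiteness, inverting $p$ would not suffice to remove bounded $p$-power torsion from an inverse limit.

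The main obstacle is the control of constants under the colimit in (i): one must verify that the period maps at finite levels $K'$ are compatible under the extension $K' \to K''$, so that the colimit produces a well-defined map with kernel and cokernel still annihilated by a universal power of $p$. This compatibility is built into the construction of $\alpha^{\rm FM}_{r,n}$, which is defined at the level of ringed-site morphisms and is functorial in the base. A second, more delicate point for (ii) is the reduction from general log-smooth to semistable local models; this is a structural property of fine saturated log-smooth morphisms over $\so_K^\times$ and can be extracted locally from the Kato chart description, at the cost of perhaps a further \'etale localization which is harmless since the final statement is about global cohomology of a proper scheme.
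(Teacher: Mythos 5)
Your approach to part (i) — passing to the colimit over finite subextensions $K'\subset\overline K$ that contain enough roots of unity and applying Theorem~\ref{main}(i) — is the same as the paper's, and this part of your argument is essentially correct; the one soft spot is your explanation of why the constant $c_p$ disappears over $\overline K$ (the appeal to ``$t\in\A_{\crr}$ being available'' is not a real argument, and the paper does not justify this step either, so it is best read as a notational relaxation rather than a theorem one must prove).

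The genuine gap is in part (ii), in your reduction from general fine saturated log-smooth $X$ to the semistable case. Your claim that every such $X$ is \'etale-locally of the semistable form (\ref{form}) is false: log-smoothness over $\so_K^\times$ permits multiplicities in the special fiber, whereas (\ref{form}) is reduced. For $p>2$, the scheme $\Spec\so_K[X]/(X^2-\varpi)$ with the log-structure generated by $X$ is log-smooth over $\so_K^\times$ but is nowhere \'etale-locally semistable over $\so_K$, and no Kato chart repairs this — charts produce log-smooth local models, not semistable ones. The paper bridges this gap by a genuinely different mechanism: it first passes to a ramified extension $K_1$ of $K$ (T.~Saito's semistable reduction theorem \cite[Theorem 2.9]{TS}), obtaining a log-blow-up $\pi: Y \to X_{\so_{K_1}}$ with $Y$ semistable, and then exploits the invariance of syntomic cohomology under log-blow-ups \cite[Proposition 2.3]{Ni} together with the fact that $\pi$ is an isomorphism on generic fibers to transfer the comparison from $Y$ back to $X$. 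Without this (or some equivalent) your proof of (ii) does not go through for non-semistable log-smooth $X$.

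A smaller point: to replace the naive syntomic cohomology $HN^j_{\synt}$ by $H^j_{\synt}$ you need the $R^1\!\lim$ terms to vanish; for the scheme $X_{\tr,\overline K}$ the finiteness of $H^*(X_{\tr,\overline K},\Z/p^n(r))$ is the classical constructibility theorem for schemes over an algebraically closed field, not Scholze's finiteness theorem for rigid-analytic spaces. The latter is the input for the formal-scheme version (Corollaries~\ref{impl2} and~\ref{comp20}), not here.
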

\begin{proof}
The first claim follows from Theorem \ref{main} by going to the limit over finite extensions of the base field. 

   It 
implies the second statement of the corollary for semistable schemes and with the group  $HN^j_{\synt}(X_{\so_{\ovk}},r)$ as the domain.  Here $HN^j_{\synt}(X_{\so_{\ovk}},r):=\invlim_nH^j\R\Gamma_{\synt}(X_{\so_{\ovk}},r)_n$, $j\geq 0$, 
  denotes the naive syntomic cohomology.  We have a natural map $H^j_{\synt}(X_{\so_{\ovk}},r)\to HN^j_{\synt}(X_{\so_{\ovk}},r)$. 
Since the groups $H^*(X_{\tr,\ovk},\Z/p^n(r))$ are finite all the relevant higher derived projective limits vanish 
and we can replace the naive syntomic groups with the ``real" ones. 

   To pass to general log-smooth schemes we use the following observation. 
We can assume that $K$ has enough roots of unity. By \cite[Theorem 2.9]{TS}, there exists a ramified extension $K_1$ of $K$ such that the base change $X_{\so_{K_1}}$ has a semistable model. That is, there exists a log-blow-up 
$\pi:Y \to X_{\so_{K_1}}$ such that $Y$ is a semistable scheme (with no multiplicities in the special fiber). We have the following commutative diagram
$$
\xymatrix{
H^j_{\synt}(Y,r)_{n}\ar[r]^-{\alpha^{\rm FM}_{r,n}} & H^j(Y_{\tr},\Z/p^n(r)^{\prime})\\
H^j_{\synt}(X_{\so_{K_1}},r)_{n}\ar[u]^{\pi^*}\ar[r]^-{\alpha^{\rm FM}_{r,n}} & H^j(X_{\tr,K_1},\Z/p^n(r)^{\prime})\ar@{=}[u]^{\pi^*}
}
$$
Since log-blow-ups do not change  syntomic cohomology \cite[Proposition 2.3]{Ni} and the top horizontal map is a $p^{Nr}$-quasi-isomorphism, for a constant $N$ independent of $n$,  so is the bottom one. 

  The final claim in the corollary is now clear.
\end{proof} 
\subsection{Semistable comparison theorems}
We show in this section how the comparison theorem for $p$-adic nearby cycles (cf. Corollary  \ref{impl1}) combined with the theory of finite dimensional Banach Spaces allows us to reprove the semistable comparison theorem for schemes and prove a semistable comparison theorem for formal schemes.
\subsubsection{Semistable conjecture for schemes}
 For $X$ proper and semistable  over $\so_K$ (with no multiplicities in the special fiber), we have \cite[3.2]{NN}
\begin{equation}
\label{presentation0}
\R\Gamma_{\synt}(X_{\so_{\ovk}},r)_{\Q}=[[\R\Gamma_{\hk}(X)\otimes_{F} \bstp]^{\phi=p^r,N=0}\stackrel{\iota_{\dr}}{\longrightarrow} (\R\Gamma_{\dr}(X_K)\otimes_K\bdrp)/F^r]
\end{equation}
Here $\R\Gamma_{\hk}(X)\simeq\R\Gamma_{\crr}(X_0/\O_F^{0})_{\Q}$ is the Hyodo-Kato cohomology of $X$ defined by Beilinson \cite{BE2} and
$\iota_{\dr}:\R\Gamma_{\hk}(X)\to  \R\Gamma_{\dr}(X_K)$ is the Hyodo-Kato map (associated to $\varpi$). The Hyodo-Kato complexes are built from finite rank $(\phi,N)$-modules. 
We set
$$
[\R\Gamma_{\hk}(X)\otimes_{F} \bstp]^{\phi=p^r,N=0}:=
\left[\begin{aligned}\xymatrix{\R\Gamma_{\hk}(X)\otimes_{F} \bstp\ar[r]^{1-\phi/p^r}\ar[d]^N & \R\Gamma_{\hk}(X)\otimes_{F} \bstp\ar[d]^{N}\\
\R\Gamma_{\hk}(X)\otimes_{F} \bstp\ar[r]^{1-\phi/p^{r-1}} &
\R\Gamma_{\hk}(X)\otimes_{F} \bstp
}\end{aligned}\right]
$$
Recall that  $H^j[\R\Gamma_{\hk}(X)\otimes_{F} \bstp]^{\phi=p^r,N=0}=(H^j_{\hk}(X)\otimes_{F}\bstp)^{\phi=p^r,N=0}$
  \cite[Corollary 3.25]{NN}. By the degeneration of the Hodge-de Rham spectral sequence \cite[Corollary 4.2.4]{DI} we also have  $ H^j((\R\Gamma_{\dr}(X_K)\otimes_K\bdrp)/F^r)=(H^j_{\dr}(X_K)\otimes_K\bdrp)/F^r$. 
It follows that we have the long exact sequence
\begin{align}
 \label{seqHK}
\to  (H^{j-1}_{\dr}(X_K)\otimes_K\bdrp)/F^r & \to H^j_{\synt}(X_{\so_{\ovk}},r)_{\Q}\to  (H^j_{\hk}(X)\otimes_{F}\bstp)^{\phi=p^r,N=0}\\
 & \to (H^j_{\dr}(X_K)\otimes_K\bdrp)/F^r\to \notag
\end{align}

\begin{corollary}\label{comp1}
{\rm (Semistable conjecture)}
Let $X$ be a proper, fine and saturated log-scheme, log-smooth over  $\so_K^{\times}$, with Cartier type reduction. 
There exists a natural $\bst $-linear Galois equivariant period isomorphism
$$
\alpha:\quad H^i(X_{\tr,\ovk},\Q_p)\otimes_{\Q_p}\bst \simeq H^i_{\hk}(X)\otimes_{F}\bst 
$$
that preserves the Frobenius and the monodromy operators, and, after extension to $\bdr$, induces a filtered  isomorphism 
$$
\alpha:\quad H^i(X_{\tr,\ovk},\Q_p)\otimes_{\Q_p}\bdr\simeq H^i_{\dr}(X_K)\otimes_{K}\bdr
$$
 \end{corollary}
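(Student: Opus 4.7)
The plan is to combine the local comparison of Corollary \ref{impl1} with the theory of finite dimensional Banach Spaces of \cite{CB} in order to bypass Poincar\'e duality when checking weak admissibility of the filtered $(\phi,N)$-module $D:=(H^i_{\hk}(X), H^i_{\dr}(X_K))$.

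First I would reduce to the case of a proper semistable scheme over $\so_K$ with no multiplicities. By \cite[Theorem 2.9]{TS}, after base change to a sufficiently ramified extension $K_1/K$, a log-blow-up $\pi:Y\to X_{\so_{K_1}}$ with $Y$ semistable exists; log-blow-ups are transparent to syntomic cohomology (as in the proof of Corollary \ref{impl1}), and Hyodo-Kato cohomology of Cartier type log-schemes is intrinsic, so it suffices to handle $Y$. Galois descent along $\Gal(K_1/K)$, following the pattern of Lemma \ref{Gal} together with its Hyodo-Kato and de Rham analogues, then recovers the statement for $X$.

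Assuming $X$ proper semistable, the presentation (\ref{presentation0}) together with Hodge--de Rham degeneration \cite{DI} gives the long exact sequence (\ref{seqHK}). Corollary \ref{impl1}(ii) identifies $H^j_{\synt}(X_{\so_{\ovk}},r)_{\Q}\simeq H^j(X_{\tr,\ovk},\Q_p(r))$ for $j\leq r$; by Scholze's finiteness theorem \cite{Sch} this is a finite dimensional $\Q_p$-vector space. Taking $r=i+1$, the key step is to show surjectivity of
$$f_i:(H^i_{\hk}(X)\otimes_F\bstp)^{\phi=p^r,N=0}\longrightarrow (H^i_{\dr}(X_K)\otimes_K\bdrp)/F^r,$$
so that (\ref{seqHK}) collapses to a short exact sequence
$$0\to H^i(X_{\tr,\ovk},\Q_p(r))\to (H^i_{\hk}(X)\otimes_F\bstp)^{\phi=p^r,N=0}\to (H^i_{\dr}(X_K)\otimes_K\bdrp)/F^r\to 0.$$
Here $f_i$ is the $C$-point evaluation of a morphism of finite dimensional Banach Spaces in the sense of \cite{CB}; its cokernel embeds into $H^{i+1}_{\synt}(X_{\so_{\ovk}},r)_{\Q}$, a finite dimensional $\Q_p$-vector space by the above, so ${\rm Coker}(f_i)$ has Dimension of pure $\Q_p$-type $(0,d)$. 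On the other hand, $(H^i_{\dr}(X_K)\otimes_K\bdrp)/F^r$ is a successive extension of $C$-vector spaces, hence a Banach Space of pure $C$-type. The Banach Space formalism forbids a nonzero surjection from a pure $C$-type Space onto a pure $\Q_p$-type Space, forcing ${\rm Coker}(f_i)=0$.

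Finally, the Hyodo-Kato isomorphism $H^i_{\hk}(X)\otimes_FK\simeq H^i_{\dr}(X_K)$ makes $D$ into a filtered $(\phi,N)$-module, and the SES identifies $\vst(D)\simeq H^i(X_{\tr,\ovk},\Q_p)$. Counting $C$-dimensions in the SES (the left term contributes pure $\Q_p$-dimension $\dim_F D$, and the alternating $C$-dimensions on the right yield $t_H(D)$ versus $\dim_F D+t_N(D)$) gives $t_N(D)=t_H(D)$. Applied to any $(\phi,N)$-stable subobject $D'\subset D$ equipped with the induced filtration, the injection $\vst(D')\hookrightarrow\vst(D)$ and monotonicity of Banach Space Dimensions force $t_N(D')\geq t_H(D')$, so $D$ is weakly admissible. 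By the Colmez-Fontaine theorem, $\vst(D)\otimes_{\Q_p}\bst\stackrel{\sim}{\to} D\otimes_F\bst$ compatibly with $\phi$, $N$, and Galois; extending scalars to $\bdr$ and invoking Hyodo-Kato yields the filtered isomorphism. The hard part is the Banach Space vanishing of ${\rm Coker}(f_i)$: it is precisely here that Scholze's finiteness, Corollary \ref{impl1}, and the $C$-versus-$\Q_p$ dichotomy in the theory of \cite{CF}, \cite{CB} enter simultaneously and replace the classical appeal to Poincar\'e duality; everything thereafter is formal once the short exact sequence is in hand.
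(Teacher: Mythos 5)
Your proposal follows essentially the same strategy as the paper: define the period map via $\alpha^{\rm FM}_r$ and the presentation (\ref{presentation0}), derive the long exact sequence (\ref{seqHK}), use finiteness of \'etale cohomology together with the Banach Space dichotomy of \cite{CB} (specifically the fact that a successive extension of ${\mathbb V}^1$'s cannot surject onto a nonzero Space of pure $\Q_p$-type) to split it into short exact sequences, and then conclude weak admissibility and $\vst(D)\simeq H^i$ via a Dimension count. This is precisely what the paper does; the paper merely packages the abstract part into Proposition~\ref{VS2} (with Lemma~\ref{VS1} and Corollary~\ref{BS2}) so that the same mechanism can be re-used verbatim for the formal-scheme case (Corollary~\ref{comp2}).

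A few places where your version is slightly off or deviates. First, for Corollary~\ref{comp1}, where $X$ is an algebraic scheme, the finiteness of $H^j(X_{\tr,\ovk},\Q_p(r))$ is classical \'etale cohomology of varieties over an algebraically closed field; Scholze's finiteness theorem for rigid spaces is only needed for the formal-scheme analogue (Corollary~\ref{comp2}). Second, fixing $r=i+1$ is adequate for producing the short exact sequence (its only job is to make $H^{i+1}_{\synt}$ finite-dimensional over $\Q_p$), but it is not adequate for the Dimension count that gives $t_N(D)=t_H(D)$: the formulas $\dim{\mathbb X}^r_{\rm st}(D)=r\dim_F D_{\rm st}-t_N$ and $\dim{\mathbb X}^r_{\rm dR}(D)=r\dim_K D_{\rm dR}-t_H$ require $r\geq r(D)$, i.e.\ all slopes of $\varphi$ on $H^i_{\hk}$ bounded by $r$, which is not known a priori; the paper avoids this by letting $r\to\infty$ in Proposition~\ref{VS2}(i) and using Corollary~\ref{BS2}(iii). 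You could simply say ``for $r$ large enough'' at that point. Third, your preliminary reduction to the semistable case (via~\cite{TS}, log-blow-ups, and Galois descent à la Lemma~\ref{Gal}) is a legitimate alternative, but the paper does not perform it: the presentation (\ref{presentation0}) and exact sequence (\ref{seqHK}), being built on Beilinson's Hyodo--Kato cohomology~\cite{BE2},~\cite{NN}, already hold in the Cartier-type generality of the statement, so the paper feeds $D^i=(H^i_{\hk}(X),H^i_{\dr}(X_K),\iota_{\dr})$ directly into Proposition~\ref{VS2}. Your descent step would require verifying compatibility of Hyodo--Kato and de~Rham cohomology and the Hyodo--Kato map with finite base change, which is true but adds work that the paper circumvents.
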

\begin{proof}Take $r\geq i$. The period map is induced by the following composition
$$
 H^i(X_{tr,\ovk},\Q_p(r))\stackrel{\alpha^{\rm FM}_r}{\leftarrow} H^i_{\synt}(X_{\so_{\ovk}},r)_{\Q}\to (H^i_{\hk}(X)\otimes_{F}\bst )^{\phi=p^r,N=0}\to H^i_{\hk}(X)\otimes_{F}\bst .
$$
The first map is an isomorphism by Corollary \ref{impl1}. The period map  is clearly compatible with Galois action, Frobenius, monodromy, as well as with filtration 
(after extending to $\bdr$).
 To prove our corollary, it suffices to show that $H^i_{\dr}(X_K)$ is admissible and 
 that the above map induces an isomorphism $H^i(X_{\tr,\ovk},\Q_p)\simeq \vst(H^i_{\dr}(X_{K}))$. To do that we will use Corollary \ref{impl1}, exact sequence (\ref{seqHK}), and 
 the theory of finite dimensional Banach
 Spaces from \cite{CB} to prove Proposition \ref{VS2} below that will imply our corollary (take $D^i:=(H^i_{\hk}(X),
 H^i_{\dr}(X_{K}), \iota_{\dr}: H^i_{\hk}(X)\to H^i_{\dr}(X_{K})$). 
\end{proof}
 
\subsubsection{Finite dimensional Banach Spaces}
  Recall~\cite{CB} that a finite dimensional Banach Space $W$ is, 
morally, a finite dimensional $C$-vector space ($C:=\overline{K}^{\wedge}$) 
 up to  a finite dimensional $\Q_p$-vector space. It has a Dimension\footnote{In~\cite{CB},
the dimension is called the ``dimension principale'', noted $\dim_{\rm pr}$, and
the height is called the ``dimension r\'esiduelle'', noted $\dim_{\rm res}$, and
the Dimension is called simply the ``dimension''.}
  ${\rm Dim}\,W=(a,b)$, where $a=\dim W\in\N$, the {\it dimension of $W$}, is the dimension of the $C$-vector space 
and $b={\rm ht}\,W\in\Z$, the {\it height of $W$}, is the dimension of the $\Q_p$-vector space. 
More precisely, a {\it Banach Space} ${\mathbb W}$ is a functor
$\Lambda\mapsto {\mathbb W}(\Lambda)$, from the category of
sympathetic algebras (spectral Banach algebras, such that $x\mapsto x^p$
is surjective on $\{x,\ |x-1|<1\}$; such an algebra is, in particular, perfectoid)
to the category of $\Q_p$-Banach spaces.
Trivial examples of such objects are:

$\bullet$ 
finite dimensional $\Q_p$-vector spaces $V$,
with associated functor $\Lambda\mapsto V$ for all $\Lambda$,

$\bullet$ ${\mathbb V}^d$, for $d\in\N$, with ${\mathbb V}^d(\Lambda)=\Lambda^d$, for
all $\Lambda$.

A Banach Space ${\mathbb W}$ is {\it finite dimensional}
 if it ``is equal to ${\mathbb V}^d$, for some $d\in\N$, up to
finite dimensional $\Q_p$-vector spaces''.
More precisely, we ask that there exists finite dimensional $\Q_p$-vector spaces
$V_1,V_2$ and exact sequences\footnote{A sequence $0\to {\mathbb W}_1\to {\mathbb W}_2\to {\mathbb W}_3\to 0$
is exact if and only if 
$0\to {\mathbb W}_1(\Lambda)\to {\mathbb W}_2(\Lambda)\to {\mathbb W}_3(\Lambda)\to 0$
is exact for all $\Lambda$.}
$$0\to V_1\to {\mathbb Y}\to {\mathbb V}^d\to 0,\quad
0\to V_2\to {\mathbb Y}\to {\mathbb W}\to 0,$$
so that ${\mathbb W}$ is obtained from ${\mathbb V}^d$ by ``adding $V_1$ and
moding out by $V_2$''.
Then $\dim{\mathbb W}=d$ and ${\rm ht}\,{\mathbb W}=\dim_{\Q_p}V_1-
\dim_{\Q_p}V_2$. (We are, in general, only interested in $W={\mathbb W}(C)$ but, without
the extra structure, it would be impossible to speak of its Dimension.)
\begin{proposition}\label{BS1}
{\rm (i)} The Dimension of a finite dimensional Banach Space is independant
of the choices made in its definition.

{\rm (ii)} If $f:{\mathbb W}_1\to{\mathbb W}_2$ is a morphism of
finite dimensional Banach Spaces, then ${\rm Ker}\,f$, ${\rm Coker}\,f$ and ${\rm Im}\,f$
are finite dimensional Banach Spaces, and
we have 
$${\rm Dim}\, {\mathbb W}_1={\rm Dim}\, {\rm Ker}\,f+{\rm Dim}\,{\rm Im}\,f
\quad{\rm and}\quad
{\rm Dim}\, {\mathbb W}_2={\rm Dim}\, {\rm Coker}\,f+{\rm Dim}\,{\rm Im}\,f.$$

{\rm (iii)} If $\dim{\mathbb W}=0$, then ${\rm ht}\,{\mathbb W}\geq 0$.

{\rm (iv)} If ${\mathbb W}$ has an increasing filtration such that the successive quotients
are ${\mathbb V}^1$, and if ${\mathbb W}'$ is a sub-Banach Space of ${\mathbb W}$, then
${\rm ht}\,{\mathbb W}'\geq 0$.
\end{proposition}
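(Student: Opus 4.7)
The plan is to follow the framework of~\cite{CB}, establishing (i)--(iv) in order; all four claims rest on the observation that the $C$-points functor computes the underlying dimension. From any presentation $0\to V_1\to {\mathbb Y}\to {\mathbb V}^d\to 0$ and $0\to V_2\to {\mathbb Y}\to {\mathbb W}\to 0$, taking $\Lambda=C$ gives $\dim_C {\mathbb W}(C)=d$, since the finite-dimensional ${\mathbf Q}_p$-vector spaces $V_1,V_2$ contribute $0$ to the $C$-dimension. This already shows that $d=\dim {\mathbb W}$ is intrinsic to~${\mathbb W}$, which is half of~(i).

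For the height in~(i), I would first analyze morphisms between the building blocks ${\mathbb V}^a$ and the constant functors~$V$: the former are rigid (controlled by matrices with coefficients in the structure ring), while the latter are ``transverse'' to the ${\mathbb V}^a$'s in a suitable sense. This rigidity lets me refine any pair of presentations $\mathcal{P},\mathcal{P}'$ of~${\mathbb W}$ to a common dominating presentation via pullback: the identity map of~${\mathbb W}$ lifts to a Banach Space ${\mathbb Y}''$ surjecting compatibly onto both ${\mathbb Y}$ and ${\mathbb Y}'$. A snake-lemma computation on $C$-points then equates $\dim_{{\mathbf Q}_p} V_1-\dim_{{\mathbf Q}_p} V_2$ with the analogous difference for~$\mathcal{P}'$, showing that $\mathrm{ht}\,{\mathbb W}$ is intrinsic as well.

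Part~(ii) is then a diagram chase built on~(i). Given $f\colon {\mathbb W}_1\to {\mathbb W}_2$, I would refine both presentations so that $f$ lifts first to a morphism ${\mathbb Y}_1\to {\mathbb Y}_2$ and then to a morphism ${\mathbb V}^{d_1}\to {\mathbb V}^{d_2}$ induced by a matrix~$M$. The kernel, image, and cokernel of~$M$ are themselves of the form ${\mathbb V}^{?}$ up to finite-dimensional ${\mathbf Q}_p$-vector spaces, and running the snake lemma on the resulting diagram of presentations yields presentations for $\mathrm{Ker}\,f$, $\mathrm{Im}\,f$, $\mathrm{Coker}\,f$, together with the additivity of Dimension on short exact sequences.

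Claim~(iii) is then immediate: $\dim {\mathbb W}=0$ forces ${\mathbb V}^d=0$ in the presentation, so ${\mathbb Y}=V_1$, and the second exact sequence $0\to V_2\to V_1\to {\mathbb W}\to 0$ gives $V_2\hookrightarrow V_1$, whence $\mathrm{ht}\,{\mathbb W}=\dim V_1-\dim V_2\geq 0$. For~(iv), I would argue by induction on the length~$n$ of the filtration. The base case $n=1$, i.e.~${\mathbb W}={\mathbb V}^1$, is handled by noting that any sub-Banach Space ${\mathbb W}'\subseteq {\mathbb V}^1$ has $\dim {\mathbb W}'\in\{0,1\}$: if $\dim {\mathbb W}'=1$ then ${\mathbb W}'={\mathbb V}^1$ has height~$0$, and if $\dim {\mathbb W}'=0$ then~(iii) applies. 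For the inductive step, given $0\to {\mathbb W}_n\to {\mathbb W}\to {\mathbb V}^1\to 0$ with ${\mathbb W}_n$ satisfying the hypothesis, the exact sequence $0\to {\mathbb W}'\cap {\mathbb W}_n\to {\mathbb W}'\to {\mathbb W}'/({\mathbb W}'\cap {\mathbb W}_n)\to 0$, combined with~(ii) and the base case applied to the quotient (a sub-Banach Space of~${\mathbb V}^1$), gives $\mathrm{ht}\,{\mathbb W}'\geq 0$. The main technical obstacle is~(i)--(ii): establishing the existence of common refinements and controlling morphisms between the building blocks ${\mathbb V}^a$ requires the sympathetic-algebra machinery of~\cite{CB}; once that is in place, (iii) and~(iv) are essentially formal.
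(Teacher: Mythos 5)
The paper does not prove this proposition: it simply cites \cite[Th.~0.4]{CB} for (i)--(ii), declares (iii) obvious, and cites \cite[Lemme~2.6]{CF} for (iv). Your attempt to supply actual arguments is in the right spirit, and (iii) you get exactly right. But two of your steps have genuine gaps.

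For (i), the step ``taking $\Lambda=C$ gives $\dim_C {\mathbb W}(C)=d$'' is not well-posed: ${\mathbb W}(C)$ is a priori only a ${\mathbf Q}_p$-Banach space and carries no canonical $C$-vector space structure, so $\dim_C$ of it is not defined without further argument. Extracting $d$ from the underlying ${\mathbf Q}_p$-Banach space (or even showing that $d$ is determined by the abstract Banach Space ${\mathbb W}$) is precisely what \cite[Th.~0.4]{CB} proves and requires the sympathetic-algebra machinery; it cannot be read off the $C$-points as you suggest. Your sketch for the height and for (ii) inherits this problem and is also more optimistic than the facts allow: morphisms between finite dimensional Banach Spaces do not automatically lift to morphisms of presentations (there is no projectivity at hand), and showing that $\mathrm{Hom}({\mathbb V}^a,{\mathbb V}^b)$ consists of ${\mathbf Q}_p$-matrices is itself one of the main structural results of \cite{CB}.

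For (iv), the base case has a hole. You assert without justification that a sub-Banach Space ${\mathbb W}'\subseteq {\mathbb V}^1$ with $\dim {\mathbb W}'=1$ must equal ${\mathbb V}^1$. What additivity gives you directly is the \emph{opposite} inequality: the cokernel ${\mathbb V}^1/{\mathbb W}'$ has $\dim=0$, so by (iii) $\mathrm{ht}({\mathbb V}^1/{\mathbb W}')\geq 0$, hence $\mathrm{ht}\,{\mathbb W}' = -\mathrm{ht}({\mathbb V}^1/{\mathbb W}')\leq 0$. To get ${\mathbb W}'={\mathbb V}^1$ you must rule out a proper sub of dimension $(1,h)$ with $h<0$, equivalently rule out a nonzero quotient map ${\mathbb V}^1\twoheadrightarrow V$ onto a finite-dimensional ${\mathbf Q}_p$-vector space. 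This is a substantive rigidity statement (it uses the $p$-th-root surjectivity built into sympathetic algebras) and is precisely what \cite[Lemme~2.6]{CF} establishes; it is not something one can ``note''. Once that base case is granted, your inductive step is sound.
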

\begin{proof}
The first two points are the core of the theory~\cite[Th.~0.4]{CB}.
The third point is obvious and the fourth is \cite[Lemme 2.6]{CF}.
\end{proof}
\begin{corollary}\label{BS2}
{\rm (i)} If ${\mathbb W}_1$ is a successive extension of ${\mathbb V}^1$'s, and if
${\mathbb W}_2$ is of dimension $0$, then any morphism
${\mathbb W}_1\to {\mathbb W}_2$ is the $0$-map.

{\rm (ii)} Let ${\mathbb W}_1, {\mathbb W}_2$ be finite dimensional Banach Spaces,
$W_1={\mathbb W}_1(C)$ and $W_2={\mathbb W}_2(C)$.
Suppose that ${\mathbb W}_2$ is a successive extension of ${\mathbb V}^1$'s,
and that we are given a $\Q_p$-linear map $f:W_1\to W_2$ which lifts
to a morphism
$f:{\mathbb W}_1\to {\mathbb W}_2$ of finite dimensional Banach Spaces.
Then, if we are in one of these situations:

\quad$\bullet$ $\dim_{\Q_p}{\rm Coker}\big(f:W_1\to W_2\big)<\infty$,

\quad$\bullet$ $\dim_{\Q_p}{\rm Ker}\big(f:W_1\to W_2\big)<\infty$ and $\dim{\mathbb W}_1=
\dim{\mathbb W}_2$,

\noindent
the map $f:W_1\to W_2$ is surjective.

{\rm (iii)} If $f:{\mathbb W}_1\to {\mathbb W}_2$ is a 
morphism of finite dimensional Banach Spaces, and
if the kernel and cokernel of
$f_C:{\mathbb W}_1(C)\to {\mathbb W}_2(C)$ are
finite dimensional over $\Q_p$, then
$\dim{\mathbb W}_1=\dim{\mathbb W}_2$.
\end{corollary}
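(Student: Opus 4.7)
My plan is to deduce all three parts directly from Proposition~\ref{BS1}, the engine being the additivity of Dimension (item (ii)) combined with the two sign constraints (items (iii) and (iv)). The one piece of background I will use beyond Proposition~\ref{BS1} is the observation that any finite dimensional Banach Space $\mathbb{W}$ with $\mathrm{Dim}\,\mathbb{W}=(0,0)$ is the zero functor: the defining presentation
$$0\to V_1\to\mathbb{Y}\to\mathbb{V}^0\to 0 \quad\text{and}\quad 0\to V_2\to\mathbb{Y}\to\mathbb{W}\to 0$$
collapses to $\mathbb{Y}=V_1$ (since $\mathbb{V}^0=0$), and then $\dim_{\mathbb{Q}_p}V_1=\dim_{\mathbb{Q}_p}V_2$ with $V_2\hookrightarrow V_1$ forces $V_2\stackrel{\sim}{\to}V_1$, whence $\mathbb{W}=0$.

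For (i), I set $I=\mathrm{Im}\,f$. Since $I\hookrightarrow\mathbb{W}_2$ and $\dim\mathbb{W}_2=0$, necessarily $\dim I=0$, so (iii) of Proposition~\ref{BS1} gives $\mathrm{ht}\,I\geq 0$. Since $\mathrm{Ker}\,f$ is a sub-Banach Space of $\mathbb{W}_1$, which is a successive extension of $\mathbb{V}^1$'s, (iv) gives $\mathrm{ht}\,\mathrm{Ker}\,f\geq 0$. Additivity in the sequence $0\to\mathrm{Ker}\,f\to\mathbb{W}_1\to I\to 0$, together with $\mathrm{Dim}\,\mathbb{W}_1=(d_1,0)$, traps both heights at $0$; hence $\mathrm{Dim}\,I=(0,0)$ and so $I=0$.

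For (ii), I treat the first case by applying additivity to $0\to\mathrm{Im}\,f\to\mathbb{W}_2\to\mathrm{Coker}\,f\to 0$. Finite $\mathbb{Q}_p$-dimensionality of $\mathrm{Coker}(W_1\to W_2)$ forces $\dim\mathrm{Coker}\,f=0$, whence (iii) yields $\mathrm{ht}\,\mathrm{Coker}\,f\geq 0$; (iv) applied to $\mathrm{Im}\,f\hookrightarrow\mathbb{W}_2$ yields $\mathrm{ht}\,\mathrm{Im}\,f\geq 0$; and $\mathrm{Dim}\,\mathbb{W}_2=(d_2,0)$ traps both at $0$, giving $\mathrm{Coker}\,f=0$ and surjectivity of $f$. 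The second case reduces to the first: finite $\mathbb{Q}_p$-dimensionality of the kernel gives $\dim\mathrm{Ker}\,f=0$, so additivity in $0\to\mathrm{Ker}\,f\to\mathbb{W}_1\to\mathrm{Im}\,f\to 0$ gives $\dim\mathrm{Im}\,f=\dim\mathbb{W}_1=\dim\mathbb{W}_2$, whence $\dim\mathrm{Coker}\,f=0$ and we conclude as before.

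For (iii), both $\mathrm{Ker}\,f$ and $\mathrm{Coker}\,f$ have vanishing $C$-dimension, and additivity in the two sequences $0\to\mathrm{Ker}\,f\to\mathbb{W}_1\to\mathrm{Im}\,f\to 0$ and $0\to\mathrm{Im}\,f\to\mathbb{W}_2\to\mathrm{Coker}\,f\to 0$ directly gives $\dim\mathbb{W}_1=\dim\mathrm{Im}\,f=\dim\mathbb{W}_2$. The whole argument is bookkeeping with Proposition~\ref{BS1} taken as a black box; the only genuine subtlety is the $\mathrm{Dim}=(0,0)\Rightarrow\mathbb{W}=0$ reduction, and that is immediate from the defining presentation.
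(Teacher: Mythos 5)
Your argument is correct and runs on exactly the same machinery as the paper's: additivity of Dimension from Proposition~\ref{BS1}(ii) combined with the sign constraints of (iii) and (iv), plus the implicit fact that $\mathrm{Dim}\,\mathbb{W}=(0,0)$ forces $\mathbb{W}=0$. The only cosmetic difference is in the first case of (ii), where the paper applies part (i) directly to the projection $\mathbb{W}_2\to\mathrm{Coker}\,f$ rather than redoing the Dimension bookkeeping as you do; the two arguments are equivalent.
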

\begin{proof}
Let $f:{\mathbb W}_1\to {\mathbb W}_2$ be a morphism.
We have $\dim {\rm Im}\,f=0$ since $\dim {\mathbb W}_2=0$.
Now, ${\rm ht}\,{\mathbb W}_1=0$; hence ${\rm ht}\,{\rm Im}\,f=-{\rm ht}\,{\rm Ker}\,f$, and
${\rm ht}\,{\rm Ker}\,f\geq 0$ by (iv) of Proposition~\ref{BS1}
and our assumption on ${\mathbb W}_1$.
So ${\rm ht}\,{\rm Im}\,f\leq 0$, and since
$\dim {\rm Im}\,f=0$, we obtain ${\rm Im}\,f= 0$,
as wanted.

To prove (ii) in the first situation, 
note that the Banach Space ${\rm Coker}(f:{\mathbb W}_1\to {\mathbb W}_2)$
is of dimension~$0$ since its $C$-points are finite dimensional over $\Q_p$ by assumption.
Hence we can apply (i) to deduce that this cokernel is~$0$, hence also
the cokernel of $f:W_1\to W_2$.
In the second situation, the assumption $\dim_{\Q_p}{\rm Ker}\big(f:W_1\to W_2\big)<\infty$
 imply that $\dim{\rm Ker}\big(f:{\mathbb W}_1\to {\mathbb W}_2\big)=0$
and, since $\dim{\mathbb W}_1=
\dim{\mathbb W}_2$, this implies that 
$\dim{\rm Coker}\big(f:{\mathbb W}_1\to {\mathbb W}_2\big)=0$, and we conclude as before.

Finally, (iii) is a consequence of (ii) of Proposition~\ref{BS1},
which implies (forgetting the heights), $$\dim{\mathbb W}_1-\dim{\mathbb W}_2=
\dim\,{\rm Ker}\,f-\dim\,{\rm Coker}\,f,$$
and the fact that $\dim\,{\rm Ker}\,f=\dim\,{\rm Coker}\,f=0$
since their $C$ points are of finite dimension over $\Q_p$ by assumption.
\end{proof}

\begin{example}
  Let $D=(D_{\rm st},D_{\rm dR},\lambda)$, where:
\begin{itemize}
\item $D_{\rm st}$ is a finite dimensional $F$-vector space
with a bijective semi-linear Frobenius $\varphi$, and a linear monodromy operator $N$ such that $N\varphi=p\varphi N$,
\item  $D_{\rm dR}$ is a finite dimensional $K$-vector space with a
decreasing, separated, exhausting filtration indexed by $\N$,
\item 
$\lambda:D_{\rm st}\to D_{\rm dR}$ is a $F$-linear map.
\end{itemize}
If $r\in\N$, define
$$X^r_{\rm st}(D)=(t^{-r}\bstp\otimes_F D_{\rm st})^{\varphi=1, N=0}
\quad{\rm and}\quad
X^r_{\rm dR}(D)=(t^{-r}\bdrp\otimes_K D_{\rm dR})/{F}^0.$$
These are the $C$-points of finite dimensional Banach Spaces 
${\mathbb X}^r_{\rm st}(D)$ and ${\mathbb X}^r_{\rm dR}(D)$,
and we have (cf. \cite[Proposition ~0.8]{CB}),
\begin{align*}
\dim {\mathbb X}^r_{\rm dR}(D)=&\ (r\dim_KD_{\rm dR}-\sum_{i=1}^r\dim F^iD_{\rm dR},0)\\
\dim {\mathbb X}^r_{\rm st}(D)=& \sum_{r_i\leq r}(r-r_i,1), \quad{\text{where the $r_i$'s are the
slopes of $\varphi$, repeated with multiplicity.}}
\end{align*}
In particular, if $F^{r+1}D_{\rm dR}=0$ and if all $r_i$'s are $\leq r$ (we let $r(D)$ be the smallest $r$
with these properties), then
$$\dim {\mathbb X}^r_{\rm st}(D)=
(r\dim_FD_{\rm st}-t_N(D_{\rm st}),\dim_FD_{\rm st})
\quad{\rm and}\quad
\dim {\mathbb X}^r_{\rm dR}(D)=(r\dim_KD_{\rm dR}-t_H(D_{\rm dR}),0).$$
Here $t_N(D)=v_p(\det\phi)$ and $t_H(D)=\sum_{i\geq 0}i\dim_K(F^iD_{\rm dR}/F^{i+1}D_{\rm dR})$. 

Now, the map $\lambda$ extends (using the natural map 
${\mathbb B}_{\rm st}^+\to {\mathbb B}_{\rm dR}^+$) to a map 
${\mathbb X}^r_{\rm st}(D)\to {\mathbb X}^r_{\rm dR}(D)$ of finite dimensional Banach Spaces
and, if $\lambda:K\otimes_FD_{\rm st}\to D_{\rm dR}$ is bijective
(in which case we set ${\rm rk}\, D=\dim_FD_{\rm st}=\dim_KD_{\rm dR}$),
then the kernel of the $C$ points of this map is $\vst(D)$ if $r\geq r(D)$
(\cite[Proposition ~10.14]{CB}).
\end{example}

The following result is a variant of the theorem ``weakly admissible implies
admissible''.

\begin{lemma}\label{VS1} If $\lambda:K\otimes_FD_{\rm st}{\to }D_{\rm dR}$ is an isomorphism, and
if $t_H(D_{\rm dR})=t_N(D_{\rm st})$, the following
conditions are equivalent:

{\rm (i)}  $\vst(D)$ is finite dimensional over $\Q_p$,

{\rm (ii)} the map $X^r_{\rm st}(D)\to X^r_{\rm dR}(D)$ is surjective for $r=r(D)$.

{\rm (ii$'$)} the map $X^r_{\rm st}(D)\to X^r_{\rm dR}(D)$ is surjective for all $r\geq r(D)$.

\noindent Moreover, they imply:

{\rm (iii)} $t_N(D')\geq t_H(D')$ for all $D'\subset D$
stable by $N$ and $\varphi$ (i.e.~$D$ is weakly admissible).

{\rm (iv)} $\dim_{\Q_p}\vst(D)={\rm rk}\, D$.
\end{lemma}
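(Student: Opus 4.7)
The strategy is to view the map $X^r_{\rm st}(D) \to X^r_{\rm dR}(D)$ as the map on $C$-points of a morphism $f^r \colon {\mathbb X}^r_{\rm st}(D) \to {\mathbb X}^r_{\rm dR}(D)$ of finite dimensional Banach Spaces, and to translate each of (i)--(iv) into a statement about Dimensions. The hypothesis that $\lambda$ is an isomorphism together with $t_H(D_{\rm dR}) = t_N(D_{\rm st})$ yields, for every $r \geq r(D)$, the equality $\dim {\mathbb X}^r_{\rm st}(D) = r \cdot {\rm rk}\, D - t_N(D) = r \cdot {\rm rk}\, D - t_H(D) = \dim {\mathbb X}^r_{\rm dR}(D)$, while the heights are ${\rm rk}\, D$ and $0$ respectively. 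Moreover ${\mathbb X}^r_{\rm dR}(D)$ is a successive extension of copies of ${\mathbb V}^1$ coming from the filtration of $t^{-r} \bdrp$ by powers of $t$ (whose graded pieces are free $C$-modules), and the $C$-kernel of $f^r$ is exactly $\vst(D)$ for every $r \geq r(D)$, as recalled in the example preceding the lemma.

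Granted this setup, the equivalences follow from the Dimension calculus. For (i) $\Rightarrow$ (ii$'$), finiteness of $\vst(D)$ over $\Q_p$ means the $C$-kernel of $f^r$ is finite over $\Q_p$ for every $r \geq r(D)$; since source and target have equal $C$-dimension while the target is successively extended from ${\mathbb V}^1$'s, Corollary~\ref{BS2}(ii) (second situation) forces $f^r$ to be surjective on $C$-points. The implication (ii$'$) $\Rightarrow$ (ii) is trivial. For (ii) $\Rightarrow$ (i), surjectivity of $f^{r(D)}$ on $C$-points means the cokernel Banach Space has vanishing $C$-points and is, in particular, of finite $\Q_p$-dimension, hence $\dim {\rm Coker}\, f^{r(D)} = 0$; combined with Proposition~\ref{BS1}(ii) and the equality of source/target Banach dimensions, this forces $\dim \ker f^{r(D)} = 0$, so the $C$-kernel $\vst(D)$ is a finite-dimensional $\Q_p$-vector space.

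Under these equivalent conditions, (iv) follows from the additivity of heights in Proposition~\ref{BS1}(ii): surjectivity gives ${\rm ht}\, {\rm Im}\, f^{r(D)} = {\rm ht}\, {\mathbb X}^{r(D)}_{\rm dR}(D) = 0$, whence ${\rm ht}\, \ker f^{r(D)} = {\rm ht}\, {\mathbb X}^{r(D)}_{\rm st}(D) = {\rm rk}\, D$, and since the kernel is of Banach Space dimension zero it is just a $\Q_p$-vector space of dimension ${\rm rk}\, D$, identifying $\dim_{\Q_p} \vst(D) = {\rm rk}\, D$. For (iii), given a sub-$(\varphi,N)$-module $D' \subset D_{\rm st}$, I would set $D'_{\rm dR} := \lambda(K \otimes_F D') \subset D_{\rm dR}$ with the induced filtration, so that $(D', D'_{\rm dR}, \lambda|_{D'})$ is a triple of the type considered above. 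The natural inclusions then give $\ker f^r(D') = X^r_{\rm st}(D') \cap \vst(D)$, a $\Q_p$-subspace of the finite-dimensional space $\vst(D)$, hence of finite $\Q_p$-dimension; applying Proposition~\ref{BS1}(ii) to $f^r(D')$ forces $\dim \ker f^r(D') = 0$, so $\dim {\rm Im}\, f^r(D') = \dim {\mathbb X}^r_{\rm st}(D')$, and the inclusion of this image in ${\mathbb X}^r_{\rm dR}(D')$ yields $\dim {\mathbb X}^r_{\rm st}(D') \leq \dim {\mathbb X}^r_{\rm dR}(D')$, i.e.~$t_N(D') \geq t_H(D')$.

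The main obstacle will be the careful verification that the Banach Space kernel of $f^r$ has $C$-points exactly $\vst(D)$ for $r \geq r(D)$, and, for (iii), that the inclusion $D' \subset D$ really does induce an inclusion of Banach Spaces at the level of ${\mathbb X}^r_{\rm st}$ and ${\mathbb X}^r_{\rm dR}$ compatibly with $f^r$, so that Proposition~\ref{BS1}(ii) applies to $f^r(D')$ with the expected meaning. Once these functorial points are in place, all of (i)--(iv) reduce to the Dimension and height calculus packaged in Proposition~\ref{BS1} and Corollary~\ref{BS2}.
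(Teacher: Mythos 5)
Your proof is correct and takes essentially the same route as the paper: identify $\vst(D)$ with the $C$-points of the kernel Banach Space, use the hypothesis $t_H=t_N$ to equate Banach-dimensions, and apply Proposition~\ref{BS1}(ii) and Corollary~\ref{BS2}(ii) for the equivalences, the height calculus for (iv), and a dimension inequality for the sub-triple $D'$ for (iii). The only cosmetic difference is that for (iii) the paper argues by contradiction (if $t_N(D')<t_H(D')$ then $\dim\ker f^r(D')\geq 1$, forcing $\vst(D')\subset\vst(D)$ to be infinite-dimensional), whereas you run the same dimension count directly; the content is identical.
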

\begin{proof}
The hypothesis $t_H(D_{\rm dR})=t_N(D_{\rm st})$
implies that $\dim ({\mathbb X}^r_{\rm dR}(D))=\dim ({\mathbb X}^r_{\rm st}(D))$ for all $r\geq r(D)$.
Now, ${\mathbb X}^r_{\rm dR}(D)$ is a sucessive extension of ${\mathbb V}^1$'s; hence,
if $\vst(D)$ is finite dimensional over $\Q_p$,
we are in the second situation considered in Lemma~\ref{BS2}~(ii),
which proves the implications (i)$\Rightarrow$(ii) and (i)$\Rightarrow$(ii$'$).

The converse implications and (iv)
are just Dimension arguments.

Now,
if there exists $D'$ with $t_N(D')<t_H(D')$, then $\dim ({\mathbb X}^r_{\rm st}(D'))>
\dim ({\mathbb X}^r_{\rm dR}(D'))$ if $r$ is big enough, so
the dimension of ${\rm Ker}\big({\mathbb X}^r_{\rm st}(D')\to {\mathbb X}^r_{\rm dR}(D')\big)$
is~$\geq 1$ and $\vst(D')$ is infinite dimensional
over $\Q_p$.  As this contradicts our hypothesis that 
$\vst(D)$ is finite dimensional over $\Q_p$, this proves (iii).
\end{proof}

\begin{proposition}\label{VS2}
Assume that, for $r\in\N$, 
we have a set of $D^i=(D^i_{\rm st},D^i_{\rm dR},\lambda^i)$, $i\in\N$, as above but with
the additional condition $F^{i+1}D^i_{\rm dR}=0$, 
and long exact sequences
$$\cdots \to H^i(r)\to X^r_{\rm st}(D^i)\to X^r_{\rm dR}(D^i)\to H^{i+1}(r)\to\cdots$$ Assume 
that $\dim_{\Q_p}H^i(r)$ is finite if $r\geq i$.
Then:

{\rm (i)} the map $\lambda^i:K\otimes_FD_{\rm st}^i\to D_{\rm dR}^i$ is bijective, if $i\geq 0$,

{\rm (ii)} $D^i$ is weakly admissible, if $i\geq 0$,

{\rm (iii)} the map $X^r_{\rm st}(D^i)\to X^r_{\rm dR}(D^i)$ is surjective, if $r\geq i$,

{\rm (iv)} $H^i(r)=\vst(D^i)$,  if $r\geq i$.
\end{proposition}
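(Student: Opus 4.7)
The plan is to exploit Corollary~\ref{BS2} and Lemma~\ref{VS1} to convert the finite-dimensionality hypothesis on $H^i(r)$ into the algebraic constraints on $D^i$ claimed in (i)--(iv). The structural input is that each $X^r_{\rm dR}(D^j)$ is a successive extension of copies of $\mathbb{V}^1$, while the $H^i(r)$, for $r\geq i$, are Banach Spaces of Dimension $(0,\dim_{\Q_p} H^i(r))$.

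First I would show that, for every $r\geq i+1$, the long exact sequence breaks into a short exact sequence of Banach Spaces
\[
0\longrightarrow H^i(r)\longrightarrow X^r_{\rm st}(D^i)\stackrel{f_i^r}{\longrightarrow} X^r_{\rm dR}(D^i)\longrightarrow 0.
\]
Left-exactness is Corollary~\ref{BS2}(i) applied to the connecting map $X^r_{\rm dR}(D^{i-1})\to H^i(r)$, a morphism from a successive extension of $\mathbb{V}^1$'s into a Banach Space of $C$-dimension~$0$; surjectivity of $f_i^r$ is Corollary~\ref{BS2}(ii) since ${\rm coker}\,f_i^r$ injects into the finite-dimensional $H^{i+1}(r)$ and the target is a successive extension of $\mathbb{V}^1$'s.

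Equating Banach Space Dimensions in this sequence and using the explicit formulas of the Example preceding Lemma~\ref{VS1}, the equality $\dim{\mathbb X}^r_{\rm st}(D^i)=\dim{\mathbb X}^r_{\rm dR}(D^i)$ for $r\geq i+1$ yields $\dim_F D^i_{\rm st}=\dim_K D^i_{\rm dR}$ and $t_N(D^i_{\rm st})=t_H(D^i_{\rm dR})$ (from large $r$), and forces all Frobenius slopes of $D^i_{\rm st}$ to be $\leq i+1$ (from $r=i+1$). For (i), the surjectivity of $f_i^r$ for any $r>i$ induces a surjection of $\bdrp$-modules $(t^{-r}\bdrp\otimes_F D^i_{\rm st})/F^0\twoheadrightarrow X^r_{\rm dR}(D^i)$; passing to the lowest graded piece for the $t$-adic filtration (where the target's $F^0$-contribution vanishes for $r>i$ thanks to $F^{i+1}D^i_{\rm dR}=0$) reduces to surjectivity of $C\otimes_K(K\otimes_F\lambda^i)$, hence of $K\otimes_F\lambda^i$ itself, which is then bijective by the dimension equality. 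For (ii), once (i) is in hand, Lemma~\ref{VS1} identifies $\vst(D^i)$ with $\ker f_i^r$ for $r\geq r(D^i)$ (which is $\leq i+1$ by the slope bound), and the SES identifies this kernel with the finite-dimensional $H^i(r)$; hence $\vst(D^i)$ is finite-dimensional, and Lemma~\ref{VS1} yields weak admissibility.

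Finally, to sharpen the slope bound from $\leq i+1$ to $\leq i$, I would argue that if some Frobenius slope exceeded $i$, the $(\varphi,N)$-stable quotient $D^i_{\rm st}/(D^i_{\rm st})^{\leq i}$ by the sub of slopes $\leq i$ (which is $N$-stable since $N$ lowers slopes) would have $t_N>i\cdot\dim_F$, while the bijectivity of $\lambda^i$ together with $F^{i+1}D^i_{\rm dR}=0$ forces its quotient filtration to satisfy $t_H\leq i\cdot\dim_K$, contradicting the weak admissibility of $D^i$. Hence $r(D^i)\leq i$ and Lemma~\ref{VS1} delivers (iii) for all $r\geq i$, while the identification $H^i(r)=\ker f_i^r=\vst(D^i)$ in that range yields (iv). The main technical hurdle is step (i): extracting the surjectivity of the $K$-linear map $K\otimes_F\lambda^i$ from that of the Banach Space map $f_i^r$ through the graded analysis described above.
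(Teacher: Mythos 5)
Your argument is correct and reaches the same conclusions, but part~(i) is proved by a genuinely different route from the paper's, and this is worth comparing. The paper establishes surjectivity of $\lambda^i$ by a more abstract device: it sets $D'={\rm Im}(\lambda^i)\subset D^i_{\rm dR}$, observes that the image of ${\mathbb X}^r_{\rm st}(D^i)$ lands in ${\mathbb X}^r_{\rm dR}(D')$, so the cokernel of $f_i^r$ surjects onto ${\mathbb X}^r_{\rm dR}(D^i/D')$; since the cokernel has finite-dimensional $C$-points (it injects into $H^{i+1}(r)$), ${\mathbb X}^r_{\rm dR}(D^i/D')$ has Dimension zero, forcing $D^i/D'=0$. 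Your approach instead first establishes the short exact sequence, then argues that the resulting surjection $(t^{-r}\bdrp\otimes_F D^i_{\rm st})/F^0\twoheadrightarrow X^r_{\rm dR}(D^i)$ of modules over $\bdrp/t^r\bdrp$ must remain surjective after tensoring with $C$ over that Artinian local ring, which recovers $C\otimes_K(K\otimes_F\lambda^i)$ and hence surjectivity of $K\otimes_F\lambda^i$ by faithful flatness, then bijectivity by the dimension count. Both work; the paper's is slightly more direct because it avoids identifying the lowest-degree piece of $X^r_{\rm dR}(D^i)$ explicitly. One imprecision in your write-up: you should phrase the ``lowest graded piece'' step as a Nakayama argument (right-exactness of $-\otimes_{\bdrp/t^r}C$), since surjectivity of a filtered map does not in general imply surjectivity on $\gr$; here it works precisely because the source and target are finite length over the local ring $\bdrp/t^r\bdrp$ and the lowest graded piece agrees with $M/\mathfrak{m}M$. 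Also note that your intermediate slope bound $\leq i+1$ extracted from the Dimension equality at $r=i+1$ is not actually needed: the paper applies Lemma~\ref{VS1} at an arbitrarily large $r$ (where $r\geq r(D^i)$ is automatic) and then deduces $r(D^i)\leq i$ a posteriori from weak admissibility, which is simpler. Finally, for (iv) at $r=i$ you only argued the short exact sequence for $r\geq i+1$; the identification $H^i(i)=\ker f_i^i$ still needs the connecting map $X^i_{\rm dR}(D^{i-1})\to H^i(i)$ to vanish, which does follow from Corollary~\ref{BS2}(i) with the same hypothesis $r\geq i$ (only right-exactness needs $r\geq i+1$), but you should say so explicitly.
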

\begin{proof}
To see (i) note that the fact that $H^i(r)$ and $H^{i+1}(r)$ are finite dimensional over
$\Q_p$ for $r$ big enough implies that
 $\dim ({\mathbb X}^r_{\rm dR}(D^i))=\dim ({\mathbb X}^r_{\rm st}(D^i))$
for all such $r$ by (iii) of Corollary~\ref{BS2}. 
This, in turn, implies that $\dim_FD^i_{\rm st}=\dim_KD^i_{\rm dR}$
and $t_H(D^i)=t_N(D^i)$.  

Now, let $D':={\rm Im}(\lambda^i)\subset D^i_{\dr}$. Then the image of ${\mathbb X}^r_{\rm st}(D^i)$ in $ {\mathbb X}^r_{\rm dR}(D^i)$
is included in ${\mathbb X}^r_{\rm dR}(D')$ and so the cokernel surjects onto
${\mathbb X}^r_{\rm dR}(D^i/D')=0$.  Since its $C$-points inject into $H^{i+1}(r)$ 
which is finite dimensional over $\Q_p$,
it follows that $\dim({\mathbb X}^r_{\rm dR}(D^i/D'))=0$ if $r\geq i+1$.  This implies that $D^i/D'=0$,
and that $\lambda^i$ is surjective, hence bijective.

Claim (ii) follows from claim (i) and Lemma \ref{VS1}. Note that it implies that $r(D^i)\leq i$ since
$F^{i+1}D^i_{\rm dR}=0$ by assumption.

If $i\leq r-1$, the map $X_{\rm dR}^r(D^i)\to H^{i+1}(r)$ is the zero map, since the source
is the $C$-points of a successive extension of 
${\mathbb V}^1$'s and the target is a finite dimensional $\Q_p$-vector space
by assumption.
This allows to split the long exact sequence into short exact sequences
$0\to H^i(r)\to X^r_{\rm st}(D^i)\to X^r_{\rm dR}(D^i)\to 0$ for $i\leq r-1$
and $0\to H^r(r)\to X^r_{\rm st}(D^r)\to X^r_{\rm dR}(D^r)$. 
We conclude that $H^i(r)=\vst(D^i)$,  if $i\leq r$, using inequality $r(D^i)\leq i$
and Lemma~\ref{VS1}.
\end{proof}
\subsubsection{Semistable conjecture for formal schemes}
   Let $\sx$ be a fine log ($p$-adic) formal scheme, log-smooth over $\so_K^{\times}$, and universally saturated. We define the arithmetic and geometric syntomic cohomology of $\sx$ by the same formulas as in
    the algebraic setting. 

 For $\sx$ semistable over $\so_K$, we have a rigid analytic version of the period map of Fontaine-Messing 
$$\alpha^{\rm FM}_{r,n}:  \quad H^j_{\synt}(\sx,r)_n\to H^j(\sx_{K,\tr},\Z/p^n(r)^{\prime}).
$$
 The definition is a straightforward analog of the algebraic definition \cite[p. 321]{Ts}. Namely, we cover $\sx$ with open affine formal schemes of the right presentation, use the Fontaine-Messing map on each open set almost verbatim to end up with a \v{C}ech complex of Galois cochains. Then we use the $K(\pi,1)$-Lemma for $p$-coefficients
 to pass to a \v{C}ech complex of \'etale cohomology. That is, in the local version of this map one combines the map $\tilde{\alpha}^{\rm FM}_{r,n}:{\rm Syn}(R,r)_n\to C(G_{R},{\mathbf Z}/p^n(r)^{\prime})$ with the quasi-isomorphism
$C(G_{R},{\mathbf Z}/p^n(r)^{\prime})\stackrel{\sim}{\to} \R\Gamma((\Sp(R[1/p]))_{\tr,\eet},{\mathbf Z}/p^n(r)^{\prime})$. This quasi-isomorphism is obtained from the Riemann's Existence Theorem \ref{riemann} by Abhyankar's Lemma argument as in Lemma \ref{Abh}.   Finally, we descend. 
\begin{corollary}
\label{impl2}
Let $\sx$ be a proper semistable formal  scheme over $\so_K$.  Then the period map
$$\alpha^{\rm FM}_r:\quad H^j_{\synt}(\sx_{\so_{\ovk}},r)_{\Q}\stackrel{\sim}{\to} H^j(\sx_{\ovk,\tr},\Q_p(r)), \quad j\leq r,$$
is an isomorphism.
\end{corollary}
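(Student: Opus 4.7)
The plan is to imitate the proof of Corollary~\ref{impl1}(ii), which handled the algebraic case, adapting each step to the formal/rigid setting. The crucial point is that the local comparison results proved in Sections 3--4---Theorem~\ref{mainCN}, Theorem~\ref{Laz=FM}, the rigid $K(\pi,1)$-Lemma of \S\ref{kpi1}, and Abhyankar's Lemma~\ref{Abh}---are all stated for the $p$-adic completion $\widehat{R}$ of an \'etale algebra over $R_\Box$, which is precisely what an affinoid chart of a semistable formal scheme looks like. Hence these local inputs apply verbatim and no algebraization of $\sx$ is required.

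First I would establish the sheaf-level comparison. Cover $\sx$ by open affines $\Spf R$ as in \S\ref{formalscheme}. On each such chart, composing Theorem~\ref{Laz=FM} (identifying $\tilde\alpha^{\rm FM}_{r,n}$ with $\tilde\alpha^{\laz}_{r,n}$ up to $p^{Nr+c_p}$) with the quasi-isomorphism $C(G_R,\Z/p^n(r)')\stackrel{\sim}{\to}\R\Gamma((\Sp R[1/p])_{\tr,\eet},\Z/p^n(r)')$ supplied by Theorem~\ref{riemann} and Lemma~\ref{Abh}, one obtains that the local Fontaine--Messing map
$$\tilde\alpha^{\rm FM}_{r,n}:\quad \tau_{\leq r}{\rm Syn}(R,r)_n\longrightarrow \tau_{\leq r}\R\Gamma((\Sp R[1/p])_{\tr,\eet},\Z/p^n(r)')$$
is a $p^{Nr+c_p}$-quasi-isomorphism, with constants universal in $R$ and $n$. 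Sheafifying on $\sx_{\so_K,\eet}$ and then passing to the limit over finite extensions of $K$ (exactly as in the proof of Corollary~\ref{impl1}(i)) yields a $p^{N'r}$-quasi-isomorphism at the level of sheaves
$$\sh^j\big(\sss_n(r)_{\sx_{\so_{\ovk}}}\big)\,\stackrel{\alpha^{\rm FM}_{r,n}}{\longrightarrow}\,\overline i^{\,*}R^j\overline j_{*}\Z/p^n(r)'_{\sx_{\ovk,\tr}}\qquad(j\leq r),$$
with $N'$ independent of $n$. Taking hypercohomology over the separated formal scheme $\sx_{\so_{\ovk}}$ via the usual spectral sequence then gives the global statement that the map $\alpha^{\rm FM}_{r,n}$ on $H^j$ has kernel and cokernel annihilated by some $p^{N''r}$ for all $j\leq r$ and all~$n$.

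The main obstacle, and the last step, is to pass from the mod-$p^n$ comparison to the rational one by inverting $p$ after taking $\invlim_n$. The potential issue is that one must identify the ``naive'' and derived inverse limits on the syntomic side, and likewise justify continuity of \'etale cohomology on the right---both of which require that $H^j(\sx_{\ovk,\tr},\Z/p^n(r))$ be finite so that the relevant $\varprojlim^1$ terms vanish. This finiteness is exactly where properness of $\sx$ enters, via Scholze's finiteness theorem for \'etale cohomology of proper rigid analytic spaces, combined with the localization sequence along the divisor at infinity $\sd\subset\sx_K$ and induction on the number of irreducible components of $\sd$ (as in \S\ref{rgaga}). Once this is granted, the uniform bound $p^{N''r}$ together with $\Z_p(r)'=\Z_p(r)$ after inverting $p$ upgrades the torsion isomorphism in the derived category to the desired isomorphism $H^j_{\synt}(\sx_{\so_{\ovk}},r)_\Q\stackrel{\sim}{\to} H^j(\sx_{\ovk,\tr},\Q_p(r))$ for $j\leq r$.
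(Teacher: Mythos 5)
Your argument is correct and follows essentially the same route as the paper's proof: use the mod-$p^n$ sheaf-level comparison coming from the local results of \S\S3--4 (already stated for $p$-adic completions, so directly applicable to an affinoid chart of a formal scheme), pass to the limit over finite extensions of $K$, and invoke Scholze's finiteness of \'etale cohomology to kill the $\varprojlim^1$-terms and replace the naive inverse limit by the derived one. The paper simply cites Theorem~\ref{main} and Scholze's finiteness with less elaboration, while you correctly flag that when the divisor at infinity is nontrivial one should supplement Scholze's theorem with the localization argument of \S\ref{rgaga} to get finiteness of $H^*(\sx_{\ovk,\tr},\Z/p^n(r))$.
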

\begin{proof}
The first claim follows from Theorem \ref{main} by going to the limit over finite extensions of the base field. It 
implies the second statement with the naive syntomic cohomology groups  $HN^j_{\synt}(\sx_{\so_{\ovk}},r):=\invlim_nH^j\R\Gamma_{\synt}(\sx_{\so_{\ovk}},r)_n$  in place of the syntomic cohomology groups.
Since the groups $H^*(\sx_{\ovk,\tr},\Z/p^n(r))$ are finite \cite[Theorem 1.1]{Sch}   all the relevant higher derived projective limits vanish 
and we can replace the naive syntomic groups with the ``real" ones. The final claim in the corollary is now clear.
\end{proof} 

  To proceed, we need to derive a presentation of the geometric syntomic cohomology using Hyodo-Kato and de Rham cohomologies analogous to the one we have used in the algebraic setting. Assume that the log-scheme $\sx_1$ is of Cartier type over $\Spec(\so_{K,1})^{\times}$. We have the Hyodo-Kato cohomology complex $\R\Gamma_{\hk}(\sx)\simeq\R\Gamma_{\crr}(\sx_0/\so^0_{F})_{\Q}$ and 
  the quasi-isomorphisms \cite[1.16.2]{BE2}
 $$
 (\R\Gamma_{\hk}(\sx)\otimes_{F}\bstp)^{N=0}\simeq \R\Gamma_{\hk}(\sx)\otimes_{F}{\B^+_{\crr}},\quad \iota_{\crr}: \R\Gamma_{\hk}(\sx)\otimes_{F}{\B^+_{\crr}}\stackrel{\sim}{\to}  \R\Gamma_{\crr}(\sx)_{\mathbf Q},
 $$ that are compatible with the action of $N$ and $\phi$. Recall that the complex 
 $\R\Gamma_{\hk}(\sx)$ is built from finite rank $(\phi,N)$-modules over $F$. We also have the Hyodo-Kato isomorphism \cite[1.16.2]{BE2}
 $$
\iota_{\dr}:\quad  \R\Gamma_{\hk}(\sx)\otimes_{F}K\stackrel{\sim}{\to} \R\Gamma_{\rm dR}(\sx_{K}).$$
 Since, for $\sx$ semistable over $\so_K$, we have $\R\Gamma_{\rm dR}(\sx_{K})\simeq \R\Gamma_{\rm dR}(\sx_{K,\tr})$, in that case the Hyodo-Kato isomorphism has the following form
$$
\iota_{\dr}:\quad  \R\Gamma_{\hk}(\sx)\otimes_{F}K\stackrel{\sim}{\to} \R\Gamma_{\rm dR}(\sx_{K,\tr}).
$$
 \begin{proposition}
  For $\sx$ proper and semistable  over $\so_K$ with Cartier type reduction, we have 

  {\rm (i)} the quasi-isomorphism
\begin{equation}
\label{presentation}
\R\Gamma_{\synt}(\sx_{\so_{\ovk}},r)_{\Q}\simeq [[\R\Gamma_{\hk}(\sx)\otimes_{F} \bstp]^{\phi=p^r,N=0}\lomapr{\iota_{\dr}}(\R\Gamma_{\dr}(\sx_K)\otimes_K\bdrp)/F^r]
\end{equation}

{\rm (ii)} 
for $j < r$, the following short exact sequences
\begin{equation}
\label{pierre-mod}
0\to H^j_{\synt}(\sx_{\so_{\ovk}},r)_{\Q}\to  (H^j_{\hk}(\sx)\otimes_{F}\bstp)^{\phi=p^r,N=0}\to   (H^j_{\dr}(\sx_K)\otimes_K\bdrp)/F^r \to 0
\end{equation}
\end{proposition}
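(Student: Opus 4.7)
The plan is to adapt to the formal scheme setting the argument used in \cite[3.2]{NN} for the algebraic presentation \eqref{presentation0}, and then to deduce the short exact sequences in (ii) by the finite-dimensional Banach Space argument sketched in the introduction.

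For (i), I would rewrite the syntomic complex as the mapping fiber
\begin{equation*}
\R\Gamma_{\synt}(\sx_{\so_{\ovk}},r)_{\Q}\simeq [\R\Gamma_{\crr}(\sx_{\so_{\ovk}})_{\Q}^{\phi=p^r}\to \R\Gamma_{\crr}(\sx_{\so_{\ovk}})_{\Q}/F^r].
\end{equation*}
The key input is Beilinson's Hyodo-Kato comparison~\cite{BE2} for formal log-schemes of Cartier type, extended to the base change to $\so_{\ovk}$, which provides a $\phi$-equivariant quasi-isomorphism $\iota_{\crr}:\R\Gamma_{\hk}(\sx)\otimes_F\B^+_{\crr}\stackrel{\sim}{\to}\R\Gamma_{\crr}(\sx_{\so_{\ovk}})_{\Q}$ together with $(\R\Gamma_{\hk}(\sx)\otimes_F\bstp)^{N=0}\simeq \R\Gamma_{\hk}(\sx)\otimes_F\B^+_{\crr}$. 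Applying $(\cdot)^{\phi=p^r}$ and using the commutation relation $N\phi=p\phi N$ identifies the source of the fiber with $[\R\Gamma_{\hk}(\sx)\otimes_F\bstp]^{\phi=p^r,N=0}$. Composing $\iota_{\crr}$ with $\iota_{\dr}$ and tensoring up to $\bdrp$ yields $\R\Gamma_{\crr}(\sx_{\so_{\ovk}})_{\Q}/F^r\simeq(\R\Gamma_{\dr}(\sx_K)\otimes_K\bdrp)/F^r$, which assembles to the presentation \eqref{presentation}.

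For (ii), I would pass to the long exact cohomology sequence of the fiber \eqref{presentation}. Since $\R\Gamma_{\hk}(\sx)$ is built from finite-rank $(\phi,N)$-modules, one has $H^j[\R\Gamma_{\hk}(\sx)\otimes_F\bstp]^{\phi=p^r,N=0}=(H^j_{\hk}(\sx)\otimes_F\bstp)^{\phi=p^r,N=0}$, and Hodge-de Rham degeneration (valid for proper log-smooth formal schemes of Cartier type) gives $H^j((\R\Gamma_{\dr}(\sx_K)\otimes_K\bdrp)/F^r)=(H^j_{\dr}(\sx_K)\otimes_K\bdrp)/F^r$. Write $f_j:(H^j_{\hk}(\sx)\otimes_F\bstp)^{\phi=p^r,N=0}\to(H^j_{\dr}(\sx_K)\otimes_K\bdrp)/F^r$ for the map induced by $\iota_{\dr}$. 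By Corollary~\ref{impl2} combined with Scholze's finiteness~\cite{Sch}, $H^j_{\synt}(\sx_{\so_{\ovk}},r)_{\Q}$ is finite-dimensional over $\Q_p$ for $j\leq r$. The map $f_j$ is the evaluation on $C$ of a morphism of finite-dimensional Banach Spaces, in the sense of the example just before Lemma~\ref{VS1}. For $j<r$, the cokernel of $f_j$ injects into $H^{j+1}_{\synt}(\sx_{\so_{\ovk}},r)_{\Q}$ and thus has $C$-dimension zero; since $(H^j_{\dr}(\sx_K)\otimes_K\bdrp)/F^r$ is the $C$-point evaluation of a successive extension of $\mathbb{V}^1$'s (filter by powers of $F^k\bdr$, with graded pieces isomorphic to copies of $C$), Corollary~\ref{BS2}(i) applied to the canonical surjection onto $\operatorname{Coker}(f_j)$ forces $\operatorname{Coker}(f_j)=0$. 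The same reasoning in degree $j-1$ shows that the boundary map $(H^{j-1}_{\dr}(\sx_K)\otimes_K\bdrp)/F^r\to H^j_{\synt}(\sx_{\so_{\ovk}},r)_{\Q}$ vanishes, so the long exact sequence collapses to the short exact sequence \eqref{pierre-mod}.

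The main obstacle I anticipate is Step (i): the formal-scheme analogue of the presentation \eqref{presentation0}. All the building blocks (Beilinson's construction of $\R\Gamma_{\hk}(\sx)$ for formal log-schemes of Cartier type, together with $\iota_{\crr}$, $\iota_{\dr}$, and their compatibilities with $\phi$, $N$, and the Hodge filtration) are recorded in the excerpt just before the proposition, so what is needed is careful bookkeeping rather than a new idea. The delicate points are to establish the identifications at the level of complexes (not merely on cohomology) so that the mapping fiber produces the required presentation, and to check that base change to $\so_{\ovk}$ commutes both with the $N=0$ eigencomplex construction and with the $F^r$-filtration coming from $\bdrp$.
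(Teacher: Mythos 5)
Your overall plan parallels the paper's proof, and part (ii) is essentially correct (the paper also notes that Scholze's de Rham comparison gives Hodge--de Rham degeneration, although it additionally supplies a self-contained direct argument that avoids it: one first splits the long exact sequence with $H^j((\R\Gamma_{\dr}(\sx_K)\otimes_K\bdrp)/F^r)$ in the last slot, then uses the resulting surjection from the Hyodo--Kato term, which factors through $(H^j_{\dr}(\sx_K)\otimes_K\bdrp)/F^r$, to get surjectivity of the comparison map, and finally deduces injectivity from the long exact sequence of the $F$-filtration). So that part is only cosmetically different.

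The real gap is in your sketch of (i). The sentence ``Composing $\iota_{\crr}$ with $\iota_{\dr}$ and tensoring up to $\bdrp$ yields $\R\Gamma_{\crr}(\sx_{\so_{\ovk}})_{\Q}/F^r\simeq(\R\Gamma_{\dr}(\sx_K)\otimes_K\bdrp)/F^r$'' cannot work as stated: the Hyodo--Kato complex $\R\Gamma_{\hk}(\sx)$ carries no Hodge filtration, so neither $\iota_{\crr}$ nor $\iota_{\dr}$ is a \emph{filtered} map, and composing them transmits no information about $F^r$ on either side. The identification you need is precisely what Beilinson's derived-de-Rham comparison gives and is the technical core of part (i): one first uses the filtered quasi-isomorphism $\kappa_r^{-1}:\R\Gamma_{\crr}(\sx_{\so_{\ovk}})_{\Q}/F^r\stackrel{\sim}{\to}\R\Gamma^{\natural}_{\dr}(\sx_{\so_{\ovk}})\widehat{\otimes}\Q_p/F^r$ (cf.\ \cite[Thm 2.1]{NN}), then the filtered base-change quasi-isomorphism $\gamma_r:(\R\Gamma_{\dr}(\sx_K)\otimes_K\bdrp)/F^r\stackrel{\sim}{\to}\R\Gamma^{\natural}_{\dr}(\sx_{\so_{\ovk}})\widehat{\otimes}\Q_p/F^r$ (via $\A_{\dr}$), and then one must verify that $\gamma_r^{-1}\kappa_r^{-1}\otimes\iota$, restricted along $\iota_{\crr}$, agrees with $\iota_{\dr}\otimes\iota$ on the Hyodo--Kato side. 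This compatibility is the point of \cite[Lemma 3.23]{NN}, not routine bookkeeping. You do flag ``establishing the identifications at the level of complexes'' as a delicate point, but the mechanism you propose for producing those identifications (composing $\iota_{\crr}$, $\iota_{\dr}$, and tensoring) does not produce any filtered comparison and therefore cannot close the gap without invoking the derived-de-Rham route.
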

\begin{proof}
Recall \cite{BE2} that we have the complexes 
\begin{align*}
\R\Gamma_{\dr}^{\natural}(\sx_{\so_{\overline{K}}})_n: = & 
\R\Gamma(\sx_{\so_{\overline{K}},\eet},\LL\Omega^{\scriptscriptstyle\bullet,\wedge}_{\sx_{\so_{\overline{K},n}}/\so_{F,n}}),\quad 
\R\Gamma_{\dr}^{\natural}(\sx_{\so_{\overline{K}}})\widehat{\otimes}{\mathbf Z}_p:=  \holim_n \R\Gamma_{\dr}^{\natural}(\sx_{\so_{\overline{K}}})_n,\\
\R\Gamma_{\dr}^{\natural}(\sx_{\so_{\overline{K}}})\widehat{\otimes}{\mathbf Q}_p:=  & (\R\Gamma_{\dr}^{\natural}(\sx_{\so_{\overline{K}}})\widehat{\otimes}{\mathbf Z}_p)\otimes {\mathbf Q}.
\end{align*}
These are objects in the filtered $\infty$-derived category.  Here $\LL\Omega^{\scriptscriptstyle\bullet,\wedge}_{\sx_{\so_{\overline{K},n}}/\so_{F,n}}$ denotes the derived log de Rham complex (see \cite{Be1} for a review). The hat refers to the completion with respect to the Hodge filtration (in the sense of prosystems). Set $\A_{\dr}:=\LL\Omega^{\scriptscriptstyle\bullet,\wedge}_{\so_{\overline{K}}/\so_K}.$  By \cite[Lemma 3.2]{BE2} we have 
 $\A_{\dr}=\R\Gamma^{\natural}_{\dr}(\so_{\overline{K}}^{\times})$. The corresponding $F$-filtered algebras $\A_{\dr,n}$, $\A_{\dr}\widehat{\otimes}{\mathbf Z}_p$, $\A_{\dr}\widehat{\otimes}{\mathbf Q}_p$ are acyclic in nonzero degrees and the projections $\cdot/F^{m+1}\to \cdot/F^m$ are surjective. 
 Thus (we set $\lim_F:=\holim_F$)
 \begin{align*}
   \lim_F\A_{\dr}\widehat{\otimes}{\mathbf Q}_p =\invlim_{m}H^0(\A_{\dr}\widehat{\otimes}{\mathbf Q}_p/F^m)=\bdrp, \quad \A_{\dr}\widehat{\otimes}{\mathbf Q}_p/F^m=\bdrp/F^m.
    \end{align*}

   Recall that we have the quasi-isomorphism \cite[Thm 2.1]{NN}
$$
\kappa^{-1}_r:\quad \R\Gamma_{\crr}(\sx_{\so_{\ovk}})_{\Q}/F^r\stackrel{\sim}{\to} \R\Gamma^{\natural}_{\dr}(\sx_{\so_{\ovk}})\widehat{\otimes}{\Q}_p /F^r.
$$
This yields the first  quasi-isomorphism in the following diagram.
\begin{align*}
\R\Gamma_{\synt}(\sx_{\so_{\ovk}},r)_{\Q}
 & \stackrel{\sim}{\to}
   \xymatrix{[\R\Gamma_{\crr}(\sx_{\so_{\ovk}})_{\Q}\ar[rr]^-{(1-\phi_r,\kappa_r^{-1})} & & \R\Gamma_{\crr}(\sx_{\so_{\ovk}})_{\Q}\oplus  (\R\Gamma^{\natural}_{\dr}(\sx_{\so_{\ovk}})\widehat{\otimes}{\Q}_p) /F^r]}\\
& \stackrel{\sim}{\to}
   \xymatrix{[\R\Gamma_{\crr}(\sx_{\so_{\ovk}})_{\Q}\ar[rr]^-{(1-\phi_r,\gamma_{r}^{-1}\kappa_r^{-1})} & & \R\Gamma_{\crr}(\sx_{\so_{\ovk}})_{\mathbf Q}\oplus 
    (\R\Gamma_{\dr}(\sx_K)\otimes_{K}\bdrp) /F^r]}
\end{align*}
To define the second quasi-isomorphism note that the natural map $\R\Gamma^{\natural}_{\dr}(\sx)\widehat{\otimes}{\mathbf Q}_p\to \R\Gamma_{\dr}(\sx_K)$ is a filtered quasi-isomorphism: 
 it suffices to show that the natural map 
$$
  \gr^i_{F}\R\Gamma(\sx_{\eet},\LL\Omega^{\scriptscriptstyle\bullet,\wedge}_{\sx/\so_F})\wh{\otimes}\Q_p \to \gr^i_F\R\Gamma (\sx_{\eet},\LL\Omega^{\scriptscriptstyle\bullet,\wedge}_{\sx/\so_K^{\times}})\wh{\otimes}\Q_p
$$
  is a quasi-isomorphism for all $i\geq 0$ and this was done in \cite[proof of Corollary 2.4]{NN}. 
It
  yields, by extension to $\A_{\dr}\widehat{\otimes}{\mathbf Q}_p$, a quasi-isomorphism  of $F$-filtered $\ovk$-algebras 
  $$
\gamma:\quad  \R\Gamma_{\dr}(\sx_K)\otimes_{K}(\A_{\dr}\widehat{\otimes}{\mathbf Q}_p)\stackrel{\sim}{\to}\R\Gamma^{\natural}_{\dr}(\sx_{\so_{\ovk}})\widehat{\otimes}{\mathbf Q}_p:
$$
one can use here the arguments of \cite[3.5]{BE2} almost verbatim. 
Its mod $F^r$-version $\gamma_r$  is the quasi-isomorphism
$$\gamma_r:  \quad (\R\Gamma_{\dr}(\sx_K)\otimes_{K}\bdrp)/F^r\stackrel{\sim}{\to}  \R\Gamma^{\natural}_{\dr}(\sx_{\so_{\ovk}})/F^r
$$

  The quasi-isomorphism (\ref{presentation}) is now defined by  the following composition of morphisms
 \begin{align*}
\R\Gamma_{\synt}(\sx_{\so_{\ovk}},r)
 & \stackrel{\sim}{\to}
  \left[\xymatrix@C=36pt{\R\Gamma_{\crr}(\sx_{\so_{\ovk}})_{\Q}\ar[rr]^-{(1-\phi_r,\gamma^{-1}_r\kappa_r^{-1})} & & \R\Gamma_{\crr}(\sx_{\so_{\ovk}})_{\Q}\oplus (\R\Gamma_{\dr}(\sx_K)\otimes_{\ovk}\bdrp) /F^r}\right]\notag\\
 &   \stackrel{\sim}{\leftarrow}
   \left[\begin{aligned}\xymatrix@C=50pt{\R\Gamma_{\hk}(\sx)\otimes_{F}\bstp\ar[r]^-{(1-\phi_r,\iota_{\dr}\otimes\iota)}\ar[d]^{N}  & \R\Gamma_{\hk}(\sx)\otimes_{F}\bstp\oplus (\R\Gamma_{\dr}(\sx_K)\otimes_{\ovk}\bdrp) /F^r\ar[d]^{(N,0)}\\
\R\Gamma_{\hk}(\sx)\otimes_{F}\bstp\ar[r]^{1-\phi_{r-1}}  & \R\Gamma_{\hk}(\sx)\otimes_{F}\bstp}\end{aligned}\right]
 \end{align*}
Here  second quasi-isomorphism 
 uses    the fact that the following diagram commutes (proof of \cite[Lemma 3.23]{NN} goes through)
 $$ \xymatrix@C=36pt{
 \R\Gamma_{\crr}(\sx_{\so_{\ovk}})_{\mathbf Q}\otimes_{\B^+_{\crr}}\bstp\ar[r]^-{\gamma^{-1}_r\kappa_r^{-1}\otimes \iota} & (\R\Gamma_{\dr}(\sx_K)\otimes_{K}\bdrp) /F^r\\
 \R\Gamma_{\hk}(\sx)\otimes_{F}\bstp\ar[ru]_-{\iota_{\dr}\otimes\iota}\ar[u]^{\iota_{cr}}_{\wr} 
  }
  $$
  This proves the first claim of our proposition. 
  
    For the second claim,  taking cohomology of the homotopy fiber (\ref{presentation}) we get the long exact sequence
\begin{align*}
\to  H^{j-1}((\R\Gamma_{\dr}(\sx_K)\otimes_K\bdrp)/F^r) & \to H^j_{\synt}(\sx_{\so_{\ovk}},r)_{\Q}\to  H^j[\R\Gamma_{\hk}(\sx)\otimes_{F} \bstp]^{\phi=p^r,N=0}\\
 &  \to     H^j((\R\Gamma_{\dr}(\sx_K)\otimes_K\bdrp)/F^r) \to \notag
\end{align*}
   But, arguing as in the algebraic case   \cite[Corollary 3.25]{NN}, we obtain the isomorphism
$$
   H^j[\R\Gamma_{\hk}(\sx)\otimes_{F} \bstp]^{\phi=p^r,N=0}  \simeq (H^j_{\hk}(\sx)\otimes_{F}\bstp)^{\phi=p^r,N=0}. 
   $$
Hence we have the long exact sequence
\begin{align*}
\to  H^{j-1}((\R\Gamma_{\dr}(\sx_K)\otimes_K\bdrp)/F^r) & \to H^j_{\synt}(\sx_{\so_{\ovk}},r)_{\Q}\to  (H^j_{\hk}(\sx)\otimes_{F}\bstp)^{\phi=p^r,N=0}\\
 & \to     H^j((\R\Gamma_{\dr}(\sx_K)\otimes_K\bdrp)/F^r) \to \notag
\end{align*}
To show that it splits into short exact sequences for $j\leq r$, we start with the observation that the cohomology groups $ H^j((\R\Gamma_{\dr}(\sx_K)\otimes_K\bdrp)/F^r)$ are 
finite dimensional Banach Spaces which are successive extensions of ${\mathbb V}^1$.
 This is because they are finite length $\bdrp$-modules: we have
$$
 H^j(\gr^i_F(\R\Gamma_{\dr}(\sx_K)\otimes_K\bdrp))\simeq \bigoplus_{k\geq 0}H^{j-k}(\Omega^k_{\sx_K})\otimes_K \gr^{i-k}_F\bdrp.
 $$ 
We get the required short exact sequences by the same argument that we
have used in the proof of Proposition ~\ref{VS2} (the key point being 
 that, by Corollary \ref{impl2},  the groups $H^j_{\synt}(\sx_{\so_{\ovk}},r)_{\Q}$, $j\leq r$,  are finite dimensional vector spaces over $\Q_p$).
 
 It remains to show that, for $j<r$ , we have an isomorphism 
 $$
 H^j((\R\Gamma_{\dr}(\sx_K)\otimes_K\bdrp)/F^r)  \simeq (H^j_{\dr}(\sx_K)\otimes_K\bdrp)/F^r.
 $$
 Just as in the algebraic case this statement (for all $j$) is an immediate result of the degeneration of the Hodge-de Rham spectral sequence
which follows from 
the de Rham comparison theorem for proper rigid analytic spaces proved by Scholze \cite{Sch}. 
It also can be proved directly via the following arguments.
 By  (\ref{pierre-mod}),  we have a surjection
$$(H^j_{\hk}(\sx)\otimes_{F}\bstp)^{\phi=p^r,N=0}\to H^j((\R\Gamma_{\dr}(\sx_K)\otimes_K\bdrp)/F^r), \quad j<r.
$$
Since the  above map 
 factors through  the natural map \begin{equation}
\label{injective}
(H^j_{\dr}(\sx_K)\otimes_K\bdrp)/F^r\to H^j((\R\Gamma_{\dr}(\sx_K)\otimes_K\bdrp)/F^r),
\end{equation}
that latter is surjective as well. But it is also injective. Indeed, we have the distinguished triangle
$$
F^r(\R\Gamma_{\dr}(\sx_K)\otimes_K\bdrp)\to \R\Gamma_{\dr}(\sx_K)\otimes_K\bdrp\to (\R\Gamma_{\dr}(\sx_K)\otimes_K\bdrp)/F^r
$$
It yields the long exact sequence of cohomology groups
$$
\to H^j F^r(\R\Gamma_{\dr}(\sx_K)\otimes_K\bdrp)\stackrel{f_j}{\to} H^j _{\dr}(\sx_K)\otimes_K\bdrp\to H^j(\R\Gamma_{\dr}(\sx_K)\otimes_K\bdrp)/F^r)\to $$
Since $F^r(H^j_{\dr}(\sx_K)\otimes_K\bdrp)=\im f_j$, the map in (\ref{injective}) is injective. We are done. 
\end{proof}

\begin{corollary}
\label{comp2}
{\rm (Semistable conjecture)}
 Let $\sx$ be a proper semistable formal scheme  over $\so_K$. 
There exists a natural $\bst$-linear Galois equivariant period isomorphism
$$
\alpha:\quad H^j(\sx_{\ovk,\tr},\Q_p)\otimes_{\Q_p}\bst \simeq H^j_{\hk}(\sx)\otimes_{F}\bst 
$$
that preserves the Frobenius and the monodromy operators, and induces a filtered  isomorphism 
$$
\alpha:\quad H^j(\sx_{\ovk,\tr},\Q_p)\otimes_{\Q_p}\bdr\simeq H^j_{\dr}(\sx_{K,\tr})\otimes_{K}\bdr.
$$
 \end{corollary}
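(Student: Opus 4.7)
The plan is to combine the local-to-global syntomic comparison theorem (Corollary~\ref{impl2}) with the short exact sequence (\ref{pierre-mod}) and feed the resulting data into the Banach Space machinery, specifically Proposition~\ref{VS2}. For each $j\geq 0$, set $D^j=(H^j_{\hk}(\sx),H^j_{\dr}(\sx_{K,\tr}),\iota_{\dr})$; by the Hyodo--Kato isomorphism this is a filtered $(\varphi,N)$-module of finite rank over $F$, and the Hodge filtration on $H^j_{\dr}(\sx_{K,\tr})$ vanishes in degrees~$>j$ (this uses Scholze's degeneration of the Hodge--de~Rham spectral sequence for proper smooth rigid analytic spaces, applied to the complement of the divisor at infinity, which is covered by the semistable structure). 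Thus the hypothesis $F^{j+1}D^j_{\dr}=0$ is satisfied.

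First I would assemble the long exact sequences. For any $r\geq j$, Corollary~\ref{impl2} provides an isomorphism $H^j_{\synt}(\sx_{\so_{\ovk}},r)_{\Q}\simeq H^j(\sx_{\ovk,\tr},\Q_p(r))$, and the short exact sequence (\ref{pierre-mod}) for $j<r$ rewrites the left-hand side as the kernel of the map
$$\iota_{\dr}\otimes\iota:\ (H^j_{\hk}(\sx)\otimes_F\bstp)^{\varphi=p^r,N=0}\longrightarrow (H^j_{\dr}(\sx_{K,\tr})\otimes_K\bdrp)/F^r.$$
Multiplying by $t^{-r}$ identifies the source and target with the $C$-points of the Banach Spaces ${\mathbb X}^r_{\rm st}(D^j)$ and ${\mathbb X}^r_{\rm dR}(D^j)$ introduced before Lemma~\ref{VS1}, so that the above becomes, after extending to all $j$ using the full degeneration of the long exact sequence (valid because $H^j(\sx_{\ovk,\tr},\Q_p(r))$ is finite-dimensional over $\Q_p$ by Scholze's finiteness and so cannot receive a nonzero map from the successive extension of ${\mathbb V}^1$'s $X^r_{\dr}(D^{j-1})$, cf.\ Corollary~\ref{BS2}(i)), a long exact sequence of the form required by Proposition~\ref{VS2} with $H^j(r)=H^j(\sx_{\ovk,\tr},\Q_p(r))$.

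Next I would invoke Proposition~\ref{VS2} directly. Its conclusions (i)--(iv), combined with the Hyodo--Kato isomorphism (which we already have independently, confirming (i)), give: $D^j$ is weakly admissible; the map $X^r_{\rm st}(D^j)\to X^r_{\rm dR}(D^j)$ is surjective for $r\geq j$; and $H^j(\sx_{\ovk,\tr},\Q_p(r))=\vst(D^j)$ for $r\geq j$. Twisting back by $(-r)$, we obtain $H^j(\sx_{\ovk,\tr},\Q_p)=\vst(D^j)(-r)$, and since $\vst$ commutes with Tate twists this reads
$$H^j(\sx_{\ovk,\tr},\Q_p)\ \simeq\ \vst(H^j_{\hk}(\sx),H^j_{\dr}(\sx_{K,\tr})).$$
This isomorphism gives in particular that the $G_K$-representation $H^j(\sx_{\ovk,\tr},\Q_p)$ is semistable with associated filtered $(\varphi,N)$-module $D^j$, whence the desired period isomorphism
$$\alpha:\ H^j(\sx_{\ovk,\tr},\Q_p)\otimes_{\Q_p}\bst\ \simeq\ H^j_{\hk}(\sx)\otimes_F\bst,$$
compatible with Frobenius, monodromy and Galois action; extending scalars to $\bdr$ and using the Hyodo--Kato isomorphism yields the filtered refinement.

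The main obstacle is checking that the hypotheses of Proposition~\ref{VS2} genuinely apply in this rigid-analytic setting: one must justify (a) that the short exact sequences (\ref{pierre-mod}) assemble into the long exact sequence with $X^r_{\rm st}$, $X^r_{\rm dR}$ as displayed, including the vanishing of the connecting map $X^r_{\dr}(D^{j-1})\to H^j(r)$, and (b) that $\dim_{\Q_p}H^j(\sx_{\ovk,\tr},\Q_p(r))<\infty$, which is exactly Scholze's finiteness theorem for proper smooth rigid analytic spaces. Once these two inputs are in place, the Banach Space dimension count in Proposition~\ref{VS2} does all the work: we never need to invoke Poincar\'e duality, which is precisely the point of the present approach.
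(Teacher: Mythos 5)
Your proposal is correct and follows essentially the same route as the paper: construct the period map via the Fontaine--Messing comparison (Corollary~\ref{impl2}) and the short exact sequence (\ref{pierre-mod}), then feed the pairs $D^j=(H^j_{\hk},H^j_{\dr},\iota_{\dr})$ into Proposition~\ref{VS2} to extract weak admissibility and the identification $H^j(\sx_{\ovk,\tr},\Q_p)\simeq\vst(D^j)$, exactly as in the proof of Corollary~\ref{comp1}. One small remark: the vanishing $F^{j+1}H^j_{\dr}(\sx_{K,\tr})=0$ is automatic from the Hodge filtration being induced by the stupid truncations $\sigma_{\geq k}\Omega\kr$ (a complex concentrated in degrees $\geq k>j$ has vanishing $H^j$), so invoking Scholze's Hodge--de~Rham degeneration here is more than is needed; that degeneration is only used for the isomorphism $H^j((\R\Gamma_{\dr}\otimes\bdrp)/F^r)\simeq(H^j_{\dr}\otimes\bdrp)/F^r$, and the paper even gives a direct argument avoiding it.
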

\begin{proof}Take $r > j$. 
 The period map is induced by the following composition
$$
 H^j(\sx_{\ovk,\tr},\Q_p(r))\stackrel{\alpha^{\rm FM}_r}{\leftarrow} H^j_{\synt}(\sx_{\so_{\ovk}},r)_{\Q}\to (H^j_{\hk}(\sx)\otimes_{F}\bst )^{\phi=p^r,N=0}\to H^j_{\hk}(\sx)\otimes_{F}\bst .
$$
The first morphism is an isomorphism by Corollary \ref{impl2}. 
The proof of our corollary  proceeds now as the proof of Corollary \ref{comp1} using the short  exact sequence (\ref{pierre-mod}). 
\end{proof}
\begin{remark}
It is shown in \cite{NL} that the distinguished triangles (\ref{presentation0}) and (\ref{presentation}) lift canonically to the category of finite 
dimensional Banach Spaces.
\end{remark}
\begin{remark}It is likely that 
de Rham conjecture for smooth and proper rigid analytic spaces proved by Scholze \cite{Sch}  can be derived from the semistable comparison in Corollary \ref{comp2} using the existence of local (in the \'etale topology) semistable formal models of smooth rigid analytic spaces proved by Hartl \cite{Hrt} and the ``gluing on the generic fiber" techniques of \cite{BE2}, \cite{NN}.
\end{remark}

\end{document}